\newcommand*{\@old@slash}{}\let\@old@slash\slash
\def\slash{\relax\ifmmode\delimiter"502F30E\mathopen{}\else\@old@slash\fi}
\def\backslash{\delimiter"526E30F\mathopen{}}
\newtheorem{thmintro}{Theorem}
\newtheorem*{corspecial}{Corollary \ref{cor:lambda1}}
\newtheorem{thmintrofr}{Théorème}
\newtheorem*{definfr*}{D\'efinition}
\newtheorem{thm}{Theorem}[section]
\newtheorem{cor}[thm]{Corollary}
\newtheorem{prop}[thm]{Proposition}
\newtheorem{defin}[thm]{Definition}
\newtheorem{lem}[thm]{Lemma}
\newtheorem{claim}[thm]{Claim}
\newtheorem*{thm*}{Theorem}
\newtheorem*{prop*}{Proposition}
\newtheorem*{defin*}{Definition}
\newtheorem*{lem*}{Lemma}
\newtheorem*{claim*}{Claim}
\theoremstyle{remark}
\newtheorem{rem}[thm]{Remark}
\newtheorem*{rem*}{Remark}
\newcommand{\leftexp}[2]{{\vphantom{#2}}^{#1}{#2}}
\newcommand\M{\widetilde{M}}
\newcommand\F{\widetilde{F}}
\newcommand{\wt}[1]{\widetilde{#1}}
\newcommand\R{\mathbb R}
\newcommand\Z{\mathbb Z}
\newcommand\N{\mathbb N}
\newcommand\C{\mathbb C}
\newcommand\Hyp{\mathbb H}
\def\S{\mathbb S}
\newcommand\T{\mathbb T}
\newcommand\ada{A \wedge dA^{n-1}}
\newcommand\bdb{B \wedge dB^{n-1}}
\newcommand\eps{\varepsilon}
\newcommand\cost{\cos(\theta)}
\newcommand\sint{\sin(\theta)}
\newcommand\cosp{\cos(\phi)}
\newcommand\cospp{\cos^2(\phi)}
\newcommand\sinp{\sin(\phi)}
\newcommand\sinpp{\sin^2(\phi)}
\newcommand\cospsi{\cos(\psi)}
\newcommand\sinpsi{\sin (\psi)}
\newcommand\parx{\frac{\partial}{\partial x}}
\newcommand\pary{\frac{\partial}{\partial y}}
\newcommand\partheta{\frac{\partial}{\partial \theta}}
\newcommand\parxx{\frac{\partial^2}{\partial x^2}}
\newcommand\paryy{\frac{\partial^2}{\partial y^2}}
\newcommand\parxy{\frac{\partial^2}{\partial x \partial y}}
\newcommand\parphi{\frac{\partial }{\partial \phi}}
\newcommand\parpsi{\frac{\partial }{\partial \psi}}
\newcommand\parttheta{\frac{\partial^2}{\partial \theta^2}}
\newcommand\parpphi{\frac{\partial^2 }{\partial \phi^2}}
\newcommand\partialxi{\frac{\partial}{\partial x_i}}
\newcommand\ee{\eps^2\sin^2(\phi)}
\newcommand\e{\eps\sin(\phi)}
\newcommand\parfunpsi{\frac{\partial f_1 }{\partial \psi}}
\newcommand\parfdepsi{\frac{\partial f_2 }{\partial \psi}}
\newcommand\xpsi{X_{\psi}}
\newcommand\xphi{X_{\phi}}
\newcommand\xtheta{X_{\theta}}
\newcommand\fundeux{f_1 \parfdepsi - f_2 \parfunpsi}
\newcommand\plm{\tilde{P}_{l}^{m}}
\newcommand\ylm{Y_{l}^{m}}
\newcommand\MM{\mathcal{M}\widetilde{M}}
\newcommand{\hmu}{\widetilde{\mu} }
\def\L{\mathcal{L}}
\newcommand\Le{\mathcal{L}_{\eps}}
\newcommand{\Tzero}{\mathring{T}}
\DeclareMathOperator{\id}{Id}
\DeclareMathOperator{\voleucl}{\mathrm{vol}_{\mathrm{Eucl}}}
\newcommand\dx{\frac{\partial}{\partial x}}
\newcommand\dy{\frac{\partial}{\partial y}}
\newcommand\dfxx{\frac{\partial f}{\partial^2 x}}
\newcommand\dfyy{\frac{\partial f}{\partial^2 y}}
\newcommand\dfxy{\frac{\partial f}{\partial x \partial y}}
\DeclareMathOperator{\res}{Res}
\newcommand\flot{ \phi^{t} }
\newcommand\hflot{ \tilde{ \phi}^t }
\newcommand\orb{ \mathcal O }
\newcommand\leafs{ \mathcal L ^{s} }
\newcommand\leafu{ \mathcal L ^{u} }
\newcommand\fs{\mathcal F^{s} }
\newcommand\hfs{\widetilde{\mathcal F}^{s} }
\newcommand\fu{\mathcal F^{u} }
\newcommand\hfu{\widetilde{\mathcal F}^{u} }
\newcommand{\al}[1]{\widetilde{\alpha_{#1} } }
\newcommand\univ{S^1_{\text{univ}}}
\author{Thomas BARTHELM\'E}
\title{A new Laplace operator in Finsler geometry and periodic orbits of Anosov flows}
\date{January 24, 2012}
\begin{document}

\maketitle


\frontmatter


\selectlanguage{french}{
\thispagestyle{empty}
\null \vspace{\stretch{1}}
\begin{flushright}
{\it
 Je rêve d'un jour où l'égoïsme ne régnera plus dans les sciences, où on s'associera pour étudier, au lieu d'envoyer aux académiciens des plis cachetés, on s'empressera de publier ses moindres observations pour peu qu'elles soient nouvelles, et on ajoutera ``je ne sais pas le reste''.\\}
 \'Evariste Galois
\end{flushright}
\vspace{\stretch{2}} \null
}

\selectlanguage{american}
\tableofcontents

\chapter{Introduction}

This dissertation is made up of two disjoint parts. In the first part, we introduce a generalization of the Laplace operator to Finsler geometry and start its study. In the second part we concentrate on the study of periodic orbits of Anosov flows in $3$-manifolds.

\section*{Finsler Geometry and Laplacian}

Finsler geometry has always been in the shadow of Riemannian geometry. Yet it was introduced by Riemann himself in his famous 1854 ``Habilitationsschrift'' in G\"ottingen. Riemann realized that the minimal condition to obtain an integral notion of length on a manifold was to equip the tangent bundle with a family of norms. Unfortunately, Minkowski's work on convex geometry was still 40 years away, and the norms not obtained from a scalar product were not well understood. Riemann abandoned the general case with these words:
\begin{quotation}
 Die Untersuchung dieser allgemeinern Gattung w\"urde zwar keine wesentlich andere Principien erfordern, aber ziemlich zeitraubend sein und verh\"altnissm\"assig auf die Lehre vom Raume wenig neues Licht werfen, zumal da sich die Resultate nicht geometrisch ausdr\"ucken lassen; \cite{Riemann}\\
{\it  The investigation of this more general kind would require no really different principles, but would take considerable time and throw little new light on the theory of space, especially as the results cannot be geometrically expressed \footnote{Translated by William Kingdon Clifford \cite{translationRiemann}};}
\end{quotation}
It was only in 1918 that, under the direction of Carath\'eodory, Paul Finsler studied the general case during his Ph.D. \cite{Finsler} and laid the basis of this theory.

Nowadays, Finsler geometry is relatively well understood but many Riemannian problems still await their Finsler equivalent. One problem that came up frequently in the past years was the development of global analytical tools.

In Riemannian geometry, the Laplace--Beltrami operator on a Riemannian manifold has long held its place as one of the most important objects in geometric analysis. Among the reasons is that, its spectrum, while being physically motivated, shows an intriguing and intimate connection with the global geometry of the manifold.

 In the first part of this dissertation, I introduce a dynamical generalization of the Laplace operator to Finsler geometry. Note that it is not the first generalization; Bao and Lackey \cite{BaoLackey}, Shen \cite{Shen:non-linear_Laplacian} and Centore \cite{Centore:FinslerLaplacians} all gave different definitions. But as the Laplace--Beltrami operator admits several equivalent definitions, it is not very surprising that generalizations of different definitions give different operators. We will see that ours produces a linear, elliptic, symmetric, second-order differential operator and is sufficiently simple to allow explicit computations of spectra. I also hope that the reader will be convinced that our definition is natural.

\subsection*{Geodesic flow and Hilbert form}
Let us start by introducing the main notions used here.

A Finsler metric on a manifold $M$ is a \emph{smooth} collection of \emph{strongly convex}, not necessarily symmetric, norms $F(x,\cdot)$ on each tangent space $T_xM$; here, strongly convex means that the Hessian of $F^2(x,\cdot)$ is positive definite. Note that, as a norm is uniquely defined by its unit ball, we can also think of a Finsler structure has a collection of open convex sets on each tangent space containing, but not centered at, $0$ and such that the boundary of each convex set has a definite positive Hessian. This strong convexity assumption might seem a bit odd, but it is often necessary, for instance to obtain the equation of geodesics.

This notion of Finsler metric is probably the most common one, but unfortunately also the most restrictive. In particular Hilbert geometries, which have known a renewed interest in the past few years, are not included in this definition: Hilbert metrics are only $C^0$ out of the zero section. But in this dissertation as often, a regularity at least $C^2$ is necessary. However, the smooth Finslerian world is still much wider than the Riemannian one and rich in new phenomena (see for instance \cite{Katok:KZ_metric} and \cite{CNV} or the survey \cite{Alvarez:Survey}).

There are many classical examples of Finsler metrics. A class that will play a central role in this dissertation are the Randers metrics. These metrics have many interests: they are physically motivated \cite{Randers} and come up in many different domains \cite{AntonelliIngardenMatsumoto}. Moreover, they constitute one of the simplest classes of Finsler metrics: they are just obtained by adding a differential $1$-form to a Riemannian metric, rendering their study tractable.

Our approach to Finsler geometry is due to Patrick Foulon: He introduced \cite{Fou:EquaDiff} several tools that allow the study of the geometry \emph{via its dynamics}, instead of using connections and local coordinates, hence eliminating one of the most common criticisms of Finsler geometry, i.e., too many indices.

The cornerstone of our study will therefore be the geodesic flow. Geodesic flows traditionally live on the unit tangent bundle of the manifold; however, in order to compare geodesic flows of different Finsler metrics, we will always see them in the \emph{homogenized tangent bundle}:
\begin{equation*}
 HM := \left( TM \smallsetminus \{0\} \right)/ \R^+.
\end{equation*}
Another central object of our study is the \emph{Hilbert form} $A$ associated with a Finsler metric; It is a differential $1$-form on $HM$, obtained by taking the \emph{vertical derivative} of the Finsler metric. Thanks to the strong convexity hypothesis, it is \emph{contact}, i.e., if $n$ is the dimension of $M$, then $\ada$ is a volume on $HM$. The link between $A$ and the Finsler metric that will be most helpful to us is the following:

The Hilbert form $A$ entirely determines the dynamics of the Finsler metric: If we write $X$ for the vector field generating the geodesic flow, then $X$ is the \emph{Reeb field} of $A$, i.e., it is defined by the following equations:
\begin{equation*}
\left\{ 
\begin{aligned}
  A(X) &= 1 \\
 i_X dA &= 0  \, .
 \end{aligned}
\right. 
\end{equation*}

Foulon's philosophy is to study Finsler geometry only via $X$, $A$ and objects built from them.

\subsection*{A Finsler--Laplacian}

As we already mentioned, there are several equivalent definitions for the Laplace--Beltrami operator (see for instance \cite{GHL}). Historically, it was defined on $C^2$ functions on $\R^n$ as:
\begin{equation*}
 \Delta f (p) = \sum_i \frac{\partial^2 f}{\partial x_i^2}(p),
\end{equation*}
and this definition can be extended from $\R^n$ to a Riemannian manifold by taking the $x_i$ to be normal coordinates at $p$ for the Riemannian metric. An intrinsic definition is given by taking the divergence of the gradient. And finally, the Hodge Laplacian, acting on differential forms, is defined by $d\delta + \delta d$ where $\delta$ is the co-differential.

We can see right now why different generalizations of this operator to the Finslerian context could give very different operators. Consider the definition given by the divergence of the gradient: The divergence of a vector field is defined by taking the Lie derivative of a given volume; so it is not specifically Riemannian. The gradient is not either: it is just the derivative of a function seen as an element of the tangent, instead of the cotangent bundle. So an identification between $TM$ and its dual $T^{\ast}M$ gives a gradient. And there is such a canonical identification given by a Finsler metric: the Legendre transform (see \ref{sec:cotangent} for the definition). However, the difference with the Riemannian case is that the Legendre transform is in general \emph{not linear}. Hence, generalizing the Laplace operator via this method gives a non-linear operator. It was done by Shen \cite{Shen:non-linear_Laplacian} but, as natural and interesting as it is, it is not what we were looking for in a Laplacian.

Bao and Lackey \cite{BaoLackey} gave a generalization of the Hodge Laplacian and Centore \cite{Cen:mean-value_laplacian} built an operator such that harmonic functions verify the mean value property.

 All these definitions rely on the choice of a volume on the manifold, which can be a problem as there are several candidates in Finsler geometry (see for instance \cite{AlvarezThompson,BuragoBuragoIvanov}).

In this dissertation, we generalize the historical definition. The problem is that there is no good notion of orthogonality in Finsler geometry, so we cannot directly apply the Riemannian definition. Before explaining our definition, let us start with a remark: \\
If $f$ is a function on $\R^2$ and $c_{\theta}(t)$ the ray from $0$ making an angle $\theta$ with the $x$-axis, then, a direct and easy computation shows that
\begin{equation*}
 \int_{\theta = 0}^{2\pi} \left. \frac{d^2 }{dt^2} f \left(c_{\theta} (t) \right)\right|_{t=0} d\theta = \pi \left( \frac{\partial^2 f}{\partial x^2}(0) + \frac{\partial^2 f}{\partial y^2}(0) \right) .
\end{equation*}

This simple remark gives the idea for our generalization: instead of taking the sum of derivatives along orthonormal directions, we can consider an average over all directions! But to be able to consider an average, we need to find a measure for it, i.e., a solid angle naturally associated with a Finsler metric.

As the Hilbert form $A$ is a contact form, there \emph{is} a truly canonical volume form associated with a Finsler metric, but only on $HM$, and not on $M$ as in the Riemannian case.

If we denote by $\pi \colon HM \rightarrow M$ the canonical projection and by $VHM:= \ker d\pi$ the set of vectors tangent to the fibers $H_xM$, then a (solid) angle is given by a nowhere-vanishing form on $VHM$. To obtain a natural notion of angle, we can just split the canonical volume $\ada$ into an angle and a volume form on the base manifold, normalizing in order to get back the euclidean angle:
\begin{prop*}
 Let $(M,F)$ be a Finsler metric. There exists a unique volume form $\Omega^F$\ on $M$\ and an $(n-1)$-form $\alpha^F$\ on $HM$ that is nowhere zero on $VHM$ and such that:
\begin{equation*}
  \alpha^{F} \wedge \pi^{\ast}\Omega^F =  A\wedge dA^{n-1}, 
\end{equation*}
and, for all $x\in M$, 
\begin{equation*}
 \int_{H_xM} \alpha^F =  \voleucl(\S^{n-1}) .
\end{equation*}
\end{prop*}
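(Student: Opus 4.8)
The plan is to work fiberwise and exploit the fact that $A \wedge dA^{n-1}$ is a fixed volume form on the $(2n-1)$-dimensional manifold $HM$, while $VHM$ is an $(n-1)$-dimensional distribution with a canonical orientation and $d\pi$ identifies $THM/VHM$ with $\pi^*TM$. First I would fix a point $x \in M$ and a local coframe: choose local coordinates on $M$, pull them back to $HM$, and complete with fiber coordinates so that $VHM$ is spanned by the corresponding vertical vector fields. The key structural observation is that any decomposition $A \wedge dA^{n-1} = \beta \wedge \pi^*\omega$, with $\omega$ a volume form on $M$ and $\beta$ an $(n-1)$-form nonzero on $VHM$, is constrained as follows: contracting with $n-1$ vertical vectors kills $\pi^*\omega$ unless all the "horizontal slots" are used, so $\beta$ restricted to $VHM$ is completely determined by $A\wedge dA^{n-1}$ and the choice of $\omega$, up to the ambiguity of adding to $\beta$ forms that vanish on $\Lambda^{n-1}VHM$. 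I would phrase this as: the restriction $\beta|_{VHM}$ equals $\iota(A\wedge dA^{n-1})/\pi^*\omega$ in the appropriate sense, so that $\beta|_{H_xM}$ scales inversely with the density of $\omega$ at $x$.

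Next I would turn the normalization condition into an equation pinning down $\omega$. Given any auxiliary volume form $\omega_0$ on $M$, write $A \wedge dA^{n-1} = \beta_0 \wedge \pi^*\omega_0$ for some $(n-1)$-form $\beta_0$ on $HM$ that is nonzero on $VHM$ (such a $\beta_0$ exists locally by the coframe construction and the contact condition, and different local choices differ by something vanishing on $VHM$, hence agree upon restriction to the fibers, so $\beta_0|_{H_xM}$ is globally well defined once $\omega_0$ is fixed). Set
\begin{equation*}
 \lambda(x) := \frac{1}{\voleucl(\S^{n-1})} \int_{H_xM} \beta_0 ,
\end{equation*}
which is a smooth positive function on $M$ by smoothness of $F$ and compactness of the fibers $H_xM \cong \S^{n-1}$. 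Then I claim $\Omega^F := \lambda \cdot \omega_0$ and $\alpha^F := \lambda^{-1}\circ\pi \cdot \beta_0$ (more precisely, $\alpha^F$ defined so that $\alpha^F \wedge \pi^*\Omega^F = A\wedge dA^{n-1}$, which forces $\alpha^F|_{H_xM} = \lambda(x)^{-1}\beta_0|_{H_xM}$) satisfy both requirements: the wedge identity holds since $\lambda^{-1}\beta_0 \wedge \pi^*(\lambda\,\omega_0) = \beta_0\wedge\pi^*\omega_0$ (the $\pi^*\lambda$ factors commute past and cancel because $\pi^*\lambda \wedge \pi^*\omega_0$ only sees $\lambda$ as a function), and $\int_{H_xM}\alpha^F = \lambda(x)^{-1}\int_{H_xM}\beta_0 = \voleucl(\S^{n-1})$ by construction.

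For uniqueness, suppose $(\Omega^F, \alpha^F)$ and $(\Omega', \alpha')$ both work. Writing $\Omega' = g\,\Omega^F$ for a positive function $g$ on $M$, the wedge identity forces $\alpha'|_{H_xM} = g(x)^{-1}\alpha^F|_{H_xM}$ (since $\alpha'\wedge\pi^*\Omega' = \alpha^F\wedge\pi^*\Omega^F$ and $\pi^*\Omega'$, $\pi^*\Omega^F$ are volume forms pulled back from the base, the ratio is forced on the quotient, hence on the fiber restriction — again because any correction term vanishes on $\Lambda^{n-1}VHM$). Integrating over $H_xM$ and using the normalization on both sides gives $g(x)^{-1}\voleucl(\S^{n-1}) = \voleucl(\S^{n-1})$, so $g \equiv 1$, hence $\Omega' = \Omega^F$ and then $\alpha'$ and $\alpha^F$ have the same restriction to every fiber; but an $(n-1)$-form on $HM$ is, for our purposes, only relevant through its restriction to $VHM$ (and indeed one can fix a canonical representative, e.g. the one that is basic-complementary in the chosen splitting), so after this normalization $\alpha^F$ is the well-defined object intended in the statement.

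The main obstacle I anticipate is making precise the repeated claim that "$\beta$ is determined by $A\wedge dA^{n-1}$ and $\Omega^F$ up to terms irrelevant on $VHM$" — i.e., carefully setting up the quotient $\Lambda^{n-1}(T^*HM) \to \Lambda^{n-1}(V^*HM)$ and checking that division by $\pi^*\Omega^F$ is a well-defined operation landing exactly in the top exterior power of $V^*HM$. This is linear algebra fiber by fiber, but one must be attentive that $\pi^*\Omega^F$, being a pullback of a top form on $M$, annihilates anything with a vertical component, which is precisely why contracting the identity $\alpha^F\wedge\pi^*\Omega^F = A\wedge dA^{n-1}$ with a full set of vertical vectors isolates $\alpha^F|_{VHM}$. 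Once this bookkeeping is in place, existence is the averaging construction above and uniqueness is the short scaling argument; the smoothness of $\Omega^F$ follows from smooth dependence of $A\wedge dA^{n-1}$ on $x$ together with smoothness of fiber integration.
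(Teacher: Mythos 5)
Your proposal is correct and follows essentially the same route as the paper: choose an auxiliary volume form $\omega_0$, split $A\wedge dA^{n-1} = \beta_0\wedge\pi^*\omega_0$, integrate $\beta_0$ over the fibers to obtain a positive function, and rescale so that the fiber integrals equal $\voleucl(\S^{n-1})$ — the paper's formula $\Omega^F = \bigl(l^\omega/\voleucl(\S^{n-1})\bigr)\omega$ is exactly your $\lambda\,\omega_0$. The one point you flag as the "main obstacle," namely pinning down $\alpha^F|_{VHM}$ by dividing out $\pi^*\Omega^F$, is handled in the paper by evaluating the identity on the explicit frame $Y_1,\dots,Y_{n-1},X,[X,Y_1],\dots,[X,Y_{n-1}]$ and invoking Foulon's result that these vectors are linearly independent, which both isolates $\alpha^F(Y_1,\dots,Y_{n-1})$ and guarantees the denominator $\pi^*\omega\bigl(X,[X,Y_1],\dots,[X,Y_{n-1}]\bigr)$ is nonzero.
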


And we can define our Finsler--Laplace operator:
\begin{defin*}
 Let $(M,F)$ be a Finsler metric. For $f \in C^{2}(M)$ and $x \in M$, we set
\begin{equation*}
  \Delta^F f (x) := \frac{n}{\voleucl \left(\mathbb{S}^{n-1}\right)} \int_{\xi \in H_xM} \left. \frac{d^2 f}{dt^2}\left(c_\xi (t) \right)\right|_{t=0}   \alpha^F_x(\xi),
\end{equation*}
where $c_{\xi}$ is the geodesic leaving $x$ in the direction $\xi \in H_xM$.
\end{defin*}

And we obtain exactly what we hoped for:
\begin{thmintro} \label{thmintro:Laplacian}
 Let $(M,F)$ be a Finsler metric. Then $\Delta^F$\ is a second-order differential operator. Furthermore:
\begin{enumerate}
 \item[(i)] $\Delta^F$\ is elliptic, 
 \item[(ii)] $\Delta^F$\ is symmetric, i.e., for any $f,g \in C^{\infty}(M)$, 
 \begin{equation*}
  \int_M  \left( f\Delta^F g - g\Delta^F f \right)\; \Omega^F = 0.
 \end{equation*}
 \item[(iii)] $\Delta^F$\ coincides with the Laplace--Beltrami operator when $F$\ is Riemannian.
\end{enumerate}
\end{thmintro}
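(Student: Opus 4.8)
The plan is to reduce everything to a local coordinate computation on the homogenized tangent bundle. First I would fix $x \in M$ and pick coordinates $(x_i)$ on a neighborhood, inducing fiber coordinates on $H_xM$; writing the geodesic $c_\xi(t)$ with initial velocity $\xi$, a Taylor expansion gives
\begin{equation*}
 \left.\frac{d^2}{dt^2} f(c_\xi(t))\right|_{t=0} = \sum_{i,j} \frac{\partial^2 f}{\partial x_i \partial x_j}(x)\, \xi_i \xi_j + \sum_i \frac{\partial f}{\partial x_i}(x)\, \ddot{c}_\xi^{\,i}(0),
\end{equation*}
where the second term is linear in the first derivatives of $f$ with coefficients depending on the geodesic spray (the Christoffel-type terms of the Finsler connection). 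Integrating this against $\alpha^F_x$ over $H_xM$ — a compact fiber — produces, after normalization by $n/\voleucl(\S^{n-1})$, an expression of the form $\sum_{i,j} a^{ij}(x)\,\partial_{ij}f + \sum_i b^i(x)\,\partial_i f$, where $a^{ij}(x) = \tfrac{n}{\voleucl(\S^{n-1})}\int_{H_xM} \xi_i\xi_j\,\alpha^F_x$. This is manifestly a second-order linear differential operator, with no zeroth-order term since a constant function has vanishing second derivative along any curve. I would also need to check that $\alpha^F$ and $\Omega^F$ are smooth (from the Proposition: they come from splitting the smooth volume $\ada$), so the coefficients $a^{ij}, b^i$ are smooth.

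For ellipticity (i), the symbol is the quadratic form $v \mapsto \sum_{i,j} a^{ij}(x)\, v_i v_j = \tfrac{n}{\voleucl(\S^{n-1})}\int_{H_xM}\langle \xi, v\rangle^2\,\alpha^F_x$ for $v \in T_x^*M$. Since $\alpha^F_x$ is a nowhere-zero (hence, after orienting, positive) density on $H_xM$ and the integrand $\langle\xi,v\rangle^2$ is nonnegative and vanishes only on a measure-zero set when $v\neq 0$, this integral is strictly positive; hence the symbol is positive definite and $\Delta^F$ is elliptic. Part (iii) is a direct check: when $F$ comes from a Riemannian metric $g$, one identifies $A$ with the canonical contact form on the unit sphere bundle, verifies that the splitting in the Proposition yields $\Omega^F$ equal to the Riemannian volume and $\alpha^F$ the round angular measure on each fiber sphere, and then the opening remark (the identity $\int_0^{2\pi}\partial_t^2 f(c_\theta(t))|_0\,d\theta = \pi(\partial_{xx}f+\partial_{yy}f)$, in its $n$-dimensional form with normal coordinates so that the $b^i$ terms vanish at $x$) gives exactly $\Delta^g f$.

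The main obstacle is (ii), the symmetry. Here I would avoid local coordinates and instead lift to $HM$: the key is to express $\Delta^F f$ intrinsically in terms of the geodesic vector field $X$ and the forms $A, \alpha^F$. One has $\frac{d^2}{dt^2} f(c_\xi(t))|_0 = (X^2(\pi^*f))(\xi)$, so
\begin{equation*}
 \pi^*(\Delta^F f)\cdot \alpha^F\wedge \pi^*\Omega^F = \frac{n}{\voleucl(\S^{n-1})}\, X^2(\pi^*f)\; A\wedge dA^{n-1},
\end{equation*}
after fiber-integration. The strategy is then to show that $\int_{HM} \big(X^2(\pi^*f)\,\pi^*g - X^2(\pi^*g)\,\pi^*f\big)\,A\wedge dA^{n-1} = 0$. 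Since $X$ preserves the volume $A\wedge dA^{n-1}$ (because $\mathcal L_X A = 0$ and $\mathcal L_X dA = 0$, as $X$ is the Reeb field), integration by parts gives $\int_{HM} X(u)\,v\,(A\wedge dA^{n-1}) = -\int_{HM} u\,X(v)\,(A\wedge dA^{n-1})$; applying this twice to $u = X(\pi^*f)$, $v = \pi^*g$ turns $\int X^2(\pi^*f)\,\pi^*g$ into $\int X^2(\pi^*g)\,\pi^*f$, hence the difference vanishes. The delicate points to nail down are that $HM$ is closed (so Stokes has no boundary term) — true when $M$ is closed, which should be part of the standing hypotheses — and the measurability/smoothness needed to commute fiber integration over $\pi$ with the manipulations on $HM$; I expect these to be routine once the fiberwise-smoothness of $\alpha^F$ from the Proposition is in hand.
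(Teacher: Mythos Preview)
Your proposal is correct and follows essentially the same route as the paper: the second-order and Riemannian claims are handled in normal coordinates exactly as you describe, ellipticity comes from positivity of the averaged square $\int_{H_xM}(L_X\pi^\ast\varphi)^2\,\alpha^F$ (the paper packages this via the criterion $\Delta^F(\varphi^2 u)(p)>0$, but it is the same computation), and symmetry is obtained by lifting to $HM$, invoking a Fubini step, and integrating by parts against $\ada$ using $L_X(\ada)=0$ and Stokes on the closed manifold $HM$. One small cleanup: your displayed identity $\pi^\ast(\Delta^F f)\cdot\alpha^F\wedge\pi^\ast\Omega^F = c_n\,X^2(\pi^\ast f)\,\ada$ is not a pointwise equality of forms on $HM$ (the left side is constant along fibers, the right is not); what you need, and what the paper proves, is the equality after pushing forward along $\pi$, namely $\int_M g\,\Delta^F f\,\Omega^F = c_n\int_{HM}\pi^\ast g\,X^2(\pi^\ast f)\,\ada$.
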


As we will see (in section \ref{sec:Finsler-Laplacian}), the proof of this result is made very simple thanks to our definition.

Let us also point out the following consequence of {\it (i)} and {\it (ii)}: $\Delta^F$ \emph{is unitarily equivalent to a Schr\"odinger operator}.

The proof of this last point is based on the following (known) result, which is of particular interest for us:
\begin{prop*}
 Let $(M,g)$\ be a closed Riemannian manifold and $\omega$\ a volume form on $M$. There exists a unique second-order differential operator $\Delta_{g,\omega}$\ on $M$ with real coefficients such that its symbol is the dual metric $g^{\star}$, it is symmetric with respect to $\omega$ and zero on constants.\\
If $a\in C^{\infty}(M)$ is such that $\omega = a^2 v_g$, where $v_g$ is the Riemannian volume, then for $\varphi \in C^{\infty}(M)$:
\begin{equation*}
 \Delta_{g,\omega} \varphi = \Delta^{g} \varphi - \frac{1}{a^2} \langle \nabla \varphi , \nabla a^2 \rangle.
\end{equation*}
\end{prop*}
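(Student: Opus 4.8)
The plan is to establish existence and uniqueness of $\Delta_{g,\omega}$ separately, then verify the explicit formula by a direct computation. For existence, I would simply \emph{define} $\Delta_{g,\omega}\varphi := \Delta^g\varphi - \tfrac{1}{a^2}\langle\nabla\varphi,\nabla a^2\rangle$ (where $a>0$ is determined up to sign by $\omega = a^2 v_g$, which exists and is smooth because $\omega/v_g$ is a positive smooth function) and then check the three required properties: that it is a second-order differential operator with real coefficients (clear, since $\Delta^g$ is and the correction term is first-order with smooth real coefficients), that its principal symbol is $g^\star$ (the correction term is first-order, hence does not affect the symbol, and the symbol of $\Delta^g$ is $g^\star$), and that it annihilates constants (both $\Delta^g$ and the gradient term kill constants). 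The only genuine content here is the symmetry with respect to $\omega$.

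For the symmetry, I would integrate by parts. Using the divergence theorem for the Riemannian structure, for $\varphi,\psi\in C^\infty(M)$ one has $\int_M \psi\,\Delta^g\varphi\; v_g = -\int_M \langle\nabla\varphi,\nabla\psi\rangle\, v_g$, and the claim is that the analogous first Green identity holds with $\Delta_{g,\omega}$ and $\omega = a^2 v_g$, namely $\int_M \psi\,\Delta_{g,\omega}\varphi\;\omega = -\int_M a^2\langle\nabla\varphi,\nabla\psi\rangle\, v_g$, which is manifestly symmetric in $\varphi,\psi$ and hence gives $\int_M(\psi\Delta_{g,\omega}\varphi - \varphi\Delta_{g,\omega}\psi)\,\omega = 0$. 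To see this, compute
\begin{equation*}
 \int_M \psi\,\Delta_{g,\omega}\varphi\;\omega = \int_M a^2\psi\,\Delta^g\varphi\; v_g - \int_M \psi\,\langle\nabla\varphi,\nabla a^2\rangle\, v_g,
\end{equation*}
apply the divergence theorem to the first term with the test function $a^2\psi$, obtaining $-\int_M \langle\nabla\varphi,\nabla(a^2\psi)\rangle v_g = -\int_M a^2\langle\nabla\varphi,\nabla\psi\rangle v_g - \int_M \psi\langle\nabla\varphi,\nabla a^2\rangle v_g$, and observe that the stray term $-\int_M\psi\langle\nabla\varphi,\nabla a^2\rangle v_g$ cancels exactly against the second term above. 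This is the main (and only slightly delicate) step, and it is where the precise coefficient $\tfrac{1}{a^2}$ in front of the correction is forced.

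For uniqueness, suppose $L_1,L_2$ both satisfy the three conditions; then $L := L_1 - L_2$ is a second-order operator with real coefficients, vanishing principal symbol (so $L$ is in fact first-order), symmetric with respect to $\omega$, and zero on constants. Write $L\varphi = \langle V,\nabla\varphi\rangle$ for some smooth vector field $V$ (the absence of a zeroth-order term follows from $L(1)=0$). Symmetry with respect to $\omega$ gives, for all $\varphi,\psi$, $\int_M \psi\langle V,\nabla\varphi\rangle\,\omega = \int_M \varphi\langle V,\nabla\psi\rangle\,\omega$; adding these and using $\langle V, \nabla(\varphi\psi)\rangle = \psi\langle V,\nabla\varphi\rangle + \varphi\langle V,\nabla\psi\rangle$ shows $\int_M \langle V,\nabla(\varphi\psi)\rangle\,\omega = 0$ for all $\varphi,\psi$, hence $\int_M \langle V,\nabla h\rangle\,\omega = 0$ for all $h\in C^\infty(M)$ (taking $\psi=1$), i.e.\ the divergence of $V$ with respect to $\omega$ vanishes identically; but then taking $\varphi = \psi$ in the symmetry relation gives $\int_M \varphi\langle V,\nabla\varphi\rangle\,\omega = \tfrac12\int_M\langle V,\nabla\varphi^2\rangle\,\omega = 0$ as well, which is automatic and gives nothing new — so instead I subtract the two symmetry relations to get $\int_M(\psi\langle V,\nabla\varphi\rangle - \varphi\langle V,\nabla\psi\rangle)\,\omega = 0$; testing with $\psi$ supported near an arbitrary point and $\varphi$ with prescribed $1$-jet there forces $V\equiv 0$. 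Thus $L_1 = L_2$. I expect the uniqueness argument to require a little care in extracting $V=0$ from the integral identities — the cleanest route is to note that $\operatorname{div}_\omega V = 0$ together with symmetry of $\varphi\mapsto\langle V,\nabla\varphi\rangle$ with respect to $\omega$ forces the first-order operator to be both symmetric and skew-symmetric, hence zero — but this is standard and not the heart of the matter.
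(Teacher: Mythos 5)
Your plan --- define $\Delta_{g,\omega}$ by the displayed formula, verify the three required properties by an integration by parts, then prove uniqueness by showing the first-order difference operator must vanish --- is exactly the route the paper takes, so there is no divergence of method. On the uniqueness side you are essentially right, and in fact more careful than the paper's own one-paragraph argument: you should promote the remark you relegate to your last sentence. Taking $\psi=1$ in the symmetry relation gives $\int_M\langle V,\nabla\varphi\rangle\,\omega=0$ for all $\varphi$, i.e.\ $\operatorname{div}_\omega V=0$, which makes $\varphi\mapsto\langle V,\nabla\varphi\rangle$ \emph{skew}-symmetric with respect to $\omega$; combined with the hypothesized symmetry the operator is zero, hence $V\equiv0$. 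The intermediate ``adding these'' manoeuvre and the ``prescribed $1$-jet'' step as written do not actually isolate $V$, so drop them and keep the clean version.

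The genuine gap is in the symmetry verification. You correctly obtain
$\int_M a^2\psi\,\Delta^g\varphi\,v_g = -\int_M a^2\langle\nabla\varphi,\nabla\psi\rangle\,v_g - \int_M\psi\langle\nabla\varphi,\nabla a^2\rangle\,v_g$,
and the first-order part of $\Delta_{g,\omega}$ contributes $-\int_M\psi\langle\nabla\varphi,\nabla a^2\rangle\,v_g$. These two terms carry the \emph{same} sign: they do not cancel, they add, so that
$\int_M\psi\,\Delta_{g,\omega}\varphi\,\omega = -\int_M a^2\langle\nabla\varphi,\nabla\psi\rangle\,v_g - 2\int_M\psi\langle\nabla\varphi,\nabla a^2\rangle\,v_g$,
which is not symmetric in $\varphi$ and $\psi$. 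For the cancellation you want, and hence for symmetry, the correction must come with a $+$ sign: with the paper's convention $\int_M f\,\Delta^g u\,v_g = -\int_M\langle\nabla f,\nabla u\rangle\,v_g$, the symmetric operator is $\Delta^g\varphi + \tfrac{1}{a^2}\langle\nabla\varphi,\nabla a^2\rangle$, which is also what one gets from the standard weighted Laplacian $\Delta - \langle\nabla f,\nabla\cdot\rangle$ for $\omega=e^{-f}v_g$ (since then $\nabla f = -\tfrac{1}{a^2}\nabla a^2$). The statement you are asked to prove carries a minus sign, so the claimed ``cancels exactly'' is precisely the step that fails; either you made an arithmetic slip or you were misled by the sign in the statement itself. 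Note that the paper's own proof never displays this computation --- it merely asserts the existence part ``is evident'' --- so the discrepancy does not surface there; once you write the computation out, as you did, you must fix the sign.
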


The symbol of an elliptic second-order differential operator always gives a Riemannian (co)-metric. So the above proposition tells us that our Finsler--Laplace operator is uniquely determined by its symbol and the volume $\Omega^F$. This leads to a few remarks:

First, there are far more Finsler metrics than pairs (Riemannian metric, volume), so there are many Finsler metrics sharing the same Laplacian. So our Finsler--Laplace operator cannot carry as much information as the metric it comes from. But this can be seen as an interesting source of questions: For instance, does sharing the same Laplacian imply that the metrics share some geometrical or dynamical properties?

The operators $\Delta_{g,\omega}$ are called \emph{weighted Laplacians} and were introduced by Chavel et Feldman \cite{ChavelFeldman:Isoperimetric_constants} and by Davies \cite{Davies:Heat_kernel_bounds}. They have been under much study since then (see for instance \cite{Grigoryan:heat_kernels_on_weighted_manifolds} or \cite{ColboisSavo}) and it seems natural to ask whether we could get a better understanding of them via the study of our Finsler--Laplacian or vice-versa. 

Which takes us to our last remark: Can we obtain every weighted Laplacian as a Finsler--Laplacian? As there are more Finsler metrics than pairs (Riemannian metric, volume), we conjecture that the answer is positive.

On surfaces, we prove that it is indeed the case, and that it is enough to consider Randers metrics to obtain every pair. Unfortunately, our proof is based on the local expression we obtained for Randers surfaces and it cannot be generalized as such.

\subsection*{Energy and spectrum}

As expected from a Laplacian, on compact manifolds our operator admits a discrete spectrum. We give the two classical proofs; the first is just the application of the general theory of \emph{unbounded} elliptic symmetric operators. The second, via the Min-Max method, is more interesting for a future study of the spectrum. It relies on the introduction of the following energy associated with $\Delta^F$:
\begin{equation*}
 E(u) := \frac{n}{\voleucl \left(\S^{n-1}\right) } \int_{HM} \left|L_X\left(\pi^{\ast}u \right)\right|^2 \ada
\end{equation*}

With this energy, we generalize several classical Riemannian results. First, we prove that harmonic functions are always obtained as minima of the energy. Second, we study how the energy varies inside a conformal class and show that it is invariant when $n=2$, which allows us to show:
\begin{thmintro}
\label{thmintro:inv_conforme}
Let $(\Sigma,F)$\ be a Finsler surface. If $f \colon \Sigma \xrightarrow{C^{\infty}} \R$\ and $F_f = e^f F$. Then,
\begin{equation*}
 \Delta^{F_f} = e^{-2f} \Delta^F.
\end{equation*}

\end{thmintro}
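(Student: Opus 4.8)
The plan is to track how the two ingredients of $\Delta^F$ — the angle form $\alpha^F$ on $HM$ and the second derivative $\frac{d^2}{dt^2}f(c_\xi(t))|_{t=0}$ — transform under the conformal change $F_f = e^f F$. The key structural fact is that on $HM$ the homogenized tangent bundle does not depend on the metric, only the Hilbert form does; so $A^{F_f}$, $X^{F_f}$, $\Omega^{F_f}$ and $\alpha^{F_f}$ all live on the same manifold $HM$ and can be compared directly. First I would recall (or establish, via Foulon's formalism) how the Hilbert form and the Reeb field change: writing $A$ for the Hilbert form of $F$ and $X$ for its Reeb field, one has $A^{F_f} = e^{f\circ\pi}\,(A + \text{(vertical correction)})$ and $X^{F_f} = e^{-f\circ\pi}(X + Y)$ for a suitable vertical vector field $Y$; the precise formula is the one coming from the variation of geodesics under a conformal change. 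The decisive point is dimension $n=2$: then $A\wedge dA$ is a $3$-form and $\alpha^F$ is a $1$-form on $VHM$, and I expect the conformal factors to combine so that $A^{F_f}\wedge dA^{F_f} = e^{2f\circ\pi}\,A\wedge dA$ while simultaneously $\int_{H_xM}\alpha^{F_f} = \int_{H_xM}\alpha^F = 2\pi$, which by the uniqueness in the first Proposition forces $\Omega^{F_f} = e^{2f}\Omega^F$ and $\alpha^{F_f} = \alpha^F$ (the latter is the crucial cancellation special to surfaces).

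Granting $\alpha^{F_f} = \alpha^F$, the computation of $\Delta^{F_f}f$ reduces to understanding the integrand. For a fixed function $u\in C^2(\Sigma)$ and $\xi\in H_x\Sigma$, let $c_\xi$ be the $F_f$-geodesic from $x$ in direction $\xi$. I would express $\frac{d^2}{dt^2}u(c_\xi(t))|_{t=0}$ in terms of $\nabla^2 u$ and the geodesic acceleration: it equals $(\text{Hess}\,u)(\xi,\xi)$ plus a term $du(\text{acceleration of }c_\xi)$. The acceleration vanishes for $F$-geodesics but not for $F_f$-geodesics; the difference is precisely the first-order term governed by $df$. Plugging this into the defining integral and using $\alpha^{F_f}=\alpha^F$, the Hessian part, after integrating over $H_x\Sigma$ against $\alpha^F$ and using the normalization, reproduces $e^{-2f}$ times the Hessian part of $\Delta^F u$ — the $e^{-2f}$ coming from the reparametrization of $F_f$-geodesics relative to $F$-geodesics (the Reeb field rescales by $e^{-f\circ\pi}$, hence "time" rescales and the second derivative picks up $e^{-2f}$). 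The first-order correction terms involving $df$ must then be shown to integrate to zero over $H_x\Sigma$ against $\alpha^F$; this is where the symmetry of the angle measure (the fact that $\int_{H_x\Sigma}\xi\,\alpha^F$ vanishes, i.e. the "center of mass" of the solid angle is the origin) does the work, and it is again special to the surface case via the normalization $\int\alpha^F = 2\pi$.

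The main obstacle I anticipate is the identity $\alpha^{F_f} = \alpha^F$ on $VH\Sigma$ — not merely that the two forms have the same integral over each fiber, but that they agree pointwise. This requires knowing the conformal variation of the Hilbert form explicitly enough to see the cancellation in $A^{F_f}\wedge dA^{F_f}$, and separating the resulting $3$-form cleanly into its "angular" and "base" parts. A clean way to organize this is: (1) derive the formula for $A^{F_f}$ in terms of $A$ and $d(f\circ\pi)$ using the definition of the Hilbert form as a vertical derivative of $F$; (2) compute $dA^{F_f}$ and then $A^{F_f}\wedge dA^{F_f}$, watching the $\pi^*(df)$ terms; (3) observe that $\pi^*(df)\wedge\pi^*\Omega^F = 0$ in dimension $2$ because $df$ and $\Omega^F$ are both pulled back from the $2$-manifold $\Sigma$ and their wedge is a pulled-back $3$-form on a $2$-manifold, hence zero — this is the dimensional miracle; (4) conclude by uniqueness. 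If one carries out (1)–(4), conformal invariance of $\alpha^F$ and the scaling $\Omega^{F_f}=e^{2f}\Omega^F$ fall out together, and the rest is the bookkeeping of the previous paragraph. The secondary obstacle, the vanishing of the $df$-linear correction under fiber integration, should follow from the same pointwise identity $\alpha^{F_f}=\alpha^F$ combined with the oddness of $\xi\mapsto\xi$ on the fiber, but it is worth isolating as a lemma.
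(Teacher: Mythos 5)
Your plan takes a genuinely different route from the paper, but it has one structural error in where the dimension-$2$ condition enters, and that error makes the key step ungrounded.

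The paper does \emph{not} argue directly at the level of $\Delta^F$. It first proves the conformal transformation of the energy: writing $X_f=mX+Y$ with $Y$ vertical and $m=e^{-f}$ (from $A_f(X_f)=1$ and $A_f=e^fA$), one gets $L_{X_f}\pi^*u=e^{-f}L_X\pi^*u$, so
\begin{equation*}
E_f(u)=c_n\int_{HM}(L_{X_f}\pi^*u)^2\,A_f\wedge dA_f^{n-1}
      =c_n\int_{HM}e^{(n-2)f}(L_X\pi^*u)^2\,\ada\, .
\end{equation*}
This makes the role of $n=2$ transparent and requires only a \emph{first} Lie derivative. The Laplacian identity then follows by polarization, via $\tfrac{d}{dt}E(v+tu)\big|_{t=0}=-2\int_\Sigma u\,\Delta^F v\,\Omega$, together with $\Omega_f=e^{2f}\Omega$. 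No second-order bookkeeping is needed.

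Your version wants to compute $L_{X_f}^2\pi^*u$ fiber-wise. That could work, but you have mis-located the dimensional miracle. You claim that $\alpha^{F_f}=\alpha^F$ on $VH\Sigma$ is ``the crucial cancellation special to surfaces,'' justified by the vanishing of $\pi^*(df)\wedge\pi^*\Omega^F$ as a pulled-back $3$-form on a $2$-manifold. But $\pi^*(df)\wedge\pi^*\Omega^F$ is a pulled-back $(n+1)$-form on an $n$-manifold for \emph{every} $n$, so that argument gives nothing special to surfaces. Indeed, the paper proves in Section~\ref{subsec:angle_and_conformal_change} that $\alpha_f=\alpha$ on $VHM$ and $\Omega_f=e^{nf}\Omega$ hold in \emph{all} dimensions. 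The only $n$-dependence is the exponent $n$ in $\Omega_f=e^{nf}\Omega$.

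Consequently, the whole burden of the theorem must fall on the term you relegate to a ``secondary obstacle'': when you expand $L_{X_f}^2\pi^*u=e^{-2f}L_X^2\pi^*u - e^{-2f}(L_X f)(L_X\pi^*u)+e^{-f}L_Y(L_X\pi^*u)$, the last two terms produce a first-order drift in $u$, and you must show it vanishes \emph{precisely} when $n=2$ (in the Riemannian special case it is proportional to $(n-2)\langle\nabla f,\nabla u\rangle$). ``Oddness of $\xi\mapsto\xi$'' will not do this: a parity argument of that kind would be dimension-independent, whereas the drift genuinely does not vanish for $n>2$. You would need to write down the vertical field $Y$ (determined by the change of Reeb field under $A\mapsto e^fA$), carry the two correction terms against $\alpha^F$ over the fiber, and exhibit the $(n-2)$ factor explicitly. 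As written, the plan assumes that cancellation rather than proving it.

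If you want to keep a ``direct'' flavor and still stay short, a cleaner variant of your route uses the characterization in Lemma~\ref{lem:existence_unicity}: the correct symbol scaling gives $\sigma^{F_f}=e^{-2f}\sigma^F$, and then $e^{-2f}\Delta^F$ is zero on constants and is symmetric with respect to $\Omega_f=e^{nf}\Omega$ \emph{if and only if} $n=2$ (because $\int(e^{-2f}\Delta^F u)v\,\Omega_f=\int(\Delta^F u)v\,e^{(n-2)f}\Omega$, and symmetry of $\Delta^F$ is with respect to $\Omega$, not $e^{(n-2)f}\Omega$). Uniqueness then forces $\Delta^{F_f}=e^{-2f}\Delta^F$. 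This correctly isolates the $n=2$ condition and avoids the second-derivative expansion entirely.
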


\subsection*{Explicit representation and computation of spectrum}

In order to prove, even to ourself, that our operator was worth studying, we felt that it was essential to give examples. Indeed, the fact that we manage to obtain explicit spectral data for Finslerian metrics is for us an asset of our operator.

But computing spectra is a daunting task, even in the Riemannian case. Indeed, the full spectra of the Laplace--Beltrami operator is known only for the model spaces $\Hyp^n$, $\R^n$ and $\S^n$ and some of their quotients. In order to have any chance of computing a spectrum, we looked for Finsler metrics with constant flag curvature (the flag curvature is the generalization to Finsler geometry of the sectional curvature, see \cite{Egloff:thesis,BaoChernShen}). But note that model spaces do not exist in Finsler geometry; for any $R$, there is an infinite number of non-isometric Finsler metrics of constant flag curvature $R$. Another problem is having actual examples of constant curvature metrics on closed manifolds. Among known examples, we chose to study the ones that seem to us to be most interesting and manageable.

If the flag curvature is negative and the manifold is compact, then a theorem of Akbar-Zadeh \cite{AkZ} implies that the Finsler structure is in fact Riemannian (this is still true for compact locally symmetric Finsler metrics \cite{Fou:Curvature_and_global_rigidity}).

In the same article, Akbar-Zadeh showed that a simply connected compact manifold endowed with a metric of positive constant flag curvature is a sphere (see also \cite{Rademacher:Sphere_theorem} for the pinched curvature case). But from a metric standpoint, things get much more exciting: Bryant \cite{Bryant:FS2S,Bryant:PFF2S} constructed a two-parameter family of (projectively flat) metrics of constant curvature $1$ on the $2$-sphere. Previously, Katok \cite{Katok:KZ_metric} had constructed a family of one-parameter deformations of the standard metric on $S^2$\ in order to obtain examples of metrics with only a finite number of closed geodesics. This example was later generalized and studied by Ziller \cite{Ziller:GKE}. Rademacher \cite{Rademacher:Sphere_theorem} proved that the Katok--Ziller metrics on the $2$-sphere have constant flag curvature. Note that Foulon \cite{Fou:perso} has a proof that Katok--Ziller metrics on any space have constant flag curvature.

These metrics were the perfect candidate for us. They are dynamically interesting and they admit adequate explicit formulas (see \cite{Rademacher:Sphere_theorem} for the sphere and Proposition \ref{prop:expression_KZ} in general) making them somewhat easier to study. In the case of the $2$-sphere, we obtain an approximation of the spectrum as well as the following:
\begin{thmintro} \label{thmintro:lambda1}
 For a family of Katok--Ziller metrics $F_{\eps}$ on the $2$-sphere, if $\lambda_1(\eps)$ is the smallest non-zero eigenvalue of $-\Delta^{F_{\eps}}$, then
\begin{equation}
 \lambda_1(\eps) = 2 -2 \eps^2 = \frac{8 \pi}{\rm{vol}_{\Omega^{F_{\eps}}}\left(\S^2\right)}.
\end{equation}
\end{thmintro}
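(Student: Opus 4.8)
The plan is to compute $\Delta^{F_\eps}$ explicitly using the local formula for the Finsler--Laplacian on a surface together with the explicit description of the Katok--Ziller metric. First I would recall, from the section on explicit representations (Proposition \ref{prop:expression_KZ} and the Randers computations alluded to in the introduction), that a Katok--Ziller metric $F_\eps$ on $\S^2$ is a Randers metric $F_\eps = \sqrt{g} + \beta_\eps$, where $g$ is the round metric and $\beta_\eps$ is $\eps$ times the $1$-form dual to a Killing field of $g$ (rotation about an axis). By Theorem~\ref{thmintro:Laplacian}(ii) and the proposition on weighted Laplacians, $\Delta^{F_\eps}$ is the weighted Laplacian $\Delta_{h_\eps,\Omega^{F_\eps}}$, where $h_\eps$ is the Riemannian metric given by the symbol of $\Delta^{F_\eps}$; the key point is that for these homogeneous examples both $h_\eps$ and the volume $\Omega^{F_\eps}$ can be written down in closed form. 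So the first real step is: identify $h_\eps$ and $\Omega^{F_\eps}$, presumably finding that $h_\eps$ is (a constant multiple of) the round metric — because constant flag curvature and the rotational symmetry force the symbol to be round — and that $\Omega^{F_\eps} = a_\eps^2\, v_g$ for an explicit rotationally invariant function $a_\eps$, with total volume $\mathrm{vol}_{\Omega^{F_\eps}}(\S^2)$ computable by integration.

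Second, having reduced to a weighted Laplacian $\Delta^{g}\varphi - a^{-2}\langle \nabla\varphi,\nabla a^2\rangle$ on the round sphere with a rotationally invariant weight, I would diagonalize it. Because everything is invariant under the $S^1$-rotation, the operator preserves the decomposition into rotational Fourier modes, and on each mode it becomes a Sturm--Liouville operator in the polar angle. The candidate for the first eigenfunction is the coordinate function along the rotation axis (the ``height'' function $z$), which is rotationally invariant; one checks it is an eigenfunction and reads off the eigenvalue $\lambda_1(\eps)$. To know this is genuinely $\lambda_1$ and not a higher eigenvalue, I would use the Min--Max characterization via the energy $E$ introduced in the excerpt: the constants give eigenvalue $0$, and a direct comparison (or the explicit Sturm--Liouville analysis mode by mode) shows no eigenvalue lies strictly between $0$ and $2-2\eps^2$. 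The final identity $2-2\eps^2 = 8\pi / \mathrm{vol}_{\Omega^{F_\eps}}(\S^2)$ is then just the bookkeeping: substitute the volume computed in the first step.

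The main obstacle I expect is the first step — getting the symbol metric $h_\eps$ and the volume form $\Omega^{F_\eps}$ genuinely explicitly. This requires carrying out the splitting $\alpha^{F_\eps}\wedge\pi^*\Omega^{F_\eps} = A\wedge dA$ for the Randers metric $F_\eps$, i.e., computing the Hilbert form $A$ of $F_\eps$ on $HS^2$, its differential, the normalization of the angular form $\alpha^{F_\eps}$, and then extracting the induced metric from the definition of $\Delta^{F_\eps}$ as the angular average of second derivatives along geodesics. This is where the ``local expression for Randers surfaces'' mentioned in the introduction does the heavy lifting; once that expression is in hand, the symmetry reduces the remaining work to one-variable calculus, and the eigenvalue computation and the verification that it is $\lambda_1$ are comparatively routine. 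A secondary point to be careful about is the normalization conventions (the factor $n/\voleucl(\S^{n-1})$ and the Euclidean-angle normalization of $\alpha^F$), since the clean value $2-2\eps^2$ depends on them; I would fix these once and for all by checking the formula against the Riemannian case $\eps = 0$, where it must return $\lambda_1 = 2$ and $\mathrm{vol} = 4\pi$.
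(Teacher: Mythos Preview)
Your proposal contains two genuine errors that would derail the computation.

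First, the symbol metric $h_\eps$ is \emph{not} a constant multiple of the round metric. The paper computes $\Delta^{F_\eps}$ directly (Theorem~\ref{th:laplacien_S2}) and obtains principal coefficients
\[
\frac{2(1-\eps^2\sin^2\phi)^{3/2}}{\sin^2\phi\,\bigl(1+\sqrt{1-\eps^2\sin^2\phi}\bigr)}\quad\text{for }\partial_\theta^2,
\qquad
\frac{2(1-\eps^2\sin^2\phi)}{1+\sqrt{1-\eps^2\sin^2\phi}}\quad\text{for }\partial_\phi^2,
\]
whose ratio is $\sqrt{1-\eps^2\sin^2\phi}/\sin^2\phi$, not $1/\sin^2\phi$. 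The paper remarks explicitly that $\Delta^{F_\eps}$ is not a Riemannian Laplacian. So your reduction to ``round metric plus rotationally invariant weight'' fails at the outset; the constant-flag-curvature hypothesis does not force the symbol to be round.

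Second, and more seriously, your candidate eigenfunction is wrong. The height function $z=\cos\phi=Y_1^0$ \emph{is} an eigenfunction, but with eigenvalue $2$, not $2-2\eps^2$. The genuine $\lambda_1$-eigenspace is spanned by $Y_1^{1}$ and $Y_1^{-1}$, which are \emph{not} rotationally invariant; one checks from the explicit formula that $\Delta^{F_\eps}Y_1^{\pm 1}=(-2+2\eps^2)\,Y_1^{\pm 1}$. So restricting to the $m=0$ sector would miss the bottom eigenvalue entirely.

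The paper's route is: compute $\Delta^{F_\eps}$ in the $(\phi,\theta,\psi)$ coordinates on $H\S^2$ directly from the Hilbert form (no detour through the weighted-Laplacian description), verify by hand that $Y_1^{\pm 1}$ are eigenfunctions with eigenvalue $2-2\eps^2$, and then certify this is $\lambda_1$ via a perturbation argument (Theorem~\ref{thm:spectre_S2}): every eigenfunction of $\Delta^{F_\eps}$ is an $O(\eps)$-perturbation of some $Y_l^m$, with eigenvalue $-l(l+1)+O(\eps^2)$, so the first nonzero eigenvalue must come from the $l=1$ block, and among $Y_1^0,Y_1^{\pm 1}$ the smallest is $2-2\eps^2$. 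The volume identity is then the easy computation $\Omega^{F_\eps}=\sin\phi\,(1-\eps^2\sin^2\phi)^{-3/2}\,d\theta\wedge d\phi$, giving $\mathrm{vol}_{\Omega^{F_\eps}}(\S^2)=4\pi/(1-\eps^2)$.
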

Note that this result exhibits a family of Finslerian metrics realizing what is known to be the maximum for the first eigenvalue of the Laplace--Beltrami operator on $\S^2$ \cite{Hersch}. We unfortunately do not yet know whether this is also a maximum in the Finsler setting.

 Finally as Katok--Ziller metrics also exists on tori, we studied them and obtained their spectrum. Note that the flat case does not lead to new operators. Indeed, for any locally Minkowski structure on a torus, we show that the Finsler--Laplace operator is the same as the Laplace--Beltrami operator associated with the symbol metric (see Remark \ref{rem:Minkowsky_randers}). It is nonetheless interesting to do the computations as this gives some insight, shows some limits of what can be expected from this operator and proves once again that computations are feasible. For instance, we will see that there is no Poisson formula linking the length spectrum of the Finsler metric and the spectrum of the Finsler--Laplacian. Finally, if we one day want to obtain topological bounds on the spectrum, we will first have to understand these easy examples.

\subsection*{Negative curvature, spectrum and geometry at infinity}

The world of negative curvature is immensely rich in terms of the interactions between geometry, dynamics, ergodic theory and the spectrum of the Laplacian.

Let $M$ be a closed Riemannian manifold of strictly negative curvature and $\M$ its universal cover. $\M$ admits a visual boundary $\M(\infty)$ carrying (at least) three natural class of measures:
\begin{itemize}
 \item The Liouville measure class, obtained by projecting Lebesgue measures on unit spheres to the boundary along geodesic rays;
 \item The Patterson-Sullivan measure class $\nu_x$;
 \item The harmonic measures $\mu_x$, which can be obtained as the measures solving the Dirichlet problem at infinity, i.e., if $f \in C^0(\M(\infty))$, then 
\begin{equation*}
 u(x) := \int_{\xi \in \M(\infty)} f(\xi) d\mu_x(\xi)
\end{equation*}
is the unique function that verifies $\Delta^{\F} u = 0$ on $\M$ and $u(x) \rightarrow f(\xi)$ when $x \rightarrow \xi \in \M(\infty)$.
\end{itemize}

Kaimanovich \cite{Kaimanovich} showed that there exists a convex isomorphism between the cone of Radon measures on $\partial^2 \M := \M(\infty) \times \M(\infty) \smallsetminus \{\textrm{diag}\}$ invariant by $\pi_1(M)$ and the cone of Radon measures on $HM$ invariant by the geodesic flow. Via this correspondence, we can obtain the Patterson--Sullivan measures from the Bowen--Margulis measure.

It is also known that all these measure classes are ergodic with respect to the action of the fundamental group on $\M(\infty)$ (each measure in a measure class is $\pi_1(M)$-quasi-invariant, so ergodicity can be defined as usual).

In the case of constant curvature, these three classes are equal. Katok \cite{Katok:4_appl} and Ledrappier \cite{Ledrappier:Harm_measures} showed that, on surfaces, if any two of these classes coincides, then the metric has constant curvature. In higher dimensions, if the harmonic and Patterson--Sullivan measures coincides, then the universal cover of the manifold is a symmetric space \cite{BessonCourtoisGallot}.

Ledrappier obtained his regularity theorem as a corollary of a more general result, valid in any dimension:  $\nu_x = \mu_x$ if and only if $\lambda_1 = h^2/4$, where $\lambda_1$ is the bottom of the essential spectrum of $-\Delta^{\F}$ and $h$ the topological entropy.

As we introduced a Finsler--Laplacian, it was tempting to try to adapt the above results to our case. Indeed, if $F$ is negatively curved, then the geodesic flow is again contact Anosov (\cite{Fou:EESLSPC}) and, if $F$ is reversible, then $\M$ is Gromov-hyperbolic. So $\M$ also admits a visual boundary, which naturally carries the Liouville measure class and the Patterson--Sullivan measure class (see \cite{Coornaert:mesures_PS}). We studied existence of harmonic measures.

One general method to solve the Dirichlet problem at infinity and obtain harmonic measures is via potential theory. Martin \cite{Martin} constructed a boundary associated to a pair $(\M, \Delta^{\F})$. If the Martin boundary is reduced to its minimal part and if it is homeomorphic to the visual boundary, then we can deduce the existence of harmonic measures. Ancona \cite{Ancona:theorie_potentiel} identified the Martin and the visual boundaries for a very general class of elliptic operators on Gromov-hyperbolic spaces.

We prove that our operator satisfies the conditions for Ancona's theorem. Furthermore, we show that the identification between the Martin and the visual boundary is H\"older-continuous as in the Riemannian case. This allows us to copy the work of Ledrappier \cite{Ledrappier:Ergodic_properties} to show ergodic properties of the harmonic measures:
\begin{thmintro}
Let $(M,F)$ be a closed, negatively curved, reversible Finsler manifold, $\M$ its universal cover and $\mu_x$ the family of harmonic measures on $\M(\infty)$; we have the following properties:
\begin{itemize}
 \item[(i)] The harmonic measure class $\lbrace \mu_x \rbrace$ is ergodic for the action of $\pi_1(M)$ on $\M(\infty)$.\\
 \item[(ii)] For any $x\in \M$, there exists a weight $f \colon \partial^2 \M \rightarrow \R$, such that the measure $ f \mu_x \otimes \mu_x$ is ergodic for the action of $\pi_1(M)$ on $\partial^2\M$.\\
 \item[(iii)] There exists a unique measure $\mu$ on $HM$, ergodic with respect to the geodesic flow, such that its image under Kaimanovitch correspondence is $ f \mu_x \otimes \mu_x$.
\end{itemize}
\end{thmintro}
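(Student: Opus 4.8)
The plan is to transport, essentially verbatim, Ledrappier's treatment of the Riemannian case \cite{Ledrappier:Ergodic_properties}; the only genuinely new work is checking that $\Delta^{\F}$ fits the potential-theoretic framework of Ancona \cite{Ancona:theorie_potentiel} on $\M$. First I would fix the geometric picture. Since $F$ is reversible, $d_F$ is an honest metric; since $(M,F)$ is compact with negative flag curvature, its geodesic flow is contact Anosov \cite{Fou:EESLSPC}, hence $(\M,d_F)$ is Gromov-hyperbolic and quasi-isometric to $\M$ equipped with the distance of the symbol metric $g^F$. By Theorem~\ref{thmintro:Laplacian} and the proposition on weighted Laplacians stated above, $\Delta^{\F}$ is the weighted Laplacian $\Delta_{g^F,\Omega^F}$, i.e.\ a uniformly elliptic divergence-form operator, self-adjoint for $\Omega^F$, with $\pi_1(M)$-periodic coefficients (hence uniformly controlled, $M$ being compact) and $\Delta^{\F}1=0$. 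Non-amenability of $\pi_1(M)$ gives $\lambda_0(\Delta^{\F})>0$ by a Brooks-type argument (the relevant isoperimetric constant being comparable to the Riemannian one through the ellipticity bounds), so $\Delta^{\F}$ is coercive and admits a positive Green function.

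Next I would verify Ancona's hypotheses in this setting: the uniform Harnack inequality comes for free from uniform ellipticity, and the barriers at infinity are built from the pinched negative flag curvature together with the comparison between $d_F$ and the $g^F$-distance. Ancona's theorem then yields that the Dirichlet problem at infinity for $\Delta^{\F}$ is solvable, that the harmonic measures $\lbrace\mu_x\rbrace_{x\in\M}$ on $\M(\infty)$ exist and form a $\pi_1(M)$-equivariant family with $d\mu_x/d\mu_y(\xi)=k_\xi(x,y)$, where $k_\xi(x,y)$ is the ratio of the Martin kernel at $x$ and at $y$, and that the Martin boundary of $(\M,\Delta^{\F})$ is minimal and $\pi_1(M)$-equivariantly homeomorphic to the visual boundary $\M(\infty)$. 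Fixing a base point $o\in\M$, I would then, following the Anderson--Schoen scheme in the form used by Ledrappier, upgrade this to a H\"older bound: for a visual metric $d_\infty$ on $\M(\infty)$ there are $C>0$ and $\kappa\in(0,1)$ such that $|k_\xi(x,o)-k_\eta(x,o)|\le C\,d_\infty(\xi,\eta)^{\kappa}$ for all $x$ in a fixed fundamental domain. The engine is the ``Ancona inequality'' --- the multiplicative factorization of the Green function along geodesics --- combined with the exponential contraction of the Anosov geodesic flow, which forces the Martin kernels of two visually close boundary points to agree up to an exponentially small error along a long initial geodesic segment.

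With these tools in place I would run Ledrappier's arguments. For (i): a $\pi_1(M)$-invariant bounded measurable function $\varphi$ on $(\M(\infty),\mu_o)$ has Poisson transform $u(x)=\int\varphi\,d\mu_x$, a bounded $\Delta^{\F}$-harmonic, $\pi_1(M)$-invariant function on $\M$; it descends to a bounded harmonic function on the compact manifold $M$, hence is constant by the maximum principle, so $\varphi$ is $\mu_o$-a.e.\ constant, which is exactly ergodicity of the harmonic measure class. For (ii)--(iii): following Ledrappier, define the weight $f$ on $\partial^2\M$ as the exponential of a ``harmonic Gromov product'' built from the Martin kernel, designed so that the quasi-invariance cocycle of the $\mu_x$ is exactly compensated; the H\"older estimate of the previous step guarantees that $f$ is positive, measurable and locally bounded away from $0$ and $\infty$ off the diagonal, so that $\lambda:=f\,\mu_x\otimes\mu_x$ is a well-defined $\pi_1(M)$-invariant Radon measure on $\partial^2\M$, independent of $x$. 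Kaimanovich's correspondence \cite{Kaimanovich} turns $\pi_1(M)$-invariant Radon measures on $\partial^2\M$ into geodesic-flow-invariant Radon measures on $HM$; let $\mu$ be the image of $\lambda$. Uniqueness of $\mu$ given $\lambda$ is part of that correspondence, so (iii) reduces to ergodicity of $\lambda$ for the $\pi_1(M)$-action, equivalently of $\mu$ for the geodesic flow; this double ergodicity follows from a Hopf-type argument combining the one-sided ergodicity of (i) with the exponential contraction of the stable and unstable foliations (alternatively, from Kaimanovich's double-ergodicity theorem for Poisson boundaries).

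I expect the main obstacle to be the second step: establishing Ancona's estimates and their H\"older refinement uniformly under the $\pi_1(M)$-action while keeping the two metrics in play carefully separated --- $d_F$, which governs the hyperbolicity of $\M$ and the construction of barriers at infinity, and $g^F$, which governs the operator $\Delta^{\F}$ and the Harnack constants. Once this mixed geometric/PDE setup is under control, everything else is a faithful transcription of \cite{Ledrappier:Ergodic_properties}.
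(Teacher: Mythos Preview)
Your plan is correct and matches the paper's overall architecture: verify Ancona's hypotheses for $\Delta^{\F}$ on the Gromov-hyperbolic cover (via the weighted-Laplacian description, coercivity from Brooks/non-amenability, and uniform local estimates coming from $\pi_1(M)$-periodicity), upgrade the Martin/visual identification to a H\"older statement via the Anderson--Schoen scheme, and then transport Ledrappier.

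There is one genuine methodological difference in the endgame. You propose to prove the ergodicity in (iii) by a Hopf-type argument (or by appealing to Kaimanovich's abstract double ergodicity), and to prove (i) separately by the maximum-principle trick on the Poisson extension. The paper instead proves (iii) first and deduces (i) and (ii) from it via the Kaimanovich correspondence; and for (iii) it does \emph{not} run a Hopf argument but follows Ledrappier's Proposition~3: the H\"older regularity of the Poisson kernel produces a H\"older potential $F_0$ on $HM$ whose equilibrium state has the spherical harmonics $\bar\mu_x$ as transverse measures, and ergodicity then comes for free from Bowen's thermodynamic formalism. Both routes are valid; the equilibrium-state route buys you more --- it identifies $\mu$ as a Gibbs measure, not just an ergodic one --- while your maximum-principle argument for (i) is pleasantly self-contained. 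A minor point: your phrase ``barriers at infinity \dots\ built from the pinched negative flag curvature'' is not how the paper proceeds; Ancona's theorem is applied in its abstract Gromov-hyperbolic form, so no curvature comparison is needed once Gromov-hyperbolicity is in hand.
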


Rigidity results however are still out of reach.

Finally, note also that we do obtain a dynamical upper bound for the bottom of the spectrum of $-\Delta^{\F}$:
\begin{thmintro} \label{thmintro:dynamical_bound}
 Let $(M,F)$ be a closed, negatively curved, (not necessarily reversible) Finsler $n$-manifold and $(\M, \F)$ its universal cover. If $\lambda_1$ is the bottom of the essential spectrum of $-\Delta^{\F}$ and $h$ the topological entropy, then
\begin{equation*}
 \lambda_1 \leq \frac{nh^2}{4}.
\end{equation*}
\end{thmintro}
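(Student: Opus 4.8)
The plan is to adapt the classical Brooks--McKean test-function argument to the Dirichlet-type energy $E$ attached to $\Delta^{\F}$. Since $\M$ is the universal cover of the \emph{closed} manifold $M$, the bottom of the essential spectrum of $-\Delta^{\F}$ equals the bottom of its full spectrum (on a normal cover of a closed manifold any approximate eigenfunction in $C_c^\infty$ can be pushed to infinity by deck transformations to produce a singular Weyl sequence), so that $\lambda_1 = \inf\{E(u)/\int_\M u^2\,\Omega^{\F} : u\in C_c^\infty(\M),\ u\neq 0\}$, using that $-\Delta^{\F}$ is essentially self-adjoint on $C_c^\infty(\M)$ with quadratic form $E$. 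It therefore suffices to produce, for every $s>h/2$, functions with Rayleigh quotient close to $n s^2$.

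Fix $x_0\in\M$, let $\rho(x)=d_{\F}(x_0,x)$ --- smooth off $x_0$, as negative flag curvature forbids conjugate points --- and take $u=\phi_R\,e^{-s\rho}$ with $\phi_R$ a Lipschitz cut-off equal to $1$ on $B(x_0,R)$, vanishing outside $B(x_0,2R)$, with $|d\phi_R|\lesssim 1/R$. Writing $\ada=\alpha^{\F}\wedge\pi^\ast\Omega^{\F}$ and using that for $\theta=(x,[\xi])\in H\M$ with $\F$-unit representative $\xi_\theta$ one has $L_X(\pi^\ast v)(\theta)=dv_x(\xi_\theta)$, the energy unfolds as $E(v)=\int_\M g^\ast_x(dv_x,dv_x)\,\Omega^{\F}$, where $g^\ast_x(p,p):=\tfrac{n}{\voleucl(\S^{n-1})}\int_{H_x\M}p(\xi_\theta)^2\,\alpha^{\F}_x$ is the symbol (co-)metric of $\Delta^{\F}$. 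Expanding $d(\phi_R e^{-s\rho})$ and using Cauchy--Schwarz to peel off the $d\phi_R$ term, for any $\eta>0$
\[
 E(u)\ \le\ (1+\eta)\,s^2\!\int_\M \phi_R^2\, e^{-2s\rho}\, g^\ast(d\rho,d\rho)\,\Omega^{\F}\ +\ C_\eta\!\int_{\{R\le\rho\le 2R\}}\!\frac{e^{-2s\rho}}{R^2}\,\Omega^{\F}.
\]
The crux is the pointwise bound $g^\ast_x(d\rho_x,d\rho_x)\le n$, equivalently $\int_{H_x\M}(d\rho_x(\xi_\theta))^2\,\alpha^{\F}_x\le\voleucl(\S^{n-1})=\int_{H_x\M}\alpha^{\F}_x$. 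The Finsler eikonal inequality gives $d\rho_x(\xi_\theta)\le\F(x,\xi_\theta)=1$; if $F$ is reversible this yields $|d\rho_x(\xi_\theta)|\le 1$ and the bound is immediate. In general one only has $d\rho_x(\xi_\theta)\ge-\F(x,-\xi_\theta)$, and one must control the directions along which $\rho$ decreases fast, using that $\F(x,-\xi)/\F(x,\xi)$ is uniformly bounded (it descends to the closed $M$), that the set of directions where $d\rho_x$ is very negative is a thin cap of $H_x\M$, and the behaviour of $\alpha^{\F}$ there. This is the step I expect to be the main obstacle: it is where one must extract \emph{exactly} the constant $n$, not a worse dimensional or reversibility-dependent one.

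Granting $g^\ast(d\rho,d\rho)\le n$, we get $E(u)\le(1+\eta)\,n\,s^2\int_\M\phi_R^2 e^{-2s\rho}\Omega^{\F}+C_\eta R^{-2}\int_{\{R\le\rho\le 2R\}}e^{-2s\rho}\Omega^{\F}$. Now $\int_\M e^{-2s\rho}\,\Omega^{\F}=2s\int_0^\infty e^{-2sr}\,\Omega^{\F}(B(x_0,r))\,dr$, which converges as soon as $2s$ exceeds the exponential growth rate of $\Omega^{\F}(B(x_0,r))$; this rate is the volume entropy of $(M,F)$ (it does not depend on the chosen volume, all volumes on the closed $M$ being comparable), and by Manning's inequality it is at most the topological entropy $h$ of the geodesic flow. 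Hence for $2s>h$ the ambient integral is finite, so the error term tends to $0$ and $\int_\M\phi_R^2 e^{-2s\rho}\Omega^{\F}\to\int_\M e^{-2s\rho}\Omega^{\F}>0$ as $R\to\infty$; thus $\lambda_1\le(1+\eta)\,n\,s^2$. Letting $\eta\to 0$ and $s\downarrow h/2$ gives $\lambda_1\le n h^2/4$.
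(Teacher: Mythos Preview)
Your argument is essentially the paper's: the same test function $e^{-s\rho}$, the same reduction to the pointwise bound on $L_X\pi^\ast\rho$, and the same volume-entropy criterion for $L^2$-integrability when $s>h/2$. Two points of comparison. First, the paper skips the cutoff and simply plugs $e^{-s\rho}$ into the Rayleigh quotient, so your treatment is the tidier one. Second, the concern you single out as ``the main obstacle'' --- that for non-reversible $F$ the eikonal bound only gives $L_X\pi^\ast\rho\le 1$, while the lower bound $L_X\pi^\ast\rho\ge -F(x,-\xi_\theta)$ may dip below $-1$ --- is not addressed in the paper either: the proof there simply asserts $|L_X\pi^\ast\rho|\le 1$ and moves on. So you have correctly located a genuine delicate point; the paper's own argument does not supply the extra ingredient you are looking for, and with the bound as stated the constant would pick up the reversibility factor in the irreversible case.
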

And we have a topological lower bound, without curvature assumptions, thanks to a generalization of a theorem of Brooks \cite{Brooks:pi1_and_spectrum}:
\begin{thmintro}\label{thmintro:Brooks}
 Let $(M,F)$ be a closed (not necessarily reversible) Finsler manifold and $(\M, \F)$ its universal cover. If $\lambda_1$ is the bottom of the essential spectrum of $-\Delta^{\F}$, then
\begin{equation*}
 \lambda_1 = 0 \quad \text{ if and only if} \quad \pi_1(M) \: \text{ is amenable}.
\end{equation*}
\end{thmintro}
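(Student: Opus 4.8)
The plan is to reduce the statement to the classical theorem of Brooks \cite{Brooks:pi1_and_spectrum} for the Laplace--Beltrami operator, via the weighted-Laplacian description recalled above. Since $\Delta^{F}$ is elliptic (Theorem~\ref{thmintro:Laplacian}), symmetric with respect to $\Omega^{F}$, and annihilates constants, the uniqueness part of the weighted-Laplacian proposition provides a Riemannian metric $g$ on $M$ (the metric dual to the symbol of $\Delta^{F}$) with $\Delta^{F}=\Delta_{g,\Omega^{F}}$; lifting the (everywhere local and $\pi_{1}(M)$-invariant) data to the universal cover gives $\Delta^{\F}=\Delta_{\wt{g},\Omega^{\F}}$ on $\M$, where $\wt{g}$ and $\Omega^{\F}$ are the lifts of $g$ and $\Omega^{F}$. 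Writing $\Omega^{\F}=a^{2}\,v_{\wt{g}}$ with $a=\wt{a}\circ\pi$ for a positive $\wt{a}\in C^{\infty}(M)$, compactness of $M$ yields constants $0<c\le a^{2}\le C<\infty$ on all of $\M$.

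Hence, for every $f\in C^{\infty}_{c}(\M)$,
\begin{equation*}
 \frac{c}{C}\cdot\frac{\displaystyle\int_{\M}|\nabla_{\wt{g}}f|^{2}\,v_{\wt{g}}}{\displaystyle\int_{\M}f^{2}\,v_{\wt{g}}}
 \;\le\;
 \frac{\displaystyle\int_{\M}|\nabla_{\wt{g}}f|^{2}\,\Omega^{\F}}{\displaystyle\int_{\M}f^{2}\,\Omega^{\F}}
 \;\le\;
 \frac{C}{c}\cdot\frac{\displaystyle\int_{\M}|\nabla_{\wt{g}}f|^{2}\,v_{\wt{g}}}{\displaystyle\int_{\M}f^{2}\,v_{\wt{g}}}\,.
\end{equation*}
Both $-\Delta^{\wt{g}}$ and $-\Delta^{\F}=-\Delta_{\wt{g},\Omega^{\F}}$ are essentially self-adjoint on $C^{\infty}_{c}(\M)$ (the metric is complete), so the bottom of the spectrum of each equals the infimum of the corresponding Rayleigh quotient over $C^{\infty}_{c}(\M)$; moreover, assuming $\pi_{1}(M)$ infinite (the finite case being immediate), the bottom of the spectrum of a co-compact cover is never an eigenvalue -- a ground state could be chosen positive, hence would be $\pi_{1}(M)$-invariant and thus not square-integrable on the non-compact $\M$ -- so for each operator it coincides with the bottom of the essential spectrum. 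Taking infima in the displayed inequality therefore gives $\lambda_{1}(-\Delta^{\F})=0$ if and only if $\lambda_{1}(-\Delta^{\wt{g}})=0$.

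It remains to invoke Brooks' theorem for $(M,g)$: the bottom of the spectrum of $-\Delta^{\wt{g}}$ on the universal cover vanishes precisely when $\pi_{1}(M)$ is amenable, which finishes the proof. For a self-contained treatment -- this is the ``generalization of a theorem of Brooks'' mentioned in the introduction -- one transcribes Brooks' argument directly for the weighted operator $\Delta_{\wt{g},\Omega^{\F}}$. In one direction, given a F\o lner sequence $(F_{n})$ of $\pi_{1}(M)$ and a fixed relatively compact open set $U$ with $\pi_{1}(M)\cdot U=\M$, the smoothed indicator functions of $\bigcup_{\gamma\in F_{n}}\gamma\overline{U}$ have (weighted) Rayleigh quotient tending to $0$, since their Dirichlet energy is concentrated in a collar of volume comparable to the size of the combinatorial boundary of $F_{n}$ while their squared norm is comparable to $|F_{n}|$ -- the two-sided bound on $a^{2}$ affecting only the comparison constants. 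In the other direction, if $\lambda_{1}=0$, the co-area formula applied to a minimizing sequence yields super-level domains of arbitrarily small (weighted) isoperimetric ratio, and intersecting these with the translates of $U$ turns them into F\o lner sets for $\pi_{1}(M)$. The main obstacle is exactly this last step: converting a near-extremal family of domains in $\M$ into a F\o lner sequence in $\pi_{1}(M)$. It relies on the bounded geometry of $\M$ and the Milnor--\v{S}varc quasi-isometry between $\M$ and $\pi_{1}(M)$, both consequences of compactness of $M$; once these are available, the weight $a^{2}$ enters only through the harmless distortion $c/C\le\,\cdot\,\le C/c$ established above.
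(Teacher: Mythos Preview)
Your argument is correct and is essentially the paper's own proof: the paper compares the Rayleigh quotients of $-\Delta^{\F}$ and of the Laplace--Beltrami operator of the symbol metric via the two-sided bound on the density $\Omega^{\F}/v_{\wt g}$ (which is $\pi_1(M)$-periodic, hence bounded), and then invokes Brooks' theorem directly. Your additional remarks on essential self-adjointness and on the absence of a discrete ground state below the essential spectrum, together with the sketch of Brooks' argument for the weighted operator, go beyond what the paper spells out but do not change the route.
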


\subsection*{Angle and co-angle: Finsler geometry and its Hamiltonian counter-part}

As our generalization of the Laplace operator relies essentially on the angle $\alpha^F$, we briefly studied it. For instance, we show that we can recognize when a Finsler surface is Riemannian just by looking at the angle.

 Now, recall that $\alpha^F$ is obtained by splitting the canonical volume form $\ada$. But looking on the cotangent side, we can see that there are other canonical volume forms: Indeed, a Finsler metric $F \colon TM \rightarrow \R^+$ uniquely determines an Hamiltonian $F^{\ast} \colon T^{\ast}M \rightarrow \R^+$ via the Legendre transform. $T^{\ast}M$ is a symplectic manifold and hence admits a canonical volume $d\lambda^n$, where $\lambda$ is the Liouville $1$-form. Finally, in the same manner as we obtained the Hilbert form $A$, we can obtain a one-form $B$ on $H^{\ast}M$, the homogenized cotangent bundle, naturally associated with $F^{\ast}$. The one-form $B$ is also a contact form and therefore $\bdb$ is a canonical volume form on $H^{\ast}M$.

Fortunately, all these volumes are pretty simply linked together: $\ada$ is the pull-back of $\bdb$ by the Legendre transform of $F$. And, if $\hat{r}\colon \mathring{T}^{\ast} M \rightarrow H^{\ast}M$ is the canonical projection, then $\hat{r}^{\ast}\bdb = \lambda \wedge d\lambda^{n-1} / (F^{\ast})^n$ (see Section \ref{sec:cotangent}).

We can carry out the construction of our angle form on the homogenized cotangent bundle, by splitting $\bdb$ into a volume form on $M$ and a co-angle $\beta^F$. We show that $\beta^F$ is the push-forward of $\alpha^F$ via the Legendre transform and that the volume on $M$ is the same. This proves that carrying the construction of our Finsler--Laplace operator on the cotangent bundle gives the same operator.

The study of the symplectic side is often very interesting. For instance, we can remark that the volume $\Omega^F$ that we obtain is the Holmes-Thompson volume (see section \ref{sec:holmes_thompson}). More surprisingly, we show that a Finsler metric is uniquely determined by its volume and its co-angle $\beta^F$ (Corollary \ref{cor:Finsler_uniquely_determined}).

\section*{Skewed $\R$-covered Anosov flows}

Anosov (\cite{Anosov}) managed to extract from geodesic flows in negatively curved manifolds the minimal condition giving their hyperbolicity. Since then, Anosov flows became an immense source of wonder in dynamical systems. In the second part of this dissertation, we concentrate on a ``topologically nice'' kind of Anosov flows on $3$-manifolds and study their periodic orbits.

Thierry Barbot and Sergio Fenley started studying Anosov flows via their transverse geometry and we follow their lead. The main objects under study here are the \emph{orbit space} and the \emph{leaf spaces}: If $\flot$ is an Anosov flow on a $3$-manifold $M$, $\M$ its universal cover and $\hflot$ the lifted flow, then the orbit space of $\flot$ is defined as $\M$ quotiented out by the relation ``being on the same orbit of $\hflot$'', and the stable (resp. unstable) leaf space is $\M$ quotiented out by the relation ``being on the same weak stable (resp. weak unstable) leaf of $\hflot$.''

For Anosov flows on $3$-manifolds, the orbit space is always homeomorphic to $\R^2$ \cite{Bar:these,Fen:AFM}, but in general the leaf spaces are non-Hausdorff. An Anosov flow is called \emph{$\R$-covered} if one (and hence both, see \cite{Bar:these,Fen:AFM}) of its leaf spaces is homeomorphic to $\R$. Fenley and Barbot proved that, if a stable leaf of $\hflot$ intersects every unstable leaves, then $\flot$ is a suspension of an Anosov diffeomorphism. So the interesting case is the other one, and Fenley called these flows \emph{skewed}.

It is fairly easy to see that the geodesic flow of a negatively curved (Finsler or Riemannian) surface is a skewed $\R$-covered Anosov flow. A recent result by (again) Barbot and Fenley \cite{BarbotFenley} in fact shows that it is (topologically) the only one on Seifert-fibered space. But if you stop controlling the topology, many non-algebraic examples exist (geodesic flows and suspension of Anosov diffeomorphisms are called algebraic Anosov flows because they are topologically conjugate to the action of a one-parameter group on a quotient $\Gamma \backslash G \slash K$, where $G$ is a Lie group, $K$ a compact Lie subgroup and $\Gamma$ a discrete subgroup acting co-compactly on $G \slash K$, see \cite{Tomter}).

In \cite{Fen:AFM}, Fenley produced a wealth of skewed $\R$-covered Anosov flows on atoroidal, not Seifert-fibered $3$-manifolds. Remark also that the construction of Foulon and Hasselblatt \cite{FouHassel:contact_anosov} leads to non-algebraic flows which are contact, hence skewed $\R$-covered (see \cite{Bar:PAG}).

\subsection*{Hyperbolic manifolds and isotopy class}

When the manifold is atoroidal and not Seifert-fibered, so for instance hyperbolic, skewed $\R$-covered Anosov flows have a surprising quality: every periodic orbit is freely homotopic to infinitely many other ones. This is in sharp contrast with the geodesic flow case where free homotopy classes are trivial.

We got interested in the following question: {\it given a periodic orbit, what can we say about its isotopy class?} In order to underline its interest, let us rephrase this question. Given a periodic orbit, its free homotopy class gives us a collection of topologically equivalent embeddings of $S^1$, i.e., knots, in a three-manifold. So {\it are these knots different?} Here we understand ``different'' in the traditional sense of knot theory.

Note that the questions about the type of knots that one can obtain from periodic orbits of flows are not new, and very interesting (the reader can consult Ghys' article on this subject \cite{Ghys:knots_and_dynamics}).

In collaboration with Sergio Fenley, we showed that all orbits in a free homotopy class are isotopic. If the answer is a bit disappointing, the way we obtain this result is quite interesting and opens some new questions. Thurston \cite{Thurston:3MFC}, Calegari \cite{Calegari:book} and Fenley \cite{Fen:Foliations_TG3M} proved that there exist pseudo-Anosov flows constructed from the geometry of some $\R$-covered foliations. We obtain the isotopy between the periodic orbits of $\flot$ by pushing them via a well-chosen pseudo-Anosov flow.

\subsection*{Embedded cylinders and periodic orbits}

An isotopy between periodic orbits creates an immersed cylinder, so we specialized our study to when we can obtain an \emph{embedded} cylinder between two periodic orbits.

By adapting results of Barbot \cite{BarbotFenley}, we showed that the existence of embedded annuli between orbits is essentially linked to the action of the fundamental group on the orbit space. Moreover, we can once again use the tools given by Thurston's work on $\R$-covered foliations.

We show that there exist some periodic orbits that cannot be joined to any other by an embedded cylinder. In a forthcoming paper with S. Fenley, we will show that the ``co-cylindrical classes'' are always finite.

\section*{Structure of this dissertation}

In Chapter 1, we introduce the necessary notions for the study of Finsler geometry via its dynamic and recall a number of results. We also present the symplectic side and the main properties of the Legendre transform. Readers familiar with these topics should start in Section \ref{sec:angle_and_volume_in_Finsler_geometry} where we introduce the angle $\alpha^F$ and the co-angle $\beta^{F^{\ast}}$ (Propositions \ref{prop:construction} and \ref{prop:coangle}). We then study some properties of these angles and remark that the volume $\Omega^F$ is the Holmes-Thompson volume.\\

Chapter 2 is the core of this work, it introduces the Finsler--Laplace operator (Definition \ref{def:delta}) and proves Theorem \ref{thmintro:Laplacian}. It then introduces the energy (Definition \ref{def:energy}) and proves that the spectrum can be obtained via the Min-Max principle (Theorems \ref{thm:Min-Max} and \ref{thm:Min-Max_bis}). The Chapter closes with the proof that, in dimension $2$, the Finsler--Laplace operator is still (almost) a conformal invariant (Theorem \ref{thm:inv_conforme}).\\

In Chapter 3 we study examples. We show that we can obtain a local coordinates expression of our operator for Randers surfaces (Proposition \ref{prop:symbol_laplacian_randers}). We then use that expression to show that every weighted Laplacian on surfaces can be obtained from a Randers metric (Proposition \ref{prop:Randers_donne_couple}). We then specialize to Katok--Ziller metrics on the sphere and the torus, giving explicit spectral data (Theorems \ref{thm:KZ_torus}, \ref{thm:spectre_S2} and Corollary \ref{cor:lambda1}).\\

In Chapter 4, we finally concentrate on the link between the Laplacian and the geodesic flow in negative curvature. We start by showing Theorem \ref{thmintro:dynamical_bound} (Proposition \ref{prop:upper_bound}) and Theorem \ref{thmintro:Brooks} (Theorem \ref{thm:Brooks}). We then introduce some of the potential theory needed for Ancona's theorem (Section \ref{sec:harmonic_measures}) and prove that the homeomorphism between the Martin and the visual boundaries is H\"older (Theorem \ref{thm:C_alpha_extension}). We show that Ancona's theorem applies to our operator and deduce that the Dirichlet problem at infinity still admits a unique solution (Corollary \ref{cor:harmonic_exists}). We finish the first part by generalizing Ledrappier's proof that harmonic measures are ergodic (Theorem \ref{thm:harmonic_measures_are_ergodic}).\\

Chapter 5 constitutes the second part of this dissertation. We start by recalling a number of results of Barbot and Fenley on Anosov flows, then the work of Thurston, Calegari and Fenley on $\R$-covered foliations. We then show that homotopic orbits are isotopic (Theorem \ref{thm:homo_implies_isotope}). We start the study of ``co-cylindrical'' classes and relate it to the action of the fundamental group on the universal circle.

\selectlanguage{french}{
\chapter{Introduction en français}

Cette thèse comporte deux parties distinctes, la première traite d'un nouvel opérateur de Laplace en géométrie de Finsler et la seconde de l'étude de certains types de flots d'Anosov dans des variétés atoroidales.

\section*{Géométrie de Finsler et laplacien}

La géométrie de Finsler a toujours été le parent pauvre de la géométrie riemannienne. Elle fut pourtant introduite par Riemann lui-même lors de sa fameuse ``Habilitationsschrift'' de 1854. En effet, Riemann réalisa que la condition minimale pour introduire une notion de longueur sur une variété était d'avoir une famille continue de normes sur chaque espace tangent. Malheureusement, la géométrie convexe de Minkowski était encore distante d'au moins 40 ans et les normes ne provenant pas d'un produit scalaire n'étaient pas encore bien comprises. Riemann s'éloigna donc du cadre général, disant:
\begin{quotation}
\og Die Untersuchung dieser allgemeinern Gattung w\"urde zwar keine wesentlich andere Principien erfordern, aber ziemlich zeitraubend sein und verh\"altnissm\"assig auf die Lehre vom Raume wenig neues Licht werfen, zumal da sich die Resultate nicht geometrisch ausdr\"ucken lassen\fg \cite{Riemann}\\
{\it L'étude de ce cadre plus général ne nécessite pas de principes réellement différents, mais prendrait un temps considérable et n'apporterait peu ou pas d'éclairage nouveau sur la théorie de l'espace, d'autant plus que les résultats ne peuvent être exprimés géométriquement\footnote{Traduction par l'auteur}}
\end{quotation}
Il fallut donc attendre 1918 pour que, sous l'impulsion de Carathéodory, Paul Finsler étudie dans sa thèse \cite{Finsler} ce cas général et jette ainsi les bases de la théorie qui porte aujourd'hui son nom.

La géométrie de Finsler est désormais assez bien connue, mais nombre de questions résolues en géométrie riemannienne attendent encore un \'equivalent finslérien. Depuis plusieurs années, l'une des questions récurrentes en géométrie de Finsler est le développement d'outils d'analyse globale généralisant ceux de géométrie riemannienne.

L'opérateur de Laplace--Beltrami est sans doute le plus important de ces objets, et ce pour de multiples raisons. Son spectre en particulier joue un rôle essentiel; il est la preuve d'une connexion complexe et intrigante entre le laplacien et la géométrie de la variété qui le porte. Dans la première partie de cette thèse, j'introduis une généralisation de l'opérateur de Laplace--Beltrami aux métriques de Finsler. Ce n'est pas la première fois que ceci est fait: Bao et Lackey \cite{BaoLackey}, Centore \cite{Centore:FinslerLaplacians} et Shen \cite{Shen:non-linear_Laplacian} ont chacun proposé des généralisations. Il n'est pas très étonnant qu'il en soit ainsi: l'opérateur de Laplace--Beltrami admet plusieurs définitions équivalentes, mais les extensions de chaque définition peuvent donner des résultats bien différents dans le cadre finslérien. Nous espérons convaincre le lecteur que notre construction est naturelle et suffisamment simple pour pouvoir fournir des données spectrales.

\subsection*{Flot géodésique et forme de Hilbert}
Avant de continuer, il semble important de rappeler les notions utilisées ici.

%

Une métrique de Finsler sur $M$ sera pour nous la donnée d'une famille de normes \emph{fortement convexes} $F(x, \cdot)$ sur chaque espace tangent $T_x M$, variant de manière \emph{lisse}; par fortement convexe, on entend que le hessien de $F^2(x, \cdot)$ est défini positif. Nous ne nous restreignons cependant pas au cas des normes sym\'etriques, c'est-à-dire celles vérifiant $F(x,-v) = F(x,v)$. De manière équivalente, une métrique de Finsler est obtenue par la donnée d'une famille d'ouverts convexes (contenants mais pas n\'ecessairement centrés en $0$) sur chaque espace tangent, variant régulièrement et telle que le bord de chaque convexe soit à hessien défini positif: ce convexe-l\`a est la boule unit\'e de la norme. Notons que nous utilisons cette condition de forte convexité car elle permet par exemple d'obtenir une équation des géodésiques.

Cette notion de métrique de Finsler est sans doute la plus répandue, mais aussi la plus restrictive. Par exemple, les géométries de Hilbert, qui ont connu un regain d'int\'er\^et ces dernières années, ne sont en général que $C^0$ hors de la section nulle. Malheureusement, une régularité au moins $C^2$ est souvent nécessaire pour le calcul variationnel et sera indispensable dans cette thèse. Malgr\'e tout, le monde finslérien lisse reste bien plus vaste que le monde riemannien et riche en phénomènes nouveaux (voir \cite{Katok:KZ_metric} ou \cite{CNV} ou \cite{Alvarez:Survey} par exemple).

Parmi les métriques de Finsler, les métriques de Randers sont un cas particulièrement intéressant pour de multiples raisons. D'abord, elles sont obtenues en ajoutant une $1$-forme différentielle à une métrique riemannienne, et sont donc parmi les plus simples des métriques non-riemanniennes. Mais surtout, elles apparaissent naturellement dans de nombreux domaines des mathématiques ainsi qu'en physique ou en biologie \cite{AntonelliIngardenMatsumoto}. Elles constitueront donc une famille d'exemples privilégiés au cours de cette thèse.

L'approche que nous utilisons pour étudier la géométrie de Finsler est due à Patrick Foulon \cite{Fou:EquaDiff}. Il développa un certain nombre d'outils permettant l'étude de la géométrie de manière intrinsèque \emph{via sa dynamique} au lieu de l'étudier via des connexions et les calculs en coordonnées locales qui l'accompagnent.

L'objet essentiel ici sera donc le flot géodésique associé à $F$, qui vit naturellement sur le fibré en sphère unité $T^1 M$. Cependant, pour pouvoir étudier différents flots sans changer d'espace, il sera plus intéressant de voir les flots géodésiques dans le \emph{fibré homogène}
\begin{equation*}
 HM := \left( TM \smallsetminus \{0\} \right)/ \R^+.
\end{equation*}
Un autre élément incontournable pour notre étude est la \emph{forme de Hilbert} $A$, une $1$-forme différentielle sur $HM$ canoniquement associée à $F$.  Grâce à la forte convexité de $F$, $A$ est une \emph{forme de contact}: si $n$ est la dimension de $M$, $\ada$ est une forme volume sur $HM$.

Si l'on note $X$ le champ de vecteurs engendrant le flot géodésique de $F$, alors $X$ est le \emph{champ de Reeb} de $A$, c'est-à-dire qu'il est uniquement déterminé par les équations suivantes:
\begin{equation*}
\left\{ 
\begin{aligned}
  A(X) &= 1 \\
 i_X dA &= 0  \,.
 \end{aligned}
\right.
\end{equation*}

Toute notre étude est uniquement basée sur la forme de Hilbert $A$, le flot géodésique $X$ et des objets obtenus à partir de $A$ et $X$.

\subsection*{Le laplacien en géométrie de Finsler}

En géométrie riemannienne, il y a plusieurs définitions équivalentes pour l'opérateur de Laplace (voir \cite{GHL} par exemple): la plus évoluée est sans doute l'opérateur de Hodge--Laplace agissant sur les formes différentielles; pour l'opérateur de Laplace--Beltrami sur les fonctions, la plus synthétique est donnée par la divergence du gradient; vient enfin la plus ``basique'', la définition historique, qui exprime le laplacien d'une fonction en un point $p$ comme
\begin{equation*}
 \Delta f (p) = \sum_i \frac{\partial^2 f}{\partial x_i^2}(p),
\end{equation*}
où $x_i$ sont des coordonnées normales au point $p$ pour la métrique riemannienne considérée.\\

Notons que Bao et Lackey \cite{BaoLackey} développèrent une généralisation de la première définition et Shen \cite{Shen:non-linear_Laplacian} une généralisation de la seconde. Nous nous attachons à étendre la définition historique. Centore \cite{Centore:FinslerLaplacians} ne généralise pas vraiment la définition du laplacien, mais plutôt la propriété que les fonctions harmoniques vérifient la propriété de la moyenne. Nous montrerons au cours de cette thèse que tout ces opérateurs sont différents, mais on peut remarquer tout de suite que l'opérateur de Shen ne se situe pas dans la même catégorie. En effet, son opérateur est non-linéaire car la généralisation du gradient à la géométrie de Finsler n'est pas linéaire.

Le problème pour généraliser le laplacien historique est que la notion d'ortho\-go\-nalité n'est pas adaptée à la géométrie de Finsler. L'idée pour obtenir un opérateur de Laplace est alors de faire une moyenne des dérivées secondes dans \emph{toutes} les directions, au lieu de ne considérer que des directions orthogonales. Reste alors à définir ce que veut dire moyenne dans ce cas: nous devons introduire une mesure d'angle $\alpha^F$ en chaque point, associée naturellement à notre métrique de Finsler.

L'existence d'un angle en géométrie de Finsler n'est pas évident. Cependant, comme $A$ est une forme de contact, il existe un volume sur $HM$ canoniquement associé à une métrique de Finsler. De là, il n'y a qu'un pas pour définir un angle naturel, il suffit de considérer la famille de mesures conditionnelles sur les fibres $H_xM$ et de normaliser de telle manière que l'aire des fibres soit l'aire d'une sphère unité euclidienne:
\begin{prop*}
  Soient $(M,F)$ une variété de Finsler, $\pi \colon HM \rightarrow M$ la projection canonique et $VHM := \ker d\pi$ le fibré vertical. Il existe une unique forme volume $\Omega^F$\ sur $M$\ et une $(n-1)$-forme différentielle $\alpha^F$\ sur $HM$, ne s'annulant pas sur $VHM$, telle que
\begin{equation*}
  \alpha^{F} \wedge \pi^{\ast}\Omega^F =  A\wedge dA^{n-1}, 
\end{equation*}
et, pour tout $x\in M$, 
\begin{equation*}
 \int_{H_xM} \alpha^F =  \voleucl(\S^{n-1}).
\end{equation*}
\end{prop*}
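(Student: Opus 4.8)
The plan is to reduce the statement to pointwise linear algebra on the fibres of $\pi$, to patch the pointwise solutions together with a partition of unity, and then to use the normalization to pin down $\Omega^F$ and to kill the freedom that survives in $\alpha^F$; the uniqueness of $\Omega^F$ will fall out of a scaling argument. First I would record the linear algebra. Fix $p\in HM$ over $x=\pi(p)$: then $\dim T_pHM = 2n-1$, the vertical subspace $V_pHM=\ker d\pi_p$ has dimension $n-1$, and $d\pi_p$ induces an isomorphism $T_pHM/V_pHM\cong T_xM$. Since $A$ is contact, $(\ada)_p$ is a nonzero element of $\Lambda^{2n-1}T_p^*HM$. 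Picking a basis $e_1,\dots,e_{2n-1}$ of $T_pHM$ with $e_1,\dots,e_{n-1}$ spanning $V_pHM$, one sees that for any nonzero $\Omega_x\in\Lambda^n T_x^*M$ the form $\pi^*\Omega_x$ is a nonzero multiple of $e^{\,n}\wedge\cdots\wedge e^{\,2n-1}$, so there is $\alpha_p\in\Lambda^{n-1}T_p^*HM$ with $\alpha_p\wedge\pi^*\Omega_x=(\ada)_p$; moreover $\alpha_p$ is unique up to adding an $(n-1)$-form that vanishes on $V_pHM$, so its restriction to $V_pHM$ is canonical, and that restriction is a nonzero volume form on $V_pHM$. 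Smoothness of such a choice of $\alpha$ over a chart where $HM$ trivializes is clear from the explicit formula.

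Next I would globalize. Starting from any volume form $\Omega$ on $M$, cover $M$ by opens $U_i$ trivializing $HM$; the previous step gives smooth $(n-1)$-forms $\alpha_i$ on $\pi^{-1}(U_i)$ with $\alpha_i\wedge\pi^*\Omega=\ada$, all with the same restriction to $VHM$ by the uniqueness just noted. Because the equation $\alpha\wedge\pi^*\Omega=\ada$ is affine in $\alpha$ with both the inhomogeneous term $\ada$ and the coefficient $\pi^*\Omega$ globally defined, for a partition of unity $\{\rho_i\}$ on $M$ subordinate to $\{U_i\}$ the form $\alpha:=\sum_i(\rho_i\circ\pi)\,\alpha_i$ still satisfies $\alpha\wedge\pi^*\Omega=\ada$, and its restriction to $VHM$ equals the common value of the $\alpha_i|_{VHM}$, hence is nowhere zero there. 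Thus every volume form $\Omega$ on $M$ admits a compatible $\alpha$ — unique up to $(n-1)$-forms vanishing on $VHM$, for instance up to $\pi^*\eta$ with $\eta\in\Omega^{n-1}(M)$ — and $I_\Omega(x):=\int_{H_xM}\alpha$ is a well-defined smooth positive function on $M$, well-defined because the fibre integral only sees $\alpha|_{VHM}$, and positive once each connected fibre $H_xM$ is oriented compatibly with $\ada$.

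Finally I would exploit scaling. If $\Omega$ is replaced by $u\Omega$ with $u\in C^\infty(M)$, $u>0$, then $(u\circ\pi)^{-1}\alpha$ is compatible with $u\Omega$, so on each fibre the vertical part, and hence $I_\Omega(x)$, is divided by $u(x)$. Starting from any $\Omega_0$ with compatible $\alpha_0$, set $u:=I_{\Omega_0}/\voleucl(\S^{n-1})$ and put $\Omega^F:=u\,\Omega_0$, $\alpha^F:=(u\circ\pi)^{-1}\alpha_0$; then $\int_{H_xM}\alpha^F=\voleucl(\S^{n-1})$ for all $x$, which proves existence. For uniqueness of $\Omega^F$: if $(\Omega_1,\alpha_1)$ and $(\Omega_2,\alpha_2)$ both satisfy the conclusion, write $\Omega_2=w\,\Omega_1$ with $w$ smooth and positive; from $\alpha_1\wedge\pi^*\Omega_1=\ada=((w\circ\pi)\alpha_2)\wedge\pi^*\Omega_1$ and the pointwise uniqueness I get $\alpha_1|_{VHM}=w\,\alpha_2|_{VHM}$ on every fibre, whence $\voleucl(\S^{n-1})=\int_{H_xM}\alpha_1=w(x)\int_{H_xM}\alpha_2=w(x)\voleucl(\S^{n-1})$, i.e. $w\equiv 1$ and $\Omega_1=\Omega_2$.

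I do not expect a genuine obstacle here; the one point to treat with care is orientations, since $M$ need not be orientable, so ``$\Omega^F$ a volume form'' should be read in the sense of densities (equivalently: $\ada$ orients $HM$ and one orients each connected fibre $H_xM$ so that the normalization integral is positive), after which all the sign bookkeeping above is automatic because the fibres $H_xM\cong\S^{n-1}$ are connected. It is also worth stressing what the uniqueness really asserts: $\Omega^F$ and the restriction $\alpha^F|_{VHM}$ are canonical, while $\alpha^F$ on all of $HM$ is determined only up to $(n-1)$-forms vanishing on $VHM$ — but $\alpha^F|_{VHM}$ is exactly what enters the definition of $\Delta^F$, so nothing is lost.
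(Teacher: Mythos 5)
Your proof is correct and follows essentially the same route as the paper. The paper's argument also splits into a uniqueness-on-$VHM$ step followed by the scaling/normalization step: there, uniqueness is made explicit via the Foulon basis $Y_1,\dots,Y_{n-1},X,[X,Y_1],\dots,[X,Y_{n-1}]$ of $T_pHM$, which yields the closed formula
\[
\alpha^{\omega}(Y_1,\dots,Y_{n-1}) = \frac{\ada\bigl(Y_1,\dots,Y_{n-1},X,[X,Y_1],\dots,[X,Y_{n-1}]\bigr)}{\pi^{\ast}\omega\bigl(X,[X,Y_1],\dots,[X,Y_{n-1}]\bigr)}\,,
\]
whereas you argue by a dimension count in $\Lambda^{\bullet}T_p^{\ast}HM$; these are the same linear-algebra fact in different clothing. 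Your globalization via a partition of unity fills in what the paper disposes of with the one-line remark that one can ``complete $\pi^{\ast}\omega$ into a $(2n-1)$-form,'' so it is a welcome elaboration rather than a different idea. One caveat: the density digression in your last paragraph is harmless but unnecessary in context, because the paper works under the standing hypothesis that $M$ is orientable (stated at the start of the chapter) and, in Section~\ref{subsec:construction}, restricts to orientation-preserving volume forms.
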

 Nous pouvons alors définir notre opérateur de Finsler--Laplace: 
\begin{definfr*}
 Soit $(M,F)$ une variété de Finsler. Pour $f \in C^{2}(M)$ et $x \in M$, on pose
\begin{equation*}
  \Delta^F f (x) := \frac{n}{\voleucl \left(\mathbb{S}^{n-1}\right)} \int_{\xi \in H_xM} \left. \frac{d^2 f}{dt^2}\left(c_\xi (t) \right)\right|_{t=0}   \alpha^F_x(\xi),
\end{equation*}
où $c_{\xi}$ est la géodésique partant de $x$ dans la direction $\xi \in H_xM$.
\end{definfr*}
Et nous obtenons exactement ce que l'on voulait:
\begin{thmintrofr} \label{thmintrofr:Laplacian}
 Soit $(M,F)$ une variété de Finsler. L'op\'erateur $\Delta^F$\ est un opérateur différentiel du second ordre qui, de plus, v\'erifie:
\begin{enumerate}
 \item[(i)] $\Delta^F$\ est elliptique;
 \item[(ii)] $\Delta^F$\ est symétrique, i.e., pour $f,g \in C^{\infty}(M)$, 
 \begin{equation*}
  \int_M \left( f\Delta^F g - g\Delta^F f \right) \; \Omega^F = 0 \; ;
 \end{equation*}
 \item[(iii)] lorsque $F$ est une métrique riemannienne, $\Delta^F$\ co\"incide avec l'opérateur de Laplace--Beltrami.
\end{enumerate}
\end{thmintrofr}
Il est à noter que la preuve de ce résultat est très facile grâce à la définition de $\Delta^F$.\\
Nous rappelons aussi la conséquence suivante de {\it (i)} et {\it (ii)}: $\Delta^F$\ \emph{est unitairement équivalent à un opérateur de Schr\"odinger.}

La preuve de ce dernier point est essentiellement basée sur le résultat (connu) suivant, qui est particulièrement intéressant pour nous:
\begin{prop*}
Soient $(M,g)$ une variété riemannienne fermée et $\omega$ une forme volume sur $M$. Il existe alors un unique opérateur différentiel du second ordre à coefficients réels $\Delta_{g,\omega}$ sur $M$ qui est symétrique par rapport à $\omega$, nul sur les constantes et dont le symbole est donné par la métrique duale $g^{\ast}$.\\
De plus, si $a\in C^{\infty}(M)$ est telle que $\omega = a^2 v_g$, où $v_g$ est le volume riemannien, alors, pour $\varphi \in C^{\infty}(M)$,
\begin{equation*}
 \Delta_{g,\omega} \varphi = \Delta^{g} \varphi - \frac{1}{a^2} \langle \nabla \varphi , \nabla a^2 \rangle.
\end{equation*}
\end{prop*}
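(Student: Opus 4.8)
The plan is to establish existence by exhibiting the operator explicitly and checking the three requirements directly, and then to prove uniqueness by a short integration-by-parts argument; the displayed formula then drops out of the construction.

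For existence, I would first note that $\omega$ and $v_g$ are both nowhere-vanishing top-degree forms, so $\omega = h\,v_g$ for a positive $h\in C^\infty(M)$, and any smooth positive square root $a$ of $h$ gives $\omega = a^2 v_g$ (only $a^2 = h$ will matter). I would then define $\Delta_{g,\omega}$ by the right-hand side of the claimed identity, $\Delta_{g,\omega}\varphi := \Delta^g\varphi - a^{-2}\langle\nabla\varphi,\nabla a^2\rangle$ (with $\nabla$ and $\langle\cdot,\cdot\rangle$ taken for $g$), and rewrite it in divergence form: the product rule $\mathrm{div}_g(a^2 Z) = a^2\,\mathrm{div}_g Z + \langle\nabla a^2,Z\rangle$ applied to $Z = \nabla\varphi$, together with the sign normalisation for which $\Delta^g$ has principal symbol $g^\star$ (that is, $\Delta^g = -\mathrm{div}_g\nabla$), gives $\Delta_{g,\omega}\varphi = -a^{-2}\,\mathrm{div}_g(a^2\nabla\varphi)$. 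From this closed form the three properties are immediate: it is a second-order operator with real coefficients; its first-order correction term does not affect the principal symbol, which is therefore $g^\star$; it annihilates constants; and, by Stokes' theorem on the closed manifold $M$,
\begin{equation*}
 \int_M \psi\,\Delta_{g,\omega}\varphi\;\omega = -\int_M \psi\,\mathrm{div}_g\left(a^2\nabla\varphi\right)\,v_g = \int_M a^2\,\langle\nabla\psi,\nabla\varphi\rangle\,v_g ,
\end{equation*}
which is symmetric in $(\varphi,\psi)$, so $\Delta_{g,\omega}$ is symmetric with respect to $\omega$.

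For uniqueness, suppose $L_1,L_2$ both satisfy the hypotheses and set $L := L_1 - L_2$. Its principal symbol vanishes, so $L$ is a differential operator of order at most one with real coefficients; since it annihilates constants its zeroth-order part is zero, whence $L\varphi = Y\varphi$ for some smooth real vector field $Y$. I would then exploit the symmetry. Writing $\mathcal{L}_Y\omega = (\mathrm{div}_\omega Y)\,\omega$, Cartan's formula and Stokes give $\int_M (Y\varphi)\,\omega = -\int_M \varphi\,(\mathrm{div}_\omega Y)\,\omega$ for all $\varphi$; taking $\psi \equiv 1$ in the symmetry identity $\int_M(\psi\,Y\varphi - \varphi\,Y\psi)\,\omega = 0$ forces $\int_M (Y\varphi)\,\omega = 0$ for all $\varphi$, hence $\mathrm{div}_\omega Y = 0$. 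Then $\varphi\,Y\psi + \psi\,Y\varphi = Y(\varphi\psi)$ integrates to $-\int_M\varphi\psi\,(\mathrm{div}_\omega Y)\,\omega = 0$; combined with the symmetry identity this yields $\int_M \varphi\,(Y\psi)\,\omega = 0$ for all $\varphi,\psi$. Fixing $\psi$ and letting $\varphi$ range over $C^\infty(M)$ gives $Y\psi \equiv 0$ for every $\psi$, so $Y = 0$ and $L_1 = L_2$. In particular the operator constructed above is the one described by the proposition, which proves the displayed formula.

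I do not expect a serious obstacle: existence is essentially a one-line integration by parts once the divergence identity is written down, and the formula then comes for free. The step needing the most care is the uniqueness reduction — recognizing that $L_1 - L_2$ is a genuine vector field and extracting its vanishing from the symmetry relation, which requires first showing $Y$ is $\omega$-divergence-free; this is also the only place where the "real coefficients" hypothesis is genuinely used, namely to exclude first-order operators of the form $\varphi \mapsto i\,Y\varphi$.
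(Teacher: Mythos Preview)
Your proof is correct and follows essentially the same route as the paper: for existence, verify the three properties directly from the formula (you make the divergence-form rewriting and Stokes argument explicit, the paper compresses this into one line); for uniqueness, reduce the difference to a real vector field and kill it with the symmetry hypothesis. Your completion of the uniqueness step is in fact cleaner than the paper's --- from $\int_M Y\varphi\,\omega=0$ for all $\varphi$ the paper concludes $Y=0$ by asserting one can build $\varphi$ with $L_Y\varphi>0$ near a non-singular point, an argument that on its own only forces $\mathrm{div}_\omega Y=0$; your extra step via $Y(\varphi\psi)$ and the symmetry identity closes this gap properly.
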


Rappelons que le symbole d'un opérateur elliptique du second ordre donne toujours une (co)-métrique riemannienne, donc en appliquant le résultat ci-dessus, nous pouvons remarquer que l'opérateur que nous introduisons est uniquement déterminé par son symbole et le volume $\Omega^F$. On se doit ici de faire quelques remarques.

 Premièrement, il y a beaucoup plus de métriques de Finsler que de couples (métrique riemannienne, forme volume), donc il y a forcément beaucoup de métriques de Finsler ayant le même laplacien. Cela peut-être pris soit comme un inconv\'enient, car notre opérateur n'est pas aussi fin que l'opérateur de Laplace--Beltrami, soit comme une belle source de questions: par exemple, les métriques ayant le même laplacien partagent-elles des propriétés géométriques ou dynamiques? Nous espérons que le lecteur préférera, comme nous, cette seconde interprétation.

Les opérateurs $\Delta_{g,\omega}$ sont appelés laplaciens à poids et furent introduits par Chavel et Feldman \cite{ChavelFeldman:Isoperimetric_constants} et par Davies \cite{Davies:Heat_kernel_bounds}. Ils ont depuis été beaucoup étudiés (voir par exemple \cite{Grigoryan:heat_kernels_on_weighted_manifolds} ou \cite{ColboisSavo}) et il est naturel de se demander s'il serait parfois intéressant de passer par la géométrie de Finsler pour les étudier ou inversement d'étudier notre opérateur à travers eux.

Ceci nous amène à notre dernière remarque: peut-on obtenir tous les laplaciens à poids via une métrique de Finsler? Comme nous avons déjà vu qu'il y a beaucoup plus de métriques de Finsler que de couples (métrique riemannienne, forme volume), nous sommes tentés de conjecturer que la réponse est positive.

 Nous montrons d'ailleurs que c'est le cas pour les surfaces et qu'il suffit alors de considérer des métriques de Randers. Malheureusement, notre preuve est basée sur l'expression locale que nous obtenons pour les surfaces de Randers, et il est donc impossible de la généraliser en l'état.

\subsection*{Spectre et \'Energie}

Comme attendu d'un laplacien qui se respecte, l'opérateur de Finsler--Laplace sur les variétés compactes admet un spectre discret. Nous donnons les deux preuves classiques de ce résultat. La première est juste l'application de la théorie générale des opérateurs linéaires \emph{non bornés}, elliptiques et symétriques. La seconde approche, par la méthode du Min-Max, est plus intéressante pour une future étude du spectre; elle repose sur l'introduction d'une énergie associée à l'opérateur, soit
\begin{equation*}
 E(u) := \frac{n}{\voleucl \left(\S^{n-1}\right) } \int_{HM} \left|L_X\left(\pi^{\ast}u \right)\right|^2 \ada.
\end{equation*}
Avec cette énergie, nous généralisons plusieurs résultats riemanniens. Premièrement, nous montrons que les fonctions harmoniques sont obtenues comme minima de l'énergie (ce résultat est d'ailleurs une étape essentielle dans la méthode du Min-Max). Deuxièmement, nous étudions comment varie l'énergie lorsque l'on considère une classe conforme de métriques de Finsler. Nous montrons:
\begin{prop*}
 Soient $(M,F)$\ une variété de Finsler de dimension $n$, ${f \colon M \xrightarrow{C^{\infty}} \R}$\ et $F_f = e^f F$. Soit $E_f$\ l'énergie associée à $F_f$. Alors, pour tout $u\in H^1\left(M\right)$,
\begin{equation*}
 E_f(u) = \frac{n}{\voleucl \left(\S^{n-1}\right) } \int_{HM} e^{(n-2)f} \left(L_X \pi^{\ast}u \right)^2 \, \ada.
\end{equation*}
En particulier, lorsque $n=2$, l'énergie est un invariant conforme.
\end{prop*}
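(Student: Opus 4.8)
The plan is to compute $E_f(u)$ directly by tracking how each of the three ingredients of the energy functional --- the vector field $X$, the Hilbert form $A$, and the contact volume $\ada$ --- transforms under the conformal change $F \mapsto F_f = e^f F$, and then to isolate the exponent of $e^f$ that survives. First I would recall the behaviour of the Hilbert form. Since $A^{F_f}$ is obtained by vertical differentiation of $F_f = e^f F$ and $f$ is a function on $M$ (so $\pi^\ast f$ is constant along the fibers $H_xM$), one gets $A^{F_f} = e^{\pi^\ast f} A$; equivalently the Reeb field rescales as $X_f = e^{-\pi^\ast f}(X + Y)$ for some field $Y$ tangent to... no, more precisely $X_f = e^{-\pi^\ast f} X + (\text{vertical/horizontal correction})$. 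What I actually need is only the two scalar quantities entering $E_f$: the integrand $|L_{X_f}(\pi^\ast u)|^2$ and the volume $A^{F_f}\wedge d(A^{F_f})^{n-1}$. For the volume, from $A^{F_f} = e^{\pi^\ast f} A$ and $dA^{F_f} = e^{\pi^\ast f}(dA + d(\pi^\ast f)\wedge A)$ one computes $A^{F_f}\wedge d(A^{F_f})^{n-1} = e^{n\pi^\ast f}\, \ada$ (the $d(\pi^\ast f)\wedge A$ terms die because they already contain a factor $A$, and $A\wedge A = 0$).

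Next I would handle the integrand. The key point is that $L_{X_f}(\pi^\ast u)$ differs from $e^{-\pi^\ast f} L_X(\pi^\ast u)$ only by terms involving derivatives of $u$ in vertical directions --- but $\pi^\ast u$ is constant on fibers, so those vanish, giving exactly $L_{X_f}(\pi^\ast u) = e^{-\pi^\ast f} L_X(\pi^\ast u)$. (More carefully: $X_f = e^{-\pi^\ast f} X + Z$ where $Z \in \ker d\pi$, and $L_Z(\pi^\ast u) = d(\pi^\ast u)(Z) = du(d\pi(Z)) = 0$.) Hence $|L_{X_f}(\pi^\ast u)|^2 = e^{-2\pi^\ast f}|L_X(\pi^\ast u)|^2$. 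Combining this with the volume computation:
\begin{equation*}
 E_f(u) = \frac{n}{\voleucl(\S^{n-1})}\int_{HM} e^{-2\pi^\ast f}\,|L_X(\pi^\ast u)|^2 \cdot e^{n\pi^\ast f}\,\ada = \frac{n}{\voleucl(\S^{n-1})}\int_{HM} e^{(n-2)\pi^\ast f}\,(L_X\pi^\ast u)^2\,\ada,
\end{equation*}
which is the claimed formula (writing $f$ for $\pi^\ast f$ as the statement does). The case $n=2$ is then immediate: the exponent $(n-2)f$ vanishes, so $E_f = E$, i.e.\ the energy is a conformal invariant.

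The main obstacle, and the only step requiring genuine care, is justifying the conformal transformation law $A^{F_f} = e^{\pi^\ast f} A$ for the Hilbert form and the consequent splitting $X_f = e^{-\pi^\ast f}X + Z$ with $Z$ vertical. This is where one must actually use the definition of $A$ as the vertical derivative of $F$ (recalled in Chapter 1) together with the fact that $f$ is pulled back from the base: the vertical derivative operator kills $\pi^\ast f$, so it factors through the multiplication. Once this transformation law is in hand, everything else is the short bilinearity/antisymmetry computation sketched above. I would present the Hilbert-form transformation law as a preliminary lemma (or cite its analogue from the earlier chapter), then give the two-line volume computation, the one-line integrand computation, and conclude. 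An alternative, slightly slicker route avoids naming $Z$ altogether: observe that both $|L_X(\pi^\ast u)|^2\,\ada$ restricted appropriately and the full expression only see $X$ modulo $\ker d\pi$ and $A$ up to the conformal factor, but the lemma-based approach is cleanest to write.
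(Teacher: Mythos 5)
Your proposal is correct and follows essentially the same route as the paper: transform $A$ and the contact volume under the conformal change, decompose $X_f$ into a rescaled $X$ plus a vertical field, and observe that the vertical part annihilates $\pi^\ast u$. The one detail you assert rather than derive — that $X_f = e^{-\pi^\ast f}X + Z$ with $Z$ vertical — is obtained in the paper by noting that $X_f$ and $X$ are both second-order differential equations (so $X_f = mX + Y$ with $Y$ vertical, by Lemma \ref{lem:second_order_ode}) and then computing $m = e^{-f}$ from $1 = A_f(X_f) = e^f m\, A(X)$.
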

Et en déduisons une généralisation du résultat riemannien classique:
\begin{thmintrofr}
\label{thmintrofr:inv_conforme}
Soient $(\Sigma,F)$\ une surface de Finsler, $f \colon \Sigma \xrightarrow{C^{\infty}} \R$\ et $F_f = e^f F$. Alors
\begin{equation*}
 \Delta^{F_f} = e^{-2f} \Delta^F.
\end{equation*}

\end{thmintrofr}

\subsection*{Représentation locale et calcul de spectres}

Un nouvel opérateur, aussi naturel soit-il, n'est intéressant que s'il est étu\-diable. Pour nous, cela veut dire que l'on doit être en mesure de produire des exemples purement finslériens où l'on arrive à d\'eterminer un spectre. La méthode du Min-Max mentionnée plus haut est bien utile d'un point de vue théorique, ou pour obtenir des majorations, mais pour obtenir une formule explicite du spectre, et en l'absence de beaucoup de symétries,la méthode éprouvée reste le calcul en coordonnées. Mais avant de se lancer tête baissée dans des exemples, il faut d'abord déterminer lesquels ont une chance d'aboutir.

Même pour l'opérateur de Laplace--Beltrami, calculer le spectre est une tâche extrêmement ardue; les seuls exemples complètement connus sont les espaces modèles $\Hyp^n$, $\R^n$ et $\S^n$ ainsi que certains de leurs quotients. Or, il n'y a pas d'espaces modèles en géométrie de Finsler: en effet, pour tout $R$, il existe une infinité de métriques finslériennes \emph{non isométriques} à courbure constante $R$. Nous avons donc étudié, parmi ces métriques à courbure constante, celles qui nous apparaissent intéressantes et abordables.

En courbure négative, un théorème d'Akbar-Zadeh \cite{AkZ} (voir aussi \cite{Fou:Curvature_and_global_rigidity} pour le cas localement symétrique) montre que, sur les variétés fermées, les seules structures ayant une courbure constante sont en fait riemanniennes.

En courbure positive, Akbar-Zadeh \cite{AkZ} prouva que le revêtement universel était toujours une sphère (voir aussi \cite{Rademacher:Sphere_theorem} pour la preuve en courbure pincée). D'un point de vue métrique, les choses deviennent plus intéressantes: Bryant \cite{Bryant:FS2S,Bryant:PFF2S} construisit une famille à deux paramètres de métriques (projectivement plates) à courbure constante égale à 1 sur $\S^2$. Bien avant cela, Katok \cite{Katok:KZ_metric} avait construit une famille à un paramètre de déformation de la métrique standard sur la sphère. Ziller \cite{Ziller:GKE} étudia la géométrie de ces exemples et les généralisa à d'autres espaces. L'intérêt majeur de ces exemples, et la raison même de leur invention, est que l'on obtient des métriques (de Finsler) sur $\S^n$ aussi proches de la métrique standard que l'on veut, mais n'ayant qu'un nombre \emph{fini} d'orbites périodiques. Rademacher \cite{Rademacher:Sphere_theorem} prouva que les métriques de Katok--Ziller sur la sphère sont aussi à courbure constante égale à 1 en remarquant qu'elles coïncidaient avec des exemples qu'avait indépendamment construit Shen \cite{Shen:Finsler_metric_constant_curvature}. Notons que P. Foulon \cite{Fou:perso} a une preuve du fait que \emph{toutes} les constructions de Katok--Ziller sont à courbure constante, quel que soit l'espace.

Ces métriques sont pour nous parfaites: elles admettent une écriture explicite assez facile (voir \cite{Rademacher:Sphere_theorem} pour la sphère et la proposition \ref{prop:expression_KZ} en général), ce qui est bien utile pour nos calculs, et sont extrêmement intéressantes d'un point de vue dynamique. Nous obtenons une approximation du spectre ainsi que:
\begin{thmintrofr} \label{thmintrofr:lambda1}
 Soit $F_{\eps}$ la famille de métriques de Katok--Ziller sur $\S^2$. La plus petite valeur propre non nulle de $-\Delta^{F_{\eps}}$ est
\begin{equation*}
 \lambda_1(\eps) = 2 -2 \eps^2 = \frac{8 \pi}{\rm{vol}_{\Omega^{F_{\eps}}}\left(\S^2\right)}.
\end{equation*}
\end{thmintrofr}
Ce résultat, en plus de prouver qu'il est possible d'obtenir explicitement des valeurs propres pour des métriques de Finsler, exhibe un phénomène nouveau. Pour les métriques riemanniennes sur la sphère, Hersch \cite{Hersch} démontra que le bas du spectre était majoré par $8\pi/\text{vol}(\S^2)$, l'égalité n'étant réalisée que pour la métrique standard. Ici, nous avons un exemple d'une famille à un paramètre réalisant le maximum riemannien. Nous ne savons malheureusement pas encore si c'est aussi un maximum dans le cas des métriques de Finsler. Notons que la preuve de Hersch repose sur le fait qu'il n'y a qu'une seule classe conforme de métriques riemanniennes sur la sphère et ne peut donc malheureusement pas se généraliser telle quelle.

Les métriques de Katok--Ziller peuvent aussi être construites sur les tores et nous calculons le spectre pour ces exemples. Les opérateurs obtenus ne sont toutefois pas nouveaux dans ces cas-là. En effet, nous montrons que pour toute métrique de Finsler \emph{localement minkowskienne} sur un tore, c'est-à-dire telle que la norme de Finsler ne dépend (localement) pas du point sur la variété, l'opérateur de Finsler--Laplace associé est donné par le laplacien de son symbole. Le calcul du symbole suffit donc pour déterminer le spectre. Il est cependant intéressant d'étudier ces exemples car ils permettent de comprendre certaines limites de notre opérateur; par exemple, nous ne pourrons pas espérer avoir un lien aussi direct qu'en géométrie riemannienne entre le spectre des longueurs et le spectre du laplacien. En outre, s'il on veut un jour obtenir des bornes topologiques pour le spectre, il serait bon de commencer par ces exemples.

\subsection*{Courbure négative, spectre et géométrie à l'infini}

La courbure négative est sans doute la plus intéressante d'un point de vue de l'interaction entre géométrie, dynamique, théorie ergodique et spectre du laplacien.

Soit $M$ une variété riemannienne fermée à courbure négative et $\M$ son revêtement universel. La variété $\M$ admet une frontière à l'infini $\M(\infty)$, supportant naturellement trois classes de mesures:
\begin{itemize}
 \item les mesures de Liouville, qui sont obtenues en projetant la mesure de Lebesgue à l'infini le long de rayons géodésiques;
 \item les mesures de Patterson--Sullivan $\nu_x$;
 \item les mesures harmoniques $\mu_x$, qui sont les mesures solutions du problème de Dirichlet à l'infini. Plus précisément, si $f \in C^0(\M(\infty))$, alors
\begin{equation*}
 u(x) := \int_{\xi \in \M(\infty)} f(\xi) d\mu_x(\xi)
\end{equation*}
est l'unique fonction vérifiant $\Delta^{\F} u = 0$ sur $\M$ et $u(x) \rightarrow f(\xi)$ quand $x \rightarrow \xi \in \M(\infty)$.
\end{itemize}

Kaimanovitch \cite{Kaimanovich} démontra qu'il existe un isomorphisme convexe entre le cône des mesures de Radon sur $\partial^2 \M  := \M(\infty) \times \M(\infty) \smallsetminus\{ \textrm{diag} \} $ invariantes par $\pi_1(M)$ et le cône des mesures de Radon sur $HM$ invariante par le flot géodésique. Via ce résultat on peut obtenir les mesures de Patterson--Sullivan à partir de la mesure de Bowen--Margulis.

Il est aussi connu que chaque classe de mesures ci-dessus est ergodique par rapport à l'action du groupe fondamental.

Si la courbure est constante, ces trois classes de mesures coïncident. Des résultats de Katok \cite{Katok:4_appl} et Ledrappier \cite{Ledrappier:Harm_measures} montrent que, en dimension 2, dès que deux mesures coïncident, la métrique doit être à courbure constante. Ledrappier obtient ce résultat de rigidité comme corollaire du résultat suivant: en dimension quelconque, $\nu_x = \mu_x$ si et seulement si $\lambda_1 = h^2/4$, où $\lambda_1$ est le bas du spectre du laplacien et $h$ est l'entropie topologique du flot géodésique. Dans ce cas, G.\ Besson, G.\ Courtois et S.\ Gallot \cite{BessonCourtoisGallot} prouvèrent que la variété est un espace symétrique.

Il est tentant de chercher à adapter ces résultats à notre cas. En effet, si $F$ est une métrique à courbure négative sur une variété compacte $M$, alors le flot géodésique est encore Anosov (\cite{Fou:EESLSPC}) et si, de plus, $F$ est réversible, le revêtement universel est Gromov-hyperbolique (\cite{Egloff:UFHM}). $\M$ admet donc aussi une frontière visuelle et les mesures de Liouville et de Patterson-Sullivan sont définies de la même manière. Je me suis donc intéressé à l'existence des mesures harmoniques pour l'opérateur de Finsler--Laplace.

Une méthode générale pour obtenir des mesures harmoniques est via la théorie du potentiel. On construit la frontière de Martin pour le couple $(\M, \Delta^{\wt{F}})$ puis on montre que cette frontière est homéomorphe à la frontière visuelle. Ancona \cite{Ancona:theorie_potentiel} prouva un théorème très général identifiant la frontière de Martin à la frontière visuelle, pour une large classe d'opérateurs elliptiques sur une variété Gromov-hyperbolique. Nous montrons que son théorème s'applique à notre opérateur, et en déduisons l'existence de mesures harmoniques. En fait, nous montrons que l'homéomorphisme entre la frontière de Martin et la frontière visuelle est H\"older régulier (en généralisant \cite{AndersonSchoen}). Ceci nous permet d'utiliser les travaux de Ledrappier \cite{Ledrappier:Ergodic_properties} pour montrer que les mesures harmoniques ont toujours les mêmes propriétés ergodiques:
\begin{thmintrofr}
Soient $(M,F)$ une vari\'et\'e de Finsler fermée à courbure négative et $\mu_x$ la famille de mesures harmoniques sur $\M(\infty)$. Les propriétés suivantes sont vérifiées:
\begin{itemize}
 \item[(i)] la classe des mesures harmoniques $\lbrace \mu_x \rbrace$ est ergodique pour l'action de $\pi_1(M)$ sur $\M(\infty)$;
 \item[(ii)] pour tout $x\in \M$, il existe un poids $f \colon \partial^2 \M \rightarrow \R$ tel que la mesure $f \mu_x \otimes \mu_x$ est ergodique pour l'action de $\pi_1(M)$ sur $\partial^2\M$;
 \item[(iii)] il existe une unique mesure $\mu$ sur $HM$, ergodique pour le flot géodésique, telle que son image par la correspondence de Kaimanovitch soit $f \mu_x \otimes \mu_x$.
\end{itemize}
\end{thmintrofr}
Notons que dans le {\it (i)}, les mesures $\mu_x$ sont seulement quasi-invariantes, mais que l'on peut définir ``ergodique'' de la même manière.

Les résultats de rigidité, eux, restent encore hors de portée.

Finalement, notons aussi que nous obtenons une borne supérieure pour le bas du spectre en courbure négative:
\begin{thmintrofr} \label{thmintrofr:borne_dynamique}
 Soient $(M,F)$ une vari\'et\'e de Finsler fermée à courbure négative, $\Delta^{\wt{F}}$ l'opérateur de Finsler--Laplace sur le revêtement universel de $M$ et $\lambda_1$ le bas du spectre de $-\Delta^{\wt{F}}$. On a
\begin{equation*}
 \lambda_1 \leq \frac{nh^2}{4},
\end{equation*}
où $n$ est la dimension de $M$ et $h$ l'entropie topologique du flot g\'eod\'esique de $(M,F)$.
\end{thmintrofr} 
Nous obtenons aussi une borne inférieure topologique, et sans condition de courbure, grâce à une généralisation d'un résultat de Brooks \cite{Brooks:pi1_and_spectrum}:
\begin{thmintrofr} \label{thmintrofr:Brooks}
 Soient $(M,F)$ est une vari\'et\'e de Finsler compacte et $\lambda_1$ le bas du spectre de $-\Delta^{\wt{F}}$. Alors,
\begin{equation*}
 \lambda_1 = 0 \quad \text{si et seulement si} \quad \pi_1(M) \: \text{ est moyennable}
\end{equation*}

\end{thmintrofr}

\subsection*{Angle et co-angle: la géométrie de Finsler et son pendant hamiltonien}

Notre généralisation de l'opérateur de Laplace repose entièrement sur la définition de l'angle $\alpha^F$, nous nous sommes donc attachés à l'étudier un minimum. Par exemple, dans le cas des surfaces, nous montrons comment l'on peut reconnaître qu'une métrique est riemannienne à son angle.

Rappelons que l'angle $\alpha^F$ est obtenu en décomposant le volume canonique $\ada$ sur $HM$ entre une partie verticale et une partie provenant de la base. Mais en considérant le côté symplectique, on voit qu'il existe un autre volume canoniquement associé à $F$.

En effet, il est maintenant bien connu qu'une métrique de Finsler ${F\colon TM \rightarrow \R}$ détermine uniquement un hamiltonien $F^{\ast} \colon T^{\ast}M \rightarrow \R$ et que l'on passe de l'un à l'autre via la transformée de Legendre. La variété $T^{\ast}M$ est symplectique et admet donc un volume canonique $d\lambda^n$, où $\lambda$ est la $1$-forme de Liouville. De plus, de la même manière que l'on obtient la forme de Hilbert $A$, on peut obtenir une $1$-forme $B$ sur $H^{\ast}M$ associée à $F^{\ast}$, où $H^{\ast}M$ est l'homogénéisé de $T^{\ast}M$. La forme $B$ est aussi de contact et nous avons donc une forme volume $\bdb$ sur $H^{\ast}M$.

Heureusement, tous ces volumes sont liés assez simplement. Si l'on note $\hat{r}\colon \mathring{T}^{\ast} M \rightarrow H^{\ast}M$ la projection canonique, alors $\hat{r}^{\ast}B = \lambda/F^{\ast}$ et donc \\$\hat{r}^{\ast}\bdb = \lambda \wedge d\lambda^{n-1} / (F^{\ast})^n$. Enfin, la transformée de Legendre permet de passer de $\ada$ à $\bdb$ (voir section \ref{sec:cotangent}).

De la même manière que l'on a obtenu $\alpha^F$, on obtient un co-angle $\beta^{F^{\ast}}$ en décomposant le volume $\bdb$, et la transformée de Legendre permet de passer de $\alpha^F$ à $\beta^{F^{\ast}}$.
	  
Considérer le côté symplectique est souvent intéressant pour l'étude de la géométrie de Finsler. Cela nous permet de remarquer que le volume $\Omega^F$ apparaissant dans la décomposition de $\ada$ est le volume de Holmes-Thompson. Plus surprenant, nous montrons aussi qu'une métrique de Finsler est uniquement déterminée par son co-angle et son volume.

\section*{Flots d'Anosov alignables en biais}

Anosov \cite{Anosov} parvint à extraire des flots géodésiques en courbure négative les conditions minimales nécessaires à leur comportement hyperbolique. Depuis, les flots d'Anosov forment une source virtuellement inépuisable de questions en théorie des systèmes dynamiques.

J'étudie, dans la seconde partie de ma thèse, des questions topologiques liées aux orbites périodiques de certains flots d'Anosov en dimension trois.

Mes travaux se basent sur l'étude, initiée par Thierry Barbot et Sergio Fenley, de la géométrie transverse des flots d'Anosov. Les principaux objets d'intérêt sont alors l'espace des orbites ainsi que les espaces des feuilles: si $\flot$ est un flot d'Anosov sur une $3$-variété compacte $M$, l'espace des orbites est défini comme le revêtement universel $\M$ quotienté par la relation ``être sur la même orbite'' et l'espace des feuilles (in)stables comme $\M$ quotienté par la relation ``être sur la même feuille (in)stable''.

Si l'espace des orbites est toujours homéomorphe à $\R^2$, les espaces des feuilles ne sont en général pas Hausdorff. Un flot d'Anosov est dit \emph{alignable} si ses espaces des feuilles sont homéomorphes à $\R$, ce qui en fait des flots topologiquement sympathiques. Barbot prouva que, s'il existe un relevé d'une feuille stable intersectant toutes les feuilles instables, alors un flot alignable est topologiquement conjugué à une suspension d'un difféomorphisme d'Anosov. Les autres flots sont appelés flots alignables \emph{en biais} et sont l'objet de notre étude.

Il est assez facile de remarquer que le flot géodésique d'une surface à courbure négative est un flot alignable en biais, et c'est d'ailleurs topologiquement le seul sur les fibrés de Seifert (à rev\^etement fini près, voir \cite{BarbotFenley}). Mais, lorsque l'on considère des $3$-variétés plus générales, de nombreux exemples non algébriques existent. (Les flots géodésiques ainsi que les suspensions de difféomorphisme d'Anosov sont appelés flots d'Anosov algébriques car ils sont topologiquement conjugués à l'action d'un groupe à un paramètre sur un quotient du type $\Gamma \backslash G \slash K$, où $G$ est un groupe de Lie, $K$ un sous-groupe de Lie compact et $\Gamma$ un sous-groupe discret agissant de manière cocompact sur $G \slash K$, voir \cite{Tomter}).

Fenley \cite{Fen:AFM} fut le premier à construire de tels exemples, mais remarquons que la construction de Foulon et Hasselblatt \cite{FouHassel:contact_anosov} en produit aussi.

\subsection*{Cas des variétés hyperboliques}
Lorsque la variété est atoro\"idale et n'est pas un fibré de Seifert, ces flots ont une particularité remarquable: chaque orbite périodique est librement homotope à une infinité d'autres orbites. Ceci est aux antipodes du flot géodésique sur une surface hyperbolique où il n'y a au plus qu'une orbite périodique dans une classe d'homotopie libre. (Une telle géodésique existe pour toute classe d'homotopie libre dans $\pi_1(\Sigma)$, mais pas pour toute classe dans $\pi_1(H\Sigma)$).	

Nous nous sommes intéressés à la question suivante: {\it étant donnée une classe d'homotopie libre d'orbites périodiques, que peut-on dire de leurs classes d'isotopie?} Pour souligner l'intérêt de cette question, nous nous permettons de la paraphraser: une classe d'homotopie libre donne une collection infinie de plongements homotopiquement équivalents de $S^1$, c'est-à-dire une collection de noeuds, dans une $3$-variété; {\it ces noeuds sont-il différents au sens de la théorie des noeuds classique?}

Notons que les questions sur les types de noeuds formés par des orbites périodiques de flots ne sont pas nouvelles, mais très intéressantes (voir par exemple les travaux de Ghys \cite{Ghys:knots_and_dynamics}).

En collaboration avec Sergio Fenley, nous prouvons que toutes les orbites homotopes sont en fait aussi isotopes. Même si la réponse peut être un peu décevante, la manière d'obtenir cette isotopie est intéressante: on pousse chaque orbite par un flot pseudo-Anosov obtenu, grâce aux travaux de Thurston \cite{Thurston:3MFC}, par la géométrie des feuilletages stables et instables.

\subsection*{Cylindres plongés et orbites périodiques}

Une isotopie entre deux orbites créant un cylindre immergé, on peut s'intéresser à la question de savoir quand apparaît un cylindre \emph{plongé} entre deux orbites. En adaptant des résultats de Barbot \cite{Bar:MPOT,BarbotFenley}, on peut montrer que cette question dépend essentiellement de l'action du groupe fondamental sur l'espace des orbites, et on peut à nouveau se ramener à l'utilisation des objets définis par Thurston sur les feuilletages.

Nous montrons qu'il existe des orbites périodiques ne pouvant être reliées à aucune autre par un cylindre plongé. Dans une prochaine publication avec S. Fenley, nous montrerons qu'en général, il n'y a qu'un nombre fini d'orbites pouvant être reliées de cette manière dans une classe d'homotopie libre.

\section*{Structure de la thèse}

Le chapitre 1 introduit d'abord les notions nécessaires à l'étude de la géométrie de Finsler via sa dynamique, et rappelle un certain nombre de résultats. Nous développons aussi le point de vue symplectique et la transformée de Legendre. Le lecteur averti peut commencer la lecture directement à la section \ref{sec:angle_and_volume_in_Finsler_geometry}, dans laquelle nous introduisons l'angle $\alpha^F$ (proposition \ref{prop:construction}) ainsi que son pendant dans l'espace cotangent $\beta^{F^{\ast}}$ (proposition \ref{prop:coangle}). Puis nous étudions quelques propriétés de ces angles et remarquons que le volume $\Omega^F$ est le volume de Holmes-Thompson (section \ref{sec:holmes_thompson}).\\

Le chapitre 2 est le c\oe ur de ce travail. Nous y introduisons l'opérateur de Finsler--Laplace (définition \ref{def:delta}) et y prouvons le théorème \ref{thmintrofr:Laplacian}. Nous introduisons ensuite l'énergie associée (définition \ref{def:energy}) et montrons que le spectre peut être obtenu grâce à l'énergie par la méthode du Min-Max (théorèmes \ref{thm:Min-Max} et \ref{thm:Min-Max_bis}). Enfin, nous utilisons l'énergie pour généraliser la preuve que l'opérateur de Laplace est, à un poids près, un invariant conforme en dimension $2$ (théorème \ref{thm:inv_conforme}).\\

Le chapitre 3 se concentre sur des exemples. Nous montrons d'abord que l'on peut obtenir relativement aisément une expression en coordonnées locales de l'opérateur de Finsler--Laplace pour les surfaces de Randers (proposition \ref{prop:symbol_laplacian_randers}). Cela nous permet d'ailleurs de montrer que tous les laplaciens à poids sur les surfaces peuvent être obtenus comme le Finsler--laplacien d'une métrique de Randers (proposition \ref{prop:Randers_donne_couple}). Nous étudions ensuite le cas des métriques de Katok--Ziller sur le tore et la sphère, exhibant leur spectre dans le premier cas (théorème \ref{thm:KZ_torus}) et une approximation de celui-ci dans le second ainsi que le théorème \ref{thmintrofr:lambda1} (théorème \ref{thm:spectre_S2} et corollaire \ref{cor:lambda1}).\\

Le chapitre 4 s'attaque au lien entre le laplacien et le flot géodésique en courbure négative. Nous prouvons les théorèmes \ref{thmintrofr:borne_dynamique}  (proposition \ref{prop:upper_bound}) et \ref{thmintrofr:Brooks} (théorème \ref{thm:Brooks}). Nous introduisons ensuite les outils nécessaires à l'énoncé du théorème d'Ancona sur l'homéomorphisme entre la frontière de Martin et la frontière visuelle et prouvons que cette homéomorphisme est H\"older régulier (théorème \ref{thm:C_alpha_extension}). Nous montrons ensuite que nous pouvons appliquer le théorème d'Ancona à notre opérateur (théorème \ref{thm:Finsler-Laplace_is_Ancona}) et en déduisons l'existence de solutions au problème de Dirichlet à l'infini (corollaire \ref{cor:harmonic_exists}). Enfin, nous généralisons la preuve de Ledrappier \cite{Ledrappier:Ergodic_properties} sur les propriétés ergodiques des mesures harmoniques (théorème \ref{thm:harmonic_measures_are_ergodic}).\\

Le chapitre 5 constitue la seconde partie de cette thèse. Nous commençons par rappeler les résultats de Barbot et Fenley dont nous avons besoin sur les flots d'Anosov, puis les travaux de Thurston, Calegari et Fenley sur les feuilletages. Nous montrons ensuite que toutes les orbites homotopes sont isotopes (théorème \ref{thm:homo_implies_isotope}). Nous terminons par l'étude des classes ``co-cylindriques'' et leurs liens avec l'action du groupe fondamental sur le cercle universel associé.

}

\selectlanguage{american}
\mainmatter

\part{A dynamical Laplace operator in Finsler Geometry}

\chapter[Dynamical Formalism]{Foulon's dynamical formalism for Finsler geometry and some consequences}

\label{chap:dynamical_formalism}

\section{Definitions} \label{sec:Definitions}
Here we explain the formalism that we will use in this dissertation regarding Finsler geometry. In that respect, we follow the work introduced by Patrick Foulon in \cite{Fou:EquaDiff}.\\

\subsection{Notations}

In this dissertation, we only consider \emph{orientable} manifolds.
For the readers' convenience, we will start with a list of notations that will be used throughout this text but defined later:\\

If $M$ is a manifold, $TM$ is its tangent bundle and $T^{\ast}M$ its cotangent bundle. $\mathring{T}M$ (resp. $\mathring{T}^{\ast}M$) is the tangent (resp. cotangent) bundle minus the zero section. The bundles $HM$ and $H^{\ast}M$ are the (co)-homogenized bundles.\\
$p_{|M} \colon TM \rightarrow M$\\
$\hat{p}_{|M} \colon T^{\ast}M \rightarrow M$\\
$r \colon \mathring{T}M \rightarrow HM$\\
$\hat{r} \colon \mathring{T}^{\ast}M \rightarrow H^{\ast}M$\\
$\pi \colon HM \rightarrow M$\\
$\hat{\pi} \colon H^{\ast}M \rightarrow M$\\
$\L_F \colon \mathring{T}M \rightarrow \mathring{T^{\ast}}M$ the Legendre transform.\\
$\ell_F \colon HM \rightarrow H^{\ast}M$ the Legendre transform on the homogeneous bundles.

If $\alpha$ is a $p$-form on a manifold (where $p$ can be equal to $0$), then $d\alpha$ is the exterior derivative of $\alpha$. Otherwise, if $f\colon M \rightarrow N$ is a map between manifolds, $df \colon TM \rightarrow TN$ is the differential of the map.\\
$VHM = \ker d\pi$.

$L_Z$ stands for the Lie derivative of the vector field $Z$.

In all of this dissertation, a $\ast$ in superscript (resp. subscript) of a map will mean the pull-back (resp. the push-forward) of the following object.

\subsection{Finsler metric and geodesic flow} \label{subsec:Finsler_metric_and_geodesic_flow}

Let $M$ be a smooth manifold of dimension $n$, there are several definitions of a Finsler metric in the literature. We will use what is probably the most common and, unfortunately, the most restrictive of smooth, strongly convex Finsler structure:
\begin{defin}\label{defin:Finsler_metric}
A smooth Finsler metric on $M$\ is a continuous function $F \colon TM \rightarrow \R^+$ that is:

\begin{enumerate}
  \item $C^{\infty}$\ except on the zero section,
  \item positively homogeneous, i.e., $F(x,\lambda v)=\lambda F(x,v)$\ for any $\lambda>0$,
  \item positive-definite, i.e., $F(x,v)\geq0$\ with equality iff $v=0$,
  \item strongly convex, i.e., $ \left(\dfrac{\partial^2 F^2}{\partial v_i \partial v_j}\right)_{i,j}$ is positive-definite.
 \end{enumerate}
If, for any $(x,v)\in TM$, $F(x, -v ) = F(x,v)$, then we say that $F$ is \emph{reversible}.
\end{defin}

A Finsler metric can be thought of as a smooth family $\lbrace I_x \rbrace$ of strongly convex sets containing $0$ in each tangent space $T_xM$, or equivalently as a smooth family of Minkowski norms. If the convex sets are always ellipsoids centered at $0$ then the Finsler metric is called Riemannian.

Some examples of Finsler metrics include for instance smooth and strongly convex deformations of Riemannian metrics. In \cite{Randers}, G. Randers introduced the following: take $g$ a Riemannian metric on $M$, $\theta$ a $1$-form on $M$ and define $F := \sqrt{g} + \theta$. If $\theta$ has norm less than $1$, then $F$ is a Finsler metric (see \cite{BaoChernShen}). Randers metrics are an important particular example of Finsler structures: they are the simplest kind of non-Riemannian metrics, arise a lot in physics and have been widely studied. Note that Randers metrics are never reversible.

Under our conditions, it can be shown that $F$ defines a distance on $M$: for $x,y \in M$,
\begin{equation*}
 d(x,y) = \inf_c \int_0^1 F(c(t), \dot{c}(t) ) dt ,
\end{equation*}
where $c$ runs over all $C^1$-by-part paths such that $c(0) = x$ and $c(1)=y$. Note that, for a non-reversible Finsler metric, the distance function will not be symmetric.

In the definition, the positive homogeneity condition is necessary to ensure that changing the parametrization of a curve does not change its length and the third point assures us that a constant path does not have positive length. The last requirement is not as self-explanatory as the others, but asking for $F$ to be convex in the second variable implies that the length structure is lower semi-continuous which, in turns, implies that the length of a rectifiable path computed with $d$ is the same as its ``integral length'' computed with the formula above (see \cite{BuragoBuragoIvanov}).  Using this distance, we can define geodesics as curves that locally minimize the distance and hence a geodesic flow on the unit tangent bundle. Note that our stronger assumption for convexity is necessary for our purpose, for instance to obtain geodesics as solutions of a second-order differential equation, or equivalently, so that the Hilbert form $A$, defined below, is a contact form.

The right place to study dynamical objects (by this, we mean objects linked to the geodesic flow) seems to be the unit tangent bundle. However, in order to study flows associated to different Finsler metrics without having to change the space, we study everything on the \emph{homogenized tangent bundle}
$$
HM =   \mathring{T}M / \R^+_{\ast}.
$$
We write $r \colon \mathring{T}M \rightarrow HM$ and $\pi \colon HM \rightarrow M$ for the canonical projections.\\
Remark that the fibers of $\pi$ defines a canonical distribution on $THM$, called the \emph{vertical distribution} $VHM$. It is the set of vectors in $THM$ that are tangent to a fiber of $\pi$, or equivalently, $VHM:= \text{Ker} \; d\pi$.

\subsubsection{Hilbert form}

To a Finsler metric $F$, we can canonically associate a $1$-form $A$ on $HM$, called the \emph{Hilbert form}, in the following manner: for $(x,\xi) \in HM$ and $Z \in T_{(x,\xi)}HM$, choose $v \in T_xM$ such that $r(x,v) = (x,\xi)$ and set
\begin{equation}
 A_{(x,\xi)}(Z) := \lim_{\eps \rightarrow 0} \frac{F\left(x, v + \eps d\pi(Z) \right) - F\left( x,v \right)}{\eps}.
\end{equation}
The homogeneity of $F$ implies that the definition of $A$ does not depend on the choice of $v$. Note that we can also define the \emph{vertical derivative} of $F$: it is the $1$-form $d_vF \colon TM \rightarrow T^{\ast}TM$ such that, for any $(x,v) \in TM$ and $Z \in T_{(x,v)}TM$,
\begin{equation*}
 d_vF_{(x,v)}(Z) := \lim_{\eps \rightarrow 0} \frac{F\left(x, v + \eps dp(Z) \right) - F\left( x,v \right)}{\eps}.
\end{equation*}
And we clearly have 
$$
d_vF = r^{\ast} A.
$$
 
\begin{thm} [Hilbert, \dots, Foulon \cite{Fou:EquaDiff}]
 The form $A$ is a contact form, i.e., $\ada$ is a volume form on $M$. Furthermore, if $X$ denotes its Reeb field, then $X$ generates the geodesic flow for $F$.
\end{thm}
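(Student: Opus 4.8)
\noindent\emph{Proof plan.} The plan is to push everything onto $\mathring{T}M$, where one has global natural coordinates, using the submersion $r\colon\mathring{T}M\to HM$ and the identity $r^{\ast}A = d_vF$ established above. In natural bundle coordinates $(x^i,v^i)$ write $d_vF = p_i\,dx^i$ with $p_i := \partial F/\partial v^i$, so that $r^{\ast}(dA) = d(d_vF) = dp_i\wedge dx^i$. Both assertions --- contactness and the Reeb property --- will then follow from essentially the same elementary computation, the only geometric inputs being Euler's homogeneity relation $v^ip_i = F$ (hence $v^i\,\partial p_i/\partial v^j = 0$) and the strong convexity of $F^2$.

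For contactness, split $d(d_vF) = B + C$ with $B := (\partial p_i/\partial x^j)\,dx^j\wedge dx^i$ purely ``horizontal'' and $C := g_{ij}\,dv^j\wedge dx^i$, where $g_{ij}:=\partial^2F/\partial v^i\partial v^j$. Since only $n$ of the $dx^i$ are available, every term of $(B+C)^{n-1}$ carrying a positive power of $B$ is killed once we wedge with the extra factor $d_vF = p_m\,dx^m$, so $r^{\ast}(A\wedge dA^{n-1}) = d_vF\wedge C^{n-1}$. It remains to see this $(2n-1)$-form is nowhere zero. Fixing $(x_0,v_0)$ and choosing coordinates with $v_0 = \partial/\partial x^1$, Euler's relation gives $p_1 = F(x_0,v_0)\neq 0$ and $g_{1j}(x_0,v_0) = 0$, while the identity $\partial^2F^2/\partial v^i\partial v^j = 2p_ip_j + 2Fg_{ij}$ shows that $(g_{ij})_{i,j\geq 2}$ is positive definite --- this is exactly where strong convexity is used. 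After a further linear change of $x^2,\dots,x^n$ we may take $g_{ij}(x_0,v_0)=\delta_{ij}$ for $i,j\geq 2$, so that $C = \sum_{i\geq 2}dv^i\wedge dx^i$ there and $d_vF\wedge C^{n-1}$ equals, up to a nonzero constant, $p_1\,dx^1\wedge\cdots\wedge dx^n\wedge dv^2\wedge\cdots\wedge dv^n\neq 0$. As $r$ is a submersion, $A\wedge dA^{n-1}$ is nowhere zero on $HM$.

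For the Reeb property one must first fix what ``geodesic flow'' means: I will take it to be the flow on $T^1M := F^{-1}(1)$ of the Euler--Lagrange vector field $\mathcal{G}$ of the Lagrangian $L := F^2/2$, observing that the $L$-energy equals $F^2/2$, so $F$ (hence $T^1M$) is preserved and $\mathcal G$ is tangent to $T^1M$; that $\mathcal G$ is a spray, $\mathcal G = v^i\partial_{x^i}+\mathcal G^i\partial_{v^i}$, whose base projections are the Finsler geodesics is classical. The restriction of $r$ to $T^1M$ is a diffeomorphism onto $HM$ carrying $A$ to $i^{\ast}d_vF$ (with $i\colon T^1M\hookrightarrow\mathring{T}M$), so it suffices to check that $\mathcal G$ is the Reeb field of $i^{\ast}d_vF$. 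The normalization $(i^{\ast}d_vF)(\mathcal G) = v^ip_i = F = 1$ on $T^1M$ is again Euler's relation. For $\iota_{\mathcal G}\,d(d_vF) = 0$, the same ``no room for horizontal terms'' bookkeeping together with $v^i\,\partial p_i/\partial v^j = 0$ reduces $\iota_{\mathcal G}\,d(d_vF)$ to $\bigl(\mathcal G(p_i) - \partial F/\partial x^i\bigr)\,dx^i$, and $\mathcal G(p_i) = \tfrac{d}{dt}p_i = \partial F/\partial x^i$ along a unit-speed geodesic is precisely the Euler--Lagrange equation for $L$ combined with conservation of energy. Uniqueness of the Reeb field of the contact form $A$ then forces $X = \mathcal G$, i.e.\ $X$ generates the geodesic flow.

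The one genuinely delicate point in both halves is the degeneracy of the vertical Hessian $(g_{ij})$ of $F$: it has rank $n-1$ with kernel exactly the radial direction $v$, so $d(d_vF)$ fails to be symplectic on $\mathring{T}M$. What rescues contactness is that this degenerate direction is $\ker dr$, hence disappears on $HM$; concretely, one must combine the strong convexity of $F^2$ with Euler's relation $v^ip_i = F$ to extract positive-definiteness of $(g_{ij})$ transverse to $v$, and the adapted coordinates above are the cleanest way to make that bookkeeping transparent. Everything else I expect to be routine.
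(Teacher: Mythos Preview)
Your proof is correct and follows exactly the approach sketched in the paper: local coordinates on $\mathring{T}M$, the Euler relation from $1$-homogeneity, strong convexity of $F^2$ to get nondegeneracy transverse to the radial direction, and recognizing the Reeb equations as the Euler--Lagrange equations. You have simply supplied the details the paper omits; in particular your pigeonhole argument on the number of $dx^i$'s and your verification that $\mathcal G(p_i)=\partial F/\partial x^i$ via conservation of $F$ are the natural ways to flesh out those two lines.
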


Recall that a Reeb field is uniquely determined by the following two equations:
\begin{equation}
\label{eq:Reeb_field}
\left\{ 
\begin{aligned}
  A(X) &= 1 \\
 i_X dA &= 0  \, .
 \end{aligned}
\right.
\end{equation}

\begin{proof}[Sketch of proof]
 Proving that $A$ is contact can be done in local coordinates, using that $F$ is strongly convex and $1$-homogeneous, via the Euler formula. To prove the affirmation about the Reeb field, we just have to remark that Equation \ref{eq:Reeb_field} is a very nice way to write the Euler-Lagrange equations.
\end{proof}

\begin{rem}
This implies that the canonical volume $\ada$ is invariant by the flow, i.e.,
\begin{equation}
\label{eq:lxada}
 L_X\left(\ada\right) = 0.
\end{equation}
\end{rem}

The Reeb field $X$ is a second order differential equation, as defined in \cite{Fou:EquaDiff}, i.e.,
\begin{defin}
 A vector field $X$ on $HM$ is called a \emph{second order differential equation} if the following diagram commutes
$$
\xymatrix{
 THM \ar[r]^{d\pi}  & TM \ar[r]^{r} &   HM \ar[lld]^{\id} & \\
 HM \ar[u]^{X} 
}
$$
\end{defin}

There is an easy and very useful lemma about second order differential equations:
\begin{lem} \label{lem:second_order_ode}
 If $X_1,X_2$ are two second order differential equations on $HM$, then there exists a function $m \colon HM \rightarrow \R$ and a vertical vector field $Y \in VHM$ such that 
\begin{equation}
 X_2 = m X_1 + Y.
\end{equation}
\end{lem}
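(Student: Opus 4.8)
The plan is to use the defining property of a second order differential equation directly: the condition says precisely that $r \circ d\pi \circ X = \id_{HM}$. First I would set $Y := X_2 - X_1$ and compute $d\pi(Y) = d\pi(X_2) - d\pi(X_1)$ at a point $(x,\xi) \in HM$. By the commuting diagram, $r(d\pi(X_i)_{(x,\xi)}) = (x,\xi)$ for $i=1,2$, so $d\pi(X_1)_{(x,\xi)}$ and $d\pi(X_2)_{(x,\xi)}$ are two vectors in $T_xM$ lying in the same $\R^+_{\ast}$-orbit. This already shows $d\pi(X_1)$ and $d\pi(X_2)$ are positively proportional, but to get an honest difference that is vertical we must correct the scaling first; this is why the statement has a function $m$ and not just $X_2 = X_1 + Y$.

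So the key step is to produce $m$. For each $(x,\xi)$, let $v_i := d\pi(X_i)_{(x,\xi)} \in T_xM$; since $r(v_1) = r(v_2) = (x,\xi)$, there is a unique $\mu > 0$ with $v_2 = \mu\, v_1$ (uniqueness because $v_1 \neq 0$: indeed $r(v_1)$ is defined, so $v_1 \in \mathring{T}M$). Define $m(x,\xi) := \mu$. Smoothness of $m$ follows because $m$ can be recovered as a smooth expression in $X_1, X_2$ — for instance, after choosing any auxiliary Riemannian metric $g_0$ on $M$, one has $m = g_0(d\pi(X_2), d\pi(X_1))/g_0(d\pi(X_1), d\pi(X_1))$, a smooth function on $HM$ since $d\pi(X_1)$ is nowhere zero and $X_1, X_2$ are smooth. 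Then set $Y := X_2 - m\,X_1$. By construction $d\pi(Y)_{(x,\xi)} = v_2 - m(x,\xi)\,v_1 = 0$, hence $Y \in \ker d\pi = VHM$, which is exactly what we wanted.

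The main (and really the only) obstacle is the bookkeeping around the $\R^+_{\ast}$-action: one has to check that the proportionality factor between $d\pi(X_1)$ and $d\pi(X_2)$ is well-defined, positive, and varies smoothly, and that $d\pi(X_i)$ never vanishes so the construction makes sense pointwise. Everything else is immediate from the definition of a second order differential equation. I would present the argument in roughly the three steps above: (1) reformulate the diagram as $r\circ d\pi \circ X_i = \id$; (2) extract the smooth positive function $m$ comparing the two second-order equations fiberwise; (3) verify $Y := X_2 - mX_1$ is vertical.
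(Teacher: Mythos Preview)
Your proposal is correct and follows essentially the same approach as the paper: both use the defining condition $r\circ d\pi\circ X_i=\id_{HM}$ to see that $d\pi(X_1)$ and $d\pi(X_2)$ lie in the same $\R^+_\ast$-orbit, extract the scalar $m$, and conclude that $X_2-mX_1$ lies in $\ker d\pi=VHM$. Your version is simply more detailed (explicitly arguing positivity and smoothness of $m$ via an auxiliary metric), whereas the paper dispatches the lemma in two lines.
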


\begin{proof}
 By definition, $r \circ d\pi \circ X_1=r \circ d\pi \circ X_2$, so there exists $m \colon HM \rightarrow \R$ such that $d\pi \circ X_1=m d\pi \circ X_2$, so $X_1 - m X_2 \in VHM$. 
\end{proof}

\subsection{Dynamical derivative and Jacobi endomorphism}

Foulon defined many objects associated with a geodesic flow, hence generalizing some Riemannian concepts, like curvature, in a purely intrinsic way, without requiring any of the connections in Finsler geometry. However, we do not wish to introduce those objects at length (or at all), because it is already done (both in French and English) in the following articles: \cite{Fou:EquaDiff,Fou:EESLSPC,Fou:LSFSNC,Egloff:thesis,Egloff:DynamicsUFM,Egloff:UFHM,Crampon:these}. Moreover we will not use that much of this machinery, at least directly. In \cite{Egloff:thesis} in particular, the reader can find much about the link between Foulon's definitions and their Riemannian (or connection-obtained Finsler) equivalent. In \cite{Crampon:these} or \cite{Crampon1}, Micka\"el Crampon even generalizes these methods to Hilbert geometry (so to some Finsler metrics with very low regularity). However, as we do need some results further on, we introduce the bare minimum.\\

There exists a $C^{\infty}$-linear operator $H_{X} \colon VHM \rightarrow THM$, called the \emph{horizontal endomorphism} such that, if we set $h_{X}HM:= H_{X}(VHM)$, we have:
$$
THM = VHM \oplus h_{X}HM \oplus \R\cdot X.
$$
 We write $\id = p_{v} + p_{h} + p_X$ for the associated projections.

Note that this decomposition generalizes the classical vertical/horizontal decomposition in Riemannian geometry. Let us also state the following easy fact:
\begin{lem}\label{lem:utilisation_unique2}
 Let $Z \in T_{(x,\xi)}HM$ such that $d\pi (Z) = (x,v) \in TM$. Let $\lambda \in \R$ such that $Z = \lambda X + h + Y$, then 
\begin{equation*}
  \lambda \leq F(x,v), \quad \text{ with equality iff } r(x,\xi) = (x,v).
\end{equation*}
\end{lem}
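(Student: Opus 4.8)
The statement to prove is Lemma~\ref{lem:utilisation_unique2}: given $Z \in T_{(x,\xi)}HM$ with $d\pi(Z) = (x,v)$, and writing $Z = \lambda X + h + Y$ according to the splitting $THM = \R\cdot X \oplus h_X HM \oplus VHM$, one has $\lambda \leq F(x,v)$ with equality precisely when $r(x,v) = (x,\xi)$, i.e.\ when $v$ represents the point $(x,\xi)$ of $HM$. The plan is to reduce everything to the Hilbert form $A$ and the fact that $X$ is its Reeb field, then invoke the defining property of $A$ together with strong convexity of $F$.

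First I would compute $A(Z)$. Since $Y \in VHM = \ker d\pi$ and $X$ is the Reeb field, we have $A(X) = 1$; the horizontal part contributes $A(h)$, but $h \in h_X HM$ and one checks (it is part of the construction of the horizontal endomorphism, or can be taken as a normalization of $H_X$) that $A$ vanishes on $h_X HM$. Also $A(Y)$ need not vanish a priori, but here is the key point: $Z$ is \emph{not} an arbitrary tangent vector — it is specified only up to its image $d\pi(Z)$, and the decomposition is applied to a genuine $Z$. Actually the cleanest route is to observe that $X$ is a second order differential equation, so $d\pi(X_{(x,\xi)}) = v_0$ where $r(x,v_0) = (x,\xi)$, i.e.\ $v_0$ is \emph{the} representative of $\xi$. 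Then $d\pi(Z) = \lambda\, d\pi(X) + d\pi(h) + 0 = \lambda v_0 + d\pi(h)$, so $(x,v) = (x, \lambda v_0 + w)$ where $w := d\pi(h)$. Now I would use the defining formula for the Hilbert form: for any representative, $A_{(x,\xi)}(Z) = d_vF_{(x,v_0)}(w')$ for the appropriate lift, and more to the point, the vertical derivative of $F$ at $v_0$ evaluated in a direction $u$ equals $\lim_{\eps\to 0}\frac{F(x,v_0+\eps u)-F(x,v_0)}{\eps}$. Strong convexity of $F^2$ (hence convexity of $F$ on each ray-complement) gives the subgradient inequality $F(x, v_0 + w) \geq F(x,v_0) + d_vF_{(x,v_0)}(w) = 1 + d_vF_{(x,v_0)}(w)$, with equality iff $w$ is tangent to the sphere $\{F=1\}$ at $v_0$ in the right sense — and by strict convexity, equality forces $w$ to be radial, i.e.\ proportional to $v_0$.

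Putting this together: $F(x,v) = F(x,\lambda v_0 + w) = \lambda F(x, v_0 + \lambda^{-1}w) \geq \lambda\bigl(1 + d_vF_{(x,v_0)}(\lambda^{-1}w)\bigr) = \lambda + d_vF_{(x,v_0)}(w)$, and since $d_vF$ at $v_0$ along the \emph{horizontal} lift of $w$ corresponds (by the normalization $A|_{h_XHM} = 0$) to $A(h) = 0$, one gets $F(x,v) \geq \lambda$. For the equality case, $F(x,v) = \lambda$ forces $w$ proportional to $v_0$ by strict convexity; but $w = d\pi(h)$ with $h$ horizontal, and the only vector in $h_XHM$ projecting to a multiple of $v_0$ is $0$ (here one uses that $h_XHM$ is a complement to $VHM$ and that the "radial" direction in $T_xM$ is already exhausted by $X$), hence $h = 0$ and $Z = \lambda X + Y$, so $d\pi(Z) = \lambda v_0$, giving $r(x,v) = r(x,\lambda v_0) = (x,\xi)$. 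Conversely if $r(x,v)=(x,\xi)$ then $v = \mu v_0$ for some $\mu>0$, and comparing projections $\lambda v_0 + w = \mu v_0$ forces $w=0$ and $\lambda = \mu = F(x,v)$.

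The main obstacle I anticipate is bookkeeping around the normalization of the horizontal endomorphism $H_X$ — specifically, pinning down that $A$ annihilates $h_XHM$ and that no nonzero horizontal vector projects to a multiple of the representative $v_0$. These are built into Foulon's construction of the decomposition $THM = VHM \oplus h_XHM \oplus \R X$, so the honest proof really amounts to unwinding that construction; everything else is the standard subgradient inequality for a strictly convex $1$-homogeneous function, which is exactly the content of strong convexity. It may in fact be cleanest to bypass $H_X$ entirely: write $Z = \lambda X + W$ with $W \in VHM \oplus h_XHM = \ker A$ (this is the kernel of the contact form), so $\lambda = A(Z)$; then $d\pi(Z) = \lambda v_0 + d\pi(W)$ and apply the convexity inequality to $F(x, \lambda v_0 + d\pi(W)) \geq \lambda + d_vF_{(x,v_0)}(d\pi(W)) = \lambda + A(W) = \lambda$, using $A = d_vF$ composed with $d\pi$ on the relevant part — this sidesteps separating $h$ from $Y$ and is likely the argument the author intends.
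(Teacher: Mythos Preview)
Your argument is correct and at bottom uses the same ingredient as the paper --- convexity of $F$ together with $\lambda = A(Z)$ --- but the paper's route is shorter because it never decomposes $d\pi(Z)$. The paper simply observes that $\ker A = h_XHM \oplus VHM$, so $\lambda = A(Z)$, and then feeds $Z$ straight into the \emph{definition} of the Hilbert form: taking any representative $u$ of $\xi$,
\[
A(Z) = \lim_{\eps\to 0}\frac{F(x,u+\eps\, d\pi(Z)) - F(x,u)}{\eps}
\ \leq\ \lim_{\eps\to 0}\frac{F(x,u)+\eps F(x,v) - F(x,u)}{\eps} = F(x,v),
\]
using only the triangle inequality $F(x,a+b)\leq F(x,a)+F(x,b)$; strict convexity then gives the equality case. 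Your route --- writing $v = \lambda v_0 + d\pi(W)$ with $F(x,v_0)=1$ and applying the subgradient inequality $F(x,y)\geq F(x,v_0)+d_vF_{(x,v_0)}(y-v_0)$, together with $d_vF_{(x,v_0)}(d\pi(W)) = A(W) = 0$ --- is a valid and equivalent reformulation (for $1$-homogeneous convex $F$ the subgradient inequality and the triangle inequality are the same statement), but it costs you the extra steps of normalizing $v_0$, invoking Euler's identity, and arguing that $d\pi$ restricted to $h_XHM$ misses the line $\R v_0$ for the equality case. The paper's argument sidesteps all of that by staying with the limit definition of $A$.
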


\begin{proof}
 As $A$ is zero on $H_X HM \oplus VHM$, we have $A(Z) = \lambda$. Now, let $u \in T_xM$ such that $r(x,u) = (x ,\xi)$, then
\begin{align*}
 A(Z) &= \lim_{\eps \rightarrow 0} \frac{1}{\eps} \left( F(x, u + \eps d\pi (Z)) - F(x,u) \right) \\
      &\leq \lim_{\eps \rightarrow 0} \frac{1}{\eps} \left( F(x,u) + \eps F(x,v) - F(x,u) \right),
\end{align*}
where the last line is obtained by convexity of $F$, giving us that $\lambda \leq F(x,v)$. The equality condition comes from the fact that $F$ is strongly, hence strictly, convex.
\end{proof}

There exists a first-order differential operator $D_{X}$ defined on the space of $C^1$ vector fields on $HM$, called the \emph{dynamical derivative} associated with $X$. The splitting of $THM$, namely $THM = VHM \oplus h_{X}HM \oplus \R\cdot X$ is invariant under $D_{X}$ and ,for any vector field $Y \colon HM \rightarrow VHM$,
\begin{equation} \label{eq:horizontal_endomorphism}
 H_{X}(Y) = -[X,Y] +D_{X}(Y) \, .
\end{equation}

In the following, uppercase letters will refer to vector fields.

The space $HM$ comes naturally equipped with a Riemannian metric $g$ such that $g(X,X) = 1$, the above splitting of $THM$ is orthogonal and, for any $y_1,y_2 \in V_{(x,\xi)}HM$, if we choose $Y_1,Y_2$ any extensions of $y_1$ and $y_2$ to vertical vector fields, then
\begin{equation}
 g(y_1,y_2) = dA ([X,Y_2],Y_1)\, .
\end{equation}
Furthermore, $g$ is compatible with $D_{X}$, in the following sense, $L_{X} \left( g(Y_1,Y_2)\right) = g(D_X Y_1, Y_2) + g(Y_1,D_X Y_2)$. The metric $g$ is called the \emph{vertical metric}.

There exists a $C^{\infty}$-linear operator $R^{X} \colon THM \rightarrow THM$, called the \emph{Jacobi endomorphism} or curvature endomorphism. It is defined by
\begin{equation} \label{eq:def_curvature}
 R^{X}(X)=0, \quad R^{X}(Y) = p_v ([X,H_X (Y) ]) , \quad R^{X}(h) = H_{X} (p_v [X,h]).
\end{equation}

\subsection{Cartan's structure equations}

Let $\Sigma$ be a surface and $F$ a Finsler metric on it. There is a way to generalize Cartan's structure equations to this Finsler setting using Foulon's formalism. Foulon never published the proof, A. Reissmann did it in a not easily available preprint \cite{Reissmann}, so we provide his proof. Note that Bryant \cite{Bryant:FS2S,Bryant:PFF2S} gives the same result using a different presentation.

\begin{prop}[Cartan's structure equations] \label{prop:Cartan_structure_equation}
 Let $(\Sigma, F)$ be a Finsler surface. Let $Y'$ be the unique vertical vector field such that $g(Y',Y') =1$ and $h = H_{X}(Y') \in h_{X}HM$. If we write $k$ for the function on $H\Sigma$ such that $R^X (Y') = k Y'$, then we have the following relations:
\begin{equation*}
\left\{
\begin{aligned}
\left[X,Y'\right] &= -h \\
\left[X,h\right] &= k Y' \\
\left[Y',h\right] &= -X + a Y' +b h \, ,
\end{aligned}
\right. 
\end{equation*}
where $a,b \colon HM \rightarrow \R$ satisfy $L_X b =a$ and $L_X a + b k  - L_{Y'} k  =0 $.
\end{prop}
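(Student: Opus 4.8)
The plan is to work entirely with the moving frame $(X, Y', h)$ on $H\Sigma$ and to extract the three bracket relations from the structural facts collected in the previous subsection, then to derive the two differential constraints on $a,b$ from the Jacobi identity. First I would establish $[X,Y'] = -h$: by definition $h = H_X(Y')$, and equation \eqref{eq:horizontal_endomorphism} gives $H_X(Y') = -[X,Y'] + D_X(Y')$, so it suffices to show $D_X Y' = 0$. This follows because $g$ is compatible with $D_X$, i.e. $L_X(g(Y',Y')) = 2g(D_X Y', Y')$; since $g(Y',Y') \equiv 1$ the left side vanishes, and since $\dim \Sigma = 2$ the vertical bundle $VH\Sigma$ is a line, so $D_X Y'$ — which lies in $VH\Sigma$ by invariance of the splitting under $D_X$ — must be a multiple of $Y'$, hence zero. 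Next, $[X,h] = kY'$: apply the definition of the Jacobi endomorphism, $R^X(Y') = p_v([X, H_X(Y')]) = p_v([X,h])$, which equals $kY'$ by the definition of $k$; it then remains to see that $[X,h]$ has no horizontal or $X$-component. The $X$-component vanishes because $L_X(A(h)) - A([X,h]) = dA(X,h) = i_X dA (h) = 0$ and $A(h) = 0$, so $A([X,h]) = 0$; the horizontal component vanishes by the $D_X$-invariance of the splitting together with equation \eqref{eq:horizontal_endomorphism} applied in reverse (or directly from $R^X(h) = H_X(p_v[X,h])$ being purely horizontal, combined with $p_h[X,h]$ being controlled by $D_X h$, which is horizontal, forcing consistency). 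I would write $X = p_X[X,h] = 0$ and $p_h[X,h]=0$ out cleanly using that $g$ is $D_X$-compatible on the horizontal part as well.

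For the third relation, $[Y',h] = -X + aY' + bh$, the key input is that $\ada$ is the contact volume and $(X,Y',h)$ is (up to the vertical metric normalization) adapted to it. I would compute $A([Y',h])$: since $dA(Y',h) = Y'(A(h)) - h(A(Y')) - A([Y',h]) = -A([Y',h])$, I need $dA(Y',h)$. Using the characterization $g(y_1,y_2) = dA([X,Y_2],Y_1)$ with the already-proven $[X,h] = kY'$ and $[X,Y']=-h$, one computes $dA$ on the frame and finds $dA(Y',h) = 1$ (this is where the normalization $g(Y',Y')=1$ pins the coefficient; essentially $A\wedge dA$ evaluated on $(X,Y',h)$ must be the canonical volume, forcing $dA(Y',h)=\pm 1$, and orientation/sign conventions give $+1$, whence $A([Y',h]) = -1$, i.e. the $X$-coefficient is $-1$). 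The vertical and horizontal coefficients $a,b$ are then simply \emph{defined} as the remaining components, $a := g(p_v[Y',h], Y')$ and $b := g(p_h[Y',h], h)$, so no further work is needed for the bracket relation itself.

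The substantive part — and what I expect to be the main obstacle — is deriving $L_X b = a$ and $L_X a + bk - L_{Y'}k = 0$. These come from the Jacobi identity $[[X,Y'],h] + [[Y',h],X] + [[h,X],Y'] = 0$. I would substitute the three bracket relations: $[[X,Y'],h] = [-h,h] = 0$; $[[h,X],Y'] = [-kY', Y'] = -(L_{Y'}k)Y' - k[Y',Y'] = -(L_{Y'}k)Y'$; and $[[Y',h],X] = [-X + aY' + bh, X] = -[X,X]\cdot(-1)\dots$ — more carefully, $[aY' + bh - X, X] = (L_X a)Y' + a[Y',X] + (L_X b)h + b[h,X] = (L_X a)Y' + a\,h + (L_X b)h - bk Y'$, using $[Y',X] = h$ and $[h,X] = -kY'$. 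Summing and collecting the $Y'$-component gives $L_X a - bk - L_{Y'}k = 0$ — here I would double-check signs against the stated identity $L_X a + bk - L_{Y'}k = 0$; a sign discrepancy would signal a convention choice in $k$ or in the orientation, which I would reconcile by fixing the sign in $R^X(Y') = kY'$ consistently. Collecting the $h$-component gives $a + L_X b = 0$ or $L_X b = a$ depending on the same sign bookkeeping; again one resolves this by matching conventions. The only real care required is this sign/normalization tracking through the Jacobi identity and the $dA$-computation; everything else is formal manipulation with the frame and the cited properties of $D_X$, $g$, and $R^X$.
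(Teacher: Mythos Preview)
Your approach is exactly the paper's: deduce $D_X Y'=0$ from $g$-compatibility to get $[X,Y']=-h$; use $A$-invariance, $D_X h=0$, and the definition of $R^X$ for $[X,h]=kY'$; compute $dA(Y',h)=1$ from $g(Y',Y')=dA([X,Y'],Y')$ to pin the $X$-coefficient of $[Y',h]$; then run the Jacobi identity.

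The sign discrepancies you flag are not convention issues --- they are slips in the Leibniz rule $[fZ,W]=f[Z,W]-(L_W f)Z$. Concretely, $[aY',X]=a[Y',X]-(L_X a)Y'=ah-(L_X a)Y'$, not $(L_X a)Y'+ah$; and $[-kY',Y']=+(L_{Y'}k)Y'$, not $-(L_{Y'}k)Y'$. With these corrected, your Jacobi sum reads $(a-L_X b)h+(-L_X a-bk+L_{Y'}k)Y'=0$, which gives $L_X b=a$ and $L_X a+bk-L_{Y'}k=0$ on the nose, matching the statement. For $p_h[X,h]=0$, the clean argument (which you gesture at) is simply that $p_h[X,h]=D_X h$ by definition of $D_X$ on horizontal fields, and $g(D_X h,h)=\tfrac12 L_X g(h,h)=0$ forces $D_X h=0$ since $h_X H\Sigma$ is one-dimensional.
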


Note that $Y'$ is unique because we consider only orientable manifold.

\begin{proof}
Let start out by computing $\left[X,Y'\right] $. As $g(Y',Y') = 1$ and $D_{X}$ is compatible with $g$, we have
$$
 0 = L_{X} \left(g(Y',Y') \right) = 2g (D_{X}Y', Y') \, . 
$$
Hence $D_{X}Y'$ and $Y'$ are orthogonal, but $D_{X}$ leaves $VH\Sigma$ invariant, so ${D_{X}Y'=0}$ which yields, by Equation \eqref{eq:horizontal_endomorphism}:
$$
h = -\left[X,Y'\right].
$$
Now let us compute the three projections of $\left[X,h\right]$.

First recall that $\ker A$ coincides with $h_{X}H\Sigma \oplus VH\Sigma$ and that $A$ is invariant by $X$. So the equality $ L_{X} (A(h) ) =  (L_{X} A ) (h) + A(\left[X,h\right] ) $ yields $A(\left[X,h\right] )=0$, hence the projection of $\left[X,h\right]$ along $X$ is null.

By definition (see \cite{Fou:EquaDiff}), $D_{X} h = p_h ([X,h])$ and applying the same argument used on $Y'$ above shows that $D_{X} h=0$. And finally, again by definition, $ R^{X}(Y) = p_v ([X,H_X (Y) ])$.

We are left with $\left[Y',h\right]$. By choice of $Y'$, we have that $dA(\left[X,Y'\right],Y')=1$, so $dA(Y',h)=1$. Now,
\begin{equation*}
 dA(Y',h) = L_{Y'} \left(A(h) \right) - L_{h} \left(A(Y') \right) - A\left( \left[ Y',h \right] \right) = - A\left( \left[ Y',h \right] \right), 
\end{equation*}
so the projection of $\left[ Y',h \right]$ along $X$ is $-1$ and projecting on the horizontal and vertical distributions shows that there exist two real-valued functions $a$ and $b$ on $H\Sigma$ such that $\left[ Y',h \right]= -X + a Y' + b h $.

We finish by proving the assertions about the Lie derivatives of $a$ and $b$. They follow from the Bianchi identity. Indeed, we have
\begin{align*}
\left[X,\left[Y',h\right]\right] &= \left[X, -X +aY' + b h\right] \\
           &= a\left[X,Y'\right] + \left( L_{X} a\right) Y' + b \left[X,h\right] + \left( L_{X} b\right) h \\
           &= -a h + \left( L_{X} a\right) Y' + b R^{X}(Y') + \left( L_{X} b\right) h ,\\
\left[h, \left[X,Y'\right] \right] &= 0 , \\
\left[Y', \left[h,X\right]\right] &= \left[Y', -R^X (Y')\right] \\
           &= -\left(L_{Y'} k \right) (Y') \, ,
\end{align*}

So writing that $\left[X,\left[Y',h\right]\right] +\left[h, \left[X,Y'\right] \right] +\left[Y', \left[h,X\right]\right]  =0$ yields
\begin{equation*}
 \left( -a  +  L_{X} b \right) h +  \left( L_X a + b k  - L_{Y'} k \right) Y' = 0.
\end{equation*}
This implies that $L_{X} b = a$ and $L_X a + b k  - L_{Y'} k  =0 $.

\end{proof}

\begin{prop}\label{prop:b=0_alors_F_riem}
 The function $b$ is identically zero if and only if $F$ is Riemannian.
\end{prop}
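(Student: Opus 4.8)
The plan is to use the Cartan structure equations from Proposition \ref{prop:Cartan_structure_equation} as the bridge between the analytic condition $b \equiv 0$ and the geometric conclusion that $F$ is Riemannian. The key observation is that a Finsler surface is Riemannian precisely when its geodesic flow is the geodesic flow of a Riemannian metric, which — since everything here is encoded by the Hilbert form $A$ and its Reeb field $X$ on the $3$-manifold $H\Sigma$ — should be detectable from the structure constants $a$, $b$, $k$ of the moving frame $(X, h, Y')$. Intuitively, $b$ measures the failure of some symmetry that is automatic in the Riemannian case, so I expect one direction to be a short computation and the other to require reconstructing a Riemannian metric (or at least a Riemannian symbol) from the flow.

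First I would treat the easy direction: assume $F$ is Riemannian and show $b \equiv 0$. In the Riemannian case the classical Cartan structure equations on the unit tangent bundle read $[X,Y'] = -h$, $[X,h] = kY'$, $[Y',h] = -X$ (with $k$ the Gauss curvature pulled back to $H\Sigma$); that is, the bracket $[Y',h]$ has no $Y'$- or $h$-component. Comparing with the general relation $[Y',h] = -X + aY' + bh$ forces $a = b = 0$. To make this rigorous within Foulon's formalism I would either invoke that for a Riemannian metric the vertical metric $g$ and the canonical contact structure reduce to the standard ones — so the frame $(X,h,Y')$ is exactly the standard orthonormal frame on the unit tangent bundle — or verify directly from the definitions of $H_X$, $D_X$ and $R^X$ in the Riemannian setting that $[Y',h]$ lies in $\R\cdot X$.

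The harder direction is to assume $b \equiv 0$ and conclude $F$ is Riemannian. Here $L_X b = a$ gives immediately $a \equiv 0$, so the structure equations collapse to exactly the Riemannian-looking system
\begin{equation*}
[X,Y'] = -h, \qquad [X,h] = kY', \qquad [Y',h] = -X,
\end{equation*}
and the remaining compatibility condition becomes $L_{Y'} k = 0$, i.e. $k$ is constant along the fibers of $\pi \colon H\Sigma \to \Sigma$, so $k$ descends to a function on $\Sigma$. The task is then to show that a contact flow on a $3$-manifold $H\Sigma$ admitting such an orthonormal frame is the geodesic flow of a Riemannian surface. I would argue that the vertical metric $g$ restricted to the $S^1$-fibers, together with the relation $g(y_1,y_2) = dA([X,Y_2],Y_1)$, is forced (by the collapsed equations) to coincide with what a Riemannian metric would produce: one reconstructs a Riemannian metric $\bar g$ on $\Sigma$ whose unit circle bundle and canonical $1$-form match $(H\Sigma, A)$, and then uniqueness of the Reeb field identifies the geodesic flows, hence $F$ and $\bar g$ have the same unit spheres and $F$ is Riemannian. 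The main obstacle is precisely this reconstruction: extracting an honest Riemannian metric on $\Sigma$ from the flow data and verifying that the Finsler unit balls become ellipsoids — this is where one must use strong convexity and the precise normalization of $Y'$ (unit for $g$), rather than just the algebra of the brackets. I would expect the cleanest route to compare the vertical metric $g$ with the second fundamental form / Hessian data of $F$ along the fibers and show $b \equiv 0$ forces the indicatrix curves $H_x\Sigma$ to be ellipses.
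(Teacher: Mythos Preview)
Your outline for the easy direction is fine, and indeed comparing with the classical structure equations on the unit tangent bundle suffices. The paper actually handles both directions with the same device, which you are missing for the hard direction.

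For $b\equiv 0 \Rightarrow F$ Riemannian, your plan is to reconstruct a Riemannian metric $\bar g$ on $\Sigma$ from the collapsed structure equations and then argue that $F$ and $\bar g$ share the same unit spheres. You correctly flag this reconstruction as the main obstacle, but you never supply a mechanism for it; ``compare the vertical metric with the Hessian data of $F$ along the fibers'' is not yet a proof. The paper bypasses any reconstruction entirely with a direct ODE argument on the indicatrix. Let $\chi_s$ be the flow of $Y'$ and set $v(s) := d\pi\bigl(X_{\chi_s(u_0)}\bigr) \in T_x\Sigma$ for a fixed $u_0 \in H_x\Sigma$; since $F(v(s)) = 1$, this parametrizes the unit circle at $x$. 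Differentiating along $s$ and using $[Y',X] = h$ gives $\dot v = d\pi(h)$, and a second differentiation using $[Y',h] = -X + aY' + bh$ gives
\[
\ddot v \;=\; d\pi\bigl([Y',h]\bigr) \;=\; -v + b\,\dot v,
\]
since $d\pi(Y') = 0$. When $b \equiv 0$ (hence also $a = L_X b = 0$), the indicatrix satisfies $\ddot v = -v$, so it is a Euclidean ellipse and $F$ is quadratic. Conversely, if $F$ is Riemannian, then $d\pi$ restricted to $\R\cdot X \oplus h_X H\Sigma$ is an isometry onto $T_x\Sigma$, so $\dot v$ is a unit vector orthogonal to $v$; differentiating once more shows $\dot v \perp \ddot v$, and the displayed equation forces $b = 0$.

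This ODE trick is the missing idea in your proposal: it converts the abstract bracket relation $[Y',h] = -X + aY' + bh$ into a second-order equation for the unit circle in each $T_x\Sigma$, from which ``ellipse'' is immediate. Your reconstruction route could perhaps be completed, but it is both longer and less transparent than this one-line ODE.
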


\begin{proof}
Suppose that $b = 0$. Remark that, as $L_Xb = a$, we immediately get that $a=0$.

Denote by $\chi_s$ the flow generated by $Y'$, take $x\in \Sigma$ and choose a point $u_0 \in H_x \Sigma$. Let $v(s):= d\pi \left( X_{\chi_s (u_0)} \right)$. By construction, we have $F(v(s))=1$ and so $v$ is a parametrization of the unit circle $F^{-1}(1)$ at $x$. The goal is to show that $v$ parametrizes an Euclidean circle.

 We start by computing $\dot{v} = \frac{dv}{ds}$. By definition, we have
\begin{equation*}
 \left[Y',X \right]_{\chi_s(u_0)} = \lim_{\eps \rightarrow 0} \frac{1}{\eps} \left( d \chi_{-\eps} \left( X_{\chi_{s + \eps} (u_0)} \right) -X_{\chi_{s} (u_0)} \right),
\end{equation*}
therefore, using that $\pi \circ \chi_{-\eps} = \pi$,
\begin{align*}
 d\pi \left( \left[Y',X \right]_{\chi_s(u_0)}  \right) &=  \lim_{\eps \rightarrow 0} \frac{1}{\eps} \left( d\pi \circ d \chi_{-\eps} \left( X_{\chi_{s + \eps} (u_0)} \right) - d\pi \left( X_{\chi_{s} (u_0)} \right)\right) \\
         &= \lim_{\eps \rightarrow 0} \frac{1}{\eps} \left( v(s+\eps) -v(s) \right) \\
         &= \dot{v}\, .
\end{align*}
So, by Cartan's structure equations, we get that $\dot{v} = d\pi (h)$. Similar computations show that $\ddot{v} = d\pi \left( \left[Y',h\right] \right)$. Applying $d\pi$ to the last of the structure equations shows that
\begin{equation} \label{eq:utilisation_unique1}
 \ddot{v} = -v + b \dot{v}\, .
\end{equation}
So assuming that $b$ is identically null gives us a differential equation for which every integral curves are Euclidean circles. Hence, $F$ comes from a quadratic form.

Now assume that $F$ is Riemannian. Recall (see \cite{Fou:EquaDiff}) that in that case, for any $u$ in $H_{x} \Sigma$, $d\pi_u$ is an isometry from $\left(\R \cdot X \oplus h_X H\Sigma \right)_u$ equipped with the vertical metric $g_u$ onto $T_x \Sigma$. Thus, by the above computations, $\dot{v}$ is orthogonal to $v$ and $F(\dot{v}) = 1$. Taking the derivative shows that $\dot{v}$ is orthogonal to $\ddot{v}$, which, by Equation \eqref{eq:utilisation_unique1} yields that $b=0$.
\end{proof}

\section{Cotangent space and Legendre transform} \label{sec:cotangent}

In this section, we will recall the construction of the Legendre transform associated to a Finsler metric and the dual Finsler metric. The Legendre transform gives a natural diffeomorphism between the tangent and the cotangent bundles. The construction is well-known in Finsler geometry and even better known in the Hamiltonian context as is the dual Finsler metric, but some of the results proved below might be less well-known. We also try to give intrinsic proofs whenever we can.

\begin{defin}
  We define the dual Finsler metric $F^{\ast} \colon T^{\ast}M \rightarrow \R$ by setting, for $(x,p) \in T^{\ast}M$,
\begin{equation} \label{eq:dual_finsler}
F^{\ast}(x,p) = \sup \lbrace p(v) \mid v\in T_xM \; \text{\rm such that } F(x,v)=1 \rbrace.
\end{equation}

\end{defin}

\begin{rem}
 The function $F^{\ast}$ is a Finsler co-metric, that is, it verifies the same conditions as in Definition \ref{defin:Finsler_metric} but on the cotangent bundle.
\end{rem}

\begin{defin}
 The \emph{Legendre transform} $\L_F : TM \rightarrow T^{\ast}M$ associated with $F$ is defined by $L_F(x,0) = (x,0)$ and, for $(x,v) \in \mathring{T}M$ and $u \in T_x^{\ast}M$, 
\begin{equation}
 \L_F ( x,v) (u) := \frac{1}{2} \left. \frac{d}{dt} F^2(x,v + tu)\right|_{t=0}.
\end{equation}
\end{defin}

As $F^2$ is $2$-homogeneous, we have that $\L_F$ is $1$-homogeneous, so we can project $\L_F$ to the homogenized bundles. Set $H^{\ast}M := \mathring{T}^{\ast}M / \R^+_{\ast}$ and write $\ell_F\colon HM \rightarrow H^{\ast}M$ for the projection.\\
 We can also construct $\ell_F$ via the Hilbert form $A$; As $A$ is zero on ${VHM = \ker d\pi}$, for any $(x,\xi) \in HM$, the linear function $(d\pi_{(x,\xi)})_{\ast} \left( A_{(x,\xi)} \right) \colon TM \rightarrow \R$ is well defined and taking its class in $H^{\ast}M$ gives $\ell_F$.  Remark that $\L_F (x,v) = F(x,v) (d\pi_{r(x,v)})_{\ast} \left( A_{r(x,v)} \right) $. Considering directly $\ell_F$, instead of $\L_F$, can be quite helpful sometimes.

\begin{prop}\label{prop:legendre_transform}
We have the following properties:
\begin{enumerate}
 \item $F = F^{\ast} \circ \L_F$;
 \item $\ell_F$ is a diffeomorphism, $\L_F$ is a bijection from $TM$ to $T^{\ast}M$ and a diffeomorphism outside the zero section;
 \item The following diagram commutes
$$
\xymatrix{
    & \Tzero^{\ast}M \ar[r]^{\hat{r}} \ar[ld]_{\hat{p}}  &   H^{\ast}M \ar[rd]^{\hat{\pi}} & \\
M   &       &       &   M  \\
    & \Tzero M \ar[uu]^{\L_F} \ar[r]_r  \ar[ul]^{p} & HM \ar[uu]_{\ell_F} \ar[ur]_{\pi} & 
}
$$
\end{enumerate}
\end{prop}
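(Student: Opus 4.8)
The plan is to establish the three assertions in order, using both the explicit formula for $\L_F$ and the contact-geometric description via the Hilbert form $A$, whichever is more convenient for each point. For assertion (1), I would unwind the definitions: fix $(x,v)\in\mathring{T}M$ and compute $F^{\ast}(x,\L_F(x,v))$ using the defining formula for $F^\ast$ as a supremum over the unit sphere $F(x,w)=1$. The key input is that, by strong convexity, for fixed $v$ the linear functional $w\mapsto \L_F(x,v)(w)=\tfrac12\frac{d}{dt}F^2(x,v+tw)|_{t=0}$ is precisely the supporting hyperplane to the convex body $\{F\le F(x,v)\}$ at the boundary point $v$; hence its supremum over $\{F(x,w)=1\}$ is attained at $w=v/F(x,v)$ and equals $\L_F(x,v)(v)/F(x,v)$. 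Euler's relation (the $2$-homogeneity of $F^2$) gives $\L_F(x,v)(v)=\tfrac12\frac{d}{dt}F^2(x,(1+t)v)|_{t=0}=F^2(x,v)$, so the supremum is $F(x,v)$, which is (1). The same computation, done on the homogenized bundle, shows $F^\ast\circ\ell_F$ is well-defined and consistent, and also justifies the stated identity $\L_F(x,v)=F(x,v)\,(d\pi_{r(x,v)})_\ast(A_{r(x,v)})$ by comparing the linear functional $(d\pi)_\ast A$ with the vertical derivative of $F$ (recall $d_vF=r^\ast A$).

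For assertion (2), I would prove that $\ell_F$ is a diffeomorphism and then lift. Bijectivity of $\ell_F$ follows from convex duality: a strongly convex body and its polar dual are in bijection via the "outer normal" map, and $\ell_F$ is exactly this map fiberwise (each class $\xi\in H_xM$ goes to the class of the supporting functional at the corresponding unit vector). Smoothness of $\ell_F$ is clear from the formula; smoothness of its inverse follows because the Legendre transform of $F^\ast$ (built the same way from the dual co-metric) furnishes a smooth inverse — here one uses that $F^\ast$ is itself smooth and strongly convex off the zero section, which is the content of the preceding Remark, and that $(F^\ast)^\ast=F$, itself a consequence of (1) applied twice together with biduality of convex bodies. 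Alternatively, one can invoke the implicit function theorem: the strong convexity hypothesis says the Hessian $\big(\partial^2 F^2/\partial v_i\partial v_j\big)$ is positive definite, and this Hessian is exactly the Jacobian of $\L_F$ in linear fiber coordinates, so $\L_F$ is a local diffeomorphism off the zero section; combined with fiberwise bijectivity (convex duality) and the fact that it covers the identity on $M$, it is a global diffeomorphism $\mathring{T}M\to\mathring{T}^\ast M$. Continuity at the zero section (to get that $\L_F$ is a bijection of all of $TM$) follows from $1$-homogeneity and the bound $\|\L_F(x,v)\|\le C F(x,v)$.

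Assertion (3) is then essentially bookkeeping: the bottom triangles commute by definition of $p$, $\hat p$, $r$, $\hat r$, $\pi$, $\hat\pi$; the square $\hat r\circ\L_F=\ell_F\circ r$ is exactly the statement that $\ell_F$ is the projectivization of the $1$-homogeneous map $\L_F$, which is how $\ell_F$ was defined; and $\hat p=p\circ(\text{nothing})$ — more precisely $\hat p\circ\L_F=p$ and $\hat\pi\circ\ell_F=\pi$ — because $\L_F$ and $\ell_F$ are fiber-preserving over $M$, again immediate from the formulas. So this part needs no real argument beyond chasing definitions. I expect the only genuine obstacle to be assertion (2): one must be careful that strong convexity of $F$ (a hypothesis only on $F$) really does transfer to $F^\ast$ so that the inverse map is smooth — the cleanest route is to note that the Hessian of $F^\ast$ at $\L_F(x,v)$ is the inverse of the Hessian of $F$ at $(x,v)$ (the classical Legendre-duality identity for the Hessians), which is positive definite, and to quote this as the Remark already does. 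Everything else is homogeneity and elementary convex geometry.
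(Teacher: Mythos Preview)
Your proposal is correct, and the overall architecture matches the paper's: prove (1) by convexity, prove (2) by combining a bijectivity argument with the Hessian computation, and dismiss (3) as bookkeeping. The actual arguments, however, are packaged differently.

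For (1), you invoke Euler's relation and the supporting-hyperplane interpretation directly on $F^2$. The paper instead phrases everything through the Hilbert form: it writes $\L_F(x,v)(u)=F(x,v)\,A_{r(x,v)}(Z)$ for a suitable lift $Z$, and then applies Lemma~\ref{lem:utilisation_unique2} (which in turn is the convexity inequality $A(Z)\le F(x,d\pi Z)$, with equality only in the direction of $v$). The content is the same convexity fact, but the paper's version stays inside Foulon's formalism, consistent with the thesis's announced philosophy of avoiding local coordinates.

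For (2), the paper does \emph{not} use convex duality or the existence of $\L_{F^\ast}$ as an inverse. Instead it argues injectivity and surjectivity of $\ell_F$ separately and directly: injectivity again from Lemma~\ref{lem:utilisation_unique2}, and surjectivity by an explicit construction---given $p\in\mathring{T}^\ast_xM$, take the unit vector $v_p$ where $p$ attains its sup, and then use the horizontal/vertical splitting $\R X\oplus h_XHM\oplus VHM$ to show that $p$ vanishes on $d\pi(h_XHM)$ at that point, forcing $p$ to be a multiple of $(d\pi)_\ast A_{(x,\xi_p)}$. Only after bijectivity is established does the paper use the Hessian to upgrade to a diffeomorphism, exactly as in your ``alternatively'' clause. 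Your route via $(F^\ast)^\ast=F$ and $\L_{F^\ast}=\L_F^{-1}$ is perfectly valid, but note that in the paper's logical order those facts are Lemma~\ref{lem:doubledual_norm}, stated \emph{after} this proposition and with part of the proof omitted; so the paper's self-contained surjectivity argument via the horizontal distribution buys independence from those later statements, while your approach is shorter but needs the smoothness and strong convexity of $F^\ast$ as an input (which you correctly flag as the delicate point).
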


\begin{proof}
We start with {\it1.} Let $(x,v) \in TM$, then $F^{\ast} \circ \L_F (x,v) = \sup \lbrace \L_F(x,v) (u) \mid  F(x,u)=1 \rbrace$. Now, for any $u$ such that $F(x,u)=1$, we choose $Z \in T_{r(x,v)}HM$ such that $d\pi (Z) = (x,u)$ and we have 
\begin{equation*}
 \L_F(x,v) (u) = F(x,v) A_{r(x,v)}(Z) \leq F(x,v) F(x,u) = F(x,v),
\end{equation*}
with equality iff $r(x,v) = r(x,u)$ (by Lemma \ref{lem:utilisation_unique2}). This implies that $F^{\ast} \circ \L_F (x,v) = F(x,v)$.\\

For {\it 2.}, several things are already clear; The unique pre-image of $0$ by $\L_F$ is $0$, $\ell_F$ and $\L_F$ are as smooth as $F$ outside the zero section and finally, $\ell_F$ is bijective iff $\L_F$ is, so we will prove injectivity or surjectivity for one of them and it will imply it for the other.

The injectivity is given by Lemma \ref{lem:utilisation_unique2}; Indeed, suppose that there exist $\xi_1,\xi_2 \in H_xM$ distinct and $\mu\in \R^+$ such that $(d\pi)_{\ast} A_{(x,\xi_1)} = \mu (d\pi)_{\ast} A_{(x,\xi_2)}$. Let $u_1,u_2 \in T_xM$ such that $r(x,u_i) = (x,\xi_i)$ and $F(x,u_i) =1$, then, by Lemma \ref{lem:utilisation_unique2}, 
$$
 1= (d\pi)_{\ast} A_{(x,\xi_1)} (u_1) = \mu (d\pi)_{\ast} A_{(x,\xi_2)} (u_1) < \mu,
$$
and switching $u_1$ and $u_2$ gives $\mu <1$.

To prove surjectivity; take $p \in \mathring{T}_x^{\ast}M$ and set $\mu = F^{\ast}(x,p)$. Let $v_p \in T_xM$ such that $F(x,v_p)=1$ and $p(v_p) = \mu$, i.e., $v_p$ is the vector realizing the supremum in Equation \eqref{eq:dual_finsler} and write $(x,\xi_p) :=r(x,v_p)$.

Let $h \in \left(h_{X}HM \right)_{(x,\xi_p)}$ and $w = d\pi(h)$, we claim that $p(w) =0$.

Assume the claim for a moment and recall that $d\pi_{(x,\xi_p)}$ is a bijection from $\left(h_{X}HM \oplus \R \cdot X \right)_{(x,\xi_p)}$ to $T_x M$. So, for any $u \in T_xM$, there are $\lambda$ and $h$ such that $u = \lambda d\pi(X(x,\xi_p)) + d\pi(h)$, that is, $u = \lambda v_p + w$. Therefore 
\begin{equation*}
 p(u) = \lambda p(v_p) = \lambda \mu = \mu (d\pi)_{\ast} A_{(x,\xi_p)}( \lambda v_p  + w) = \mu (d\pi)_{\ast} A_{(x,\xi_p)}(u)
\end{equation*}
and the surjectivity would be proven. So we just need to prove the claim.

Let $h \in \left(h_{X}HM \right)_{(x,\xi_p)}$ and set $c(t) := \left(F(v_p + t d\pi(h)) \right)^{-1}\left(v_p + t d\pi(h)\right)$. By definition of $v_p$, $\left. \frac{d}{dt} p\left( c(t) \right) \right|_{t=0} = 0$ so
\begin{equation*}
 0 = \left. \frac{d}{dt} p\left( c(t) \right) \right|_{t=0} = p \left( \frac{d\pi (h)}{F(x,v_p + t d\pi(h))} - \frac{A_{(x,\xi_p)}(h) v_p }{F^2(x,v_p + t d\pi(h))} \right).
\end{equation*}
As $A_{(x,\xi_p)}(h) =0$, we get that $p(d\pi (h) ) =0$, which proves our claim.\\
Now, writing $\L_F$ in local coordinates, we can easily see that its Jacobian is given by $\left(\dfrac{\partial^2 F^2}{\partial v_i \partial v_j}\right)_{i,j}$ and, as this matrix is non-degenerate, $\L_F$ is a local diffeomorphism, hence a global one because it is bijective.\\

 Finally, the fact that the diagram commutes is trivial.
\end{proof}

\begin{defin}
 We set $B:=\left(\ell_F^{-1}\right)^{\ast} A$, it is a contact form and we write $X^{\ast}$ for its Reeb field.
\end{defin}

\begin{lem}
The following diagram commutes
$$
\xymatrix{
    TH^{\ast}M \ar[r]^{d\hat{\pi}} &   TM \ar[r]^{r} & HM \\
H^{\ast}M  \ar[u]_{X^{\ast}}  \ar[urr]_{\ell_{F}^{-1} } & &
}
$$
Note also that the projection by $\hat{p}$ of the integral curves of $X^{\ast}$ are geodesics of the metric on $M$.
\end{lem}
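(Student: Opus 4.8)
The plan is to reduce the statement to two facts already at our disposal: that $X$ is a second order differential equation, i.e.\ $r\circ d\pi\circ X=\id_{HM}$, and that the Hilbert form $A$ (hence its Reeb field $X$) is transported by the Legendre diffeomorphism $\ell_F$ in the expected way.

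\emph{Step 1 (identification of $X^\ast$).} The first thing I would establish is that $X^\ast=(\ell_F)_\ast X=d\ell_F\circ X\circ\ell_F^{-1}$. Since $\ell_F\colon HM\to H^\ast M$ is a diffeomorphism (Proposition \ref{prop:legendre_transform}) and $B=(\ell_F^{-1})^\ast A$, this is just naturality of the Reeb field: for $q=\ell_F(m)$ one has $B_q\big((\ell_F)_\ast X\big)=A_m(X_m)=1$, and, because $dB=(\ell_F^{-1})^\ast dA$, one gets $i_{(\ell_F)_\ast X}\,dB=(\ell_F^{-1})^\ast(i_X\,dA)=0$; uniqueness of the solution to \eqref{eq:Reeb_field} then forces $X^\ast=(\ell_F)_\ast X$. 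This is the only step that needs any care, essentially because one must be careful about which way the pushforward goes; everything afterwards is diagram chasing.

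\emph{Step 2 (commutativity of the diagram).} Using Step 1 and the chain rule, $d\hat\pi\circ X^\ast=d(\hat\pi\circ\ell_F)\circ X\circ\ell_F^{-1}$. By the commutative diagram of Proposition \ref{prop:legendre_transform}, $\hat\pi\circ\ell_F=\pi$, so $d\hat\pi\circ X^\ast=d\pi\circ X\circ\ell_F^{-1}$. Composing on the left with $r$ and invoking that $X$ is a second order differential equation, $r\circ d\pi\circ X=\id_{HM}$, I obtain $r\circ d\hat\pi\circ X^\ast=\ell_F^{-1}$, which is exactly the asserted commutativity.

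\emph{Step 3 (the geodesic statement).} Since $X^\ast=(\ell_F)_\ast X$, the diffeomorphism $\ell_F$ conjugates the flow of $X$ with that of $X^\ast$, so every integral curve of $X^\ast$ has the form $t\mapsto\ell_F(\gamma(t))$ with $\gamma$ an integral curve of $X$. By the theorem of Hilbert--Foulon, $\pi\circ\gamma$ is a geodesic of $F$ on $M$; and since $\hat\pi\circ\ell_F=\pi$ (again Proposition \ref{prop:legendre_transform}), the projection to $M$ of $\ell_F\circ\gamma$ equals $\pi\circ\gamma$. Hence the base projection of any integral curve of $X^\ast$ is an $F$-geodesic, which finishes the argument; the main obstacle, such as it is, remains Step 1.
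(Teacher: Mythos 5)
Your proof is correct and follows essentially the same route as the paper: identify $X^{\ast}$ with the pushforward of $X$ by $\ell_F$ via naturality of the Reeb field, then chase the diagram using $\hat\pi\circ\ell_F=\pi$ and the second-order-differential-equation identity $r\circ d\pi\circ X=\id_{HM}$. Your Step 1 spells out the Reeb-field verification that the paper leaves as an exercise, and your Step 3 expands slightly on the paper's one-line remark for the geodesic claim, but there is no substantive difference in the argument.
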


\begin{proof}
 As $B = \left( \ell_F^{-1}\right)^{\ast} A$ and $X^{\ast}$ is the Reeb field of $B$, we have ${X^{\ast} = \left( \ell_F^{-1}\right)_{\ast} X}$ (just verify that $B \left( \left(  \ell_F^{-1}\right)_{\ast} X \right)=1$ and $i_{\left( \ell_F^{-1}\right)_{\ast} X} dB =0$), and since $\hat{\pi}\circ \ell_F = \pi$, we get that the projections to $M$ of the integral curves of $X^{\ast}$ are the same as those of $X$, which proves the second claim.\\
 As $X^{\ast} = \left( \ell_F^{-1}\right)_{\ast} X$, we can rewrite it as $X \circ \ell_F^{-1} = d\ell_F^{-1} \circ X^{\ast}$, and $X$ being a second order differential equation, we get that 
\begin{equation*}
 \ell_F^{-1} = r\circ d\pi \circ X \circ \ell_F^{-1} = r\circ d\pi \circ d\ell_F^{-1} \circ X^{\ast} = r\circ d\hat{\pi} \circ X^{\ast}.
\end{equation*}

\end{proof}

\begin{lem} \label{lem:doubledual_norm}
 Let $H \colon T^{\ast}M \rightarrow \R$ be a Finsler co-metric, then we can define $H^{\ast}\colon TM \rightarrow \R$ by
 \begin{equation*}
  H^{\ast}(x,v) = \sup \lbrace p(v) \mid p\in T_x^{\ast}M \; \text{\rm such that } H(x,p)=1 \rbrace .
 \end{equation*}
The function $H^{\ast}$ satisfies the following properties:
\begin{enumerate}
 \item $H^{\ast}$ is a Finsler metric;
 \item If $F$ is a Finsler metric, then $F^{\ast\ast} = F$;
 \item We define the Legendre transform associated with $H$, $\L_H \colon T^{\ast}M \rightarrow TM$ by 
\begin{equation*}
 \L_H ( x,p_1) (p_2) := \frac{1}{2} \left. \frac{d}{dt} H^2(x,p_1 + tp_2)\right|_{t=0},
\end{equation*}
 where we identified $TM$ and $T^{\ast\ast}M$. It is a diffeomorphism outside the zero section. Furthermore, if $F$ is a Finsler metric then $\L_{F^{\ast}} \circ \L_F = \id_{TM}$ and $\L_F \circ \L_{F^{\ast}} = \id_{T^{\ast}M}$.
 \item $H = H^{\ast} \circ \L_H$.
\end{enumerate}
\end{lem}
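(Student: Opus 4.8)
\emph{Strategy.} The point of the lemma is that the constructions of this section are symmetric under exchanging $TM$ and $T^{\ast}M$. A Finsler co-metric $H$ on $M$ satisfies exactly the axioms of Definition \ref{defin:Finsler_metric}, only with $T^{\ast}M$ in place of $TM$, so the plan is to run every construction of Section \ref{sec:cotangent} with $H$ playing the role of $F$ and $T^{\ast}M$ that of $TM$, identifying $T^{\ast\ast}M$ with $TM$. With this dictionary the supremum formula defining $H^{\ast}$ is the dual-metric construction applied to $H$, and the map $\L_H$ of the statement is the Legendre transform of $H$. Hence item 4, $H = H^{\ast}\circ\L_H$, is the first assertion of Proposition \ref{prop:legendre_transform} applied to $H$; and the first claim of item 3 (that $\L_H$ is a diffeomorphism off the zero section) is the second assertion of Proposition \ref{prop:legendre_transform} applied to $H$: in local coordinates $\L_H$ has Jacobian $\left(\partial^{2}H^{2}/\partial p_i\partial p_j\right)$, non-degenerate by strong convexity of $H$, so $\L_H$ is a local diffeomorphism off the zero section, hence a global one once one checks it is bijective exactly as in the proof of Proposition \ref{prop:legendre_transform}.

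For item 1, $H^{\ast}$ is clearly $1$-homogeneous and positive-definite (for $v\neq 0$ some $p$ has $p(v)>0$, and one rescales it to land on $\{H(x,\cdot)=1\}$); the non-formal point is smoothness and strong convexity. I would prove these by the argument establishing the Remark that $F^{\ast}$ is a Finsler co-metric: the supremum defining $H^{\ast}(x,v)$ is attained at the unique $p$ on $\{H(x,\cdot)=1\}$ whose $H$-gradient direction is dual to $v$, this $p$ depends smoothly on $(x,v)$ because $H$ is strongly convex, and the Hessian of $(H^{\ast})^{2}$ at $(x,v)$ is, up to a positive factor, the inverse of the Hessian of $H^{2}$ at $p$, hence positive-definite. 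This local-coordinate computation is the classical fact that Legendre duality preserves smoothness and strong convexity (see \cite{BaoChernShen}), and it is the only real obstacle in the lemma.

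Item 2, $F^{\ast\ast}=F$, is the fiberwise bipolar theorem: the unit ball of $F^{\ast\ast}$ at $x$ is the bipolar of the unit ball $B_x=\{F(x,\cdot)\le 1\}$, and since $B_x$ is closed, convex and contains $0$ in its interior, $B_x^{\circ\circ}=B_x$. Alternatively, staying with the tools of the paper, one gets $F^{\ast\ast}(x,v)\ge \L_F(x,v)(v)/F^{\ast}(x,\L_F(x,v))=F(x,v)$ from the identity $\L_F(x,v)(v)=F^{2}(x,v)$ and the first assertion of Proposition \ref{prop:legendre_transform}, together with $p(v)\le F^{\ast}(x,p)F(x,v)$ for all $p$, which gives the reverse inequality.

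For the remaining identities of item 3, I would first record the characterization: for $(x,v)\in\mathring{T}M$, the covector $\L_F(x,v)$ is the \emph{unique} $p\in T^{\ast}_xM$ with $F^{\ast}(x,p)=F(x,v)$ and $p(v)=F(x,v)^{2}$, existence coming from Proposition \ref{prop:legendre_transform} and $\L_F(x,v)(v)=F^{2}(x,v)$, uniqueness from the strict convexity input of Lemma \ref{lem:utilisation_unique2}. Applying the same to the co-metric $F^{\ast}$ and using item 2 to replace $F^{\ast\ast}$ by $F$, $\L_{F^{\ast}}(x,p)$ is the unique $w$ with $F(x,w)=F^{\ast}(x,p)$ and $w(p)=F^{\ast}(x,p)^{2}$. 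Feeding $p=\L_F(x,v)$ into this, the vector $v$ itself meets both conditions, so $\L_{F^{\ast}}(\L_F(x,v))=(x,v)$; with $\L_{F^{\ast}}(x,0)=(x,0)$ this gives $\L_{F^{\ast}}\circ\L_F=\id_{TM}$, and $\L_F\circ\L_{F^{\ast}}=\id_{T^{\ast}M}$ follows by exchanging the roles of $F$ and $F^{\ast}$.
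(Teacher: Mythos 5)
Your second argument for item 2 is exactly the paper's proof: pick $v$ with $F(x,v)=1$, take $p=\L_F(x,v)$ so that $p(v)=F^2(x,v)=1$ and $F^{\ast}(x,p)=F(x,v)=1$ (Proposition \ref{prop:legendre_transform}), giving $F^{\ast\ast}\ge F$, and get the reverse from $p(v)\le F^{\ast}(x,p)\,F(x,v)$. The paper in fact proves \emph{only} item 2, explicitly declining to prove 1, 3 and the remaining identities of 3 ("we will not use them"), and observes that 4 is the same argument as in Proposition \ref{prop:legendre_transform}; you go further and supply the missing arguments. Those additions look sound: for 1, the duality-preserves-smoothness-and-strong-convexity fact is standard and you correctly flag it as the only non-formal step; for the inverse identities in 3, your characterization of $\L_F(x,v)$ as the unique $p$ with $F^{\ast}(x,p)=F(x,v)$ and $p(v)=F(x,v)^2$, together with $F^{\ast\ast}=F$, gives $\L_{F^{\ast}}\circ\L_F=\id_{TM}$ cleanly — just note that the uniqueness rests on the analogue of Lemma \ref{lem:utilisation_unique2} for the co-metric $F^{\ast}$, which in turn uses that $F^{\ast}$ is strongly convex (the Remark after the definition of $F^{\ast}$, and your item 1).
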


\begin{proof}
We do not prove the first point nor the third one, as we will not use them in this dissertation. Note also that the proof of {\it 4.} is the same as in Proposition \ref{prop:legendre_transform}. The only thing we will use later is {\it 2.}, so we prove it.

Set $F'(x,v) := \sup \lbrace p(v) \mid p\in T_x^{\ast}M \; \text{such that } F^{\ast}(x,p)=1 \rbrace$, we want to show that $F' = F$, by homogeneity, it suffices to show it on the $F$-unit circle. Let $v\in T_xM$ such that $F(x,v)=1$. 

First, if we set $p:= \L_F(x,v) \in T_x^{\ast}M$ then, by definition, we have 
$$p(v) = F(x,v) = F^{\ast}(x,p) = 1,$$
so $F'(x,v) \geq F(x,v)$.

Now, if we take $p_0$ such that $F^{\ast}(x,p_0)= 1$ and $p_0(v) = F'(x,v)$, again by definition
\begin{equation*}
 F'(x,v) = p_0(v) \leq F^{\ast}(x,p_0) = 1 = F(x,v).
\end{equation*}
\end{proof}

The following result was communicated to us by P. Foulon and might be already known to others. However, we are not aware of the existence of a published proof and hence give one:
\begin{thm}[Foulon \cite{Fou:perso}] \label{thm:uniquely_contact}
Any Finsler metric on $M$ defines the same contact structure on $H^{\ast}M$, i.e., if $F$ is a Finsler metric on $M$ and ${B = \left(\ell_F^{-1} \right)^{\ast} A}$, the distribution $\ker B \subset TH^{\ast}M$ is independent of $F$.
Furthermore, if we denote by $\lambda$ the Liouville form on $T^{\ast}M$, we have 
\begin{equation} \label{eq:B_equal_louiville_over_F_star}
 \hat{r}^{\ast} B = \frac{\lambda}{F^{\ast}},
\end{equation}
and
\begin{equation} \label{eq:bdb_and_louiville}
 \hat{r}^{\ast} B\wedge dB^{n-1} = \frac{\lambda \wedge d\lambda^{n-1} }{(F^{\ast})^n}. 
\end{equation}
\end{thm}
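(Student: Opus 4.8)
The strategy is to transport everything to $\mathring{T}M$ via the Legendre transform $\L_F$, where the Hilbert form is essentially tautological, and then use that $\L_F$ is a diffeomorphism to conclude. First I would record the behaviour of the Liouville form under $\L_F$. Since $\lambda_{(x,p)}(W) = p\big(d\hat{p}_{(x,p)}(W)\big)$ and $\hat{p}\circ\L_F$ is the bundle projection $\mathring{T}M\to M$, the definition of $\L_F$ gives, for $(x,v)\in\mathring{T}M$,
\[
 (\L_F^{\ast}\lambda)_{(x,v)}(Z) = \L_F(x,v)\big(dp_{(x,v)}(Z)\big) = F(x,v)\,(d_vF)_{(x,v)}(Z),
\]
using $\L_F(x,v)(u) = F(x,v)\,\tfrac{d}{dt}F(x,v+tu)\big|_{t=0}$. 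Hence $\L_F^{\ast}\lambda = F\cdot d_vF = F\cdot r^{\ast}A$, by the identity $d_vF = r^{\ast}A$.

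Next I would compute $\L_F^{\ast}(\hat{r}^{\ast}B)$ in two ways. On one hand, from $B = (\ell_F^{-1})^{\ast}A$ we get $\ell_F^{\ast}B = A$, and the commuting square $\hat{r}\circ\L_F = \ell_F\circ r$ of Proposition \ref{prop:legendre_transform} gives
\[
 \L_F^{\ast}(\hat{r}^{\ast}B) = (\ell_F\circ r)^{\ast}B = r^{\ast}(\ell_F^{\ast}B) = r^{\ast}A = \tfrac1F\,\L_F^{\ast}\lambda .
\]
On the other hand, $F = F^{\ast}\circ\L_F$ (Proposition \ref{prop:legendre_transform}), so $\tfrac1F = \L_F^{\ast}(1/F^{\ast})$ and therefore $\L_F^{\ast}(\hat{r}^{\ast}B) = \L_F^{\ast}\!\big(\lambda/F^{\ast}\big)$. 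As $\L_F$ restricts to a diffeomorphism $\mathring{T}M\to\mathring{T}^{\ast}M$, the pullback $\L_F^{\ast}$ is injective on forms, and we conclude $\hat{r}^{\ast}B = \lambda/F^{\ast}$, which is \eqref{eq:B_equal_louiville_over_F_star}.

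From \eqref{eq:B_equal_louiville_over_F_star} the rest is bookkeeping. Writing $\mu := \lambda/F^{\ast}$ one has $d\mu = \tfrac{1}{F^{\ast}}d\lambda - \tfrac{1}{(F^{\ast})^{2}}\,dF^{\ast}\wedge\lambda$; since $(dF^{\ast}\wedge\lambda)\wedge(dF^{\ast}\wedge\lambda)=0$, the binomial expansion of $(d\mu)^{n-1}$ has only two surviving terms, and wedging with $\mu$ kills the correction term as well (it again contains $\lambda\wedge\lambda$), leaving $\mu\wedge(d\mu)^{n-1} = \lambda\wedge d\lambda^{n-1}/(F^{\ast})^{n}$; as $\hat{r}^{\ast}$ commutes with $d$ and $\wedge$ this is \eqref{eq:bdb_and_louiville}. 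For the contact structure: since $F^{\ast}>0$ on $\mathring{T}^{\ast}M$, equation \eqref{eq:B_equal_louiville_over_F_star} gives $\ker(\hat{r}^{\ast}B) = \ker\lambda$ pointwise, which does not involve $F$; because $\hat{r}$ is a surjective submersion with vertical distribution $\R\cdot E$ ($E$ the radial vector field, which lies in $\ker\lambda$ as $\lambda(E)=0$) and $B(\zeta) = (\hat{r}^{\ast}B)(Z)$ for any $\hat{r}$-lift $Z$ of $\zeta$, we get $\ker B = d\hat{r}(\ker\lambda)$, independent of $F$. As a byproduct $\lambda\wedge d\lambda^{n-1} = \tfrac1n\,i_E\,d\lambda^{n}$ is nowhere zero, which reconfirms that $B$ is contact.

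\textbf{Main obstacle.} No step is deep. The only places that require care are the tautological identity $\L_F^{\ast}\lambda = F\,d_vF$, which must be extracted cleanly from the definitions of $\lambda$, $\L_F$ and $d_vF$, and the wedge-product expansion for \eqref{eq:bdb_and_louiville}, where one must correctly track which terms vanish; the remainder is a formal chase through the commutative diagram of Proposition \ref{prop:legendre_transform} together with $F = F^{\ast}\circ\L_F$.
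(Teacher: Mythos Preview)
Your proof is correct and follows essentially the same route as the paper's: pull both $\hat{r}^{\ast}B$ and $\lambda/F^{\ast}$ back by the diffeomorphism $\L_F$, use the commuting square $\hat{r}\circ\L_F=\ell_F\circ r$ together with the computation $\L_F^{\ast}\lambda=F\,d_vF$ to identify the two, and then do the wedge-product bookkeeping for \eqref{eq:bdb_and_louiville}. Your treatment of the $F$-independence of $\ker B$ via the submersion $\hat{r}$ and $\ker(\hat{r}^{\ast}B)=\ker\lambda$ is slightly more explicit than the paper, which simply declares this step ``trivial'' once \eqref{eq:B_equal_louiville_over_F_star} is in hand.
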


\begin{proof} 
We will start by showing Equation \eqref{eq:B_equal_louiville_over_F_star}. First recall the definition of the Liouville form: for any $\in T^{\ast}M$, $\lambda_{p} = p \circ d\hat{p}_{|M}$, where $\hat{p}_{|M} \colon T^{\ast}M \rightarrow M$ is the base point projection. In order to show that $\hat{r}^{\ast} B = \frac{\lambda}{F^{\ast}}$, we will prove that their pull-back by $\L_F$ coincides.\\

On one hand, as $\hat{r} \circ \L_F = r \circ \ell_F$, we have
\begin{equation*}
\L_F^{\ast} \hat{r}^{\ast} B = r^{\ast}  \ell_F^{\ast} B = r^{\ast} A = d_vF \, ,
\end{equation*}
and on the other hand,
\begin{equation*}
 \L_F^{\ast} \left( \frac{\lambda}{F^{\ast}} \right) = \frac{\L_F^{\ast} \lambda }{ F^{\ast} \circ \L_F} = \frac{\L_F^{\ast} \lambda }{F} \, .
\end{equation*}
Now, let us compute $\L_F^{\ast} \lambda $: for $(x,v) \in TM$ and $Z \in T_{(x,v)} TM$,
\begin{align*}
\left( \L_F^{\ast} \lambda\right)_{(x,v)} (Z) &= \lambda_{\L_F(x,v)} \left( d\L_F (Z) \right) \\
	&= \L_F(x,v) \circ d\hat{p}_{|M} \circ d\L_F (Z) \\
	&= \L_F(x,v) \circ dp_{|M} (Z) \\
	&= \frac{1}{2} \frac{d}{dt} F^2\left(x, v + t dp_{|M} (Z) \right) \\
	&= F(x,v) d_vF_{(x,v)}(Z)\, .
\end{align*}
And we proved Equation \eqref{eq:B_equal_louiville_over_F_star}. Once we have that, the uniqueness of the contact structure is trivial.
 
For the last equality, we have
\begin{equation*}
 \hat{r}^{\ast} dB = d \hat{r}^{\ast} B =  \frac{d\lambda}{F^{\ast}} - \frac{\lambda \wedge dF^{\ast} }{(F^{\ast})^2}\, .
\end{equation*}
Therefore $\hat{r}^{\ast} dB^{n-1} =  \left(\frac{d\lambda}{F^{\ast}}\right)^{n-1} + \lambda\wedge\left( \text{Something} \right)$, so
\begin{equation*}
 \hat{r}^{\ast} B\wedge dB^{n-1}=  \frac{\lambda \wedge d\lambda^{n-1}}{(F^{\ast})^{n}} + \lambda \wedge \lambda\wedge\left( \text{Something} \right) = \frac{\lambda \wedge d\lambda^{n-1}}{(F^{\ast})^{n}}\, .
\end{equation*}

\end{proof}

\begin{cor} \label{cor:uniquely_contact}
 If $B$ and $B_1$ correspond to two Finsler metrics $F$ and $F_1$, then there exists a function $f \colon H^{\ast}M \rightarrow \R$ such that $B_1 = f B$.
\end{cor}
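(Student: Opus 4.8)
The statement is an immediate consequence of Theorem \ref{thm:uniquely_contact}. The plan is to use the explicit formula \eqref{eq:B_equal_louiville_over_F_star} for the pull-backs of $B$ and $B_1$ to $\mathring{T}^{\ast}M$, and then to descend the proportionality factor from $\mathring{T}^{\ast}M$ to $H^{\ast}M$ by homogeneity.

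First I would apply Theorem \ref{thm:uniquely_contact} to both metrics: with $\hat{r}\colon \mathring{T}^{\ast}M \rightarrow H^{\ast}M$ the canonical projection and $\lambda$ the Liouville form, we get $\hat{r}^{\ast}B = \lambda/F^{\ast}$ and $\hat{r}^{\ast}B_1 = \lambda/F_1^{\ast}$. Since $F^{\ast}$ and $F_1^{\ast}$ are everywhere positive off the zero section, dividing yields
\begin{equation*}
 \hat{r}^{\ast}B_1 = \frac{F^{\ast}}{F_1^{\ast}}\, \hat{r}^{\ast}B .
\end{equation*}
Next I would observe that the function $F^{\ast}/F_1^{\ast}\colon \mathring{T}^{\ast}M \rightarrow \R$ is invariant under the $\R^+_{\ast}$-action, because $F^{\ast}$ and $F_1^{\ast}$ are both $1$-homogeneous; hence it is of the form $\hat{r}^{\ast}f$ for a unique smooth, everywhere positive function $f\colon H^{\ast}M \rightarrow \R$. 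Then $\hat{r}^{\ast}B_1 = \hat{r}^{\ast}f \cdot \hat{r}^{\ast}B = \hat{r}^{\ast}(fB)$, and since $\hat{r}$ is a surjective submersion, $\hat{r}^{\ast}$ is injective on differential forms, so $B_1 = fB$.

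Alternatively, one can argue purely at the contact-structure level: by Theorem \ref{thm:uniquely_contact}, $\ker B$ and $\ker B_1$ both equal the canonical contact distribution on $H^{\ast}M$, which is independent of the metric; two nowhere-vanishing $1$-forms with the same codimension-one kernel are pointwise proportional, and smoothness of the factor follows by evaluating against the Reeb field $X^{\ast}$ of $B$, i.e.\ $f = B_1(X^{\ast})$, which is smooth and nowhere zero. Either way there is no real difficulty here; the only point deserving a word of care is that the proportionality factor, which \emph{a priori} lives on $\mathring{T}^{\ast}M$ (or is only defined pointwise), genuinely descends to a well-defined function on $H^{\ast}M$, and this is precisely what the $0$-homogeneity of $F^{\ast}/F_1^{\ast}$ (equivalently, the $\R^+_{\ast}$-equivariance of the whole picture) provides.
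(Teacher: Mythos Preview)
Your proof is correct and follows exactly the same route as the paper: it uses \eqref{eq:B_equal_louiville_over_F_star} for both metrics to get $\hat{r}^{\ast}B_1 = (F^{\ast}/F_1^{\ast})\,\hat{r}^{\ast}B$ and then concludes. You are simply more explicit than the paper about why the ratio $F^{\ast}/F_1^{\ast}$ descends to $H^{\ast}M$ and why equality after pulling back by $\hat{r}$ yields equality on $H^{\ast}M$; the paper leaves these points implicit with a bare ``hence the result.''
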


\begin{proof}
By Equation \eqref{eq:B_equal_louiville_over_F_star},
\begin{equation*}
 \hat{r}^{\ast} B_1 = \frac{F^{\ast}}{F^{\ast}_1} \hat{r}^{\ast} B,
\end{equation*}
hence the result.
\end{proof}

\section{Angle and volume in Finsler geometry} 
\label{sec:angle_and_volume_in_Finsler_geometry}

We will give here a definition of a solid angle in Finsler geometry, i.e., a volume form on each $H_{x}M$ naturally associated with the Finsler metric. Even if the construction seems to be known, this angle does not appear as such in the literature, at least to the best of the knowledge of the author. We will also construct a similar solid ``co-angle'', i.e.,  a volume form on each $H_{x}^{\ast}M$.\\
Simultaneously to the angle, the construction gives a volume form on $M$. We will show that this volume is the Holmes-Thompson volume for Finsler geometry \cite{HolmesThompson}.

There are already several different angles defined in Finsler geometry. We did not however try to compare known Finsler angles to this new one, making just a quick address to that problem in Section \ref{sec:angles_dim2}.

\subsection{Construction} \label{subsec:construction}

As we have seen in Section \ref{sec:Definitions}, the only truly canonical volume associated with $F$ is given by $\ada$. We are going to split this volume on $HM$ into a volume on the base manifold $M$ and a volume on each fiber $H_xM$ in a canonical way. From now on, we will assume that $M$ is oriented and will only consider volumes preserving the orientation.

The construction is done in two steps. First,
\begin{lem}
\label{lem:existence_angle}
Let $\omega$ be a volume form on $M$, then there exists an $(n-1)$-form $\alpha^{\omega}$\ on $HM$\ such that
 $$
 \alpha^{\omega} \wedge \pi^{\ast}\omega = A\wedge dA^{n-1}.
 $$
 Moreover, the value taken by $\alpha^{\omega}$\ on $VHM$ is uniquely determined.
\end{lem}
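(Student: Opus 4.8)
The plan is to observe that $\alpha^{\omega}\wedge\pi^{\ast}\omega=\ada$ is an identity between $(2n-1)$-forms on the $(2n-1)$-dimensional manifold $HM$, hence a pointwise condition on sections of the line bundle $\wedge^{2n-1}T^{\ast}HM$. I would treat existence by a local computation glued by a partition of unity, and the uniqueness of $\alpha^{\omega}|_{VHM}$ by a short piece of exterior algebra on each fibre.

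\emph{Local model and existence.} Over a coordinate chart $U\subset M$ with $\omega=\rho\,dx_1\wedge\cdots\wedge dx_n$, $\rho>0$, choose fibrewise coordinates $(\theta_1,\dots,\theta_{n-1})$ on the $H_xM$, so that $(x,\theta)$ are coordinates on $\pi^{-1}(U)$. Since $\ada$ is a volume form on $HM$ we may write $\ada=g\,d\theta_1\wedge\cdots\wedge d\theta_{n-1}\wedge dx_1\wedge\cdots\wedge dx_n$ with $g$ nowhere vanishing, and then
\[
\alpha_U:=\frac{g}{\rho}\,d\theta_1\wedge\cdots\wedge d\theta_{n-1}
\]
satisfies $\alpha_U\wedge\pi^{\ast}\omega=\ada$ on $\pi^{-1}(U)$. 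To globalize, take a locally finite cover $\{U_i\}$ of $M$ by such charts and a subordinate partition of unity $\{\chi_i\}$ on $M$, and set $\alpha^{\omega}:=\sum_i(\chi_i\circ\pi)\,\alpha_{U_i}$. Because $\sum_i\chi_i\equiv1$ and both $\pi^{\ast}\omega$ and $\ada$ are globally defined,
\[
\alpha^{\omega}\wedge\pi^{\ast}\omega=\sum_i(\chi_i\circ\pi)\bigl(\alpha_{U_i}\wedge\pi^{\ast}\omega\bigr)=\sum_i(\chi_i\circ\pi)\,\ada=\ada .
\]
(The local expression $\alpha_U$ also shows that wedging with $(\pi^{\ast}\omega)_u$ surjects onto the line $\wedge^{2n-1}T_u^{\ast}HM$, a fact I use below; and it makes clear that $\alpha^{\omega}$ is nowhere zero on $VHM$, its fibre restriction being $(g/\rho)\,d\theta_1\wedge\cdots\wedge d\theta_{n-1}$ after the partition of unity.)

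\emph{Uniqueness on $VHM$.} Fix $u=(x,\xi)\in HM$; write $T=T_uHM$ ($\dim T=2n-1$), $V=V_uHM=\ker d\pi_u$ ($\dim V=n-1$, so $\operatorname{Ann}V$ has dimension $n$), and $\mu:=(\pi^{\ast}\omega)_u=(d\pi_u)^{\ast}\omega_x\in\wedge^{n}(\operatorname{Ann}V)$. I claim the linear map $\beta\mapsto\beta\wedge\mu$ from $\wedge^{n-1}T^{\ast}$ to $\wedge^{2n-1}T^{\ast}$ has the same kernel as the restriction map $\beta\mapsto\beta|_{V}$ from $\wedge^{n-1}T^{\ast}$ to $\wedge^{n-1}V^{\ast}$. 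The kernel of the restriction map is exactly $(\operatorname{Ann}V)\wedge\wedge^{n-2}T^{\ast}$; this sits inside $\ker(\,\cdot\wedge\mu\,)$ because for $\eta\in\operatorname{Ann}V$ one has $\eta\wedge\mu\in\wedge^{n+1}(\operatorname{Ann}V)=0$; and it has codimension $1$ in $\wedge^{n-1}T^{\ast}$ (since $\wedge^{n-1}V^{\ast}$ is a line), while $\ker(\,\cdot\wedge\mu\,)$ also has codimension $1$ (since $\cdot\wedge\mu$ is onto the line $\wedge^{2n-1}T^{\ast}$). Two codimension-one subspaces, one containing the other, must coincide, which proves the claim. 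Hence if $\alpha_1,\alpha_2$ both solve the equation, then $(\alpha_1-\alpha_2)\wedge\pi^{\ast}\omega=0$ forces $(\alpha_1-\alpha_2)|_{V_uHM}=0$ for every $u$, i.e.\ $\alpha^{\omega}|_{VHM}$ is uniquely determined.

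I do not expect any genuine obstacle. The only points requiring care are the orientation and sign bookkeeping when rewriting $\ada$ in the product coordinates $(x,\theta)$, and the conceptual observation — which is precisely the exterior-algebra claim above — that the equation constrains $\alpha^{\omega}$ only modulo the ideal $\pi^{\ast}\Omega^{1}(M)$, hence only through $\alpha^{\omega}|_{VHM}$; this is exactly why the local solutions $\alpha_{U_i}$ may be patched together without any compatibility condition on overlaps beyond what the uniqueness statement already guarantees.
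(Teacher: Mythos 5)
Your proof is correct. The existence argument (local coordinate construction glued by a partition of unity) is, in detail, exactly what the paper's one-line remark ``we just complete the $n$-form $\pi^{\ast}\omega$ into the $(2n-1)$-form $\ada$'' means, so no real divergence there.

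The uniqueness argument is where you take a genuinely different route. The paper evaluates the identity $\alpha^{\omega}\wedge\pi^{\ast}\omega=\ada$ against the dynamically adapted frame $Y_1,\dots,Y_{n-1},X,[X,Y_1],\dots,[X,Y_{n-1}]$ of $THM$ (that this is a frame is Theorem~II.1 of the cited Foulon paper); since $\pi^{\ast}\omega$ annihilates any vertical argument, the only surviving shuffle yields the explicit formula
\[
\alpha^{\omega}(Y_1,\dots,Y_{n-1})=\frac{\ada\bigl(Y_1,\dots,Y_{n-1},X,[X,Y_1],\dots,[X,Y_{n-1}]\bigr)}{\pi^{\ast}\omega\bigl(X,[X,Y_1],\dots,[X,Y_{n-1}]\bigr)}\,.
\]
Your argument abstracts the same mechanism: fibrewise, $\ker(\,\cdot\wedge\mu)$ and $\ker(\,\cdot|_V)$ are codimension-one subspaces of $\wedge^{n-1}T^{\ast}$ with one contained in the other, hence they are equal. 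The underlying reason is identical in both proofs --- vertical vectors kill $\pi^{\ast}\omega$, so the wedge equation only constrains $\alpha^{\omega}$ modulo the ideal generated by $\operatorname{Ann}(VHM)$ --- but your version is purely linear-algebraic and avoids any reliance on Foulon's frame theorem, which makes it more self-contained and more portable, whereas the paper's version produces an explicit frame-dependent formula for $\alpha^{\omega}$ on $VHM$, which is slightly more concrete to compute with. Both are valid proofs.
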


\begin{proof}
 The existence of some $\alpha^{\omega}$\ is straightforward, we just complete a $n$-form $\pi^{\ast}\omega$\ into a $(2n-1)$-form $\ada$.\\
The uniqueness is given by the following. Let $Y_1, \dots, Y_{n-1} \in VHM$\  be $(n-1)$ linearly independent vertical vector fields. Then $ Y_1, \dots, Y_{n-1}$, $ X$, $\left[X, Y_1 \right], \dots,\\ \left[X, Y_{n-1}\right]$\ are linearly independent (see \cite[Theorem II.1]{Fou:EquaDiff}). Therefore, we must have
\begin{equation*}
 \alpha^{\omega} \left(Y_1, \dots, Y_{n-1} \right) = \frac{A\wedge dA^{n-1} \left( Y_1, \dots, Y_{n-1}, X, \left[X, Y_1 \right], \dots, \left[X, Y_{n-1}\right] \right) } {\pi^{\ast}\omega\left( X, \left[X, Y_1 \right], \dots, \left[X, Y_{n-1}\right] \right) }.
\end{equation*}
\end{proof}

Note that $\alpha^{\omega}$ does give us a notion of solid angle, despite its non-uniqueness:
\begin{lem}
 For any $\omega$, the integral
\begin{equation}
 l^{\omega}(x)= \int_{H_x M} \alpha^{\omega}
\end{equation}
does not depend on the choice of $\alpha^{\omega}$.
\end{lem}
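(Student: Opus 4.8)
The plan is to reduce the statement entirely to the uniqueness clause of Lemma \ref{lem:existence_angle}. The key observation is that although $\alpha^{\omega}$ is only determined up to an $(n-1)$-form that vanishes on $VHM$, the integral $l^{\omega}(x)$ only depends on the restriction of $\alpha^{\omega}$ to the fiber $H_xM$, and that restriction is exactly the part that is pinned down.

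Concretely, first I would introduce the inclusion $i_x \colon H_xM \hookrightarrow HM$ of the fiber over $x$ and recall that, by definition of $VHM$, one has $T_{(x,\xi)}H_xM = \ker d\pi_{(x,\xi)} = V_{(x,\xi)}HM$ for every $\xi \in H_xM$. Consequently the pullback $i_x^{\ast}\alpha^{\omega}$ is, at each point $(x,\xi)$, nothing but the alternating form $\alpha^{\omega}_{(x,\xi)}$ evaluated on $(n-1)$-tuples of vertical vectors — that is, precisely the data that Lemma \ref{lem:existence_angle} asserts is uniquely determined by $\omega$ alone. Hence, if $\alpha_1$ and $\alpha_2$ are any two $(n-1)$-forms on $HM$ satisfying $\alpha_j \wedge \pi^{\ast}\omega = A\wedge dA^{n-1}$, then $i_x^{\ast}\alpha_1 = i_x^{\ast}\alpha_2$ as $(n-1)$-forms on the $(n-1)$-dimensional manifold $H_xM$, and therefore
\[
 \int_{H_xM}\alpha_1 = \int_{H_xM} i_x^{\ast}\alpha_1 = \int_{H_xM} i_x^{\ast}\alpha_2 = \int_{H_xM}\alpha_2 ,
\]
where the outer equalities are just the definition of integrating a form over a submanifold. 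Since $x$ is arbitrary, $l^{\omega}$ is independent of the choice of $\alpha^{\omega}$.

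There is essentially no obstacle here: the only point requiring a little care is that the integral over $H_xM$ presupposes an orientation of the fiber, which must be fixed once and for all (compatibly with the fixed orientations of $M$ and of $HM$); but this is part of the standing conventions rather than a genuine difficulty. All the substance of the statement is already contained in the uniqueness half of Lemma \ref{lem:existence_angle}.
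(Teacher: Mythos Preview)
Your argument is correct and is exactly the approach the paper takes: the paper's proof is the one-line remark that the forms $\alpha^{\omega}$ coincide on $VHM$, and you have simply spelled this out via the inclusion $i_x$ and the pullback. There is no difference in substance.
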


\begin{proof}
 Follows from the fact that the forms $\alpha^{\omega}$\ are the same on $VHM$.
\end{proof}

Secondly, in order to have a reasonable notion of angle, we wish the volume of the fibers to be constant. To coincide with the Riemannian case we take this constant to be the volume of a Euclidean sphere. It turns out that this condition is realized for a unique volume form on $M$, hence giving a really natural way to associate a pair angle/volume with a Finsler metric:
 \begin{lem}
\label{lem:volume}
 There exists a unique volume form $\Omega^F$\ on $M$\ such that, for all $ x \in M$,
 $$
  l^{\Omega^F}(x)= \text{vol}_{\text{Eucl}} \left( \S^{n-1} \right).
 $$
 Moreover, if $\omega$ is any volume on $M$, then $\Omega^F$ is given by
\begin{equation} \label{eq:definition_Omega_F}
 \Omega^F = \frac{l^{\omega}}{\text{vol}_{\text{Eucl}} \left( \S^{n-1} \right)} \omega \, .
\end{equation}

\end{lem}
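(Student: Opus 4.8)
The plan is to pin down exactly how the solid angle $l^{\omega}$ introduced above depends on the choice of volume form $\omega$ on $M$, and then to extract both existence and uniqueness of $\Omega^F$ from that dependence. First I would fix an auxiliary volume form $\omega$ on $M$ (compatible with the orientation) and let $h\colon M\to\R$ be an arbitrary positive smooth function. Writing $\omega' = h\omega$, so that $\pi^{\ast}\omega' = (h\circ\pi)\,\pi^{\ast}\omega$, the explicit expression for $\alpha^{\omega}$ on $VHM$ from Lemma \ref{lem:existence_angle} has $\pi^{\ast}\omega$ evaluated on $\left(X,[X,Y_1],\dots,[X,Y_{n-1}]\right)$ in the denominator, so the scalar factor $h\circ\pi$ simply pulls out and
\begin{equation*}
 \alpha^{h\omega}\big|_{VHM} = \frac{1}{h\circ\pi}\,\alpha^{\omega}\big|_{VHM}.
\end{equation*}
Since $h\circ\pi$ is constant, equal to $h(x)$, along the fiber $H_xM$, integrating this identity over $H_xM$ gives the transformation rule
\begin{equation*}
 l^{h\omega}(x) = \frac{l^{\omega}(x)}{h(x)}, \qquad x\in M.
\end{equation*}

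Granting this rule, the statement is almost immediate. I would first record that $l^{\omega}$ is a smooth, strictly positive function on $M$: in the formula of Lemma \ref{lem:existence_angle} the numerator is nonzero because $\ada$ is a volume form and $Y_1,\dots,Y_{n-1},X,[X,Y_1],\dots,[X,Y_{n-1}]$ is a frame of $THM$, so $\alpha^{\omega}$ restricts to a nowhere-vanishing $(n-1)$-form on each compact fiber $H_xM$; with the induced orientation, $l^{\omega}(x)=\int_{H_xM}\alpha^{\omega}$ is therefore a positive real number depending smoothly on $x$. Then setting
\begin{equation*}
 \Omega^F := \frac{l^{\omega}}{\text{vol}_{\text{Eucl}}\left(\S^{n-1}\right)}\,\omega
\end{equation*}
defines a genuine volume form, and applying the transformation rule with $h = l^{\omega}/\text{vol}_{\text{Eucl}}(\S^{n-1})$ yields $l^{\Omega^F}\equiv\text{vol}_{\text{Eucl}}(\S^{n-1})$, establishing existence and formula \eqref{eq:definition_Omega_F} simultaneously. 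The right-hand side of \eqref{eq:definition_Omega_F} is independent of the auxiliary $\omega$, since replacing $\omega$ by $g\omega$ multiplies $l^{\omega}$ by $g^{-1}$ and leaves the product $l^{\omega}\,\omega$ unchanged.

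For uniqueness, suppose $\Omega_1$ is another volume form with $l^{\Omega_1}\equiv\text{vol}_{\text{Eucl}}(\S^{n-1})$ and write $\Omega_1 = h\,\Omega^F$ with $h>0$ smooth. The transformation rule gives $\text{vol}_{\text{Eucl}}(\S^{n-1}) = l^{\Omega_1} = l^{\Omega^F}/h = \text{vol}_{\text{Eucl}}(\S^{n-1})/h$, hence $h\equiv 1$ and $\Omega_1 = \Omega^F$.

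The computation is routine throughout; the one point I expect to require a little care is the claim that $l^{\omega}$ is smooth and strictly positive, i.e., that $\alpha^{\omega}$ restricts to an orientation-preserving volume form on each fiber $H_xM$. This is where the standing orientability hypothesis on $M$ and the contact condition on $A$ genuinely enter, and it is the step I would write out in full detail; everything else is bookkeeping around the transformation rule $l^{h\omega}=l^{\omega}/h$.
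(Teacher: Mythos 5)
Your proposal is correct and follows essentially the same route as the paper: the key step in both is the transformation rule $\alpha^{h\omega}|_{VHM} = (h\circ\pi)^{-1}\alpha^{\omega}|_{VHM}$, hence $l^{h\omega}=l^{\omega}/h$, from which existence, the formula, and uniqueness all follow by direct computation. You spell out positivity and smoothness of $l^{\omega}$ and the independence of the final formula from the auxiliary $\omega$ more explicitly than the paper, which simply remarks that uniqueness is "straightforward," but these are the same details the paper leaves to the reader.
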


Before getting on to the proof, let us state the following remark which has some interest of its own and will be needed afterwards:
\begin{rem}
 If $\omega'$\ is another volume form on $M$, preserving the orientation, and $f : M \rightarrow \R_+^{\ast}$\ such that $\omega'= f \omega$. Then, 
 $$
 \alpha^{f\omega} \wedge \pi^{\ast} \left( f\omega \right) =  \alpha^{f\omega} \wedge f \pi^{\ast} \omega = A\wedge dA^{n-1}.
 $$
 And so, for any  $Y_1, \dots, Y_n \in VHM$,
 \begin{equation}
  i_{Y_1}\dots i_{Y_n} \alpha^{f\omega}= \frac{1}{f} i_{Y_1}\dots i_{Y_n} \alpha^{\omega}.
 \end{equation}

\end{rem}

\begin{proof}[Proof of Lemma \ref{lem:volume}] 
 Let $c_n =\voleucl \left( \S^{n-1} \right)$ and $ \Omega = \frac{l^{\omega}}{  c_n } \omega$, then the remark shows that, on $VHM$, $\alpha^{\Omega} = \frac{c_n}{l^{\omega}} \alpha^{\omega}$, which in turns yields
 \begin{equation*}
  l^{\Omega}(x) = \int_{H_x M} \alpha^{\Omega} = \int_{H_x M} \frac{c_n}{l^{\omega}(x)} \alpha^{\omega} = \frac{c_n}{l^{\omega}(x)} \int_{H_x M} \alpha^{\omega} = c_n.
 \end{equation*}

 Uniqueness is also straightforward.
\end{proof}
 
We can summarize the construction in the following:
\begin{prop}
\label{prop:construction}
 There exists a unique volume form $\Omega^F$\ on $M$\ and an $(n-1)$-form $\alpha^F$\ on $HM$, never zero on $VHM$, such that
\begin{equation}
\label{eq:alpha_wedge_omega}
  \alpha^{F} \wedge \pi^{\ast}\Omega^F =  A\wedge dA^{n-1}, 
\end{equation}
and, for all $x\in M$, 
\begin{equation}
\label{eq:longueur_fibre}
 \int_{H_xM} \alpha^F =  \voleucl(\S^{n-1})\, .
\end{equation}

\end{prop}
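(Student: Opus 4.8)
The plan is to assemble Lemmas~\ref{lem:existence_angle} and~\ref{lem:volume} together with the Remark preceding the proof of Lemma~\ref{lem:volume}. Since $M$ is orientable, fix once and for all an orientation-compatible volume form $\omega$ on $M$.

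First I would invoke Lemma~\ref{lem:existence_angle} to produce an $(n-1)$-form $\alpha^{\omega}$ on $HM$ with $\alpha^{\omega}\wedge\pi^{\ast}\omega = \ada$. The explicit formula in that proof exhibits $\alpha^{\omega}(Y_1,\dots,Y_{n-1})$ as a quotient of two non-vanishing top-degree evaluations, using that $Y_1,\dots,Y_{n-1},X,[X,Y_1],\dots,[X,Y_{n-1}]$ is a frame by \cite[Theorem II.1]{Fou:EquaDiff}; this shows that the restriction of $\alpha^{\omega}$ to $VHM$ is nowhere zero, is independent of the chosen completion, and depends smoothly on the base point. Hence $l^{\omega}(x):=\int_{H_xM}\alpha^{\omega}$ is a well-defined, smooth, strictly positive function on $M$ (choosing the fiberwise orientation consistently so that the integral is positive).

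Next I would set, as in Lemma~\ref{lem:volume},
\begin{equation*}
 \Omega^F := \frac{l^{\omega}}{\voleucl(\S^{n-1})}\,\omega, \qquad \alpha^F := \alpha^{\Omega^F},
\end{equation*}
where $\alpha^{\Omega^F}$ is any form supplied by Lemma~\ref{lem:existence_angle} applied to the volume $\Omega^F$. Then \eqref{eq:alpha_wedge_omega} holds by construction, and the scaling relation of the Remark (applied with $f=l^{\omega}/\voleucl(\S^{n-1})$) gives $\alpha^F=(\voleucl(\S^{n-1})/l^{\omega})\,\alpha^{\omega}$ on $VHM$, whence
\begin{equation*}
 \int_{H_xM}\alpha^F = \frac{\voleucl(\S^{n-1})}{l^{\omega}(x)}\int_{H_xM}\alpha^{\omega} = \voleucl(\S^{n-1}),
\end{equation*}
which is \eqref{eq:longueur_fibre}. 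I would also note that $\Omega^F$ is independent of the auxiliary $\omega$: replacing $\omega$ by $f\omega$ multiplies $l^{\omega}$ by $1/f$ by the Remark, so the product $l^{\omega}\,\omega$ is unchanged.

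Finally, for uniqueness, suppose $(\Omega_1,\alpha_1)$ and $(\Omega_2,\alpha_2)$ both satisfy \eqref{eq:alpha_wedge_omega} and \eqref{eq:longueur_fibre}, and write $\Omega_2=g\,\Omega_1$ for a positive smooth $g$ on $M$. The Remark (comparing $\alpha_1$ and $\alpha_2$ on $VHM$) forces $\alpha_2=g^{-1}\alpha_1$ there; integrating over $H_xM$ and using \eqref{eq:longueur_fibre} for both pairs yields $\voleucl(\S^{n-1})=g(x)^{-1}\voleucl(\S^{n-1})$, i.e.\ $g\equiv1$, so $\Omega_1=\Omega_2$ and $\alpha_1=\alpha_2$ on $VHM$ (only the restriction to $VHM$ is pinned down, which is all that will be used afterwards). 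There is no genuine obstacle here, since every ingredient is already established; the only points requiring a little care are the smoothness and positivity of $l^{\omega}$ — so that $\Omega^F$ is genuinely a volume form — and a coherent choice of orientations making the fiber integrals positive.
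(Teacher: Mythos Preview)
Your proposal is correct and follows exactly the paper's approach: the proposition is explicitly presented as a summary of Lemmas~\ref{lem:existence_angle} and~\ref{lem:volume} together with the scaling Remark, and you have assembled these ingredients just as intended. Your write-up is in fact more detailed than the paper's, which simply states the proposition as a summary of the preceding construction without further argument.
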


\begin{rem}
 Let us emphasize again that $\alpha^F$ is \emph{not} unique, only its restriction to $VHM$, which is what we need in order to have a solid angle.\\
 We will see later (Section \ref{sec:holmes_thompson}) that $\Omega^F$ is in fact the well-known Holmes-Thompson volume.
\end{rem}

\subsection{Coangle}

Taking the Hamiltonian point of view, we can see that the volume $\bdb$ on $H^{\ast}M$ is as canonical as $\ada$ is, so we could have carried out the above construction on the homogenized cotangent bundle $H^{\ast}M$. The exact same steps gives the following:
\begin{prop}
\label{prop:coangle}
 There exists a unique volume form $\Omega^{F^{\ast}}$\ on $M$\ and an $(n-1)$-form $\beta^{F^{\ast}}$\ on $H^{\ast}M$, never zero on $VH^{\ast}M:= \ker d\hat{\pi}$, such that
\begin{equation}
\label{eq:beta_wedge_omega}
  \beta^{F^{\ast}} \wedge \hat{\pi}^{\ast}\Omega^{F^{\ast}} =  \bdb, 
\end{equation}
and, for all $x\in M$, 
\begin{equation}
 \int_{H^{\ast}_xM} \beta^{F^{\ast}} =  \voleucl(\S^{n-1})\, .
\end{equation}

\end{prop}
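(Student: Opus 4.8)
The plan is to observe that Proposition \ref{prop:coangle} is the literal transcription of Proposition \ref{prop:construction} with the pair $(HM, A\wedge dA^{n-1})$ replaced by $(H^{\ast}M, B\wedge dB^{n-1})$, so the entire argument of Section \ref{subsec:construction} can be repeated verbatim provided the three structural facts used there hold on the cotangent side. Those three facts are: (a) $B$ is a contact form on $H^{\ast}M$, so $B\wedge dB^{n-1}$ is a volume form; (b) $X^{\ast}$, the Reeb field of $B$, is a second-order differential equation in the appropriate sense, i.e.\ $r\circ d\hat{\pi}\circ X^{\ast}$ is the identity on $H^{\ast}M$; and (c) an analogue of \cite[Theorem II.1]{Fou:EquaDiff} holds, namely that for $(n-1)$ linearly independent vertical vector fields $Y_1,\dots,Y_{n-1}$ on $H^{\ast}M$, the $2n-1$ vector fields $Y_1,\dots,Y_{n-1},X^{\ast},[X^{\ast},Y_1],\dots,[X^{\ast},Y_{n-1}]$ form a frame of $TH^{\ast}M$. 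Fact (a) is Theorem \ref{thm:uniquely_contact} (or the Definition preceding it), and fact (b) is exactly the Lemma following that Definition. Fact (c) can be obtained by transporting the statement through the diffeomorphism $\ell_F \colon HM \to H^{\ast}M$: since $B = (\ell_F^{-1})^{\ast}A$ and $X^{\ast} = (\ell_F^{-1})_{\ast}X$, if $Y_i = (\ell_F^{-1})_{\ast} \widetilde{Y}_i$ for vertical fields $\widetilde{Y}_i$ on $HM$ (vertical because $\hat\pi\circ\ell_F = \pi$), then $[X^{\ast},Y_i] = (\ell_F^{-1})_{\ast}[X,\widetilde{Y}_i]$, and the frame property on $H^{\ast}M$ is the pushforward of the frame property on $HM$, which is \cite[Theorem II.1]{Fou:EquaDiff}.

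Granting (a)--(c), I would carry out the two-step construction exactly as before. First, the cotangent analogue of Lemma \ref{lem:existence_angle}: given any orientation-preserving volume $\omega$ on $M$, one completes the $n$-form $\hat{\pi}^{\ast}\omega$ to the $(2n-1)$-form $B\wedge dB^{n-1}$, which gives the existence of an $(n-1)$-form $\beta^{\omega}$ with $\beta^{\omega}\wedge\hat{\pi}^{\ast}\omega = B\wedge dB^{n-1}$; uniqueness of the restriction to $VH^{\ast}M$ follows from (c) by the same quotient-of-evaluations formula,
\begin{equation*}
 \beta^{\omega}(Y_1,\dots,Y_{n-1}) = \frac{B\wedge dB^{n-1}\bigl(Y_1,\dots,Y_{n-1},X^{\ast},[X^{\ast},Y_1],\dots,[X^{\ast},Y_{n-1}]\bigr)}{\hat{\pi}^{\ast}\omega\bigl(X^{\ast},[X^{\ast},Y_1],\dots,[X^{\ast},Y_{n-1}]\bigr)}.
\end{equation*}
Second, the cotangent analogues of the two lemmas on $l^{\omega}$ and on $\Omega^F$: the fiber integral $l^{\omega}(x) = \int_{H^{\ast}_xM}\beta^{\omega}$ depends only on $\omega$ (since $\beta^{\omega}$ is determined on $VH^{\ast}M$), and the scaling remark $\beta^{f\omega} = \frac{1}{f}\beta^{\omega}$ on $VH^{\ast}M$ shows that $\Omega^{F^{\ast}} := (l^{\omega}/\voleucl(\S^{n-1}))\,\omega$ is the unique volume on $M$ whose associated fiber integral is constantly $\voleucl(\S^{n-1})$. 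Setting $\beta^{F^{\ast}} := \beta^{\Omega^{F^{\ast}}}$ then gives both displayed identities of the Proposition, and uniqueness of $\Omega^{F^{\ast}}$ is clear while $\beta^{F^{\ast}}$ is unique only on $VH^{\ast}M$.

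The only step that is not pure bookkeeping is verifying fact (c), the transversality/frame statement on $H^{\ast}M$; everything else is a word-for-word copy of Section \ref{subsec:construction}. I expect this to be the main (and essentially the sole) obstacle, and I would handle it as indicated above by pushing \cite[Theorem II.1]{Fou:EquaDiff} forward along $\ell_F$, using $B = (\ell_F^{-1})^{\ast}A$, $X^{\ast} = (\ell_F^{-1})_{\ast}X$, the fact that $\ell_F$ conjugates the vertical distributions ($\ell_F^{\ast}(VH^{\ast}M) = VHM$, since $\hat\pi\circ\ell_F=\pi$), and the naturality of the Lie bracket under a diffeomorphism. (Alternatively, one notes that $X^{\ast}$ being a second-order differential equation is itself enough, since the proof of \cite[Theorem II.1]{Fou:EquaDiff} only uses that $X$ is a second-order equation and that $A$ is contact — both of which we have for $B$, $X^{\ast}$ — but invoking the conjugacy by $\ell_F$ is the cleanest route given what is already established in the excerpt.) Since the statement asserts "\emph{the exact same steps gives the following}", presenting the proof as this dictionary translation, with fact (c) spelled out, is the natural and complete argument.
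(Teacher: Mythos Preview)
Your proposal is correct and follows exactly the approach the paper takes: the paper's entire proof is the single sentence ``The exact same steps gives the following,'' so your dictionary translation of Section~\ref{subsec:construction} to $(H^{\ast}M, B, X^{\ast})$ is precisely what is intended. You are in fact more careful than the paper, which does not pause to justify the cotangent analogue of the frame property from \cite[Theorem II.1]{Fou:EquaDiff}; your argument that this follows by pushing forward along the diffeomorphism $\ell_F$ (using $B=(\ell_F^{-1})^{\ast}A$, $X^{\ast}=(\ell_F^{-1})_{\ast}X$, $\hat\pi\circ\ell_F=\pi$, and naturality of the bracket) is clean and correct.
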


Fortunately for our claim of ``natural'' angle and volume associated with $F$, there is a relationship between our two constructions:
\begin{prop}
Recall that $\ell_{F} \colon HM \rightarrow H^{\ast}M$ denotes the Legendre transform. Then
\begin{align*}
\Omega^F &= \Omega^{F^{\ast}}, \\
\alpha^F &= \ell_{F}^{\ast} \beta^{F^{\ast}},
\end{align*}
where the second equality holds on $VHM$.
\end{prop}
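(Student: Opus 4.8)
The plan is to pull back the defining relations for $\beta^{F^{\ast}}$ and $\Omega^{F^{\ast}}$ along the Legendre transform $\ell_F$ and then invoke the uniqueness part of Proposition \ref{prop:construction}. The starting observation is that $B=(\ell_F^{-1})^{\ast}A$, so $\ell_F^{\ast}B=A$, and hence
\begin{equation*}
 \ell_F^{\ast}\bigl(\bdb\bigr)=(\ell_F^{\ast}B)\wedge(d\,\ell_F^{\ast}B)^{n-1}=\ada .
\end{equation*}
Applying $\ell_F^{\ast}$ to Equation \eqref{eq:beta_wedge_omega} and using $\hat\pi\circ\ell_F=\pi$ (Proposition \ref{prop:legendre_transform}), which gives $\ell_F^{\ast}\hat\pi^{\ast}=\pi^{\ast}$, we get
\begin{equation*}
 \bigl(\ell_F^{\ast}\beta^{F^{\ast}}\bigr)\wedge\pi^{\ast}\Omega^{F^{\ast}}=\ada .
\end{equation*}
Since $\ell_F$ is a diffeomorphism carrying each fiber $H_xM$ onto $H_x^{\ast}M$, its differential restricts to an isomorphism $VHM\to VH^{\ast}M$, so $\ell_F^{\ast}\beta^{F^{\ast}}$ is nowhere zero on $VHM$. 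Thus $\ell_F^{\ast}\beta^{F^{\ast}}$ is, relative to the volume $\Omega^{F^{\ast}}$ on $M$, exactly an admissible choice of the form $\alpha^{\Omega^{F^{\ast}}}$ of Lemma \ref{lem:existence_angle}, and is therefore uniquely determined on $VHM$.

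Next I would pin down the base volume. Write $\Omega^{F^{\ast}}=g\,\Omega^F$ with $g\colon M\to\R_+^{\ast}$ (both being orientation-preserving volume forms on $M$). By the Remark preceding the proof of Lemma \ref{lem:volume}, on $VHM$ one has $\alpha^{g\Omega^F}=\tfrac1g\,\alpha^{\Omega^F}$, i.e. $\ell_F^{\ast}\beta^{F^{\ast}}=\tfrac1g\,\alpha^F$ on $VHM$. Integrating over the fiber $H_xM$ and changing variables via $\ell_F\colon H_xM\to H_x^{\ast}M$ yields
\begin{equation*}
 \voleucl(\S^{n-1})=\int_{H_x^{\ast}M}\beta^{F^{\ast}}=\int_{H_xM}\ell_F^{\ast}\beta^{F^{\ast}}=\frac{1}{g(x)}\int_{H_xM}\alpha^F=\frac{\voleucl(\S^{n-1})}{g(x)},
\end{equation*}
whence $g\equiv 1$, that is $\Omega^F=\Omega^{F^{\ast}}$. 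Substituting this back into the two displayed identities above and using again the uniqueness of $\alpha^{\Omega^F}$ on $VHM$ gives $\ell_F^{\ast}\beta^{F^{\ast}}=\alpha^F$ on $VHM$, completing the proof.

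The one point needing care is the sign in the change-of-variables step: one must check that the fiber orientation on $H_xM$ implicitly used to define $\int_{H_xM}\alpha^F$ is sent by $\ell_F$ to the fiber orientation on $H_x^{\ast}M$ used to define $\int_{H_x^{\ast}M}\beta^{F^{\ast}}$. In both cases this fiber orientation is the one compatible with the ambient orientation (given by $\ada$, respectively $\bdb$) together with the pulled-back base orientation $\pi^{\ast}\Omega^F$, respectively $\hat\pi^{\ast}\Omega^{F^{\ast}}$; since $\ell_F$ intertwines $A$ with $B$ and $\pi$ with $\hat\pi$, it intertwines all of these data, so the change of variables contributes $+1$ and not $-1$. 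Everything else in the argument is a routine pull-back computation, so I expect this orientation bookkeeping to be the only real subtlety.
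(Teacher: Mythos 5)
Your argument is correct and is essentially the paper's own proof. Both start from $\ell_F^{\ast}(\bdb)=\ada$, apply $\hat\pi\circ\ell_F=\pi$ to get $(\ell_F^{\ast}\beta^{F^{\ast}})\wedge\pi^{\ast}\Omega^{F^{\ast}}=\ada$, and then settle the fiber-length normalization by the change of variables $\int_{H_xM}\ell_F^{\ast}\beta^{F^{\ast}}=\int_{H_x^{\ast}M}\beta^{F^{\ast}}=\voleucl(\S^{n-1})$; the paper then invokes the uniqueness part of Proposition \ref{prop:construction} in one stroke, whereas you unpack that uniqueness by writing $\Omega^{F^{\ast}}=g\,\Omega^F$ and deriving $g\equiv1$ from the same fiber integral, which is a cosmetic rather than substantive difference. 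Your closing remark on orientation bookkeeping is a reasonable extra check that the paper leaves implicit.
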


In the sequel, when the metric is clear from the context, we will often forget the superscript when writing $\alpha^F$, $\beta^{F^{\ast}}$ or $\Omega^F$.

\begin{proof}
First, note that
\begin{equation*}
 \ada = \ell_F^{\ast}\left(\bdb\right) = \ell_F^{\ast}\left( \beta^{F^{\ast}} \wedge \hat{\pi}^{\ast}\Omega^{F^{\ast}} \right) = \ell_F^{\ast}\left( \beta^{F^{\ast}} \right) \wedge \ell_F^{\ast} \hat{\pi}^{\ast} \left( \Omega^{F^{\ast}}  \right).
\end{equation*}
 As $\hat{\pi} \circ \ell_F = \pi$ we have that $\ell_F^{\ast} \hat{\pi}^{\ast} \left( \Omega^{F^{\ast}}  \right) = \pi^{\ast}\Omega^{F^{\ast}}$, which yields 
\begin{equation*}
  \ada = \ell_F^{\ast} \beta^{F^{\ast}}  \wedge \pi^{\ast}\Omega^{F^{\ast}}.
\end{equation*}
It remains to show that the length of the fibers for $\ell_F^{\ast} \beta^{F^{\ast}}$ are equal to $\voleucl\left(\S^{n-1} \right)$ as the uniqueness part in Proposition \ref{prop:construction} would then prove the claim. By the change of variables formula and the definition of $\beta^{F^{\ast}}$, we have for any $x\in M$
\begin{equation*}
 \int_{H_x M} \ell_F^{\ast} \beta^{F^{\ast}}  = \int_{\ell_F(H_xM)} \beta^{F^{\ast}} = \int_{H^{\ast}_xM} \beta^{F^{\ast}} = \voleucl\left(\S^{n-1} \right). \qedhere
\end{equation*}

\end{proof}

\subsection{Angle and conformal change} \label{subsec:angle_and_conformal_change}

Here we will show two properties relating our angle and conformal change of Finsler metrics. The first says that the angle and coangle are invariant under conformal change, which is natural. The second is more surprising; the coangle determines the conformal class of a Finsler metric, therefore a Finsler metric is uniquely determined by a coangle and a volume form on the manifold.

\begin{prop}
Let $(M,F)$\ be a Finsler manifold, $f \colon M \xrightarrow{C^{\infty}} \R$, $F_f= e^{f} F$, $\alpha_f$, $\beta_f$ and $\Omega_f$\ the angle, co-angle and volume form of $F_f$. Then $\alpha_f = \alpha$\ on $VHM$, $\beta_f = \beta$ on $VH^{\ast}M$ and $\Omega_f = e^{nf} \Omega$.
\end{prop}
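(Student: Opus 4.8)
The claim has three parts, and the natural strategy is to reduce everything to understanding how the contact form $A$ and its Reeb field $X$ change under the conformal scaling $F \mapsto F_f = e^f F$. The key observation is that vertical derivatives behave simply: since $F_f = e^f F$ and $e^f$ depends only on the base point $x$, we have $r^\ast A_f = d_v F_f = e^{f\circ p}\, d_v F = e^{f\circ\pi} \cdot r^\ast A$ (pulled back appropriately), so on $HM$ we get $A_f = (e^{f}\circ\pi)\, A =: \varphi\, A$ where $\varphi = e^{f\circ\pi}$ is the pullback of a function on $M$. This single identity drives all three statements. I would state and prove it first.

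\begin{proof}
Write $\varphi := e^{f}\circ \pi \colon HM \to \R_+^\ast$. Since the vertical derivative of $F_f = e^f F$ satisfies $d_v F_f = e^{f\circ p_{|M}}\, d_v F$ (because $e^f$ is constant along the fibers of $p_{|M}$), and since $d_v F = r^\ast A$, $d_v F_f = r^\ast A_f$, we obtain on $HM$
\begin{equation}\label{eq:Af_is_phi_A}
 A_f = \varphi\, A.
\end{equation}
Now compute the associated volume form on $HM$. From \eqref{eq:Af_is_phi_A},
\begin{equation*}
 dA_f = d\varphi \wedge A + \varphi\, dA,
\end{equation*}
hence, expanding and using that $d\varphi \wedge d\varphi = 0$ and $A \wedge A = 0$,
\begin{equation*}
 A_f \wedge dA_f^{\,n-1} = \varphi A \wedge \left( d\varphi\wedge A + \varphi\, dA\right)^{n-1} = \varphi^{n}\, A\wedge dA^{n-1},
\end{equation*}
since every term in the expansion of $(d\varphi\wedge A + \varphi\, dA)^{n-1}$ containing a factor $d\varphi \wedge A$ is killed after wedging on the left by $\varphi A$. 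Because $\varphi = e^f\circ\pi = \pi^\ast(e^{nf})^{1/n}$, this reads $A_f\wedge dA_f^{\,n-1} = \pi^\ast(e^{nf})\, A\wedge dA^{n-1}$.

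Next we identify $\Omega_f$. By Proposition \ref{prop:construction} applied to $F_f$, there is a unique volume form $\Omega_f$ on $M$ and an $(n-1)$-form $\alpha_f$ on $HM$, never zero on $VHM$, with $\alpha_f \wedge \pi^\ast\Omega_f = A_f\wedge dA_f^{\,n-1}$ and $\int_{H_xM}\alpha_f = \voleucl(\S^{n-1})$. Set $\widetilde\Omega := e^{nf}\,\Omega^F$. Then, by the computation above,
\begin{equation*}
 \alpha^F \wedge \pi^\ast\widetilde\Omega = \alpha^F\wedge \pi^\ast(e^{nf})\,\pi^\ast\Omega^F = \pi^\ast(e^{nf})\, A\wedge dA^{n-1} = A_f\wedge dA_f^{\,n-1},
\end{equation*}
using $\alpha^F \wedge \pi^\ast\Omega^F = A\wedge dA^{n-1}$ from \eqref{eq:alpha_wedge_omega}. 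Moreover $\int_{H_xM}\alpha^F = \voleucl(\S^{n-1})$. Thus the pair $(\widetilde\Omega, \alpha^F)$ satisfies both defining conditions of Proposition \ref{prop:construction} for $F_f$, and by the uniqueness assertion there (uniqueness of the volume, and uniqueness of the $(n-1)$-form on $VHM$) we conclude $\Omega_f = e^{nf}\,\Omega^F$ and $\alpha_f = \alpha^F$ on $VHM$.

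Finally, the statement for the co-angle. Recall $B_f = (\ell_{F_f}^{-1})^\ast A_f$. By Corollary \ref{cor:uniquely_contact}, $B_f = \psi\, B$ for some $\psi\colon H^\ast M\to\R_+^\ast$; in fact, by Equation \eqref{eq:B_equal_louiville_over_F_star}, $\hat r^\ast B = \lambda/F^\ast$ and $\hat r^\ast B_f = \lambda/F_f^\ast$, and since $F_f = e^f F$ gives $F_f^\ast = e^{-f}F^\ast$ (the dual metric scales inversely, directly from Equation \eqref{eq:dual_finsler}), we get $\hat r^\ast B_f = e^{f\circ\hat p}\,\hat r^\ast B$, i.e.\ $B_f = (e^{f}\circ\hat\pi)\, B$. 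This is the cotangent analogue of \eqref{eq:Af_is_phi_A} with $\hat\pi$ in place of $\pi$, so the identical computation yields $B_f\wedge dB_f^{\,n-1} = \hat\pi^\ast(e^{nf})\, B\wedge dB^{n-1}$, and then the uniqueness in Proposition \ref{prop:coangle} gives $\beta_f = \beta^{F^\ast}$ on $VH^\ast M$ (and incidentally $\Omega^{F_f^\ast} = e^{nf}\,\Omega^{F^\ast}$, consistent with $\Omega^F = \Omega^{F^\ast}$).
\end{proof}

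The only genuinely delicate point is the wedge-power computation showing $A_f \wedge dA_f^{\,n-1} = \varphi^n\, A\wedge dA^{n-1}$: one must be careful that all the cross terms involving $d\varphi\wedge A$ vanish after wedging with $\varphi A$ on the left. This is the step I expect to need the most attention, though it is ultimately a routine multilinear-algebra argument and not a real obstacle; the rest is just bookkeeping with the uniqueness statements already established in the excerpt. One should also double-check the sign/scaling convention $F_f^\ast = e^{-f}F^\ast$ against Equation \eqref{eq:dual_finsler}, which is immediate since scaling $F$ by $e^f$ scales the constraint set $\{F=1\}$ by $e^{-f}$.
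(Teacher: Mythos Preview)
Your proof is correct and, for the angle and volume, follows essentially the same approach as the paper: establish $A_f = e^f A$, compute $A_f\wedge dA_f^{\,n-1} = e^{nf}\,A\wedge dA^{n-1}$, and invoke the uniqueness in Proposition~\ref{prop:construction}. Your verification is in fact a bit more direct: you exhibit the candidate pair $(e^{nf}\Omega^F,\alpha^F)$ and check it satisfies the defining conditions, whereas the paper passes through an auxiliary volume $\omega$ and the formula $\Omega^F = \frac{l^\omega}{c_n}\omega$ before reaching the same conclusion.

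For the co-angle you take a genuinely different route. The paper proves a short lemma that the homogeneous Legendre transforms coincide, $\ell_F = \ell_{F_f}$ (since $A_f$ is a positive multiple of $A$, they define the same class in $H^\ast_xM$), and then transports $\alpha_f = \alpha$ via $\alpha^F = \ell_F^\ast \beta^{F^\ast}$. You instead compute $F_f^\ast = e^{-f}F^\ast$, deduce $B_f = (e^f\circ\hat\pi)\,B$ from \eqref{eq:B_equal_louiville_over_F_star}, and rerun the entire tangent-side argument on $H^\ast M$. Both are clean; the paper's route is shorter once the relation $\alpha = \ell_F^\ast\beta$ is in hand, while yours avoids the Legendre transform entirely at the cost of repeating the wedge computation.
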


 \begin{proof}
  Using the definition of the Hilbert form, we immediately have $A_f = e^{f} A$, so 
  $$
  A_f \wedge dA_f^{n-1} = e^{nf} A\wedge dA^{n-1}.
  $$
  Let $\omega$\ be a volume form on $M$. Let $\alpha_F^{\omega}$\ and $\alpha_{F_f}^{\omega}$\ be the two $(n-1)$-forms defined by $\alpha_F^{\omega}\wedge \pi^{\ast} \omega =  A\wedge dA^{n-1}$\ and $\alpha_{F_f}^{\omega}\wedge \pi^{\ast} \omega  =  A_f \wedge dA_f^{n-1}$. We have
$$
\alpha_{F_f}^{\omega}\wedge \pi^{\ast} \omega = e^{nf} \alpha_F^{\omega}\wedge \pi^{\ast} \omega.
$$
From there we get that, for any $Y_1, \dots , Y_{n-1} \in VHM$,
  \begin{align*}
   i_{Y_1}\dots i_{Y_{n-1}} \left( \alpha_{F}^{\omega}\wedge \pi^{\ast} \omega \right) &= \alpha_{F}^{\omega}\left(Y_1, \dots , Y_{n-1} \right)\pi^{\ast} \omega \, , \\
   i_{Y_1}\dots i_{Y_{n-1}} \left( \alpha_{F_f}^{\omega}\wedge \pi^{\ast} \omega \right) &= \alpha_{F_f}^{\omega}\left(Y_1, \dots , Y_{n-1} \right)\pi^{\ast} \omega \, . 
  \end{align*}
  Therefore
  $$
  \alpha_{F_f}^{\omega}\left(Y_1, \dots , Y_{n-1} \right)\pi^{\ast} \omega =  e^{nf} \alpha_{F}^{\omega}\left(Y_1, \dots , Y_{n-1} \right)\pi^{\ast} \omega\, ,
  $$
which leads to
\begin{equation*}
\alpha_{F_f}^{\omega}\left(Y_1, \dots , Y_{n-1} \right) = e^{nf} \alpha_{F}^{\omega}\left(Y_1, \dots , Y_{n-1} \right).
\end{equation*}
And we deduce that, for any $x\in M$,
\begin{equation*}
 \int_{H_xM} \hspace{-2mm} \alpha_{F_f}^{\omega} = e^{nf(x)} \int_{H_xM} \hspace{-2mm} \alpha_{F}^{\omega}\, , 
\end{equation*}
The two volume forms $\Omega$\ and $\Omega_f$\ on $M$\ associated with $F$\ and $F_f$\ are given by (see Equation \eqref{eq:definition_Omega_F})

\begin{equation*} 
\Omega_f = \frac{\int_{\scriptscriptstyle{H_xM}} \hspace{-1mm} \alpha_{F_f}^{\omega}}{c_n} \omega,\; \text{and} \; \, \Omega = \frac{\int_{\scriptscriptstyle{H_xM}} \hspace{-1mm} \alpha_{F}^{\omega}}{c_n} \omega \, ,
\end{equation*}
which yields
\begin{equation}
\label{eq:omega1}
 \Omega_f = e^{nf} \Omega\, .
\end{equation}
Using the definition of $\alpha_f$\ and Equation \eqref{eq:omega1}, we obtain
\begin{equation*}
 e^{nf} \alpha_f \wedge\pi^{\ast} \Omega  = \alpha_f \wedge \pi^{\ast} \Omega_f = A_f\wedge dA_f^{n-1} = e^{nf} A\wedge dA^{n-1} =e^{nf} \alpha \wedge \pi^{\ast}\Omega \, ,
\end{equation*}
which in turns implies that, for any $Y_1, \dots , Y_{n-1} \in VHM$, we have
\begin{equation*}
\alpha\left(Y_1, \dots , Y_{n-1}\right) = \alpha_f\left(Y_1, \dots , Y_{n-1}\right).
\end{equation*}
Applying the following lemma will prove the claim about the co-angles.
\end{proof}

\begin{lem}
If $F_f = e^{f} F$, then $\ell_{F} = \ell_{F_f}$.
\end{lem}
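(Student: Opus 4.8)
The statement is essentially immediate from the definitions; the plan is simply to compute the Legendre transform of $F_f$ directly and to observe that the conformal factor, being a positive function of the base point alone, disappears upon homogenization. So there is no real obstacle — the only point worth stressing is that passing to the homogenized bundle kills exactly the positive scalar $e^{f}$, which is why the two Legendre transforms agree on the nose and not merely up to scale.

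First I would recall that $\ell_F$ is obtained by projecting to the homogenized bundles the $1$-homogeneous map $\L_F\colon\mathring{T}M\to\mathring{T}^{\ast}M$, $\L_F(x,v)(u)=\tfrac12\,\tfrac{d}{dt}F^2(x,v+tu)|_{t=0}$. Since $F_f=e^{f}F$ and $f$ is a function on $M$, one has $F_f^2(x,v+tu)=e^{2f(x)}F^2(x,v+tu)$ for every $t$, and differentiating at $t=0$ gives
\[
 \L_{F_f}(x,v)(u)=e^{2f(x)}\,\L_F(x,v)(u),\qquad (x,v)\in\mathring{T}M,\ u\in T_xM,
\]
that is, $\L_{F_f}=(e^{2f}\circ p_{|M})\,\L_F$ as maps $\mathring{T}M\to\mathring{T}^{\ast}M$. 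Because $e^{2f(x)}>0$, the covectors $\L_{F_f}(x,v)$ and $\L_F(x,v)$ lie on the same ray of $\mathring{T}^{\ast}_xM$, hence have the same image under the homogenization projection $\hat r\colon\mathring{T}^{\ast}M\to H^{\ast}M$. Since $\ell_F$ and $\ell_{F_f}$ are by definition the descents of $\L_F$ and $\L_{F_f}$ to the homogenized bundles, this yields $\ell_{F_f}=\ell_F$.

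An alternative route, which I would mention for completeness, goes through the Hilbert form: the computation in the proof above already shows $A_f=e^{f}A$ (meaning $(\pi^{\ast}e^{f})\,A$), and $\ell_F(x,\xi)$ is the class in $H^{\ast}M$ of the covector $(d\pi_{(x,\xi)})_{\ast}A_{(x,\xi)}\in T_x^{\ast}M$, which is well defined because $A$ vanishes on $VHM=\ker d\pi$. Then $(d\pi_{(x,\xi)})_{\ast}(A_f)_{(x,\xi)}=e^{f(x)}\,(d\pi_{(x,\xi)})_{\ast}A_{(x,\xi)}$ differs from $(d\pi_{(x,\xi)})_{\ast}A_{(x,\xi)}$ only by the positive scalar $e^{f(x)}$, so both represent the same point of $H^{\ast}M$, and again $\ell_{F_f}=\ell_F$. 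This lemma then feeds straight into the proof of the previous proposition to conclude $\beta_f=\beta$ on $VH^{\ast}M$, since $\beta_f=\big(\ell_{F_f}^{-1}\big)^{\ast}$ applied to the vertical part of the canonical volume equals the corresponding expression for $\beta$ once $\ell_{F_f}=\ell_F$.
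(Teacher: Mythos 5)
Your proposal is correct, and your primary argument takes a slightly different route from the paper's. The paper proceeds directly from the characterization $\ell_F(x,\xi) = $ (class of $A_{(x,\xi)}$ in $H_x^{\ast}M$) and then just observes $A_f = e^f A$; this is a one-line proof and is exactly your ``alternative route.'' Your first argument instead computes $\L_{F_f} = e^{2f}\L_F$ on $\mathring{T}M$ and then passes to the homogenization, which is equally valid and equally short — it trades the characterization via $A$ for a direct use of the defining formula of $\L_F$. The two arguments are essentially dual presentations of the same observation (the conformal factor is a positive scalar depending only on the base point, hence dies under $\hat r$), so there is nothing to choose between them; the version via $A$ has the minor advantage of matching the way $\ell_F$ was introduced two paragraphs earlier in the paper, whereas yours has the minor advantage of not invoking the Hilbert form at all. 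Your closing remark about how the lemma feeds into the proposition on co-angles is accurate.
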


\begin{proof}
Recall (see Section \ref{sec:cotangent}) that for $(x,\xi) \in HM$, $\ell_F(x,\xi)$ is given by the class in $H^{\ast}_x M$ of $ A_{(x,\xi)}$ seen as an element of $T^{\ast}_xM$. Now $A_f = e^f A$, therefore,  $ A_{(x,\xi)}$ and $ \left.A_f \right._{(x,\xi)}$ are in the same class. Hence $\ell_{F} = \ell_{F_f}$.
\end{proof}

\begin{prop} \label{prop:coangles_equals_implies_conformal}
 Let $\beta$ and $\beta_1$ be coangles associated with two Finsler metrics $F$\ and $F_1$. If $\beta$ and $\beta_1$ are equal on $VH^{\ast}M$, then there exists a positive function $f$ on $M$ such that $F_1 = f F$.
\end{prop}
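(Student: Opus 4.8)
The plan is to exploit Corollary~\ref{cor:uniquely_contact}: it gives a function $u\colon H^{\ast}M\to\R$ with $B_1 = uB$, and $u$ is positive because Equation~\eqref{eq:B_equal_louiville_over_F_star} gives $\hat{r}^{\ast}B_1 = (F^{\ast}/F_1^{\ast})\,\hat{r}^{\ast}B$ with $F^{\ast}/F_1^{\ast}>0$. The whole point will be to show that the hypothesis ``$\beta_1=\beta$ on $VH^{\ast}M$'' forces $u$ to be constant along the fibers of $\hat\pi$, i.e.\ $u=\hat\pi^{\ast}h$ for some $h\colon M\to\R_+^{\ast}$; once that is established, unwinding $B_1 = (\hat\pi^{\ast}h)B$ via Equation~\eqref{eq:B_equal_louiville_over_F_star} and dualizing with Lemma~\ref{lem:doubledual_norm} immediately produces $F_1 = hF$.

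First I would relate the canonical volumes. From $dB_1 = du\wedge B + u\,dB$ and $B\wedge B = 0$, expanding $(du\wedge B + u\,dB)^{n-1}$ and wedging with the outer $B$, every term carrying a factor $du\wedge B$ dies (for $k\ge 2$ because $du\wedge du = 0$ and $B\wedge B=0$; for $k=1$ because $B\wedge du\wedge B = -du\wedge B\wedge B = 0$), so
\begin{equation*}
 B_1\wedge dB_1^{\,n-1} = u^{n}\, B\wedge dB^{n-1}.
\end{equation*}
Feeding this into the defining relations of the two co-angles from Proposition~\ref{prop:coangle} gives $\beta_1\wedge\hat\pi^{\ast}\Omega^{F_1^{\ast}} = u^{n}\,\beta\wedge\hat\pi^{\ast}\Omega^{F^{\ast}}$. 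Contracting both sides with $n-1$ vertical vector fields $Y_1,\dots,Y_{n-1}\in VH^{\ast}M$, and using that $\hat\pi^{\ast}\Omega^{F^{\ast}}$ and $\hat\pi^{\ast}\Omega^{F_1^{\ast}}$ are annihilated by interior product with any vertical vector (exactly the computation in the Remark preceding Lemma~\ref{lem:volume}), we obtain
\begin{equation*}
 \beta_1(Y_1,\dots,Y_{n-1})\,\hat\pi^{\ast}\Omega^{F_1^{\ast}} = u^{n}\,\beta(Y_1,\dots,Y_{n-1})\,\hat\pi^{\ast}\Omega^{F^{\ast}}.
\end{equation*}
Writing $\Omega^{F_1^{\ast}} = g\,\Omega^{F^{\ast}}$ for a positive function $g$ on $M$ (both are orientation-preserving volume forms on the same manifold), using the hypothesis $\beta_1=\beta$ on $VH^{\ast}M$, and choosing at a point $(x,p)\in H^{\ast}M$ vertical vectors with $\beta(Y_1,\dots,Y_{n-1})\neq 0$ (possible since $\beta$ is nowhere zero on $VH^{\ast}M$), we conclude $g(x) = u(x,p)^{n}$ for every $p$. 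Hence $u$ is independent of the fiber variable: $u = \hat\pi^{\ast}h$ with $h:=g^{1/n}\colon M\to\R_+^{\ast}$.

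Finally I would translate this back. Applying $\hat{r}^{\ast}$ to $B_1 = (\hat\pi^{\ast}h)B$, and using $\hat\pi\circ\hat{r}=\hat{p}$ together with Equation~\eqref{eq:B_equal_louiville_over_F_star} (which, as in the proof of Corollary~\ref{cor:uniquely_contact}, gives $\hat{r}^{\ast}B_1 = (F^{\ast}/F_1^{\ast})\,\hat{r}^{\ast}B$), one gets $(\hat{p}^{\ast}h)\,\hat{r}^{\ast}B = (F^{\ast}/F_1^{\ast})\,\hat{r}^{\ast}B$; since $\hat{r}^{\ast}B$ is nowhere zero this reads $F_1^{\ast}(x,p) = h(x)^{-1}F^{\ast}(x,p)$. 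Dualizing, with $F^{\ast\ast}=F$ and $F_1^{\ast\ast}=F_1$ from Lemma~\ref{lem:doubledual_norm}, for $v\in T_xM$:
\begin{equation*}
 F_1(x,v) = \sup\{\,p(v)\mid F_1^{\ast}(x,p)=1\,\} = \sup\{\,p(v)\mid F^{\ast}(x,p)=h(x)\,\} = h(x)\,F(x,v),
\end{equation*}
so $F_1 = fF$ with $f:=h$, a positive function on $M$. The main obstacle is the middle step: seeing that equality of the co-angles on the vertical bundle is precisely the assertion that the conformal factor relating $B$ and $B_1$ is the pullback of a function on $M$; the volume computation and the final dualization are routine.
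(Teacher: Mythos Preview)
Your proof is correct and follows essentially the same route as the paper: show that the ratio relating $B$ and $B_1$ (equivalently $F^{\ast}/F_1^{\ast}$) is constant on the fibers of $\hat\pi$ by contracting the relation between the two canonical volumes with $n-1$ vertical vectors, then dualize via $F^{\ast\ast}=F$. The only cosmetic difference is that you derive $B_1\wedge dB_1^{\,n-1}=u^n\,B\wedge dB^{n-1}$ by a direct wedge computation from $B_1=uB$, whereas the paper reads it off from Equation~\eqref{eq:bdb_and_louiville}; both are immediate.
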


\begin{proof}

By Equation \eqref{eq:bdb_and_louiville}, we have that $ \bdb = \left(\frac{F_1^{\ast}}{F^{\ast}}\right)^n B_1 \wedge dB_1^{n-1}$, where $F_1^{\ast}/ F^{\ast}$ is seen as a function on $H^{\ast}M$, we call it $f$ for the moment (note that $f$ is positive).\\
Using the definition of coangle, we have that
\begin{equation*}
 \beta \wedge \hat{\pi}^{\ast}\Omega = f^n \beta_1 \wedge \hat{\pi}^{\ast}\Omega_1 = f^n \beta \wedge \hat{\pi}^{\ast}\Omega_1\, .
\end{equation*}
Now, applying $i_{Y_1} \dots i_{Y_{n-1}}$ to both sides, for every $Y_1, \dots, Y_{n-1} \in VH^{\ast}M$, shows that we must have $\hat{\pi}^{\ast}\Omega = f^n \hat{\pi}^{\ast}\Omega_1$, i.e., $f$ must be constant on the fibers. Hence, we can see $f$ as a function on $M$, and, as $f = \frac{F_1^{\ast}}{F^{\ast}}$, we get, $F_1^{\ast}=f F^{\ast}$, with $f \colon M \rightarrow \R$. So $F_1^{\ast}$ and $F^{\ast}$ differs by a conformal change.\\
From there, it is easy to deduce it for $F_1$ and $F$. Indeed, recall that $F_1^{\ast\ast} = F_1$ (see Lemma \ref{lem:doubledual_norm}), therefore
\begin{align*}
 F_1(x,v) &=  \sup \lbrace p(v) \mid p\in T_x^{\ast}M \; \text{such that } F_1^{\ast}(x,p)=1 \rbrace \\
	  &=  \sup \lbrace p(v) \mid p\in T_x^{\ast}M \; \text{such that } f(x) F^{\ast}(x,p)=1 \rbrace\\
	  &= f(x) F(x,v)\, .
\end{align*}
\end{proof}

\begin{cor} \label{cor:Finsler_uniquely_determined}
A Finsler metric $F$ is uniquely determined by the two forms $\beta^F$ and $\Omega^F$.
\end{cor}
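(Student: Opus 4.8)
The plan is to reduce the statement to Proposition~\ref{prop:coangles_equals_implies_conformal} together with the behaviour of the volume $\Omega^F$ under a conformal change. Suppose $F$ and $F_1$ are two Finsler metrics on $M$ with $\beta^{F} = \beta^{F_1}$ on $VH^{\ast}M$ and $\Omega^{F} = \Omega^{F_1}$; the goal is to conclude that $F_1 = F$.

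First I would invoke Proposition~\ref{prop:coangles_equals_implies_conformal}: the equality of the coangles on the vertical bundle already forces $F_1 = f F$ for some positive function $f \colon M \to \R$ on the base. Writing $f = e^{g}$ with $g \colon M \xrightarrow{C^{\infty}} \R$, the preceding proposition computing the volume of a conformal deformation ($\Omega^{e^{g}F} = e^{ng}\,\Omega^{F}$) gives $\Omega^{F_1} = f^{n}\,\Omega^{F}$. Since by hypothesis $\Omega^{F_1} = \Omega^{F}$ and $\Omega^{F}$ is a genuine volume form, hence nowhere vanishing, we may divide to obtain $f^{n} \equiv 1$ on $M$; as $f$ is positive this forces $f \equiv 1$, i.e. $F_1 = F$. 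Thus the pair $(\beta^{F},\Omega^{F})$ determines $F$ uniquely.

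I do not expect any real obstacle here: the genuinely substantial work is already packaged into Proposition~\ref{prop:coangles_equals_implies_conformal} (which relies on Equation~\eqref{eq:bdb_and_louiville} and on the biduality $F^{\ast\ast} = F$ from Lemma~\ref{lem:doubledual_norm}) and into the conformal-change formula for $\Omega^{F}$. The corollary is then simply the observation that the coangle $\beta^{F}$ pins down the conformal class of $F$, while the volume $\Omega^{F}$ pins down the one remaining global positive scalar, leaving only $F$ itself.
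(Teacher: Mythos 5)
Your proof is correct and is exactly the argument the paper has in mind: the corollary is stated immediately after Proposition~\ref{prop:coangles_equals_implies_conformal} with no written proof, and the intended reasoning is precisely that the coangle fixes the conformal class while the transformation rule $\Omega^{e^{g}F} = e^{ng}\Omega^{F}$ then forces the conformal factor to be~$1$. No gap here.
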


Note that this corollary gives an interesting characterization of a Finsler metric. However, given two such forms, we did not try to give conditions implying that they come from a Finsler metric even so it is an interesting question.

Remark that it would seem natural to have the above proposition also true for angles (instead of coangles). Unfortunately, we do not know if that is the case. One thing is sure: it is not, as it might seem, a direct consequence of Proposition \ref{prop:coangles_equals_implies_conformal}: If $\alpha = \alpha_1$, we just have $\beta_1 = \left( \ell_F \circ \ell_{F_1}^{-1} \right)^{\ast}(\beta)$, but we have not yet found a reason to believe that having the same angle would imply that the Legendre transforms are the same.

\section{Angles in dimension two} \label{sec:angles_dim2}

In dimension $2$, a solid angle is the same thing as a traditional angle. In this section, we quickly go over a few properties of our angle. It is not aimed to be anything like a thorough study of the angle, more of a quick overlook of some questions that at once came to our mind. First, we study the relationship between this angle and another that is traditionally used in Finsler geometry. Then we state some easy properties of the rotation generated by this angle.

\subsection{A characterization of Riemannian surfaces}

Recall that there is a canonical Riemannian metric $g$ on $VHM$ given by:
\begin{equation*}
 g (y_1,y_2) = dA\left( \left[X,Y_2\right] , Y_1 \right),
\end{equation*}
 where $Y_1$\ and $Y_2$\ are vertical vector fields such that $Y_i(x,v) = y_i$. This defines a distance function on each $H_x\Sigma$, i.e., an angle. Note that this gives an angle in any dimension, not just a solid angle as our $\alpha^F$.

Even though the presentation of this metric was given by P. Foulon, the angle that it defines was known well before him, because it turns out that this vertical metric is the same as the one obtained by considering $\left(\dfrac{\partial^2 F^2}{\partial v_i \partial v_j}\right)_{i,j}$.

Our goal in this section is to show that $\alpha^F$ is in general very different from the angle obtained via the vertical metric. We will show even more: in dimension $2$, if those angles coincides at a point, then the Finsler metric is Riemannian at that point.
\begin{prop}
 Let $Y$ be the vertical vector field such that $\alpha^F(Y)=1$ and $Y'\in VH\Sigma$ such that $g(Y',Y') =1$. Let $c \colon HM \rightarrow \R$, such that $Y' = c Y$. \\
 If $c$ is constant on the fibers, then $F$\ is Riemannian.
\end{prop}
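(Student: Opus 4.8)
The plan is to reduce the statement to Proposition~\ref{prop:b=0_alors_F_riem}: I will show that if $c$ is constant along every fiber $H_x\Sigma$, then the function $b$ appearing in Cartan's structure equations (Proposition~\ref{prop:Cartan_structure_equation}) vanishes identically, which forces $F$ to be Riemannian. Throughout, set $h := H_X(Y') \in h_X H\Sigma$, so that $(X,h,Y')$ is a frame of $TH\Sigma$; recall that $A(X)=1$, that $A$ vanishes on $h_XH\Sigma\oplus VH\Sigma$, and that $dA(Y',h)=1$ (this last identity is exactly the normalization of $Y'$, cf.\ the proof of Proposition~\ref{prop:Cartan_structure_equation}). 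Fix $x\in\Sigma$ and a point $u_0\in H_x\Sigma$, let $\chi_s$ be the flow of $Y'$, and set $v(s):=d\pi\big(X_{\chi_s(u_0)}\big)$, the parametrization of the $F$-unit circle of $T_x\Sigma$ by $g$-arclength. From the proof of Proposition~\ref{prop:b=0_alors_F_riem} one has $\dot v = d\pi(h)$ and, along $\chi_s(u_0)$, the equation of motion \eqref{eq:utilisation_unique1}: $\ddot v = -v + b\,\dot v$.

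The heart of the argument is to read off $c$ from the volume $\Omega^F$. Since $n=2$, the defining identity of Proposition~\ref{prop:construction} is $\alpha^F\wedge\pi^{\ast}\Omega^F = A\wedge dA$. Evaluating both sides on the frame $(X,h,Y')$ and using that $\pi^{\ast}\Omega^F$ is basic, hence vanishes under contraction with the vertical field $Y'$, every term containing $\alpha^F(X)$ or $\alpha^F(h)$ drops out and one is left with
\[
 \alpha^F(Y')\cdot\Omega^F_x\!\left(v,\dot v\right) = \pm 1 .
\]
As $\alpha^F$ never vanishes on $VH\Sigma$ and $\alpha^F(Y') = c\,\alpha^F(Y) = c$, this shows that $\Omega^F_x(v,\dot v) = \pm 1/c$ is nowhere zero on the fiber, and that $c$ is constant on $H_x\Sigma$ if and only if $s\mapsto\Omega^F_x\big(v(s),\dot v(s)\big)$ is constant. (One also sees $\Omega^F_x(v,\dot v)\neq 0$ directly, since $v$ is a point of a strongly convex curve around the origin and $\dot v$ a tangent vector there.)

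It remains to differentiate. Since $v(s)$ and $\dot v(s)$ are curves in the single vector space $T_x\Sigma$ and $\Omega^F_x$ is a fixed alternating $2$-form there, the equation of motion gives
\[
 \frac{d}{ds}\,\Omega^F_x\!\left(v,\dot v\right) = \Omega^F_x(\dot v,\dot v) + \Omega^F_x(v,\ddot v) = \Omega^F_x\!\left(v,\,-v + b\,\dot v\right) = b\cdot\Omega^F_x\!\left(v,\dot v\right).
\]
If $c$ is constant on every fiber, the left-hand side vanishes identically for all $x$, and since $\Omega^F_x(v,\dot v)\neq 0$ we conclude $b\equiv 0$ on $H\Sigma$. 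Proposition~\ref{prop:b=0_alors_F_riem} then yields that $F$ is Riemannian. The only step requiring genuine care is the frame computation giving $\alpha^F(Y')\,\Omega^F_x(v,\dot v)=\pm 1$: one must keep track of which contractions of $\pi^{\ast}\Omega^F$ vanish and of the orientation conventions, but the identity itself is forced up to sign, and only the fact that $\Omega^F_x(v,\dot v)$ is a nonzero multiple of $c^{-1}$ is used afterwards.
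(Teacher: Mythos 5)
Your proof is correct, and it takes a genuinely different route from the paper's. The paper works intrinsically with forms on $H\Sigma$: it observes that $c$ constant on fibers forces $A\wedge i_{Y'}dA$ to be basic, computes $L_{Y'}\bigl(i_{Y'}dA\bigr)(h)$ via Cartan's structure equations, and finds it equals $-b$, hence $b\equiv 0$. You instead push everything down to the unit circle in $T_x\Sigma$: evaluating $\alpha^F\wedge\pi^{\ast}\Omega^F = A\wedge dA$ on the frame $(X,h,Y')$ yields $c\cdot\Omega^F_x(v,\dot v) = -1$ (the actual sign, since $A(X)=1$ kills all but $dA(h,Y')=-1$ on the right), so $c$ is constant on a fiber iff $\Omega^F_x(v,\dot v)$ is; then the ODE $\ddot v = -v + b\,\dot v$ exhibits $b$ as the logarithmic derivative of $\Omega^F_x(v,\dot v)$ along the circle. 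The two proofs are parallel in structure — both exploit the same volume decomposition and both reduce to $b\equiv 0$ via the same structure equation — but yours is the explicit, pointwise incarnation of the paper's Lie-derivative identity, and it makes the geometric meaning of $b$ (rate of change of the $\Omega^F$-area swept by the velocity of the unit circle) more transparent; the paper's version is slicker and avoids tracking the circle parametrization, at the cost of a somewhat opaque Lie-derivative computation.
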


\begin{proof}
 If $c$\ is constant on the fibers, then
\begin{equation*}
 A\wedge i_{Y'} dA =i_{Y'} (A\wedge dA) = \alpha(Y') \pi^{\ast} \Omega  = c \pi^{\ast} \Omega 
\end{equation*}
can be projected to $M$. Therefore $L_{Y'} \left(A\wedge i_{Y'} dA \right)= 0$, and a direct computation gives $L_{Y'} \left(A\wedge i_{Y'} dA \right) = A\wedge L_{Y'} \left( i_{Y'} dA \right)$. We deduce that $L_{Y'} \left( i_{Y'} dA \right)$\ is null on $VHM \oplus h_{X} HM$. In particular, if $h = H^X (Y')$, then $L_{Y'} \left( i_{Y'} dA \right) (h) = 0$.

Using Cartan's structure equations (Proposition \ref{prop:Cartan_structure_equation}), we have
\begin{align*}
 L_{Y'} \left( i_{Y'} dA \right) (h) &= i_{Y'}\left(d i_{Y'} dA \right) (h) \\
   &= d\left( i_{Y'} dA \right) (h, Y') \\
   &= L_h \left( i_{Y'} dA (Y')\right) - L_{Y'} \left( i_{Y'} dA (h)\right)  - i_{Y'}dA\left( \left[h,Y'\right]\right) \\
   &= - L_{Y'} \left(  L_{Y'} A(h) - L_h A(Y') - A([H,Y']) \right) - dA\left( \left[h,Y'\right], Y'\right)\\
   &= - L_{Y'} \left(  - A(X - a Y'-b h) \right) - dA\left( X - a Y'-b h, Y' \right) \\
   &= - L_{Y'} (1) - dA\left(-b h, Y'\right)\\
   &= - b dA\left(\left[X, Y'\right], Y'\right) \\
   &= -b\, .
\end{align*}
So $b=0$, which is equivalent to $F$ being Riemannian (by Proposition \ref{prop:b=0_alors_F_riem}).
\end{proof}

\subsection{Rotations and reversibility}

\begin{prop}
 Let $(\Sigma,F)$\ be a Finsler $2$-manifold. There exists a unique vertical vector field $Y \colon H\Sigma \rightarrow VH\Sigma$\ such that $\alpha(Y)=1$. The one-parameter group $\theta^t$\ generated by $Y$\ is such that $\forall (x,\xi) \in H\Sigma$, $t\in \R$,
\begin{equation}
\label{eq:rotation}
\left\{
\begin{aligned}
 \pi\left(\theta^t(x,\xi)\right) &= x \\
 \theta^{2\pi}(x,\xi) &= (x,\xi) \, .
\end{aligned}
\right.
\end{equation}

\end{prop}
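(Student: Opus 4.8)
The plan is to handle the two assertions in turn, using the fact that in dimension two the vertical bundle $VH\Sigma = \ker d\pi$ has rank $n-1 = 1$, so the $(n-1)$-form $\alpha^F$ restricts to an ordinary $1$-form there. For the existence and uniqueness of $Y$: each fiber $H_x\Sigma = \bigl(\mathring{T}_x\Sigma\bigr)/\R^+_{\ast}$ is diffeomorphic to $\S^1$ and $VH\Sigma$ is a line bundle on which, by Proposition \ref{prop:construction}, $\alpha^F$ is nowhere zero. Hence at each point of $H\Sigma$ there is exactly one vertical vector on which $\alpha^F$ takes the value $1$, and since $\alpha^F$ is smooth these vectors assemble into a smooth vertical vector field $Y$ with $\alpha(Y)=1$, unique by construction. (Only the restriction of $\alpha^F$ to $VH\Sigma$ enters, which is precisely the part of $\alpha^F$ that is canonically determined; note also that this step is meaningful only because $n=2$.)

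Next I would observe that, since $Y \in \ker d\pi$, any integral curve $\phi(t) = \theta^t(x,\xi)$ satisfies $\tfrac{d}{dt}\bigl(\pi(\phi(t))\bigr) = d\pi(Y) = 0$, so $\pi(\theta^t(x,\xi)) = x$ for every $t$; this is the first line of \eqref{eq:rotation}. Consequently every integral curve is confined to the compact circle $H_x\Sigma$, so $Y$ is complete and $\theta^t$ is indeed a one-parameter group of diffeomorphisms of $H\Sigma$, restricting to a flow on each fiber.

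For the periodicity I would fix $(x,\xi)$ and study the flow line $\phi$ inside the circle $H_x\Sigma$: its velocity $Y$ never vanishes there, so $\phi$ is periodic with some minimal period $T>0$, and $\phi|_{[0,T)}$ covers $H_x\Sigma$ exactly once. Since $\phi^{\ast}\alpha^F = \alpha^F(Y)\,dt = dt$, we obtain
\begin{equation*}
 T = \int_0^T \phi^{\ast}\alpha^F = \int_{H_x\Sigma} \alpha^F,
\end{equation*}
the second equality holding because $\int_0^T \phi^{\ast}\alpha^F$ equals $\pm\int_{H_x\Sigma}\alpha^F$ according to whether $\phi|_{[0,T)}$ preserves or reverses orientation, and the left side being positive (while $\int_{H_x\Sigma}\alpha^F>0$) forces the sign to be $+$. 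By the normalization \eqref{eq:longueur_fibre} with $n=2$, the fiber integral equals $\voleucl(\S^1) = 2\pi$, whence $T = 2\pi$ and $\theta^{2\pi}(x,\xi) = (x,\xi)$ for all $(x,\xi)$.

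The only point requiring any care is this last computation — recognizing the period of the circle flow as the fiber integral $\int_{H_x\Sigma}\alpha^F$ and pinning down the orientation and sign; everything else is immediate from the definitions together with Proposition \ref{prop:construction}.
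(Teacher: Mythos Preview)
Your proof is correct and follows essentially the same approach as the paper: normalize a vertical frame by $\alpha$, use verticality for the fiber-preservation, and identify the minimal period with the fiber integral $\int_{H_x\Sigma}\alpha^F=2\pi$ via $\phi^{\ast}\alpha=dt$. The paper inserts an extra (ultimately redundant) step showing the period is independent of $\xi$ before computing it, while you compute it directly for each $(x,\xi)$ and are a bit more explicit about the orientation/sign issue.
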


\begin{proof}
 Take a non-degenerate vertical vector field $Y_1$\ and set $Y := Y_1/\alpha\left(Y_1\right)$. Uniqueness is due to the fact that $VH\Sigma$\ is $1$-dimensional.

As $Y$\ is a non-degenerate smooth vector field, it generates a one-parameter group $\theta^t$, and the first part of \eqref{eq:rotation} follows from the fact that $Y$\ is vertical.

The second claim follows from the fact that the length of the fibers is taken to be $2\pi$.\\
Indeed, if we set $x \in \Sigma$, for any $\xi\in H_x\Sigma$, $t \mapsto \theta^{t} (x,\xi)$\ gives a parametrization of $H_x\Sigma$. Let $T_{(x,\xi)} \in \R^+$\ be the period of $\theta^{t}(x,\xi)$. It is easy to see that this period does not depend on $\xi$. Indeed, if $\xi_1,\xi_2 \in H_x\Sigma$, there exists $t<T_{(x,\xi_1)}$\ such that $\theta^{t}(x,\xi_1) = (x,\xi_2)$. Therefore
\begin{equation*}
\theta^{T_{(x,\xi_1)}}(x,\xi_2) = \theta^{T_{(x,\xi_1)}}\left(\theta^{t}(x,\xi_1) \right) = \theta^{t+T_{(x,\xi_1)}}(x,\xi_1) = \theta^{t}(x,\xi_1) = (x,\xi_2),
\end{equation*}
so $T_{(x,\xi_1)}$\ is also a period for $\xi_2$. Now, as the length of the fibers is $2\pi$, we can see that the period must be $2\pi$. Indeed
\begin{equation*}
 2\pi = \int_{H_x\Sigma} \hspace{-2mm} \alpha = \int_{0}^{T_x} \!\alpha \left( \dot{\theta}(t) \right) dt = \int_{0}^{T_x} \alpha \left(Y \right) dt = \int_{0}^{T_x} dt = T_x \, .
\end{equation*}
Therefore, for any $(x,\xi)\in H\Sigma$, $\theta^{2\pi}\left(x,\xi\right)= \left(x,\xi\right)$.
\end{proof}

The first question that sprang to our mind about this rotation is its link with the reversibility of the metric. We prove here something that seems very natural: if the metric is reversible, then rotating a vector by $\pi$ gives its opposite. Note that this is not a characterization of reversible metrics. Indeed, as we will see in the proof, if we write $s \colon HM \rightarrow HM$ such that $s(x,\xi) = (x,-\xi)$, the only thing we need is that $s^{\ast}\alpha = \alpha$.

\begin{prop}
 Let $(\Sigma,F)$\ be a Finsler $2$-manifold. If $F$\ is reversible and $(x,\xi) \in H\Sigma$, then
\begin{equation}
 \theta^{\pi}\left(x,\xi\right) = \left(x,-\xi\right).
\end{equation}

\end{prop}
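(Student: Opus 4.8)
The plan is to exploit the fiberwise flip $s\colon H\Sigma\to H\Sigma$, $s(x,\xi)=(x,-\xi)$, exactly as suggested in the remark preceding the statement: I will check that reversibility forces $s^{\ast}\alpha=\alpha$ on $VH\Sigma$, deduce that $s$ commutes with the rotation flow $\theta^{t}$, and then use that $s$ is an involution to pin down $\theta^{\pi}$.

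\emph{Step 1: $s^{\ast}A=-A$.} The map $s$ is a smooth involution (scaling by a positive factor commutes with $v\mapsto -v$), and $\pi\circ s=\pi$ gives $d\pi\circ ds=d\pi$. Fix $(x,\xi)\in H\Sigma$ and $v\in T_{x}\Sigma$ with $r(x,v)=(x,\xi)$, so that $r(x,-v)=(x,-\xi)$; then, using $F(x,-w)=F(x,w)$,
\begin{align*}
 (s^{\ast}A)_{(x,\xi)}(Z)&=A_{(x,-\xi)}(ds\,Z)=\lim_{\eps\to 0}\frac{F\big(x,-v+\eps\,d\pi(Z)\big)-F(x,-v)}{\eps}\\
 &=\lim_{\eps\to 0}\frac{F\big(x,v-\eps\,d\pi(Z)\big)-F(x,v)}{\eps}=-A_{(x,\xi)}(Z).
\end{align*}
Hence $s^{\ast}A=-A$, and therefore $s^{\ast}dA=-dA$.

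\emph{Step 2: $s^{\ast}\alpha=\alpha$ on $VH\Sigma$.} In dimension two the defining relation of Proposition~\ref{prop:construction} reads $\alpha\wedge\pi^{\ast}\Omega=A\wedge dA$. Pulling back by $s$ and using $\pi\circ s=\pi$ together with Step 1,
\[
 (s^{\ast}\alpha)\wedge\pi^{\ast}\Omega=s^{\ast}(A\wedge dA)=(-A)\wedge(-dA)=A\wedge dA=\alpha\wedge\pi^{\ast}\Omega,
\]
so $(s^{\ast}\alpha-\alpha)\wedge\pi^{\ast}\Omega=0$. Contracting with an arbitrary vertical $Y$ and using $i_{Y}\pi^{\ast}\Omega=0$ gives $\big((s^{\ast}\alpha-\alpha)(Y)\big)\pi^{\ast}\Omega=0$, whence $s^{\ast}\alpha=\alpha$ on $VH\Sigma$.

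\emph{Step 3: conclusion.} Since $ds$ preserves $VH\Sigma$ and $\alpha(ds\,Y)=(s^{\ast}\alpha)(Y)=\alpha(Y)$, the vector field $Y$ characterized by $\alpha(Y)=1$ is $s$-invariant, hence $s\circ\theta^{t}=\theta^{t}\circ s$ for all $t$. Recall from the previous proposition that, for fixed $(x,\xi)$, the curve $t\mapsto\theta^{t}(x,\xi)$ parametrizes $H_{x}\Sigma$ with minimal period $2\pi$, so there is a unique $t_{0}\in[0,2\pi)$ with $\theta^{t_{0}}(x,\xi)=(x,-\xi)$. Then for every $t$, $s(\theta^{t}(x,\xi))=\theta^{t}(x,-\xi)=\theta^{t+t_{0}}(x,\xi)$; applying $s$ once more and using $s^{2}=\id$ yields $\theta^{2t_{0}}(x,\xi)=(x,\xi)$, so $2t_{0}\equiv 0\pmod{2\pi}$ and $t_{0}\in\{0,\pi\}$. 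Finally $t_{0}=0$ is impossible, since $(x,-\xi)\neq(x,\xi)$ in $H\Sigma$; therefore $t_{0}=\pi$, i.e.\ $\theta^{\pi}(x,\xi)=(x,-\xi)$.

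The computations here are routine; the point where reversibility is genuinely used is Step 1 (equivalently Step 2) — without $s^{\ast}\alpha=\alpha$ on $VH\Sigma$ one cannot conclude that $s$ commutes with $\theta^{t}$ — and the only mild subtlety is the involution argument at the end that rules out $t_{0}=0$.
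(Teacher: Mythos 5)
Your Steps 1 and 2 match the paper exactly: you compute $s^{\ast}A=-A$ from reversibility and then use the defining relation $\alpha\wedge\pi^{\ast}\Omega=A\wedge dA$ together with uniqueness of $\alpha$ on $VH\Sigma$ to get $s^{\ast}\alpha=\alpha$ on vertical vectors. The divergence is in how you exploit this symmetry. The paper integrates: from $s^{\ast}\alpha=\alpha$ it applies the change-of-variables formula to an arc $U$ from $\xi$ to $-\xi$, uses $H_x\Sigma=U\cup s(U)$ to split the total fiber mass $2\pi$ into two equal pieces, and concludes the arc has $\alpha$-length $\pi$. You instead argue dynamically: the invariance $s_{\ast}Y=Y$ (which you correctly derive from $s^{\ast}\alpha=\alpha$ and one-dimensionality of $VH\Sigma$) shows that $s$ commutes with the rotation flow $\theta^{t}$, and then the involution identity $s^{2}=\id$ forces $\theta^{2t_0}=\id$ on the orbit, pinning $t_0$ to $0$ or $\pi$ and discarding $0$ since $\xi\neq -\xi$. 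Both are correct; your version has the small advantage that it sidesteps the unstated fact that $s(U)$ is the complementary arc (equivalently, that the fixed-point-free involution $s$ of the circle $H_x\Sigma$ is orientation-preserving), which the paper's change-of-variables step tacitly relies on, whereas the paper's version gives directly the quantitative statement ``the $\alpha$-mass of each half is $\pi$'' that is more in the spirit of the angle-form construction.
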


\begin{proof}
Let   $ s \colon {T\Sigma  \rightarrow  T\Sigma } $, $(x,v)  \mapsto  (x,-v)$ be the local symmetry.

If $F$\ is reversible, then $s^{\ast} d_v F = - d_v F$: for $(x,v)\in T\Sigma$\ and $Z\in T_{(x,v)}T\Sigma$,
\begin{align*}
 s^{\ast} \left(d_v F\right)_{(x,v)}(Z) &= d_v F_{(x,-v)}(ds\, Z) \\
  &= \lim_{\epsilon \rightarrow 0} \frac{1}{\epsilon}\left( F(x, -v + \epsilon d\pi \circ ds \, Z) - F(x,-v)\right) \\
  &= \lim_{\epsilon \rightarrow 0} \frac{1}{\epsilon}\left( F(x, v - \epsilon d\pi \, Z) - F(x,v)\right) \\
  &= - d_v F_{(x,v)}(Z).
\end{align*}
Now, if we also denote by $s$\ the symmetry on $H\Sigma$, we have shown that $s^{\ast} A = -A $. Therefore $s^{\ast} \left( A\wedge dA\right) =  A\wedge dA$ and, using the definition of the angle form, we obtain $s^{\ast} \alpha = \alpha$.\\
The map $s$\ is a diffeomorphism of $H\Sigma$, so for $x\in \Sigma$\ and $U \subset H_x \Sigma $\ measurable, the change of variable formula gives
\begin{equation}
 \int_{U} s^{\ast}\alpha = \int_{s(U)} \alpha,
\end{equation}
if $\xi\in H_x\Sigma$\ and $U$\ is one interval from $\xi$\ to $-\xi$, then $H_x\Sigma = U \cup s\left(U\right)$\ and
\begin{equation*}
 2\pi = \int_{H_x\Sigma} \alpha = \int_{U} \alpha + \int_{s\left(U\right)} \alpha = 2 \int_{U} \alpha.
\end{equation*}
Recall that there exists $t_0$\ such that $U = \lbrace \theta^t(\xi) \mid t\in \left[0,t_0 \right] \rbrace$, so
\begin{equation*}
\pi = \int_{U} \alpha = \int_0^{t_0} \alpha_{\theta^t(\xi)}\left(\dot{\theta}^t(\xi) \right) dt = \int_0^{t_0} dt = t_0,
\end{equation*}
therefore $-\xi = \theta^{\pi}\left(\xi\right)$. \qedhere

\end{proof}

\section{$\Omega^F $ is the Holmes-Thompson volume} \label{sec:holmes_thompson}

Contrarily to what happens in Riemannian geometry, there is no canonical volume in Finsler geometry. We do not go into the reason for that as it is very well explained in \cite{BuragoBuragoIvanov}. There is however a certain number of ``natural'' volumes that have come up in convex geometry (see \cite{BuragoBuragoIvanov,AlvarezThompson}). Among them the most studied ones are the Busemann-Hausdorff and the Holmes-Thompson volumes.

The Busemann-Hausdorff volume (\cite{Busemann:intrinsic_area}) is obtained by considering a Finsler manifold as a metric space and taking its Hausdorff measure, but Alvarez-Paiva and Berck \cite{AlvarezBerck} showed that, despite its naturality, it might not be the ``good'' one.

The Holmes-Thompson volume (\cite{HolmesThompson}) comes from the symplectic structure of the cotangent bundle of a manifold. Recall that, if $\lambda$ denotes the Liouville form on $T^{\ast}M$, then $d\lambda$ is the canonical symplectic form on $T^{\ast}M$ and $\left(d\lambda \right)^n / n!$ is the Liouville volume. Now the Holmes-Thompson volume $\text{Vol}_{HT}$ associated with a Finsler metric $F$ is defined, for $U$ a Borel set on $M$ and $B^{\ast}U := \{ (x,p) \in T^{\ast}M \mid \hat{p}_{|M} (x,p) \in U, \;\; F^{\ast}(x,p) <1 \} \subset T^{\ast}M$, by
\begin{equation}
 \text{Vol}_{HT}(U) = \frac{1}{\epsilon_n} \int_{B^{\ast}U} \frac{\left(d\lambda \right)^n}{n!},
\end{equation}
where $\epsilon_n$ is the volume of the unit ball in the Euclidean space $\mathbb{E}^n$.\\
Remark that, if we denote by $S^{\ast}M$ the cotangent \emph{sphere} bundle, i.e., the subset of $T^{\ast}M$ given by $(F^{\ast})^{-1}(1)$, then an application of Stokes Theorem shows that
\begin{equation*}
 \text{Vol}_{HT}(U) = \frac{1}{\epsilon_n} \int_{S^{\ast}U} \frac{\lambda \wedge \left(d\lambda \right)^{n-1}}{n!}.
\end{equation*}
 
We can now prove that $\Omega^F$ corresponds to $\text{Vol}_{HT}$: For $U$ a Borel set in $M$, an application of Fubini Theorem (see Lemma \ref{lem:Fubini}) gives
\begin{equation*}
\int_{U} \Omega^F = \frac{1}{\voleucl \left(\mathbb{S}^{n-1}\right) } \int_{H^{\ast}U} \beta^F \wedge \hat{\pi}^{\ast} \Omega^F = \frac{1}{\voleucl \left(\mathbb{S}^{n-1}\right) } \int_{H^{\ast}U} \bdb.
\end{equation*}
Now, by Theorem \ref{thm:uniquely_contact}, we have that $\hat{r}^{\ast} \bdb = \frac{1}{(F^{\ast})^{n} }\lambda \wedge \left(d\lambda \right)^{n-1}$, and ${\hat{r} \colon \mathring{T}^{\ast}M \rightarrow H^{\ast}M }$ is a diffeomorphism when restricted to $S^{\ast}M$. So, by the change of variable formula, we obtain
\begin{equation*}
 \int_{H^{\ast}U} \bdb = \int_{\hat{r} \left(S^{\ast}U \right)} \bdb = \int_{S^{\ast}U} \frac{\lambda \wedge \left(d\lambda \right)^{n-1}}{(F^{\ast})^{n} } = \int_{S^{\ast}U} \lambda \wedge \left(d\lambda \right)^{n-1}.
\end{equation*}
Therefore,
\begin{equation}
 \int_{U} \Omega^F = \frac{1}{\voleucl \left(\mathbb{S}^{n-1}\right) } \int_{S^{\ast}U} \lambda \wedge \left(d\lambda \right)^{n-1} =  \frac{n! \epsilon_n}{\voleucl \left(\mathbb{S}^{n-1}\right) } \text{Vol}_{HT}(U) = (n-1)! \text{Vol}_{HT}(U). 
\end{equation}

So we proved the following:

\begin{prop}
 Let $F$ be a Finsler metric on an $n$-manifold, then 
\begin{equation*}
 \Omega^F = (n-1)! \text{Vol}_{HT}\,.
\end{equation*}
\end{prop}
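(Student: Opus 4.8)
The plan is to fix an arbitrary Borel set $U\subset M$, compute $\int_U \Omega^F$, and recognize it as a fixed multiple of the Holmes--Thompson volume of $U$; since $U$ is arbitrary this identifies the two volume forms on $M$.

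First I would transfer the computation to the cotangent side. Since we have already shown $\Omega^F = \Omega^{F^{\ast}}$, it suffices to integrate $\Omega^{F^{\ast}}$. Writing $H^{\ast}U := \hat\pi^{-1}(U)$ and using the defining relation of the co-angle, $\beta^{F^{\ast}}\wedge\hat\pi^{\ast}\Omega^{F^{\ast}} = \bdb$, a Fubini-type argument (Lemma~\ref{lem:Fubini}) --- integrate first over each fibre $H^{\ast}_xM$, where by construction $\int_{H^{\ast}_xM}\beta^{F^{\ast}} = \voleucl(\S^{n-1})$, and then over $U$ --- gives
\[
\int_U \Omega^F \;=\; \frac{1}{\voleucl(\S^{n-1})}\int_{H^{\ast}U} \bdb .
\]

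Next I would push this integral down to the cotangent sphere bundle $S^{\ast}M = (F^{\ast})^{-1}(1)$. By Theorem~\ref{thm:uniquely_contact} we have $\hat r^{\ast}(\bdb) = (F^{\ast})^{-n}\,\lambda\wedge d\lambda^{n-1}$, and $\hat r$ restricts to a diffeomorphism $S^{\ast}M \to H^{\ast}M$, so the change of variables formula yields
\[
\int_{H^{\ast}U}\bdb \;=\; \int_{S^{\ast}U}\frac{\lambda\wedge d\lambda^{n-1}}{(F^{\ast})^{n}} \;=\; \int_{S^{\ast}U}\lambda\wedge d\lambda^{n-1},
\]
since $F^{\ast}\equiv 1$ on $S^{\ast}M$. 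Finally, by the Stokes-theorem reformulation of the Holmes--Thompson volume recalled above, $\int_{S^{\ast}U}\lambda\wedge(d\lambda)^{n-1} = n!\,\epsilon_n\,\text{Vol}_{HT}(U)$, whence
\[
\int_U \Omega^F \;=\; \frac{n!\,\epsilon_n}{\voleucl(\S^{n-1})}\,\text{Vol}_{HT}(U).
\]
Using the classical identity $\voleucl(\S^{n-1}) = n\,\epsilon_n$ this collapses to $\int_U\Omega^F = (n-1)!\,\text{Vol}_{HT}(U)$, and since $U$ was arbitrary, $\Omega^F = (n-1)!\,\text{Vol}_{HT}$.

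The main obstacle is the Fubini step: one must make precise that integrating $\beta^{F^{\ast}}\wedge\hat\pi^{\ast}\Omega^{F^{\ast}}$ over $H^{\ast}U$ decomposes as integration along the fibres followed by integration over $U$, and in particular that the orientations on $H^{\ast}M$, on the fibres $H^{\ast}_xM$, and on $M$ are chosen compatibly so that no sign is lost; this is exactly what Lemma~\ref{lem:Fubini} provides. Everything after that is bookkeeping of constants, together with the already-established Theorem~\ref{thm:uniquely_contact} and the Stokes reformulation of $\text{Vol}_{HT}$.
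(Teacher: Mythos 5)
Your proof is correct and follows essentially the same route as the paper's: apply the Fubini lemma to pass from $\int_U\Omega^F$ to $\frac{1}{\voleucl(\S^{n-1})}\int_{H^{\ast}U}\bdb$, use Theorem~\ref{thm:uniquely_contact} and the restriction of $\hat r$ to $S^{\ast}M$ to rewrite this as $\frac{1}{\voleucl(\S^{n-1})}\int_{S^{\ast}U}\lambda\wedge d\lambda^{n-1}$, and then invoke the Stokes reformulation of $\text{Vol}_{HT}$ together with $\voleucl(\S^{n-1})=n\,\epsilon_n$.
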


 Before moving on, we wish to make a few remarks.

 First, we could get rid of the constant by considering $\ada/ (n-1)!$ instead of just $\ada$. However, our aim was never to study this volume and so we felt that adding this constant would unnecessarily complicate matters. Indeed, almost everything in this dissertation is invariant under a change of volume by a constant, the only exception being the actual computation of the volume of a Finsler manifold. However, our only actual computation of volume is in dimension two.

 Second, we must admit that, until very recently, we wrongly believed that $\Omega^F$ was different from the Holmes-Thompson volume. Hence in this dissertation, we prove every claim we make about this volume even so they are probably classical results.

 Finally, and this is a side note, it is interesting to point out that the Holmes-Thompson volume is very well-known, but that the naturally associated angle does not seem to have been studied, so we hope that we repair at least a little that injustice.

\chapter{A natural Finsler--Laplace operator}

\section[Finsler--Laplacian]{A natural Finsler--Laplace operator} \label{sec:Finsler-Laplacian}

\subsection{Definition}

In this section we will introduce our generalization of the operator of Laplace--Beltrami, and begin by recalling the different equivalent definitions in Riemannian geometry.

The ``historic'' Laplacian on $\R^n$ is defined as 
$$
\Delta  = \sum_i \frac{\partial^2}{\partial x_i^2}.
$$
So, given a Riemannian metric $g$, at any point $p$, we can choose normal coordinates and use the above expression. For generic local coordinates $(x_1, \dots, x_n)$, where the metric reads $g = [g_{ij}]$, the local expression becomes
$$
 \Delta^{LB} := \frac{1}{\sqrt{\text{det} g}} \frac{\partial}{\partial x_i}\left( \sqrt{\text{det} g} g^{ij} \frac{\partial}{\partial x_j}\right).
$$
A possibly more convenient, coordinate-free expression for the Laplacian is
$$
\Delta^{LB}f = \text{div} \left( \nabla f \right),
$$
where $\nabla f$ is the gradient of $f$ with respect to $g$ and $\text{div}$ is the divergence operator (see, for instance, \cite{GHL}).\\
Finally, the Hodge--Laplace operator gives an expression for the Laplacian on differential forms (see \cite{GHL} or \cite{Warner}).\\

The study of the Laplace operator is of paramount importance as it exhibit deep links between its spectral data and the geometry of the manifold carrying it (for a proof of this claim, the reader can consult, for a start, \cite{Ber:SpectralGeo,BergerGauduchonMazet,Chavel:eigenvalues} and then move on to the thousands of articles on this subject). Hence, giving a generalization of this operator to the Finslerian context is of great  importance, especially if we manage to construct one having the same kind of behavior.

There have already been several generalizations of the Laplace operator to Finslerian geometry \cite{BaoLackey,Centore:FinslerLaplacians,Shen:non-linear_Laplacian}, each starting from a different definition of the Laplace--Beltrami and obtaining different operators. This can be seen either as a drawback or a new source of interest in Finsler geometry. Indeed, it is not uncommon that, when we try to extend definitions that were equivalent, we often end up with different notions, yielding some new insight in the process. And this is particularly true of Finsler geometry, as the history of Finslerian connections, for instance, proves.

Bao and Lackey \cite{BaoLackey} gave a generalization of the Hodge-star operator, allowing them to define a Finslerian Hodge--Laplace operator. Shen \cite{Shen:non-linear_Laplacian} gave the very natural generalization of $\text{div} \left( \nabla  \right)$ to Finsler metrics. Indeed, the gradient of a function is just its derivative seen as an element of the tangent space, i.e., its pullback by the Legendre transform. So clearly this is not just Riemannian. But note however that, for Finsler metrics, the Legendre transform is in general \emph{not} linear, so Shen's Laplacian is not linear. Remark also that, to define a divergence, one needs to choose a volume form on the manifold and, as we have already mentioned, there is no canonical volume form in Finsler geometry. Finally, Centore \cite{Centore:FinslerLaplacians} did not use directly one of the above definitions, but the fact that harmonic functions satisfy the mean-value property and designed an operator in order to keep that property. Here, once again, his definition relies on the choice of a volume form.\\

Our approach for a generalization relies on the first definition, as the sum of the second derivatives in orthonormal directions. As there is no good notion of orthogonality in Finsler geometry, we consider instead the \emph{average} of the second derivatives in every direction. The average being taken with respect to the angle we introduced in Section \ref{sec:angle_and_volume_in_Finsler_geometry}. More precisely, we introduce:
\begin{defin}
\label{def:delta}
 Let $F$ be a Finsler metric on an $n$-manifold $M$. We define the Finsler--Laplace operator, denoted $\Delta^F$, as
 \begin{equation*}
 \Delta^F f (x) = \frac{n}{\voleucl \left(\mathbb{S}^{n-1}\right) }\int_{H_xM} L_X ^2 (\pi^{\ast} f ) \alpha^F,
 \end{equation*}
 for every $x\in M$\ and every $f \colon M \rightarrow \R$\ (or $\C$) such that the integral exists.
\end{defin}

As we will see in the next section, the constant $n/\voleucl \left(\mathbb{S}^{n-1}\right)$\ is chosen so that $\Delta^F$\ is the Laplace--Beltrami operator when $F$\ is Riemannian.
\begin{rem}
 Note that we can define a Laplace-like operator in this fashion for \emph{any contact form} on the homogenized bundle $HM$, but we have not pursued the study of this more general kind.
\end{rem}

It is already clear from the definition that $\Delta^F$\ is a linear differential operator of order two. It also verifes the following:
\begin{thm} \label{thm:finsler_laplace_basics}
 Let $F$\ be a Finsler metric on $M$, then $\Delta^F$\ is a second-order differential operator, furthermore:
\begin{itemize}
 \item[(i)] $\Delta^F$\ is elliptic;
 \item[(ii)] $\Delta^F$\ is symmetric, i.e., for any $f,g \in C^{\infty}_0(M)$, 
 \begin{equation*}
  \int_M f\Delta^F g - g\Delta^F f \; \Omega^F = 0\,;
 \end{equation*}
 \item[(iii)] $\Delta^F$\ is unitarily equivalent to a Schr\"odinger operator;
 \item[(iv)] $\Delta^F$\ coincides with the Laplace--Beltrami operator when $F$\ is Riemannian.
\end{itemize}
\end{thm}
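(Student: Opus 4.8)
The plan is to reduce the four claims to one local-coordinate computation plus one structural observation: that the Lie derivative $L_X$ is skew-adjoint on $L^2\bigl(HM,\,A\wedge dA^{n-1}\bigr)$.

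\emph{Step 1 (local form, second order, ellipticity).} First I would unwind the definition in a chart $(x^i)$. Since $X$ is a second-order differential equation with $A(X)=1$, Lemma~\ref{lem:utilisation_unique2} gives $d\pi\bigl(X_{(x,\xi)}\bigr)=v_\xi$, the $F$-unit vector in the direction $\xi$; hence $L_X(\pi^{\ast}f)(x,\xi)=\tfrac{d}{dt}f(c_\xi(t))\bigr|_{0}$ and, differentiating again along the geodesic equation of $F$,
\begin{equation*}
 L_X^2(\pi^{\ast}f)(x,\xi)=\sum_{i,j}\frac{\partial^2 f}{\partial x^i\partial x^j}(x)\,v_\xi^i v_\xi^j+\sum_k\frac{\partial f}{\partial x^k}(x)\,G^k(x,v_\xi),
\end{equation*}
where the functions $G^k$ depend on $F$ but not on $f$. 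Integrating over $H_xM$ against $\alpha^F$ yields
\begin{equation*}
 \Delta^F f(x)=\sum_{i,j}a^{ij}(x)\frac{\partial^2 f}{\partial x^i\partial x^j}(x)+\sum_k b^k(x)\frac{\partial f}{\partial x^k}(x),\qquad a^{ij}(x)=\frac{n}{c_n}\int_{H_xM}v_\xi^iv_\xi^j\,\alpha^F,
\end{equation*}
with $c_n:=\voleucl(\mathbb{S}^{n-1})$. So $\Delta^F$ is linear of order two with no zeroth-order term, hence kills constants; and for any covector $p=\sum p_i\,dx^i\neq0$,
\begin{equation*}
 \sum_{i,j}a^{ij}(x)p_ip_j=\frac{n}{c_n}\int_{H_xM}\bigl(p(v_\xi)\bigr)^2\alpha^F>0,
\end{equation*}
because $\alpha^F$ is a nowhere-zero positive density on the fibre (Proposition~\ref{prop:construction}) and $\xi\mapsto p(v_\xi)$ vanishes only on a negligible set. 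Thus $\Delta^F$ is elliptic and its principal symbol is the dual metric of a Riemannian metric $g$ on $M$.

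\emph{Step 2 (symmetry).} Next I would use the splitting $\alpha^F\wedge\pi^{\ast}\Omega^F=A\wedge dA^{n-1}$. Writing $\Omega:=A\wedge dA^{n-1}$, Fubini together with the fact that $\pi^{\ast}f$ is fibrewise constant gives $\int_M f\,\Delta^F g\,\Omega^F=\tfrac{n}{c_n}\int_{HM}(\pi^{\ast}f)\,L_X^2(\pi^{\ast}g)\,\Omega$. Since $L_X\Omega=0$ (Equation~\eqref{eq:lxada}), for any function $\phi$ on $HM$ one has $(L_X\phi)\,\Omega=L_X(\phi\,\Omega)=d\,i_X(\phi\,\Omega)$, so $\int_{HM}(L_X\phi)\,\Omega=0$ by Stokes (the integrand is compactly supported, as $HM$ has compact fibres). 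Taking $\phi=(\pi^{\ast}f)\,L_X(\pi^{\ast}g)$ gives
\begin{equation*}
 \int_{HM}(\pi^{\ast}f)\,L_X^2(\pi^{\ast}g)\,\Omega=-\int_{HM}L_X(\pi^{\ast}f)\,L_X(\pi^{\ast}g)\,\Omega,
\end{equation*}
which is symmetric in $f$ and $g$; subtracting the analogous identity with $f$ and $g$ swapped proves (ii). (The same identity shows $E(u)=-\int_M u\,\Delta^F u\,\Omega^F$.)

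\emph{Step 3 (Schrödinger equivalence, and the Riemannian case).} By Steps 1--2, $\Delta^F$ is elliptic, symmetric with respect to $\Omega^F$, and annihilates constants, so the quoted proposition on weighted Laplacians identifies it with $\Delta_{g,\Omega^F}$ for the symbol metric $g$. Writing $\Omega^F=a^2v_g$, conjugation by the unitary $L^2(v_g)\to L^2(\Omega^F),\ \psi\mapsto a^{-1}\psi$, together with the explicit formula for $\Delta_{g,\Omega^F}$, turns it into $\Delta^g-V$ with $V=a^{-1}\Delta^g a$, i.e. a Schrödinger operator; this gives (iii). For (iv), when $F$ comes from a Riemannian metric $g_0$ one has $\Omega^F=v_{g_0}$ (Section~\ref{sec:holmes_thompson}: the Holmes--Thompson volume of a Riemannian metric is the Riemannian one), and in $g_0$-normal coordinates at $x$ the term $G^k(x,v_\xi)$ vanishes while $\alpha^F$ restricts to the round measure $d\sigma$ on the Euclidean unit sphere; then, using $\int_{\mathbb{S}^{n-1}}v^iv^j\,d\sigma=\tfrac{c_n}{n}\delta^{ij}$,
\begin{equation*}
 \Delta^F f(x)=\frac{n}{c_n}\int_{\mathbb{S}^{n-1}}\sum_{i,j}\frac{\partial^2 f}{\partial x^i\partial x^j}(x)\,v^iv^j\,d\sigma(v)=\sum_i\frac{\partial^2 f}{\partial (x^i)^2}(x)=\Delta^{g_0}f(x).
\end{equation*}

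\emph{Main obstacle.} None of the four points is deep once Steps 1 and 2 are in place; in particular the skew-adjointness of $L_X$ makes symmetry almost free. The only place that needs genuine care is (iv): verifying that $\Omega^F$ is exactly the Riemannian volume and that $\alpha^F$ is exactly the round measure in normal coordinates, i.e. that the two normalizations match precisely. That verification is also what pins down the constant $n/c_n$ in the definition of $\Delta^F$.
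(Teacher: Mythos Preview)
Your proposal is correct and follows essentially the same route as the paper: local expansion of $L_X^2(\pi^*f)$ for (i), Fubini plus flow-invariance of $A\wedge dA^{n-1}$ and Stokes for (ii), the weighted-Laplacian characterization for (iii), and normal coordinates for (iv). One minor remark on your ``main obstacle'': the paper sidesteps identifying $\alpha^F$ with the round measure by computing the integrals $\int_{H_pM}v_iv_j\,\alpha^F$ directly---parity in $v_i$ kills the off-diagonal terms, and the diagonal terms are all equal by symmetry and sum to $\int_{H_pM}\alpha^F=c_n$ by the normalization in Proposition~\ref{prop:construction}, so each equals $c_n/n$; no separate verification that $\Omega^F=v_{g_0}$ is needed for (iv).
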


Remark that our definition of Laplace operator could be applied with any angle. However, to obtain a symmetric operator, we fundamentally rely on the fact that $\alpha^F$ and $\Omega^F$ come from the volume $\ada$, which is invariant under the geodesic flow. So, in order to get an operator satisfying the above conditions, the only choice we really made was to ask for the constancy with respect to $\alpha^F$ of the volume of each fiber.

We split the proof of the theorem into four parts presented in the next four sections. Note that all the proofs are surprisingly simple, which is, in my opinion, an asset of this operator.

\subsection{The Riemannian case}
We start by proving Theorem \ref{thm:finsler_laplace_basics} {\it (iv)}.
\begin{prop}
 Let $g$\ be a Riemannian metric on $M$, $F = \sqrt{g}$, $\Delta^F$\ the Finsler--Laplace operator and $\Delta^g$\ the usual Laplace--Beltrami operator. Then,
\begin{equation*}
 \Delta^F = \Delta^g.
\end{equation*}

\end{prop}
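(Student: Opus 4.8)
The plan is to compute $\Delta^F f$ at an arbitrary point $x \in M$ by choosing convenient coordinates on $M$ and the induced objects on $HM$, and checking that the resulting expression matches the Laplace--Beltrami operator $\Delta^g f(x) = \sum_i \partial^2 f/\partial x_i^2(x)$ computed in normal coordinates. First I would fix $x$ and pick geodesic normal coordinates $(x_1,\dots,x_n)$ for $g$ centered at $x$, so that $g_{ij}(x) = \delta_{ij}$ and the Christoffel symbols vanish at $x$. In these coordinates a unit-speed geodesic $c_\xi$ leaving $x$ in the direction $\xi = (\xi_1,\dots,\xi_n)$ (with $\sum \xi_i^2 = 1$) satisfies $\ddot c_\xi^k(0) = 0$, so that
\begin{equation*}
 \left. \frac{d^2}{dt^2} f(c_\xi(t)) \right|_{t=0} = \sum_{i,j} \frac{\partial^2 f}{\partial x_i \partial x_j}(x)\, \xi_i \xi_j .
\end{equation*}
The key point is that $L_X^2(\pi^\ast f)$ evaluated at $(x,\xi) \in H_xM$ is exactly this second derivative along the geodesic, since $X$ is the Reeb field of $A$ and hence generates the geodesic flow (Theorem of Hilbert--Foulon), and $\pi \circ \flot$ projects to $c_\xi$.

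Next I would identify the angle form $\alpha^F$ on the fibers $H_xM$ in the Riemannian case. The essential input is Proposition \ref{prop:construction}: $\alpha^F$ restricted to $VHM$ is the unique (up to the normalization condition) $(n-1)$-form with $\alpha^F \wedge \pi^\ast \Omega^F = \ada$ and $\int_{H_xM}\alpha^F = \voleucl(\S^{n-1})$. When $F = \sqrt g$, the contact form $A$ is the usual Hilbert (Liouville) form, the geodesic flow is the classical one, and $\ada$ restricted to a fiber is (a constant multiple of) the round volume form on the unit sphere $S_xM = \{\xi : g_x(\xi,\xi)=1\} \cong \S^{n-1}$; the normalization $\int_{H_xM}\alpha^F = \voleucl(\S^{n-1})$ then forces $\alpha^F|_{H_xM}$ to be precisely the standard volume form $d\sigma$ on $\S^{n-1}$ under the identification $H_xM \cong S_xM \cong \S^{n-1}$ given by the normal coordinates. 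Likewise $\Omega^F$ is forced to be the Riemannian volume form $v_g$ (this is consistent with the later identification of $\Omega^F$ as the Holmes--Thompson volume, which equals $v_g$ in the Riemannian case up to the convention on constants — but here it is cleanest to read it directly off the normalization). Plugging in,
\begin{equation*}
 \Delta^F f(x) = \frac{n}{\voleucl(\S^{n-1})} \int_{\S^{n-1}} \sum_{i,j}\frac{\partial^2 f}{\partial x_i \partial x_j}(x)\,\xi_i\xi_j \; d\sigma(\xi) .
\end{equation*}
Now the standard spherical integral identity $\int_{\S^{n-1}} \xi_i \xi_j\, d\sigma = \frac{1}{n}\voleucl(\S^{n-1})\,\delta_{ij}$ collapses the sum to $\sum_i \partial^2 f/\partial x_i^2(x)$, which is $\Delta^g f(x)$ in normal coordinates. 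This is exactly the motivating remark in the introduction (the $\R^2$ computation) in $n$ dimensions, and it is why the constant $n/\voleucl(\S^{n-1})$ was built into Definition \ref{def:delta}.

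The main obstacle — really the only nontrivial point — is justifying carefully that $\alpha^F$ restricted to $H_xM$ coincides with the round measure $d\sigma$ on $\S^{n-1}$ under the normal-coordinate identification, i.e.\ that the splitting of $\ada$ given by Proposition \ref{prop:construction} is the "obvious" one in the Riemannian case. One clean way to see this: for $g$ Riemannian the map $HM \to S^1_g M$ (unit tangent bundle) is a diffeomorphism, $A$ pulls back to the canonical contact form, and the Reeb flow is the geodesic flow, whose Liouville measure $\ada$ disintegrates over $M$ (via $v_g$) into the round measures on the unit spheres — a classical fact. The normalization $\int \alpha^F = \voleucl(\S^{n-1})$ then pins down both $\alpha^F|_{VHM}$ and $\Omega^F$ simultaneously, and uniqueness in Proposition \ref{prop:construction} finishes it. After that, everything is the elementary geodesic computation and the spherical moment identity above.
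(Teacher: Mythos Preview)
Your proof is correct and follows essentially the same route as the paper: normal coordinates at a point, the identification $L_X^2(\pi^\ast f)(x,\xi)=\sum_{i,j}\xi_i\xi_j\,\partial_i\partial_j f(x)$, and then the spherical moment identity $\int_{\S^{n-1}}\xi_i\xi_j\,d\sigma=\tfrac{1}{n}\voleucl(\S^{n-1})\delta_{ij}$. The only cosmetic difference is that the paper does not explicitly identify $\alpha^F|_{H_xM}$ with the round measure via the Liouville disintegration; instead it extracts the off-diagonal vanishing by parity and the diagonal value by permutation symmetry of the $v_i$ together with the normalization $\int_{H_xM}\alpha^F=\voleucl(\S^{n-1})$, which amounts to the same thing.
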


\begin{proof}
We will compute both operators in normal coordinates for $g$ and show that they coincide.\\
Let $p\in M$\ and $x_1, \dots, x_n$\ be normal coordinates around it. Denote by $v_1, \dots , v_n$\ their canonical lift to $T_x M$.
For $f \colon M \rightarrow \R$, the Laplace--Beltrami operator is
$$ \Delta^{g}f(p)  = \sum_i  \frac{\partial^2 f }{\partial x_i^2} (p)\, . $$

The first step to compute the Finsler--Laplace operator is to compute the Hilbert form $A$\ and the geodesic flow $X$. In order to write $A$, we identify $HM$\ with $T^1M$\ and coordinates on $H_pM$\ are then given by the $v_i$'s with the condition $\sqrt{\sum (v_i)^2 }=1$.

 The vertical derivative of $F$\ at $p$\ is $d_v F_p = \left(v_i dx_i\right) \big/\sqrt{\sum (v_i)^2 }$. So  $ A_p = v_i\;dx^i$\ and $dA_p = dv_i \wedge dx^i $. Hence $X(p, \cdot ) = v_i \partialxi$. Indeed, we just need to check that $A_p\left(X_p\right) = 1$\ and $\left(i_X dA \right)_p = 0 $, but both equalities follow from $\sum (v_i)^2 =1$.

Let $f \colon M \rightarrow \R$. Then
\begin{equation*}
 L_X^2\left(\pi^{\ast}f \right) (p,v) = v_i v_j \frac{\partial^2 f}{\partial x_i\partial x_j} (p,v)\, ,
\end{equation*}
so that the Finsler--Laplace operator is
\begin{equation*}
 \Delta^F f(p) = \frac{ n}{\voleucl \S^{n-1}} \int_{H_pM} v_i v_j \; \alpha \; \frac{\partial^2 f}{\partial x_i\partial x_j}(p)\, ,
\end{equation*}
and the proof follows from the next two claims. \qedhere

\end{proof}

\begin{claim}
 For all $i \neq j$, 
$$
\int_{H_pM} v_i v_j \, \alpha =0 \,.
$$
\end{claim}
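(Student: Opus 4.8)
The plan is to show that, over the point $p$ and in the chosen normal coordinates, the restriction of $\alpha^F$ to the fibre $H_pM$ is exactly the round volume form of the Euclidean unit sphere $S^{n-1}$, and then to conclude by the elementary reflection symmetry $v_i\mapsto -v_i$ of that sphere.

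First I would identify $HM\cong T^1M$, with $x_1,\dots,x_n$ normal at $p$ and $v_1,\dots,v_n$ the induced fibre coordinates, so that $H_pM=\{\sum v_i^2=1\}$. As computed in the proof above, over $p$ one has $A_p=\sum_i v_i\,dx^i$ and $dA_p=\sum_i dv_i\wedge dx^i$ (the $dg_{ij}$-terms vanish at $p$ because the coordinates are normal). Expanding $A\wedge dA^{n-1}$ from these expressions — in each term $v_j\,dx^j$ of $A$ survives against $(dA)^{n-1}$ only opposite the missing index $j$ — one gets, up to an orientation sign fixed below,
\begin{equation*}
 A\wedge dA^{n-1}\big|_{\text{over }p}=(n-1)!\ i_E\!\left(dv_1\wedge\dots\wedge dv_n\right)\wedge\left(dx^1\wedge\dots\wedge dx^n\right),
\end{equation*}
where $E=\sum_i v_i\,\partial/\partial v_i$ is the radial field. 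Restricted to $\{\sum v_i^2=1\}$ the first factor is precisely the round volume form $d\sigma$ of $S^{n-1}$, and in normal coordinates the second factor is $(\mathrm{vol}_g)_p$. Comparing with the defining identity $\alpha^F\wedge\pi^{\ast}\Omega^F=A\wedge dA^{n-1}$ of Proposition~\ref{prop:construction} and with the normalization $\int_{H_pM}\alpha^F=\voleucl(\S^{n-1})=\int_{S^{n-1}}d\sigma$, the uniqueness statement of Lemma~\ref{lem:volume} forces $\alpha^F\big|_{H_pM}=d\sigma$ (and incidentally $\Omega^F_p=(n-1)!\,(\mathrm{vol}_g)_p$), the orientation being the one making $\int_{H_pM}\alpha^F$ positive.

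With this in hand the claim is a one-line symmetry argument. Fix $i\neq j$ and let $\rho\colon S^{n-1}\to S^{n-1}$ be the reflection sending $v_i\mapsto -v_i$ and fixing the other coordinates; it is an isometry of the round sphere, hence $d\sigma$-preserving, while $(v_iv_j)\circ\rho=-v_iv_j$ since exactly one of the two indices is reflected. Therefore
\begin{equation*}
 \int_{H_pM} v_iv_j\,\alpha^F=\int_{S^{n-1}} v_iv_j\,d\sigma=\int_{S^{n-1}}(v_iv_j)\circ\rho\ d\sigma=-\int_{S^{n-1}} v_iv_j\,d\sigma,
\end{equation*}
so $\int_{H_pM}v_iv_j\,\alpha^F=0$.

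The only step carrying real content is the identification of $\alpha^F$ over $p$ with the round measure, i.e. the fact that for a Riemannian metric the canonical contact volume $A\wedge dA^{n-1}$ disintegrates along $\pi$ into $\mathrm{vol}_g$ times the round measure on the unit spheres; this is classical in substance but has to be extracted from Foulon's formalism, which is where I expect the (modest) work to be. Once it is granted, the vanishing — and, by the same token, the companion claim $\int_{H_pM}v_i^2\,\alpha^F=\voleucl(\S^{n-1})/n$, since the $n$ integrals $\int_{S^{n-1}}v_k^2\,d\sigma$ are equal by symmetry and sum to $\int_{S^{n-1}}d\sigma$ — is immediate.
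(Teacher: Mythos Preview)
Your proof is correct and follows the same underlying idea as the paper's, namely the odd parity of $v_iv_j$ under a coordinate reflection of the fibre. The paper's proof is a one-liner (``a parity argument then yields the desired result''), leaving implicit the fact that $\alpha^F$ restricted to $H_pM$ is invariant under $v_i\mapsto -v_i$; you make this explicit by computing that $\alpha^F\big|_{H_pM}$ is the round volume form of $S^{n-1}$, which is more than strictly needed (the reflection invariance alone suffices) but certainly valid and informative.
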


\begin{proof}
 $H_pM$\ is parametrized by 
$$ H_pM = \left\{ (v_1, \dots , v_n ) \mid v_i \in [-1, 1], \sum (v_i)^2 =1 \right\}.$$
 A parity argument then yields the desired result. \qedhere

\end{proof}

\begin{claim}
 For any $1\leq i \leq n $,
$$
\int_{H_pM} v_i^2 \, \alpha = \frac{\voleucl \S^{n-1}}{ n} \, .
$$

\end{claim}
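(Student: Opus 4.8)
The plan is to deduce the claim from two elementary facts: that $\sum_i v_i^2 \equiv 1$ on the fibre $H_pM$, and that the angle form $\alpha$ is symmetric under permutations of the normal coordinates $x_1,\dots,x_n$.

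First I would record what is already implicit in the computation above: in the normal coordinates the fibre $H_pM$ is identified with the Euclidean unit sphere $\{(v_1,\dots,v_n) : \sum_i v_i^2 = 1\}$, and the expressions $A_p = v_i\,dx^i$, $dA_p = dv_i\wedge dx^i$ are invariant under any permutation of the index set $\{1,\dots,n\}$. Since, by Proposition~\ref{prop:construction}, the restriction of $\alpha$ to $VHM$ is uniquely determined by $A\wedge dA^{n-1}$ together with the normalisation \eqref{eq:longueur_fibre} — both permutation-symmetric — the density $\alpha|_{H_pM}$ is invariant under the coordinate transpositions $v_i \leftrightarrow v_j$. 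In particular the number $c := \int_{H_pM} v_i^2\,\alpha$ does not depend on $i$. This is the same symmetry already exploited in the previous claim; concretely, $\alpha|_{H_pM}$ is nothing but the standard round volume form of $\S^{n-1}$, although we do not need that.

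Then I would simply sum: using $\sum_{i=1}^n v_i^2 = 1$ on $H_pM$ and \eqref{eq:longueur_fibre},
$$
nc \;=\; \sum_{i=1}^n \int_{H_pM} v_i^2\,\alpha \;=\; \int_{H_pM}\Bigl(\sum_{i=1}^n v_i^2\Bigr)\alpha \;=\; \int_{H_pM}\alpha \;=\; \voleucl(\S^{n-1}),
$$
whence $c = \voleucl(\S^{n-1})/n$, which is the claim.

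The only step needing a little care is the first one, namely justifying the permutation-invariance of $\alpha|_{H_pM}$ (equivalently, identifying it with the round measure); everything else is a one-line manipulation. If one prefers to avoid the invariance discussion, one can instead evaluate $A_p\wedge dA_p^{n-1}$ directly — it is a positive constant times $\pi^*(dx^1\wedge\cdots\wedge dx^n)\wedge\sigma_{\S^{n-1}}$, where $\sigma_{\S^{n-1}}$ is the round volume form — read off from Proposition~\ref{prop:construction} that $\alpha|_{H_pM}$ is proportional to $\sigma_{\S^{n-1}}$, fix the proportionality constant with \eqref{eq:longueur_fibre}, and finish by the classical symmetry of the round sphere.
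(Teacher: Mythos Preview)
Your proof is correct and is essentially the same argument as the paper's: use the permutation symmetry of the coordinates to see that all the integrals $\int_{H_pM} v_i^2\,\alpha$ coincide, then sum and invoke $\sum_i v_i^2 = 1$ together with the normalisation \eqref{eq:longueur_fibre}. You simply spell out the symmetry justification in more detail than the paper, which dispatches it with ``As the $v_i$'s are symmetric by construction''.
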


\begin{proof}
 As the $v_i$'s are symmetric by construction, we have that, for any $i\neq j$,
$$
\int_{H_pM} v_i^2 \, \alpha = \int_{H_pM} v_j^2 \, \alpha\, .
$$
So
\begin{equation*}
 n \int_{H_pM} \hspace{-1.8mm} v_i^2 \, \alpha =  \sum_j \int_{H_pM} \hspace{-1.8mm} v_j^2 \, \alpha =   \int_{H_pM} \sum_j v_j^2 \, \alpha  =   \int_{H_pM} \hspace{-2.3mm}1 \, \alpha  = \voleucl \S^{n-1}. \qedhere
\end{equation*}

\end{proof}

\subsection{Ellipticity}
We now prove Theorem \ref{thm:finsler_laplace_basics} {\it (i)}.
\begin{prop}
 The operator $\Delta^F \colon  C^{\infty}(M) \rightarrow  C^{\infty}(M) $\ is elliptic. The symbol $\sigma^F$ is given by
\begin{equation*}
 \sigma^F_x(\xi_1,\xi_2) = \frac{n}{\voleucl \left(\mathbb{S}^{n-1}\right) } \int_{H_xM} L_X(\pi^{\ast} \varphi_1) L_X(\pi^{\ast}\varphi_2)\, \alpha^F
\end{equation*}
for $\xi_1,\xi_2 \in T^{\ast}_x M$, where $\varphi_i \in C^{\infty}(M)$ such that $\varphi_i(x)=0$ and $\left.d\varphi_i\right._x = \xi_i$.
\end{prop}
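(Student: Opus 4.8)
The plan is to obtain the principal symbol by the standard polarization identity and then to deduce ellipticity directly from the resulting integral formula, using that the indicatrix of $F$ at $x$ spans $T_xM$.

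First, recall that for a linear second-order differential operator $P$ on $M$, written in local coordinates as $P = a^{ij}\partial_i\partial_j + b^k\partial_k + c$, the Leibniz rule gives, for any $\varphi_1,\varphi_2\in C^{\infty}(M)$ vanishing at $x$,
\begin{equation*}
P(\varphi_1\varphi_2)(x) = 2\, a^{ij}(x)\,\partial_i\varphi_1(x)\,\partial_j\varphi_2(x) = 2\,\sigma_P(x)\!\left(d\varphi_1|_x, d\varphi_2|_x\right),
\end{equation*}
where $\sigma_P(x)(\xi_1,\xi_2) := a^{ij}(x)(\xi_1)_i(\xi_2)_j$ is the (co-)symbol. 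Since the text already notes that $\Delta^F$ is a linear differential operator of order two — and it is plainly $C^{\infty}(M)$-valued, because $X$ and $\alpha^F$ are smooth and the fibers $H_xM$ are compact, so the integral in Definition \ref{def:delta} depends smoothly on $x$ — it is enough to compute $\Delta^F(\varphi_1\varphi_2)(x)$ for $\varphi_i(x)=0$.

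The computation I would carry out uses $\pi^{\ast}(\varphi_1\varphi_2) = \pi^{\ast}\varphi_1\cdot\pi^{\ast}\varphi_2$ together with the Leibniz rule for the Lie derivative,
\begin{equation*}
L_X^2(uv) = (L_X^2 u)\,v + 2\,(L_X u)(L_X v) + u\,(L_X^2 v).
\end{equation*}
On the fiber $H_xM$ one has $\pi^{\ast}\varphi_i \equiv \varphi_i(x) = 0$, so the outer two terms drop out and
\begin{equation*}
\Delta^F(\varphi_1\varphi_2)(x) = \frac{2n}{\voleucl\left(\mathbb{S}^{n-1}\right)}\int_{H_xM} L_X(\pi^{\ast}\varphi_1)\,L_X(\pi^{\ast}\varphi_2)\,\alpha^F = 2\,\sigma^F_x(d\varphi_1,d\varphi_2),
\end{equation*}
which identifies $\sigma^F$ with the bilinear form in the statement. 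It remains to observe that the right-hand side depends only on $d\varphi_i|_x$: since $X$ is a second-order differential equation that is the Reeb field of $A$, Lemma \ref{lem:utilisation_unique2} forces $v(x,\zeta) := d\pi(X_{(x,\zeta)})$ to be the unique representative of $\zeta$ with $F(x,v(x,\zeta))=1$, whence $L_X(\pi^{\ast}\varphi)(x,\zeta) = d\varphi_x\bigl(v(x,\zeta)\bigr)$ depends on $\varphi$ only through $d\varphi_x$; in particular $\sigma^F_x$ is a well-defined symmetric bilinear form on $T_x^{\ast}M$.

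Finally I would establish ellipticity, i.e. $\sigma^F_x(\xi,\xi) > 0$ for $\xi\neq 0$. Choosing $\varphi$ with $\varphi(x)=0$ and $d\varphi_x = \xi$, the formula above becomes
\begin{equation*}
\sigma^F_x(\xi,\xi) = \frac{n}{\voleucl\left(\mathbb{S}^{n-1}\right)}\int_{H_xM} \bigl(\xi(v(x,\zeta))\bigr)^2\,\alpha^F(\zeta) \geq 0.
\end{equation*}
By the description of $v(x,\cdot)$ above, $\zeta\mapsto v(x,\zeta)$ parametrizes the indicatrix $F(x,\cdot)^{-1}(1)\subset T_xM$, which is the boundary of a convex body containing $0$ in its interior and therefore spans $T_xM$; thus the continuous nonnegative integrand $\zeta\mapsto (\xi(v(x,\zeta)))^2$ vanishes identically only if $\xi=0$. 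Since $\alpha^F$ restricted to the fiber $H_xM$ is a volume form of total mass $\voleucl(\mathbb{S}^{n-1})>0$, hence a positive measure, the integral is strictly positive for $\xi\neq 0$, which is exactly ellipticity (and incidentally shows $\sigma^F$ is the co-metric of a Riemannian metric on $M$). I do not expect a real obstacle here; the only points needing care are the identification $d\pi(X_{(x,\zeta)}) = v(x,\zeta)$ via Lemma \ref{lem:utilisation_unique2} and the remark that the integral formula is independent of the auxiliary functions $\varphi_i$.
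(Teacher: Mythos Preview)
Your proof is correct and follows essentially the same route as the paper: both compute the action of $\Delta^F$ on a product of functions vanishing at $x$ via the Leibniz rule for $L_X^2$, so that only the cross term $2\,L_X(\pi^{\ast}\varphi_1)L_X(\pi^{\ast}\varphi_2)$ survives on $H_xM$. The paper packages this as $\Delta^F(\varphi^2 u)(p)>0$ for $u(p)>0$ and $d\varphi_p\neq 0$, simply asserting the final strict inequality; your version with $\varphi_1\varphi_2$ yields the bilinear symbol formula directly by polarization, and you supply the justification the paper leaves implicit, namely that the indicatrix spans $T_xM$ and $\alpha^F$ is a positive measure on the fiber.
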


\begin{rem}
 If we identify the unit tangent bundle $T^1 M$ with the homogenized tangent bundle $HM$ and write again $\alpha^F$ for the angle form on $T^1M$, then the symbol is given by
\begin{equation*}
 \sigma^F_x(\xi_1,\xi_2) = \frac{n}{\voleucl \left(\mathbb{S}^{n-1}\right) } \int_{v\in T^1_xM} \xi_1(v) \xi_2(v) \, \alpha^F(v)
\end{equation*}
for $\xi_1,\xi_2 \in T^{\ast}_x M$.

The symbol of an elliptic second-order differential operator is a non-degenerate symmetric $2$-tensor on the cotangent bundle, and therefore defines a Riemannian metric on $M$. This gives one more way to obtain a Riemannian metric from a Finsler one. Let $\Delta^{\sigma}$\ be the Laplace--Beltrami operator associated with the symbol metric, then $\Delta^F -\Delta^{\sigma}$\ is a differential operator of first order, so is given by a vector field $Z$\ on $M$. The Finsler--Laplace operator therefore is a Laplace--Beltrami operator together with some ``drift'' given by $Z$. We will later see that our operator is in fact characterized by its symbol and the symmetry condition (Section \ref{subsec:a_characterization}).
\end{rem}

\begin{proof}
 To show that $\Delta$\ is elliptic at $p\in M$, it suffices to show that for each $\varphi \colon M  \rightarrow \R $\ such that $\varphi(p) = 0$\ and $d\varphi|_p$\ does not vanish, and for $u \colon M \rightarrow \R^+$ we have $\Delta^F(\varphi^2 u) (p) > 0$ unless $u(p)=0$.\\ 
 We first compute $L^{2}_X \left(\pi^{\ast}\varphi^2 u\right)$:
 \begin{align*}
  L^{2}_X \left(\pi^{\ast}\varphi^2 u\right) &= L_X \left( 2 \pi^{\ast}\varphi u L_X\left( \pi^{\ast}\varphi\right) + \pi^{\ast}\varphi^2 L_X \left(\pi^{\ast}u\right) \right), \\
  &=  2 \pi^{\ast}u \left(L_X\left( \pi^{\ast}\varphi\right)\right)^2  + 2 \pi^{\ast} \varphi u L^{2}_X \left(\pi^{\ast}\varphi \right) \\
   & \quad   +  4 \pi^{\ast}\varphi L_X \left(\pi^{\ast}\varphi\right) L_X \left(\pi^{\ast}u\right) + 2\pi^{\ast} \varphi^2 L^{2}_X\left( \pi^{\ast}u\right) .      
 \end{align*}
Evaluating at $\xi \in H_p M$, we obtain
\begin{equation*}
 L^{2}_X \left(\pi^{\ast}\varphi^2 u\right) \left(\xi\right) = 2 u(p) \left(L_X \pi^{\ast}\varphi\right)^2 \left(\xi\right).
\end{equation*}
 Therefore
 \begin{align*}
 \Delta^F(\varphi^2 u) (p) &= \frac{n}{\voleucl \left(\S^{n-1}\right) } \int_{H_p M} 2 u(p) \left(L_X \pi^{\ast}\varphi\right)^2 \alpha ,\\
  &=  \frac{2 u(p)n}{\voleucl \left(\mathbb{S}^{n-1}\right) } \int_{H_p M} \left(L_X \pi^{\ast}\varphi\right)^2 \alpha \; > 0 \, . \qedhere
 \end{align*}

\end{proof}

\subsection{Symmetry}

We have a hermitian product defined on the space of $C^{\infty}$\ complex functions with compact support on $M$\ by 
\begin{equation*}
\langle f,g \rangle = \int_M f(x)\overline{g(x)} \Omega^F.
\end{equation*}
So we can now prove Theorem \ref{thm:finsler_laplace_basics} {\it (ii)}.
\begin{prop}
\label{prop:symmetry}
 The operator $\Delta^F$\ is symmetric for $\langle \cdot , \cdot \rangle $\ on $C^{\infty}_0(M)$, i.e., for any ${f,g\in C^{\infty}_0(M)}$, we have
 \begin{equation*}
 \langle \Delta^F f , g \rangle =  \langle f,\Delta^F g \rangle.
 \end{equation*}
\end{prop}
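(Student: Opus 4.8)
The strategy is to push everything up to the canonical volume $\ada$ on $HM$, where the symmetry becomes a statement about integrating a Lie derivative along the geodesic flow. First I would rewrite the pairing using Proposition \ref{prop:construction}: by Equation \eqref{eq:alpha_wedge_omega}, for any $f,g\in C^\infty_0(M)$ and any choice of $\alpha^F$,
\begin{equation*}
 \langle \Delta^F f, g\rangle = \int_M (\Delta^F f)\,\bar g\;\Omega^F = \frac{n}{\voleucl(\S^{n-1})}\int_{HM} L_X^2(\pi^{\ast}f)\,\pi^{\ast}\bar g \;\; \alpha^F\wedge\pi^{\ast}\Omega^F = \frac{n}{\voleucl(\S^{n-1})}\int_{HM} L_X^2(\pi^{\ast}f)\,\pi^{\ast}\bar g \;\; \ada,
\end{equation*}
where the middle step uses Fubini (integrating first over the fibers $H_xM$, exactly as in the definition of $\Delta^F$) together with the fact that $\pi^{\ast}\bar g$ is constant on fibers. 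Thus it suffices to prove that
\begin{equation*}
 \int_{HM} \Bigl( L_X^2(\pi^{\ast}f)\,\pi^{\ast}\bar g - \pi^{\ast}f\, L_X^2(\pi^{\ast}\bar g)\Bigr)\,\ada = 0.
\end{equation*}

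Now I would integrate by parts twice along $X$, using the key invariance property $L_X(\ada)=0$ from Equation \eqref{eq:lxada}. For any $u,v\in C^\infty(HM)$ with compact support, $L_X(uv) = (L_Xu)v + u(L_Xv)$, and since $L_X(\ada)=0$, Cartan's formula gives $L_X(uv\,\ada) = d\,i_X(uv\,\ada)$, which integrates to zero over the closed manifold $HM$ (here $M$ is closed; for the compactly supported statement one uses Stokes directly). Hence $\int_{HM}(L_Xu)v\,\ada = -\int_{HM} u(L_Xv)\,\ada$. Applying this identity with $u = L_X(\pi^{\ast}f)$, $v=\pi^{\ast}\bar g$ and then again gives
\begin{equation*}
 \int_{HM} L_X^2(\pi^{\ast}f)\,\pi^{\ast}\bar g\;\ada = -\int_{HM} L_X(\pi^{\ast}f)\,L_X(\pi^{\ast}\bar g)\;\ada = \int_{HM} \pi^{\ast}f\, L_X^2(\pi^{\ast}\bar g)\;\ada,
\end{equation*}
which is exactly the desired equality. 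Replacing $\bar g$ by $g$ throughout (the computation is identical) yields $\langle\Delta^F f,g\rangle = \langle f,\Delta^F g\rangle$, and in particular the real-integral form $\int_M(f\Delta^F g - g\Delta^F f)\,\Omega^F = 0$.

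The only genuine subtlety — and the main point to be careful about — is the Fubini step: one must check that $\int_M\bigl(\int_{H_xM}L_X^2(\pi^{\ast}f)\,\alpha^F\bigr)\pi^{\ast}\bar g\;\Omega^F$ really equals $\int_{HM}L_X^2(\pi^{\ast}f)\,\pi^{\ast}\bar g\;\ada$, i.e. that the decomposition $\ada = \alpha^F\wedge\pi^{\ast}\Omega^F$ is compatible with iterated integration in the expected way; this is precisely the content of the Fubini-type lemma invoked in Section \ref{sec:holmes_thompson} (Lemma \ref{lem:Fubini}), and it is where one uses that $\pi^{\ast}\bar g$ and $\Omega^F$ are pulled back from $M$ so that only the fiber integral of $\alpha^F$ against $L_X^2(\pi^{\ast}f)$ matters. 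Everything else is the two-line integration-by-parts argument above, whose whole force comes from the flow-invariance $L_X(\ada)=0$ — which is why, as the remark after Theorem \ref{thm:finsler_laplace_basics} notes, it is essential that $\alpha^F$ and $\Omega^F$ both descend from the contact volume $\ada$ rather than from an arbitrary angle.
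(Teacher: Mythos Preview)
Your proof is correct and follows essentially the same route as the paper: lift the pairing to $HM$ via the Fubini lemma (Lemma \ref{lem:Fubini}), then integrate by parts along $X$ using Cartan's formula together with the invariance $L_X(\ada)=0$. The paper performs one integration by parts to obtain the Green-type identity $\langle \Delta^F f,g\rangle = -c_n\int_{HM} L_X(\overline{\pi^{\ast}g})\,L_X(\pi^{\ast}f)\,\ada$, which is visibly symmetric, whereas you apply the same step twice to reach $\langle f,\Delta^F g\rangle$ directly; this is only a cosmetic difference.
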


\begin{rem}
 The proof of this result is remarkably simple due to our choice of angle form and volume. Indeed, as $\alpha\wedge \pi^{\ast} \Omega$ is the canonical volume on $HM$, it is invariant under the geodesic flow (i.e., $L_X(\alpha\wedge \pi^{\ast} \Omega) =0$) which is the key to the computation.
\end{rem}

In order to prove the proposition, we first need a Fubini-like result. It is certainly known, but as it appears to us that it would take less time to do it than try to look for a reference in the literature, we provide the proof below.
\begin{lem} \label{lem:Fubini}
 Let $f \colon HM \rightarrow \C$\ be a continuous, integrable function on $HM$. We have
 \begin{equation}
 \int_M \left( \int_{H_xM} f(x,\cdot ) \, \alpha \right) \Omega = \int_{HM} f \; \alpha \wedge \pi^{\ast}\Omega \, .
 \end{equation}  
\end{lem}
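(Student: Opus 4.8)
The plan is to reduce the statement to the classical Fubini theorem by localising with a partition of unity and using local trivialisations of the fibre bundle $\pi\colon HM\to M$, whose fibres are copies of $S^{n-1}$. Both sides of the claimed equality are additive under a decomposition $f=\sum_i f_i$; so, picking a locally finite cover $\{U_i\}$ of $M$ by oriented coordinate charts over each of which $HM$ is trivial, together with a subordinate partition of unity $\{\chi_i\}$ on $M$ and writing $f=\sum_i (\chi_i\circ\pi)\,f$, it suffices to prove the identity when $f$ is supported in $\pi^{-1}(U)$ for one such chart $U$ (the interchange of $\sum_i$ with the integrals being justified by the integrability of $f$, after separating positive and negative parts should $M$ fail to be compact).

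First I would fix coordinates $(x_1,\dots,x_n)$ on $U$ with $\Omega=\rho\,dx_1\wedge\cdots\wedge dx_n$, $\rho>0$, and a trivialisation $\pi^{-1}(U)\cong U\times S^{n-1}$ with fibre coordinates $(\xi_1,\dots,\xi_{n-1})$. The key observation is that when $\alpha$ is expanded in these coordinates, any monomial containing some $dx_j$ dies upon wedging with $\pi^{\ast}\Omega$ (which already contains every $dx_j$), while those same monomials restrict to zero on the fibre $H_xM=\{x\}\times S^{n-1}$. Hence, writing
\begin{equation*}
\alpha = a(x,\xi)\,d\xi_1\wedge\cdots\wedge d\xi_{n-1} + (\text{terms containing some }dx_j),
\end{equation*}
one gets simultaneously $\alpha|_{H_xM}=a(x,\cdot)\,d\xi_1\wedge\cdots\wedge d\xi_{n-1}$ --- so that $\int_{H_xM}f(x,\cdot)\,\alpha$ only involves the restriction of $\alpha$ to $VHM$, and no ambiguity arises from the non-uniqueness of $\alpha$ --- and
\begin{equation*}
\alpha\wedge\pi^{\ast}\Omega = a(x,\xi)\,\rho(x)\,dx_1\wedge\cdots\wedge dx_n\wedge d\xi_1\wedge\cdots\wedge d\xi_{n-1},
\end{equation*}
the reordering of the two blocks of factors costing the sign $(-1)^{n(n-1)}=1$.

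With this, both sides become the same iterated Lebesgue integral: the left-hand side equals
\begin{equation*}
\int_U\left(\int_{S^{n-1}}f(x,\xi)\,a(x,\xi)\,d\xi\right)\rho(x)\,dx,
\end{equation*}
the right-hand side equals $\int_{U\times S^{n-1}}f(x,\xi)\,a(x,\xi)\,\rho(x)\,dx\,d\xi$, and the classical Fubini theorem applied to the continuous, compactly supported --- hence integrable --- integrand $f\,a\,\rho$ identifies them. Summing over the partition of unity finishes the proof.

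The only genuinely delicate point, and the step I would be most careful about, is the orientation bookkeeping: one must orient each fibre $H_xM$ by $\alpha|_{VHM}$ and verify that this fibre orientation followed by the pulled-back orientation of $M$ agrees with the orientation of $HM$ determined by $A\wedge dA^{n-1}$, so that all three integrals carry consistent signs. This is exactly what makes the local computation above yield $+a\rho$ rather than $\pm a\rho$, and it is built into the normalisations of $\alpha^F$ and $\Omega^F$ fixed in Proposition~\ref{prop:construction}. Everything else is the standard ``integration along the fibre'' argument.
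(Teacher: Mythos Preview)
Your proof is correct and follows essentially the same route as the paper's: localise with a partition of unity subordinate to a trivialising cover, reduce to a product $U\times S^{n-1}$, and invoke the classical Fubini theorem. Your coordinate argument is in fact a bit more careful than the paper's---you make explicit why the terms of $\alpha$ involving some $dx_j$ contribute to neither side (hence why the non-uniqueness of $\alpha$ off $VHM$ is harmless), and you flag the orientation bookkeeping, both of which the paper glosses over.
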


\begin{proof}
 In the following, we will write $f_x \colon H_x M \rightarrow \C$ for $f(x,\cdot)$.\\
 Let $\lbrace U_a \rbrace$ be a trivializing open covering for $\pi: HM \rightarrow M$, i.e., there exists $S$\ such that, for every $a$, there exists a homeomorphism 
$$\varphi_a \colon HU_a= \pi^{-1}\left( U_a \right) \rightarrow U_a \times S \,.$$
 Moreover, for every $x \in U_a$, $\varphi_a(x, \dot) \colon H_xM=H_x U_a \rightarrow S$ is also a homeomorphism.\\
 Let $\lbrace \varphi_a \rbrace$\ be a partition of unity subordinated to $\lbrace U_a \rbrace$. We have
 \begin{equation*}
 \int_M \left( \int_{H_xM} f_x \alpha \right) \Omega = \sum_a \int_{U_a} \varphi_a(x) \left( \int_{H_xM} f_x \alpha \right) \Omega \,.
 \end{equation*}
 Let $x\in U_a$. We set $\alpha^a_x := \left( \varphi_a(x,\cdot)^{-1} \right)^{\ast} \alpha$. It is a $(n-1)$-form on $S$, but, by definition of $\varphi_a$, $\alpha^a_x$ does not depend on $x$, just on $a$, so we can write $\alpha^a:=\alpha^a_x$. We have
 \begin{equation*}
 \int_{U_a} \varphi_a(x) \left( \int_{H_xM} f_x \alpha \right)\Omega = \int_{U_a} \varphi_a(x) \left( \int_S f_x \alpha^a \right)\Omega \, .
 \end{equation*}
 Then, applying Fubini's Theorem (see, for instance, \cite{BerGos}) gives 
 \begin{equation*}
 \int_{U_a} \varphi_a(x) \left( \int_S f_x \alpha \right)\Omega = \int_{U_a \times S} \varphi_a f \alpha^a\wedge\Omega\, .
 \end{equation*}
 Now, we have $\varphi_a^{\ast} \alpha^a\wedge\Omega = \alpha\wedge\pi^{\ast}\Omega$, hence we get
 \begin{equation*}
 \int_{U_a \times S} \varphi_a f \alpha\wedge\Omega = \int_{HU_a} \varphi_a f \alpha\wedge\pi^{\ast}\Omega\,.
 \end{equation*}
 Summing over $a$, we finally obtain
 \begin{align*}
  \int_M \left( \int_{H_xM} f_x \alpha \right) \Omega &= \sum_a \int_{U_a} \varphi_a(x) \left( \int_{H_xM} f_x \alpha \right) \Omega \\ 
    &=  \sum_a \int_{HU_a} \varphi_a f \alpha\wedge\pi^{\ast}\Omega \\
    &=  \int_{HM} \left( \sum_a \varphi_a\right) f \alpha\wedge\pi^{\ast}\Omega  \\ 
    &=  \int_{HM} f \alpha\wedge\pi^{\ast}\Omega\, . 
 \end{align*}
\end{proof}

We can now proceed with the

 \begin{proof}[Proof of Proposition \ref{prop:symmetry}]
  Let $f,g: M \xrightarrow{C^{\infty}} \C$\ and set $c_n := \dfrac{n}{\voleucl \left(\S^{n-1}\right) }$.
  \begin{align*}
   \langle \Delta^F f,g \rangle &= \int_M \overline{g} \Delta^F f  \;  \Omega \\
                  &= c_n \int_M \overline g \left( \int_{H_xM} L_X ^2 (\pi^{\ast} f ) \alpha \right) \Omega  \\
		  &= c_n \int_M \left( \int_{H_xM} \overline{\pi^{\ast}g} L_X ^2 (\pi^{\ast} f ) \alpha\right) \Omega  \\
		  &= c_n \int_{HM} \overline{\pi^{\ast}g} L_X ^2 (\pi^{\ast} f ) \; \alpha\wedge\pi^{\ast}\Omega \, ,
  \end{align*}
  where the last equality follows from the preceding lemma. \\As ${\alpha\wedge\pi^{\ast}\Omega = A\wedge dA^{n-1}}$, we can write 
  $$
  \langle \Delta^F f,g \rangle = c_n \int_{HM} \overline{\pi^{\ast}g} L_X ^2 (\pi^{\ast} f ) \; A\wedge dA^{n-1}.
  $$
Now
  \begin{multline*} 
  \label{eq:1}  
   L_X \left( \overline{\pi^{\ast}g} L_X (\pi^{\ast} f ) A\wedge dA^{n-1} \right)  =  \overline{\pi^{\ast}g}  L_X^2 (\pi^{\ast} f ) A\wedge dA^{n-1}  \\
    + L_X ( \overline{\pi^{\ast}g} ) L_X (\pi^{\ast} f ) A\wedge dA^{n-1}  + \overline{\pi^{\ast}g} L_X (\pi^{\ast} f ) L_X (\ada).
   \end{multline*}
The last part of the above equation vanishes because of \eqref{eq:lxada}. We also have
\begin{equation*} 
   L_X \left( \overline{\pi^{\ast}g} L_X (\pi^{\ast} f ) A\wedge dA^{n-1} \right)  = d\left( i_X \overline{\pi^{\ast}g} L_X (\pi^{\ast} f ) A\wedge dA^{n-1} \right).
\end{equation*}
Hence 
\begin{multline*}
 \langle \Delta^F f,g \rangle = \frac{n}{\voleucl \left(\S^{n-1}\right) }\Biggl[ \int_{HM} d\left( i_X \overline{\pi^{\ast}g} L_X (\pi^{\ast} f ) A\wedge dA^{n-1} \right)  \\
     -  L_X ( \overline{\pi^{\ast}g} ) L_X (\pi^{\ast} f ) A\wedge dA^{n-1} \Biggr].
\end{multline*}
As $M$\ is closed, $HM$\ is closed and applying Stokes Theorem gives \eqref{eq:green_formula}, thus proving the claim. 

 \end{proof}

In the proof we obtained a Finsler version of Green's formulas:
 \begin{prop}
 \label{prop:green_formula}
\begin{enumerate}
 \item For any $f,g \in C^{\infty}(M)$, we have
  \begin{equation} \label{eq:green_formula}
  \langle\Delta^F f,g \rangle = \frac{-n}{\voleucl \left(\S^{n-1}\right) } \int_{HM} L_X ( \overline{\pi^{\ast}g} ) L_X (\pi^{\ast} f ) \, A\wedge dA^{n-1}.
  \end{equation}
 \item Let $U$ be a submanifold of $M$\ of the same dimension and with boundaries. Then, for any $f \in C^{\infty}(U)$, we have
 \begin{equation}
 \int_U \Delta^F f \; \Omega^F =\frac{n}{\voleucl \left(\S^{n-1}\right) }\int_{\partial HU }  L_X (\pi^{\ast} f ) dA^{n-1}.
 \end{equation}
\end{enumerate}
 \end{prop}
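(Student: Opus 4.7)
The plan is to observe that part (i) is essentially already carried out in the proof of Proposition \ref{prop:symmetry} (symmetry). Starting from $\langle \Delta^F f, g\rangle$, I would first apply the Fubini-type identity (Lemma \ref{lem:Fubini}) to rewrite it as $c_n\int_{HM} \overline{\pi^{\ast}g}\, L_X^2(\pi^{\ast} f)\, A\wedge dA^{n-1}$, where $c_n = n/\voleucl(\S^{n-1})$. Then I would use the Leibniz rule for the Lie derivative together with the key invariance $L_X(A\wedge dA^{n-1}) = 0$ (Equation \eqref{eq:lxada}) to write
\begin{equation*}
\overline{\pi^{\ast}g}\, L_X^2(\pi^{\ast}f)\, A\wedge dA^{n-1} = L_X\bigl(\overline{\pi^{\ast}g}\, L_X(\pi^{\ast}f)\, A\wedge dA^{n-1}\bigr) - L_X(\overline{\pi^{\ast}g})\, L_X(\pi^{\ast}f)\, A\wedge dA^{n-1}.
\end{equation*}
Since $A\wedge dA^{n-1}$ is a top form on $HM$, Cartan's formula reduces $L_X$ to $d\circ i_X$ on it, so the first term is exact; as $M$ is closed, $HM$ is closed, and Stokes kills the exact term. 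Rearranging gives part (i).

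For part (ii), the strategy is the same but applied on $HU$, which is a manifold with boundary $\partial HU$. By definition of $\Delta^F$ and Lemma \ref{lem:Fubini},
\begin{equation*}
\int_U \Delta^F f\, \Omega^F = c_n \int_{HU} L_X^2(\pi^{\ast}f)\, A\wedge dA^{n-1}.
\end{equation*}
Using $L_X(A\wedge dA^{n-1})=0$ once more, the integrand becomes $L_X\bigl(L_X(\pi^{\ast}f)\, A\wedge dA^{n-1}\bigr)$. On a top form, $L_X = d\circ i_X$; a short computation using the Reeb equations $A(X)=1$ and $i_X dA = 0$ (which also forces $i_X dA^{n-1}=0$) gives $i_X(A\wedge dA^{n-1}) = dA^{n-1}$. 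Hence
\begin{equation*}
L_X^2(\pi^{\ast}f)\, A\wedge dA^{n-1} = d\bigl(L_X(\pi^{\ast}f)\, dA^{n-1}\bigr),
\end{equation*}
and Stokes on $HU$ yields part (ii).

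The whole argument is a direct consequence of the invariance of the canonical volume $A\wedge dA^{n-1}$ under the geodesic flow, so there is no real obstacle; the only point requiring a moment of care is the identity $i_X(A\wedge dA^{n-1}) = dA^{n-1}$, but that follows immediately from the two defining equations of the Reeb field. Both formulas are therefore just an application of Stokes after rewriting the integrand as an exact form.
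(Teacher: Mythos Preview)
Your proposal is correct and follows exactly the paper's approach: part (i) is precisely the computation carried out in the proof of Proposition~\ref{prop:symmetry} (Fubini, Leibniz, $L_X(\ada)=0$, Cartan's formula, Stokes on closed $HM$), and part (ii) is the same argument on $HU$ with boundary, using $i_X(A\wedge dA^{n-1})=dA^{n-1}$ from the Reeb equations. The paper in fact does not give a separate proof for this proposition, simply noting that the formulas were obtained along the way; your write-up makes the boundary case explicit and is entirely in line with this.
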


\subsection{A characterization of $\Delta^F$} \label{subsec:a_characterization}
 
The following results were explained to us by Yves Colin de Verdi\`ere and are probably well known to many people. However, they might not be known to everyone and are quite interesting, so we provide the proofs.

\begin{lem}\label{lem:existence_unicity}
 Let $(M,g)$\ be a closed Riemannian manifold and $\omega$\ a volume form on $M$. There exists a unique second-order differential operator $\Delta_{g,\omega}$\ on $M$ with real coefficients such that its symbol is the dual metric $g^{\star}$, that is symmetric with respect to $\omega$ and zero on constants.\\
If $a\in C^{\infty}(M)$ is such that $\omega = a^2 v_g$, where $v_g$ is the Riemannian volume, then, for $\varphi \in C^{\infty}(M)$,
\begin{equation*}
 \Delta_{g,\omega} \varphi = \Delta^{g} \varphi - \frac{1}{a^2} \langle \nabla \varphi , \nabla a^2 \rangle.
\end{equation*}
\end{lem}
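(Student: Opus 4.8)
The plan is to prove existence by exhibiting the formula and checking it works, and uniqueness by a standalone argument about second-order operators with a prescribed symbol. First I would recall that any second-order differential operator $L$ on $M$ with no zeroth-order term is determined, in local coordinates, by its symbol (the top-order coefficients) and a first-order part; equivalently $L = \Delta^g + Z$ for some smooth vector field $Z$ once we demand the symbol be $g^\star$, since $\Delta^g$ has symbol $g^\star$ and the difference of two operators with the same symbol and vanishing on constants is a first-order operator with no constant term, i.e.\ a vector field. So the content is: among the operators $\Delta^g + Z$, which ones are symmetric with respect to $\omega$, and is $Z$ then unique?

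For uniqueness, suppose $\Delta^g + Z_1$ and $\Delta^g + Z_2$ are both symmetric with respect to $\omega$. Then $Y := Z_1 - Z_2$ is a vector field such that the first-order operator $\varphi \mapsto Y\varphi$ is symmetric for $\langle\cdot,\cdot\rangle_\omega$. Integrating by parts against $\omega$, symmetry of a vector field $Y$ acting as a derivation forces, for all $\varphi,\psi$,
\begin{equation*}
 \int_M (Y\varphi)\,\psi\,\omega = \int_M \varphi\,(Y\psi)\,\omega = -\int_M (Y\varphi)\,\psi\,\omega - \int_M \varphi\psi\,(\operatorname{div}_\omega Y)\,\omega,
\end{equation*}
so $2\int_M (Y\varphi)\psi\,\omega = -\int_M \varphi\psi\,(\operatorname{div}_\omega Y)\,\omega$. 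Taking $\psi$ supported near a point and $\varphi$ with prescribed differential there shows $Y\varphi \equiv 0$ pointwise for all $\varphi$, hence $Y = 0$. (One also reads off $\operatorname{div}_\omega Y = 0$, consistent.) Thus the symmetric operator with symbol $g^\star$ and no constant term is unique.

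For existence, I would simply verify that the candidate $\Delta_{g,\omega}\varphi := \Delta^g\varphi - a^{-2}\langle\nabla\varphi,\nabla a^2\rangle$ with $\omega = a^2 v_g$ does the job. Its symbol is that of $\Delta^g$, namely $g^\star$, since the correction term is first-order; it clearly vanishes on constants. Symmetry is the computation
\begin{equation*}
 \int_M (\Delta_{g,\omega}\varphi)\,\psi\,a^2 v_g = \int_M \bigl(a^2\Delta^g\varphi - \langle\nabla\varphi,\nabla a^2\rangle\bigr)\psi\,v_g = -\int_M \langle\nabla\varphi,\nabla(a^2\psi)\rangle\,v_g,
\end{equation*}
where the last step is the usual Green formula for $\Delta^g$ applied to $\varphi$ and $a^2\psi$ (using $M$ closed, no boundary term); since $\langle\nabla(a^2\psi),\nabla\varphi\rangle$ is symmetric in $\varphi\leftrightarrow\psi$, the right-hand side is symmetric, so $\langle\Delta_{g,\omega}\varphi,\psi\rangle_\omega = \langle\varphi,\Delta_{g,\omega}\psi\rangle_\omega$. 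Finally, $a$ exists and is smooth and positive because $\omega$ and $v_g$ are both volume forms inducing the same orientation, so their ratio is a positive smooth function, of which we take the square root. The main obstacle, such as it is, is purely bookkeeping: making the "symbol determines the operator up to a vector field" step precise and the integration-by-parts in the uniqueness argument clean; there is no serious analytic difficulty since everything is smooth on a closed manifold.
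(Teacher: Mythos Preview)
Your overall strategy matches the paper's exactly: prove existence by exhibiting the explicit formula and checking its properties, and prove uniqueness by noting that the difference of two candidates is a vector field whose symmetry forces it to vanish. Your uniqueness argument is a slight variant of the paper's (the paper takes $\psi=1$ to get $\int_M (Z\varphi)\,\omega=0$ for all $\varphi$ and then builds a $\varphi$ with $Z\varphi>0$ near a non-singular point of $Z$; you instead derive the pointwise identity $2Y\varphi=-\varphi\,\operatorname{div}_\omega Y$ and evaluate at points where $\varphi$ vanishes), and it is correct.

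The existence computation, however, contains a slip. Green's formula for $\Delta^g$ against the test function $a^2\psi$ gives
\[
\int_M (a^2\psi)\,\Delta^g\varphi\;v_g \;=\; -\int_M \langle\nabla\varphi,\nabla(a^2\psi)\rangle\,v_g,
\]
so your middle expression $\int_M\bigl(a^2\Delta^g\varphi-\langle\nabla\varphi,\nabla a^2\rangle\bigr)\psi\,v_g$ actually equals $-\int_M \langle\nabla\varphi,\nabla(a^2\psi)\rangle\,v_g-\int_M\psi\langle\nabla\varphi,\nabla a^2\rangle\,v_g$, not what you wrote; the correction term is left over. Moreover, $-\int_M\langle\nabla\varphi,\nabla(a^2\psi)\rangle\,v_g$ is \emph{not} symmetric in $\varphi\leftrightarrow\psi$: expanding gives $-\int_M a^2\langle\nabla\varphi,\nabla\psi\rangle\,v_g-\int_M\psi\langle\nabla a^2,\nabla\varphi\rangle\,v_g$, whose second term breaks the symmetry. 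If you redo the computation you will find that the operator that \emph{is} symmetric with respect to $\omega=a^2v_g$ is $\Delta^g\varphi+a^{-2}\langle\nabla\varphi,\nabla a^2\rangle$ (plus, not minus), yielding $\int_M\psi\,\Delta_{g,\omega}\varphi\,\omega=-\int_M\langle\nabla\varphi,\nabla\psi\rangle\,\omega$, which is manifestly symmetric. In other words, the formula in the statement carries a sign error, and your computation masked rather than caught it; the paper's own proof simply asserts the symmetry identity without writing out the integration by parts.
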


Before getting on to the proof, this result deserves a few remarks.
\begin{itemize}
  \item We have seen above that, to a Finsler metric, we can associate a volume and a Riemannian metric via the symbol of the Finsler--Laplace operator. This lemma tells us that conversely a volume together with a Riemannian metric give a Laplace-like operator. This shows that we could have a wealth of Finsler--Laplace operators --- just associate a Riemannian metric and a volume to a Finsler metric --- but not all of them are natural.
 \item As there are many more Finsler metrics than pairs (volume/Riemannian metric), this lemma shows that many Finsler metrics will share the same Finsler--Laplacian. 
\item A related question raised by Yves Colin de Verdi\`ere was to determine the range of pairs (volume/Riemannian metric) that can be obtained from a Finsler metric. We prove that we get everything in the case of surfaces (Proposition \ref{prop:Randers_donne_couple}), but we do not know the general answer.
 \item The operators of the type $\Delta_{g,\omega}$ seem to have been introduced by Chavel and Feldman \cite{ChavelFeldman:Isoperimetric_constants} and Davies \cite{Davies:Heat_kernel_bounds}. They are called \emph{weighted Laplace operators} and have been quite widely studied (see, for instance \cite{Grigoryan:heat_kernels_on_weighted_manifolds}).
 \item Up to now we only considered our operators as acting on  $C^{\infty}(M)$. In the next section, for the purpose of spectral theory, we will start considering them as unbounded operators on $L^2(M,\omega)$. By considering the Friedrich extension, the above result stays true replacing ``symmetric'' by ``self-adjoint'' (we recall the definitions in Appendix \ref{app:unbounded_operators}).
\end{itemize}

\begin{proof}
It is evident from the definition of $\Delta_{g,\omega}$ that it is zero on constant functions, that its symbol is $g^{\ast}$ and that for $\varphi, \psi \in C^{\infty}(M)$, 
\begin{equation*}
\int_M \psi \Delta_{g,\omega}\varphi \; \omega = \int_{M} g^{\ast}\left(d\varphi, d\psi \right) \, \omega = \int_M \varphi \Delta_{g,\omega}\psi \; \omega 
\end{equation*}

 Let us now prove uniqueness. Let $\Delta_1$\ and $\Delta_2$ be two second-order differential operators such that they vanish on constant functions and have the same symbol. This implies that there exists a smooth vector field $Z$ on $M$ such that $\Delta_1 - \Delta_2 = L_Z$. 

Now, suppose that both operators are symmetric with respect to $\omega$. We get, $\int_M \varphi L_Z \psi -\psi L_Z \varphi \, \omega =0$ for any $\varphi, \psi \in C^{\infty}(M)$. And taking $\psi = 1$ yields $\int_M L_Z \varphi \, \omega =0$.

But, if $Z$ is not zero, it is easy to construct a function $\varphi \in C^{\infty}(M)$ such that $L_Z \varphi >0$ in any open set that does not contain a singular point of $Z$. So by continuity, $Z$ must vanish.
\end{proof}

An important consequence of this lemma is that any symmetric, elliptic linear second order operator is unitarily equivalent to a Schr\"odinger operator.
\begin{prop}
 Let $\Delta_{g,\omega}$, $v_g$ and $a$ be as above. Define an operator ${U \colon L^2\left(M, \omega\right) \rightarrow L^2\left(M, v_g \right)}$ by $Uf = af$. Then $U \Delta_{g,\omega} U^{-1} = \Delta^{g} + V $ is a Schr\"odinger operator with potential $V = a\Delta_{g,\omega} a^{-1}$.
\end{prop}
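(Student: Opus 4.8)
The plan is to verify directly that conjugating $\Delta_{g,\omega}$ by the multiplication operator $U\colon f\mapsto af$ produces an operator of Schrödinger type, i.e.\ of the form $\Delta^g + V$ for a multiplication operator $V$. First I would note that $U$ is unitary from $L^2(M,\omega)$ to $L^2(M,v_g)$: since $\omega = a^2 v_g$, we have $\langle Uf,Ug\rangle_{v_g} = \int_M a^2 f\bar g\, v_g = \int_M f\bar g\,\omega = \langle f,g\rangle_\omega$. So $U\Delta_{g,\omega}U^{-1}$ is self-adjoint (or symmetric on the appropriate domain) on $L^2(M,v_g)$, and since conjugation by a multiplication operator does not change the second-order part, its symbol is still $g^\star$; hence $U\Delta_{g,\omega}U^{-1} - \Delta^g$ is a second-order operator with the same symbol as the zero operator, so it is at most first order. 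The key point is then to show this first-order remainder is actually of order zero, i.e.\ a genuine potential.

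The cleanest way to see this is to compute. Using the explicit formula $\Delta_{g,\omega}\varphi = \Delta^g\varphi - \frac{1}{a^2}\langle\nabla\varphi,\nabla a^2\rangle$ from Lemma \ref{lem:existence_unicity}, for $\psi\in C^\infty(M)$ I would expand
\begin{equation*}
 U\Delta_{g,\omega}U^{-1}\psi = a\,\Delta_{g,\omega}(a^{-1}\psi) = a\left(\Delta^g(a^{-1}\psi) - \frac{1}{a^2}\langle\nabla(a^{-1}\psi),\nabla a^2\rangle\right),
\end{equation*}
and then use the product rules
\begin{equation*}
 \Delta^g(a^{-1}\psi) = a^{-1}\Delta^g\psi + \psi\,\Delta^g(a^{-1}) + 2\langle\nabla(a^{-1}),\nabla\psi\rangle,
\end{equation*}
together with $\nabla(a^{-1}\psi) = a^{-1}\nabla\psi + \psi\nabla(a^{-1})$ and $\nabla a^2 = 2a\nabla a$, $\nabla(a^{-1}) = -a^{-2}\nabla a$. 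The terms involving $\langle\nabla\psi,\cdot\rangle$ should cancel: the $2a\langle\nabla(a^{-1}),\nabla\psi\rangle = -2a^{-1}\langle\nabla a,\nabla\psi\rangle$ coming from the Laplacian of the product is exactly cancelled by $-a\cdot a^{-1}\cdot a^{-2}\cdot\langle\nabla\psi,\nabla a^2\rangle = -2a^{-1}\langle\nabla a,\nabla\psi\rangle$ with the opposite sign after tracking signs carefully — this is the computation that must be done honestly. What remains is $\Delta^g\psi$ plus a multiple of $\psi$, and collecting that multiple gives $V = a\,\Delta^g(a^{-1}) - \langle\nabla(a^{-1}),\nabla a^2\rangle\cdot a\cdot a^{-1}\cdot\ldots$, which after simplification is precisely $V = a\,\Delta_{g,\omega}(a^{-1})$, since $\Delta_{g,\omega}(a^{-1}) = \Delta^g(a^{-1}) - a^{-2}\langle\nabla(a^{-1}),\nabla a^2\rangle$ and multiplying by $a$ reproduces the constant term.

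Alternatively, and perhaps more elegantly, I would avoid the brute-force identity and argue structurally: since $U\Delta_{g,\omega}U^{-1}-\Delta^g$ is first order, write it as $L_Z + V_0$ for a vector field $Z$ and a function $V_0$; self-adjointness with respect to $v_g$ of both $U\Delta_{g,\omega}U^{-1}$ and $\Delta^g$ forces $L_Z$ to be skew-adjoint, but the first-order part of a difference of two second-order operators with equal symbol that are each self-adjoint for the \emph{same} volume must have vanishing vector-field part — this is exactly the uniqueness argument in the proof of Lemma \ref{lem:existence_unicity} (testing against $\psi=1$ shows $\int_M L_Z\varphi\, v_g = 0$ for all $\varphi$, forcing $Z=0$). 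Hence $U\Delta_{g,\omega}U^{-1} = \Delta^g + V_0$ with $V_0$ a multiplication operator, and evaluating both sides on the constant function $1$ gives $V_0 = U\Delta_{g,\omega}U^{-1}(1) = a\,\Delta_{g,\omega}(a^{-1})$, since $\Delta^g 1 = 0$ and $U^{-1}1 = a^{-1}$. I expect the main obstacle to be purely bookkeeping — getting the signs and factors of $a$ right in the cancellation of the cross terms — rather than anything conceptual; the structural argument sidesteps most of it and is the route I would write up.
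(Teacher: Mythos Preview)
Your structural argument is correct and is essentially the paper's approach: both rely on the uniqueness in Lemma~\ref{lem:existence_unicity}. The paper organizes it slightly more cleanly by subtracting $V$ first and checking that $U\Delta_{g,\omega}U^{-1}-V$ has symbol $g^\ast$, is symmetric with respect to $v_g$, and vanishes on constants, so the lemma applies directly as a black box; you instead subtract $\Delta^g$ and rerun the lemma's internal argument to kill the vector-field part, then read off $V_0$ by evaluating on $1$. Either way works, and your identification $V_0 = a\,\Delta_{g,\omega}(a^{-1})$ via evaluation on constants is exactly how the paper's version implicitly singles out $V$ as well.
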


\begin{rem}
 This fact shows that the spectral theory of our operator restricts to the theory of Schr\"odinger operators such that the infimum of the spectrum is zero.
\end{rem}

\begin{proof}
 It suffices to show that $U \Delta_{g,\omega} U^{-1} - V$ is symmetric with respect to $\omega$, vanishes on constant functions and has $g^{\ast}$ for symbol, because then Lemma \ref{lem:existence_unicity} proves the claim. It clearly vanishes on constant functions and the symmetry property is obvious by construction. Let $x\in M$ and $\varphi \in L^2\left(M, v_g \right)$ be such that $\varphi (x) = 0$ and $d\varphi_{x} \neq 0$. We have
\begin{equation}
 \left(U \Delta_{g,\omega} U^{-1} - V\right) \varphi^2 (x) = a\Delta_{g,\omega}(\varphi^2 a^{-1})(x) = \Delta_{g,\omega}(\varphi^2)(x)\, .
\end{equation}
Therefore the symbol of $\left(U \Delta_{g,\omega} U^{-1} - V\right)$ is the same as that of $\Delta_{g,\omega}$.
\end{proof}

\subsection{Relation to other Laplacians}

We did not pursue the study of the comparison between this Finsler--Laplace operator and those introduced before by Bao and Lackey \cite{BaoLackey} and Centore \cite{Centore:FinslerLaplacians}. However, we can make the following easy remark.

Suppose that $L$ is a second-order differential operator on a closed manifold, vanishing on constant functions and symmetric with respect to two volumes $\Omega_1$ and $\Omega_2$, then $\Omega_1$ is a constant multiple of $\Omega_2$. Indeed, if we write $\Omega_2 = f \Omega_1$, then for any function $g$,
\begin{equation*}
 0 = \int_M Lg \; \Omega_2 = \int_M (Lg) f \; \Omega_1 = \int_M g Lf \; \Omega_1 \, ,
\end{equation*}
hence $Lf = 0$, so, if $M$ is closed, $f = \text{cst}$.\\

So an easy way to see that our Finsler--Laplace operator is different from Centore's is by remarking that his operator is symmetric with respect to the Busemann-Hausdorff volume and applying the above remark.

\section{Spectral theory}

Most of the results of this section follow from the general theory of elliptic, symmetric operators on compact manifolds. However, we felt that for the convenience of the reader, as well as for the interest of the results, it was worthwhile to give the proofs. Hence we either reproduced or adapted the proofs to our special case.

\subsection{The space $H^1$}

In order to deal with the spectral theory of our operator, we will stop seeing it as acting on $C^{\infty}$ functions but as an unbounded operator on $L^2(M)$. We collected in Appendix \ref{app:unbounded_operators} the basic definitions and the main results that we need.

We start by defining a very useful functional space. Let $C^{\infty}_0(M)$ be the space of smooth functions with compact support on $M$ (so that, if $M$ is boundaryless, then the second condition is empty). Consider the following inner product on $C^{\infty}_0(M)$
\begin{equation*}
 \langle u,v \rangle_{1} = \int_M uv \; \Omega + \int_{HM} L_X\left(\pi^{\ast}u \right) L_X\left(\pi^{\ast}v \right)\; \ada
\end{equation*}
and denote by $\lVert \cdot \rVert_1$ the associated norm.

\begin{defin}
 We denote by $H^1(M)$\ the completion of $C^{\infty}_0 (M)$\ with respect to the norm $\rVert \cdot \lVert_{\empty_{1}}$.
\end{defin}

\begin{rem}
 Using the Riemannian metric given by the symbol of the Laplacian, we have:
\begin{equation*}
 \langle u,v \rangle_{1} = \int_M uv \; \Omega + \int_M \nabla u \nabla v \; \Omega\, .
\end{equation*}
Note also that we do not use the classical notations of $H^1(M)$ and $H^1_0(M)$ for the completion of respectively $C^{\infty}(M)$ and $C^{\infty}_0(M)$. But, as our main focus will later be closed manifolds, we did not feel it worth introducing two notations. 
\end{rem}

The space $H^1(M)$ is a Sobolev space and we have the following embedding result (see \cite[Lemma 3.9.3]{Narasimhan}):
\begin{thm}[Rellich--Kondrachov]
 If $M$\ is compact with smooth boundary, then $H^1(M)$\ is compactly embedded in $L^2(M)$.
\end{thm}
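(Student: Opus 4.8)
The plan is to reduce the statement to the classical Rellich--Kondrachov theorem for compact Riemannian manifolds with smooth boundary, by identifying $\lVert\cdot\rVert_1$ with a genuine Sobolev norm. Recall from the Ellipticity section that the symbol $\sigma^F$ of $\Delta^F$ is a positive-definite symmetric $2$-tensor on $T^{\ast}M$; let $g_\sigma$ be the Riemannian metric on $M$ dual to it, $\nabla$ the associated gradient and $v_\sigma$ its Riemannian volume. First I would record the identity, valid for $u,v \in C^\infty_0(M)$,
\begin{equation*}
 \int_{HM} L_X(\pi^{\ast} u)\, L_X(\pi^{\ast} v)\, \ada = \frac{\voleucl(\S^{n-1})}{n}\int_M g_\sigma(\nabla u,\nabla v)\, \Omega^F ,
\end{equation*}
which follows from the Fubini formula of Lemma~\ref{lem:Fubini} (using $\ada = \alpha^F\wedge\pi^{\ast}\Omega^F$) together with the defining formula for $\sigma^F$ applied, at each point $x$, to the functions $u-u(x)$ and $v-v(x)$. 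Thus, up to the fixed constant $\voleucl(\S^{n-1})/n$ in front of the gradient term, $\lVert\cdot\rVert_1$ is exactly the Sobolev norm attached to the metric $g_\sigma$ and the volume $\Omega^F$.

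Next I would use compactness of $M$ to pass from $\Omega^F$ to $v_\sigma$: since both are smooth nowhere-vanishing volume forms, $\Omega^F = a^2 v_\sigma$ for a smooth positive $a$ (as in Lemma~\ref{lem:existence_unicity}), and $a, a^{-1}$ are bounded on the compact manifold $M$; hence the $L^2(M,\Omega^F)$- and $L^2(M,v_\sigma)$-norms are equivalent, and likewise the two gradient integrals. Consequently $\lVert\cdot\rVert_1$ is equivalent to the standard Sobolev $H^1$-norm of $(M,g_\sigma)$, so the completion $H^1(M)$ of $C^\infty_0(M)$ is, as a normed space, the usual Sobolev space ($H^1_0$ in classical notation) and $L^2(M)$ is $L^2(M,v_\sigma)$ up to equivalent norms. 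The classical Rellich--Kondrachov embedding (for instance \cite[Lemma 3.9.3]{Narasimhan}) then asserts precisely that this inclusion is compact, which transfers immediately to $H^1(M)\hookrightarrow L^2(M)$.

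Thus the substance lies in the cited classical theorem, whose proof I would only indicate: a finite atlas subordinate to a partition of unity (with half-cube charts near $\partial M$, the boundary being flattened and the functions extended by even reflection across the flat face) reduces everything to a cube in $\R^n$ with the flat metric, where a $\lVert\cdot\rVert_1$-bounded sequence is shown to have an $L^2$-convergent subsequence by mollification: the mollified functions are equibounded and equi-Lipschitz, hence precompact in $C^0$ by Arzel\`a--Ascoli, while the mollification error is controlled, uniformly along the sequence, by the gradient bound. I do not expect a real obstacle; the only points specific to this setting that warrant care are the integrability and smoothness needed to justify the Fubini identity above (immediate, since $u$ is smooth and the unit bundle is compact), the reduction modulo the constant $\voleucl(\S^{n-1})/n$, and the harmless identification of the completion of $C^\infty_0(M)$ with the standard space.
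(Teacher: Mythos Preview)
The paper does not prove this theorem at all; it simply states it as a classical fact with a reference to \cite[Lemma 3.9.3]{Narasimhan}. Your reduction to the standard Riemannian Rellich--Kondrachov theorem via the symbol metric $g_\sigma$ is correct and is in fact exactly what the paper has in mind: the remark immediately following the definition of $H^1(M)$ records precisely your Fubini identity (modulo the constant $\voleucl(\S^{n-1})/n$, which the paper suppresses), so that $\lVert\cdot\rVert_1$ is a genuine Sobolev norm for $(M,g_\sigma,\Omega^F)$. Your proposal thus spells out explicitly the justification that the paper leaves to that one-line remark plus the citation; there is nothing to correct.
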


The Finsler--Laplace operator is an unbounded operator on $L^2(M)$ with domain in $H^1(M)$.

\subsection{Energy integral and Rayleigh quotients}

\begin{defin} \label{def:energy}
 For any function $u\in H^1(M)$, we define the \emph{energy} of $u$ by
\begin{equation}
 E(u) := \frac{n}{\voleucl \left(\S^{n-1}\right) } \int_{HM} \left|L_X\left(\pi^{\ast}u \right)\right|^2 \ada
\end{equation}
and the \emph{Rayleigh quotient} by
\begin{equation}
 R(u) := \frac{E(u)}{\int_M u^2\, \Omega}\, .
\end{equation}
\end{defin}

\begin{rem}
 The energy as well as the Rayleigh quotient can also be defined using the cotangent setting, i.e. for any $u\in H^1(M)$, we have 
\begin{equation*}
 E(u) := \frac{n}{\voleucl \left(\S^{n-1}\right) } \int_{H^{\ast}M} \left|L_{X^{\ast}}\left(\hat{\pi}^{\ast}u \right)\right|^2 \bdb,
\end{equation*}
where $B = (\ell_F^{-1})^{\ast} A$ and $X^{\ast}$ is its Reeb field (see Chapter \ref{chap:dynamical_formalism}).
\end{rem}

The Energy we defined is naturally linked to the Finsler--Laplace operator:
\begin{thm}
\label{th:min_energy}
 A function $u \in H^1(M)$\ is a minimum of the energy if and only if $u$\ is harmonic, i.e., $\Delta^F(u) = 0$.
\end{thm}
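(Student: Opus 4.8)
The plan is to characterize the minima of the energy $E$ on $H^1(M)$ via the standard calculus-of-variations argument, observing that $E$ is a non-negative quadratic form whose associated bilinear form is precisely (a multiple of) the Dirichlet-type pairing appearing in Green's formula \eqref{eq:green_formula}.

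\begin{proof}[Sketch of proof]
First I would note that $E$ is a non-negative quadratic form on $H^1(M)$, associated with the symmetric bilinear form
\begin{equation*}
 \mathcal{E}(u,v) := \frac{n}{\voleucl\left(\S^{n-1}\right)} \int_{HM} L_X\left(\pi^{\ast}u\right) L_X\left(\pi^{\ast}v\right) \ada,
\end{equation*}
so that $E(u) = \mathcal{E}(u,u)$. Suppose first that $u \in H^1(M)$ is harmonic, i.e. $\Delta^F u = 0$, and let $v \in C^{\infty}_0(M)$ be arbitrary. For $t \in \R$, develop
\begin{equation*}
 E(u + tv) = E(u) + 2t\, \mathcal{E}(u,v) + t^2 E(v).
\end{equation*}
By Green's formula \eqref{eq:green_formula} (Proposition \ref{prop:green_formula}), one has $\mathcal{E}(u,v) = -\langle \Delta^F u , v\rangle = 0$ since $\Delta^F u = 0$; here I would first verify the formula for $u \in C^{\infty}$ and then extend to $u \in H^1(M)$ by density, using that both sides are continuous for the $H^1$-norm (the right-hand side being controlled by $\lVert u \rVert_1 \lVert v \rVert_1$ and the left-hand side making sense once $\Delta^F u$ is interpreted weakly). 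Therefore $E(u+tv) = E(u) + t^2 E(v) \geq E(u)$ for all $t$ and all $v$, so $u$ is a minimum of the energy.

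Conversely, suppose $u$ is a minimum of $E$. Then for every $v \in C^{\infty}_0(M)$ the function $t \mapsto E(u+tv) = E(u) + 2t\,\mathcal{E}(u,v) + t^2 E(v)$ attains its minimum at $t=0$, hence its derivative there vanishes: $\mathcal{E}(u,v) = 0$ for all $v \in C^{\infty}_0(M)$. Again by Green's formula this reads $\langle \Delta^F u, v\rangle = 0$ for all $v \in C^{\infty}_0(M)$, so $\Delta^F u = 0$ in the weak (distributional) sense; by elliptic regularity (Theorem \ref{thm:finsler_laplace_basics}(i)) $u$ is in fact smooth and a classical solution, i.e. $u$ is harmonic.

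The main subtlety, and the step I would be most careful about, is the passage from $C^{\infty}$ to $H^1(M)$ in the use of Green's formula \eqref{eq:green_formula}: one must make sense of $\Delta^F u$ and of $\langle \Delta^F u, v \rangle$ when $u$ is only in $H^1(M)$, and justify that $\mathcal{E}(u,v)$ is indeed the correct weak pairing. This is handled by density of $C^{\infty}_0(M)$ in $H^1(M)$ together with the continuity of $\mathcal{E}$ with respect to $\lVert \cdot \rVert_1$, which follows directly from the definition of the $H^1$-inner product $\langle \cdot,\cdot\rangle_1$. On a closed manifold there are no boundary terms, so no further care is needed there.
\end{proof}
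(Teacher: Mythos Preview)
Your proof is correct and follows essentially the same approach as the paper: expand $E(u+tv)$ as a quadratic in $t$, identify the cross term with $-\langle \Delta^F u, v\rangle$ via the Finsler--Green formula, and conclude in both directions. You are slightly more explicit than the paper about the density/regularity issues (extending Green's formula to $H^1$ and invoking elliptic regularity for the converse), which the paper handles more casually, but the argument is the same.
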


\begin{rem}
When $M$\ is closed, this just proves that harmonic functions are constant. But this result stays true \emph{without restrictions} on the manifold and is therefore fundamental when used on manifolds with boundary.
\end{rem}

\begin{proof}
 Let $u,v \in H^1(M)$\ we want to compute $\frac{d}{dt} E(v+tu)$. Let $c_n = \dfrac{n}{\voleucl\left(\S^{n-1}\right) } $, we have
\begin{equation}
\label{calcul_E_v_tu}
 E(v+tu) = c_n \int_{HM} \! \left(L_X \pi^{\ast}v\right)^2 +2 t L_X \pi^{\ast}v L_X \pi^{\ast}u +t^2 \left(L_X \pi^{\ast}u\right)^2 \; \ada,
\end{equation}
and therefore
\begin{eqnarray*}
 \frac{d}{dt}\left(E(v+tu) \right)_{|_{t=0}} &=& 2 c_n \int_{HM} L_X \pi^{\ast}v L_X \pi^{\ast}u  \, \ada.
\end{eqnarray*}
Applying the Finsler--Green formula (Proposition \ref{prop:green_formula}, note that $u \in H^1(M)$\ implies that $u|_{\partial M} =0$\;hence the Finsler--Green formula applies without modifications even when $M$\ has a boundary), we obtain
\begin{equation*}
 \frac{d}{dt}\left(E(v+tu) \right)_{|_{t=0}} = 2 \int_{HM} u \Delta^F v  \, \Omega^F.
\end{equation*}
So, if $v$\ is harmonic, then it is a critical point of the energy, and \eqref{calcul_E_v_tu} shows that it must be a minimum. Conversely, if $v$\ is a critical point, then, for any $u \in H^1(M)$, $\langle \Delta^F v, u \rangle = 0$, which yields $\Delta^F v = 0$. \qedhere

\end{proof}

\subsection{Spectrum}

\begin{thm} \label{thm:spectre_discret} 
Let $M$\ be a compact manifold and $F$\ a Finsler metric on $M$.
 \begin{enumerate}
  \item The set of eigenvalues of $-\Delta^F$\ consists of an infinite, unbounded sequence of non-negative real numbers $ \lambda_0 < \lambda_1 < \lambda_2 < \dots$.
  \item Each eigenvalue has finite multiplicity and the eigenspaces corresponding to different eigenvalues are $L^2\left(M, \Omega \right)$-orthogonal.
  \item The direct sum of the eigenspaces is dense in $L^2\left(M, \Omega \right)$\ for the $L^2$-norm and dense in $C^k\left(M\right)$ for the uniform $C^k$-topology.
 \end{enumerate}
\end{thm}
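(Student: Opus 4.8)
The plan is to deduce the spectral theorem from the general Hilbert-space theory for unbounded self-adjoint operators with compact resolvent, having already gathered the ingredients assembled in the preceding sections: $\Delta^F$ is elliptic (Proposition on ellipticity), symmetric with respect to $\langle\cdot,\cdot\rangle$ on $C^\infty_0(M)$ (Proposition \ref{prop:symmetry}), the energy $E$ provides the associated quadratic form, and $H^1(M)$ embeds compactly in $L^2(M,\Omega)$ by Rellich--Kondrachov. Concretely, I would first pass to the Friedrich extension: the quadratic form $u\mapsto E(u)+\langle u,u\rangle$ is closed, non-negative, and densely defined on $H^1(M)$, so it defines a non-negative self-adjoint operator on $L^2(M,\Omega)$ whose domain sits inside $H^1(M)$ and which extends $-\Delta^F$ on $C^\infty_0(M)$. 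Since $M$ is compact there are no boundary issues and one checks this extension is the unique self-adjoint one (essential self-adjointness on a closed manifold), so I will simply write $-\Delta^F$ for it.

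Next I would show the resolvent $(-\Delta^F+1)^{-1}$ is compact on $L^2(M,\Omega)$. This factors through $H^1(M)$: by construction of the Friedrich extension, $(-\Delta^F+1)^{-1}$ maps $L^2(M,\Omega)$ continuously into $H^1(M)$ (indeed $\|(-\Delta^F+1)^{-1}f\|_1^2 = \langle (-\Delta^F+1)^{-1}f, f\rangle \le \|(-\Delta^F+1)^{-1}f\|_{L^2}\|f\|_{L^2}$), and then the inclusion $H^1(M)\hookrightarrow L^2(M,\Omega)$ is compact by Rellich--Kondrachov. A composition of a bounded operator with a compact one is compact. Since $-\Delta^F+1$ is self-adjoint, positive, and has compact resolvent, the spectral theorem for compact self-adjoint operators applies to $(-\Delta^F+1)^{-1}$: its spectrum is a sequence of positive eigenvalues accumulating only at $0$, each of finite multiplicity, with an orthonormal basis of eigenfunctions. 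Translating back gives that $-\Delta^F$ has an unbounded increasing sequence of non-negative eigenvalues $\lambda_0<\lambda_1<\lambda_2<\cdots$, each of finite multiplicity, with eigenspaces for distinct eigenvalues mutually $L^2(M,\Omega)$-orthogonal (orthogonality being automatic from self-adjointness), and whose direct sum is dense in $L^2(M,\Omega)$. That $\lambda_0=0$ with constant eigenfunctions comes from Theorem \ref{th:min_energy}: a minimizer of $E$ is harmonic, and on a closed manifold $E(u)=0$ forces $L_X(\pi^\ast u)\equiv 0$, hence $u$ constant; strict positivity of the remaining $\lambda_i$ then follows because $E(u)>0$ for $u$ non-constant.

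The remaining point is density of the eigenspaces in $C^k(M)$ for the uniform $C^k$-topology, which is where elliptic regularity enters. First, elliptic regularity (bootstrapping from the ellipticity of $\Delta^F$) shows each eigenfunction is $C^\infty$ and, more quantitatively, that for an $L^2$-eigenfunction $\varphi$ with $-\Delta^F\varphi=\lambda\varphi$ one has elliptic estimates $\|\varphi\|_{H^{s+2}}\le C_s(\|\lambda\varphi\|_{H^s}+\|\varphi\|_{L^2})$, so that on a finite sum of eigenspaces the $H^s$-norms are controlled by the $L^2$-norm times a power of the largest eigenvalue. To get $C^k$-density, take $f\in C^k(M)$; since the eigenfunctions are $L^2$-dense, there is a sequence of finite eigenfunction-combinations $f_N\to f$ in $L^2$. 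The subtlety — and the step I expect to be the main obstacle — is upgrading $L^2$-convergence to $C^k$-convergence. The standard route is to note $f\in C^k(M)\subset H^s(M)$ for $s$ large, decompose $f=\sum_j c_j\varphi_j$ in $L^2$, and observe that the partial sums $S_N f=\sum_{j\le N}c_j\varphi_j$ satisfy $\|f-S_N f\|_{H^{s'}}\to 0$ for any $s'$ for which $f\in H^{s'}$: this follows because the Laplacian acts diagonally, $\|f-S_N f\|_{H^{2m}}^2 \asymp \sum_{j>N}|c_j|^2(1+\lambda_j)^{2m}$ (Sobolev norms being comparable to the graph norms of powers of $-\Delta^F$), and $f\in H^{2m}$ means this sum converges. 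Then the Sobolev embedding $H^{s}(M)\hookrightarrow C^k(M)$ for $s>k+n/2$ (valid since $M$ is a compact manifold) gives $S_N f\to f$ in $C^k$. Thus the only genuine work is (i) identifying Sobolev norms with graph norms of $-\Delta^F$, which uses elliptic regularity, and (ii) invoking Sobolev embedding; both are standard and I would state them with references (e.g.\ \cite{Narasimhan}) rather than reprove them. I would close by remarking that all of this is the standard machinery, reproduced here only for the reader's convenience as announced.
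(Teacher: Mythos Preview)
Your argument is correct and coincides with the paper's first proof: Friedrich extension, compact resolvent via Rellich--Kondrachov, spectral theorem for compact self-adjoint operators, and elliptic regularity to recover smoothness of eigenfunctions. The paper also offers a second, Min-Max proof constructing the eigenvalues and eigenspaces variationally via Rayleigh quotients; neither of the paper's proofs spells out the $C^k$-density, so your treatment is actually more complete on that point---though the inclusion ``$C^k(M)\subset H^s(M)$ for $s$ large'' is only true for $s\le k$, so you should instead first approximate $f\in C^k$ by a smooth function and run your Sobolev argument on that.
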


For the convenience of the reader, we will give the adaptation to this special case of the two classical proofs of Riemannian geometry (see, for instance \cite{Ber:SpectralGeo}). The first one uses the whole machinery of the theory of unbounded operators, the second one uses the min-max principle and is, in my opinion, more agreeable.

Before starting the proof, we wish to recall one special characteristic of elliptic operators (see, for instance \cite[Theorem 3.9.1]{Narasimhan}): 
\begin{thm}[Elliptic Regularity Theorem]
Let $L$\ be an elliptic operator on a smooth manifold $M$. If $s \in H^1(M)$ is such that $Ls \in C^{\infty}\left(M\right)$, then $s \in C^{\infty}\left(M\right)$.
\end{thm}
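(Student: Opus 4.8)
The statement is local and is one of the cornerstones of linear PDE theory, so the plan is to reduce it to a coordinate computation and then invoke (or reprove) the standard interior regularity machinery. Smoothness of $s$ can be checked near each point, so fix $p\in M$, choose a chart, and work on an open set $\Omega\subset\R^n$ on which $L$ is a second-order operator $L=\sum_{i,j}a_{ij}(x)\partial_i\partial_j+\sum_i b_i(x)\partial_i+c(x)$ with $C^\infty$ coefficients and with $(a_{ij}(x))$ positive-definite uniformly on relatively compact subsets --- this is exactly what ellipticity of $L$ means in local coordinates. Since restricting shrinks $\Omega$ but preserves all hypotheses, it suffices to prove: if $u\in H^1(\Omega)$ and $Lu\in C^\infty(\Omega)$, then $u\in C^\infty(\Omega')$ for every $\Omega'\Subset\Omega$.

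The heart of the argument is the \emph{interior a priori estimate}: for every integer $k\geq 0$ and every pair $\Omega''\Subset\Omega'\Subset\Omega$ there is a constant $C$ such that
\begin{equation*}
 \lVert u\rVert_{H^{k+2}(\Omega'')}\leq C\left(\lVert Lu\rVert_{H^{k}(\Omega')}+\lVert u\rVert_{H^{k}(\Omega')}\right)
\end{equation*}
whenever the right-hand side is finite. I would prove this in two stages. First the constant-coefficient, whole-space case: if $L_0=\sum a_{ij}^0\partial_i\partial_j$ with $(a_{ij}^0)$ positive-definite, then Plancherel's theorem applied to $\widehat{L_0 u}(\xi)=-\big(\sum a_{ij}^0\xi_i\xi_j\big)\hat u(\xi)$, together with the ellipticity bound $\sum a_{ij}^0\xi_i\xi_j\geq\theta|\xi|^2$, gives $\lVert u\rVert_{H^{m+2}}\lesssim\lVert L_0 u\rVert_{H^m}+\lVert u\rVert_{H^m}$ directly. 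Then one passes to variable coefficients by freezing $(a_{ij}(x))$ at a point, and absorbs the resulting error terms (which involve the modulus of continuity of the $a_{ij}$ and the lower-order coefficients) using a partition of unity on a small neighbourhood and the Peter--Paul inequality; the gain of one derivative in the tangential directions needed to run the induction on $k$ starting only from $u\in H^1$ is provided by Nirenberg's difference-quotient method, which is the device that actually \emph{produces} the regularity rather than assuming it.

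With the a priori estimate in hand the conclusion is a bootstrap. We are given $s\in H^1(M)$, hence $s\in H^0_{\mathrm{loc}}$, and $Ls\in C^\infty$, so $Ls\in H^k_{\mathrm{loc}}$ for every $k$. Applying the estimate with $k=0$ on a slightly smaller domain gives $s\in H^{2}_{\mathrm{loc}}$; and once $s\in H^{m}_{\mathrm{loc}}$ is known, applying it with this value of $k$ gives $s\in H^{m+2}_{\mathrm{loc}}$. By induction $s\in H^{k}_{\mathrm{loc}}$ for every $k$, i.e.\ $s\in\bigcap_k H^k_{\mathrm{loc}}$. The Sobolev embedding theorem ($H^{k}_{\mathrm{loc}}\hookrightarrow C^{m}$ whenever $k>m+n/2$) then yields $s\in C^{m}$ for every $m$, that is, $s$ is smooth near $p$; since $p$ was arbitrary, $s\in C^\infty(M)$.

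The main obstacle is genuinely the interior a priori estimate: the constant-coefficient step is immediate from the Fourier transform, but the localization to variable coefficients, the careful bookkeeping of the commutator and lower-order terms, and the difference-quotient argument needed to gain a derivative and to let the induction on $k$ begin from merely $u\in H^1$ are the technical core. An efficient alternative that sidesteps the bootstrap altogether is the parametrix method: an elliptic $L$ admits a properly supported pseudodifferential operator $Q$ of order $-2$ with $QL=\mathrm{Id}+R$ and $R$ smoothing; then $s=QLs-Rs$, where $QLs\in C^\infty$ because $Q$ preserves $C^\infty$ and $Ls\in C^\infty$, while $Rs\in C^\infty$ because a smoothing operator maps distributions to $C^\infty$ --- whence $s\in C^\infty$. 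This is essentially the route taken in the reference \cite{Narasimhan}, and I would present it instead if the pseudodifferential calculus is taken as available.
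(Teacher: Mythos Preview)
Your proposal is mathematically sound and follows the standard route to interior elliptic regularity: localize to a chart, establish the interior a priori estimate (constant-coefficient Fourier argument, freezing of coefficients with absorption of error terms, Nirenberg difference quotients to start the bootstrap), iterate to get $s\in \bigcap_k H^k_{\mathrm{loc}}$, and conclude by Sobolev embedding. The parametrix alternative you sketch at the end is equally valid and is indeed the approach taken in the cited reference.

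However, the paper does not prove this theorem at all. It is stated without proof as a classical black box, with a pointer to \cite[Theorem 3.9.1]{Narasimhan}, and is then invoked in the two proofs of Theorem~\ref{thm:spectre_discret} to upgrade weak eigenfunctions to smooth ones. So there is nothing in the paper's own argument to compare your proof against: you have supplied a correct proof where the paper simply quotes the result. If your goal is to match the paper, a one-line citation suffices; if your goal is to be self-contained, what you wrote is an accurate outline, with the parametrix version being the closer match to the reference the paper actually points to.
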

One classical problem of partial differential equations is, given a differential operator $L$ on $M$ and a smooth function $w$, to find a smooth function $u$ such that $Lu = w$ and $u$ verifying some additional conditions on the boundary of $M$ if it exists. Finding solutions to this problem is generally hard, but can sometimes be simplified by weakening the expected regularity of $u$. These solutions are called \emph{weak solutions}. Now, if the operator is elliptic, then finding weak solutions is sufficient due to the above regularity theorem! This technique of finding weak solutions is applied in the proofs to follow. (We did not wish to go into any detail about weak/classical solutions as it is not the main concern here and it can be found in many books on PDEs, see for instance \cite{GilTru,Showalter}.)

\begin{proof}[Proof of Theorem \ref{thm:spectre_discret}: first method]
The operator $-\Delta^F$ is a positive, symmetric operator, so its Friedrich extension $\mathfrak{F}$ exists (see Theorem \ref{thm:Friedrich_extension}). The operator $\mathfrak{F}$ is closed, positive, and self-adjoint, so its spectrum is in $\R^+$. Now take any $\mu$ not in $\R^+$, by definition, the resolvent $R_{\mu}:= \left( \mathfrak{F} -\mu \id \right)$ is a bounded operator from $L^2(M)$ onto the domain of $\mathfrak{F}$ which is $H^1$. By Rellich-Kondrachov Theorem, the embedding $H^1(M)\hookrightarrow L^2(M)$ is compact, so $R_{\mu}$ is a compact operator. Therefore, by the classical result on the spectrum of compact operators (see for instance \cite[Theorem VI.16]{ReedSimon:I}), we deduce that $\mathfrak{F}$ has an infinite, unbounded, discrete spectrum. Finally, as $-\Delta^F$  is elliptic, so is $\mathfrak{F}$, hence all its eigenfunctions are smooth (by the elliptic regularity theorem) and so belong to the domain of $-\Delta^F$.
\end{proof}

The following proof is classical for the Laplace--Beltrami operator (and stays true in a much wider setting, see \cite{ReedSimon:IV}). Here, we adapted the proof given in \cite{Ber:SpectralGeo}.
\begin{proof}[Proof of Theorem \ref{thm:spectre_discret}: The Min-Max method]
Let
 $$
 \mu_1= \inf\lbrace R(u)\; : \;  u \in H^1(M), \; \int_M |u|^2 \Omega \neq 0 \rbrace.
 $$
As $R(u) \geq 0$, $\mu_1$\ exist. Using the Rellich-Kondrachov theorem, we show that from a sequence $(u_n) \in H^1(M)$ such that $R(u_n)$\ tends to $\mu_1$, we can extract a subsequence converging in $\L^2(M)$ to a function in $H^1(M)$. Therefore, $\mu_1$ is realized in $H^1(M)$. Define $E_1$\ as all $v\in H^1(M)$\ such that $R(v)=\mu_1$\ or $v=0$. The second step of the proof is to show that $E_1$\ is an eigenspace for $\Delta^F$. 
\begin{claim}
\label{claim:carac_eigenspace}
 We have the following characterization:
 \begin{equation}
 \label{eq:carac_eigenspace}
 \left( v \in E_1 \right) \quad \Leftrightarrow \quad \left( \forall u \in H^1(M), \;  \langle u , v \rangle_{\empty{1}} =  \left(\frac{\mu_1}{c_n} + 1 \right) \langle u , v \rangle \right).
 \end{equation}
\end{claim}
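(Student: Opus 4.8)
The plan is to recognise the identity displayed in \eqref{eq:carac_eigenspace} as the weak form of the eigenvalue equation $-\Delta^F v = \mu_1 v$, and to prove the equivalence by a first–variation argument modelled on the proof of Theorem~\ref{th:min_energy}, postponing any use of regularity. First I would rewrite the two inner products in terms of a single bilinear form: put $c_n := n/\voleucl(\S^{n-1})$ and $\mathcal E(u,v) := c_n\int_{HM} L_X(\pi^{\ast}u)\,L_X(\pi^{\ast}v)\,\ada$, so that $E(u) = \mathcal E(u,u)$ and, straight from the definition of $\langle\cdot,\cdot\rangle_1$,
\[
 \langle u,v\rangle_1 \;=\; \langle u,v\rangle \;+\; \tfrac1{c_n}\,\mathcal E(u,v),
\]
where $\langle\cdot,\cdot\rangle$ denotes the $L^2(M,\Omega)$ inner product. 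A one–line rearrangement then shows that the right–hand side of \eqref{eq:carac_eigenspace} holds if and only if $\mathcal E(u,v) = \mu_1\langle u,v\rangle$ for every $u\in H^1(M)$; I shall refer to this last assertion as $(\star)$.

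For the implication $(\star)\Rightarrow v\in E_1$, take $u=v$ in $(\star)$: this gives $E(v) = \mu_1\int_M v^2\,\Omega$, so either $v=0$, or $\int_M v^2\,\Omega\neq 0$ and $R(v)=\mu_1$; in both cases $v\in E_1$. For the converse, let $v\in E_1$; the case $v=0$ is trivial, so assume $v\neq 0$, whence $\int_M v^2\,\Omega\neq 0$ and $R(v)=\mu_1$. Fix $u\in H^1(M)$ and set $\varphi(t):=R(v+tu)$. Since $\int_M(v+tu)^2\,\Omega = \int_M v^2\,\Omega + 2t\langle u,v\rangle + t^2\int_M u^2\,\Omega$ is continuous in $t$ and positive at $t=0$, $\varphi$ is defined and differentiable near $t=0$; and it attains a minimum there because $v$ minimises $R$ over all of $H^1(M)$, so $\varphi'(0)=0$. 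Using $E(v+tu) = E(v) + 2t\,\mathcal E(v,u) + t^2E(u)$ (as in \eqref{calcul_E_v_tu}) and the quotient rule,
\[
 0 \;=\; \varphi'(0) \;=\; \frac{2\,\mathcal E(v,u)\int_M v^2\,\Omega \;-\; 2\,E(v)\,\langle u,v\rangle}{\bigl(\int_M v^2\,\Omega\bigr)^2},
\]
and since $E(v)=\mu_1\int_M v^2\,\Omega$ with $\int_M v^2\,\Omega\neq 0$, this reduces to $(\star)$. Unwinding the relation $\langle u,v\rangle_1 = \langle u,v\rangle + \frac1{c_n}\mathcal E(u,v)$ gives back \eqref{eq:carac_eigenspace}.

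The argument is routine; the only point deserving care is that $t\mapsto R(v+tu)$ must actually be differentiable at $0$, and this is precisely where the hypothesis $v\neq 0$, rather than merely $v\in H^1(M)$, is used, since it is what keeps the denominator from vanishing near $t=0$. No regularity input is needed for the claim itself; elliptic regularity and the Finsler–Green formula (Proposition~\ref{prop:green_formula}, together with \eqref{eq:lxada}) enter only afterwards, when $(\star)$ is rewritten as $\langle -\Delta^F v - \mu_1 v,\,u\rangle = 0$ for all $u\in H^1(M)$ in order to conclude that each nonzero element of $E_1$ is a smooth eigenfunction of $-\Delta^F$ with eigenvalue $\mu_1$.
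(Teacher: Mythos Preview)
Your proof is correct and follows essentially the same route as the paper: both directions rest on the first-variation identity $\varphi'(0)=0$ for $\varphi(t)=R(v+tu)$, with the reverse implication obtained by setting $u=v$. Your introduction of the bilinear form $\mathcal E$ and the intermediate statement $(\star)$ streamlines the bookkeeping, but the argument is the paper's own.
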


\begin{proof}[Proof of Claim]
The implication from right to left is trivial, just take $u=v$, so we focus on the other implication.\\
Choose any $v \in E_1$, $u\in H^1(M)$\ and $t\in \R$\ sufficiently small, we have 
$$R\left( v +tu\right) \geq R(v) = \mu_1,$$
so the derivative at $t=0$\ of the function $t \mapsto R\left( v +tu\right)$\ is zero. We denote $c_n := n \left(\voleucl \left(\mathbb{S}^{n-1}\right) \right)^{-1}$, direct computation gives 

\begin{align*}
  R\left(v +tu\right) &= c_n \frac{\int_{HM} \left(L_X v\right)^2 + 2t L_Xv L_Xu + o(t) \; \ada }{\int_M  v^2 + 2t uv +o(t) \; \Omega } \\
   &= \frac{c_n}{ \int_M v^2 \; \Omega} \left( \int_{HM} \left(L_X v\right)^2 + 2t L_Xv L_Xu + o(t) \; \ada\right) \\
   & \times \left(1- \frac{1}{\int_M v^2 \; \Omega} \int_M 2tuv \; \Omega +o(t) \right) \\
   &= R(v) + 2t \frac{c_n}{ \int_M v^2 \; \Omega} \left(- \frac{\int_{HM} \left(L_X v\right)^2 \; \ada }{\int_M v^2 \; \Omega } \int_M uv \; \Omega \right. \\
    & + \left.  \int_{HM} L_Xv L_Xu \; \ada \right) +o(t) \\
    &= R(v) + \frac{2t}{\int_M v^2 \; \Omega} \left(- \mu_1 \langle u,v \rangle +  c_n \left( \langle u,v\rangle_{\empty_{1}} - \langle u,v \rangle \right) \right)  + o(t) \, .
\end{align*}

 So writing that the term in $t$\ is $0$\ yields the characterization (\ref{eq:carac_eigenspace}).
\end{proof}

The Claim \ref{claim:carac_eigenspace} shows first that $E_1$\ is a vector space. Furthermore, as the $\lVert. \rVert_1$-norm and the $\lVert. \rVert$-norm are proportional on $E_1$, using once more the Rellich-Kondrachov theorem, it shows that the balls of $E_1$\ are compact in $L^2(M)$, so $E_1$\ is finite dimensional.

Using the Finsler-Green formula (Proposition \ref{prop:green_formula}), we can rewrite Equation \eqref{eq:carac_eigenspace}\ as 
 \begin{equation}
 \left( v \in E_1 \right) \quad \Leftrightarrow \quad \left( \forall u \in H^1(M), \; \langle \Delta v , u \rangle_0 = -\mu_1 \langle u,v\rangle_0 \right)\, ,
 \end{equation}
that is, an element of $E_1$\ is a weak solution of the closed eigenvalue problem, for the eigenvalue $\mu_1$. So the elliptic regularity theorem yields that $E_1$\ is inside $C^{\infty}(M)$, and that all the weak solutions are in fact classical solutions.\\
To get the next eigenvalue, set $H_1$\ (resp. $L_1$) the orthogonal complement of $E^1$\ with regard to $\lVert. \rVert_1$\ (resp. $\lVert. \rVert$) and we define 
$$
 \mu_2 = \inf \lbrace R(u) : u\in H_1, u\neq 0 \rbrace.
$$
Claim \ref{claim:carac_eigenspace} shows that $H_1 \subset L_1$, and these spaces are closed in $H^1(M)$\ and $L^2(M)$\ respectively. So, the inclusion $H_1 \subset L_1$\ is closed and we can apply the same arguments to show that $\mu_2$\ is attained, and that the space of functions $E_2$\ that realizes this minimum is the eigenspace for $\mu_2$. By definition, $\mu_2 > \mu_1$. Redoing the previous steps, we construct a sequence of eigenvalues $0 \leq \mu_1 < \mu_2 < \mu_3 ...$\ together with a sequence of associated, mutually orthogonal, finite dimensional subspaces of $H^1(M)$, $E_1,E_2, \dots$. The sequence is necessarily infinite because $H^1(M)$\ is infinite dimensional and the $E_i$\ are only finite dimensional.

To finish the proof of the first part of the theorem, we must show that the sequence $\lbrace \mu_i \rbrace$\ is unbounded.  If it was bounded by a real number $\mu$, we could take an infinite sequence $\lbrace \phi_i \rbrace$\ of orthonormal functions in $L^2(M)$\ such that $R(\phi_i) \leq \mu$\ for every $i$\ (just take a $L^2$-orthonormal basis of every $E_i$'s), so $\lVert \phi_i \rVert_1 \leq \mu +1$, and as the inclusion $H^1(M) \hookrightarrow L^2(M)$\ is compact we would obtain a subsequence of orthonormal functions which converges in $L^2(M)$, which is absurd. 
\end{proof}
 
The advantage of the above proof is that it gives an actual expression for eigenvalues. Let us summarize it in the following:
\begin{thm}[Min-Max principle] \label{thm:Min-Max}
If $M$ is a compact manifold, then the first eigenvalue of $-\Delta^F$ is given by
\begin{equation*}
\lambda_0 =  \inf \left\{ R(u) \mid u \in H^1(M) \right\},
\end{equation*}
and its eigenspace $E_0$ is the set of functions realizing the above infimum. The following eigenvalues are given by
\begin{equation*}
 \lambda_k= \inf \left\{ R(u) \; \biggl| \; u \in \bigcap_{i=1}^{k-1} E_i^{\perp} \right\} ,
\end{equation*}
where their eigenspaces $E_k$ are given by the set of functions realizing the above infimum.
\end{thm}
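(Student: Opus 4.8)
The plan is to distill the statement from the Min-Max argument already carried out in the proof of Theorem~\ref{thm:spectre_discret}; no genuinely new work is needed, only a careful restatement, together with the remark that the $\lambda_k$ exhaust the spectrum. First I would treat $\lambda_0$. Set $\lambda_0 := \inf\{R(u) \mid u \in H^1(M),\ u \neq 0\}$, which is finite and non-negative since $R \geq 0$. As in the proof of Theorem~\ref{thm:spectre_discret}, I would take a minimizing sequence normalized in $L^2$; it is then bounded for $\lVert \cdot \rVert_1$, so Rellich--Kondrachov yields an $L^2$-convergent subsequence whose limit $v$ still lies in $H^1(M)$, has $L^2$-norm $1$, and satisfies $R(v) \leq \lambda_0$ by lower semicontinuity of $E$; hence $R(v) = \lambda_0$ and the infimum is attained. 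Writing $E_0$ for the set of such minimizers (together with $0$), the derivative computation of Claim~\ref{claim:carac_eigenspace}, used verbatim, shows that $v \in E_0$ if and only if $\langle \Delta^F v, u \rangle_0 = -\lambda_0 \langle v, u\rangle_0$ for every $u \in H^1(M)$, i.e. $v$ is a weak solution of $\Delta^F v = -\lambda_0 v$; the Elliptic Regularity Theorem then gives $v \in C^\infty(M)$ and upgrades this to a classical identity, so $E_0$ is a bona fide eigenspace of $-\Delta^F$, finite dimensional by the same compactness argument. Finally, if $\mu$ is any eigenvalue with eigenfunction $\varphi$, Green's formula (Proposition~\ref{prop:green_formula}) gives $\mu = R(\varphi) \geq \lambda_0$, so $\lambda_0$ is the smallest eigenvalue and $E_0$ its eigenspace.

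Next I would run the induction. Assume $\lambda_0 < \dots < \lambda_{k-1}$ and the eigenspaces $E_0, \dots, E_{k-1}$ have been produced, and let $W_k := E_0 \oplus \dots \oplus E_{k-1} \subset C^\infty(M)$. Define $\lambda_k := \inf\{R(u) \mid u \in H^1(M),\ u \perp_{L^2} W_k,\ u \neq 0\}$. The $L^2$-orthogonal complement of $W_k$ intersected with $H^1(M)$ is closed in $H^1(M)$ and its $L^2$-closure is again $W_k^\perp$, so the inclusion into $L^2$ stays compact and the three steps above repeat word for word: $\lambda_k$ is attained, the minimizer set $E_k$ is a finite-dimensional eigenspace of $-\Delta^F$ for $\lambda_k$, and $E_k \perp_{L^2} W_k$. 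Since $W_{k-1} \subset W_k$ we get $\lambda_k \geq \lambda_{k-1}$, and equality is impossible: a minimizer $v \in E_k$ with $R(v) = \lambda_{k-1}$ would be an eigenfunction for $\lambda_{k-1}$, hence lie in $E_{k-1} \subset W_k$ while also being $L^2$-orthogonal to $W_k$, forcing $v = 0$. So $\lambda_k > \lambda_{k-1}$, matching the strict chain in Theorem~\ref{thm:spectre_discret}. One should also check, as there, that every $\mu$-eigenfunction orthogonal to $W_k$ lies in $E_k$, so that $E_k$ is the \emph{full} eigenspace of $\lambda_k$; this follows from $\langle \varphi, v\rangle_1 = (1+\text{const}\cdot\lambda_k)\langle\varphi,v\rangle$ for eigenfunctions, via Green's formula.

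It remains to check that the $\lambda_k$ so defined are \emph{all} the eigenvalues, which is what makes the two displayed formulas a genuine description of the spectrum. The list is infinite because $H^1(M)$ is infinite dimensional while each $E_i$ is finite dimensional. If some eigenvalue $\mu$ were different from every $\lambda_k$, then since eigenspaces of the symmetric operator $-\Delta^F$ attached to distinct eigenvalues are $L^2$-orthogonal (Theorem~\ref{thm:spectre_discret}), any $\mu$-eigenfunction would be orthogonal to every $E_k$, hence to $\bigoplus_k E_k$; but that direct sum is dense in $L^2(M, \Omega^F)$ by Theorem~\ref{thm:spectre_discret}, so the eigenfunction would vanish, a contradiction. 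Therefore the $\lambda_k$ exhaust the spectrum and the $E_k$ are exactly their eigenspaces. The main obstacle here is not analytic — everything rests on Rellich--Kondrachov, elliptic regularity, and the Finsler--Green formula, all already in hand — but purely organizational: keeping straight the index shift between the $\mu_j$, $E_j$ of the proof of Theorem~\ref{thm:spectre_discret} and the $\lambda_k$, $E_k$ of the present statement, and being careful that the inductive complement is taken in $L^2$ (not in $H^1$) so that the characterization of $E_k$ as an eigenspace goes through cleanly.
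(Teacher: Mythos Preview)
Your proposal is correct and matches the paper's approach exactly: the paper presents this theorem as a direct summary of the Min-Max proof of Theorem~\ref{thm:spectre_discret} (``Let us summarize it in the following''), and you have faithfully unpacked that argument, including the additional care of checking via density that the $\lambda_k$ exhaust the spectrum and flagging the index shift between the $\mu_j$ of the proof and the $\lambda_k$ of the statement.
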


\begin{rem}
 In particular, if $M$ is closed, the first non-zero eigenvalue is 
\begin{equation*}
 \lambda_1 = \inf \lbrace R(u) \mid u \in H^1(M), \; \int_M \!\! u \; \Omega = 0 \rbrace\, .
\end{equation*}
\end{rem}

The Min-Max principle admits another formulation:
\begin{thm}[Min-Max principle (bis)] \label{thm:Min-Max_bis}
 Let $M$ be a compact manifold and $\lambda_k$ the $k^{\text{th}}$-eigenvalue (counted with multiplicity) of $-\Delta^F$, then
\begin{equation}
 \lambda_k = \inf_{V_k} \sup_{u \in V_k} R(u),
\end{equation}
where the infimum is taken over all the $k$-dimensional subspaces of $H^1(M)$ when $M$ is closed, and $k+1$-dimensional when $M$ has a non-empty boundary.
\end{thm}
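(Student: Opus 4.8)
Theorem (Min-Max principle, second form). The plan is to deduce this from the first version of the Min-Max principle (Theorem \ref{thm:Min-Max}) together with the spectral decomposition established in Theorem \ref{thm:spectre_discret}, following the classical Courant argument. Write $E_0, E_1, E_2, \dots$ for the eigenspaces of $-\Delta^F$ and $\{\varphi_0, \varphi_1, \dots\}$ for an $L^2(M,\Omega)$-orthonormal basis of eigenfunctions, ordered so that the associated eigenvalues (with multiplicity) are non-decreasing; by Theorem \ref{thm:spectre_discret}(3) this basis is complete in $L^2(M,\Omega)$, and by the Min-Max principle every $u \in H^1(M)$ satisfies $R(u) \geq \lambda_0$, so $R$ is bounded below on the relevant subspaces and the infima and suprema below are all attained or at least finite.

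\textbf{The upper bound.} First I would exhibit a $k$-dimensional (resp. $(k+1)$-dimensional) subspace on which $\sup R = \lambda_k$. Take $V = \operatorname{span}(\varphi_0, \dots, \varphi_{k-1})$ in the closed case (resp. $\operatorname{span}(\varphi_0,\dots,\varphi_k)$ in the boundary case, where the labelling starts differently because $\lambda_0 > 0$ is already the first eigenvalue). For $u = \sum_{j} a_j \varphi_j$ in this span, the Finsler--Green formula (Proposition \ref{prop:green_formula}) gives $E(u) = \langle -\Delta^F u, u\rangle_{L^2} = \sum_j \lambda_j a_j^2$ and $\int_M u^2\,\Omega = \sum_j a_j^2$, so $R(u) = (\sum_j \lambda_j a_j^2)/(\sum_j a_j^2) \leq \lambda_{k-1}$ with equality when $u = \varphi_{k-1}$. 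Hence $\inf_{V_k}\sup_{u\in V_k} R(u) \leq \lambda_k$ (with the appropriate index shift). This step is routine once one has the orthogonality of eigenspaces and the expression of $E$ via the Green formula.

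\textbf{The lower bound.} For this I would take an arbitrary $k$-dimensional subspace $V \subset H^1(M)$ and produce a nonzero $u_0 \in V$ with $R(u_0) \geq \lambda_{k-1}$ (again shifting indices suitably in the boundary case). The idea is the standard dimension-count: the linear conditions $\langle u, \varphi_j\rangle_{L^2} = 0$ for $j = 0, \dots, k-2$ form a system of $k-1$ equations on the $k$-dimensional space $V$, hence admit a nonzero solution $u_0 \in V$. Writing $u_0 = \sum_{j \geq k-1} a_j \varphi_j$ and using completeness together with the fact that $u_0 \in H^1(M)$ (so that $E(u_0) = \sum_{j\geq k-1}\lambda_j a_j^2$ makes sense — this is where one must be slightly careful, invoking that the partial sums converge in $H^1$, equivalently that the $H^1$-norm is equivalent to $(\sum(1+\lambda_j)a_j^2)^{1/2}$, which follows from the definition of $\langle\cdot,\cdot\rangle_1$ and the Green formula), one gets $R(u_0) = (\sum_{j\geq k-1}\lambda_j a_j^2)/(\sum_{j\geq k-1} a_j^2) \geq \lambda_{k-1}$. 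Therefore $\sup_{u\in V} R(u) \geq \lambda_{k-1}$ for every such $V$, so $\inf_{V_k}\sup_{u\in V_k} R(u) \geq \lambda_k$ (with the index shift).

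\textbf{Main obstacle.} The only delicate point is the justification, in the lower-bound step, that for $u_0 \in H^1(M)$ one really has $E(u_0) = \sum_j \lambda_j \langle u_0,\varphi_j\rangle_{L^2}^2$ and that truncating the eigenfunction expansion does not decrease the Rayleigh quotient in the wrong direction; this requires knowing that the eigenfunctions form an orthogonal basis of $H^1(M)$ for $\langle\cdot,\cdot\rangle_1$ as well, which one extracts from Theorem \ref{thm:spectre_discret} and the density of $\bigoplus E_i$ in $C^k$, or alternatively from the compactness of the resolvent used in the first proof of Theorem \ref{thm:spectre_discret}. Everything else — the dimension count and the two-sided estimate — is formal. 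Combining the upper and lower bounds yields the equality $\lambda_k = \inf_{V_k}\sup_{u\in V_k} R(u)$, with $V_k$ ranging over $k$-dimensional subspaces when $\partial M = \emptyset$ and $(k+1)$-dimensional subspaces otherwise, the shift accounting for the presence of the eigenvalue $\lambda_0 = 0$ in the closed case versus its absence in the boundary case.
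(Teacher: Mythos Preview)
Your proposal is correct and follows exactly the classical Courant--Fischer argument; the paper itself gives no independent proof but simply defers to \cite{Ber:SpectralGeo}, whose argument is the one you have reproduced. The one point worth tightening is the indexing convention (the paper's Theorem~\ref{thm:spectre_discret} lists eigenvalues without multiplicity, whereas here they are counted with multiplicity), but you flag this shift explicitly and handle it correctly.
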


\begin{proof}
 The proof given in \cite{Ber:SpectralGeo} applies without any modifications.
\end{proof}

When $M$ is non-compact, we do not have such a nice spectra. However, we still have the following:
\begin{prop} \label{prop:spectral_gap}
 The infimum of the essential spectrum of $-\Delta^F$ on a non-compact manifold is given by
\begin{equation}
 \lambda_1 = \inf \lbrace R(u) \mid u \in H^1(M) \rbrace\, .
\end{equation}
\end{prop}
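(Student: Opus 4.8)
The plan is to read the statement as the variational characterization of the bottom of the spectrum of the (self-adjoint) Friedrich extension of $-\Delta^F$, and then to upgrade ``bottom of the spectrum'' to ``bottom of the essential spectrum'' in the situation where the proposition is actually used, i.e.\ when $M$ is a normal cover of a closed Finsler manifold. So I would argue in two steps.

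\emph{Step 1 (variational formula for $\inf\sigma(-\Delta^F)$).} By the Finsler--Green formula (Proposition \ref{prop:green_formula}), for $u\in C^\infty_0(M)$ one has $\langle -\Delta^F u,u\rangle = \frac{n}{\voleucl(\S^{n-1})}\int_{HM}|L_X(\pi^{\ast}u)|^2\,\ada = E(u)\geq 0$, so $-\Delta^F$ is a non-negative symmetric operator on $C^\infty_0(M)\subset L^2(M,\Omega^F)$. Let $\mathfrak F$ be its Friedrich extension (Theorem \ref{thm:Friedrich_extension}): $\mathfrak F$ is self-adjoint, $\mathfrak F\geq 0$, and by construction its form domain is the completion of $C^\infty_0(M)$ for the norm $u\mapsto(\langle-\Delta^F u,u\rangle+\|u\|_0^2)^{1/2}$, which is equivalent to the norm $\|\cdot\|_1$ defining $H^1(M)$ (Definition \ref{def:energy}); hence the form domain is $H^1(M)$ and the associated quadratic form is $u\mapsto E(u)$. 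The Rayleigh--Ritz characterization of the bottom of the spectrum of a lower-bounded self-adjoint operator then gives
\begin{equation*}
 \inf\sigma(\mathfrak F)=\inf\Bigl\{\,E(u)/\|u\|_0^2 \ \Big|\ u\in H^1(M),\ u\neq 0\,\Bigr\}=\inf\{\,R(u)\mid u\in H^1(M)\,\}.
\end{equation*}
This is exactly the $k=0$ instance of the Min-Max principle (Theorem \ref{thm:Min-Max}): the proof given for the compact case used only self-adjointness and positivity and carries over verbatim, the only step that must be dropped being the appeal to Rellich--Kondrachov to conclude that the infimum is attained.

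\emph{Step 2 (identifying $\inf\sigma$ with $\inf\sigma_{\mathrm{ess}}$).} One always has $\inf\sigma(\mathfrak F)\leq\inf\sigma_{\mathrm{ess}}(\mathfrak F)$, so it suffices to exclude an $L^2$ ground state strictly below the essential spectrum. Here I would use the deck group $\Gamma$ acting on $M$ by $F$-isometries, hence commuting with $\Delta^F$. If $\lambda_0:=\inf\sigma(\mathfrak F)$ were an isolated eigenvalue of finite multiplicity, then because $-\Delta^F$ is, up to unitary equivalence, a Schr\"odinger operator (Theorem \ref{thm:finsler_laplace_basics}\,(iii)) --- or directly because $E(|\phi|)\leq E(\phi)$ together with the strong maximum principle for the elliptic operator $\Delta^F$ --- the $\lambda_0$-eigenspace is one-dimensional, spanned by some $\phi>0$. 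Then $\gamma^{\ast}\phi$ is again a positive $\lambda_0$-eigenfunction, so $\gamma^{\ast}\phi=c(\gamma)\phi$ for a homomorphism $c\colon\Gamma\to\R_{>0}$, and integrating over a fundamental domain $\mathcal D$ gives
\begin{equation*}
 \int_M\phi^2\,\Omega^F=\Bigl(\int_{\mathcal D}\phi^2\,\Omega^F\Bigr)\sum_{\gamma\in\Gamma}c(\gamma)^2=+\infty,
\end{equation*}
since $\Gamma$ is infinite and $\sum_{\gamma}c(\gamma)^2$ diverges for every such $c$ (if $c\equiv 1$ this is immediate, otherwise some $c(\gamma_0)>1$ and the powers of $\gamma_0$ already contribute an infinite geometric series). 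This contradicts $\phi\in L^2(M,\Omega^F)$, so $\lambda_0\in\sigma_{\mathrm{ess}}(\mathfrak F)$ and $\inf\sigma_{\mathrm{ess}}(\mathfrak F)=\inf\sigma(\mathfrak F)=\inf\{R(u)\mid u\in H^1(M)\}$.

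The routine part is Step 1 (it is the $k=0$ line of the already-proven Min-Max, with the compactness step deleted). The genuine obstacle is Step 2: the variational argument only pins down the bottom of the \emph{whole} spectrum, and passing to the \emph{essential} spectrum truly requires ruling out an $L^2$ ground state; this is exactly where some extra structure --- here the cocompact deck action and the $\Gamma$-quasi-invariance of a would-be ground state --- has to be invoked, and absent such a hypothesis one should only assert the identity for $\inf\sigma(-\Delta^F)$.
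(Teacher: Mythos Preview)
Your Step~1 is exactly the paper's argument: the proof in the text is a single line invoking the Friedrich extension theorem (Theorem~\ref{thm:Friedrich_extension}), whose conclusion is that the bottom of the spectrum of the Friedrich extension equals the lower bound of the quadratic form, i.e.\ $\inf\{R(u)\mid u\in H^1(M)\}$.

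Your Step~2 in fact goes beyond what the paper does. The statement speaks of the \emph{essential} spectrum, but the Friedrich extension theorem as quoted only identifies the bottom of the \emph{full} spectrum; the paper's one-line proof does not address the gap between the two. Your argument via the cocompact deck action --- positivity and simplicity of a putative $L^2$ ground state, then $\Gamma$-quasi-invariance forcing $\sum_{\gamma}c(\gamma)^2=+\infty$ --- is a clean and correct way to close that gap in the setting where the proposition is actually applied (the universal cover of a closed Finsler manifold). You are also right to flag in your last paragraph that for a general non-compact $M$ with no symmetry hypothesis, only the identity $\inf\sigma(-\Delta^F)=\inf R(u)$ is warranted; this is a genuine caveat that the paper's formulation glosses over.
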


\begin{proof}
 This result is a consequence of the Friedrich extension Theorem (see Theorem \ref{thm:Friedrich_extension}).
\end{proof}

\section{Behavior under conformal change}

As for the Laplace--Beltrami operator, the Energy allows us to give a simple proof that the Laplacian is a conformal invariant only in dimension $2$.

\begin{thm}
\label{thm:inv_conforme}
Let $(\Sigma,F)$\ be a Finsler surface, $f \colon \Sigma \xrightarrow{C^{\infty}} \R$\ and $F_f = e^f F$. Then,
\begin{equation*}
 \Delta^{F_f} = e^{-2f} \Delta^F.
\end{equation*}

\end{thm}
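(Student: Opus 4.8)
The plan is to exploit the energy functional and the conformal-invariance result stated just above (the unnumbered Proposition that, for $F_f = e^f F$ on an $n$-manifold, $E_f(u) = \frac{n}{\voleucl(\S^{n-1})}\int_{HM} e^{(n-2)f}(L_X\pi^\ast u)^2\,\ada$). In dimension $n=2$ the factor $e^{(n-2)f}$ is identically $1$, so $E_f = E$: the energy is a genuine conformal invariant. From this I want to deduce the pointwise identity $\Delta^{F_f} = e^{-2f}\Delta^F$ by combining the invariance of $E$ with the transformation law of the volume form, via the Finsler--Green formula (Proposition \ref{prop:green_formula}).

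First I would recall that, by the polarization of the energy carried out in the proof of Theorem \ref{th:min_energy}, for any $u,v \in C^\infty(\Sigma)$ one has
\begin{equation*}
 \frac{n}{\voleucl(\S^{n-1})}\int_{HM} L_X(\pi^\ast u)\,L_X(\pi^\ast v)\,\ada = -\int_\Sigma v\,\Delta^F u\;\Omega^F,
\end{equation*}
and the analogous formula for $F_f$ with $\Delta^{F_f}$, $\Omega^{F_f}$, and the geodesic flow $X_f$ of $F_f$. Since $n=2$, the two bilinear forms on the left coincide (this is exactly the content of $E_f = E$ together with polarization, noting that $L_{X_f}$ and the volume $A_f\wedge dA_f^{n-1}$ are what enter $E_f$). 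Therefore
\begin{equation*}
 \int_\Sigma v\,\Delta^{F_f} u\;\Omega^{F_f} = \int_\Sigma v\,\Delta^F u\;\Omega^F \qquad \text{for all } u,v\in C^\infty(\Sigma).
\end{equation*}

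Next I would use the conformal transformation of the volume form. By the Proposition on angle and conformal change (Section \ref{subsec:angle_and_conformal_change}), $\Omega^{F_f} = e^{nf}\Omega^F = e^{2f}\Omega^F$ in dimension $2$. Substituting, $\int_\Sigma v\,\Delta^{F_f}u\;e^{2f}\Omega^F = \int_\Sigma v\,\Delta^F u\;\Omega^F$ for every $v\in C^\infty(\Sigma)$, hence $e^{2f}\Delta^{F_f}u = \Delta^F u$ pointwise, i.e. $\Delta^{F_f}u = e^{-2f}\Delta^F u$, which is the claim.

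I do not expect a serious obstacle here: the heavy lifting (the invariance $E_f=E$ in dimension $2$ and the volume transformation law) is already done earlier in the paper. The one point that needs a little care is making sure that the polarized energy identity is applied with the correct geodesic flow and contact form for $F_f$ — that is, checking that $E_f(u+tv)$ expanded in $t$ really produces $\int L_{X_f}(\pi^\ast u)L_{X_f}(\pi^\ast v)\,A_f\wedge dA_f$, and that Green's formula for $F_f$ then converts this into $-\int v\,\Delta^{F_f}u\,\Omega^{F_f}$; but this is verbatim the computation in the proof of Theorem \ref{th:min_energy} applied to $F_f$ rather than $F$. Alternatively, one could avoid the bilinear form entirely and argue directly: since $E_f = E$ as functions on $H^1(\Sigma)$ and $\Delta^{F_f}$ (resp. $\Delta^F$) is recovered from $E_f$ (resp. $E$) together with the volume $\Omega^{F_f}$ (resp. $\Omega^F$) through the identity $\frac{d}{dt}E(v+tu)|_{t=0} = 2\int_\Sigma u\,\Delta v\,\Omega$, equality of energies forces $\int u\Delta^{F_f}v\,\Omega^{F_f} = \int u\Delta^F v\,\Omega^F$ for all $u$, and the volume comparison finishes it exactly as above.
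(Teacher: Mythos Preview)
Your proposal is correct and follows essentially the same route as the paper: use the conformal invariance of the energy in dimension $2$, differentiate $E(v+tu)$ at $t=0$ (polarization) together with the Finsler--Green formula to obtain $\int_\Sigma u\,\Delta^{F_f}v\,\Omega^{F_f}=\int_\Sigma u\,\Delta^{F}v\,\Omega^{F}$, then substitute $\Omega^{F_f}=e^{2f}\Omega^{F}$ and conclude pointwise since $u$ is arbitrary. The paper phrases it via $\frac{d}{dt}E(v+tu)\big|_{t=0}$ directly rather than via the bilinear form, which is exactly your ``alternative'' at the end; there is no substantive difference.
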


We first prove the following result:
\begin{prop}
 Let $(M,F)$\ be a Finsler manifold of dimension $n$, $f \colon M \xrightarrow{C^{\infty}} \R$\ and $F_f = e^f F$. Set $E_f$\ the Energy associated with $F_f$. Then, for $u\in H^1\left(M\right)$
\begin{equation*}
 E_f(u) = c_n \int_{HM} e^{(n-2)f} \left(L_X \pi^{\ast}u \right)^2 \, \ada,
\end{equation*}
where $c_n = n \left(\voleucl \left(\mathbb{S}^{n-1}\right) \right)^{-1}$.\\
In particular, when $n=2$\ the Energy is a conformal invariant.
\end{prop}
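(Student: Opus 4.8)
The plan is to reduce the statement to two facts about how the contact data of $F$ transform under the conformal change $F \mapsto F_f = e^f F$; I write $f$ also for the pullback $\pi^{\ast}f$ to $HM$. The first fact, already obtained in the proof of the previous proposition, is that the Hilbert form scales as $A_f = e^{f}A$. The second is that the Reeb field $X_f$ of $A_f$ differs from $e^{-f}X$ only by a vertical vector field. Granting these, the computation is immediate.

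From $A_f = e^{f}A$ one gets $dA_f = e^{f}(df\wedge A + dA)$; since $(df\wedge A)\wedge(df\wedge A) = 0$ and $A\wedge df\wedge A = 0$, expanding the $(n-1)$-st power and wedging with $A$ leaves $A_f\wedge dA_f^{n-1} = e^{nf}\,\ada$ (consistent with $\Omega_f = e^{nf}\Omega$ and $\alpha_f = \alpha$ on $VHM$ from the previous proposition). For the Reeb field: $X$ and $X_f$ are both second-order differential equations on $HM$, being the Reeb fields of the Hilbert forms of $F$ and $F_f$, so by Lemma~\ref{lem:second_order_ode} there are a function $m\colon HM\to\R$ and a vertical field $Y\in VHM$ with $X_f = mX + Y$; evaluating $A_f$ on this and using $A_f(X_f)=1$, $A_f(X)=e^{f}$ and $A_f(Y)=e^{f}A(Y)=0$ forces $m = e^{-f}$, so $X_f = e^{-f}X + Y$. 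Since $Y\in\ker d\pi$, for $u\in C^{\infty}(M)$ we obtain $L_{X_f}(\pi^{\ast}u) = d(\pi^{\ast}u)\bigl(e^{-f}X + Y\bigr) = e^{-f}L_X(\pi^{\ast}u)$.

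Substituting both facts into the definition of $E_f$ gives, for $u \in C^{\infty}_0(M)$,
\[
 E_f(u) = c_n\int_{HM}\bigl(L_{X_f}\pi^{\ast}u\bigr)^2 A_f\wedge dA_f^{n-1} = c_n\int_{HM} e^{-2f}\bigl(L_X\pi^{\ast}u\bigr)^2\,e^{nf}\,\ada = c_n\int_{HM} e^{(n-2)f}\bigl(L_X\pi^{\ast}u\bigr)^2\,\ada,
\]
and this extends to all $u\in H^1(M)$ by density; when $n=2$ the factor $e^{(n-2)f}$ is identically $1$, so $E_f = E$. I do not anticipate a genuine difficulty: the only points to watch are keeping $f$ on $M$ distinct from its pullback to $HM$ and the short second-order-equation argument that pins down $m=e^{-f}$; everything else is a one-line wedge-product computation.
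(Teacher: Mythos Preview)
Your proof is correct and follows essentially the same route as the paper: write $X_f = mX + Y$ with $Y$ vertical via the second-order-equation lemma, determine $m = e^{-f}$ from $A_f(X_f)=1$, note $L_Y(\pi^{\ast}u)=0$, and combine with $A_f\wedge dA_f^{n-1} = e^{nf}\ada$. The only cosmetic differences are that you spell out the wedge-product expansion and the density step a bit more explicitly than the paper does.
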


\begin{proof}
 The subscript $f$\ indicates that we refer to the object associated with the Finsler metric $F_f$. The vector field $X_f$\ is a second-order differential equation, so (see Lemma \ref{lem:second_order_ode}) there exists a function $m \colon HM \rightarrow \R$\ and a vertical vector field $Y$\ such that 
$$
X_f = m X + Y.
$$
We have already seen (in Section \ref{subsec:angle_and_conformal_change}) that $A_f = e^f A$\ and that 
$$A_f \wedge dA_f^{n-1} = e^{nf} \ada.$$
 Using $A_f\left(X_f\right)=1$\ and that $VHM$\ is in the kernel of $A$, we have
\begin{equation*}
 1 = e^f A\left(mX +Y\right) = e^f m A\left(X\right) = e^f m\,.
\end{equation*}
Now,
\begin{align*}
 E_f(u) &= c_n \int_{HM} \left(L_{X_f} \pi^{\ast} u \right)^2 \, A_f \wedge dA_f^{n-1}, \\
        &= c_n \int_{HM} \left(L_{m X + Y} \pi^{\ast} u \right)^2   e^{nf} \, \ada, \\
        &= c_n \int_{HM}  e^{nf}\left(mL_{ X } \pi^{\ast} u + L_{Y} \pi^{\ast} u \right)^2   \, \ada.
\end{align*}
As $u$\ is a function on the base and $Y$\ is a vertical vector field, $L_{Y} \pi^{\ast} u =0$. So the preceding equation becomes
\begin{align*}
 E_f(u) &= c_n \int_{HM}  e^{nf} m^2 \left(L_{ X } \pi^{\ast} u  \right)^2   \, \ada, \\
   &= c_n \int_{HM}  e^{(n-2)f}  \left(L_{ X } \pi^{\ast} u  \right)^2   \, \ada. \qedhere
\end{align*}

\end{proof}

\begin{proof}[Proof of Theorem \ref{thm:inv_conforme}]
 Let $u,v \in H^1\left(\Sigma\right)$, we have already shown (Theorem \ref{th:min_energy}) that $\frac{d}{dt}\left(E(v+tu) \right)_{|_{t=0}} = -2 \int_{\Sigma} u \Delta^{F} v \; \Omega $.\\
The conformal invariance of the Energy yields: for $u,v\in H^1 \left(\Sigma\right)$,
\begin{equation*}
 -2 \int_{\Sigma} u \Delta^{F} v \; \Omega = -2 \int_{\Sigma} u \Delta^{F_f} v \; \Omega_f = -2 \int_{\Sigma} e^{2f} u \Delta^{F_f} v \; \Omega\,,
\end{equation*}
where we used $\Omega_f = e^{2f} \Omega$\ (see Equation \eqref{eq:omega1}) to obtain the last equality. We can rewrite this last equality as: for $u,v\in H^1\left(\Sigma\right)$,
\begin{equation}
 \langle \left(\Delta^{F} - e^{2f} \Delta^{F_f} \right)v , u \rangle = 0,
\end{equation}
which yields the desired result.
\end{proof}

\begin{subappendices}
 \section{Unbounded operators and the Friedrich extension} \label{app:unbounded_operators}

We give here a quick presentation of unbounded operators and the results that we used above. We refer to \cite{Kato:Perturbation_theory} or to the series of books \cite{ReedSimon:I,ReedSimon:II,ReedSimon:III,ReedSimon:IV} for the details and the proofs.

\subsection{Some basic definitions}

Let $H$ be a Hilbert space and $\langle \cdot, \cdot \rangle$ its scalar product.
\begin{defin}
  An unbounded operator $L$ is a linear map from a dense linear subspace of $H$, called the \emph{domain} of $L$ and denoted by $D(L)$, into $H$. The operator $L_1$ is an \emph{extension} of $L$ if $D(L_1) \supset D(L)$ and if $L_1 \varphi = L \varphi$ for any $\varphi \in D(L)$.
\end{defin}

\begin{defin}
 An unbounded operator $L$ is called \emph{closed} if its graph $\Gamma(L)$ is closed in $H \times H$. The graph of $L$ is the set of pairs
\begin{equation*}
 \Gamma(L):= \lbrace \left( \psi , L \psi \right) \mid \psi \in D(L) ^rbrace \subset H \times H \,.
\end{equation*}
The operator $L$ is called \emph{closable} if there exists a closed extension of $L$.
\end{defin}

\begin{defin}
 Let $L$ be a closed unbounded operator on $H$ and $D(L)$ its domain. A complex number $\lambda$ is said to be in the \emph{resolvent set} for $L$ if $L-\lambda \id$ is a bijection from $D(L)$ to $H$ such that its inverse is bounded. We denote the resolvent set of $L$ by $\rho(L)$, and $R_{\lambda}(L) := \left(L-\lambda \id\right)^{-1}$ is called the \emph{resolvent} of $L$. The set $\sigma(L):= \C \smallsetminus \rho(L)$ is the \emph{spectrum}.
\end{defin}

The \emph{point spectrum} consists of those elements in the spectrum which are eigenvalues.

\subsection{The Friedrich extension}

\begin{defin}
 Let $L$ be an unbounded operator on $H$. Let $D(L^{\ast})$ be the set of $\varphi \in H$ for which there exists a $\psi \in H$ such that
\begin{equation*}
 \langle L \rho , \varphi \rangle =\langle \rho , \psi \rangle, \quad \text{for } \rho \in D(L).
\end{equation*}
For any such $\varphi \in D(L^{\ast})$, we set $L^{\ast}\varphi := \psi $. $L^{\ast}$ is called the \emph{adjoint} of $L$.
\end{defin}

Note that we always have $D(L) \subset D(L^{\ast})$, this will provide the difference between being symmetric and self-adjoint:
\begin{defin}
The operator $L$ is called \emph{symmetric} if $L^{\ast}$ is an extension of $L$, i.e., if for $\varphi, \psi \in D(L)$
\begin{equation*}
\langle L \psi , \varphi \rangle =\langle \psi , L\varphi \rangle\, .
\end{equation*}
The operator $L$ is called \emph{self-adjoint} if $L = L^{\ast}$, i.e., if $L$ is symmetric and $D(L)=D(L^{\ast})$.
\end{defin}

There are many criteria to determine whether a symmetric operator is self-adjoint (see \cite{ReedSimon:I}) however we do not recall them as we will not use them directly. Indeed, for the operator studied in this thesis there exists a self-adjoint extension called the \emph{Friedrich extension} (see below Theorem \ref{thm:Friedrich_extension}). But, before stating that result, we will need some more definitions.

\begin{defin}
 The operator $L$ is called \emph{semi-bounded} if there exists a positive constant $C$ such that $\langle L \psi , \psi \rangle \geq -C \lVert \psi \rVert^2$ for all $\psi \in D(L)$.\\
When $C$ can be taken to be $0$, then we say that $L$ is positive.
\end{defin}

\begin{defin}
 An unbounded quadratic form $q$ on $H$ is a bilinear form $q \colon D(q) \times D(q) \rightarrow \R$, where $D(q)$ is a dense linear subspace of $H$. It is called \emph{symmetric} if $q(\varphi,\psi) = q(\psi,\varphi)$ for any $\varphi,\psi \in D(q)$ and \emph{semi-bounded} if there exists a positive constant $C$ such that $q(\psi,\psi) \geq -C \lVert \psi \rVert^2$ for all $\psi \in D(L)$.\\
\end{defin}

As we defined an extension of an unbounded operator, we define in the same manner an extension of a quadratic form.
To a symmetric operator $L$, we can naturally associate a symmetric quadratic form, just by letting $q_L (\varphi, \psi ) = \langle \varphi , L \psi \rangle$ for any $\varphi,\psi \in D(L)$. For bounded operators, the Riesz Lemma gives a one-to-one correspondence, but this is not always true for unbounded quadratic forms.

\begin{defin}
 Let $q$ be a semi-bounded quadratic form, $q(\psi,\psi) \geq -C \lVert \psi \rVert^2$ for all $\psi \in D(L)$. It is called \emph{closed} if $D(q)$ is complete under the norm:
\begin{equation*}
 \lVert \psi \rVert_{C+1} = \sqrt{q(\psi,\psi) + (C+1) \lVert \psi \rVert^2}.
\end{equation*}
 A semi-bounded quadratic form $q$ is called \emph{closable} if there exists a closed extension $\overline{q}$ of $q$
\end{defin}

\begin{thm}[Friedrich extension] \label{thm:Friedrich_extension}
 Let $L$ be a symmetric semi-bounded operator and let $q(\varphi, \psi ) := \langle L \psi , \varphi \rangle$ for $\psi, \varphi \in D(L)$. Then $q$ is a closable quadratic form, its closure $\hat{q}$ is the quadratic form associated with a self-adjoint operator $\hat{L}$. The operator $\hat{L}$ is the unique self-adjoint extension of $L$ with domain in $D(\hat{q})$. Furthermore, the lower bound of its spectrum is the lower bound of $q$.
\end{thm}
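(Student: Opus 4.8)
The plan is the classical one for the Friedrich extension: after a harmless shift, turn $q$ into a genuine Hilbert inner product, complete, and recover the operator from the Riesz representation theorem. Since $L$ is semi-bounded, say $\langle L\psi,\psi\rangle\geq -C\lVert\psi\rVert^2$ on $D(L)$ with $C>0$, I replace $L$ by $L_0:=L+(C+1)\id$, which has the same domain and satisfies $\langle L_0\psi,\psi\rangle\geq\lVert\psi\rVert^2$; closability of $q$ is equivalent to that of $q_0(\varphi,\psi):=\langle L_0\psi,\varphi\rangle$, and the closures differ only by the constant $C+1$. The form $q_0$ is Hermitian and positive-definite, so $\lVert\psi\rVert_1:=q_0(\psi,\psi)^{1/2}$ is a norm on $D(L)$ with $\lVert\cdot\rVert_1\geq\lVert\cdot\rVert$. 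Let $\mathcal H_1$ be the completion of $(D(L),\lVert\cdot\rVert_1)$; since $\lVert\cdot\rVert_1$ dominates $\lVert\cdot\rVert$, every $\lVert\cdot\rVert_1$-Cauchy sequence is $\lVert\cdot\rVert$-Cauchy, so there is a bounded linear map $\iota\colon\mathcal H_1\to H$ extending $D(L)\hookrightarrow H$, and $q_0$ extends to a bounded Hermitian form $\hat q_0$ on $\mathcal H_1$ with $\hat q_0(\psi,\psi)=\lVert\psi\rVert_1^2$.

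The one genuinely delicate point — and the main obstacle — is the injectivity of $\iota$; this is exactly the assertion that $q$ is closable, and it lets me regard $\mathcal H_1$ as a linear subspace of $H$ carrying the finer Hilbert norm $\lVert\cdot\rVert_1$, with $D(\hat q)=\mathcal H_1$. To prove it, let $(\psi_n)\subset D(L)$ be $\lVert\cdot\rVert_1$-Cauchy with $\psi_n\to 0$ in $H$. Given $\varepsilon>0$, choose $N$ with $\lVert\psi_n-\psi_m\rVert_1<\varepsilon$ for $n,m\geq N$; for $n\geq N$ expand $\lVert\psi_n\rVert_1^2=q_0(\psi_n-\psi_m,\psi_n)+q_0(\psi_m,\psi_n)$. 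The first summand is at most $\varepsilon\lVert\psi_n\rVert_1$ by the Cauchy--Schwarz inequality for the positive form $q_0$, and $q_0(\psi_m,\psi_n)=\langle L_0\psi_n,\psi_m\rangle=\langle\psi_n,L_0\psi_m\rangle\to 0$ as $n\to\infty$, since $L_0\psi_m$ is a fixed vector of $H$ and $\psi_n\to 0$ there. As $(\lVert\psi_n\rVert_1)$ is bounded, taking $\limsup_n$ yields $\limsup_n\lVert\psi_n\rVert_1^2\leq\varepsilon\limsup_n\lVert\psi_n\rVert_1$, hence $\limsup_n\lVert\psi_n\rVert_1\leq\varepsilon$; letting $\varepsilon\downarrow 0$ gives $\psi_n\to 0$ in $\mathcal H_1$, which is the injectivity.

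Next I would produce $\hat L$. By a standard application of the Riesz representation theorem in the Hilbert space $(\mathcal H_1,\hat q_0)$ to the functionals $\langle u,\cdot\rangle$ ($u\in H$), which are $\lVert\cdot\rVert_1$-bounded by $\lVert u\rVert$, one obtains a bounded, injective, positive, self-adjoint operator $G\colon H\to H$ with dense range (concretely $G=\iota\iota^\ast$). Set $\hat L_0:=G^{-1}$ with $D(\hat L_0):=\mathrm{Ran}(G)$; as the inverse of an injective bounded positive self-adjoint operator it is self-adjoint, $\hat L_0\geq\lVert\iota\rVert^{-2}\id$, and $\langle\hat L_0\psi,\varphi\rangle=\hat q_0(\varphi,\psi)$ for $\psi\in D(\hat L_0)$, $\varphi\in\mathcal H_1$. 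A short computation with this relation shows $D(L)\subset D(\hat L_0)$ and $\hat L_0|_{D(L)}=L_0$: for $\psi\in D(L)$ the functional $\varphi\mapsto\hat q_0(\varphi,\psi)$ equals $\langle L_0\psi,\varphi\rangle$ on the $\lVert\cdot\rVert_1$-dense subset $D(L)$, hence on all of $\mathcal H_1$, so that $\psi\in D(\hat L_0)$ with $\hat L_0\psi=L_0\psi$. Thus $\hat L:=\hat L_0-(C+1)\id$ is a self-adjoint extension of $L$ with domain in $\mathcal H_1=D(\hat q)$ and associated form $\hat q$. Since $\mathrm{Ran}(G)$ is $\lVert\cdot\rVert_1$-dense in $\mathcal H_1$, $D(\hat L_0)$ is a form core, so the variational characterization of the bottom of the spectrum gives
\[\inf\sigma(\hat L_0)=\inf_{\psi\in\mathcal H_1\setminus\{0\}}\frac{\hat q_0(\psi,\psi)}{\lVert\psi\rVert^2}=\inf_{\psi\in D(L)\setminus\{0\}}\frac{q_0(\psi,\psi)}{\lVert\psi\rVert^2},\]
the last equality by density and continuity; subtracting $C+1$ identifies $\inf\sigma(\hat L)$ with the lower bound of $q$.

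Finally, for uniqueness, let $\tilde L$ be a self-adjoint extension of $L$ with $D(\tilde L)\subset D(\hat q)=\mathcal H_1$, and put $\tilde L_0:=\tilde L+(C+1)\id$, which extends $L_0$. For $\psi\in D(\tilde L_0)$ and $\varphi\in D(L)\subset D(\tilde L_0)$, self-adjointness and $\tilde L_0\varphi=L_0\varphi$ give $\langle\tilde L_0\psi,\varphi\rangle=\langle\psi,L_0\varphi\rangle$, and by approximating $\psi$ in $\lVert\cdot\rVert_1$ by elements of $D(L)$ this equals $\hat q_0(\varphi,\psi)$; since $D(L)$ is $\lVert\cdot\rVert_1$-dense in $\mathcal H_1$ and both sides are $\lVert\cdot\rVert_1$-continuous in $\varphi$, the identity $\langle\tilde L_0\psi,\varphi\rangle=\hat q_0(\varphi,\psi)$ persists for all $\varphi\in\mathcal H_1$, which is precisely the condition placing $\psi$ in $D(\hat L_0)$ with $\hat L_0\psi=\tilde L_0\psi$. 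Hence $\tilde L_0\subseteq\hat L_0$, and since both operators are self-adjoint, taking adjoints reverses the inclusion and forces $\tilde L_0=\hat L_0$, i.e.\ $\tilde L=\hat L$. Everything after the closability step is a routine application of the Riesz representation theorem together with elementary facts about bounded positive self-adjoint operators and their inverses.
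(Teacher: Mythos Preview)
Your argument is correct and is precisely the classical construction of the Friedrich extension. The paper itself does not give a proof of this theorem: it simply refers the reader to \cite[Theorem~X.23]{ReedSimon:II} and \cite[p.~325ff.]{Kato:Perturbation_theory}, so there is nothing to compare against beyond noting that what you have written is essentially the proof one finds in those references---shift to a strictly positive operator, complete in the form norm, verify closability via the symmetry of $L$, and recover the self-adjoint extension from the Riesz isomorphism as the inverse of $\iota\iota^\ast$.
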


\begin{proof}
 See, for instance, \cite[Theorem X.23]{ReedSimon:II} or \cite[p. 325ff.]{Kato:Perturbation_theory}
\end{proof}

\end{subappendices}

\chapter[Explicit representations and spectra]{Explicit representations and computations of spectra}

In this chapter, we give explicit representations for our Finsler--Laplace operator. We start with general Randers metrics on surfaces and then give explicit spectra for some Katok-Ziller metrics.

\section{The Finsler--Laplace operator for Randers metrics}

\subsection{Some generalities on Randers metrics}

Among the classical examples of non-Riemannian Finsler metrics, the Randers metrics play an important role, arise naturally in physics (\cite{Randers}) and have been widely studied. Let us give the definition:

Let $g$ be a Riemannian metric on $M$ and $\theta$ a $1$-form on $M$, define, for $(x,v) \in TM$
\begin{equation*}
 F\left(x,v\right) = \sqrt{g_x(v,v)} + \theta_x(v).
\end{equation*}
If the norm of $\theta$ with respect to $g$ is strictly less than one, than $F$ is a Finsler metric (see \cite{BaoChernShen}) and it is called a \emph{Randers metric}.

Note that Randers metrics are never reversible if they are not Riemannian.\\

One advantage of Randers metrics in our case is that it is particularly easy to compute the volume and angle forms.
\begin{prop} \label{prop:volume_angle_Randers}
 Let denote by $\Omega_0$, $\alpha_0$ and $X_0$ the volume form, angle form and geodesic flow associated with the Riemannian metric $g$. We have
\begin{align*}
 \Omega^F &= \Omega_0\,, \\
 \alpha^F &= \left(1 + \pi^{\ast}\theta(X_0) \right) \alpha_0 \,.
\end{align*}
\end{prop}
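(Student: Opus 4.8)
Here is how I would attack Proposition \ref{prop:volume_angle_Randers}.

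\smallskip

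\noindent\textbf{Step 1: the Hilbert form of a Randers metric.}
The plan is to read off $\ada$ from $A$ and then invoke the uniqueness in Proposition \ref{prop:construction}. Write $A_0$, $X_0$ for the Hilbert form and Reeb field of the Riemannian metric $g$ (i.e.\ of the Finsler metric $\sqrt g$), so that $X_0$ generates the Riemannian geodesic flow. Splitting the vertical derivative of $F=\sqrt g+\theta$ as $d_vF=d_v(\sqrt g)+d_v\theta$ and noticing that, $\theta$ being a $1$-form, its restriction to the fibers of $p\colon TM\to M$ is linear, one gets $d_v\theta=p^{\ast}\theta$, hence $r^{\ast}A=d_vF=r^{\ast}(A_0+\pi^{\ast}\theta)$ and therefore $A=A_0+\pi^{\ast}\theta$, $dA=dA_0+\pi^{\ast}d\theta$. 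In particular the function $\pi^{\ast}\theta(X_0)$ of the statement makes sense, and since $d\pi(X_0)$ is precisely the $g$-unit vector representing $(x,\xi)$, we have $\pi^{\ast}\theta(X_0)(x,\xi)=\theta_x(v)$ with $r(x,v)=(x,\xi)$, $g_x(v,v)=1$.

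\smallskip

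\noindent\textbf{Step 2: the pointwise identity $A\wedge dA^{n-1}=(1+\pi^{\ast}\theta(X_0))\,A_0\wedge dA_0^{n-1}$.}
Both sides are $(2n-1)$-forms, so I would compare them on a basis of $T_{(x,\xi)}HM$ of the form $(X_0,Y_1,\dots,Y_{n-1},Z_1,\dots,Z_{n-1})$ where $Y_1,\dots,Y_{n-1}$ spans $V_{(x,\xi)}HM$ and the $Z_i$ complete it. Expand $dA^{n-1}=(dA_0+\pi^{\ast}d\theta)^{n-1}$ by the binomial formula and look at the term carrying $(\pi^{\ast}d\theta)^{j}$. Since $\pi^{\ast}\theta$ and $\pi^{\ast}d\theta$ both annihilate vertical vectors, all $n-1$ vectors $Y_i$ must be fed into the factor $dA_0^{\,n-1-j}$; moreover $dA_0$ vanishes on $VHM$ (because $A_0$ does and $VHM$ is integrable), so in the Pfaffian expansion of $dA_0^{\,n-1-j}$ each $Y_i$ has to be paired against a non-vertical argument. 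Counting arguments: $dA_0^{\,n-1-j}$ absorbs $2(n-1-j)$ vectors, among which the $n-1$ vertical $Y_i$ plus at least $n-1$ non-vertical partners; hence $2(n-1-j)\ge 2(n-1)$, which forces $j=0$. In the remaining term $(A_0+\pi^{\ast}\theta)\wedge dA_0^{n-1}$, the factor $A_0+\pi^{\ast}\theta$ cannot absorb a $Y_i$ (value $0$) nor a $Z_i$ (then $dA_0^{n-1}$ would have to swallow $X_0$, but $i_{X_0}dA_0=0$), so it absorbs $X_0$, with value $A_0(X_0)+\pi^{\ast}\theta(X_0)=1+\pi^{\ast}\theta(X_0)$. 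The identical computation for $A_0\wedge dA_0^{n-1}$ gives the same expression with $A_0(X_0)=1$ instead; since $A_0\wedge dA_0^{n-1}$ is a volume form, taking the ratio proves
\[
 A\wedge dA^{n-1}=\bigl(1+\pi^{\ast}\theta(X_0)\bigr)\,A_0\wedge dA_0^{n-1}.
\]

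\smallskip

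\noindent\textbf{Step 3: conclusion via uniqueness.}
Apply Proposition \ref{prop:construction} to $g$: $A_0\wedge dA_0^{n-1}=\alpha_0\wedge\pi^{\ast}\Omega_0$ and $\int_{H_xM}\alpha_0=\voleucl(\S^{n-1})$. Substituting in the identity of Step 2 yields $A\wedge dA^{n-1}=\bigl[(1+\pi^{\ast}\theta(X_0))\alpha_0\bigr]\wedge\pi^{\ast}\Omega_0$. It then suffices to check the normalization $\int_{H_xM}(1+\pi^{\ast}\theta(X_0))\alpha_0=\voleucl(\S^{n-1})$, i.e.\ $\int_{H_xM}\pi^{\ast}\theta(X_0)\,\alpha_0=0$. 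Since $g$ is reversible, the fiber involution $s\colon(x,\xi)\mapsto(x,-\xi)$ satisfies $s^{\ast}A_0=-A_0$, hence $s^{\ast}\alpha_0=(-1)^n\alpha_0$ on $VHM$, while $\pi^{\ast}\theta(X_0)$ is odd under $s$ (the unit vector representing $\xi$ reverses); as $s|_{H_xM}$ is a degree-$(-1)^n$ diffeomorphism of $H_xM\cong\S^{n-1}$, the change-of-variables formula forces the integral to vanish. By the uniqueness part of Proposition \ref{prop:construction} we conclude $\Omega^F=\Omega_0$ and $\alpha^F=(1+\pi^{\ast}\theta(X_0))\alpha_0$ on $VHM$, which is the claim.

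\smallskip

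The main obstacle is Step 2: one has to use \emph{simultaneously} that $\pi^{\ast}\theta$ and $\pi^{\ast}d\theta$ kill vertical vectors, that $VHM$ is isotropic for $dA_0$, and that $X_0\in\ker dA_0$, in order to discard all the mixed terms $(\pi^{\ast}d\theta)^{j}$ with $j\ge1$ — in dimension $\ge 5$ the brute-force binomial expansion otherwise produces $d\theta$-terms that only disappear after this bookkeeping. Everything else reduces to results already in hand.
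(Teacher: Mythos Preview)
Your proof is correct and follows essentially the same route as the paper: compute $A=A_0+\pi^{\ast}\theta$, show the mixed $d\theta$-terms in $A\wedge dA^{n-1}$ die because they cannot absorb the $n-1$ vertical directions, identify the proportionality factor $1+\pi^{\ast}\theta(X_0)$, and conclude by the reversibility/parity argument together with the uniqueness in Proposition~\ref{prop:construction}. The only cosmetic difference is that the paper, after reaching $A\wedge dA^{n-1}=(A_0+\pi^{\ast}\theta)\wedge dA_0^{\,n-1}$, extracts the factor by contracting with $X_0$ (using $i_{X_0}dA_0=0$) rather than evaluating on a full frame as you do; this is slightly quicker but uses exactly the same ingredients you list in your final paragraph.
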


\begin{rem}
 The above result holds in general whenever a Finsler metric $F$ differs from a \emph{reversible} Finsler metric $F_0$ by a $1$-form $\theta$.\\
The fact that the Holmes-Thompson volume for Randers metrics is equal to the Riemannian volume is already known (see for instance \cite{ChengShen}) but maybe not very widely.
\end{rem}

 \begin{proof}
 By definition of $A$ (see Chapter \ref{chap:dynamical_formalism}), for any $\xi \in H_{x}M$ and ${Z \in T_{\xi}HM}$, we have
\begin{align*}
 A_{\xi}(Z) &= \lim_{\eps \rightarrow 0} \frac{F_0\left(x,v + \eps d\pi (Z)\right) -F_0\left(x,v \right) + \eps \theta_x\left( d\pi (Z) \right)}{\eps} \\ 
  &= \left. A_0 \right._{\xi}(Z) + \pi^{\ast}\theta(Z).
\end{align*}
From now on we will write $\theta$ instead of $\pi^{\ast}\theta$ as it will simplify notations and hopefully not lead to any confusion. Using this notation, we have: $A = A_0 + \theta$ and therefore $dA = dA_0 + d\theta$.\\
Note that $dA^{n-1} = dA_0^{n-1} + T$ where $T$ is a $(2n-2)$-form. So, as $d\theta$ is a $2$-form vanishing on $VHM$, and for $Y_1, Y_2 \in VHM$, $i_{Y_1} i_{Y_2} dA_0 = 0$, $T$ can be given at most $n-2$ vertical vectors, i.e., if $Y_1, \dots, Y_{n-1} \in VHM$, then $i_{Y_1} \dots i_{Y_{n-1}} T = 0$. Now this implies that the top-form $A\wedge T$ vanishes, hence $\ada = (A_0 + \theta)\wedge dA_0^{n-1}$. As $\ada$ and $A_0 \wedge dA_0^{n-1}$ are both volume forms, there exists a function $\lambda$ such that $\ada = \lambda A_0 \wedge dA_0^{n-1}$. We have 
$$
i_{X_0}(\ada) = (1+\theta(X_0)) dA_0^{n-1} = \lambda dA_0^{n-1},
$$
therefore $\lambda = 1+\theta(X_0)$.

Let $\alpha^{\Omega_0}$ be defined by $\alpha^{\Omega_0} \wedge \pi^{\ast}\Omega_0 = \ada$ (it exists by Lemma \ref{lem:existence_angle}). We have $\alpha^{\Omega_0} \wedge \pi^{\ast}\Omega_0 = \lambda A_0\wedge dA_0^{n-1} = \lambda \alpha_0 \wedge \pi^{\ast}\Omega_0 $, hence $\alpha^{\Omega_0}$ and $\lambda \alpha_0$ coincide on $VHM$. It is then immediate (see Section \ref{subsec:construction}) that 
\begin{equation*}
 \Omega = \frac{\int_{H_xM} (1+\theta(X_0)) \alpha_0}{\voleucl\left(\S^{n-1}\right)}  \Omega_0\, .
\end{equation*}
As the metric $g$ is Riemannian, it is reversible, therefore $\int_{H_xM} \theta(X_0) \, \alpha_0$ must be zero. Hence $\Omega = \Omega_0$ and $\alpha = (1+\theta(X_0)) \alpha_0$.
 \end{proof}

With our knowledge of the angle and volume forms for Randers metrics, we can give a more explicit expression of our Finsler--Laplace operator. Indeed, recall that there exists a function $m:HM \rightarrow \R$ and a vertical vector field $Y_0$ such that $X =mX_0 +Y_0$ (because $X$ and $X_0$ are second-order differential equations, see Section \ref{subsec:Finsler_metric_and_geodesic_flow}). 

As $1 =A(X) = m A_0(X_0) + m\theta(X_0)$, we get that $m = (1 + \theta(X_0))^{-1}$. Hence, we can rewrite the Finsler--Laplace operator as
\begin{multline} \label{eq:laplacien_pour_randers_general}
\Delta^F f(p) = \frac{n}{\voleucl(\S^{n-1})}\left( \int_{H_p M} \frac{1}{1+\theta(X_0)} L_{X_0}^2(\pi^{\ast}f) \: \alpha_0  \right. \\
 \left.  + \int_{H_p M}  \left(\frac{-L_{X_0}\left(\theta(X_0)\right)}{(1+\theta(X_0))^2} + \frac{-L_{Y_0}\left( \theta(X_0) \right) }{(1+\theta(X_0))^3}  \right) L_{X_0}(\pi^{\ast}f) \: \alpha_0 \right).
\end{multline}

We agree that this formula does not represent a great improvement compared to the definition. However, note that the symbol of $\Delta^F$ is determined only by the first integral, so to obtain a coordinate expression, we can just compute the symbol and then use the formula in Lemma \ref{lem:existence_unicity} and the traditional Riemannian local expressions.\\
 A particularly easy case arises from Randers metrics that are also Minkowski metrics (see Remark \ref{rem:Minkowsky_randers}).

\subsection{The symbol of the Laplacian on Randers surfaces}

This part is unfortunately a quite heavy computational section, but our justification for doing so is two-fold. 

First, we want to compute explicitly the symbol for any Randers metric on a surface to prove that it can be done by hand. Furthermore, this expression indicates what kind of Riemannian metric we can expect to come from a Randers metric via the symbol of its Finsler-Laplacian. 

Which brings us to our second goal. Namely, to give an answer, in the case of $2$-dimensional manifolds, to the question of which pairs (Riemannian metric, volume) can be obtained from a Finsler metric via the Finsler-Laplacian (see Section \ref{subsec:a_characterization}).

\begin{prop}\label{prop:symbol_laplacian_randers}
 Let $F= \sqrt{g} + \theta$ be a Randers metric on a $2$-dimensional manifold, $(x,y)$ be normal coordinates for $g$ at $p$, $\left(\sigma_{ij}(p)\right)$ be the symbol at $p$ of $\Delta^F$. We write $\theta= \theta_x dx + \theta_y dy$ and  $T:= \frac{\theta_x -i \theta_y}{2} =|T|e^{i\varphi}$. We have:
\begin{align}
 \sigma_{11}(p) &= \frac{1}{\sqrt{1-\lVert\theta\rVert^2}}\left( 1 + \cos\left(2\varphi\right) \left( \frac{1-\sqrt{1-\lVert\theta\rVert^2}}{1+\sqrt{1-\lVert\theta\rVert^2}}\right) \right) \label{eq:sigma11} ,\\
\sigma_{22}(p) &= \frac{1}{\sqrt{1-\lVert\theta\rVert^2}} \left( 1 - \cos\left(2\varphi\right) \left(\frac{1-\sqrt{1-\lVert\theta\rVert^2}}{1+\sqrt{1-\lVert\theta\rVert^2} } \right) \right)\label{eq:sigma22} , \\
 \sigma_{12}(p) &=  \frac{ \sin \left(2\varphi\right)}{\sqrt{1-\lVert\theta\rVert^2}} \left( \frac{1-\sqrt{1-\lVert\theta\rVert^2}}{1+\sqrt{1-\lVert\theta\rVert^2}}\right) \label{eq:sigma12},
\end{align}
where $\lVert \theta \rVert$ is the norm of $\theta$ with respect to the Riemannian metric $g$.
\end{prop}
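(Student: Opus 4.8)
The plan is to combine the explicit description of the angle and volume for a Randers metric (Proposition~\ref{prop:volume_angle_Randers}) with the integral formula for the symbol given in the Remark following the ellipticity Proposition, and then to reduce everything to two classical trigonometric integrals.

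\emph{Step 1: reduce the symbol to a one-dimensional integral over the fibre.} I identify $HM$ with the $g$-unit tangent bundle $T^1_gM$. Recall (Section~\ref{subsec:Finsler_metric_and_geodesic_flow}) that $X = mX_0 + Y_0$ with $Y_0$ vertical and, as computed just before the statement of Proposition~\ref{prop:symbol_laplacian_randers}, $m = (1+\pi^{\ast}\theta(X_0))^{-1}$. Since $Y_0 \in \ker d\pi$ and $d\pi(X_0)_{(x,v)} = v$, for a function $\varphi$ on $M$ one has $L_X(\pi^{\ast}\varphi)_{(x,v)} = m(x,v)\, d\varphi_x(v)$. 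Plugging this and $\alpha^F = (1+\pi^{\ast}\theta(X_0))\alpha_0$ into the symbol formula, and using that $n=2$ so that the constant is $n/\voleucl(\S^{1}) = 1/\pi$, the factor $m^2$ coming from the two copies of $L_X$ cancels exactly against $(1+\pi^{\ast}\theta(X_0))$ coming from $\alpha^F$, leaving, for $\xi_i = dx^i$,
\begin{equation*}
\sigma_{ij}(p) = \frac{1}{\pi}\int_{H_pM}\frac{v_i\,v_j}{1+\pi^{\ast}\theta(X_0)}\,\alpha_0 .
\end{equation*}

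\emph{Step 2: make the integral explicit in normal coordinates.} Fix normal coordinates $(x,y)$ for $g$ at $p$, so that $g_p$ is Euclidean; then $H_pM$ is parametrised by $\phi \mapsto v(\phi) = (\cos\phi,\sin\phi)$ with $\alpha_0 = d\phi$ (the standard angle form, normalised so the fibre has length $2\pi = \voleucl(\S^1)$), and $\pi^{\ast}\theta(X_0)_{(p,v(\phi))} = \theta_x\cos\phi + \theta_y\sin\phi$. Writing $T = |T|e^{i\varphi}$ one has $\theta_x\cos\phi+\theta_y\sin\phi = 2\,\mathrm{Re}(Te^{i\phi}) = \lVert\theta\rVert\cos(\phi+\varphi)$. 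After the substitution $\psi = \phi+\varphi$, which does not change the integral over a full period, the denominator becomes $1 + \lVert\theta\rVert\cos\psi$, while $\cos^2\phi$, $\sin^2\phi$ and $\cos\phi\sin\phi$ become, via $\cos\phi = \cos(\psi-\varphi)$ and the double-angle formulas, linear combinations of $1$, $\cos 2\psi$ and $\sin 2\psi$ with coefficients depending only on $\varphi$.

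\emph{Step 3: evaluate.} By parity the integrals of $\frac{\sin\psi}{1+\mu\cos\psi}$ and $\frac{\sin 2\psi}{1+\mu\cos\psi}$ over $[0,2\pi]$ vanish, and the two remaining integrals are the classical ones
\begin{equation*}
\frac{1}{\pi}\int_0^{2\pi}\frac{d\psi}{1+\mu\cos\psi} = \frac{2}{\sqrt{1-\mu^2}}, \qquad \frac{1}{\pi}\int_0^{2\pi}\frac{\cos 2\psi\,d\psi}{1+\mu\cos\psi} = \frac{2}{\sqrt{1-\mu^2}}\cdot\frac{1-\sqrt{1-\mu^2}}{1+\sqrt{1-\mu^2}},
\end{equation*}
valid for $\mu = \lVert\theta\rVert < 1$ (Weierstrass substitution, or a residue computation). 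Substituting these into the expansions of Step 2 gives \eqref{eq:sigma11}, \eqref{eq:sigma22} and \eqref{eq:sigma12} directly.

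\emph{Main obstacle.} There is no conceptual difficulty here: the content is entirely bookkeeping. The two points that require care are, first, that the identification of $HM$ with $T^1_gM$ is made compatibly with $d\pi(X_0)=v$, so that the cancellation of $m^2$ against $1+\pi^{\ast}\theta(X_0)$ is exact; and second, that the sign and normalisation conventions — the definition of $\varphi$ through $T=\frac{\theta_x-i\theta_y}{2}$ and the chosen orientation of the fibre $H_pM$ — are fixed so that the off-diagonal term comes out with the sign displayed in \eqref{eq:sigma12} rather than its opposite. One should also check positivity of $\sigma_{11},\sigma_{22}$ and that $\det(\sigma_{ij})>0$ as a sanity check that the resulting tensor is indeed a Riemannian metric, as required for ellipticity.
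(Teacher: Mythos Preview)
Your argument is correct, and Steps~1--2 coincide with the paper's: both reduce the symbol to
\[
\sigma_{ij}(p)=\frac{1}{\pi}\int_0^{2\pi}\frac{v_i(\phi)\,v_j(\phi)}{1+\theta_x\cos\phi+\theta_y\sin\phi}\,d\phi
\]
in normal coordinates for $g$ at $p$. The evaluation in Step~3, however, is genuinely different from the paper's. The paper keeps $\theta_x,\theta_y$ separate, substitutes $z=e^{i\phi}$, factors the denominator as $Tz^2+z+R$ with $R=\bar T$, and computes residues at $0$ and at the interior root $z^+$ for each of the three rational integrands --- a two-page calculation with several intermediate simplifications. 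Your rotation $\psi=\phi+\varphi$ removes $\varphi$ from the denominator at the outset, after which everything collapses to the two classical moments $\frac{1}{\pi}\int_0^{2\pi}\frac{d\psi}{1+\mu\cos\psi}$ and $\frac{1}{\pi}\int_0^{2\pi}\frac{\cos 2\psi\,d\psi}{1+\mu\cos\psi}$, and $\varphi$ re-enters only through the coefficients in the expansion of $\cos^2\phi$, $\sin^2\phi$, $\cos\phi\sin\phi$ in the basis $\{1,\cos 2\psi,\sin 2\psi\}$. This is shorter, less error-prone, and makes the structure of the answer --- a rotation by $2\varphi$ of a diagonal symbol --- immediately visible.

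One remark on your ``main obstacle''. With the conventions exactly as stated in the Proposition (so that $\theta_x\cos\phi+\theta_y\sin\phi=\lVert\theta\rVert\cos(\phi+\varphi)$), your Step~3 produces $-\sin(2\varphi)$ rather than $+\sin(2\varphi)$ in the off-diagonal entry; a direct Taylor expansion in $\lVert\theta\rVert$ (e.g.\ $\theta_x=\theta_y=a$, giving $\sigma_{12}\approx a^2/2>0$ while $\sin 2\varphi=-1$) confirms this sign. So your caution is warranted, but it is not a convention ambiguity on your side; the sign in \eqref{eq:sigma12} appears to be a slip in the statement. It is immaterial for $\sigma_{11}$, $\sigma_{22}$, the determinant, and everything downstream.
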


\begin{rem}
 One interest of this result is that it shows that for a given Riemannian metric $g$, no two Randers metrics $F= \sqrt{g} + \theta$ gives the same symbol, hence the same Laplacian. We can also obtain a full coordinate expression of the Laplacian for Randers surfaces by using the formula given in Lemma \ref{lem:existence_unicity}. However, I doubt that it would yield much information in the general case, so I have not done it.
\end{rem}

So now, given a couple $(g_{\text{goal}},\Omega_{\text{goal}} )$ our aim is to give a condition for the existence of a Randers metric $F = \sqrt{g} + \theta$ such that $\Omega^F = \Omega_{\text{goal}}$ and, if we denote by $g_{\sigma}$ the dual of the symbol of $\Delta^F$, then   $g_{\sigma} = g_{\text{goal}}$.\\
We remark that the above local expression of the symbol already gives a condition on the volumes.
\begin{cor} \label{cor:utilisation_unique3}
 Let $F= \sqrt{g} + \theta$ be a Randers metric on a $2$-manifold, and $g_{\sigma}$ the dual of the symbol of $\Delta^F$. Then
\begin{equation*}
 \Omega^{g_{\sigma}} = \sqrt{\frac{\sqrt{1-\lVert \theta \rVert^2}\left(1 + \sqrt{1-\lVert \theta \rVert^2} \right)^2 }{4} } \Omega^F.
\end{equation*}
\end{cor}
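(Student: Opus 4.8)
The plan is to argue pointwise, working in the very normal coordinates $(x,y)$ for $g$ at $p$ that are already fixed in Proposition \ref{prop:symbol_laplacian_randers}. In such coordinates $g_{ij}(p)=\delta_{ij}$, so the Riemannian volume form of $g$ equals $dx\wedge dy$ at $p$; and by Proposition \ref{prop:volume_angle_Randers} we have $\Omega^F=\Omega_0$, the Riemannian volume of $g$, hence $\Omega^F = dx\wedge dy$ at $p$. On the other hand the symbol $\sigma^F$ is a co-metric, i.e. $(\sigma_{ij}(p))$ is the matrix in these coordinates of $g_\sigma^{-1}$, so $g_\sigma$ has matrix $(\sigma_{ij}(p))^{-1}$ and its Riemannian volume form at $p$ is $\sqrt{\det\big((\sigma_{ij}(p))^{-1}\big)}\;dx\wedge dy = \big(\det(\sigma_{ij}(p))\big)^{-1/2}\,dx\wedge dy$. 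Thus the whole statement reduces to computing the determinant $\det(\sigma_{ij}(p))$ from the formulas \eqref{eq:sigma11}, \eqref{eq:sigma22} and \eqref{eq:sigma12}.

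To do that I would set $s:=\sqrt{1-\lVert\theta\rVert^2}$ and $k:=\dfrac{1-s}{1+s}$, so that at $p$ the three formulas become $\sigma_{11}=\frac{1}{s}(1+k\cos 2\varphi)$, $\sigma_{22}=\frac{1}{s}(1-k\cos 2\varphi)$ and $\sigma_{12}=\frac{1}{s}\,k\sin 2\varphi$. A one-line computation then gives
\[
\det(\sigma_{ij}(p)) = \sigma_{11}\sigma_{22}-\sigma_{12}^2 = \frac{1}{s^2}\Big((1-k^2\cos^2 2\varphi)-k^2\sin^2 2\varphi\Big) = \frac{1-k^2}{s^2},
\]
and since $1-k^2=\dfrac{(1+s)^2-(1-s)^2}{(1+s)^2}=\dfrac{4s}{(1+s)^2}$ we obtain $\det(\sigma_{ij}(p))=\dfrac{4}{s(1+s)^2}$.

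Substituting back, $\Omega^{g_\sigma} = \big(\det(\sigma_{ij}(p))\big)^{-1/2}\,dx\wedge dy = \sqrt{\dfrac{s(1+s)^2}{4}}\;dx\wedge dy = \sqrt{\dfrac{s(1+s)^2}{4}}\;\Omega^F$ at $p$, which is precisely the claimed identity after replacing $s$ by $\sqrt{1-\lVert\theta\rVert^2}$. Since $p\in M$ was arbitrary, the two volume forms agree on all of $M$. There is no genuine obstacle here: the only points requiring a little care are the bookkeeping between the symbol (a co-metric) and its dual $g_\sigma$ (a metric), and the use of Proposition \ref{prop:volume_angle_Randers} to identify $\Omega^F$ with the Riemannian volume of $g$ — so that in $g$-normal coordinates at $p$ it is exactly $dx\wedge dy$. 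Modulo these, the corollary is a direct consequence of Proposition \ref{prop:symbol_laplacian_randers}.
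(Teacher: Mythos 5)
Your proof is correct and takes essentially the same route as the paper: compute $\det(\sigma_{ij}(p))$ in $g$-normal coordinates at $p$ from the formulas of Proposition \ref{prop:symbol_laplacian_randers}, invert to get $\det(g_{\sigma})$, and read off the volume form as the square root of the determinant, using $\Omega^F=\Omega_0$ from Proposition \ref{prop:volume_angle_Randers}. The only difference is that the paper (inside Lemma \ref{lem:Randers_coord_general}) states the determinant $|\sigma| = 4/\bigl(b(1+b)^2\bigr)$ without detailing the algebra, whereas you spell it out via the clean substitution $k=(1-s)/(1+s)$.
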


Using this result, we see that the norm of $\theta$ is uniquely determined by the quotient $\Omega^{g_{\sigma}}/\Omega^F$, and so, a first condition to get a positive answer to the above question is that the quotient $\Omega^{g_{\text{goal}}} /\Omega_{\text{goal}} $ \emph{can} be realized by the norm of a $1$-form.

A trivial counter-example would be to take any $g_{\text{goal}}$ on the $2$-sphere and $\Omega_{\text{goal}}$ to be a constant multiple of $\Omega^{g_{\text{goal}}}$, as we cannot have a $1$-form on the sphere with constant norm we get a negative answer to our question.

However, this question was spurred by the fact that $\Delta^F = \Delta_{g,\Omega}$ (where $\Delta_{g,\Omega}$ is defined in Lemma \ref{lem:existence_unicity}) and the question was really ``can we obtain every second-order, elliptic, symmetric operators that vanish on the constants from a Finsler--Laplace operator?''.

But, again from Lemma \ref{lem:existence_unicity}, it is clear that $\Delta_{g,\Omega} = \Delta_{g,K \Omega}$ for any constant $K >0$. So we should not be too upset about not being able to get all the possible $\Omega_{\text{goal}}$ but just one in each constant multiple class.\\

So, in order to answer our question, we will have to reconstruct a Randers metric just from the information given by a Riemannian metric of the form obtained in Proposition \ref{prop:symbol_laplacian_randers}. 

We described the $1$-form $\theta$ by its norm and a certain angle $\varphi$, which depends on the normal coordinates we chose. Now, suppose that on a contractible set $U$, we are given two smooth functions $k\colon U \rightarrow \R^+$ and $\varphi \colon U \rightarrow \left[0,2 \pi \right]$, a Riemannian metric $g$ \emph{and} a preferred choice of coordinates $(x,y)$. With this information, we can construct a $1$-form $\theta$ such that its norm for $g$ is $k$ and such that $\varphi$ is the angle computed in the (unique) normal coordinates $(x^n,y^n)$ for $g$ such that $x^n$ and $x$ are collinear. In other words, to reconstruct $\theta$ from its norm $k$ and angle $\varphi$ what we really need is a Riemannian metric and a vector field that is not zero when $k$ is not zero. This remark is all we need to prove the following:

\begin{prop} \label{prop:Randers_donne_couple}
Let $g_1$ be a Riemannian metric on a $2$-manifold $M$ and $\Omega$ a volume form on $M$. Denote by $\mu \colon M\rightarrow \R_+^{\ast}$ the function such that $\Omega^{g_1} = \mu \Omega$. Let $K:= \sup \mu$, suppose that either $M$ is contractible and $K < \infty$ or that $M$ is compact. Then there exists a Randers metric $F$ such that $\Omega = K \Omega^F$ and $g_1$ is the dual of the symbol of $\Delta^F$.
\end{prop}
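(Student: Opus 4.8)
The plan is to reconstruct a pair $(g,\theta)$ by hand and then check that $F:=\sqrt{g}+\theta$ does the job, reading Propositions \ref{prop:volume_angle_Randers} and \ref{prop:symbol_laplacian_randers} backwards. Recall that, by Proposition \ref{prop:volume_angle_Randers}, $\Omega^F=\Omega^{g}$, and that, by Proposition \ref{prop:symbol_laplacian_randers}, if one sets $t:=\sqrt{1-\lVert\theta\rVert_g^{2}}$, then the metric $g_\sigma$ dual to the symbol of $\Delta^F$ is characterized by: the $g$-self-adjoint endomorphism $g^{-1}g_\sigma$ is positive with eigenvalues $\lambda_+:=\tfrac{t(1+t)}{2}$ and $\lambda_-:=\tfrac{1+t}{2}$, the $\lambda_+$-eigenspace being $\R\,\theta^{\sharp}$ (note the identities $\lambda_--\lambda_+=\tfrac12\lVert\theta\rVert_g^{2}$ and $\lambda_+\lambda_-=\bigl(\tfrac{(1+t)\sqrt{t}}{2}\bigr)^{2}$). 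So ``$g_\sigma=g_1$'' amounts to requiring that $g^{-1}g_1$ be the $g$-self-adjoint endomorphism with eigenvalues $\lambda_{\pm}(\lVert\theta\rVert_g)$ and $\lambda_+$-eigenspace $\R\,\theta^{\sharp}$.

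First I would read off $\lVert\theta\rVert$. Since $\Delta_{g_1,\Omega}=\Delta_{g_1,c\Omega}$ for every constant $c>0$ (Lemma \ref{lem:existence_unicity}), it is enough to realize $g_1$ as the dual symbol for a Randers metric whose volume $\Omega^F$ is \emph{proportional} to $\Omega$; the hypothesis ($K=\sup\mu<\infty$, resp.\ $M$ compact) guarantees that the needed constant is finite, and with $\Omega^F:=K\Omega$ one has $\Omega^{g_1}/\Omega^F=\mu/K\in(0,1]$, with equality exactly on $Z:=\{\mu=K\}$. Imposing $g_\sigma=g_1$ forces, through the determinant (Corollary \ref{cor:utilisation_unique3}), $\tfrac{(1+t)\sqrt{t}}{2}=\Omega^{g_1}/\Omega^F=\mu/K$; as $t\mapsto\tfrac{(1+t)\sqrt{t}}{2}$ is a smooth increasing bijection of $(0,1]$ onto itself, this produces a smooth function $t\colon M\to(0,1]$, hence the prescribed norm $\lVert\theta\rVert:=\sqrt{1-t^{2}}$, which is $<1$ everywhere (strictly, as $\mu>0$) and vanishes exactly on $Z$.

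Next I would build $g$, then $\theta$. Fix an auxiliary smooth unoriented line field $\ell$ on $M\smallsetminus Z$ — it will carry the direction of $\theta$, playing the role of the ``preferred coordinates'' of the introductory remark — and let $P_\ell$ be $g_1$-orthogonal projection onto $\ell$. Set $C:=\tfrac{1+t}{2}\,\id-\tfrac{1-t^{2}}{2}\,P_\ell$, a $g_1$-self-adjoint positive endomorphism equal to $\id$ on $Z$, and define the metric $g$ by $g(v,w):=g_1\bigl(C^{-1}v,w\bigr)$; then $g^{-1}g_1=C$ has eigenvalues $\lambda_+$ along $\ell$ and $\lambda_-$ along $\ell^{\perp_{g_1}}$, and a determinant computation gives $\Omega^{g}=\Omega^{g_1}/\sqrt{\lambda_+\lambda_-}=K\Omega=\Omega^F$. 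Finally let $\theta$ be the $1$-form of $g$-norm $\sqrt{1-t^{2}}$ whose $\sharp$ lies along $\ell$ (either orientation, continuously on $M\smallsetminus Z$; $\theta:=0$ on $Z$). Then $F:=\sqrt{g}+\theta$ is a Randers metric, $\Omega^F=\Omega^{g}$ is proportional to $\Omega$, and by construction $g^{-1}g_1$ and $g^{-1}g_\sigma$ are the \emph{same} $g$-self-adjoint endomorphism (same eigenvalues, same $\lambda_+$-eigenline $\R\,\theta^{\sharp}$); hence $g_\sigma=g_1$ and $\Delta^F=\Delta_{g_1,\Omega^F}=\Delta_{g_1,\Omega}$.

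The hard part will be the choice of $\ell$ and the proof that $g$ and $\theta$ are $C^{\infty}$ \emph{across $Z$}. If $M$ is contractible, line fields on the relevant open set exist freely. If $M$ is compact, $\mu$ attains its supremum, so $Z\neq\emptyset$ and $M\smallsetminus Z$ is a non-compact surface, which carries a nowhere-vanishing vector field (whereas $\S^{2}$ carries none — which is exactly why $Z$ cannot be empty there). In every case the delicate point is smoothness at $Z$: $C$ sees $\ell$ only through the tensor $\lVert\theta\rVert^{2}P_\ell$, and $\lVert\theta\rVert$ vanishes on $Z$ to the order with which $\mu$ reaches its maximum, so one must pick $\ell$ with a singularity of matching index along $Z$ — e.g.\ the radial line field near an isolated non-degenerate maximum of $\mu$, for which $|z|^{2}P_\ell$ is polynomial — so that $\lVert\theta\rVert^{2}P_\ell$, and therefore $g$ and $\theta$, extend smoothly. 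Carrying this out in general, using the local structure of $\mu$ near $Z$, is the crux of the argument; as the text remarks, this is where one leans on the explicit formula of Proposition \ref{prop:symbol_laplacian_randers}, and it is what prevents the method from generalizing to higher dimensions.
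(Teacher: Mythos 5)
Your proposal follows the same strategy as the paper's proof: recover $\lVert\theta\rVert_g$ pointwise from the volume ratio via Corollary~\ref{cor:utilisation_unique3}, carry the direction of $\theta$ by an auxiliary field on $\{\mu<K\}$, define $g$ by correcting $g_1$ along that direction, and set $\theta$ along it with the prescribed norm, finally appealing to $\Omega^F=\Omega^g$ (Proposition~\ref{prop:volume_angle_Randers}). What you do differently is to repackage the coordinate formulas of Proposition~\ref{prop:symbol_laplacian_randers} and Lemma~\ref{lem:Randers_coord_general} into the intrinsic spectral statement ``$g^{-1}g_\sigma$ is $g$-self-adjoint with eigenvalues $\lambda_\pm(t)$ and $\lambda_+$-eigenline $\R\theta^{\sharp}$,'' and then define $g=g_1\circ C^{-1}$ directly from the correction operator $C$. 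That eigenvalue claim checks out (with $\varphi=0$ in $g$-normal coordinates the symbol is $\mathrm{diag}(2/(t(1+t)),\,2/(1+t))$, so $g_\sigma=\mathrm{diag}(\lambda_+,\lambda_-)$ with $\lambda_+$ along $\theta^{\sharp}$), and your verifications that $C$ is $g_1$-self-adjoint positive, that its eigenspaces are simultaneously $g$- and $g_1$-orthogonal, and that $\Omega^g=K\Omega$ are all correct. This formulation is cleaner than the paper's three explicit formulas for $g_{ij}$ with $\varphi=\pi/4$, and it makes the well-definedness (coordinate independence) automatic rather than a separate verification. Two small remarks: you gain nothing by using an unoriented line field, since you must orient $\ell$ anyway to define $\theta$, and the hypotheses already guarantee a vector field (never zero if $M$ is contractible; with zeros in $\{\mu=K\}\neq\emptyset$ if $M$ is compact); and beware that your convention gives $\lambda_+\le\lambda_-$, so the ``$+$'' labels the \emph{smaller} eigenvalue.

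On the one point you flag as incomplete — smoothness of $g$ and $\theta$ across $\{\mu=K\}$ — the paper is not more rigorous than you are: it defines $g$ in $Z$-adapted coordinates off $\{\mu'=1\}$, sets $g=g_1$ and $\theta=0$ on $\{\mu'=1\}$, and asserts smoothness ``because of how we defined it.'' Your observation that what must extend smoothly is the tensor $\lVert\theta\rVert^{2}P_\ell$ rather than $\ell$ itself, and that the singularity of the field must match the vanishing order of $1-\mu/K$, is a sharper diagnosis of this point than what appears in the paper, not a gap relative to it.
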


\begin{rem}
 The proof we give is entirely based on the coordinate expression we obtained before, but it would be much more interesting to have a coordinate-free proof in order to generalize it to any dimension.
\end{rem}

We now get on to the proofs.

\begin{proof}[Proof of Proposition \ref{prop:symbol_laplacian_randers}]
 Equation \eqref{eq:laplacien_pour_randers_general} shows that the symbol of $F$ at $p$ is obtained by computing
\begin{equation*}
 \frac{1}{\pi} \int_{H_pM} \frac{1}{1+\theta\left(X_0\right)} L_{X_0}^2\left(\pi^{\ast}f\right) \alpha_0\, .
\end{equation*}
 In normal coordinates $(x,y, \theta)$ at $p$ on $HM$, it is easy to check that 
\begin{align*}
 (X_0)_p &= \cos \theta \dx + \sin \theta \dy \, , \\
 (A_0)_p &= \cos \theta dx + \sin \theta dy \, , \\
 (A_0 \wedge dA_0)_p &= -d\theta \wedge dx\wedge dy\, .
\end{align*}
Therefore we can write, $\alpha_0 = d\theta$, $\theta\left(X_0\right) = \theta_x \cos \theta + \theta_y \sin \theta$ and $$L_{X_0}^2\left(\pi^{\ast}f\right) = \cos^2 \theta \dfxx + \sin^2 \theta \dfyy +2\cos\theta\sin\theta \dfxy\, .$$
 Hence the symbol at $p$ is
\begin{align*}
 \sigma_{11}(p) &= \frac{1}{\pi} \int_0^{2\pi} \frac{\cos^2 \theta}{1+\theta_x \cos \theta + \theta_y \sin \theta} d\theta \,, \\
 \sigma_{22}(p) &= \frac{1}{\pi} \int_0^{2\pi} \frac{\sin^2 \theta}{1+\theta_x \cos \theta + \theta_y \sin \theta} d\theta\,, \\
 \sigma_{12}(p) &= \frac{1}{\pi} \int_0^{2\pi} \frac{\cos \theta \sin \theta}{1+\theta_x \cos \theta + \theta_y \sin \theta} d\theta\,.
\end{align*}
Unfortunately, a lack of knowledge on my part associated with Maple's unhelpfullness prevented me to give a computerized computation. Therefore I give below the computations by-hand proving the proposition (and would strongly advise the reader to skip the next two pages).\\

{\bf Computation of $\sigma_{22}$:} Recall that $T = \frac{\theta_x -i \theta_y}{2}$\ and let $R :=\frac{\theta_x + i \theta_y}{2}$ and $z=e^{i\theta}$, we have
\begin{equation*}
 \sigma_{22} =   \int_{S^1} \frac{\left( \frac{1}{2i}\left(z-z^{-1} \right) \right)^2}{1 + T z + R z^{-1} } \frac{dz}{iz} =  \frac{i}{4} \int_{S^1} \frac{z^4 -2 z^2 + 1 }{T z^4 + z^3 + R z^2 } dz \,.
\end{equation*}
We are going to apply the Residue Theorem, hence we must found the zeros of the polynomial $T z^4 + z^3 + R z^2$\ and compute the residues.
\begin{equation}
\label{eq:decomp_poly}
T z^4 + z^3 + R z^2 = Tz^2 \left(z-z^-\right) \left(z-z^+\right),
\end{equation}
where
\begin{equation*}
 z^- =  \frac{-1 -\sqrt{1- 4|T|^2} } {2T},\quad z^+ =  \frac{-1 +\sqrt{1- 4|T|^2} } {2T}\,.
\end{equation*}
As we had chosen the $1$-form $\theta$\ with a norm strictly less than $1$, we have $4|T|^2 = \theta_x^2 +\theta_y^2 <1$, so $z^-$\ and $z^+$\ are well-defined reals. Furthermore, as $\theta$\ is non-null, $\left|z^-\right| > 1$, so $z^-$\ is not inside the unit disc. The poles of $\frac{z^4 -2 z^2 + 1 }{T z^4 + z^3 + R z^2 }$\ inside the unit disc are then $0$\ and $z^+$. As
\begin{multline*}
 \frac{z^4 -2 z^2 + 1 }{T z^4 + z^3 + R z^2 }  = \frac{z^2}{T \left(z-z^-\right) \left(z-z^+\right)} - \frac{2}{T \left(z-z^-\right) \left(z-z^+\right)} \\ + \frac{1}{Tz^2 \left(z-z^-\right) \left(z-z^+\right)}\,,
\end{multline*}
we get
\begin{align}
 \res_{z^+}\left( \frac{z^2}{ \left(z-z^-\right) \left(z-z^+\right)} \right) &= \frac{\left(z^+\right)^2}{z^+ - z^-} \,,\label{eq:residu_z+}\\
 \res_{z^+}\left(\frac{2}{ \left(z-z^-\right) \left(z-z^+\right)} \right) &= \frac{2}{z^+ - z^-}, \\
 \res_{z^+}\left( \frac{1}{z^2 \left(z-z^-\right) \left(z-z^+\right)} \right) &= \frac{1}{\left(z^+\right)^2\left(z^+ - z^-\right)}\, , \label{eq:residu_z+2}\\                                                                                                                                                           \res_{0}\left( \frac{1}{z^2 \left(z-z^-\right) \left(z-z^+\right)} \right) &= \frac{z^+ + z^-}{\left(z^+ - z^-\right)^2}\,. \label{eq:residu0}
\end{align}
Then, using the Residue Theorem, we obtain

\begin{align*}
  \int_{S^1} \frac{z^2}{T \left(z-z^-\right) \left(z-z^+\right)} dz &= \frac{2i\pi}{T} \frac{\left(z^+\right)^2}{z^+ - z^-}\,, \\
  \int_{S^1} -\frac{2}{T \left(z-z^-\right) \left(z-z^+\right)} dz &= -\frac{2i\pi}{T} \frac{\left(z^+\right)^2}{z^+ - z^-}\,, \\
  \int_{S^1} \frac{1}{Tz^2 \left(z-z^-\right) \left(z-z^+\right)} dz &= \frac{2i\pi}{T} \left( \frac{1}{\left(z^+\right)^2\left(z^+ - z^-\right)} + \frac{z^+ + z^-}{\left(z^+ - z^-\right)^2} \right)\,, 
\end{align*}
and the sum of these three integrals gives $\sigma_{22}$
\begin{equation}
 \sigma_{22} = \frac{2i^2 \pi}{4 T} \left( \frac{\left(z^+\right)^2}{z^+ - z^-} - \frac{2}{z^+ - z^-} + \frac{1}{\left(z^+\right)^2\left(z^+ - z^-\right)} + \frac{z^+ + z^-}{\left(z^+ - z^-\right)^2} \right).
\end{equation}
To simplify a bit the above equation, note that
\begin{equation*}
 z^+ + z^- = -\frac{1}{T}, \;\; z^+  z^- = \frac{R}{T}, \;\;  z^+ - z^- = \frac{\sqrt{1-|T|^2}}{T} \;\; \text{and} \left(z^+\right)^2 = -\left( \frac{z^+ + R }{T} \right).
\end{equation*}
So
\begin{align*}
\sigma_{22} &= -\frac{ 1}{2 T}\frac{1}{\left(z^+  z^- \right)^2} \left( \left( \left( \left(z^+\right)^2-2 \right)\left(z^+  z^- \right)^2 + \left(\frac{z^+  z^- }{z^+}\right)^2 \right) \frac{1}{z^+ - z^-} +z^+ +z^- \right)  \\
   &= -\frac{ 1}{2 T}\frac{1}{\left(z^+  z^- \right)^2}\frac{1}{z^+ - z^-} \left( \left(z^+\right)^4\left(z^-\right)^2 - 2\left(z^+  z^- \right)^2 + \left(z^+\right)^2 \right) \\
   &=  -\frac{ 1}{2 T}\frac{1}{z^+ - z^-} \left( \left(z^+\right)^2\left(1+\frac{1}{\left(z^+ z^- \right)^2} \right) -2 \right)\\
   &= -\frac{ 1}{2 T} \frac{T}{\sqrt{1-|T|^2}} \left( - \left( \frac{z^+ + R }{T} \right) \left(1+ \frac{T^2}{R^2} \right) -2 \right) \\
  &= \frac{1}{2\sqrt{1-|T|^2}} \left(2 + \left(\frac{z^+}{T} + \frac{R}{T}\right)\left(1+ \frac{T^2}{R^2} \right) \right).
\end{align*}
We define $\varphi$\ as the argument of $T$ so that
\begin{equation*}
 \frac{z^+}{T} = \frac{-1 + \sqrt{1-|T|^2}}{2|T|^2e^{2i\varphi}} \quad \text{and } \frac{R}{T} = e^{-2i\varphi}.
\end{equation*}
Hence
\begin{align*}
 \sigma_{22} &= \frac{1}{2\sqrt{1-|T|^2}} \left(2 + e^{-2i\varphi} \left(\frac{-1 + \sqrt{1-|T|^2} }{2|A|^2} +1 \right)\left( 1 + e^{4i\varphi} \right) \right) \\
 &=  \frac{1}{\sqrt{1-|T|^2}} + \frac{1}{\sqrt{1-|T|^2}} \cos\left(2\varphi\right) \left( 1 + \frac{-1+\sqrt{1-4|T|^2}}{2|T|^2} \right).
\end{align*}
Using that $\lVert\theta\rVert^2 =4|T|^2 =  \left( 1-\sqrt{1-4|T|^2}\right)\left(1+\sqrt{1-4|T|^2}\right)$ gives Formula \eqref{eq:sigma22}.

{\bf Computation of $\sigma_{11}$:} Using the above notations, we have
\begin{equation*}
 \sigma_{11} = -\frac{i}{4\pi} \int_{S^1} \frac{z^4 + 2z^2 +1}{Tz^4 + z^3 + R z^2} dz \, .
\end{equation*}
We need to compute the residues at $0$\ and $z^+$\ of $\frac{z^2}{ \left(z-z^-\right) \left(z-z^+\right)}$, $\frac{2}{ \left(z-z^-\right) \left(z-z^+\right)}$\ and $\frac{1}{z^2 \left(z-z^-\right) \left(z-z^+\right)}$. The values of these residues are given by Equations (\ref{eq:residu_z+}) to (\ref{eq:residu0}). So, applying once again the Residue Theorem, we get
\begin{equation}
 \sigma_{11} = \frac{-2i^2 }{4 T} \left( \frac{\left(z^+\right)^2}{z^+ - z^-} + \frac{2}{z^+ - z^-} + \frac{1}{\left(z^+\right)^2\left(z^+ - z^-\right)} + \frac{z^+ + z^-}{\left(z^+ - z^-\right)^2} \right). \nonumber
\end{equation}
Then, as above, we rewrite this formula to have something that behaves well when $|T|$\ tends to $0$. By doing the same transformations as in the case of $\sigma_{22}$, we obtain

$$
\sigma_{11} = \frac{ 1}{2 T}\frac{1}{z^+ - z^-} \left( \left(z^+\right)^2\left(1+\frac{1}{\left(z^+ z^- \right)^2} \right) + 2 \right).
$$
Simplifying once again, we get
\begin{align*}
 \sigma_{11} &= \frac{ 1}{2 T} \frac{T}{\sqrt{1-|T|^2}} \left( - \left( \frac{z^+ + R }{T} \right) \left(1+ \frac{T^2}{R^2} \right) + 2 \right) \\
  &= \frac{1}{2\sqrt{1-|T|^2}} \left(2 - \left(\frac{z^+}{T} + \frac{R}{T}\right)\left(1+ \frac{T^2}{R^2} \right) \right)\\
 &= \frac{1}{2\sqrt{1-|T|^2}} \left(2 - e^{-2i\varphi} \left(\frac{-1 + \sqrt{1-|T|^2} }{2|A|^2} +1 \right)\left( 1 + e^{4i\varphi} \right) \right) \\
 &=  \frac{1}{\sqrt{1-|T|^2}} - \frac{1}{2\sqrt{1-|T|^2}} \cos\left(2\varphi\right) \left( 2 + \frac{-1+\sqrt{1-4|T|^2}}{|T|^2} \right),
\end{align*}
that we can rewrite as (\ref{eq:sigma11}).\\

{\bf Computation of $\sigma_{12}$:} The now usual transformations give
\begin{equation*}
 2\sigma_{12} = -\frac{1}{2 \pi T} \int_{S^1} \frac{z^2}{\left(z - z^+\right)\left(z - z^-\right)} + \frac{1}{z^2\left(z - z^+\right)\left(z - z^-\right)} dz \,.
\end{equation*}
The residues we need are given by Equations (\ref{eq:residu_z+}), (\ref{eq:residu_z+2}) and (\ref{eq:residu0}), so we obtain
\begin{multline*}
 2\sigma_{12} =  -\frac{i}{ T} \left(\frac{\left(z^+\right)^2}{z^+ - z^-} - \frac{1}{\left(z^+\right)^2\left(z^+ - z^-\right)} - \frac{z^+ + z^-}{\left(z^+ - z^-\right)^2} \right) \\ = -\frac{i\pi}{ T} \frac{\left(z^+\right)^2 }{\left(z^+ - z^-\right)} \left( 1 - \frac{1}{\left(z^+z^-\right)^2} \right).
\end{multline*}
Simplifying in the same fashion as above yields
\begin{align*}
  2\sigma_{12} &= -\frac{i}{ T} \frac{T}{\sqrt{1-4|T|^2}} \frac{z^+ +R}{T}\left(1- \left(\frac{T}{R} \right)^2 \right) \\
    &= -\frac{i}{\sqrt{1-4|T|^2}} \left( \frac{-1 + \sqrt{1-4|T|^2} }{2 |T|^2 e^{2i\varphi}} + e^{-2i\varphi} \right) \left(1-e^{4i\varphi}\right) \\
    &= -\frac{ \sin \left(2 \varphi\right)} {\sqrt{1-4|T|^2}} \left( 2 + \frac{-1 + \sqrt{1-4|T|^2} }{|T|^2} \right).
\end{align*}
\end{proof}

We gave a local expression of $\sigma$ in normal coordinate for $g$. Now we generalize it to any local coordinates, and in the same stroke, we prove Corollary \ref{cor:utilisation_unique3}:

\begin{lem} \label{lem:Randers_coord_general}
 Let $F = \sqrt{g} + \theta$ and $g_{\sigma}$ be the dual of the symbol of $\Delta^F$. Let $|g| := \det g$. Then we have
\begin{equation*}\label{eq:volume_sigma_en_randers}
 |g_{\sigma}| = \frac{\sqrt{1-\lVert \theta \rVert^2}\left(1 + \sqrt{1-\lVert \theta \rVert^2} \right)^2 }{4} |g|\, ,
\end{equation*}
Now let $(x,y)$ be local coordinates on $M$ and write $g = \left[ g_{ij} \right]$ and $g_{\sigma} = \left[ g_{\sigma;ij} \right]$ in this basis. Then
\begin{align*}
 g_{\sigma;11} &= \frac{g_{11}}{4} \left( \left(1 + \sqrt{1-\lVert \theta \rVert^2} \right)^2 + \lVert \theta \rVert^2 \cos(2\varphi) \right) \,,\\
 g_{\sigma;12} &= \frac{g_{12} }{4} \left(1 + \sqrt{1-\lVert \theta \rVert^2} \right)^2 + \frac{1}{4} \lVert \theta \rVert^2 \left( -g_{12}\cos(2\varphi) + \sqrt{|g|} \sin(2\varphi) \right) \,,\\
 g_{\sigma;22} &= \frac{g_{22}}{4} \left(1 + \sqrt{1-\lVert \theta \rVert^2} \right)^2 + \frac{1}{4} \lVert \theta \rVert^2 \left(\frac{ g_{11}g_{22} - 2 g_{12}^{2} }{g_{11}} \cos(2\varphi) + 2\frac{ g_{12} \sqrt{|g|}}{g_{11}} \sin(2\varphi) \right)\,,
\end{align*}
 where $\varphi$ is given in Proposition \ref{prop:symbol_laplacian_randers}, computed in the normal coordinates for $g$ given by $\left( \dfrac{x}{\sqrt{g_{11}}} , -\dfrac{g_{12}}{\sqrt{g_{11}} \sqrt{|g|}} x + \dfrac{\sqrt{g_{11}} }{ \sqrt{|g|}} y \right)$.
\end{lem}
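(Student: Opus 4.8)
The plan is to deduce Lemma \ref{lem:Randers_coord_general} from Proposition \ref{prop:symbol_laplacian_randers}, which already computes the symbol in \emph{normal} coordinates, by using the fact that the symbol of $\Delta^F$ is a genuine tensor --- a symmetric contravariant $2$-tensor on $T^{\ast}M$, i.e.\ a Finsler co-metric --- and hence so is its dual $g_{\sigma}$. Thus everything reduces to (i) a determinant computation, which is chart-independent, and (ii) an explicit change of frame from an arbitrary coordinate basis to a $g$-orthonormal one.

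First I would settle the volume identity. Write $s:=\sqrt{1-\lVert\theta\rVert^{2}}$ and $\rho:=\tfrac{1-s}{1+s}$. From \eqref{eq:sigma11}--\eqref{eq:sigma12}, in normal coordinates for $g$ at $p$,
\[
\sigma_{11}\sigma_{22}-\sigma_{12}^{2}
=\frac{1}{s^{2}}\Bigl((1+\rho\cos 2\varphi)(1-\rho\cos 2\varphi)-\rho^{2}\sin^{2}2\varphi\Bigr)
=\frac{1-\rho^{2}}{s^{2}}=\frac{4}{s(1+s)^{2}},
\]
using $1-\rho^{2}=\tfrac{(1+s)^{2}-(1-s)^{2}}{(1+s)^{2}}=\tfrac{4s}{(1+s)^{2}}$. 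Since $g_{\sigma}$ is the inverse of the symbol, $\det g_{\sigma}=\tfrac{s(1+s)^{2}}{4}$ in these coordinates, where moreover $\det g=1$. Because the symbol is tensorial, $\det$ of both sides picks up the square of the Jacobian under any change of chart, so the \emph{ratio} $|g_{\sigma}|/|g|$ is chart-independent and equals $\tfrac{\sqrt{1-\lVert\theta\rVert^{2}}(1+\sqrt{1-\lVert\theta\rVert^{2}})^{2}}{4}$ everywhere; this is the first assertion of the lemma, and combined with $\Omega^{F}=\Omega^{g}$ (Proposition \ref{prop:volume_angle_Randers}) it is exactly Corollary \ref{cor:utilisation_unique3}.

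For the coefficient formulas I would apply Gram--Schmidt to $(\partial_{x},\partial_{y})$: this produces the $g$-orthonormal frame whose first vector is collinear with $\partial_{x}$, which is precisely the frame dual to the normal coordinates prescribed in the statement, and the transition matrix $P$ between the two frames has entries built from $g_{11}$, $g_{12}$ and $\sqrt{|g|}$. In that orthonormal frame the symbol is the matrix $\Sigma=(\sigma_{ij})$ of Proposition \ref{prop:symbol_laplacian_randers}, with $\varphi$ the angle attached to $\theta$ \emph{in that frame}, and $g_{\sigma}=\Sigma^{-1}$ there; transporting back, $g_{\sigma;ij}$ in the $(x,y)$-basis is obtained by conjugating $\Sigma^{-1}$ by $P$. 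One then computes $\Sigma^{-1}$ from the adjugate divided by $\det\Sigma=\tfrac{4}{s(1+s)^{2}}$ and simplifies its entries using the key identity $(1+s)^{2}\rho=(1+s)(1-s)=\lVert\theta\rVert^{2}$, which turns each entry into a combination of $(1+\sqrt{1-\lVert\theta\rVert^{2}})^{2}$, $\lVert\theta\rVert^{2}\cos 2\varphi$ and $\lVert\theta\rVert^{2}\sin 2\varphi$. Substituting the entries of $P$, expanding, and re-expressing the double-angle terms consistently with the chosen frame yields the three displayed expressions for $g_{\sigma;11}$, $g_{\sigma;12}$, $g_{\sigma;22}$.

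The main obstacle is pure bookkeeping rather than any new idea: one must keep the angle $\varphi$ referred to the \emph{same} orthonormal frame throughout, track carefully the signs produced by inverting $\Sigma$ against the orientation/argument convention fixed in Proposition \ref{prop:symbol_laplacian_randers} (so that the $\cos 2\varphi$ and $\sin 2\varphi$ terms land with the stated signs), and carry the $g_{11},g_{12},|g|$ factors coming from $P$ through the conjugation without breaking the manifest symmetry of $g_{\sigma}$. The only two conceptual points needed are the tensoriality of the symbol and the remark that, since the symbol equals $g_{\sigma}^{-1}$, the determinant identity is chart-independent --- which is what legitimizes reducing the whole statement to the normal-coordinate computation of Proposition \ref{prop:symbol_laplacian_randers}.
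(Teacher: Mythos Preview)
Your proposal is correct and follows essentially the same approach as the paper: compute $\det\sigma$ in normal coordinates from Proposition~\ref{prop:symbol_laplacian_randers}, invert to get $g_{\sigma}$ there, observe that the ratio $|g_{\sigma}|/|g|$ is chart-independent, and then transport $g_{\sigma}$ to arbitrary coordinates via the change-of-basis matrix coming from Gram--Schmidt (the paper writes this matrix $T$ explicitly, with $^{t}TT=[g_{ij}]$, and computes $^{t}T\,g_{\sigma}(p)\,T$). Your simplification $(1+s)^{2}\rho=\lVert\theta\rVert^{2}$ is exactly the identity the paper uses to display the normal-coordinate form of $g_{\sigma}$.
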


\begin{proof}
 If $F = \sqrt{g}+\theta$, then, according to Proposition \ref{prop:symbol_laplacian_randers}, in normal coordinates at $p$ for $g$, we have
\begin{equation*}
 |\sigma| = \frac{4}{b(1 + b)^2}\,,
\end{equation*}
where $b := \sqrt{1-\lVert \theta\rVert^2}$. Hence, switching from normal coordinates to any coordinates gives the claim and therefore Corollary \ref{cor:utilisation_unique3} because it is well known that the volume form of a Riemannian metric is given by the square-root of its determinant.\\

 Note that $\left(x^n, y^n\right) := \left( \dfrac{x}{\sqrt{g_{11}}} , -\dfrac{g_{12}}{\sqrt{g_{11}} \sqrt{|g|}} x + \dfrac{\sqrt{g_{11}} }{ \sqrt{|g|}} y \right)$ are normal coordinates for $g$. Let 
\begin{equation*}
 T := \left[ \begin{array}{cc}
                   \sqrt{g_{11}} & \frac{g_{12}}{\sqrt{g_{11}}} \\
		    0  & \frac{\sqrt{|g|}}{\sqrt{g_{11}}}
                  \end{array} \right].
\end{equation*}
We have that $T\left(x^n, y^n\right) = (x,y)$ and $ \leftexp{t}{T} T = [g_{ij}]$.\\
 Now, as $g_{\sigma} = \sigma^{\ast}$, in the normal coordinate $\left(x^n, y^n\right)$ at $p$, we have, using Proposition \ref{prop:symbol_laplacian_randers}:
\begin{equation*}
 g_{\sigma}(p) = \frac{1}{4} \left[ \begin{array}{cc}
                     (1+b)^2 - (1-b^2)\cos(2\varphi) & (1-b^2)\sin(2\varphi) \\
		      (1-b^2)\sin(2\varphi)    &   (1+b)^2 + (1-b^2)\cos(2\varphi)
                   \end{array} \right]
\end{equation*}
 and $\leftexp{t}{T} g_{\sigma}(p) T$ gives $g_{\sigma}$ in the $(x,y)$ coordinates.
\end{proof}

\begin{proof}[Proof of Proposition \ref{prop:Randers_donne_couple}]
Let $g_1$ and $\Omega$ be respectively a Riemannian metric and a volume form on a $2$-manifold $M$. Let $\mu \colon M\rightarrow \R_{+}^{\ast}$ be the function such that $\Omega^{g_1} = \mu \Omega$, so that we can write $K:= \sup_{x\in M} \mu(x)$, by hypothesis $K$ is finite, and set $\mu':= \frac{\mu}{K}$.\\
We choose a smooth vector field $Z$ on $M$ such that it is non-zero on $\mu'^{-1}\lbrace 1 \rbrace$. Such a vector field exists by hypothesis, indeed, either $M$ is contractible and we can take $Z$ to be never zero, or $M$ is supposed to be compact, which force $\mu'^{-1}\lbrace 1 \rbrace$ to be non-empty.\\
Now our goal is to construct a Riemannian metric $g$ and a $1$-form $\theta$ such that $g_1$ is obtained from the Randers metric $F = \sqrt{g} + \theta$ via the symbol of the Finsler--Laplace operator and $\Omega = \frac{1}{K} \Omega^F$. \\

We set $b \colon M \rightarrow \left] 0, 1 \right]$ to be the unique function such that
\begin{equation*}
 \frac{b(1+b)^2}{4}= \left.\mu'\right.^2 .
\end{equation*}
Such a $b$ exists, and is smooth, by definition of $\mu'$.\\
At any point $z \in \mu'^{-1}\lbrace 1 \rbrace$, we set $g(z) := g_1(z)$ and $\theta_z = 0$.\\
Now, take $z \notin \mu'^{-1}\lbrace 1 \rbrace$ and choose local coordinates $(x,y)$ around $z$ such that $Z(z)$ and $\left.\frac{\partial}{\partial x}\right|_z$ are collinear (such coordinates exist as $\mu'^{-1}\lbrace 1 \rbrace$ is closed).\\
In order to avoid too much subscript clutter, we write $g_1 = \left[ \begin{array}{cc}
                                                                                                u& v\\
												v & w 
                                                                                              \end{array} \right]$ in the $(x,y)$ coordinates.

In those coordinates, we define $g(z)$ by
\begin{equation*}
\left\{
\begin{aligned}
  g_{11}(z) &:= \frac{4u}{\left(1 + b\right)^2} \,, \\
  g_{12}(z) &:= \frac{4v -(1-b^2) \frac{\sqrt{uw-v^2}}{\mu'}  }{\left(1 + b\right)^2} \,, \\
  g_{22}(z) &:= \frac{4w -2 (1-b^2) \frac{\sqrt{uw-v^2}}{\mu'} \left(\frac{4v -(1-b^2) \frac{\sqrt{uw-v^2}}{\mu'}  }{4u} \right) } {\left(1 + b\right)^2} \,.
\end{aligned} \right. 
\end{equation*}
Remark that it implies that 
\begin{equation*}
\left\{
\begin{aligned}
 \sqrt{uw-v^2} &= \mu' \sqrt{|g|} \,, \\
  4u &= g_{11}(z) \left(1 + b\right)^2 \,,\\
  4v &= g_{12}(z) \left(1 + b \right)^2 + (1-b^2) \sqrt{|g(z)|} \,,\\
  4w &= g_{22}(z)\left(1 + b \right)^2 + 2 (1-b^2)\frac{ g_{12}(z) \sqrt{|g(z)|}}{g_{11}(z)} \,,
\end{aligned} \right.
\end{equation*}
where again we wrote $|g| = g_{11}(z) g_{22}(z) - g_{12}(z)^2$. Only the first equation is not evident, but the computation can be done by hand or by plugging $g(z)$ in your favorite formal computation program.\\
By Lemma \ref{lem:Randers_coord_general}, the second system of equations shows that $g_1(z) = g_{\sigma}(z)$ where $g_{\sigma}$ is the dual symbol coming from the Randers metric $F_z = \sqrt{g(z)} + \theta_z$, where $\theta_z$ is defined by $\lVert \theta_z \rVert^2_{g} = 1- b(z)^2$ and $\varphi(z) = \pi/4$, i.e., $\theta_z$ is $\L_g \circ Z(z)$ rotated (for the metric $g$) by $\pi/2$. Moreover, $\Omega^F = \Omega^{g}$ because $F$ is Randers, and $\Omega^{g_{\sigma}} = \sqrt{\dfrac{b(1-b)^2}{4}} \Omega^g$. By our choice of $b$, we have
\begin{equation*}
 \mu' \Omega^F = \Omega^{g_1} = K \mu' \Omega,
\end{equation*}
so $\Omega^F = K \Omega$ as wanted.\\

For the moment, given any point $z$ and local coordinates $(x,y)$ around $z$ (satisfying the above condition with respect to $Z$) we constructed a scalar product $g(z)$ on $T_zM$ and an element $\theta_z$ of $T_z^{\ast}M$ which verifies our conclusion. But in order to be done, we still need to show two things: First, the definitions of $g(z)$ and $\theta_z$ must be independent of the local coordinates we choose, and once we have that, it remains to be shown that everything is smooth.\\

Let us prove the independence of $g(z)$ from the coordinates $(x,y)$:\\
For any $z\in \mu'^{-1}\lbrace 1 \rbrace$ there exists only one local coordinate system $(x^n, y^n)$ that is normal for $g_1$ at $z$ and such that $Z(z)$ is collinear to $\left.\frac{\partial}{\partial x^n}\right|_z$. In these coordinates, $g(z) := [g_{ij}^n(z)]$ is given by
\begin{equation*}
\left\{
\begin{aligned}
  g_{11}^{n}(z) &:= \frac{4}{\left(1 + b\right)^2} \,, \\
  g_{12}^{n}(z) &:= -\frac{ (1-b^2)  }{\lambda\left(1 + b\right)^2} \,, \\
  g_{22}^{n}(z) &:= \frac{4 + 2 (1-b^2)^2 \frac{1}{4\lambda^2}} {\left(1 + b\right)^2}  \,.
\end{aligned} \right. .
\end{equation*}
A transformation from $(x^n,y^n)$ to $(x,y)$ is given by 
\begin{equation*}
 T_z := \left[ \begin{array}{cc}
                   \sqrt{u(z)} & \frac{v(z)}{\sqrt{u(z)}} \\
		    0  & \frac{\sqrt{u(z)w(z) - v(z)^2}}{\sqrt{u(z)}}
                  \end{array} \right]. .
\end{equation*}
and to get the independence, we just need to verify that 
$$
\leftexp{t}{T_z} \left[ \begin{array}{cc} 
                       g_{11}^{n}(z) & g_{12}^{n}(z) \\ g_{12}^{n}(z) & g_{22}^{n}(z)
                      \end{array} \right]
  T_z = \left[ \begin{array}{cc} 
                       g_{11}(z) & g_{12}(z) \\ g_{12}(z) & g_{22}(z)
                      \end{array} \right].
$$
This is easily done, even by hand.\\
So $g$ is well defined and as $\theta$ just depends on $g$, $b$ and $Z$, it is also well-defined.\\

We are left with the smoothness issue, which is easy. Indeed, $b$ is a smooth function because $\mu'$ is. So $g$ is also smooth because of how we defined it, and finally as $\theta$ depends just on $g$, $b$ and $Z$, all of which are smooth, it is smooth.

\end{proof}

\section[Spectral data for KZ metrics]{Finsler--Laplacian and spectral data for Katok--Ziller metrics}

When we started studying this Finsler--Laplace operator, one of our first goals was to show that it was ``usable'', that is, that we could take purely Finslerian examples and compute the spectrum and the eigenfunctions.

 However, computing spectral data is a daunting task even in the Riemannian case. Indeed, past the three model spaces $\R^n$, $\S^n$ and $\mathbb{H}^n$ and some of their quotients, we do not know any full spectra of a Laplace--Beltrami operator. So, in order to have any chance of success, we wanted to start with some equivalent of a Finsler model space, but as this does not exist, we settled for just constant flag curvature, preferably on surfaces.\\

Akbar-Zadeh \cite{AkZ} showed that any closed surface endowed with a Finsler metric of constant negative flag curvature is in fact Riemannian. On the $2$-sphere however, the Finsler case is richer, Bryant \cite{Bryant:FS2S,Bryant:PFF2S} constructed a $2$-parameter family of such metrics. Katok previously \cite{Katok:KZ_metric} had constructed a $1$-parameter family of Finsler metrics on the sphere which later turned out to be of constant flag curvature $1$ (see \cite{Rademacher:Sphere_theorem}).\\

We chose to study Katok's examples for several reasons, the main being its dynamical interest. Indeed, they were constructed to give examples of metrics on the $2$-sphere with only a finite number of closed geodesics at a time when it was thought impossible. Furthermore, the construction method was generalized by Ziller \cite{Ziller:GKE} and so we use these metrics to give examples of spectral data in the torus case. Lastly, these metrics admits a quite agreeable explicit expression compared to the above mentioned examples due to Bryant (see Proposition \ref{prop:expression_KZ} and compare to Equation (12.7.4) on p.346 of \cite{BaoChernShen}). We recall the construction, in a slightly more general context than in \cite{Ziller:GKE}, as well as some other properties below.

\subsection{Construction}

Let $M$\ be a closed manifold and $F_0$\ a smooth Finsler metric on $M$. We suppose furthermore that $M$ admits a Killing field $V$, i.e., $V$\ is a vector field on $M$\ such that the one-parameter group of diffeomorphisms that it generates are composed of isometries for $F_0$. Katok-Ziller examples are constructed in the Hamiltonian setting.

Recall that the Legendre transform $\L_{F_0} \colon TM \rightarrow T^{\ast}M$ associated with $F_0$ is a diffeomorphism outside the zero section and that $H_0 := F_0 \circ \L_0 ^{-1} : T^{\ast}M \rightarrow \R$ is a Finsler co-metric (see Section \ref{sec:cotangent}).

 We consider $T^{\ast}M$\ as a symplectic manifold with canonical form $\omega$. Any function $H \colon T^{\ast}M \rightarrow \R$\ gives rise to a Hamiltonian vector field $X_H$\ defined by 
\begin{equation*}
 dH(y) = \omega\left(X_H , y \right), \; \text{for all} \; y\in TT^{\ast}M\, .
\end{equation*}

We define $H_1: T^{\ast}M \rightarrow \R$\ by $ H_1(x) := x(V)$, and, for sufficiently small $\eps$, we set 
\begin{equation*}
 H_{\eps} := H_0 + \eps H_1 \, .
\end{equation*}
The function $H_{\eps}$\ is smooth off the zero-section, homogeneous of degree one, and also strongly convex for sufficiently small $\eps$. Therefore, the Legendre transform ${\L_{\eps} : T^{\ast}M \rightarrow TM}$\ associated to $\frac{1}{2} H_{\eps}^2$\ is a global diffeomorphism, so we can state the
\begin{defin}
 The family of generalized Katok-Ziller metrics on $M$\ associated to $F_0$\ and $V$\ is given by
\begin{equation*}
 F_{\eps} = H_{\eps} \circ \L_{\eps}^{-1}.
\end{equation*}

\end{defin}

Katok, in his first example, took $F_0$\ to be the standard Riemannian metric on $\S^n$ and showed that some of these metrics had only a finite number of closed geodesics. Ziller \cite{Ziller:GKE} then showed that, for $\eps/\pi$\ irrational, there was in fact $n$\ closed geodesics for $n$\ even and $n-1$\ for $n$\ odd. In general, Bangert and Long \cite{MR2563691} showed that every non-reversible Finsler metric on $\S^2$\ has at least two closed geodesics. The minimal number of closed geodesics is still unknown in higher dimension.

As we will use local coordinates formulas for the Katok-Ziller metrics on the torus and the sphere, we can state the general formula in local coordinates when $F_0$\ is Riemannian. The computation of these metrics in local coordinates is probably not new (see Rademacher \cite{Rademacher:Sphere_theorem} for the expression on the sphere) and was communicated to us in this more general form by Foulon.
\begin{prop}[Foulon \cite{Fou:perso}] \label{prop:expression_KZ}
 Let $F_0 = \sqrt{g}$\ be a Riemannian metric on $M$, $V$\ a Killing field on $M$, and $F_{\eps}$\ the Katok-Ziller metric associated. Then
\begin{equation*}
 F_{\eps}(x,\xi) = \frac{1}{1- \eps^2 g\left(V,V\right)} \left[ \sqrt{g\left(\xi,\xi \right) \left(1-\eps^2 g\left(V,V\right) \right) + \eps^2 g\left(V,\xi\right)^2} -\eps g\left(V,\xi\right) \right].
\end{equation*}

\end{prop}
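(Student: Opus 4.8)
The plan is to recognise $F_\eps$ as the dual Finsler metric of the co-metric $H_\eps$, and then to compute that dual norm by hand. For \emph{Step 1}, note that since $F_0=\sqrt g$ is Riemannian, Proposition~\ref{prop:legendre_transform}(1) gives $H_0=F_0\circ\L_0^{-1}=F_0^{\ast}=\sqrt{g^{\ast}}$, the dual Riemannian co-metric; together with $H_1(x,p)=p(V)$ this shows
\[
H_\eps(x,p)=\sqrt{g^{\ast}(p,p)}+\eps\, p(V),
\]
which is a Finsler co-metric for $|\eps|$ small (the strong convexity is already observed in the construction). By definition $\L_\eps=\L_{H_\eps}$, a diffeomorphism off the zero section by Lemma~\ref{lem:doubledual_norm}(3). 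Applying Lemma~\ref{lem:doubledual_norm}(4) to the co-metric $H_\eps$ yields $H_\eps=H_\eps^{\ast}\circ\L_{H_\eps}$, and composing on the right with $\L_\eps^{-1}$ gives
\[
F_\eps=H_\eps\circ\L_\eps^{-1}=H_\eps^{\ast}.
\]

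For \emph{Step 2}, fix $x$ and write $g,V,\xi$ for the data at $x$ and $|\cdot|$ for $\sqrt{g(\cdot,\cdot)}$. Parametrising covectors by the musical isomorphism of $g$, $p=g(w,\cdot)$, we have $g^{\ast}(p,p)=g(w,w)$, $p(V)=g(w,V)$, $p(\xi)=g(w,\xi)$, so that by homogeneity the definition of $H_\eps^{\ast}$ reads
\[
F_\eps(x,\xi)=\sup_{|u|=1}\ \frac{g(u,\xi)}{1+\eps\, g(u,V)}=:\mu,
\]
where the supremum is attained (the constraint set is compact once $\eps^2 g(V,V)<1$) and is $\ge 0$. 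For every unit vector $u$ one has $g(u,\xi)\le\mu\bigl(1+\eps\, g(u,V)\bigr)$, i.e. $g\bigl(u,\,\xi-\eps\mu V\bigr)\le\mu$; taking the supremum over unit $u$ (Cauchy--Schwarz) gives $|\xi-\eps\mu V|\le\mu$, while a maximiser forces equality. Hence $\mu=|\xi-\eps\mu V|$, and squaring gives the quadratic
\[
\bigl(1-\eps^2 g(V,V)\bigr)\mu^2+2\eps\, g(V,\xi)\,\mu-g(\xi,\xi)=0 .
\]
Its roots have product $-g(\xi,\xi)\big/\bigl(1-\eps^2 g(V,V)\bigr)\le 0$, so the unique nonnegative root
\[
\mu=\frac{-\eps\, g(V,\xi)+\sqrt{\eps^2 g(V,\xi)^2+g(\xi,\xi)\bigl(1-\eps^2 g(V,V)\bigr)}}{1-\eps^2 g(V,V)}
\]
equals $F_\eps(x,\xi)$, which is exactly the claimed formula.

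The main point of care is in Step~2: one must check that the constrained supremum is attained, turn the optimality condition into the clean identity $\mu=|\xi-\eps\mu V|$ via Cauchy--Schwarz, and then select the nonnegative root of the quadratic. Step~1 is essentially bookkeeping with the statements of Section~\ref{sec:cotangent}. (The negative root is the infimum of the same quotient, and the argument incidentally reproves that the dual of a Randers co-metric is again a Randers metric, explaining why $F_\eps$ is of Randers type.)
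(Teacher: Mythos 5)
Your proof is correct, and it takes a cleaner, more intrinsic route than the paper does, although both ultimately land on the same quadratic equation. The paper works in normal coordinates, computes the Legendre transform $\L_\eps$ \emph{explicitly} (finding $\L_\eps(x,p) = (p^i + \eps\|p\|V_i)(1+\tfrac{\eps}{\|p\|}\langle p|V\rangle)\tfrac{\partial}{\partial x_i}$), then substitutes $\xi=\L_\eps(x,p)$ into $H_\eps(x,p)=F_\eps(\L_\eps(x,p))$ and manipulates the resulting component identities to eliminate the auxiliary covector. You instead invoke the abstract duality $F_\eps=H_\eps^\ast$ (a one-line consequence of Lemma~\ref{lem:doubledual_norm}(4), valid without coordinates), rewrite the dual norm as the constrained supremum $\sup_{|u|=1} g(u,\xi)/(1+\eps\,g(u,V))$, and use Cauchy--Schwarz at the maximiser to obtain the clean identity $\mu=|\xi-\eps\mu V|$. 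Your approach buys a coordinate-free argument that never needs to know $\L_\eps$ explicitly and makes the Randers structure of the answer transparent; the paper's approach buys the explicit formula for the Legendre transform as a by-product, which can be useful elsewhere. Both correctly identify the sign of the discriminant and select the non-negative root in the final step, so the two proofs are fully in agreement on the analysis of the quadratic.
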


\begin{proof}
Let $x\in M$, we choose the normal coordinates $\left(\xi_i\right)$\ on $T_xM$, so that we have $F_0^2\left(x,\xi\right) = \sum \xi_i^2$. We will write $p$\ for an element of $T_x^{\ast}M$, and $p^i$\ will be the associated coordinates. As $F_0$\ is Riemannian, we have 
$$
H_0 \left(x,p\right) = ||p|| = \sqrt{\sum (p^i)^2}.
$$
The function $H_{\eps}$\ is then given by
\begin{align*}
 H_{\eps}\left(x,p\right) &= H_0 \left(x,p\right) + \eps H_1 \left(x,p\right) \\
     &= ||p|| + \eps \left<p | V\right>,
\end{align*}
and, if we set 
\begin{equation*}
 \Le= d_v\left(\frac{1}{2} H_{\eps}^2 \right): T^{\ast}M \rightarrow TM,
\end{equation*}
we have
\begin{equation*}
 F_{\eps}\left(x, \xi \right) = H_{\eps} \circ \Le^{-1} \left(x, \xi \right) .
\end{equation*}
In order to compute $ F_{\eps}$, we will first compute $\Le$. Recall that $\left(\frac{\partial}{\partial x_i}\right)$\ represents a basis of $T_xM$
\begin{align*}
 \Le \left(x,p\right) &= \frac{\partial}{\partial p^i}\left(\frac{1}{2} H_{\eps}^2 \right) \frac{\partial}{\partial x_i} \\
      &= \frac{\partial}{\partial p^i}\left[ \frac{1}{2} ||p||^2 + \eps ||p|| \left<p | V\right> + \frac{\eps^2}{2} \left<p | V\right>^2 \right] \frac{\partial}{\partial x_i} \\
      &= \left[ p^i + \eps \left(\frac{p^i}{||p||} \left<p | V\right> + \eps ||p|| V_i \right) + \eps^2 V_i \left<p | V\right> \right] \frac{\partial}{\partial x_i} \\ 
     &=  \left( p^i + \eps ||p|| V_i \right) \left( 1 + \frac{\eps}{||p||} \left< p | V\right>\right) \frac{\partial}{\partial x_i} \, . 
\end{align*}
Using that $H_{\eps} \left(x,p\right) = F_{\eps}\left( \Le \left(x,p\right) \right)$\ and setting $\frac{p}{||p||} = u $, we obtain
\begin{align*}
  ||p|| + \eps \left<p | V\right> &= F_{\eps}\left(||p|| \left( u^i + \eps  V_i \right) \left( 1 + \frac{\eps}{||p||} \left< p | V\right>\right) \frac{\partial}{\partial x_i} \right) \\
    &= \left(||p|| + \eps \left<p | V\right> \right) F_{\eps}\left( \left( u^i + \eps  V_i \right) \frac{\partial}{\partial x_i} \right) ,
\end{align*}
hence
$$
 1 = F_{\eps}\left( \left( u^i + \eps  V_i \right) \frac{\partial}{\partial x_i} \right).
$$
Let $ \xi = \Le \left(x,p\right)$, we have obtained that:
\begin{align*}
 \xi &=  F_{\eps}\left(x,\xi\right) \left( u + \eps V \right),
\end{align*}
where this equality must be understood coordinate by coordinate. Therefore,
\begin{align*}
  \left< u |V \right> &= \frac{1}{F_{\eps}\left(x,\xi\right)} \left< \xi |V \right> - \eps ||V||^2,
\end{align*}
and
\begin{align*}
 ||\xi||^2 &=  F_{\eps}^2\left(x,\xi\right) \left[ ||u||^2 + 2 \eps \left< u | V\right> + \eps^2 ||V||^2 \right] \\
          &= F_{\eps}^2\left(x,\xi\right)\left[ 1  + 2 \eps  \frac{ \left< \xi |V \right> }{F_{\eps}\left(x,\xi\right)}  -2\eps^2 ||V||^2+ \eps^2 ||V||^2 \right] .
\end{align*}
And we are led to solve
\begin{equation*}
  F_{\eps}^2\left(x,\xi\right) \left(1 - \eps^2 ||V||^2 \right)  + 2 \eps   \left< \xi |V \right>  F_{\eps}\left(x,\xi\right) - ||\xi||^2 = 0,
\end{equation*}
which yields
\begin{align*}
  F_{\eps}\left(x,\xi\right) = \frac{ -  \eps   \left< \xi |V \right> + \sqrt{\eps^2  \left< \xi |V \right>^2 + \left(1 - \eps^2 ||V||^2 \right)||\xi||^2 }}{\left(1 - \eps^2 ||V||^2 \right)}\,  .
\end{align*}

\end{proof}

Before getting on to the examples, we want to point out the following property of the Katok-Ziller examples:
\begin{thm}[Foulon \cite{Fou:perso}]
 The flag curvatures of the family of Katok-Ziller metrics are constant.
\end{thm}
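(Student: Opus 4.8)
The plan is to translate ``constant flag curvature'' into a property of the geodesic spray alone, and then to exhibit the Katok--Ziller spray as a \emph{twist} of the original one by the canonical lift of the Killing field. As is implicit in the statement (and clear from the context, since the point is to produce examples of constant flag curvature), I assume throughout that $F_0$ itself has constant flag curvature $c$; the claim is then that $F_\eps$ has constant flag curvature $c$ for all small $\eps$.

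First I would recall the dynamical meaning of this hypothesis. In Foulon's formalism, a Finsler metric $F$ has constant flag curvature $c$ exactly when its Jacobi endomorphism satisfies $R^X(Y)=cY$ for every $Y\in VHM$ (for a surface this is the statement $k\equiv c$ in Proposition \ref{prop:Cartan_structure_equation}). Since $R^X$, the horizontal endomorphism $H_X$, the splitting $THM=VHM\oplus h_XHM\oplus\R\cdot X$ and the dynamical derivative $D_X$ are all manufactured from the spray $X$ alone, this is intrinsic to the pair $(HM,X)$ and invariant under any diffeomorphism of $HM$ conjugating sprays; concretely, it says that along each orbit, in a $D_X$-parallel frame of $VHM$ together with its horizontal lift, the linearised flow restricted to $\ker A=VHM\oplus h_XHM$ is the block rotation of angle $\sqrt{c}\,t$ (with the usual replacements when $c\le 0$), because then the Jacobi equation reads $\ddot J=-cJ$.

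Next I would identify the $F_\eps$ geodesic flow. On the cotangent side its generator is, by Theorem \ref{thm:uniquely_contact}, the Reeb field $X^{\ast}_\eps$ of $B_\eps$, which under the canonical identification $\{H_\eps=1\}\simeq H^{\ast}M$ is the Hamiltonian vector field $X_{H_\eps}=X_{H_0}+\eps X_{H_1}$. As $V$ is Killing for $F_0$, the momentum $H_1=p(V)$ Poisson-commutes with $H_0$, and $X_{H_1}$ is the cotangent lift of the isometry flow $\psi^s$ of $V$; hence $X_{H_1}$ preserves $\lambda$, $H_0$ and $H_1$, so it preserves $B_\eps$ for every $\eps$ (making $\psi^s$ a common isometry of all the $F_\eps$) and descends to $H^{\ast}M$ as the canonical lift of $V$. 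All three fields are scale-invariant, so they descend, giving $X^{\ast}_\eps=X^{\ast}_0+\eps X^{\ast}_1$, with $X^{\ast}_0$ the $F_0$ geodesic field and $X^{\ast}_1$ the lift of $V$. Transporting to $HM$ by the Legendre diffeomorphisms (which intertwine the lifts of $V$, since $\psi^s$ is an $F_\eps$-isometry for each $\eps$), one obtains a diffeomorphism $\Xi:=\ell_{F_\eps}^{-1}\circ\ell_{F_0}$ of $HM$, covering $\mathrm{id}_M$, with $X_\eps=\Xi_{\ast}(X_0+\eps\widetilde V)$, where $\widetilde V$ is the lift of $V$ to $HM$; equivalently, the $F_\eps$ geodesic flow is $\Xi$-conjugate to the commuting composition $\phi^t_0\circ\widetilde\psi^{\,\eps t}$ of the $F_0$ geodesic flow with the lifted isometry flow.

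Finally I would conclude from the characterization of the second step. Since $\widetilde\psi^s$ lifts an $F_0$-isometry, its differential preserves the splitting, the vertical metric and $D_{X_0}$-parallel transport, and commutes with $d\phi^t_0$; so along each $\phi^t_0\circ\widetilde\psi^{\,\eps t}$-orbit the transverse linearised flow is still, in a $D_{X_0}$-parallel adapted frame, the $\sqrt{c}\,t$ rotation composed with the orthogonal action of $d\widetilde\psi^{\,\eps t}$, which does not change the rotation speed $\sqrt{c}$. It then remains to transport this back through $\Xi$ and check that it forces $R^{X_\eps}(Y)=cY$ for $Y\in VHM$; equivalently, to compute $R^{X_\eps}$ directly from the decomposition $X_\eps=m_\eps X_0+W_\eps$ furnished by Lemma \ref{lem:second_order_ode}, using $R^{X_0}=c\,p_v$, the structure equations and the Killing identities $L_{\widetilde V}A_0=0$, $[\widetilde V,X_0]=0$. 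I expect this last bookkeeping --- reconciling the $F_\eps$-transverse structure on $HM$ with the twisted $F_0$-structure, since $\Xi$ need not send $\ker A_\eps$ to $\ker A_0$ --- to be the technical heart of the argument; the conceptual content lies entirely in the commuting-flows identity $X^{\ast}_\eps=X^{\ast}_0+\eps X^{\ast}_1$ of the third step.
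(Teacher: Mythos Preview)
The paper does not actually prove this statement. It is attributed to Foulon via the citation \cite{Fou:perso} (a personal communication) and is stated without any argument; the introduction likewise says only that ``Foulon has a proof that Katok--Ziller metrics on any space have constant flag curvature''. So there is no proof in the paper to compare your proposal against.

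As for your outline on its own merits: the core identity $X^{\ast}_\eps=X^{\ast}_0+\eps X^{\ast}_1$ with $[X^{\ast}_0,X^{\ast}_1]=0$ on $H^{\ast}M$ is correct and is indeed the conceptual heart of the matter (this is essentially Ziller's observation that the Katok geodesic flow is the composition of the original geodesic flow with the lifted isometry flow). Your interpretation of the hypothesis --- that $F_0$ is assumed to have constant flag curvature $c$ --- is the right reading given the context. However, you yourself flag that the final step (transporting the transverse linearisation through $\Xi$, or equivalently computing $R^{X_\eps}$ from the decomposition $X^{\ast}_\eps=X^{\ast}_0+\eps\bar V$ with $\bar V$ the canonical lift of $V$) is left undone. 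That step is genuinely where the work lies: $\bar V$ is not vertical, so the horizontal/vertical splittings and dynamical derivatives for $X^{\ast}_0$ and $X^{\ast}_\eps$ differ, and one must actually track how $R^{X}$ transforms under adding a commuting symmetry field. Until that computation is carried out, what you have is a correct strategic reduction rather than a proof.
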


\subsection{On the Torus}

Let $\T$ be an $n$-dimensional torus, $g$ a flat metric on $\T$ and $V$ a Killing field for $g$. Let $F_{\eps}$ be the associated Katok--Ziller metric, we have:

\begin{prop}
 There exists a (unique) Riemannian metric $\sigma_{\eps}$ such that $\Delta^{F_{\eps}} = \Delta^{\sigma_{\eps}}$.
\end{prop}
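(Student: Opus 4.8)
The plan is to show that $F_\eps$ is in fact a \emph{locally Minkowski} metric on $\T$ and then to conclude by the characterization of $\Delta^{F}$ via its symbol and volume (as in Remark~\ref{rem:Minkowsky_randers}, obtained from Lemma~\ref{lem:existence_unicity}).

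First I would work in flat coordinates $(x_1,\dots,x_n)$ on $\T$, i.e. coordinates in which $g$ is the Euclidean metric and all Christoffel symbols vanish. Since $\T$ is compact with $\mathrm{Ric}_g=0$, Bochner's theorem forces the Killing field $V$ to be parallel, so in these coordinates $V=\sum_i V^i\,\partial_{x_i}$ has constant coefficients. (Equivalently, the identity component of the isometry group of a flat torus is the torus acting by translations, whose infinitesimal generators are the parallel vector fields.) Plugging this into the formula of Proposition~\ref{prop:expression_KZ}, the quantities $g(V,V)$, $g(V,\xi)$ and $g(\xi,\xi)$ are respectively a constant, a linear form in $\xi$ with constant coefficients, and the fixed Euclidean quadratic form; hence $F_\eps(x,\xi)$ does not depend on $x$. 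Thus $F_\eps$ is (the descent of) a Minkowski norm on $\R^n$, i.e. a locally Minkowski metric.

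Next I would use this to read off the statement. For a locally Minkowski metric all the canonical objects on $H\T$ built from $F_\eps$ --- the Hilbert form $A$, $dA$, $\ada$, the angle form $\alpha^{F_\eps}$ on $VH\T$, and the volume $\Omega^{F_\eps}$ --- are invariant under translations in the flat coordinates, so $\Omega^{F_\eps}=c\,dx_1\wedge\cdots\wedge dx_n$ for a positive constant $c$, and the symbol $\sigma^{F_\eps}$ is a constant-coefficient (co-)metric. Hence its dual $\sigma_\eps$ is a genuine Riemannian metric (it is positive definite because $\Delta^{F_\eps}$ is elliptic) with constant coefficients, so $v_{\sigma_\eps}=\sqrt{\det\sigma_\eps}\,dx_1\wedge\cdots\wedge dx_n$ is again a constant multiple of Lebesgue measure. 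Writing $\Omega^{F_\eps}=a^2 v_{\sigma_\eps}$ with $a$ constant, Lemma~\ref{lem:existence_unicity} gives $\Delta^{F_\eps}=\Delta_{\sigma_\eps,\Omega^{F_\eps}}=\Delta^{\sigma_\eps}-\tfrac{1}{a^2}\langle\nabla\,\cdot\,,\nabla a^2\rangle=\Delta^{\sigma_\eps}$. Uniqueness of $\sigma_\eps$ is immediate: the symbol of $\Delta^{\sigma}$ is the dual metric $\sigma^{\star}$, so $\Delta^{\sigma_1}=\Delta^{\sigma_2}$ forces $\sigma_1=\sigma_2$. Alternatively, one can bypass Lemma~\ref{lem:existence_unicity} and compute $\Delta^{F_\eps}$ directly from Definition~\ref{def:delta}: since $X$ and $\alpha^{F_\eps}$ are $x$-independent in flat coordinates, $\Delta^{F_\eps}=\sum_{i,j}\sigma^{ij}\,\partial_{x_i}\partial_{x_j}$ with constant $\sigma^{ij}$, which is exactly $\Delta^{\sigma_\eps}$ for $\sigma_\eps=(\sigma^{ij})^{-1}$.

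The only genuinely non-formal step is the first one, identifying $F_\eps$ as locally Minkowski, which rests on $V$ being parallel on the flat torus; everything afterwards is a direct application of the structure of $\Delta^{F}$ already established, together with the elementary observation that on a torus a translation-invariant volume and a translation-invariant Riemannian volume differ by a constant.
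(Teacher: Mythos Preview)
Your proof is correct and follows essentially the same route as the paper: both argue that $V$ is translation-invariant (you via Bochner, the paper more tersely), hence $F_\eps$ is independent of the base point, so the symbol $\sigma_\eps$ and the volume $\Omega^{F_\eps}$ are constant, and then Lemma~\ref{lem:existence_unicity} with $a$ constant gives $\Delta^{F_\eps}=\Delta^{\sigma_\eps}$. The only cosmetic difference is that the paper invokes the Randers-specific formulas (Proposition~\ref{prop:volume_angle_Randers} and Equation~\eqref{eq:laplacien_pour_randers_general}) to read off constancy of symbol and volume, whereas you argue translation invariance of the canonical objects directly; your treatment of uniqueness is also more explicit than the paper's.
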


\begin{proof}
 This result follows from the fact that all objects involved are invariant by translations, hence independent of the point on the torus. Indeed, $V$ is a Killing field on $\T$ and so is translation invariant. From Proposition \ref{prop:expression_KZ}, we deduce that $F_{\eps}$ is independent of the base point on the torus, which yields, via Equation \eqref{eq:laplacien_pour_randers_general} and Proposition \ref{prop:volume_angle_Randers} that both the symbol $\sigma_{\eps}$ and the volume $\Omega^{F_{\eps}}$ are constant on $\T$. Now, recall (see Lemma \ref{lem:existence_unicity}) that ${\Delta^{F_{\eps}} = \Delta^{\sigma} - \frac{1}{a^2} \langle \nabla^{\sigma} \varphi , \nabla^{\sigma} a^2 \rangle}$, where $a$ is the function such that $\Omega^{F_{\eps}} = a^2 \Omega^{\sigma}$. So as $a$ is constant, $\nabla^{\sigma} a^2 = 0$ and we get the result.
\end{proof}

\begin{rem} \label{rem:Minkowsky_randers}
The above result of course holds for any ``Minkowski-Randers space'' (i.e., a Randers metric depending only on the tangent vector and not on the base-point on the manifold) and we could therefore use what is known on the Riemannian spectrum on subsets of $\R^n$ to obtain the Finsler--Laplace spectrum of these spaces.
\end{rem}
We give below the actual computation of the Finsler--Laplace operator for Katok--Ziller metrics on the $2$-torus to get an idea of how the Katok--Ziller transformation actually acts on the spectrum. This could be obtained by doing the right change of variables in the formula of Proposition \ref{prop:symbol_laplacian_randers}. However, direct computations are not much longer and were already typed. Therefore that is how we proceed.
 
 \subsubsection{An example in dimension two}

We set $\T  = \R^2 /\Z^2$, $(x,y)$\ (global) coordinates on $\T$\ and $\left( \xi_x, \xi_y \right)$\ local coordinates on $T_p\T$. Let $\eps<1$, the Katok-Ziller metric on $\T$\ associated with the standard metric and the Killing field $V = \frac{\partial}{\partial x}$, is given by
$$
F_{\eps}(x,y; \xi_x, \xi_y ) = \frac{1}{1-\eps^2} \left( \sqrt{\xi_x^2 + (1-\eps^2) \xi_y^2} - \eps \xi_x \right),
$$

\begin{thm}
\label{thm:KZ_torus}
The Laplace operator, in local coordinates, is given by
$$
\Delta^{F_{\eps}} = \frac{2 \left(1-\eps^2\right)}{1 + \sqrt{1- \eps^2}} \left( \sqrt{1- \eps^2} \parxx + \paryy \right)
$$
and the spectrum is the set of $\lambda_{(p,q)}$, $(p,q)\in \Z^2$, given by
\begin{equation*}
\lambda_{(p,q)} = 4\pi^2 \frac{2 \left(1-\eps^2\right)}{1 + \sqrt{1- \eps^2}} \left( \sqrt{1-\eps^2} p^2 + q^2 \right).
\end{equation*}

\end{thm}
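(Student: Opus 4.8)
The plan is to compute $\Delta^{F_{\eps}}$ explicitly in the global coordinates $(x,y)$ via the formula for Randers metrics, then diagonalize it on $\T = \R^2/\Z^2$ using Fourier series. First I would observe that $F_{\eps}$ is a Randers metric: writing $g$ for the standard flat metric and using Proposition~\ref{prop:expression_KZ} with $V = \partial/\partial x$, one has $g(V,V) = 1$ and $g(V,\xi) = \xi_x$, so
\begin{equation*}
 F_{\eps}(x,y;\xi_x,\xi_y) = \frac{1}{1-\eps^2}\left(\sqrt{\xi_x^2 + (1-\eps^2)\xi_y^2} - \eps\,\xi_x\right),
\end{equation*}
which is of the form $\sqrt{\bar g} + \theta$ for a suitable flat metric $\bar g$ and closed $1$-form $\theta$ (both with constant coefficients). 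In particular $F_{\eps}$ depends only on the tangent vector, so it is a Minkowski--Randers metric and Remark~\ref{rem:Minkowsky_randers} (together with the Proposition just proved for the torus) tells us $\Delta^{F_{\eps}} = \Delta^{\sigma_{\eps}}$ for a \emph{constant-coefficient} Riemannian metric $\sigma_{\eps}$; hence $\Delta^{F_{\eps}}$ is a constant-coefficient second-order operator with no first-order term, and it suffices to identify its symbol.

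Next I would compute the symbol. By the remark following the ellipticity proposition in Section~\ref{sec:Finsler-Laplacian},
\begin{equation*}
 \sigma^F_p(\xi_1,\xi_2) = \frac{2}{\voleucl(\S^1)}\int_{v\in T^1_p\T} \xi_1(v)\,\xi_2(v)\,\alpha^F(v),
\end{equation*}
and by Proposition~\ref{prop:volume_angle_Randers} we have $\Omega^{F_{\eps}} = \Omega_0$ (the flat area form) and $\alpha^{F_{\eps}} = (1 + \theta(X_0))\,\alpha_0$, where $X_0$, $\alpha_0$ are the Riemannian data of $\bar g$. So the three symbol entries reduce to integrals of the form $\frac{1}{\pi}\int_0^{2\pi}\frac{\cos^2\vartheta}{1+\eps\cos\vartheta}\,d\vartheta$, etc. (after the appropriate linear change of variables putting $\bar g$ in normal form), exactly as in the proof of Proposition~\ref{prop:symbol_laplacian_randers}; these are elementary contour integrals. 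Carrying this out — or equivalently specializing Proposition~\ref{prop:symbol_laplacian_randers} to $\theta = -\frac{\eps}{1-\eps^2}dx$, whose $g$-norm squared is $\eps^2$, so $\lVert\theta\rVert^2 = \eps^2$, $\varphi = 0$ — and then assembling $\Delta^{F_{\eps}} = |\sigma|^{-1/2}\,\partial_i(|\sigma|^{1/2}\sigma^{ij}\partial_j)$ (no drift, since coefficients are constant), should yield after simplification
\begin{equation*}
 \Delta^{F_{\eps}} = \frac{2(1-\eps^2)}{1+\sqrt{1-\eps^2}}\left(\sqrt{1-\eps^2}\,\parxx + \paryy\right).
\end{equation*}

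Finally, for the spectrum, since $\Delta^{F_{\eps}}$ has constant coefficients on the flat torus $\R^2/\Z^2$, the functions $e^{2\pi i(px+qy)}$, $(p,q)\in\Z^2$, form a complete orthogonal system of eigenfunctions; applying the operator gives the eigenvalue
\begin{equation*}
 \lambda_{(p,q)} = 4\pi^2\,\frac{2(1-\eps^2)}{1+\sqrt{1-\eps^2}}\left(\sqrt{1-\eps^2}\,p^2 + q^2\right),
\end{equation*}
and completeness of the Fourier basis in $L^2(\T)$ (combined with the discreteness statement of Theorem~\ref{thm:spectre_discret}) shows these are \emph{all} the eigenvalues. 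The main obstacle is purely computational: correctly executing the linear change of coordinates that simultaneously normalizes $\bar g$ and tracks $\theta$, and then evaluating the residue integrals for the symbol without sign errors; everything else (no first-order term, Fourier diagonalization, completeness) is automatic from the constant-coefficient structure and the results already established.
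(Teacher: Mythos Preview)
Your proposal is correct and the spectrum computation via Fourier series is identical to the paper's, but your derivation of the operator itself follows a genuinely different route. The paper works directly from the definition of $\Delta^F$: it writes the Hilbert form $A$ in coordinates $(x,y,\theta)$ on $H\T$, reads off the angle form $\alpha=(1-\eps\cos\theta)\,d\theta$ from $A\wedge dA$, solves the Reeb equations explicitly for the geodesic vector field $X$, and then integrates $L_X^2(\pi^{\ast}f)\,\alpha$ over the fibre --- a hands-on illustration of Definition~\ref{def:delta}. You instead invoke the structural results already established for Randers metrics: the Minkowski--Randers proposition (and Remark~\ref{rem:Minkowsky_randers}) for the absence of drift, and Proposition~\ref{prop:symbol_laplacian_randers} specialized to $\lVert\theta\rVert^2=\eps^2$, $\cos 2\varphi=1$, $\sin 2\varphi=0$, followed by the linear change of variables back from $\bar g$-normal coordinates to $(x,y)$. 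Your route is shorter and shows that the general Randers symbol formula does the work; the paper's direct computation is self-contained and serves as an independent check of the definition on a concrete example. One small slip: in $\bar g$-normal coordinates one has $\theta_{x^n}=-\eps<0$, so the argument of $T$ is $\varphi=\pi$ rather than $0$; this is harmless since only $\cos 2\varphi$ and $\sin 2\varphi$ enter the formulas.
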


Recall that for any flat Riemannian torus, the Poisson formula gives a link between the eigenvalues of the Laplacian and the length of the periodic orbits. In the case at hand we lose this relationship, as there is no a priori link between the length of the periodic geodesics for the Finsler metric and the length of the closed geodesics in the isospectral torus.

\begin{proof}

 \textbf{Vertical derivative and coordinate change.}\\
 In the local coordinates $\left(x,y, \xi_x, \xi_y\right)$\ on $T\T$ we have
\begin{equation*}
 d_v F_{\eps}= \frac{1}{1-\eps^2} \left( f_x dx + f_y dy \right),
\end{equation*}
where
\begin{equation*}
 f_x := \frac{\xi_x}{\sqrt{\xi_x^2 + (1-\eps^2) \xi_y^2}} - \eps  \quad \text{and} \quad  f_y := \frac{\xi_y}{\sqrt{\xi_x^2 + (1-\eps^2) \xi_y^2}}\, .
\end{equation*}
We choose a local coordinate system $\left(x,y,\theta\right)$\ on $H\T$ where $\theta$\ is determined by 
\begin{equation*}
\left\{
\begin{aligned} 
 \cos \theta & =  f_x + \eps \\
 \sin \theta & =  \frac{f_y}{\sqrt{1-\eps^2}}\, .
\end{aligned}
\right.
\end{equation*}
As the Hilbert form $A$\ is the projection on $H\T$\ of the vertical derivative of $F$, we have
\begin{equation*}
 A = \frac{1}{1-\eps^2} \left( \left(\cos \theta -\eps \right) dx + \sqrt{1-\eps^2}\sin \theta dy \right).
\end{equation*}

\textbf{Liouville volume and angle form.}\\
 We have
\begin{align*}
 dA & = \frac{1}{1-\eps^2} \left( -\sin \theta d\theta \wedge dx + \sqrt{1-\eps^2} \cost d\theta \wedge dy \right) \\
A \wedge dA & = \left(\frac{1}{1-\eps^2}\right)^{\frac{3}{2}} \left(-1 + \eps \cos \theta \right) d\theta \wedge dx \wedge dy \, .
\end{align*}
Therefore $ \alpha = \left(1 - \eps \cost\right) d\theta$.

\textbf{Geodesic flow.}\\
Let $X = X_x \parx + X_y \pary + X_{\theta} \partheta$ be the geodesic flow, Equation \eqref{eq:Reeb_field} is equivalent to
\begin{equation*}
\left\{
\begin{aligned}
X_{\theta} &= 0  \\
\sint X_x -  \sqrt{1-\eps^2} \cost X_y &= 0 \\
\left(\cost -\eps \right) X_x +  \sqrt{1-\eps^2} \sint X_y &= 1-\eps^2  \,.
\end{aligned}
\right.
\end{equation*}
Hence $X_x = \frac{1-\eps^2}{1- \eps \cost} \cost$, $X_y = \frac{\sqrt{1-\eps^2}}{1- \eps \cost} \sint$ and $X_{\theta} = 0$.

\textbf{The Laplacian.}\\
The second Lie derivative of $X$\ is $L_X^2  = X_x^2 \parxx + X_y^2 \paryy + X_x X_y \parxy$. So, for $p\in S$, 
\begin{equation*}
 \Delta^{\eps} = \frac{1}{\pi}\left( \int_{H_pS} X_x^2 \alpha \parxx + \int_{H_pS} X_y^2 \alpha \paryy + \int_{H_pS} X_x X_y \alpha \parxy \right).
\end{equation*}
As $X_x$\ and $X_y$\ are of different parity (in $\theta$), we have $\int_{H_pS} X_x X_y \alpha =0 $. Hence
\begin{equation*}
 \Delta^{\eps} = \frac{1}{\pi}\left( \int_{H_pS} X_x^2 \alpha \parxx + \int_{H_pS} X_y^2 \alpha \paryy \right).
\end{equation*}
Direct computations give
\begin{align*}
 \int_{H_pS} X_x^2 \alpha &= 2 \pi \frac{\left(1-\eps^2\right)^{\frac{3}{2}}}{1 + \sqrt{1- \eps^2}}\, ,\\
 \int_{H_pS} X_y^2 \alpha &= 2 \pi \frac{1-\eps^2}{1 + \sqrt{1- \eps^2}} \, .
\end{align*}
Therefore, the Finsler--Laplace operator is given by 
\begin{equation*}
 \Delta^{\eps} = \frac{2 \left(1-\eps^2\right)^{\frac{3}{2}}}{1 + \sqrt{1- \eps^2}} \parxx + \frac{2 \left(1-\eps^2 \right)}{1 + \sqrt{1- \eps^2}} \paryy \, .
\end{equation*}

\textbf{The spectrum.}\\
To compute the spectrum we consider Fourier series of functions on $\T$.\\
Any function $f \in C^{\infty }(\T)$\ can be written as 
$$
f(x,y) = \sum_{(p,q)\in \Z^2} c_{(p,q)} e^{2i\pi (px+qy)}
$$
and we are led to solve
\begin{equation}
\label{eq:spectre_fourier}
 \Delta^{F_{\eps}} f + \lambda f = \sum_{(p,q)\in \Z^2} c_{(p,q)} \left[ -4\pi^2 \left(a p^2 + b q^2\right) +\lambda \right] e^{2i\pi (px+qy)} = 0 \,,
\end{equation}
where
\begin{equation*}
 a = \frac{2 \left(1-\eps^2\right)^{3/2}}{1 + \sqrt{1- \eps^2}} \quad \text{and}\quad b = \frac{2 \left(1-\eps^2\right)}{1 + \sqrt{1- \eps^2}}\, .
\end{equation*}

Now, for any $(p,q) \in \Z^2$, $\lambda_{(p,q)} = 4\pi^2 \frac{2 \left(1-\eps^2\right)}{1 + \sqrt{1- \eps^2}} \left( \sqrt{1-\eps^2} p^2 + q^2 \right)$ is a solution to \eqref{eq:spectre_fourier}.
\end{proof}

\subsection{On the $2$-Sphere}

Let $\S^2 \smallsetminus\{\text{N,S}\} = \lbrace (\phi,\theta) \mid \phi \in \left]0,\pi\right[, \; \theta \in \left[0,2 \pi\right] \rbrace$ be polar coordinates on the sphere minus the poles, and take $\left(\phi,\theta; \xi_{\phi}, \xi_{\theta} \right)$\ the associated local coordinates on $T\S^2\smallsetminus\{\text{N,S}\}$. All the formulas afterwards can be extended by taking $\phi=0$ and $\phi=\pi$ for the North and South poles.

The Katok--Ziller metrics associated with the standard metric and the Killing field $V = \sinp \frac{\partial}{\partial \theta}$\ are given by
\begin{equation*}
 F_{\eps} \left(\phi,\theta; \xi_{\phi}, \xi_{\theta} \right) = \frac{1}{1-\ee} \left( \sqrt{\left(1-\ee\right)\xi_{\phi}^2 + \sinpp \xi_{\theta}^2} - \eps\sin^2(\phi) \xi_{\theta}\right),
\end{equation*}

\begin{thm}
\label{th:laplacien_S2}
 The Finsler--Laplace operator on $(\mathbb{S}^2, F_{\eps})$ is given by
\begin{multline} \label{eq:laplacien_S2}
    \Delta^{F_{\eps}} = \frac{2}{1+\sqrt{1-\ee}} \Biggl[ \frac{1}{\sinpp} \left(1-\ee \right)^{\frac{3}{2}} \parttheta  \\
              + \left(1-\ee \right) \parpphi  +\frac{\cosp}{\sinp} \left(\ee + \sqrt{1-\ee}  \right)\parphi \Biggr].
\end{multline} 
\end{thm}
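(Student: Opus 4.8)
The strategy mirrors exactly the computation carried out for the torus in the proof of Theorem \ref{thm:KZ_torus}, only now the base point enters the formulas through $\phi$, so the computation becomes genuinely $\phi$-dependent. The starting point is the explicit expression of $F_\eps$ above, obtained from Proposition \ref{prop:expression_KZ} with $g$ the round metric and $V = \sinp\,\partial/\partial\theta$ (so that $g(V,V) = \sinpp$). First I would compute the vertical derivative $d_v F_\eps$ in the coordinates $(\phi,\theta;\xi_\phi,\xi_\theta)$, and then, exactly as in the torus case, introduce an angular coordinate $\psi$ on the fibres $H_{(\phi,\theta)}\S^2$ adapted to $F_\eps$: one sets
\begin{equation*}
\cos\psi = \frac{\xi_\phi}{\sqrt{\xi_\phi^2 + \sinpp\,\xi_\theta^2/(1-\ee)}},\qquad
\sin\psi = \frac{\sinp\,\xi_\theta}{\sqrt{(1-\ee)\xi_\phi^2 + \sinpp\,\xi_\theta^2}}
\end{equation*}
(up to the normalizing constants forced by homogeneity), so that the Hilbert form $A = r^\ast d_v F_\eps$ becomes a manageable trigonometric expression in $(\phi,\theta,\psi)$, linear in $d\phi$ and $d\theta$ with $\phi$-dependent coefficients.

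Next I would compute $dA$ and $A\wedge dA$, read off the angle form $\alpha^{F_\eps}$ from the identity $A\wedge dA = \alpha^{F_\eps}\wedge\pi^\ast\Omega^{F_\eps}$ together with the normalization $\int_{H_x\S^2}\alpha^{F_\eps} = 2\pi$ (Proposition \ref{prop:construction}); by the Randers structure and Proposition \ref{prop:volume_angle_Randers} one expects $\Omega^{F_\eps}$ to be the round area form $\sinp\,d\phi\wedge d\theta$ and $\alpha^{F_\eps} = (1+\pi^\ast\theta(X_0))\,d\psi$ for a suitable coefficient. Then I would solve the Reeb equations \eqref{eq:Reeb_field} $A(X) = 1$, $i_X dA = 0$ for the geodesic vector field $X = X_\phi\,\partheta[\phi] + X_\theta\,\partial/\partial\theta + X_\psi\,\parpsi$, obtaining $X_\phi, X_\theta$ as explicit functions of $(\phi,\psi)$ and (as on the torus) $X_\psi$ is \emph{not} zero here — there will be a nontrivial vertical component, which is precisely what produces the first-order term $\frac{\cosp}{\sinp}(\ldots)\parphi$ in \eqref{eq:laplacien_S2}.

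Finally I would expand $L_X^2(\pi^\ast f)$ for $f = f(\phi,\theta)$; since $\pi^\ast f$ depends only on $\phi,\theta$, this is a second-order differential operator in $\phi,\theta$ whose coefficients are $X_\phi^2$, $X_\theta^2$, $X_\phi X_\theta$, plus a first-order piece $L_X(X_\phi)\,\parphi$ coming from $X_\psi\neq0$. Then one integrates against $\alpha^{F_\eps}$ over the fibre $\psi\in[0,2\pi]$. The cross term $X_\phi X_\theta$ integrates to zero by a parity argument in $\psi$ (as in the torus proof), and the three surviving fibre integrals $\int X_\phi^2\,\alpha$, $\int X_\theta^2\,\alpha$, $\int L_X(X_\phi)\,\alpha$ are elementary trigonometric (residue-type) integrals in $\psi$ with $\phi$ as a parameter, yielding the coefficients $\frac{2}{1+\sqrt{1-\ee}}(1-\ee)^{3/2}$, $\frac{2}{1+\sqrt{1-\ee}}(1-\ee)^{3/2}\big/\sinpp$ and $\frac{2}{1+\sqrt{1-\ee}}(\ee+\sqrt{1-\ee})\frac{\cosp}{\sinp}$ respectively. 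Assembling these and dividing by $\voleucl(\S^1) = 2\pi$ as in Definition \ref{def:delta} gives \eqref{eq:laplacien_S2}; one checks separately that the formula extends smoothly across the poles $\phi = 0,\pi$. The main obstacle is purely computational bookkeeping: keeping the $\phi$-dependent coefficients of $A$, $dA$ and $X$ organized cleanly enough that the fibre integrals over $\psi$ can be evaluated in closed form, and correctly capturing the first-order term that arises from the non-vanishing vertical component $X_\psi$ of the geodesic flow — a subtlety that is invisible in the flat torus case.
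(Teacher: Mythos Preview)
Your plan matches the paper's proof essentially step for step: introduce the fibre coordinate $\psi$, compute $A$, $dA$, $A\wedge dA$, extract the angle form, solve the Reeb equations for $X$, expand $L_X^2\pi^\ast f$, kill the cross term by parity in $\psi$, and evaluate the three surviving fibre integrals (the paper resorts to Maple for these). One correction worth flagging: $F_\eps$ is Randers with respect to the \emph{deformed} metric $g' = \frac{1}{(1-\ee)^2}\bigl[(1-\ee)\,d\phi^2 + \sinpp\,d\theta^2\bigr]$, not the round metric, so Proposition~\ref{prop:volume_angle_Randers} yields $\Omega^{F_\eps} = \frac{\sinp}{(1-\ee)^{3/2}}\,d\phi\wedge d\theta$, not the round area form; relatedly, your anticipated coefficient for $\parpphi$ should be $(1-\ee)$, not $(1-\ee)^{3/2}$.
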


By computing the Laplacian associated with the symbol metric, we can remark that this Laplacian is \emph{not} a Riemannian Laplacian, contrarily to the flat torus case above. Hence the question of it being isospectral to a Riemannian Laplacian becomes non-trivial, but we do not know the answer.

Recall that the spectrum of the Laplace--Beltrami operator on $\S^2$ is the set ${\lbrace -l(l+1) \mid l \in \N \rbrace}$ and that an eigenspace is spanned by functions $\ylm$ with $m\in \Z$ such that $-l\leq m \leq l$. These functions are called \emph{spherical harmonics} and are defined by
\begin{equation*}
\ylm\left(\phi, \theta\right) := e^{i m \theta} P_l^m\left(\cosp\right),
\end{equation*}
where $P_l^m$\ is the associated Legendre polynomial.\\
 We can see clearly from Formula \eqref{eq:laplacien_S2} that, when $\eps$\ tends to $0$, we obtain the usual Laplace--Beltrami operator on $\S^2$, we will therefore look for eigenfunctions close to the spherical harmonics. It turns out that the $Y_1^m$\ are eigenfunctions of $\Delta^{F_{\eps}}$\ for any  $\eps$, which yields:
\begin{cor}
\label{cor:lambda1}
 The smallest non-zero eigenvalue of $-\Delta^{F_{\eps}}$\ is 
\begin{equation}
 \lambda_1 = 2 -2 \eps^2 = \frac{8 \pi}{\text{vol}_{\Omega^{F_{\eps}}}\left(\S^2\right)} \,.
\end{equation}
It is of multiplicity two and the eigenspace is generated by $Y_1^1$\ and $Y_1^{-1}$.
\end{cor}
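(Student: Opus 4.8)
The plan is to combine the explicit formula \eqref{eq:laplacien_S2} with the rotational symmetry of $F_\eps$. Concretely: (a) check directly that $Y_1^{\pm1}$ are eigenfunctions with eigenvalue $2-2\eps^2$; (b) use the symmetry to split the spectral problem into one-dimensional problems in $\phi$ and prove that nothing nonzero sits strictly below $2-2\eps^2$, and that the eigenspace is exactly $\mathrm{span}\{Y_1^1,Y_1^{-1}\}$; (c) evaluate $\mathrm{vol}_{\Omega^{F_\eps}}(\S^2)$ to obtain the stated identity.

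\emph{Step (a).} Put $f=e^{i\theta}\sin\phi$, $u=\eps^2\sin^2\phi$, $s=\sqrt{1-u}$. Since $\partial_\theta^2 f=-f=\partial_\phi^2 f$ and $\partial_\phi f=e^{i\theta}\cos\phi$, a short computation using $\cos^2\phi=1-\sin^2\phi$, $s-s^3=su$, $s^2+u+s=1+s$ and $u-\sin^2\phi=-(1-\eps^2)\sin^2\phi$ shows that the bracket in \eqref{eq:laplacien_S2} applied to $f$ equals $-(1+s)(1-\eps^2)\,f$; multiplying by $2/(1+s)$ gives $\Delta^{F_\eps}Y_1^{\pm1}=-(2-2\eps^2)Y_1^{\pm1}$. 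The same computation yields $\Delta^{F_\eps}Y_1^0=-2\,Y_1^0$ and $\Delta^{F_\eps}\mathbf 1=0$, so the Riemannian $l=1$ eigenspace splits, with $2-2\eps^2<2$ whenever $0<\eps<1$.

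\emph{Step (b).} The coefficients in \eqref{eq:laplacien_S2} do not depend on $\theta$, hence $\Delta^{F_\eps}$ commutes with $\partial_\theta$; by Theorem \ref{thm:spectre_discret} and the orthogonal $\theta$-Fourier decomposition of $L^2(\S^2)$ into the subspaces spanned by functions $e^{im\theta}g(\phi)$, every eigenvalue of $-\Delta^{F_\eps}$ occurs in one of the reduced problems $L_m g=\lambda g$. For $m=0$, $L_0$ is a Sturm--Liouville operator on $(0,\pi)$ with the boundary behaviour at the poles forced by smoothness on $\S^2$; its ground state is the constants (eigenvalue $0$), and $\cos\phi$ is an eigenfunction of eigenvalue $2$ with exactly one interior zero, so Sturm oscillation identifies it as the first excited state: nothing in $(0,2)\supset(0,2-2\eps^2)$. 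For $|m|\ge1$ I would use the energy/Green formula (Proposition \ref{prop:green_formula}): for $u=e^{im\theta}g(\phi)$ one has $|L_X\pi^*u|^2=X_\phi^2\,g'^2+m^2X_\theta^2\,g^2$ on $H\S^2$, with $X_\theta\not\equiv0$ (from the geodesic flow computed in the proof of Theorem \ref{th:laplacien_S2}), so $E(u)$ is strictly increasing in $m^2$ for $g\not\equiv0$ while $\int_{\S^2}|u|^2\,\Omega^{F_\eps}$ is independent of $m$. Hence the bottom $\lambda_0(L_m)$ of the $m$-th branch is nondecreasing in $|m|$; it equals $2-2\eps^2$ at $|m|=1$ (the positive ground state $\sin\phi$), and is strictly larger for $|m|\ge2$, since $\lambda_0(L_2)=2-2\eps^2$ realised by some $g$ would force $R(e^{i\theta}g)<R(e^{2i\theta}g)=2-2\eps^2=\lambda_0(L_1)$, absurd. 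Combining the three cases, $2-2\eps^2$ is the smallest nonzero eigenvalue, and its eigenfunctions live only in the modes $m=\pm1$, where each branch contributes its one-dimensional ground state; thus $\lambda_1=2-2\eps^2$ with eigenspace $\mathrm{span}\{Y_1^1,Y_1^{-1}\}$ of dimension $2$.

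\emph{Step (c) and main obstacle.} By Proposition \ref{prop:expression_KZ} the metric $F_\eps$ is a Randers metric $\sqrt{g_\eps}+\theta_\eps$ with $g_\eps=(1-u)^{-1}d\phi^2+(1-u)^{-2}\sin^2\phi\,d\theta^2$, so Proposition \ref{prop:volume_angle_Randers} gives $\Omega^{F_\eps}=\sqrt{\det g_\eps}\,d\phi\,d\theta=(1-u)^{-3/2}\sin\phi\,d\phi\,d\theta$, and after $t=\cos\phi$ one gets $\mathrm{vol}_{\Omega^{F_\eps}}(\S^2)=2\pi\int_{-1}^1(1-\eps^2+\eps^2t^2)^{-3/2}\,dt=4\pi/(1-\eps^2)$, whence $8\pi/\mathrm{vol}_{\Omega^{F_\eps}}(\S^2)=2-2\eps^2$. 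The only genuinely delicate point is the lower bound in Step (b): the $m=0$ branch needs Sturm oscillation for a problem that is singular at the endpoints, and the $|m|\ge1$ branch needs the monotonicity in $|m|$ of the Rayleigh quotient, which in turn rests on the explicit shape of $|L_X\pi^*u|^2$ on $\theta$-modes and on $X_\theta\not\equiv0$. If one prefers to bypass the oscillation argument, the spectral approximation obtained alongside Theorem \ref{th:laplacien_S2} already exhibits a gap below $2$ in the spectrum, after which only the exact identities of Step (a) and the $m=\pm1$ versus $|m|\ge2$ comparison are needed.
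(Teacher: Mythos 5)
Your steps (a) and (c) are in substance the paper's: the paper also verifies from Theorem \ref{th:laplacien_S2} (equivalently from equation \eqref{eq:Delta_ylm}) that $Y_1^{\pm1}$, $Y_1^0$, $\mathbf{1}$ are exact eigenfunctions with eigenvalues $2-2\eps^2$, $2$, $0$, and reads off the volume from equation \eqref{eq:ada_sphere} to get $4\pi/(1-\eps^2)$. The genuine departure is the lower bound in step (b). The paper pins down $\lambda_1$ using Theorem \ref{thm:spectre_S2}, a perturbative description of the eigenpairs as small deformations of $\bigl(Y_l^m,-l(l+1)\bigr)$, which rules out eigenvalues below $2-2\eps^2$; the argument is intrinsically asymptotic and identifies $\lambda_1$ only for small $\eps$. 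Your route---the $\theta$-Fourier reduction to singular Sturm--Liouville branches $L_m$, oscillation theory in the $m=0$ branch, and strict monotonicity of the Rayleigh quotient in $|m|$ via the $m^2X_\theta^2 g^2$ contribution to the energy (together with $X_\theta\not\equiv0$)---is non-perturbative and, once the singular-endpoint analysis and the positivity of the $L_1$ ground state $\sin\phi$ are settled, yields $\lambda_1=2-2\eps^2$ with two-dimensional eigenspace $\mathrm{span}\{Y_1^{\pm1}\}$ on the full range $0<\eps<1$. That is a stronger conclusion than the paper's proof actually supplies, traded for the Sturm--Liouville technicalities you correctly flag as delicate (identifying the variational domain of $L_m$ at $\phi=0,\pi$, and showing $\sin\phi$ is really the ground state so that $\lambda_0(L_1)=2-2\eps^2$ exactly); the paper's route is shorter because it leans entirely on the machinery already assembled in Theorem \ref{thm:spectre_S2}.
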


The fact that we have the above formula for $\lambda_1$ is quite interesting; first, it shows us that there does exist relationships between some geometrical data associated with a Finsler metric (here the volume) and the spectrum of the Finsler--Laplace operator. Secondly, recall the following result:
\begin{thm*}[Hersch \cite{Hersch}]
 For any Riemannian metric $g$\ on $\S^2$,
$$
\lambda_1 \leq \frac{8\pi}{\text{vol}_{g}\left(\S^2\right)}\, .
$$
Furthermore, the equality is realized only in the constant curvature case.
\end{thm*}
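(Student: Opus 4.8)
The plan is to give the classical argument due to Hersch, which rests, exactly as Theorem~\ref{thm:inv_conforme} does, on the conformal invariance of the Dirichlet energy in dimension two. First I would invoke the uniformization theorem to write $g = e^{2\varphi} g_0$, where $g_0$ is the round metric of curvature $1$ on $\S^2$, so that $\mathrm{vol}_{g_0}(\S^2) = 4\pi$ and the first nonzero eigenvalue of $-\Delta^{g_0}$ is $2$, with eigenspace spanned by the three restrictions $x_1, x_2, x_3$ to $\S^2 \subset \R^3$ of the ambient coordinate functions. These satisfy $\Delta^{g_0} x_i = -2 x_i$, $\sum_i x_i^2 \equiv 1$, and in particular $\int_{\S^2} x_i \, dv_{g_0} = 0$.

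The heart of the proof is a balancing step. By the min--max characterization, $\lambda_1(g)$ is the infimum of $\int_{\S^2} |\nabla_g u|^2 \, dv_g \big/ \int_{\S^2} u^2 \, dv_g$ over all nonzero $u$ with $\int_{\S^2} u \, dv_g = 0$, so one needs test functions of $g$-mean zero. The $x_i$ need not be $g$-centered, but one can precompose with a conformal automorphism of $\S^2$: I would show, by a Brouwer-type degree/fixed-point argument on the (noncompact) Möbius group closed up by its natural boundary sphere, that there exists a conformal diffeomorphism $\Phi$ of $(\S^2, g_0)$ with $\int_{\S^2} (x_i \circ \Phi) \, dv_g = 0$ for $i = 1,2,3$ simultaneously. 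Granting this, each $x_i \circ \Phi$ is admissible, so $\lambda_1(g) \int_{\S^2} (x_i\circ\Phi)^2 \, dv_g \le \int_{\S^2} |\nabla_g (x_i \circ \Phi)|^2 \, dv_g$; summing over $i$ and using $\sum_i (x_i\circ\Phi)^2 \equiv 1$ yields $\lambda_1(g)\, \mathrm{vol}_g(\S^2) \le \sum_i \int_{\S^2} |\nabla_g (x_i\circ\Phi)|^2 \, dv_g$. Conformal invariance of the Dirichlet energy in dimension two turns this into $\sum_i \int_{\S^2} |\nabla_{g_0} (x_i\circ\Phi)|^2 \, dv_{g_0}$, which is the energy of the conformal map $\Phi \colon (\S^2, g_0) \to (\S^2, g_0)$; since a conformal map between surfaces has energy equal to twice the area of its image, this equals $2\,\mathrm{vol}_{g_0}(\S^2) = 8\pi$. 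Hence $\lambda_1(g) \le 8\pi/\mathrm{vol}_g(\S^2)$.

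For the equality statement I would trace back the inequalities: equality forces each $x_i\circ\Phi$ to realize the infimum of the Rayleigh quotient, hence to be a $\lambda_1(g)$-eigenfunction with $\Delta^g(x_i\circ\Phi) = -\lambda_1(g)\, x_i\circ\Phi$, while $\sum_i (x_i\circ\Phi)^2 \equiv 1$. Feeding the conformal relation $\Delta^g = e^{-2\varphi}\Delta^{g_0}$ (the two-dimensional conformal law) into these equations and using that $x_j\circ\Phi$ spans the $g_0$-eigenspace for the eigenvalue $2$, one gets $e^{2\varphi}\,\lambda_1(g) = 2$ pointwise, so $\varphi$ is constant and $g$ has constant curvature.

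The main obstacle is the balancing step: producing the automorphism $\Phi$ that simultaneously centers all three coordinate functions, which is where the degree/fixed-point topology and the compactification of the Möbius group must be handled carefully. The conformal-invariance step is essentially the computation already carried out in this chapter for Theorem~\ref{thm:inv_conforme}, and the remaining eigenvalue bookkeeping is routine. Finally, specializing to $g = F_{\eps}$ when $F_{\eps}$ happens to be Riemannian is not the point here; what Corollary~\ref{cor:lambda1} shows is that the \emph{Finsler} family saturates the Riemannian bound $8\pi/\mathrm{vol}_{\Omega^{F_{\eps}}}(\S^2)$ without being round, so Hersch's equality case does not extend naively to the Finsler setting.
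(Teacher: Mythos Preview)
The paper does not prove this statement at all: it is quoted as a known theorem of Hersch \cite{Hersch} and used only for context, to contrast with Corollary~\ref{cor:lambda1}. There is therefore no ``paper's own proof'' to compare against.

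That said, what you have written is exactly the classical Hersch argument and is correct as outlined: uniformize to the round metric, balance the three coordinate functions by a M\"obius transformation, apply the min--max principle and the conformal invariance of the Dirichlet energy in dimension $2$, and trace the equality case back to a pointwise identity forcing the conformal factor to be constant. Your identification of the balancing step as the only nontrivial point is accurate, and your closing remark about why Corollary~\ref{cor:lambda1} is striking in light of this rigidity is precisely the point the paper is making.
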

So the Katok--Ziller metrics on $\S^2$ give us a continuous family of metrics realizing that equality! We do not know however whether this is a Finslerian maximum or not.\\

Note that we also have $\Delta^{F_{\eps}} Y_1^0 = -2 Y_1^0$. However, the $\ylm$\ with $l\geq 2$\ are no longer eigenfunctions of $\Delta^{F_{\eps}}$. This is probably related to the breaking of the symmetries that the Katok--Ziller metrics induce.\\
 In the following, if $m$\ happens to be greater than $l$, we set $\ylm =0$. We denote by $\langle \cdot , \cdot \rangle$\ the inner product on $L^2\left(\S^2\right)$ defined by:
\begin{equation*}
 \langle f,g \rangle = \int_0^{2 \pi} \int_0^{\pi} f \bar{g} \sinp d\phi d\theta \, .
\end{equation*}

\begin{thm}
\label{thm:spectre_S2}
 Let $f$\ be an eigenfunction for $\Delta^{F_{\eps}}$ and $\lambda$\ its eigenvalue. There exist unique numbers $l$\ and $m$\ in $\N$, $0\leq m \leq l$, such that $f = a \ylm + b Y_l^{-m} + g$, where $g$\ uniformly tends to $0$\ with $\eps$, and 
\begin{multline}
\label{eq:spectre_S2}
  \lambda =  -l(l+1) + \eps^2 \Biggl[ \frac{m^2}{2\left(2l-1 \right)} \left( 2\left(l+1\right) + \frac{3 l\left(l-1\right)}{ \left(2l+3 \right)} \right)   \\
             +\frac{3l\left(l-1\right)}{2\left(2l-1 \right)}\left( 1 +  \frac{l^2+l-1}{ \left(2l+3 \right)\left(2l-1 \right)} \right) \Biggr] +o\left(\eps^2 \right).
\end{multline}

\end{thm}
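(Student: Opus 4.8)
The plan is to realize $\Delta^{F_\eps}$ as an analytic perturbation of the round Laplacian and to run Rayleigh--Schr\"odinger perturbation theory. The starting point is the observation, immediate from Theorem~\ref{th:laplacien_S2}, that every coefficient of $\Delta^{F_\eps}$ is a function of $\ee$ alone: consequently $\Delta^{F_\eps}$ depends only on $\eps^2$ and commutes with $\partial_\theta$. Hence it respects the splitting $L^2(\S^2)=\bigoplus_{m\in\Z}e^{im\theta}\otimes L^2\big([0,\pi],\sin\phi\,d\phi\big)$, and on the $m$-th summand it acts as a regular--singular Sturm--Liouville operator $\Delta^{F_\eps}_m$ in $\phi$. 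At $\eps=0$ this is $\Delta^{\S^2}_m$, whose spectrum is the \emph{simple} set $\{-l(l+1):l\ge|m|\}$ with eigenfunctions $P_l^{|m|}(\cos\phi)$. Because $\eps^2\mapsto\Delta^{F_\eps}_m$ is an analytic family of type (A) with compact resolvent (the coefficients being analytic in $\eps^2$, uniformly on $[0,\pi]$, for $|\eps|<1$), Kato's theory provides, for each $(l,m)$ and all small $\eps$, a unique eigenvalue $\lambda_{l,m}(\eps)$ and a normalized eigenfunction, both analytic in $\eps^2$; elliptic (ODE) regularity promotes the $L^2$ convergence of the eigenfunction toward $P_l^{|m|}(\cos\phi)$ to a uniform one, giving the asserted decomposition $f=a\,\ylm+b\,Y_l^{-m}+g$ with $g\to0$ uniformly, while the symmetry $\theta\mapsto-\theta$ (composed with complex conjugation) forces $\lambda_{l,m}=\lambda_{l,-m}$, which is why the limiting eigenspace is two-dimensional for $m\neq0$.

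Next I would extract the first-order (in $\eps^2$) correction. Taylor expanding the three coefficients $\tfrac{2(1-\ee)^{3/2}}{1+\sqrt{1-\ee}}$, $\tfrac{2(1-\ee)}{1+\sqrt{1-\ee}}$, $\tfrac{2(\ee+\sqrt{1-\ee})}{1+\sqrt{1-\ee}}$ of Theorem~\ref{th:laplacien_S2} around $\ee=0$ yields $\Delta^{F_\eps}=\Delta^{\S^2}+\eps^2 D+o(\eps^2)$, where $D$ is an explicit second-order operator built from $\partial_\theta^2$, $\sin^2\phi\,\partial_\phi^2$ and $\sin\phi\cos\phi\,\partial_\phi$. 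Since $D$ preserves the Fourier modes in $\theta$, the matrix of $D$ on the $(2l+1)$-dimensional $\Delta^{\S^2}$-eigenspace for $-l(l+1)$ is diagonal in the basis $\{Y_l^m\}_{|m|\le l}$, so degenerate perturbation theory gives
\begin{equation*}
\lambda_{l,m}(\eps)=-l(l+1)+\eps^2\,\frac{\langle Y_l^m,\,D\,Y_l^m\rangle}{\langle Y_l^m,Y_l^m\rangle}+o(\eps^2),
\end{equation*}
where $\langle\cdot,\cdot\rangle$ is the round $L^2$ inner product. (This formula only uses self-adjointness of $\Delta^{\S^2}$ for the round volume and the simplicity of $-l(l+1)$ within a fixed Fourier sector; it does not need $D$ to be self-adjoint, which it is not, the operator being symmetric instead for $\Omega^{F_\eps}$.)

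Then I would evaluate this Rayleigh quotient explicitly. Writing $Y_l^m=e^{im\theta}P_l^m(\cos\phi)$ and changing variables to $x=\cos\phi$, the $\phi$-part of $D$ becomes a constant multiple of $(1-x^2)^2\,d^2/dx^2$, so the matrix element reduces to the one-dimensional integrals $\int_{-1}^1(P_l^m)^2\,dx$ and $\int_{-1}^1(1-x^2)^2P_l^m(P_l^m)''\,dx$. The second integral is handled by using the associated Legendre equation to replace $(1-x^2)^2(P_l^m)''$, then the three-term recursion $xP_l^m=\tfrac{l-m+1}{2l+1}P_{l+1}^m+\tfrac{l+m}{2l+1}P_{l-1}^m$ and the normalization $\int_{-1}^1(P_l^m)^2\,dx=\tfrac{2}{2l+1}\tfrac{(l+m)!}{(l-m)!}$, reducing everything to a closed rational function of $l$ and $m^2$ which, after simplification, is \eqref{eq:spectre_S2}. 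As a built-in consistency check, for $l=1$ one verifies directly (as noted after Theorem~\ref{th:laplacien_S2}) that $Y_1^0,Y_1^{\pm1}$ are exact eigenfunctions of $\Delta^{F_\eps}$ with eigenvalues $-2$ and $-2+2\eps^2$; the general formula degenerates correctly there because of its factor $l(l-1)$, and for $(l,m)=(1,1)$ it recovers Corollary~\ref{cor:lambda1}.

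The main obstacle is twofold. On the analytic side, the perturbation must be carried out for a family that is singular at the poles $\phi=0,\pi$ of the polar chart, so one has to make sure the abstract statements really apply --- most cleanly by working with the self-adjoint realizations on $L^2(\S^2,\Omega^{F_\eps})$, or by first performing the unitary rescaling that turns $\Delta^{F_\eps}$ into a Schr\"odinger operator as in Section~\ref{subsec:a_characterization} --- and that the convergence $g\to0$ is genuinely uniform up to the poles. On the computational side, the actual labor is the bookkeeping in the last step: tracking the normalization constants and the $P_{l\pm1}^m$, $P_{l\pm2}^m$ contributions so that the rational function collapses to the compact expression \eqref{eq:spectre_S2}.
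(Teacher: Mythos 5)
Your proposal is correct but follows a genuinely different route from the paper. The paper never invokes Kato's theory; instead it works entirely ``by hand'': it expands a given eigenfunction $f$ in spherical harmonics, $f=\sum_{l,m}a_l^m Y_l^m$, uses elliptic regularity to upgrade $L^2$ to uniform convergence, writes the eigenvalue relation as $\lambda\,a_l^m\lVert Y_l^m\rVert^2=\sum_k a_k^m\langle Y_l^m,\Delta^{F_\eps}Y_k^m\rangle$ (using the fact that $\Delta^{F_\eps}$ respects the $\theta$-Fourier decomposition), explicitly computes $\Delta^{F_\eps}Y_l^m$ via the Legendre recursions~\eqref{eq:legendre}--\eqref{eq:rec_legendre2}, and then argues directly that there can be at most one $l$ (and, at that $l$, at most one $|m|$) for which $1/a_l^m$ stays bounded as $\eps\to0$. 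In effect it reconstructs first-order Rayleigh--Schr\"odinger by comparing orders of $\eps$ rather than by citing a general theorem. Your route — restrict to a fixed Fourier mode, observe simplicity of the round spectrum there, invoke analyticity of $\eps^2\mapsto\Delta^{F_\eps}_m$ to get analytic eigenbranches, then use the $\theta\mapsto-\theta$ symmetry for $\lambda_{l,m}=\lambda_{l,-m}$ — is more conceptual and makes the existence and uniqueness of $(l,m)$ structurally transparent rather than a by-product of a coefficient-boundedness argument. What the paper's approach buys is that it is elementary and self-contained, requiring nothing beyond the Legendre identities and elliptic regularity; what yours buys is a cleaner justification that the expansion is legitimate and that the $o(\eps^2)$ remainder is genuinely analytic in $\eps^2$. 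Both methods reduce to the same Rayleigh quotient $\langle Y_l^m,D\,Y_l^m\rangle/\lVert Y_l^m\rVert^2$ with $D=-\tfrac{5}{4}\partial_\theta^2-\tfrac34\sin^2\phi\,\partial_\phi^2+\tfrac34\sin\phi\cos\phi\,\partial_\phi$, evaluated via $\sin\phi\,\partial_\phi P_l^m=l\cos\phi\,P_l^m-(l+m)P_{l-1}^m$ and the three-term recursion, exactly as you propose.

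Two remarks worth keeping in mind. First, the type-(A) analyticity claim does need a sentence of care, since $D$ contains the second-order piece $-\tfrac34\sin^2\phi\,\partial_\phi^2$ and the whole family is singular at $\phi=0,\pi$; the cleanest fix is the one you flag yourself, namely working with the self-adjoint Friedrichs realizations on $L^2(\S^2,\Omega^{F_\eps})$ or conjugating to a Schr\"odinger form via Section~\ref{subsec:a_characterization}. Second, before chasing the closed-form simplification to~\eqref{eq:spectre_S2}, sanity-check against $l=2,m=0$: a direct evaluation of $\langle DY_2^0,Y_2^0\rangle/\lVert Y_2^0\rVert^2$ gives $12/7$, which agrees with the term-by-term formula $\tfrac{3l(l-1)}{2(2l-1)}\bigl(1+\tfrac{l^2+l-1}{2l+3}\bigr)+\tfrac{m^2}{2}+\tfrac{3(l(l-1)+m^2)}{2(2l-1)}$ but not with~\eqref{eq:spectre_S2} as printed; the factor $(2l-1)^2$ in the denominator of the last term of~\eqref{eq:spectre_S2} should be $(2l-1)$. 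This is invisible at $l=1$ (where $l(l-1)=0$, the only case the paper cross-checks via Corollary~\ref{cor:lambda1}), so trust your own computed rational function over the printed one.
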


Note that the Katok--Ziller transformation gets rid of most of the degeneracy of the spectrum. If $\eps\neq 0$, the eigenvalues are at most of multiplicity two, and are of multiplicity $2l+1$\ if $\eps$\ is zero.\\
We can state even more on the multiplicity of eigenvalues. Define $\Psi  \colon  \S^2 \rightarrow \S^2 $ by
\begin{equation*}
        \Psi(\phi, \theta ) := (\pi - \phi, -\theta) \, .
\end{equation*}
Theorem \ref{th:laplacien_S2} implies that $\Delta^{F_{\eps}}$\ is stable by $\Psi$, i.e., for any $g$, $\left(\Delta^{F_{\eps}} g \right) \circ \Psi = \Delta^{F_{\eps}} \left( g \circ \Psi \right)$.
So, if $f$\ is an eigenfunction for $\lambda$, then $f \circ \Psi$\ also. Therefore, either the subspace generated by $f$\ is stable by $\Psi$\ or $\lambda$\ is of multiplicity at least (and hence exactly) two.

\begin{rem}
When $\eps >0$, $F_{\eps}$\ is not preserved by $\Psi$.
\end{rem}

\subsubsection{Proof of Theorem \ref{th:laplacien_S2}}
This proof follows the same lines as that of Theorem \ref{thm:KZ_torus}, the computations being more involved and a bit lengthy. We just give the main steps.

\textbf{Vertical derivative and change of coordinates.}\\
Set $g_{\eps}\left(\phi,\theta; \xi_{\phi}, \xi_{\theta} \right) = \left(1-\ee\right)\xi_{\phi}^2 + \sinpp \xi_{\theta}^2$. We have 
$$
d_v F_{\eps} = \frac{\partial F_{\eps}}{\partial \xi_{\phi}} d\phi + \frac{\partial F_{\eps}}{\partial \xi_{\theta}} d\theta
$$
 where $  \dfrac{\partial F_{\eps}}{\partial \xi_{\phi}} = \dfrac{\xi_{\phi}}{\sqrt{g_{\eps}}}$ and $\dfrac{\partial F_{\eps}}{\partial \xi_{\theta}} =  \dfrac{1}{1-\ee}\left(\dfrac{\xi_{\theta} \sinpp}{\sqrt{g_{\eps}}} - \eps \sinpp \right)$.\\
From now on we consider the local coordinate $\psi \in \left[0,2\pi\right]$\ on $H_{(\phi,\theta)}\S^2$ defined by,
\begin{equation*}
\left\{
\begin{aligned} 
\cospsi& =  \frac{\xi_{\theta} \sinp}{\sqrt{g_{\eps}}} \\
 \sinpsi& = \sqrt{1-\ee} \frac{\xi_{\phi}}{\sqrt{g_{\eps}}} \,. 
\end{aligned}
\right.
\end{equation*}

\textbf{Hilbert form and Liouville volume.}\\
As in the above coordinates, we have
$$ \frac{\partial F_{\eps}}{\partial \xi_{\phi}} = \frac{\sinpsi}{\sqrt{1-\ee}} \quad \text{and} \quad \frac{\partial F_{\eps}}{\partial \xi_{\theta}} = \frac{1}{1-\ee}\left(\sinp\cospsi - \eps \sinpp \right),$$
 we deduce that the Hilbert form $A$ associated to $F_{\eps}$\ is given by
\begin{equation*}
 A = \frac{1}{1- \ee}\left( f_1 d\phi + f_2 d\theta \right),
\end{equation*}
with $f_1 = \sqrt{1-\ee} \sinpsi$ and $f_2 = \sinp\cospsi - \eps \sinpp$. In order to simplify the computations, note that $f_1$\ is odd in $\psi$, $f_2$\ is even and they do not depend on $\theta$. The exterior derivative of $A$\ is given by
\begin{equation*}
 dA = \frac{1}{1- \ee} \left(\parfunpsi d\psi\wedge d\phi + \parfdepsi d\psi\wedge d\theta + f_3 d\phi\wedge d\theta \right).
\end{equation*}
where
\begin{align*}
 \parfunpsi &= \sqrt{1-\ee} \cospsi, \\
 \parfdepsi &= -\sinp \sinpsi, \\
 f_3 &= \cosp \frac{\cospsi -2 \e +\ee \cospsi}{1-\ee} \, .
\end{align*}
Now, we have $A \wedge dA = \frac{1}{\left(1-\ee \right)^2} \left( -f_1 \parfdepsi + f_2 \parfunpsi \right) d\psi \wedge d\phi            	\wedge d\theta$ and $-f_1 \parfdepsi + f_2 \parfunpsi = \sinp \sqrt{1-\ee} \left(1 - \e \cospsi\right)$. Therefore
\begin{equation}
\label{eq:ada_sphere}
 A \wedge dA = \frac{\sinp}{\left(1-\ee \right)^{3/2}} \left( 1- \e \cospsi \right) d\psi \wedge d\phi \wedge d\theta \, .
\end{equation}

We can now use the construction of the angle form (see Section \ref{subsec:construction}). Let $\alpha'$\ be the $1$-form associated to the volume $d\phi \wedge d\theta$\ on $\S^2$, on $VH\S^2$\ we have
\begin{equation*}
 \alpha' = -\frac{1}{\left(1-\ee\right)^2} \left(\fundeux \right) d\psi\, .
\end{equation*}
If we denote by $l(p)$\ the length for $\alpha'$ of the fiber above a point $p\in \S^2$ we obtain
\begin{align*}
 l(p) &= \int_{H_p \S^2} \alpha' \\
      &= -\frac{1}{\left(1-\ee\right)^2} \int_0^{2\pi} \left(\fundeux \right) d\psi\\
      &= \frac{2\pi}{\left(1-\ee\right)^{\frac{3}{2}}} \sinp \,, 
\end{align*}
where we used again that 
$$\fundeux = \sinp \sqrt{1-\ee} \left(-1 + \e \cospsi\right).$$
 As $ \alpha = \frac{2\pi}{l(p)} \alpha'$ we obtain
\begin{equation}
\label{eq:angle_sphere}
 \alpha = \left(1- \e \cospsi\right) d\psi.
\end{equation}

\textbf{Geodesic flow.}\\
Let $X = \xpsi \parpsi + \xtheta \partheta + \xphi \parphi$\ be the geodesic flow of $F_{\eps}$. As $X$ is the Reeb field of $A$, we can use Equations \eqref{eq:Reeb_field} to determine $X$. We have
\begin{multline*}
 0 = i_X dA = \frac{1}{1-\ee} \Biggl( \left(- \parfunpsi \xphi - \parfdepsi \xtheta \right) d\psi +\left( \xphi f_3+ \xpsi \parfdepsi \right) d\theta  \\ 
+ \left( - \xtheta f_3 + \xpsi \parfunpsi\right) d\phi \Biggr),
\end{multline*}
and
\begin{equation*}
 1= A(X) = \frac{1}{1-\ee} \left( f_1 \xphi + f_2 \xtheta \right).
\end{equation*}
The above equations give the system
\begin{equation*}
\left\{
 \begin{aligned}
  \parfunpsi \xphi + \parfdepsi \xtheta &= 0 \\
 \xphi f_3+ \xpsi \parfdepsi &= 0 \\
 - \xtheta f_3 + \xpsi \parfunpsi &= 0 \\
 f_1 \xphi + f_2 \xtheta &= 1-\ee
 \end{aligned}
\right. 
\end{equation*}
which yields
\begin{align*}
\xtheta &= \frac{1-\ee}{\sinp} \; \frac{\cospsi}{1-\e \cospsi} ,\\
\xphi &= \sqrt{1-\ee} \; \frac{\sinpsi}{1-\e \cospsi} ,\\
\xpsi &= \frac{1}{\sqrt{1-\ee}} \; \frac{\cosp}{\sinp} \; \frac{\cospsi -2\e + \ee \cospsi}{1-\e\cospsi}.
\end{align*}

\textbf{The Finsler--Laplace operator.}\\
Let $f \colon \S^2 \rightarrow \R$. We start by computing $L_X^2 \pi^{\ast}f$.\\
As $\parpsi\left(\pi^{\ast}f\right) = 0 $\ and $X$\ does not depend on $\theta$, we get

\begin{multline*}
 L_X^2 \pi^{\ast}f = \xtheta^2 \frac{\partial^2 f}{\partial \theta^2} + \xtheta\xphi \frac{\partial^2 f}{\partial \phi \partial \theta}  + \xphi \xtheta \frac{\partial^2 f}{\partial \theta \partial \phi } + \xphi \frac{\partial \xtheta }{\partial \phi} \frac{\partial f}{\partial \theta}  \\
+ \xphi \frac{\partial \xphi }{\partial \phi} \frac{\partial f}{\partial \phi} + \xphi^2 \frac{\partial^2 f}{\partial \phi^2} + \xpsi \frac{\partial \xtheta }{\partial \psi} \frac{\partial f}{\partial \theta}  + \xpsi \frac{\partial \xphi }{\partial \psi} \frac{\partial f}{\partial \phi} \, .
\end{multline*}
Since we are only interested in $\int_{H_x\S^2} L_X^2 \pi^{\ast}f \alpha$, we can use the parity properties (with respect to $\psi$) of the functions $\xtheta, \; \xphi$\ and $\xpsi$\ (which are respectively even, odd and even) to get rid of half of the above terms. We obtain
\begin{multline*}
 \pi \Delta^{F_{\eps}} f(p) =  \int_{H_p\S^2} \hspace{-1,5mm}\xtheta^2 \; \alpha \; \frac{\partial^2 f}{\partial \theta^2} + \int_{H_p\S^2} \hspace{-1,5mm} \xphi^2 \; \alpha \; \frac{\partial^2 f}{\partial \phi^2} \\
   + \int_{H_p\S^2} \left( \xpsi \frac{\partial \xphi }{\partial \psi} + \xphi \frac{\partial \xphi }{\partial \phi} \right) \alpha \; \frac{\partial f}{\partial \phi} \, .
\end{multline*}

Direct computation (with a little help from Maple) yields
\begin{multline*}
   \Delta^{F_{\eps}} = \frac{2 \left(1-\ee \right)^{\frac{3}{2}}}{\sinpp \left(1+\sqrt{1-\ee} \right)} \; \frac{\partial^2 }{\partial \theta^2}  + 2  \frac{1-\ee}{1+\sqrt{1-\ee}} \; \frac{\partial^2 }{\partial \phi^2}  \\
 + \frac{2 \cosp}{\sinp} \left( 2 - \frac{1}{1+ \sqrt{1-\ee}} -\sqrt{1-\ee} \right) \frac{\partial }{\partial \phi} \, .
\end{multline*}
This concludes the proof of Theorem \ref{th:laplacien_S2}.

\subsubsection{Proof of Theorem \ref{thm:spectre_S2}}

We state the following property of spherical harmonics that will be useful in later computations:
\begin{prop}
\label{prop:ylm}
 Let $l\in \N$, and $m\in \Z$, such that $|m|\leq l$, then the associated Legendre polynomial $P_l^m\left(\cosp\right)$, denoted here by $\plm$, is a solution to the equation
\begin{equation}
 \frac{\partial^2 \plm}{\partial \phi^2} + \frac{\cosp}{\sinp}\frac{\partial \plm}{\partial \phi} +\left(l(l+1) - \frac{m^2}{\sinpp} \right) \plm = 0 \label{eq:legendre},
\end{equation}
They verify (see \cite[Formulas 8.5.3 to 8.5.5]{MR1225604})
\begin{subequations}
\begin{align}
 \left(2l-1\right) \cosp \tilde{P}^{m}_{l-1} &= (l-m)\plm + \left(l+m-1\right) \tilde{P}^{m}_{l-2} \, ,\label{eq:rec_legendre1} \\
\sinp \frac{\partial \plm}{\partial \phi} & = l \cosp \plm - (l+m) \tilde{P}^{m}_{l-1} \label{eq:derivative_legendre} \, , \\
 \sinp \plm &= \frac{1}{2l+1}\left( \tilde{P}_{l-1}^{m+1} - \tilde{P}_{l+1}^{m+1}\right) \label{eq:rec_legendre2}.
\end{align}
\end{subequations}
 The spherical harmonics form an orthogonal Hilbert basis of $L^2\left(\S^2\right)$\ and their norm is given by
\begin{equation}
 ||\ylm || = \sqrt{\frac{4\pi}{2l+1} \frac{\left(l+m\right)!}{\left(l-m\right)!}} \label{eq:norm_ylm}\, .
\end{equation}

\end{prop}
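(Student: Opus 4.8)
The statement assembles classical facts about associated Legendre functions and spherical harmonics, and the reference \cite{MR1225604} is cited precisely for the three recurrences \eqref{eq:rec_legendre1}--\eqref{eq:rec_legendre2}; so the plan is to recall the standard derivations and to indicate which pieces are genuinely imported from the literature.

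\emph{Equation \eqref{eq:legendre}.} I would start from the Rodrigues representation $\plm=\frac{(-1)^m}{2^l l!}(1-x^2)^{m/2}\frac{d^{l+m}}{dx^{l+m}}(x^2-1)^l$ in the variable $x=\cos\phi$, valid for $0\le m\le l$, the case $m<0$ being reduced to $m>0$ by $P_l^{-m}=(-1)^m\frac{(l-m)!}{(l+m)!}P_l^m$, a substitution under which \eqref{eq:legendre}, being even in $m$, is invariant. From the Rodrigues formula for $P_l=P_l^0$ one reads off Legendre's equation $(1-x^2)P_l''-2xP_l'+l(l+1)P_l=0$. Differentiating it $m$ times by the Leibniz rule shows $u:=d^mP_l/dx^m$ solves $(1-x^2)u''-2(m+1)xu'+(l(l+1)-m(m+1))u=0$, and setting $y:=(1-x^2)^{m/2}u$ (so that $y=(-1)^m\plm$ up to the Rodrigues constant) one checks by a direct expansion that $y$ satisfies the associated Legendre equation $(1-x^2)y''-2xy'+(l(l+1)-m^2/(1-x^2))y=0$. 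Finally, with $x=\cos\phi$ one has $\frac{d}{d\phi}=-\sin\phi\frac{d}{dx}$, hence $\frac{d^2}{d\phi^2}+\cot\phi\frac{d}{d\phi}=(1-x^2)\frac{d^2}{dx^2}-2x\frac{d}{dx}$ and $m^2/(1-x^2)=m^2/\sin^2\phi$, which turns the associated Legendre equation into exactly \eqref{eq:legendre}. The recurrences \eqref{eq:rec_legendre1}--\eqref{eq:rec_legendre2} are the classical mixed, derivative, and degree-raising relations; they are \cite[Formulas 8.5.3 to 8.5.5]{MR1225604} and I would simply cite them, noting for completeness that they also follow from Bonnet's recursion $(l+1)P_{l+1}=(2l+1)xP_l-lP_{l-1}$ and the identity $(1-x^2)P_l'=l(P_{l-1}-xP_l)$ by differentiating $m$ times and multiplying by $(1-x^2)^{m/2}$ --- pure bookkeeping with no real content.

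\emph{Orthogonality, basis property, and norm.} Writing $\ylm(\phi,\theta)=e^{im\theta}\plm$, the integral $\int_0^{2\pi}e^{i(m-m')\theta}\,d\theta=2\pi\delta_{mm'}$ gives orthogonality as soon as $m\ne m'$. For fixed $m$ and $l\ne l'$, both $\plm$ and $\tilde P_{l'}^m$ solve the singular Sturm--Liouville problem $((1-x^2)y')'+(\lambda-m^2/(1-x^2))y=0$ on $(-1,1)$ with $\lambda=l(l+1)$, resp.\ $l'(l'+1)$, and the Lagrange-identity integration by parts --- whose boundary terms vanish since the weight $(1-x^2)$ kills the endpoint contributions and $\plm$ is regular there --- gives $\int_{-1}^1\plm\tilde P_{l'}^m\,dx=0$. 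For completeness of $\{\ylm\}$ in $L^2(\S^2)$ I would invoke Theorem \ref{thm:spectre_discret} applied to $\S^2$ with its round metric (there $\Delta^F=\Delta^g$): the eigenspaces of $-\Delta^g$ have dense span, each $\ylm$ is an eigenfunction for $-l(l+1)$ by \eqref{eq:legendre} together with the $\theta$-dependence, and the $2l+1$ functions $Y_l^{-l},\dots,Y_l^l$ are linearly independent, so a dimension count identifies the eigenspaces and their union is the dense family. For the norm, $\|\ylm\|^2=2\pi\int_{-1}^1(\plm)^2\,dx$; inserting the Rodrigues formula and integrating by parts $l+m$ times (all boundary terms vanish because $(x^2-1)^l$ vanishes to order $l$ at $\pm1$) reduces this to a factorial multiple of $\int_{-1}^1(1-x^2)^l\,dx=\frac{2^{2l+1}(l!)^2}{(2l+1)!}$, yielding $\int_{-1}^1(\plm)^2\,dx=\frac{2}{2l+1}\frac{(l+m)!}{(l-m)!}$ and hence \eqref{eq:norm_ylm}. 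The only steps with any content are the appeal to the spectral theory for completeness and the careful tracking of the Condon--Shortley sign together with the factorial produced when the $l+m$ integrations by parts differentiate $(x^2-1)^l$ --- a sign or factorial slip there is the one thing to guard against.
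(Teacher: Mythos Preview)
Your proposal is correct and in fact goes well beyond what the paper does: the paper treats Proposition~\ref{prop:ylm} as a collection of classical facts and offers no proof at all, merely stating the properties and citing \cite{MR1225604} for the recurrences~\eqref{eq:rec_legendre1}--\eqref{eq:rec_legendre2}. Your derivations (Rodrigues formula and Leibniz differentiation for~\eqref{eq:legendre}, Sturm--Liouville orthogonality, the appeal to Theorem~\ref{thm:spectre_discret} for completeness, and the integration-by-parts computation of the norm) are the standard ones and are sound; there is nothing to compare against on the paper's side.
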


We can now proceed with the proof. Take $f$\ an eigenfunction of $\Delta^{F_{\eps}}$ and $\lambda$\ the associated eigenvalue. As the $\ylm$\ form an Hilbert basis of $L^2\left(\S^2\right)$, there exist $a_l^m$\ such that
$$
f = \sum_{l=0}^{+\infty} \sum_{|m|\leq l} a_l^m \ylm,
$$
where the convergence is a priori in the $L^2$-norm. The elliptic regularity theorem implies that $f \in C^{\infty}\left(\S^2 \right)$. Therefore the convergence above is uniform. So $\Delta^{F_{\eps}} f = \sum_{l=0}^{+\infty} \sum_{|m|\leq l} a_l^m \Delta^{F_{\eps}} \ylm$.\\ Let $l,m$\ be fixed. The equation $\langle \Delta^{F_{\eps}} f, \ylm \rangle = \lambda \langle f, \ylm \rangle$ yields
\begin{equation}
\label{eq:lambda_sphere}
 \lambda a_l^m \lVert \ylm \rVert^2 = \sum_{k=0}^{+\infty} \sum_{|n|\leq k} a_k^n \langle \ylm , \Delta^{F_{\eps}} Y_k^n \rangle.
\end{equation}

\begin{claim}
For any $l,m$\ we have
\begin{multline}
\label{eq:Delta_ylm}
 \Delta^{F_{\eps}} \ylm =   -l(l+1) \ylm \\
 + \frac{\eps^2}{\left(1+\sqrt{1-\ee}\right)^2}  \Biggl[  \left(1+2\sqrt{1-\ee}\right)l\left( l-1\right) \sinpp \ylm  \\
  +  \left(2 m^2 \left(1-\ee\right)\right)\ylm  + 2 \frac{l^2 + m^2 +l}{2l+1} \left(1+2\sqrt{1-\ee}\right) \ylm \\
 - 2(l+m)(l+m-1)\left(1+2\sqrt{1-\ee}\right) Y_{l-2}^m \Biggr].
\end{multline}
\end{claim}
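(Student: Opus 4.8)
The plan is to substitute $\ylm=e^{im\theta}\plm(\cosp)$ into the explicit formula for $\Delta^{F_\eps}$ provided by Theorem~\ref{th:laplacien_S2} and then to reduce the result using the Legendre differential equation \eqref{eq:legendre} together with the recursion relations \eqref{eq:rec_legendre1} and \eqref{eq:derivative_legendre} of Proposition~\ref{prop:ylm}. Since $\frac{\partial^2}{\partial\theta^2}\ylm=-m^2\ylm$, applying Theorem~\ref{th:laplacien_S2} and abbreviating $b:=\sqrt{1-\ee}$ gives
\[
\Delta^{F_\eps}\ylm=\frac{2}{1+b}\left[-\frac{b^3m^2}{\sinpp}\,\ylm+b^2\parpphi\ylm+\left(b+(1-b^2)\right)\frac{\cosp}{\sinp}\parphi\ylm\right].
\]

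First I would use the Legendre equation in the form $\parpphi\plm+\frac{\cosp}{\sinp}\parphi\plm=\bigl(\frac{m^2}{\sinpp}-l(l+1)\bigr)\plm$ to eliminate the second $\phi$-derivative. Because $1-b^2=\ee=\eps^2\sinpp$, this substitution is simultaneously what exhibits $\Delta^{F_\eps}\ylm+l(l+1)\ylm$ as a quantity of order $\eps^2$: after collecting terms, and using the identity $1-b=\eps^2\sinpp/(1+b)$ to turn the surviving prefactor into $\eps^2/(1+b)^2$, the $\sinpp$ so produced cancels the various $1/\sinpp$'s, leaving $-l(l+1)\ylm$ plus $\frac{\eps^2}{(1+b)^2}$ times a combination of $\sinpp\,\ylm$, $\ylm$ and $\sinp\cosp\,\parphi\ylm$. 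It then remains to put $\sinp\cosp\,\parphi\plm$ into the required shape: by \eqref{eq:derivative_legendre} it equals $\cosp\bigl(l\cosp\,\plm-(l+m)\tilde P_{l-1}^{m}\bigr)$, and writing $\cospp=1-\sinpp$ on the first piece and invoking \eqref{eq:rec_legendre1} to express $\cosp\,\tilde P_{l-1}^{m}$ as a combination of $\plm$ and $\tilde P_{l-2}^{m}$ puts everything in terms of $\sinpp\,\ylm$, $\ylm$ and $Y_{l-2}^{m}$. Collecting all the contributions yields \eqref{eq:Delta_ylm}.

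The main obstacle is purely the bookkeeping. One has to carry the factors $b=\sqrt{1-\ee}$ through the computation without expanding them prematurely, keeping intact the particular combinations $1+2b$ and $1-\ee$ that appear in the statement, and one has to verify that the $1/\sinpp$-singular contributions cancel \emph{exactly}, which is precisely what pins down the coefficients of $\ylm$ and $Y_{l-2}^{m}$. The cleanest organisation is to split the reduction into its $1/\sinpp$-singular part (which must be reconciled with the $m^2/\sinpp$ term of the Legendre equation) and its smooth part, and to treat the two separately; this is elementary but lengthy, and is the step where a formal-computation check is worthwhile.
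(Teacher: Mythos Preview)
Your approach is essentially the same as the paper's: substitute into the explicit operator \eqref{eq:laplacien_S2}, eliminate the second $\phi$-derivative via the Legendre equation \eqref{eq:legendre}, then use \eqref{eq:derivative_legendre} followed by \eqref{eq:rec_legendre1} to reduce $\cosp\,\parphi\plm$ to a combination of $\plm$, $\sinpp\plm$ and $\tilde P_{l-2}^{m}$. The paper carries out these same steps and then writes ``after a bit of rearranging'' for the algebra you describe more explicitly (the extraction of $\eps^2/(1+b)^2$ via $1-b=\eps^2\sinpp/(1+b)$ and the cancellation of the $1/\sinpp$ singularities); your organisation of that rearranging is if anything clearer, but there is no substantive difference in method.
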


\begin{proof}
 By the formula \eqref{eq:laplacien_S2} for $\Delta^{F_{\eps}}$, we have
\begin{align*}
 \Delta^{F_{\eps}} \ylm &= \frac{2}{1+\sqrt{1-\ee}} \left[ \frac{1}{\sinpp} \left(1-\ee \right)^{\frac{3}{2}} (-m^2) \ylm \right. \\
            &  \left. + e^{im\theta} \Biggl( \left(1-\ee \right) \parpphi\plm \right. \\
            & \left. +\frac{\cosp}{\sinp} \left(\ee + \sqrt{1-\ee}  \right)\parphi\plm \Biggr) \right],\\
\end{align*}
Applying first Equation \eqref{eq:legendre} and then Equation \eqref{eq:derivative_legendre}, we get
\begin{align*}
 \Delta^{F_{\eps}} \ylm  &= \frac{2}{1+\sqrt{1-\ee}} \left[ -m^2 \frac{1}{\sinpp} \left(1-\ee \right)^{\frac{3}{2}}  \ylm \right. \\
            &   \left. + e^{im\theta} \Biggl( \frac{\cosp}{\sinp} \left(-1+2\ee + \sqrt{1- \ee} \right) \parphi\plm \right.\\
            & \left. -\left(1-\ee \right)\left( l(l+1) - \frac{m^2}{\sinpp} \right) \plm \Biggr) \right] \\
   &= \frac{2}{1+\sqrt{1-\ee}} \Biggl[ \left( 1- \ee \right)\left(-l(l+1) + \frac{m^2}{ \sinpp} \left(1-\sqrt{1-\ee}\right) \right)\ylm \\
            & + \left(-1+2\ee + \sqrt{1- \ee} \right) l \frac{\cospp}{\sinpp} \ylm \\
            & -\frac{l+m}{\sinpp} \left( \frac{l-m}{2l-1} \ylm + \frac{l+m-1}{2l-1} Y_{l-2}^m\right) \Biggr].
\end{align*}
After a bit of rearranging, we get 
\begin{align*}
\Delta^{F_{\eps}} \ylm   &= -l(l+1) \ylm + \eps^2 \Biggl[ \frac{1+2\sqrt{1-\ee}}{\left(1+\sqrt{1-\ee}\right)^2} l\left( l-1\right) \sinpp \ylm \\
            & + \frac{2 m^2 \left(1-\ee\right)}{\left(1+\sqrt{1-\ee}\right)^2} \ylm + 2 \frac{l^2 + m^2 +l}{2l+1} \frac{1+2\sqrt{1-\ee}}{\left(1+\sqrt{1-\ee}\right)^2} \ylm \\
            & -2(l+m)(l+m-1)\frac{1+2\sqrt{1-\ee}}{\left(1+\sqrt{1-\ee}\right)^2} Y_{l-2}^m \Biggr],
\end{align*}
which gives our claim after some more simplifications.
\end{proof}
Using the claim, Equation \eqref{eq:lambda_sphere} becomes
\begin{equation*}
 \lambda a_l^m \lVert \ylm \rVert^2 = \sum_{k=0}^{+\infty} a_k^m \langle \ylm , \Delta^{F_{\eps}} Y_k^m \rangle.
\end{equation*}
Now, we can use an expansion of $\Delta^{F_{\eps}} Y_k^m $\ in powers of $\eps$.

\begin{claim}
 For any $l,m$, we have
 \begin{multline}
 \Delta^{F_{\eps}} \ylm  =  -l(l+1) \ylm + \eps^2 \Biggl[\frac{3 l (l+1)}{4} \sinpp\ylm \\
       + \left( \frac{m^2}{2} + \frac{3\left(l(l+1) +m^2\right)}{ 2l+1} \right) \ylm  + \frac{3}{2} (l+m)(l+m-1) Y_{l-2}^m \Biggr] + O\left(\eps^4\right).
\end{multline}
\end{claim}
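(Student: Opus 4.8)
The plan is to deduce this expansion directly from the exact formula for $\Delta^{F_{\eps}}\ylm$ established in the preceding Claim, by Taylor expanding its $\eps$-dependent coefficients about $\eps=0$. The key preliminary observation is that, by \eqref{eq:laplacien_S2}, the Finsler--Laplace operator depends on $\eps$ only through the quantity $\ee=\eps^2\sin^2(\phi)$; in particular $\eps\mapsto\Delta^{F_{\eps}}$ factors through $\eps\mapsto\eps^2$, so the $\eps$-expansion of $\Delta^{F_{\eps}}\ylm$ proceeds in powers of $\eps^2$ and any truncation after the $\eps^2$ term automatically leaves a remainder that is $O(\eps^4)$, and uniformly so on $\S^2$ since $0\le\ee\le\eps^2$.

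Concretely, the preceding Claim has the shape $\Delta^{F_{\eps}}\ylm=-l(l+1)\ylm+\eps^2\,\Xi(\ee)$, where $\Xi(\ee)$ is a fixed linear combination of $\sinpp\ylm$, $\ylm$ and $Y_{l-2}^m$ whose coefficients are the rational functions $\frac{1+2\sqrt{1-\ee}}{(1+\sqrt{1-\ee})^2}$ and $\frac{2(1-\ee)}{(1+\sqrt{1-\ee})^2}$ of $\sqrt{1-\ee}$, each smooth in $\ee$ near $\ee=0$. Since $\Xi$ is already multiplied by $\eps^2$, I would simply replace $\Xi(\ee)$ by $\Xi(0)$: evaluating at $\ee=0$ means setting $\sqrt{1-\ee}=1$, so $\frac{1+2\sqrt{1-\ee}}{(1+\sqrt{1-\ee})^2}\to\tfrac34$ and $\frac{2(1-\ee)}{(1+\sqrt{1-\ee})^2}\to\tfrac12$. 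The error incurred is $\eps^2\big(\Xi(\ee)-\Xi(0)\big)=\eps^2\cdot O(\ee)=O(\eps^4)$, uniformly in $\phi$ because $\ee\le\eps^2$ and because $\plm$ is (up to the factor $e^{im\theta}$) a polynomial in $\cos\phi$ and hence has bounded $C^0$-norm. Substituting these limiting values of the coefficients and collecting terms produces exactly the asserted expansion.

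There is no genuine obstacle here: the whole argument is an elementary Taylor expansion, and its only delicate point is the bookkeeping of which of the $\ee$-dependent factors in $\Xi$ contribute at order $\eps^2$ and which are swallowed by the $O(\eps^4)$ remainder. If one prefers not to invoke the preceding Claim, the same expansion can be obtained from scratch: expand the coefficients in \eqref{eq:laplacien_S2} to first order in $\ee$ before applying $L_X^2\pi^{\ast}f$, and then use the Legendre recursions \eqref{eq:legendre} and \eqref{eq:derivative_legendre} exactly as in the proof of Theorem \ref{th:laplacien_S2}. Finally, I would emphasize that the estimate is uniform on $\S^2$, as recorded above — this is precisely what later legitimizes the $o(\eps^2)$ term in \eqref{eq:spectre_S2}.
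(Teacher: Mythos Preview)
Your approach is correct and is exactly what the paper does: the paper's entire proof is the single sentence ``The claim follows once again from a straightforward computation,'' meaning one substitutes $\ee=0$ in the $\eps$-dependent coefficients of the preceding exact formula \eqref{eq:Delta_ylm} and collects terms. You have in fact supplied more detail than the paper, including the useful observation that dependence on $\eps$ factors through $\eps^2$ (so the remainder is automatically $O(\eps^4)$) and the uniformity in $\phi$ of the estimate.
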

The claim follows once again from a straightforward computation.\\
Using this second claim and the orthogonality of the spherical harmonics, Equation \eqref{eq:lambda_sphere} now reads
\begin{multline}
\label{eq:expansion_de_lambda}
 \lambda a_l^m \lVert \ylm \rVert^2 =  -l(l+1) a_l^m \lVert \ylm \rVert^2 + a_l^m \eps^2 \Biggl[\frac{3 l (l+1)}{4} \langle \sinpp\ylm , \ylm \rangle \\
       + \left( \frac{m^2}{2} + \frac{3\left(l(l+1) +m^2\right)}{ 2l+1} \right) \lVert \ylm \rVert^2 \Biggr]  \\
 + \sum_{k \neq l} a_k^m \eps^2  \Biggl[\frac{3 k (k+1)}{4} \langle \sinpp Y_k^m, \ylm \rangle    + \frac{3}{2} (k+m)(k+m-1) \langle Y_{k-2}^m, \ylm \rangle \Biggr] + O\left(\eps^4\right).
\end{multline}

\begin{claim}
 There is at most one $l$\ such that $\frac{1}{a_l^m}$\ is bounded independently of $\eps$.
\end{claim}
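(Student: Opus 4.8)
The plan is to extract from the projected eigenvalue equation \eqref{eq:expansion_de_lambda} a sharp asymptotic for $\lambda$ in terms of the index $l$, and then to use that $t\mapsto t(t+1)$ is strictly increasing on the non-negative integers. Throughout, $f$, $\lambda$ and the coefficients $a_l^m$ are regarded as functions of $\eps$ (as in the surrounding argument), and I would begin by normalizing $f$ so that $\langle f,f\rangle=1$; by Parseval this gives $\sum_{l,m}|a_l^m|^2\lVert\ylm\rVert^2=1$, hence $|a_l^m|\,\lVert\ylm\rVert\le1$ uniformly in $\eps$.

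Next I would note that, because $\Delta^{F_{\eps}}$ preserves the Fourier mode $e^{im\theta}$ and, by the recurrence relations of Proposition \ref{prop:ylm} applied to $\cos\phi\,\plm$ once or twice, multiplication by $\sin^{2}\phi$ carries $\ylm$ into a combination of $Y_{l-2}^m$, $\ylm$, $Y_{l+2}^m$, the sum over $k$ in \eqref{eq:expansion_de_lambda} reduces, up to the $O(\eps^4)$ remainder, to $k=l\pm2$. Dividing \eqref{eq:expansion_de_lambda} by $a_l^m\lVert\ylm\rVert^2$ — legitimate since the hypothesis presupposes $a_l^m\neq0$ — gives
\begin{equation*}
 \lambda = -l(l+1) + \frac{\eps^2}{a_l^m\lVert\ylm\rVert^2}\Bigl(a_l^m\,b_l + a_{l-2}^m\,b_{l-2,l} + a_{l+2}^m\,b_{l+2,l}\Bigr) + \frac{O(\eps^4)}{a_l^m\lVert\ylm\rVert^2},
\end{equation*}
where the coefficients $b_l,b_{l\pm2,l}$ stay bounded as $\eps\to0$. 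Using the uniform bounds $|a_k^m|\le\lVert Y_k^m\rVert^{-1}$ from the normalization, the bracketed sum is bounded independently of $\eps$.

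With this identity in hand, the argument closes quickly: if $l$ is an index for which $1/a_l^m$ stays bounded as $\eps\to0$, then both correction terms above are $O(\eps^2)$, so $\lambda(\eps)\to-l(l+1)$. Were there two distinct such indices $l_1\neq l_2$, we would get $-l_1(l_1+1)=\lim_{\eps\to0}\lambda(\eps)=-l_2(l_2+1)$, impossible since $t\mapsto t(t+1)$ is injective on $\mathbb{N}$; hence at most one such $l$ exists. This is precisely the $l$ that supplies the dominant spherical-harmonic part $\ylm$ in Theorem \ref{thm:spectre_S2}.

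The one point I expect to require care is the uniform-in-$\eps$ control of the $O(\eps^4)$ error, since the coefficients of $\Delta^{F_{\eps}}$ in \eqref{eq:laplacien_S2} are not polynomial in $\sin^2\phi$ and so $\Delta^{F_{\eps}}\ylm$ a priori has infinitely many nonzero Fourier coefficients. I would handle this by writing the off-diagonal part of the eigenvalue equation as the pairing of the $\ell^2$ sequence $(a_k^m\lVert Y_k^m\rVert)_k$ (of norm $\le1$) against the Fourier coefficients of $(\Delta^{F_{\eps}})^{*}\widehat{\ylm}$; since $\Delta^{F_{\eps}}$ depends smoothly on $\eps$ in $C^{\infty}$ and $\Delta^{F_0}$ is self-adjoint for the round volume, these coefficients converge in $\ell^2$ to those of $-l(l+1)\widehat{\ylm}$ as $\eps\to0$, so the off-diagonal contribution is $o(1)$ times the bounded factor $1/a_l^m$. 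Everything else is a one-line limit.
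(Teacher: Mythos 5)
Your proposal is correct and follows the same route as the paper: read off from Equation \eqref{eq:expansion_de_lambda} that boundedness of $1/a_l^m$ forces $\lambda(\eps)\to -l(l+1)$ as $\eps\to 0$, and conclude by injectivity of $l\mapsto l(l+1)$ on $\N$. Your write-up supplies the Parseval normalization of $f$, the reduction of the $k$-sum to $k=l\pm 2$ via the three-term recurrence, and the $\ell^2$ control of the $O(\eps^4)$ remainder — all of which the paper's two-sentence proof leaves implicit.
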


\begin{proof}
 Equation \eqref{eq:expansion_de_lambda} shows that, if $\frac{1}{a_l^m}$\ is bounded as $\eps$\ tends to $0$, then $\lambda$\ tends to $-l(l+1)$. Therefore we can have only one such $l$.
\end{proof}

Let $l$\ be given by the previous claim, \eqref{eq:expansion_de_lambda} reduces to
\begin{multline*}
 \lambda  = -l(l+1)  + \frac{\eps^2}{\lVert \ylm \rVert^2 } \Biggl[\frac{3 l (l+1)}{4} \langle \sinpp\ylm , \ylm \rangle \\
        + \left( \frac{m^2}{2} + \frac{3\left(l(l+1) +m^2\right)}{ 2l+1} \right) \lVert \ylm \rVert^2 \Biggr] + o\left(\eps^2\right).
\end{multline*}
Some more computations (using Equations \eqref{eq:rec_legendre2}, \eqref{eq:norm_ylm} and the orthogonality of the spherical harmonics) give
\begin{equation*}
 \frac{\langle \sinpp\ylm , \ylm \rangle}{\lVert \ylm \rVert^2 }  = 2\frac{l^2+l-1+ m^2}{ \left(2l+3 \right)\left(2l-1 \right)} \, ,
\end{equation*}
so that
\begin{multline} \label{eq:lambda_derniere}
 \lambda  =  -l(l+1)  + \eps^2 \Biggl[\frac{3 l (l+1)}{2} \frac{l^2+l-1+ m^2}{ \left(2l+3 \right)\left(2l-1 \right)} \\
        + \left( \frac{m^2}{2} + \frac{3\left(l(l+1) +m^2\right)}{ 2l+1} \right) \Biggr] + o\left(\eps^2\right).
\end{multline}
From this equation, we deduce
\begin{claim}
 There can only be one $m$\ such that $\frac{1}{a_l^m}$\ or $\frac{1}{a_l^{-m}}$\ is bounded independently of $\eps$.
\end{claim}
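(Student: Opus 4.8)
The plan is to mimic the proof of the preceding claim (uniqueness of the index $l$), now exploiting the explicit $\eps^2$-expansion \eqref{eq:lambda_derniere} of $\lambda$. First I would record the coefficient of $\eps^2$ in \eqref{eq:lambda_derniere} as a single function
\begin{equation*}
 C(l,m) := \frac{3 l (l+1)}{2}\,\frac{l^2+l-1+ m^2}{\left(2l+3 \right)\left(2l-1 \right)} + \frac{m^2}{2} + \frac{3\left(l(l+1) +m^2\right)}{ 2l+1}\,,
\end{equation*}
and make two observations about it. First, $C(l,m)$ depends on $m$ only through $m^2$; moreover, since \eqref{eq:lambda_derniere} involves only $m^2$ and the norm $\lVert \ylm \rVert = \lVert Y_l^{-m} \rVert$, that expansion holds verbatim with $a_l^m$ replaced by $a_l^{-m}$, so the relevant object is really $\lvert m\rvert$. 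Second, for $l\geq 1$ — the case $l=0$ being vacuous, as then $m=0$ is the only admissible index — $C(l,\cdot)$ is \emph{strictly} increasing in $m^2$: differentiating in the real variable $m^2$ gives $\frac{3l(l+1)}{2(2l+3)(2l-1)} + \frac12 + \frac{3}{2l+1}\geq \frac12 > 0$, using $2l-1>0$.

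With this in hand the argument is immediate. Suppose $m_1\neq m_2$ are non-negative integers such that $1/a_l^{m_1}$ (or $1/a_l^{-m_1}$) and $1/a_l^{m_2}$ (or $1/a_l^{-m_2}$) both stay bounded as $\eps\to 0$, where $l$ is the unique index produced by the preceding claim. Then \eqref{eq:lambda_derniere}, applied first with $m_1$ and then with $m_2$, yields
\begin{equation*}
 \eps^2 C(l,m_1) + o(\eps^2) = \lambda + l(l+1) = \eps^2 C(l,m_2) + o(\eps^2)\,.
\end{equation*}
Dividing by $\eps^2\neq 0$ and letting $\eps\to 0$ forces $C(l,m_1)=C(l,m_2)$, which contradicts the strict monotonicity of $C(l,\cdot)$ in $m^2$ since $m_1^2\neq m_2^2$. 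Hence at most one value of $\lvert m\rvert$ can occur, which is exactly the assertion of the claim.

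The only step needing a little care — and the closest thing to an obstacle — is making sure \eqref{eq:lambda_derniere} is genuinely available for each candidate $m$: this requires, exactly as in the reduction carried out just above \eqref{eq:lambda_derniere}, that dividing \eqref{eq:expansion_de_lambda} by $a_l^m\lVert \ylm\rVert^2$ be legitimate and that the $k\neq l$ terms there be absorbed into the $o(\eps^2)$. That is the same verification already performed for the index $l$, now applied with that $l$ held fixed and with each $m$ for which $1/a_l^m$ (or $1/a_l^{-m}$) is bounded; no new input is involved, so I would simply invoke it.
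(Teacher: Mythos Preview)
Your proposal is correct and follows exactly the paper's approach: the paper's entire proof is the single line ``Otherwise, we would find two different coefficients in $\eps^2$ for $\lambda$,'' and you have simply unpacked this by writing out the coefficient $C(l,m)$ explicitly and checking that it is strictly monotone in $m^2$. The extra care you take (noting the $l=0$ case is vacuous, and flagging that \eqref{eq:lambda_derniere} must be legitimately available for each candidate $m$) is welcome but not a different argument.
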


\begin{proof}
 Otherwise, we would find two different coefficients in $\eps^2$\ for $\lambda$.
\end{proof}

We sum up what we proved, namely that there exist unique $l,m\in \N $, $a,b \in \C$ and $g \colon \S^2 \rightarrow \C$\ such that
\begin{equation*}
f = a \ylm + b Y_l^{-m} + g \,.
\end{equation*}
Furthermore, for any $p\in \S^2$, $|g(p)|$\ tends to $0$\ with $\eps$ and the associated eigenvalue verifies Equation \eqref{eq:lambda_derniere}. That is, we proved Theorem \ref{thm:spectre_S2}.

\subsubsection{First eigenvalue and volume}

We finish by proving Corollary \ref{cor:lambda1}. Recall:

\begin{corspecial}
 The smallest non-zero eigenvalue of $-\Delta^{F_{\eps}}$\ is 
\begin{equation*}
 \lambda_1 = 2 - 2 \eps^2 = \frac{8\pi}{\text{vol}_{\Omega}\left(\S^2\right) }.
\end{equation*}
It is of multiplicity two and the eigenspace is generated by $Y_1^1$\ and $Y_1^{-1}$.
\end{corspecial}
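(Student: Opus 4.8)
The plan is to verify directly that $Y_1^{\pm 1}$ are eigenfunctions of $\Delta^{F_\eps}$ with eigenvalue $-(2-2\eps^2)$, then argue that no smaller nonzero eigenvalue can occur, and finally compute $\mathrm{vol}_{\Omega^{F_\eps}}(\S^2)$ and check the numerical identity. For the first part, I would plug $l=1$, $m=\pm 1$ into the formula \eqref{eq:Delta_ylm} for $\Delta^{F_\eps}\ylm$ proved in the excerpt. Since $Y_1^1(\phi,\theta)=e^{i\theta}P_1^1(\cos\phi)$ and $P_1^1(\cos\phi)$ is proportional to $\sin\phi$, we have $\sin^2(\phi)\,Y_1^1 = $ (const)$\cdot\sin^3(\phi)e^{i\theta}$, which is \emph{not} a multiple of $Y_1^1$ in general — so the key computational step is to re-expand $\sin^2(\phi)\,Y_1^{\pm1}$ in spherical harmonics and check that, for $l=1$, the offending terms cancel. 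In \eqref{eq:Delta_ylm} with $l=1,m=\pm1$ the term $l(l-1)=0$ kills the $\sin^2(\phi)\ylm$ contribution entirely, and $(l+m)(l+m-1)=2\cdot1=2$ with $m=1$ gives a $Y_{-1}^1=0$ term (using the convention $Y_l^m=0$ when $|m|>l$), while for $m=-1$ we get $(l+m)(l+m-1)=0\cdot(-1)=0$. So the only surviving terms are $-l(l+1)\ylm = -2\ylm$ and the $\eps^2$ correction $\frac{\eps^2}{(1+\sqrt{1-\ee})^2}\big[2m^2(1-\ee)+2\frac{l^2+m^2+l}{2l+1}(1+2\sqrt{1-\ee})\big]\ylm$; with $l=1$, $m^2=1$ this is $\frac{\eps^2}{(1+\sqrt{1-\ee})^2}\big[2(1-\ee)+2(1+2\sqrt{1-\ee})\big]\ylm = \frac{\eps^2}{(1+\sqrt{1-\ee})^2}\cdot 2(1+\sqrt{1-\ee})^2 \ylm = 2\eps^2\ylm$. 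Hence $\Delta^{F_\eps}Y_1^{\pm1} = (-2+2\eps^2)Y_1^{\pm1}$, i.e. $-\Delta^{F_\eps}Y_1^{\pm1} = (2-2\eps^2)Y_1^{\pm1}$. Wait — I must double-check whether the $\ee$ inside \eqref{eq:Delta_ylm} ought to be $\eps^2\sin^2\phi$; since the formula came with $\sin^2(\phi)$ factors multiplying the other $\ylm$ terms, the clean cancellation above is exactly what makes $l=1$ special, and this is the step I would carry out carefully.

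Next I would argue these give the \emph{smallest} nonzero eigenvalue. By the Min-Max principle (Theorem~\ref{thm:Min-Max}, in the closed-manifold form of the Remark following it), $\lambda_1 = \inf\{R(u)\mid u\in H^1(\S^2),\ \int_{\S^2}u\,\Omega^{F_\eps}=0\}$. From Theorem~\ref{thm:spectre_S2}, every eigenfunction of $\Delta^{F_\eps}$ is of the form $a\ylm+bY_l^{-m}+g$ with $g\to 0$ as $\eps\to 0$ and eigenvalue $\lambda = -l(l+1) + O(\eps^2)$ given by \eqref{eq:lambda_derniere}; hence for $\eps$ small the eigenvalues cluster near $-l(l+1)$ for $l\geq 1$, and the only eigenvalues near $-2$ are those attached to $l=1$. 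Since $Y_1^0$ also satisfies $\Delta^{F_\eps}Y_1^0=-2Y_1^0$ (noted in the text, and one can check it the same way from \eqref{eq:Delta_ylm}: $l=1,m=0$ gives $-2\ylm$ with vanishing $\eps^2$-correction $2m^2(1-\ee)+2\frac{l^2+l}{3}(1+2\sqrt{1-\ee}) = 0 + 2\cdot\frac{2}{3}(1+2\sqrt{1-\ee})$, hmm this is nonzero) — so I would instead simply invoke \eqref{eq:lambda_derniere} with $l=1$: the $\eps^2$-coefficient is $\frac{3\cdot2}{2}\cdot\frac{1-1+m^2}{5\cdot1} + \frac{m^2}{2} + \frac{3(2+m^2)}{3} = \frac{3m^2}{5}+\frac{m^2}{2}+2+m^2$, which for $m=\pm1$ gives $\frac{3}{5}+\frac12+2+1 = \frac{41}{10}$; that does not match $-2\eps^2$ hmm. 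Actually \eqref{eq:lambda_derniere} is only an $o(\eps^2)$ asymptotic while the exact computation above is exact, and the exact value $2-2\eps^2$ is what holds; for $l=0$ we get $\lambda_0=0$ with constant eigenfunction, and for all $l\geq 2$ the eigenvalues are near $-l(l+1)\leq -6$, hence $>2$ in absolute value for small $\eps$, so $\lambda_1=2-2\eps^2$ among small-$\eps$ data, with multiplicity exactly two since $Y_1^0$ is ruled out — here I would need to recheck the $Y_1^0$ eigenvalue directly, and if it equals $-2$ exactly then the multiplicity claim needs the symmetry argument via $\Psi$ from the text to separate $Y_1^0$ from $Y_1^{\pm1}$, or a direct orthogonality argument. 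This reconciliation is the \textbf{main obstacle}: pinning down the exact eigenvalue of $Y_1^0$ and confirming the eigenspace is spanned precisely by $Y_1^{\pm1}$.

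Finally I would compute the volume. By Proposition~\ref{prop:construction} and Lemma~\ref{lem:Fubini}, $\mathrm{vol}_{\Omega^{F_\eps}}(\S^2) = \int_{\S^2}\Omega^{F_\eps} = \frac{1}{\voleucl(\S^1)}\int_{H\S^2}\alpha^F\wedge\pi^*\Omega^F = \frac{1}{2\pi}\int_{H\S^2}A\wedge dA$, and I would use the explicit formula \eqref{eq:ada_sphere}, namely $A\wedge dA = \frac{\sin\phi}{(1-\ee)^{3/2}}(1-\e\cos\psi)\,d\psi\wedge d\phi\wedge d\theta$. Integrating over $\psi\in[0,2\pi]$ kills the $\cos\psi$ term and yields $\int_0^{2\pi}(1-\e\cos\psi)d\psi = 2\pi$, so the integrand becomes $\frac{2\pi\sin\phi}{(1-\ee)^{3/2}}\,d\phi\wedge d\theta$; but note $(1-\ee) = 1-\eps^2\sin^2\phi$ depends on $\phi$. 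Then $\mathrm{vol}_{\Omega^{F_\eps}}(\S^2) = \frac{1}{2\pi}\int_0^{2\pi}\!\!\int_0^\pi \frac{2\pi\sin\phi}{(1-\eps^2\sin^2\phi)^{3/2}}\,d\phi\,d\theta = 2\pi\int_0^\pi\frac{\sin\phi}{(1-\eps^2\sin^2\phi)^{3/2}}d\phi$. The substitution $u=\cos\phi$ gives $\int_{-1}^1\frac{du}{(1-\eps^2(1-u^2))^{3/2}} = \int_{-1}^1\frac{du}{((1-\eps^2)+\eps^2u^2)^{3/2}}$, a standard integral equal to $\frac{1}{1-\eps^2}\cdot\frac{2}{\sqrt{1-\eps^2+\eps^2}} \cdot$ (after evaluating $\big[\frac{u}{(1-\eps^2)\sqrt{1-\eps^2+\eps^2u^2}}\big]_{-1}^1$) $= \frac{2}{1-\eps^2}$. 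Hence $\mathrm{vol}_{\Omega^{F_\eps}}(\S^2) = \frac{4\pi}{1-\eps^2}$, and therefore $\frac{8\pi}{\mathrm{vol}_{\Omega^{F_\eps}}(\S^2)} = 8\pi\cdot\frac{1-\eps^2}{4\pi} = 2(1-\eps^2) = 2-2\eps^2 = \lambda_1$, which closes the proof.
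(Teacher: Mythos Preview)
Your approach is exactly the paper's: verify $Y_1^{\pm 1}$ are eigenfunctions via \eqref{eq:Delta_ylm}, invoke Theorem~\ref{thm:spectre_S2} to localize the candidates for $\lambda_1$ near the $l=1$ space, and compute the volume from \eqref{eq:ada_sphere}. Your computations for $Y_1^{\pm 1}$ and for the volume are correct and match the paper line for line.

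The one gap is precisely what you flagged as the ``main obstacle'': the status of $Y_1^0$. Your attempts via \eqref{eq:Delta_ylm} and \eqref{eq:lambda_derniere} stall (the latter is only an $o(\eps^2)$ approximation, and \eqref{eq:Delta_ylm} is awkward to apply cleanly when $m=0$). The paper bypasses this by computing $\Delta^{F_\eps}Y_1^0$ \emph{directly} from the explicit operator in Theorem~\ref{th:laplacien_S2}. Since $Y_1^0=\cos\phi$ has no $\theta$-dependence, one gets
\[
\Delta^{F_\eps}\cos\phi=\frac{2}{1+\sqrt{1-\ee}}\Bigl[-(1-\ee)\cos\phi-(\ee+\sqrt{1-\ee})\cos\phi\Bigr]=-2\cos\phi,
\]
so $Y_1^0$ has exact eigenvalue $-2$, distinct from $-(2-2\eps^2)$. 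Combined with Theorem~\ref{thm:spectre_S2}, which says any eigenfunction for an eigenvalue near $-2$ must lie (up to a small remainder) in the span of $Y_1^{-1},Y_1^0,Y_1^1$, this pins down $\lambda_1=2-2\eps^2$ with eigenspace spanned by $Y_1^{\pm 1}$, hence multiplicity two. That two-line direct computation is all that was missing from your argument.
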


\begin{proof}
Computation using either \eqref{eq:Delta_ylm} or directly Theorem \ref{th:laplacien_S2} gives $\Delta^{F_{\eps}} Y_1^1 = (-2 +2 \eps^2) Y_1^1$\ and $\Delta^{F_{\eps}} Y_1^{-1} = (-2 +2 \eps^2) Y_1^{-1}$. It also yields $\Delta^{F_{\eps}} Y_1^0 = -2 Y_1^0 $, now Theorem \ref{thm:spectre_S2} shows that the eigenfunctions for the first (non-zero) eigenvalue must live in the vicinity of the space generated by $Y_1^1, \; Y_1^0$\ and $Y_1^{-1}$, therefore $\lambda_1 = 2 - 2 \eps^2$.

Now using Equation \eqref{eq:ada_sphere} we get that the Finsler volume form for $(\S^2,F_{\eps})$\ is 
$$
\Omega^{F_{\eps}} = \frac{\sinp}{\left(1-\ee\right)^{3/2}} d\theta \wedge d\phi\, .
$$
So
$$
\text{vol}_{\Omega} \left(\S^2\right) = \frac{4\pi}{1-\eps^2} \, ,
$$
and hence,
\begin{equation*}
 \lambda_1 = \frac{8\pi}{\text{vol}_{\Omega}\left(\S^2\right) } \, .
\end{equation*}

\end{proof}

\chapter{Spectrum and geometry at infinity}

Our focus here will be the study of the links between the Finsler--Laplace operator and the dynamics or geometry for Finsler metrics with negative curvature (in the sense of equation \eqref{eq:def_curvature}).

In Riemannian manifolds of negative curvature, there are (at least) three natural classes of measures on the boundary at infinity: the Liouville (or visual) measure class, which is obtained by pushing the Lebesgue measure on unit spheres to the boundary via the geodesic flow; the Patterson-Sullivan measure class, which can be obtained from the Bowen-Margulis measure via the Kaimanovich correspondence (see \cite{Kaimanovich}); and the Harmonic measure class which is linked to the Laplace--Beltrami operator in a way that we will explicit later.

 In the case of surfaces we have a famous rigidity phenomenon: when two of those classes are equivalent, it forces the Riemannian metric to be of constant curvature (this is due to Katok \cite{Katok:4_appl,Katok:entropy_closed_geod} and Ledrappier \cite{Ledrappier:Harm_measures}).

 In higher dimensions, Ledrappier \cite{Ledrappier:Harm_measures} showed that equality between the Harmonic and Patterson-Sullivan classes is equivalent to $\lambda_1 = \frac{h^2}{4}$, where $\lambda_1$ is the bottom of the spectrum of $\Delta$ and $h$ is the topological entropy. In \cite{BessonCourtoisGallot}, G.\ Besson, G.\ Courtois and S.\ Gallot proved that $\lambda_1 = \frac{h^2}{4}$ implies that the manifold is a symmetric space.

When we started studying the Finsler--Laplace operator in negative curvature, our goal was to generalize some of (or get counter-examples to) the above results. Unfortunately, this is still out of reach. The first difficulty we stumbled upon was the existence of harmonic measures associated with our Finsler--Laplace operator. Many papers prove their existence in the Riemannian case, or even for weighted Laplace operators (see the remark after Lemma \ref{lem:existence_unicity}) when the symbol is of negative curvature, but none, to my knowledge, was made for our more general case. However, Ancona gives in \cite{Ancona:theorie_potentiel} a very general theorem that implies existence of harmonic measures.

Sections \ref{sec:harmonic_measures} and \ref{sec:existence_of_harmonic} are devoted to stating Ancona's theorem and proof that it applies to our case. But beforehand, we start by recalling some geometrical and dynamical properties of negatively curved Finsler manifolds and use them to give an upper bound for the first eigenvalue of the Finsler--Laplace operator in terms of topological entropy.

If not stated otherwise, in this chapter, $M$ is a closed manifold of dimension $n$ endowed with a Finsler metric of negative curvature $F$ and $\M$ is a fixed universal cover of $M$ endowed with the lifted Finsler metric $\F$.

\section{Negatively curved Finsler manifolds}

Manifolds of negative Finsler curvature enjoy many of the same dynamical and geometrical features of Riemannian ones. We will recall here two of those.

\subsection{Gromov-hyperbolicity} \label{subsec:gromov-hyperbolicity}

Egloff, in his Ph.D Thesis \cite{Egloff:thesis} (the reader can also refer to \cite{Egloff:UFHM} as it is available on-line when the dissertation is not), studied the Finsler equivalent of Cartan Hadamard manifolds that he called \emph{uniform Finsler Hadamard manifolds}.

Note that in Egloff's definition, uniform refers to a control of the quadratic forms $\left( \frac{\partial^2 F^2}{\partial v_i v_j} \right)$, not to a control of the curvature. We do not enter into more details as, for us, uniform Finsler Hadamard manifolds will just be the universal cover of a closed manifold of non-positive curvature. Such manifolds are in particular homeomorphic to $\R^n$ and Egloff studied the property of the Finsler distance and the existence of a visual boundary. He proved:

\begin{thm}[Egloff \cite{Egloff:UFHM}]
 Let $\M$ be a uniform Finsler Hadamard manifold of strictly negative curvature. Then $\M$ is Gromov-hyperbolic.
\end{thm}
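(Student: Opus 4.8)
\emph{Plan of proof.} In the situation at hand $\M$ is the universal cover of a closed manifold $M$ of negative curvature, so I would begin by extracting uniform bounds from compactness: since $HM$ is compact the flag curvature of $F$ (the continuous function on $HM$ carried by the Jacobi endomorphism $R^X$) is bounded above by some $-a^2<0$, and the same holds on $\M$ for $\F$ by $\pi_1(M)$-invariance; likewise the reversibility constant $\lambda_F:=\sup_{HM}F(x,-v)/F(x,v)$ is finite, so the Finsler distance $d_{\F}$ and its symmetrisation $\hat d(x,y):=\tfrac12\bigl(d_{\F}(x,y)+d_{\F}(y,x)\bigr)$ are bi-Lipschitz equivalent. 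As Gromov-hyperbolicity is a quasi-isometry invariant, it is enough to prove that $d_{\F}$-geodesic triangles in $\M$ are uniformly thin; I would then follow the classical Cartan--Hadamard scheme, the essential new point being that no $\mathrm{CAT}(-a^2)$ comparison is available here, so everything must be read off from Jacobi fields in Foulon's formalism.

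\emph{Step 1: an exponential Jacobi-field estimate.} Let $c$ be a unit-speed geodesic of $\F$ and $J$ a Jacobi field along it, i.e.\ a vertical vector field obeying the Jacobi equation $D_X^2 J + R^X J = 0$ of Foulon's formalism. Using that $D_X$ is compatible with the vertical metric $g$ and that $R^X$ restricted to the vertical bundle is $g$-self-adjoint (a standard property of that formalism), one gets, for $f:=\sqrt{g(J,J)}$, first $L_X\bigl(g(J,J)\bigr)=2\,g(D_XJ,J)$ and then
\[
L_X^2\bigl(g(J,J)\bigr)=-2\,g(R^XJ,J)+2\,|D_XJ|^2\ \ge\ 2a^2 f^2 .
\]
Combining this with $(f')^2=g(D_XJ,J)^2/f^2\le|D_XJ|^2$ (Cauchy--Schwarz) yields $f''\ge a^2 f$ wherever $f>0$. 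For a Jacobi field vanishing at $c(0)$ this forces $f(t)\ge \tfrac{1}{a}\,|D_XJ(0)|\,\sinh(at)$, that is, exponential spreading of the geodesic variation generating $J$.

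\emph{Step 2: from infinitesimal to macroscopic divergence.} By Egloff's Finsler Cartan--Hadamard theorem the exponential map of $\F$ is a diffeomorphism at every point (negative curvature excludes conjugate points), so $\M$ is uniquely geodesic and the Finsler distance is convex along geodesics (non-positivity of the curvature). Applying Step~1 to the radial geodesic variation issuing from a point $p$, one obtains that two geodesic rays from $p$ diverge at least exponentially: $d_{\F}(c_1(t),c_2(t))$ grows like $\sinh(at)$ up to the ``angle'' between the rays. Feeding this divergence estimate, together with the convexity of the distance (which, exactly as in the non-positively curved Riemannian case, controls the length of paths forced to avoid a metric ball), into the standard criterion that exponential divergence of geodesics implies $\delta$-hyperbolicity, one concludes that the $d_{\F}$-geodesic triangles of $\M$ are $\delta$-thin with $\delta=\delta(a)$, hence that $\M$ is Gromov-hyperbolic.

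\emph{Main obstacle.} The delicate part is Step~2. Unlike in Riemannian geometry one cannot invoke a $\mathrm{CAT}(-a^2)$ comparison theorem — a Finsler metric of non-positive curvature is not $\mathrm{CAT}(0)$ unless it is Riemannian — so the passage from the infinitesimal estimate of Step~1 to the uniform thinness of triangles has to be carried out directly, and it genuinely relies on the Finslerian facts established by Egloff (global diffeomorphism property of $\exp$, convexity of the asymmetric distance, structure of the visual boundary). The subsidiary technical points are to keep track of the asymmetry of $d_{\F}$ throughout, and to verify that the comparison of Jacobi fields in Step~1 is uniform in the initial data so that the final $\delta$ depends only on the curvature bound $a$.
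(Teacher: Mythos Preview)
The paper does not actually prove this theorem: it is simply quoted as a result of Egloff, with a citation to \cite{Egloff:UFHM} and a remark that Fang--Foulon extended it to the non-reversible case. There is therefore no ``paper's own proof'' to compare your proposal to.

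That said, your outline is a reasonable sketch of the standard route (Jacobi-field comparison $\Rightarrow$ exponential divergence of geodesics $\Rightarrow$ $\delta$-thinness of triangles), and you are right to flag Step~2 as the genuine difficulty in the Finsler setting, where no $\mathrm{CAT}(-a^2)$ comparison is available. One point to be careful with: your opening sentence assumes $\M$ is the universal cover of a closed manifold, which is the situation the thesis cares about, but the theorem as stated is for general uniform Finsler Hadamard manifolds; the uniformity hypothesis (a control on the Hessians $\partial^2 F^2/\partial v_i\partial v_j$, as the paper notes) is precisely what replaces compactness of the base in providing the uniform bounds you need in Step~1.
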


\begin{rem}
 Note that Egloff only studied \emph{reversible} metrics, as is normally the case in metric geometry. However Fang and Foulon \cite{FangFoulon} proved that the same theorem holds for non-reversible metrics (with an appropriate definition of Gromov-hyperbolicity).
\end{rem}

We very briefly recall some facts about Gromov-hyperbolic spaces. Proofs, better explanations and much more can be found in \cite{CooDelPap} or \cite{GhysDelaHarpe}.\\

 Let $(V,d)$ be a complete, locally compact, geodesic (i.e., there exists at least one distance-minimizing curve between two points), simply connected metric space.

Let $x,a, b\in V$. Then the \emph{Gromov product} at $x$ of $a$ and $b$ is defined as
\begin{equation*}
 \lbrace a, b\rbrace_x = \frac{1}{2}\left( d(a,x) +d(x,b) -d(a,b) \right),
\end{equation*}
The metric space $V$ is called \emph{Gromov-hyperbolic} if there exists $\delta >0$ such that, for any $x, a,  b, c\in V$,
\begin{equation*}
 \min\left( \lbrace a, b\rbrace_x , \lbrace a, c\rbrace_x \right) \leq \delta \, .
\end{equation*}
If we want to make explicit the constant $\delta$, we say that $V$ is $\delta$-hyperbolic.

The Gromov product is very useful, but unfortunately it is hard (at least for me) to get an insight of what being Gromov-hyperbolic represents using the above definition. An equivalent definition uses geodesic triangles:

The space $V$ is Gromov-hyperbolic if there exists $\delta>0$ such that for any geodesic triangle $(a,b,c) \subset V$, any side is contained in the $\delta$-neighborhood of the union of the remaining sides.

A Gromov-hyperbolic space admits a boundary at infinity $V(\infty)$. One way to define it is to take equivalence classes of geodesic rays; if $O\in V$ is a base point, two geodesic rays $\gamma_1, \gamma_2 \colon \R^+ \rightarrow V$ issuing from $O$ are equivalent if $d(\gamma_1(t),\gamma_2(t) )$ stays bounded for any $t\in \R^+$.

Consider the elements in $V$ as endpoints of geodesic rays starting at $O$ and endow the set of all rays with the uniform convergence on compact topology. Then $\overline{V}:= V \cup V(\infty)$ with the quotient topology is compact, $V$ is a dense open set in $\overline{V}$ and $\partial \overline{V} = V(\infty)$. This boundary is traditionally called the \emph{visual boundary} of $V$ and is independent of the base point $O$.

Using only the Gromov product, we can also define a boundary, which turns out to be homeomorphic to the visual boundary. The advantage of this presentation is that it comes naturally equipped with a metric.

Fix a base point $O\in V$. A sequence $(x_n)$ in $V$ is a Gromov-sequence if $\lbrace x_i, x_j \rbrace_O \rightarrow +\infty$ when $i,j \rightarrow +\infty$. Two Gromov-sequences $(x_n)$ and $(y_n)$ are equivalent if $\lbrace x_i, y_i \rbrace_O \rightarrow +\infty$ when $i \rightarrow +\infty$. Then the set of equivalence classes of Gromov-sequences is the Gromov boundary of $V$.\\
For $\xi, \eta \in V(\infty)$, we define the Gromov product of $\xi$ and $\eta$ by
\begin{equation*}
 \lbrace \xi , \eta \rbrace_O := \inf \liminf_{n \rightarrow +\infty} \lbrace a_n , b_n \rbrace_O \, ,
\end{equation*}
where the infimum is taken over all sequences $a_n$ converging to $\xi$ and $b_n$ converging to $\eta$.

We can now describe the metric on the boundary: let $\epsilon >0$ and set, for any $\xi, \eta \in V(\infty)$,
\begin{equation*}
\rho_{\epsilon}(\xi, \eta):=e^{-\epsilon \lbrace \xi , \eta \rbrace_O}. 
\end{equation*}
Unfortunately, $\rho_{\epsilon}$ does not yet verify the triangle inequality, but can be slightly altered in order to do so. A \emph{chain} between $\xi, \eta \in V(\infty)$ is a finite sequence $\xi = \xi_0, \xi_1,\dots , \xi_n = \eta$ in $V(\infty)$ and we write $\mathcal{C}_{\xi,\eta}$ for the set of chains between $\xi$ and $\eta$. Let $c= (\xi_0, \xi_1, \dots , \xi_n) \in \mathcal{C}_{\xi,\eta}$, define
\begin{align*}
 \rho_{\epsilon}(c) &:= \sum_{i=0}^{n-1} \rho_{\epsilon}(\xi_i, \xi_{i+1}), \\
 d_{G,\epsilon}(\xi,\eta) &:= \inf \lbrace \rho_{\epsilon}(c) \mid c \in \mathcal{C}_{\xi,\eta} \rbrace.
\end{align*}

\begin{prop}  \label{prop:Gromov_metric}
 If $\epsilon >0$ is chosen such that $ e^{\epsilon \delta} < \sqrt{2}$, then $d_{G,\epsilon}$ is a distance on $V(\infty)$, compatible with the above topology.\\
Furthermore,
\begin{equation*}
 (3 - 2 e^{\epsilon \delta}) \rho_{\epsilon}(\xi,\eta) \leq d_{G,\epsilon}(\xi,\eta) \leq \rho_{\epsilon}(\xi,\eta).
\end{equation*}
We call $d_{G,\epsilon}$ a \emph{Gromov metric} on $V(\infty)$.
\end{prop}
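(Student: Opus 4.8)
The statement to prove is Proposition \ref{prop:Gromov_metric}: for $\epsilon > 0$ chosen so that $e^{\epsilon\delta} < \sqrt{2}$, the function $d_{G,\epsilon}$ is a distance on $V(\infty)$ compatible with the visual topology, and satisfies the two-sided bound $(3-2e^{\epsilon\delta})\rho_\epsilon(\xi,\eta) \le d_{G,\epsilon}(\xi,\eta) \le \rho_\epsilon(\xi,\eta)$.

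The plan is to follow the classical argument (as in \cite{GhysDelaHarpe} or \cite{CooDelPap}), adapted to the present setting. First I would record the basic near-ultrametric inequality for $\rho_\epsilon$: using $\delta$-hyperbolicity one shows that for all $\xi,\eta,\zeta \in V(\infty)$,
\begin{equation*}
\{\xi,\zeta\}_O \ge \min\bigl(\{\xi,\eta\}_O, \{\eta,\zeta\}_O\bigr) - \delta,
\end{equation*}
which, after applying $e^{-\epsilon(\cdot)}$, gives $\rho_\epsilon(\xi,\zeta) \le e^{\epsilon\delta}\max(\rho_\epsilon(\xi,\eta),\rho_\epsilon(\eta,\zeta)) \le e^{\epsilon\delta}(\rho_\epsilon(\xi,\eta)+\rho_\epsilon(\eta,\zeta))$. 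The upper bound $d_{G,\epsilon}(\xi,\eta) \le \rho_\epsilon(\xi,\eta)$ is then immediate, since the trivial chain $(\xi,\eta)$ is admissible. The triangle inequality for $d_{G,\epsilon}$ follows formally from its definition as an infimum over chains: concatenating a chain from $\xi$ to $\eta$ with one from $\eta$ to $\zeta$ gives a chain from $\xi$ to $\zeta$, so $d_{G,\epsilon}(\xi,\zeta) \le d_{G,\epsilon}(\xi,\eta) + d_{G,\epsilon}(\eta,\zeta)$. Symmetry is clear, and $d_{G,\epsilon}(\xi,\xi)=0$ is clear; positivity (that $d_{G,\epsilon}(\xi,\eta) > 0$ when $\xi\ne\eta$) will follow from the lower bound, which is the heart of the matter.

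The key step is the lower bound $d_{G,\epsilon}(\xi,\eta) \ge (3-2e^{\epsilon\delta})\rho_\epsilon(\xi,\eta)$. The condition $e^{\epsilon\delta} < \sqrt{2}$ guarantees $3 - 2e^{\epsilon\delta} > 0$. I would prove by induction on the number $n$ of segments in a chain $c = (\xi_0=\xi, \xi_1, \dots, \xi_n = \eta)$ that $\rho_\epsilon(c) \ge (3-2e^{\epsilon\delta})\rho_\epsilon(\xi,\eta)$; taking the infimum over chains then gives the result. The base case $n=1$ is trivial since $3-2e^{\epsilon\delta} \le 1$. For the inductive step one splits the chain at an index $p$ chosen so that both $\rho_\epsilon(\xi_0,\dots,\xi_p)$ and $\rho_\epsilon(\xi_p,\dots,\xi_n)$ are at most $\tfrac12\rho_\epsilon(c)$, applies the induction hypothesis to each half to estimate $\rho_\epsilon(\xi_0,\xi_p)$ and $\rho_\epsilon(\xi_p,\xi_n)$, and then uses the near-ultrametric inequality $\rho_\epsilon(\xi_0,\xi_n) \le e^{\epsilon\delta}\bigl(\rho_\epsilon(\xi_0,\xi_p) + \rho_\epsilon(\xi_p,\xi_n)\bigr)$ together with the bound $\rho_\epsilon(\xi_0,\xi_p), \rho_\epsilon(\xi_p,\xi_n) \le \rho_\epsilon(c)/(3-2e^{\epsilon\delta}) \cdot (\text{something})$... more precisely one bounds $\rho_\epsilon(\xi_0,\xi_p) \le \frac{1}{3-2e^{\epsilon\delta}}\rho_\epsilon(\xi_0,\dots,\xi_p)$ is the wrong direction, so instead one directly estimates $\rho_\epsilon(\xi_0,\xi_n)$ from above by $e^{\epsilon\delta}(\rho_\epsilon(\xi_0,\dots,\xi_p) + \rho_\epsilon(\xi_p,\dots,\xi_n)) + (\text{middle term from a 3-chain estimate})$ and checks that the resulting constant works out to $3 - 2e^{\epsilon\delta}$ after optimizing the split. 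This bookkeeping is the routine-but-delicate part; it is exactly the standard lemma on quasi-metrics and I would cite \cite[Chapitre 7]{GhysDelaHarpe} while writing out the induction.

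Finally, compatibility with the topology: the bound $(3-2e^{\epsilon\delta})\rho_\epsilon \le d_{G,\epsilon} \le \rho_\epsilon$ shows that $d_{G,\epsilon}$ and $\rho_\epsilon$ induce the same topology, so it suffices to check that $\rho_\epsilon(\xi,\eta) = e^{-\epsilon\{\xi,\eta\}_O}$ induces the visual topology on $V(\infty)$. This is where I would invoke the description of the visual topology via the Gromov product: a sequence $\eta_k \to \xi$ in the visual boundary if and only if $\{\eta_k,\xi\}_O \to +\infty$, i.e. $\rho_\epsilon(\eta_k,\xi)\to 0$. I would either recall this equivalence from \cite{CooDelPap,GhysDelaHarpe} or sketch it using that equivalent geodesic rays stay at bounded Gromov distance while divergent ones have bounded Gromov product. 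The main obstacle is purely the constant-chasing in the lower-bound induction — everything else is formal — and since this is a well-documented lemma in hyperbolic geometry, the cleanest exposition is to state the near-ultrametric inequality carefully, reduce the whole proposition to the abstract fact that a symmetric function satisfying $\rho(x,z) \le K\max(\rho(x,y),\rho(y,z))$ with $K < \sqrt 2$ is bi-Lipschitz to a genuine metric, and cite the reference for that abstract fact.
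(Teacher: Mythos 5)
Your proposal is correct and takes essentially the same approach as the paper. The paper itself gives no proof at all --- it states only that the proof is found in Chapter 7 of \cite{GhysDelaHarpe} --- and your sketch reproduces precisely that reference's argument: the near-ultrametric inequality $\rho_\epsilon(\xi,\zeta)\le e^{\epsilon\delta}\max(\rho_\epsilon(\xi,\eta),\rho_\epsilon(\eta,\zeta))$ coming from $\delta$-hyperbolicity, the trivial one-segment chain for the upper bound, the induction on chain length for the lower bound (with the delicate constant-chasing deferred to the same reference you cite), and the identification of convergence in the visual topology with divergence of the Gromov product for the topological compatibility.
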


The proof of the above proposition is given in Chapter 7 of \cite{GhysDelaHarpe}.

Remark that the boundary of a Gromov-hyperbolic space admits a H\"older structure (see \cite[Chapitre 11]{CooDelPap}).

\subsection{Geodesic flow and entropy}

As is the case in Riemannian geometry, negatively curved Finsler metrics gives hyperbolic dynamics:
\begin{thm}[Foulon \cite{Fou:EESLSPC}]
 The geodesic flow of a negatively curved Finsler manifold is a (contact) Anosov flow.
\end{thm}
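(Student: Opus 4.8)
The plan is to transport into Foulon's dynamical formalism the classical argument (Anosov, Green, Eberlein) that the geodesic flow of a negatively curved Riemannian manifold is Anosov, with the Jacobi endomorphism $R^{X}$ and the dynamical derivative $D_{X}$ playing the roles of the curvature tensor and of covariant differentiation along geodesics. First I would describe the linearised geodesic flow. Since $X$ is a second order differential equation, for a vertical vector field $Y$ one has $[X,Y]=D_{X}Y-H_{X}(Y)$ by \eqref{eq:horizontal_endomorphism}, and the flow-invariant splitting $THM=VHM\oplus h_{X}HM\oplus\R\cdot X$ lets one write a vector transverse to $X$ as $Y+H_{X}(Z)$ with $Y,Z$ vertical. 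Using \eqref{eq:def_curvature} one checks that the vertical component of $t\mapsto d\phi^{t}(Y+H_{X}(Z))$ is a Jacobi field $J(t)$ along the orbit satisfying $D_{X}^{2}J+R^{X}(J)=0$, so that solutions of this linear second order ODE parametrise the action of $d\phi^{t}$ on $\ker A=VHM\oplus h_{X}HM$.

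Next, since $M$ is closed, $HM$ is compact, so the hypothesis of negative curvature, $g(R^{X}Y,Y)<0$ for $Y\neq 0$, upgrades to uniform bounds $-b^{2}g(Y,Y)\le g(R^{X}Y,Y)\le -a^{2}g(Y,Y)$ for some $0<a\le b$; one should note that $R^{X}$ need not be $g$-symmetric in the Finsler setting, so only its symmetric part is controlled, which is what enters the estimates below. Then I would run a Riccati-type construction: along a geodesic the shape operator $U=(D_{X}J)J^{-1}$, formed from a fundamental solution of the Jacobi equation, satisfies $D_{X}U+U^{2}+R^{X}=0$, and using the upper bound $R^{X}\le -a^{2}<0$ together with compactness one produces, as limits of the solutions vanishing at time $T$ (resp. $-T$) as $T\to+\infty$, a stable solution $U^{s}$ on $[0,+\infty)$ and an unstable solution $U^{u}$ on $(-\infty,0]$, with associated Jacobi fields $J^{s}$, $J^{u}$ decaying as $t\to+\infty$ and $t\to-\infty$ respectively. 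Setting $E^{s}_{(x,\xi)}=\{J^{s}(0)+H_{X}(D_{X}J^{s}(0))\}$ and $E^{u}_{(x,\xi)}=\{J^{u}(0)+H_{X}(D_{X}J^{u}(0))\}$ gives two $(n-1)$-dimensional, $d\phi^{t}$-invariant, continuous subbundles.

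The exponential estimates come from Gr\"onwall applied to $\tfrac{d}{dt}g(J^{s},J^{s})=2g(U^{s}J^{s},J^{s})$ and the definiteness of $U^{s}$: one gets $\lVert J^{s}(t)\rVert\le Ce^{-at}\lVert J^{s}(0)\rVert$ for $t\ge 0$, and symmetrically for $J^{u}$ as $t\to-\infty$, with $C$ uniform by compactness of $HM$. A nonzero vector in $E^{s}\cap E^{u}$ would give a Jacobi field decaying in both time directions, hence zero, so $E^{s}\cap E^{u}=0$; a dimension count then yields $\ker A=E^{s}\oplus E^{u}$ and $THM=\R\cdot X\oplus E^{s}\oplus E^{u}$, which together with the contraction/expansion estimates is the Anosov property. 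For the contact statement one uses that $A$ is contact with Reeb field $X$ (the Hilbert--Foulon theorem) and that $L_{X}(dA)=0$, with $\ker A$ the contact distribution $E^{s}\oplus E^{u}$: for $v,w\in E^{s}$ one has $dA(v,w)=dA(d\phi^{t}v,d\phi^{t}w)\to 0$ as $t\to+\infty$ since both vectors contract, so $dA|_{E^{s}}=0$, and likewise $dA|_{E^{u}}=0$, i.e. $E^{s}$ and $E^{u}$ are Lagrangian for $dA$, which is exactly the meaning of ``contact Anosov''.

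The main obstacle is the Riccati construction of $U^{s}$ and $U^{u}$ and the monotonicity and limiting arguments when $R^{X}$ is not $g$-symmetric: unlike the Riemannian case one cannot invoke symmetric-matrix comparison directly, and one must either control the antisymmetric part of $R^{X}$ carefully or build a Lyapunov functional adapted to Foulon's formalism; this is the technical heart of Foulon's original argument in \cite{Fou:EESLSPC}. A secondary delicate point is the continuity of $E^{s},E^{u}$ and the uniformity of all the constants involved, both of which rest on the compactness of $HM$.
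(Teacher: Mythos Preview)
The paper does not give a proof of this theorem: it is merely quoted, with attribution to Foulon \cite{Fou:EESLSPC}, and the text immediately moves on to Manning's and Egloff's results on entropy. So there is no ``paper's own proof'' to compare your proposal against.

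That said, your sketch is the right outline of Foulon's actual argument in \cite{Fou:EESLSPC}: one writes the linearised flow on $\ker A$ via Jacobi fields for the dynamical derivative $D_X$ and the Jacobi endomorphism $R^X$, uses strict negativity and compactness to build stable and unstable solutions, and deduces the Anosov splitting with $E^s,E^u$ Lagrangian for $dA$. Your identification of the main technical issue --- that $R^X$ is not $g$-symmetric in genuine Finsler situations, so the Riccati/comparison step needs care --- is exactly the point where Foulon's paper does real work beyond the Riemannian template. If you want to turn this into an actual proof rather than a plan, that is the step to flesh out (or to cite precisely from \cite{Fou:EESLSPC}); the rest is standard once that is in place.
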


We give the precise definition of an Anosov flow and some of its basic properties in the second part of this dissertation (see Section \ref{subsec:basics_AF}). For the moment, we just recall that it is uniformly hyperbolic, i.e., in the tangent space there exist one direction of (uniform) exponential expansion and one direction of (uniform) exponential shrinking.\\

Let $h$ denote the topological entropy associated with the geodesic flow of $F$ (see, for instance \cite{KatokHassel} for equivalent definitions). Manning \cite{Manning} proved that the topological entropy for the geodesic flow of a Riemannian metric of non-positive curvature is the same as the volume entropy, i.e., the exponential growth of the volume of balls. It turns out that this is still true for Finsler metrics

\begin{thm}[Egloff \cite{Egloff:DynamicsUFM}] \label{thm:topo_entropy=volume}
 Let $(M,F)$ be a compact Finsler manifold and $h$ the topological entropy of the geodesic flow. Let $x\in \M$ and $B(R)$ be the ball of radius $R$ centered at $x$, and set 
\begin{equation*}
 h_{\text{vol}} := \lim_{R \rightarrow +\infty}\frac{1}{R}\log\left( \int_{B(R)} \Omega^{\F} \right).
\end{equation*}
Then $h \geq h_{\text{vol}}$.
Furthermore, if $F$ is of non-positive curvature, then 
$$
h = h_{\text{vol}} \, .
$$
\end{thm}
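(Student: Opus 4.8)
The plan is to follow Manning's strategy (topological entropy $\ge$ volume entropy, with equality in non-positive curvature), checking that the Riemannian inputs it uses have Finsler analogues in Egloff's work. Throughout I identify $HM$ with the unit tangent bundle $T^1M$, so that the Reeb flow of $A$ is the usual geodesic flow $\phi^t$, and I compute $h=h_{\mathrm{top}}(\phi^1)$ à la Bowen as the exponential growth rate of $r(T,\eps)$, the maximal cardinality of a $(T,\eps)$-separated subset of $T^1M$ for the metric $d_T(v,w)=\sup_{0\le t\le T}d_{T^1M}(\phi^tv,\phi^tw)$. Let $\Gamma=\pi_1(M)$ act on $\M$ by deck transformations, fix $\tilde x\in\M$ over $x$ and a relatively compact fundamental domain $D\ni\tilde x$ of diameter $\le d_0$ (enlarge $d_0$ to dominate both the forward and backward diameter of $D$, since $F$ may be non-reversible), and set $a(R):=\#\{\gamma\in\Gamma:\ d(\tilde x,\gamma\tilde x)\le R\}$. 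The first step is the standard reduction $h_{\mathrm{vol}}=\lim_R\tfrac1R\log a(R)$: since $\Gamma$ acts by $\F$-isometries and $\Omega^{\F}$ is canonically attached to $\F$ it is $\Gamma$-invariant, so $\mathrm{vol}_{\Omega^{\F}}(\gamma D)=\mathrm{vol}_{\Omega^{\F}}(D)$, and sandwiching $B(\tilde x,R)$ (the ball $B(R)$ of the statement) between the unions of the $\gamma D$ with $d(\tilde x,\gamma\tilde x)\le R-d_0$, resp. $\le R+2d_0$, gives $a(R-d_0)\,\mathrm{vol}(D)\le\mathrm{vol}_{\Omega^{\F}}(B(R))\le a(R+2d_0)\,\mathrm{vol}(D)$.

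\emph{Lower bound $h\ge h_{\mathrm{vol}}$ (no curvature hypothesis).} By forward completeness of $\M$ (Hopf–Rinow for Finsler metrics), for each $\gamma$ with $\ell_\gamma:=d(\tilde x,\gamma\tilde x)\in(T-1,T]$ pick a minimizing geodesic $c_\gamma\colon[0,\ell_\gamma]\to\M$ from $\tilde x$ to $\gamma\tilde x$ and let $v_\gamma\in T^1M$ be the projection of $\dot c_\gamma(0)$. Fix $\eps_0$ below the injectivity radius of $M$. The key point is: if $v_\gamma$ and $v_{\gamma'}$ are $(T,\eps_0)$-close, then $d(\gamma\tilde x,\gamma'\tilde x)<\eps_0+1$. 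Indeed, a standard lifting argument (valid because $\eps_0$ is below the injectivity radius) produces a deck transformation $\sigma$ with $d(c_\gamma(t),\sigma c_{\gamma'}(t))<\eps_0$ on $[0,\min(\ell_\gamma,\ell_{\gamma'})]$; at $t=0$ this gives $d(\tilde x,\sigma\tilde x)<\eps_0$, which forces $\sigma=\mathrm{id}$ since a nontrivial deck transformation displaces every point by at least twice the injectivity radius; the triangle inequality at $t=\min(\ell_\gamma,\ell_{\gamma'})$ together with $|\ell_\gamma-\ell_{\gamma'}|<1$ then yields the claim. As $B(\gamma\tilde x,\eps_0+1)$ contains at most a fixed number $C$ of points of the orbit $\Gamma\tilde x$ (uniform discreteness of the orbit plus compactness), the map $\gamma\mapsto v_\gamma$ is at most $C$-to-one on $d_T$-balls of radius $\eps_0$; hence a maximal $(T,\eps_0)$-separated subset of $\{v_\gamma:\ell_\gamma\in(T-1,T]\}$ has at least $\tfrac1C\,\#\{\gamma:\ell_\gamma\in(T-1,T]\}$ elements. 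Since $\limsup_T\tfrac1T\log\#\{\gamma:\ell_\gamma\in(T-1,T]\}=h_{\mathrm{vol}}$ (otherwise $a(R)$ would grow too slowly), we get $r(T,\eps_0)\ge\tfrac1C\,e^{(h_{\mathrm{vol}}-o(1))T}$ along a subsequence, so $h\ge h_{\mathrm{vol}}$.

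\emph{Upper bound $h\le h_{\mathrm{vol}}$ in non-positive curvature.} Here $\M$ is a uniform Finsler Hadamard manifold in Egloff's sense: geodesics extend to all of $\R$, there is a \emph{unique} geodesic between any two points, and the distance is convex, i.e. $d(\sigma_{p,q}(s),\sigma_{p',q'}(s))\le(1-s)\,d(p,p')+s\,d(q,q')$ for geodesics parametrized proportionally to arc length on $[0,1]$. Fix $\eps>0$ and a maximal $(T,\eps)$-separated set $S\subset T^1M$; tile $\M$ by "cubes" of size $\delta$ for an auxiliary $\Gamma$-invariant Riemannian metric. For $v\in S$ lift to $\tilde v$ based in $D$, extend its geodesic $\tilde c_v$ to $[0,T+1]$, and record only the indices of the two $\delta$-cubes containing $\tilde c_v(0)$ and $\tilde c_v(T+1)$. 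By uniqueness, $\tilde c_v|_{[0,T+1]}$ is determined by its endpoints, and by convexity of $d$ (plus the fact that the two lengths differ by $O(\delta)$) two such segments with $\delta$-close endpoints stay within $O(\delta)$ in position on all of $[0,T]$; since the velocity at time $t\le T$ is the initial direction of the unit geodesic from $\tilde c_v(t)$ to $\tilde c_v(t+1)$ — both pinned within $O(\delta)$ — and this dependence is uniformly continuous by cocompactness, the $\phi^t$-images stay within $O(\delta)$ in $T^1M$ for all $t\in[0,T]$. Choosing $\delta=\delta(\eps)$ so that $O(\delta)<\eps$, the pair of cube indices is injective on $S$, whence $\#S$ is at most the number of such pairs, i.e. at most $C(\delta)$ times the number of $\delta$-cubes meeting $B(\tilde x,T+1+2d_0)$, which is $\le C'(\delta)\,\mathrm{vol}_{\Omega^{\F}}(B(T+O(1)))$. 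Thus $r(T,\eps)\le C'(\delta(\eps))\,\mathrm{vol}_{\Omega^{\F}}(B(T+O(1)))$, and letting $T\to\infty$ then $\eps\to0$ gives $h\le h_{\mathrm{vol}}$.

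The hard part is this last step: one must upgrade "nearby endpoints" to "dynamically nearby orbit segments, uniformly in $T$", and what makes it work is that in non-positive curvature a geodesic segment of arbitrary length is rigidly pinned by its endpoints. This rests on two Finsler facts from Egloff's thesis — Cartan–Hadamard uniqueness of geodesics and Busemann convexity of the distance on uniform Finsler Hadamard manifolds — plus cocompactness of $\M\to M$ to make all the "$O(\delta)$" estimates (and the lifting in the lower bound) uniform; the non-reversibility of $F$ adds only bookkeeping. Note that the Anosov property of the geodesic flow in strictly negative curvature (Foulon) is neither needed nor helpful for the upper bound: an expansive flow \emph{does} carry exponentially many separated orbit segments inside a bounded tube, so one genuinely has to compare endpoints rather than whole orbits.
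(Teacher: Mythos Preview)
The paper does not give its own proof of this statement: it is quoted as a theorem of Egloff with a citation to \cite{Egloff:DynamicsUFM}, and the only argument supplied is the one-line remark that Egloff's choice of volume form is immaterial because on a compact manifold any two volume forms are within a bounded ratio, which disappears under $\tfrac{1}{R}\log(\cdot)$. Your write-up is therefore not competing with a proof in the paper but with Egloff's original, and your Manning-style strategy is exactly his: separated orbit segments from orbit points of $\Gamma$ for $h\ge h_{\mathrm{vol}}$, and endpoint-pinning via Cartan--Hadamard uniqueness plus Busemann convexity for $h\le h_{\mathrm{vol}}$ in non-positive curvature. The argument is correct, and you have correctly identified the Finsler-specific inputs (Egloff's uniform Finsler Hadamard machinery, cocompactness for uniformity, non-reversibility as bookkeeping); the paper's sole contribution---that $\Omega^{\F}$ can replace Egloff's volume---you have absorbed implicitly by working with $\Omega^{\F}$ throughout and using only its $\Gamma$-invariance.
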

Note that Egloff proved the above result with another volume form, however as $M$ is compact, there exists a constant controlling the ratio of two different volume forms, and this constant disappears when we consider exponential growth.

Note that Egloff \cite{Egloff:DynamicsUFM} also showed, using the Anosov property, that the visual boundary of $\M$ admits a H\"older structure with a constant depending on the Lyapunov exponents.

\section{Bounds for the first eigenvalue}

We denote by $\lambda_1$ the infimum of the essential spectrum of $-\Delta^{\F}$. Recall that it is given by the infimum of the Rayleigh quotients (see Proposition \ref{prop:spectral_gap}).\\

\subsection{A dynamical upper bound}
As in the Riemannian case, we have an upper bound for $\lambda_1$ depending only on the dimension of the manifold and the topological entropy of the geodesic flow.

\begin{prop} \label{prop:upper_bound}
If $M$\ is of dimension $n$ and $h$ is the topological entropy, then
\begin{equation*}
 \lambda_1 \leq n \frac{h^2}{4} \,. 
\end{equation*}

\end{prop}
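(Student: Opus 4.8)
The plan is to bound $\lambda_1$ directly from the variational characterization $\lambda_1=\inf\{R(u)\mid u\in H^1(\M)\}$ of Proposition \ref{prop:spectral_gap}, by exhibiting an explicit family of test functions whose Rayleigh quotients tend to $nh^2/4$. First I would trade the topological entropy for the volume entropy: a negatively curved Finsler metric is in particular non-positively curved, so Theorem \ref{thm:topo_entropy=volume} gives
\begin{equation*}
h=h_{\mathrm{vol}}=\lim_{R\to\infty}\frac1R\log\int_{B(R)}\Omega^{\F},
\end{equation*}
where $B(R)$ is the $\widetilde d$-ball of radius $R$ about a fixed $x_0\in\M$. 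Since $\M$ is a uniform Finsler Hadamard manifold (Section \ref{subsec:gromov-hyperbolicity}), the function $\rho(x):=\widetilde d(x_0,x)$ is smooth off $x_0$ and, being $1$-Lipschitz for the Finsler distance, globally Lipschitz; for $s>0$ one sets $u_s:=e^{-s\rho}$. For $s>h/2$, integrating by parts against the volume-growth bound $\mathrm{vol}_{\Omega^{\F}}(B(R))\le e^{(h+\epsilon)R}$ shows $\int_\M u_s^2\,\Omega^{\F}<\infty$, so $u_s\in H^1(\M)$ (the singularity at $x_0$ is harmless; alternatively one works with the compactly supported truncations $\min(u_s,e^{-sR})-e^{-sR}$).

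Next I would estimate the energy. Writing $L_X(\pi^{\ast}u_s)=-s\,e^{-s\rho}\,L_X(\pi^{\ast}\rho)$ and using the Fubini formula of Lemma \ref{lem:Fubini},
\begin{equation*}
E(u_s)=\frac{n\,s^2}{\voleucl\left(\S^{n-1}\right)}\int_{\M}e^{-2s\rho}\left(\int_{H_x\M}\bigl|L_X(\pi^{\ast}\rho)\bigr|^2\alpha^{\F}\right)\Omega^{\F}.
\end{equation*}
The crux is the pointwise estimate $\int_{H_x\M}|L_X(\pi^{\ast}\rho)|^2\alpha^{\F}\le\voleucl\left(\S^{n-1}\right)$ for a.e.\ $x\in\M$: granting it, $E(u_s)\le n s^2\int_\M u_s^2\,\Omega^{\F}$, hence $R(u_s)\le n s^2$, and letting $s\downarrow h/2$ gives $\lambda_1\le n h^2/4$. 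To prove the estimate, observe that $L_X(\pi^{\ast}\rho)(x,\xi)=d\rho_x\bigl(d\pi(X_{(x,\xi)})\bigr)$, and that by Lemma \ref{lem:utilisation_unique2} applied to $Z=X_{(x,\xi)}$ together with the fact that $X$ is a second order differential equation, $d\pi(X_{(x,\xi)})$ is exactly the $F$-unit vector representing $\xi$. Since $\rho$ decreases at rate at most $1$ along any curve traversed at unit Finsler speed, the triangle inequality yields $L_X(\pi^{\ast}\rho)\le 1$ on $H\M$, i.e.\ $F^{\ast}(x,d\rho_x)\le 1$. For a \emph{reversible} metric one also gets $L_X(\pi^{\ast}\rho)\ge-1$, so $|L_X(\pi^{\ast}\rho)|^2\le 1$ and the fibrewise estimate follows from $\int_{H_x\M}\alpha^{\F}=\voleucl(\S^{n-1})$. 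In general, the expression for the symbol of $\Delta^{\F}$ as an average over $T^1_x\M$ obtained in Section \ref{sec:Finsler-Laplacian} identifies the left-hand side with $\tfrac{\voleucl(\S^{n-1})}{n}\,\sigma^{\F}_x(d\rho_x,d\rho_x)$, reducing the claim to $\sigma^{\F}_x(d\rho_x,d\rho_x)\le n$.

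The genuine obstacle is this last inequality in the \emph{non-reversible} case. Averaging gives only $\sigma^{\F}_x(p,p)\le n\max\bigl(F^{\ast}(x,p),F^{\ast}(x,-p)\bigr)^2$; the bound $F^{\ast}(x,d\rho_x)\le 1$ is automatic, but $F^{\ast}(x,-d\rho_x)$ is controlled by the reverse norm $\bar F(x,v)=F(x,-v)$ and a priori only by a constant depending on $F$. Since for reversible metrics $\bar F=F$ makes everything immediate, the content of the Proposition lies precisely in removing that constant. I would attack this either by passing to the Hamiltonian picture of Section \ref{sec:cotangent} — rewriting $E(u_s)$ over $H^{\ast}\M$ with $B\wedge dB^{n-1}$ and the co-angle $\beta^{\F^{\ast}}$, where the identity $\hat r^{\ast}B=\lambda/F^{\ast}$ from Theorem \ref{thm:uniquely_contact} gives tighter control of $d\rho_x$ viewed as a covector — or by replacing $\rho$ with a Lipschitz function tuned to the radial geodesics so that the relevant directional derivatives remain in $[-1,1]$ fibrewise. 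Establishing $\sigma^{\F}_x(d\rho_x,d\rho_x)\le n$ (equivalently the fibrewise estimate) without a reversibility hypothesis is the one delicate point; the rest is the standard Rayleigh-quotient computation combined with Egloff's entropy equality.
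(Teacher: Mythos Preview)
Your approach is exactly the paper's: test functions $u_s=e^{-s\rho}$, Egloff's entropy equality (Theorem \ref{thm:topo_entropy=volume}) to get $u_s\in L^2$ for $s>h/2$, the Fubini reduction to the fibrewise integral, and then the pointwise bound $|L_X(\pi^{\ast}\rho)|\le 1$ to conclude $R(u_s)\le n s^2$ and let $s\downarrow h/2$. There is no alternative decomposition or extra lemma in the paper.

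On the point you single out as delicate: the paper simply asserts ``since $|L_X\pi^{\ast}\rho(x,\xi)|\le 1$'' and moves on, with no separate treatment of the non-reversible case and no passage through the symbol or the cotangent picture. So the issue you raise --- that the lower bound $L_X\pi^{\ast}\rho\ge -1$ genuinely uses $F(x,-v)=F(x,v)$ --- is not resolved in the paper's proof either; you have been more careful than the source. For reversible $F$ (which becomes the standing hypothesis a few pages later anyway) your argument is complete and matches the paper verbatim; for the general statement as announced in Theorem~\ref{thmintro:dynamical_bound}, neither your sketch nor the paper's proof supplies the missing estimate, and your instinct that this is where the work lies is correct.
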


\begin{rem}
 This bound is far less sharp than in the Riemannian case, where we have $\lambda_1^{\textrm{Riem}} \leq h^2/4$. The additional $n$ appears in the proof because we don't know how to control locally the Finsler metric. It would be interesting to decide whether we could improve this bound to the Riemannian one or if there exist Finsler metrics with $h^2/4 < \lambda_1 \leq nh^2/4$.
\end{rem}

The proof follows the Riemannian one and is based on the following:
\begin{claim}
 Let $x_0 \in \M$ and $\rho(x):= d(x_0,x)$, then $\exp(-s \rho(x))$\ is in $L^2(\M)$\ for any $s> \frac{h}{2}$.
\end{claim}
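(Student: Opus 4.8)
The claim is the standard estimate that $e^{-s\rho}\in L^2(\M)$ for $s>h/2$, proved by slicing $\M$ into metric annuli centred at $x_0$ and using the exponential volume growth rate $h_{\mathrm{vol}}=h$ provided by Theorem \ref{thm:topo_entropy=volume} (using non-positivity, indeed negativity, of the curvature). The plan is to write, for $R_k=k$ (say), the annulus $A_k := \{x\in\M \mid k\le \rho(x) < k+1\}$, and to bound
\begin{equation*}
 \int_{\M} e^{-2s\rho(x)}\,\Omega^{\F} = \sum_{k\ge 0} \int_{A_k} e^{-2s\rho(x)}\,\Omega^{\F} \le \sum_{k\ge 0} e^{-2sk}\,\mathrm{vol}_{\Omega^{\F}}\big(B(k+1)\big).
\end{equation*}
First I would fix $\eps>0$ and use Theorem \ref{thm:topo_entropy=volume} to get a constant $C_\eps$ with $\mathrm{vol}_{\Omega^{\F}}(B(R))\le C_\eps\, e^{(h+\eps)R}$ for all $R\ge 0$; plugging $R=k+1$ in, the series is dominated by $C_\eps e^{h+\eps}\sum_{k\ge 0} e^{-2sk}e^{(h+\eps)k}$, which converges as soon as $2s > h+\eps$. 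Since $s>h/2$ is strict, one chooses $\eps := s - h/2 >0$ (or any $\eps < 2s-h$) and the geometric series converges, giving $e^{-s\rho}\in L^2(\M,\Omega^{\F})$.

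A couple of minor points must be handled carefully. One should note that $\rho$ is only Lipschitz, not smooth, but that is irrelevant here since we only integrate $e^{-2s\rho}$, a bounded measurable (indeed continuous) function; no differentiability of $\rho$ is used. For non-reversible $F$ one should be mildly careful about which distance $d(x_0,\cdot)$ is meant and in which order the arguments sit — but since the curvature is negative, the universal cover is (forward and backward) Gromov-hyperbolic (Section \ref{subsec:gromov-hyperbolicity}, with the Fang--Foulon extension to the non-reversible case), and in any case $\rho$ is comparable to the symmetrized distance up to a bounded multiplicative constant on a compact-quotient manifold, which at worst changes the threshold constant in front of $h$ and is absorbed into $\eps$. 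Also, $\Omega^{\F}$ is the Holmes--Thompson volume (Section \ref{sec:holmes_thompson}), which is the volume appearing in Theorem \ref{thm:topo_entropy=volume} up to the universal constant $(n-1)!$ — harmless for exponential-growth statements, exactly as the excerpt already remarks for Egloff's theorem.

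There is no real obstacle in this claim itself; it is a volume-counting argument. The substantive input, the equality $h = h_{\mathrm{vol}}$ in non-positive curvature, is already granted by Theorem \ref{thm:topo_entropy=volume}. The claim will then feed into the proof of Proposition \ref{prop:upper_bound} via the Rayleigh quotient (Proposition \ref{prop:spectral_gap}): one tests the quotient $R(u)$ on $u = e^{-s\rho}$ for $s$ slightly larger than $h/2$, computes $E(u)$ using that $|L_X(\pi^{\ast}u)| \le F\cdot|\nabla^{\mathrm{Finsler}}\rho|$-type bounds — here the extra factor $n$ in the stated bound $\lambda_1\le nh^2/4$ enters because the symbol metric is controlled by $F^2$ only up to dimension-dependent constants — and lets $s\downarrow h/2$, so that $R(u)\to n s^2 \to nh^2/4$. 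So the main work in the surrounding proposition is that Rayleigh-quotient estimate; the present claim is the easy but necessary lemma that the test function is genuinely in $L^2$, hence admissible.
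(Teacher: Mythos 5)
Your argument is correct and is essentially the same as the paper's: the paper's proof of this claim is a one-liner that cites Theorem \ref{thm:topo_entropy=volume} (giving $h = h_{\mathrm{vol}}$) and then simply asserts the conclusion, while your annulus decomposition and geometric-series estimate is exactly the standard unpacking of that ``therefore.'' Your side remarks (measurability of $\rho$ suffices, volume-form constants don't affect exponential growth, non-reversibility is a bounded distortion) are accurate and match the implicit conventions of the surrounding text.
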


\begin{proof}
By Theorem \ref{thm:topo_entropy=volume}, we have
\begin{equation*}
 h = \lim_{R \rightarrow +\infty}\frac{1}{R}\log\left( \int_{B(R)} \Omega^{\F} \right).
\end{equation*}
Therefore, if $s > \frac{h}{2}$, $exp(-s \rho(x))$\ is in $L^2(\M)$.
\end{proof}

\begin{proof}[Proof of Proposition \ref{prop:upper_bound}]
Our goal is to find an upper bound for the Rayleigh quotient of $e^{-s\rho(x)}$. We have  
\begin{equation*}
L_X \pi^{\ast} \exp(-s \rho) (x,\xi) = -s \left(L_X\pi^{\ast} \rho\right)(x,\xi) \exp(-s \rho(x)).
\end{equation*}
So, using Fubini theorem,
\begin{equation*}
 \int_{H\M} \left(L_X \pi^{\ast} \exp(-s \rho) \right)^2 \ada = \int_{x\in \M} s^2 \left(\int_{\xi \in H_x\M} \left(L_X\pi^{\ast} \rho (x,\xi)\right)^2 \alpha \right) \exp(- 2 s \rho(x)) \Omega.
\end{equation*}
To deduce the proposition, we just have to bound $\int_{\xi \in H_x\M} \left(L_X\pi^{\ast} \rho (x,\xi)\right)^2 \alpha$ because $\lambda_1$ is the infimum of the Rayleigh quotient. Since $\left|L_X\pi^{\ast} \rho (x,\xi) \right|\leq 1$, we have
\begin{align*}
  \lambda_1 &\leq \frac{n}{\voleucl\left(\S^{n-1}\right)} \frac{ s^2 \int_{\M} \exp(- 2 s \rho)  \int_{H_{x}\M} \alpha\, \Omega}{\lVert \exp(-s \rho) \rVert^2} \\
 &\leq n s^2 \, . 
\end{align*}
\end{proof}

\subsection{A topological lower bound}

In this section we do not need the negative curvature assumption.\\

Bounds on $\lambda_1$ can be obtained through bounds for the Laplace--Beltrami operator associated with the symbol of $\Delta^{\F}$:
\begin{prop}
Let $g_{\sigma}$ be the Riemannian metric on $\M$ given by the dual of the symbol of $\Delta^{\F}$, $\Delta^{\sigma}$ its associated Laplace--Beltrami operator and $\lambda_1(\sigma)$ the infimum of the spectrum of $-\Delta^{\sigma}$. Then, if we let $a\in C^{\infty}(\M)$ such that $\Omega^{\F}= a \Omega^{g_{\sigma}}$, we have
\begin{equation}
\lambda_1(\sigma) \frac{\sup_{x\in \M} a(x)}{\inf_{x\in \M} a(x)} \geq \lambda_1 \geq \lambda_1(\sigma) \frac{\inf_{x\in \M} a(x)}{\sup_{x\in \M} a(x)} \, .
\end{equation}
\end{prop}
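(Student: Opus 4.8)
The plan is to reduce the statement to a pointwise comparison of Rayleigh quotients. By Proposition~\ref{prop:spectral_gap} together with Theorem~\ref{thm:Friedrich_extension}, the bottom of the essential spectrum of $-\Delta^{\F}$ is
\[
\lambda_1 \;=\; \inf\Bigl\{\, R(u) \;\Bigm|\; u\in H^1(\M) \,\Bigr\},\qquad
R(u)=\frac{E(u)}{\int_{\M}u^2\,\Omega^{\F}} .
\]
Since $g_{\sigma}$ is a genuine Riemannian metric and $\Delta^{\sigma}$ is exactly the Finsler--Laplace operator of $\sqrt{g_{\sigma}}$ by Theorem~\ref{thm:finsler_laplace_basics}\,(iv), the same two results give $\lambda_1(\sigma)=\inf\{R_{\sigma}(u)\mid u\in H^1(\M)\}$, where $R_{\sigma}(u)=E_{\sigma}(u)/\int_{\M}u^2\,\Omega^{g_{\sigma}}$ and $E_{\sigma}$ is the energy attached to $\sqrt{g_{\sigma}}$. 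One first checks that the two copies of $H^1(\M)$ coincide: the $\F$-norm and the $g_{\sigma}$-norm of $du$ agree at every point (both equal $g_{\sigma}^{\star}(du,du)$, since the symbol of $\Delta^{\F}$ is by definition the dual of $g_{\sigma}$), and $\Omega^{\F}=a\,\Omega^{g_{\sigma}}$ with $a$ bounded above and away from $0$ (see below); hence the norms $\lVert\cdot\rVert_1$ defined from $\F$ and from $\sqrt{g_{\sigma}}$ are equivalent on $C^{\infty}_0(\M)$ and the completions are the same space.

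Next I would rewrite the Finsler energy through its symbol. Combining the Fubini-type Lemma~\ref{lem:Fubini} with the identity $\ada=\alpha^{F}\wedge\pi^{\ast}\Omega^{F}$, and recalling the expression of the symbol $\sigma^{\F}$ obtained in Section~\ref{sec:Finsler-Laplacian} (the fibre integral $\int_{H_x\M}\lvert L_X\pi^{\ast}u\rvert^2\,\alpha^{F}$ depends only on $du_x$, because $L_X(\pi^{\ast}u)(x,\xi)=du_x(d\pi\,X_{(x,\xi)})$), the normalising constant $n/\voleucl(\S^{n-1})$ cancels and one gets, for $u\in H^1(\M)$,
\[
E(u)\;=\;\int_{\M} g_{\sigma}^{\star}(du,du)\,\Omega^{\F},
\qquad\text{while}\qquad
E_{\sigma}(u)\;=\;\int_{\M} g_{\sigma}^{\star}(du,du)\,\Omega^{g_{\sigma}}
\]
is the classical Dirichlet energy of $g_{\sigma}$. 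Thus numerator and denominator of $R$ differ from those of $R_{\sigma}$ only by the insertion of the density $a$.

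Finally, since $a$ is $\pi_1(M)$-invariant it descends to a positive continuous function on the closed manifold $M$, so $0<a_-:=\inf_{\M}a\le a_+:=\sup_{\M}a<\infty$. For any positive measure $\omega$ on $\M$ one has $a_-\,\omega\le a\,\omega\le a_+\,\omega$; applying this to the numerator and denominator of
\[
R(u)\;=\;\frac{\int_{\M}g_{\sigma}^{\star}(du,du)\,a\,\Omega^{g_{\sigma}}}{\int_{\M}u^2\,a\,\Omega^{g_{\sigma}}}
\]
yields $\tfrac{a_-}{a_+}\,R_{\sigma}(u)\le R(u)\le \tfrac{a_+}{a_-}\,R_{\sigma}(u)$ for every $u\in H^1(\M)$. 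Taking the infimum over $u$ and using the two variational characterisations above gives $\tfrac{a_-}{a_+}\lambda_1(\sigma)\le\lambda_1\le\tfrac{a_+}{a_-}\lambda_1(\sigma)$, which is the claimed inequality. The only place requiring any care is the bookkeeping in the middle step (tracking the constant $n/\voleucl(\S^{n-1})$ and the fibrewise reduction to the symbol); non-compactness of $\M$ enters solely through the equivalence of the $H^1$-norms and the finiteness and positivity of $a_{\pm}$, and both of these follow immediately from the compactness of $M$. Everything else is a direct application of Proposition~\ref{prop:spectral_gap} and Lemma~\ref{lem:Fubini}.
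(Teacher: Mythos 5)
Your proof is correct and follows essentially the same route as the paper's: expressing $\lambda_1$ via the Rayleigh quotient from Proposition~\ref{prop:spectral_gap}, fiber-integrating the Finsler energy to rewrite it as $\int_{\M}g_{\sigma}^{\star}(du,du)\,\Omega^{\F}$ using the symbol, and then bounding the density $a$ above and below. The only difference is that you explicitly flag the cancellation of the normalizing constant $n/\voleucl(\S^{n-1})$ and the equivalence of the two $H^1$-spaces, which the paper treats implicitly; this is additional care, not a different argument.
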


\begin{rem}
 As $a$ is a $\pi_1(M)$-invariant function and $M$ is compact, $a$ is bounded, so the above makes sense.
\end{rem}

\begin{proof}
Recall once again that
\begin{equation*}
 \lambda_1 = \inf \frac{\int_{H\M} \left( L_X \pi^{\ast} f \right)^2 \ada}{\int_{\M} f^2 \Omega^F} \,,
\end{equation*}
where the infimum is taken over all functions in $H^1(\M)$. We will prove the lower bound. The proof for the upper bound follows along the same lines.
\begin{align*}
 \int_{H\M} \left( L_X \pi^{\ast} f \right)^2 \ada &= \int_{x\in \M} \left(\int_{H_x \M} \left( L_X \pi^{\ast} f \right)^2 \alpha^F\right) \Omega^F \\
	&= \int_{\M} \lVert\nabla f \rVert_{\sigma}^2 \Omega^F \\
	&\geq \int_{\M} \lVert\nabla f \rVert_{\sigma}^2 \Omega^{g_{\sigma}} \inf_{x\in \M} a(x) \, .
\end{align*}
Therefore, for any $f\in H^1(\M)$,
\begin{equation*}
 \frac{\int_{H\M} \left( L_X \pi^{\ast} f \right)^2 \ada}{\int_{\M} f^2 \Omega^F} \geq \frac{\int_{\M} \lVert\nabla f \rVert_{\sigma}^2 \Omega^{g_{\sigma}} }{\int_{\M} f^2 \Omega^{g_{\sigma}} } \frac{\inf_{x\in \M} a(x)}{\sup_{x\in \M} a(x)} \, .
\end{equation*}
Hence the result.
\end{proof}

We do not know much about lower bounds for $\lambda_1(\sigma)$ because we do not know anything on the metric $g_{\sigma}$, but Brooks \cite{Brooks:pi1_and_spectrum} gave a purely topological condition for it to be strictly positive. Using the above proposition leads to a generalization of Brooks theorem to the Finsler--Laplace operator:
\begin{thm}  \label{thm:Brooks}
One has $\lambda_1 = 0 $ if and only if $\pi_1(M)$ is amenable.
\end{thm}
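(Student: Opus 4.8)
The plan is to deduce this from Brooks' theorem \cite{Brooks:pi1_and_spectrum} for the Laplace--Beltrami operator, using the comparison between $-\Delta^{\F}$ and the Laplace--Beltrami operator $-\Delta^{\sigma}$ of the symbol metric $g_{\sigma}$ established in the proposition just above. The crucial observation is that the symbol of the Finsler--Laplace operator on $M$ lifts to the symbol of $\Delta^{\F}$ on $\M$, so $g_{\sigma}$ is $\pi_1(M)$-invariant, i.e.\ it is the pull-back of a Riemannian metric $g_{\sigma}^{M}$ on the \emph{closed} manifold $M$; in the same way, since $\Omega^{\F}$ is the lift of $\Omega^{F}$ and $\Omega^{g_{\sigma}}$ the lift of the Riemannian volume of $g_{\sigma}^{M}$, the density $a\in C^{\infty}(\M)$ with $\Omega^{\F}=a\,\Omega^{g_{\sigma}}$ is $\pi_1(M)$-invariant, hence descends to $M$ and satisfies $0<\inf_{\M}a\leq \sup_{\M}a<\infty$.

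First I would record that the comparison inequalities $\lambda_1(\sigma)\,\tfrac{\inf_{\M}a}{\sup_{\M}a}\leq\lambda_1\leq\lambda_1(\sigma)\,\tfrac{\sup_{\M}a}{\inf_{\M}a}$, together with the positivity and finiteness of the two ratios, immediately give
\begin{equation*}
 \lambda_1 = 0 \qquad \Longleftrightarrow \qquad \lambda_1(\sigma) = 0 ,
\end{equation*}
where $\lambda_1(\sigma)$ denotes the bottom of the (essential) spectrum of $-\Delta^{\sigma}$ on $\M$. Since $(\M,g_{\sigma})$ is the Riemannian universal cover of the closed Riemannian manifold $(M,g_{\sigma}^{M})$, Brooks' theorem applies verbatim and yields $\lambda_1(\sigma)=0$ if and only if $\pi_1(M)$ is amenable; combining the two equivalences proves the claim. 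Along the way I would just note, as was done for $\Delta^{\F}$ in Proposition \ref{prop:spectral_gap}, that on the cocompact cover $\M$ the bottom of the essential spectrum of $-\Delta^{\sigma}$ equals the infimum of its Rayleigh quotients over $H^1(\M)$ — which is precisely the quantity Brooks characterizes — and that this also coincides with the bottom of the full spectrum, so there is no discrepancy between the various formulations.

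There is essentially no hard step: the analytic content is already packaged in the comparison proposition, and the group-theoretic content is Brooks' theorem, invoked as a black box. The only points asking for a little care are (i) checking that $g_{\sigma}$ and $a$ are genuinely $\pi_1(M)$-invariant, which follows from $\Delta^{\F}$ being the lift of $\Delta^{F}$, and (ii) matching "bottom of the spectrum'' / "bottom of the essential spectrum'' / "infimum of Rayleigh quotients'' between Brooks' statement and ours, which is handled by Proposition \ref{prop:spectral_gap} and cocompactness. Should one prefer a self-contained argument, one could instead transcribe Brooks' proof directly for $-\Delta^{\F}$, using the Rayleigh quotient $R$ of Definition \ref{def:energy} together with a Cheeger-type inequality for $\Delta^{\F}$ on $\M$ and a Følner-sequence construction on the amenable side; but the reduction above is shorter and loses nothing.
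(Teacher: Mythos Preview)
Your proof is correct and follows exactly the same route as the paper: use the comparison inequalities from the preceding proposition (with $a$ bounded by $\pi_1(M)$-invariance and compactness of $M$) to reduce to $\lambda_1(\sigma)=0$, then invoke Brooks' theorem for the Riemannian cover $(\M,g_{\sigma})\to(M,g_{\sigma}^{M})$. The paper's proof is the one-line version of what you wrote; your additional remarks on invariance and on matching the various notions of ``bottom of the spectrum'' are correct and only make the argument more explicit.
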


\begin{proof}
 This is a direct application of the proposition together with the main result of \cite{Brooks:pi1_and_spectrum}.
\end{proof}

\section{Harmonic measures and the Martin Boundary} \label{sec:harmonic_measures}

We are going to leave Finsler geometry for a bit, to give some basics about potential theory. As an introduction, we recall how harmonic measures are obtained in Riemannian geometry.

 Given a Riemannian metric $g$ on $\M$, one way to construct the harmonic measure associated with a point $x\in \M$ is by defining the measure of a Borel set $U \subset \M(\infty)$ as the probability for a Brownian motion for $g$ leaving $x$ to end in $U$. In Riemannian geometry, Brownian motion can be thought of in two ways: one is as the limit of a random walk; the other as the diffusion associated with the Laplacian. We don't know how to generalize the first approach to our Finsler setting, without losing all the Finslerian information, but the second approach comes in fact from the more general theory of elliptic equations and hence will apply in our context.\\
 A related way of viewing harmonic measures is given via solutions to the Dirichlet problem at infinity. If $\M$ is a Cartan--Hadamard manifold of bounded, strictly negative scalar curvature, we have the following result:

\begin{thm}[Anderson \cite{And:Dirichlet_at_infinity}, Sullivan \cite{Sul:Dirichlet_at_infinity}]
\label{thm:Dirichlet_infiny_Riem}
 Let $f \in C^0(\M(\infty))$. There exists a unique function $u_f \in C^{\infty}(\M)$ such that
\begin{equation}
 \left\{
\begin{aligned}
 \Delta u_f &= 0 \quad \textrm{on } \M \\
 u_f(x)  &\rightarrow f(\xi) \quad \textrm{when } x\rightarrow \xi, \; \xi \in \M(\infty) \,.
\end{aligned}
\right. 
\end{equation}

\end{thm}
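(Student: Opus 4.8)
The plan is to follow the potential-theoretic (Perron) route, which is the most self-contained; I will also indicate Sullivan's probabilistic argument, which reaches the same conclusion by a different path. Throughout I use that a Cartan--Hadamard manifold $\M$ has a compactification $\overline{\M}=\M\cup\M(\infty)$ homeomorphic to a closed ball, and that the Dirichlet problem for $\Delta$ is classically solvable on small geodesic balls.

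\textbf{Uniqueness.} This is the easy part. If $u_1,u_2$ both solve the problem, then $u:=u_1-u_2$ is harmonic on $\M$ and extends continuously to $\overline{\M}$ with $u|_{\M(\infty)}=0$. Since $\overline{\M}$ is compact, $u$ attains its supremum and infimum on $\overline{\M}$; by the strong maximum principle a nonconstant harmonic function has no interior extremum, so both are attained on $\M(\infty)$, forcing $u\equiv0$.

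\textbf{Existence of a harmonic candidate.} I would let $\mathcal{S}_f$ be the Perron family of all subharmonic $v\colon\M\to\R$ with $\limsup_{x\to\xi}v(x)\le f(\xi)$ for every $\xi\in\M(\infty)$; it contains the constant $\min f$ and is bounded above by $\max f$. Set $u_f(x):=\sup_{v\in\mathcal{S}_f}v(x)$. The classical Perron machinery — applicable on any Riemannian manifold because $\mathcal{S}_f$ is stable under harmonic replacement on small balls — then shows that $u_f$ is harmonic on $\M$ with $\min f\le u_f\le\max f$. So far nothing about curvature has been used beyond completeness and simple connectedness.

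\textbf{Boundary regularity — the hard part.} It remains to prove $u_f(x)\to f(\xi)$ as $x\to\xi\in\M(\infty)$, and this is exactly where strict negative curvature (with the implicit pinching $-b^2\le K_{\M}\le -a^2<0$) is needed. The mechanism is the construction, for each $\xi$ and each $\varepsilon>0$, of a \emph{barrier}: a superharmonic function $w_\xi$ on $\M$, continuous on $\overline{\M}$, with $w_\xi(\xi)=0$, $w_\xi>0$ elsewhere, and bounded below by a positive constant outside a prescribed neighbourhood of $\xi$. Once such barriers exist, squeezing $u_f$ between $f(\xi)-\varepsilon-Cw_\xi$ and $f(\xi)+\varepsilon+Cw_\xi$ near $\xi$ concludes the proof. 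Building $w_\xi$ is where the pinching enters: in pinched negative curvature geodesic spheres are uniformly convex (principal curvatures $\ge a$ by Hessian comparison), so every boundary point admits arbitrarily small \emph{convex conic} neighbourhoods, and one obtains $w_\xi$ by transplanting a barrier from the boundary of such a convex set (equivalently, as a smoothing of a function built from Busemann/angle data adapted to the cone). I expect this uniform control near $\M(\infty)$ to be the only genuinely delicate step. Note also that in the case relevant to this chapter — $\M$ the universal cover of a closed negatively curved manifold — the pinching hypothesis is automatic, the curvature being continuous and negative on a compact quotient.

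Alternatively, following Sullivan, one takes Brownian motion $(X_t)$ on $\M$; non-positive curvature makes the radial process drift to infinity (by Hessian comparison $\Delta\rho\ge (n-1)a\coth(a\rho)$, bounded below by a positive constant for large $\rho$), so $X_t$ converges almost surely to a point $X_\infty\in\M(\infty)$. One sets $\mu_x(\cdot):=\mathbb{P}_x(X_\infty\in\cdot)$ and $u_f(x):=\int_{\M(\infty)}f\,d\mu_x$; harmonicity of $u_f$ is the mean-value property via the strong Markov property, and the boundary limit again amounts to showing $\mu_x\to\delta_\xi$ weakly as $x\to\xi$, i.e. the same shadow estimate powered by pinched negative curvature. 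Either way, the soft parts (uniqueness, harmonicity of the candidate) are immediate, and the real work is the boundary barrier/escape estimate.
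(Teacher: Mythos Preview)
The paper does not actually give a self-contained proof of this cited theorem; it is stated as a result of Anderson and Sullivan and then used as motivation. What the paper \emph{does} do is indicate a proof strategy different from yours: ``One way to prove Theorem~\ref{thm:Dirichlet_infiny_Riem} \dots\ is to study the Martin Boundary associated with $\Delta$ and show that the harmonic functions are given by Equation~\eqref{eq:poisson_representation} where $\mu_x$ is a measure on the Martin boundary.'' This Martin-boundary route is then carried out in detail for the Finsler--Laplace operator (Theorem~\ref{thm:solution_dirichlet_a_infini}), and it is instructive to compare it with your approach.

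Your Perron argument is Anderson's original method and is correct as sketched; your alternative probabilistic paragraph is Sullivan's. The paper's route is a third one: rather than building barriers directly from convex geometry, one identifies the Martin compactification with the visual compactification (Ancona's theorem, relying on multiplicative estimates for the Green function along geodesics), then obtains the solution as $u_f(x)=\int_{\M(\infty)} K_\xi(x) f(\xi)\,d\mu(\xi)$ where $\mu$ is the Choquet measure representing the constant function $\mathds{1}$. The boundary limit then follows because each $K_\xi$ is dominated by a multiple of $G(\cdot,O)$ away from $\xi$, which plays the role of your barrier. What your approach buys is directness and minimal machinery in the Riemannian setting; what the Martin-boundary approach buys is that it generalises to operators (like $\Delta^{\F}$) for which one has no control on the curvature of the symbol metric and hence no obvious geometric barriers, only Gromov-hyperbolicity of the underlying space --- which is precisely why the paper prefers it.
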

 Now, to define the harmonic measures, take $x$ in $\M$. There exists a positive linear functional on $C^0(\M(\infty))$ given by $f \mapsto u_f(x)$. This defines a probability measure $\mu_x$ on $\M(\infty)$ which is the harmonic measure at $x$. 

Furthermore, the solution to the Dirichlet problem at infinity is given by
\begin{equation}\label{eq:poisson_representation}
u_f(x) = \int_{\xi \in \M(\infty)} f(\xi) \,d\mu_x (\xi) \, .
\end{equation}

We wish to extend this construction to our Finsler--Laplace operator. One way to prove Theorem \ref{thm:Dirichlet_infiny_Riem} (see \cite{And:Dirichlet_at_infinity,AndersonSchoen,Ancona:elliptic_operators_Martin_boundary}) is to study the Martin Boundary associated with $\Delta$ and show that the harmonic functions are given by Equation \eqref{eq:poisson_representation} where $\mu_x$ is a measure on the Martin boundary. Ancona \cite{Ancona:theorie_potentiel,Ancona:elliptic_operators_Martin_boundary} showed that this method works for a very general class of elliptic operators, requiring no Riemannian setting, but just assumptions on the operator and Gromov-hyperbolicity.\\

The redaction of the next section has been a (possibly failed) challenge for me. My main goal was to present Ancona's theorem (Theorem \ref{thm:Ancona_martin_boundary} below), but, in order to make it somewhat understandable, we need a good deal of background which is, if my personal example is significant, not in the usual toolbox of the average student in Finsler geometry or dynamical systems. Therefore, I tried to include some of the basics I learned on potential theory but not too much for space and time issues. I also tried to give the proofs that seem to me to be important for the understanding of this theory, while using only the tools I introduce. So, all in all, the resulting redaction is probably disappointing for many reasons but I hope that it gives a reasonable idea of the theory used to prove Ancona's result and to obtain harmonic measures.

\subsection{Some potential theory and the Martin Boundary}

In this section, we will recall the construction of the Martin compactification of $\M$. Our main reference is \cite{Ancona:theorie_potentiel}, but if you want to get back to the roots, see \cite{Martin}.

\subsubsection{The Green function}

We start out by recalling some definitions. In the following, $L$ will be a second-order uniformly elliptic differential operator. We will furthermore always assume that $L$ is self-adjoint with respect to some volume form $\Omega$ on $\M$, even if this is not necessary for the general theory.

Recall that an operator is \emph{elliptic} if, for any local coordinates, the symbol of $L$ is a positive-definite matrix. It is \emph{uniformly elliptic} if the positivity of the symbol is uniform, i.e., there exists a constant $c$ depending only on $L$, such that, if $(\sigma_{ij})$ is the symbol of $L$ in a local chart, then $\sum \sigma_{ij}x_i x_j \geq c \sum x_i^2$.
\begin{defin}
Let $U \subset \M$.
\begin{itemize}
 \item A $L$-\emph{harmonic} function on $U$ is a $C^2$ function such that $Lu=0$ on $U$. 
 \item A relatively compact open set $V$ is called \emph{(Dirichlet-)regular} if for any $f\in C(\partial V)$ there exists a unique function $u\in C(\overline{V})$ harmonic on $V$ such that $u=f$ on $\partial V$.
 \item If $V$ is a regular open set and $x\in V$, we denote by $\mu_{x}^{V}$ the harmonic measure at $x$ relatively to $V$. That is, the 	only Borel measure such that, for any $f \in C(\partial V)$, 
 \begin{equation}
  H_f^{V}(x) = \int_{\partial V} f(\xi) d\mu_x^{V}(\xi)
 \end{equation}
 is harmonic on $V$.
 \item A function $u: U \rightarrow ]-\infty , +\infty]$ is $L$-\emph{superharmonic} on $U$ if $u$ is lower semi-continuous and if, for any regular set $V \subset\subset U$, $u \geq H_{u}^V$ on $V$.
 \item A superharmonic function $s$ is \emph{non-degenerate} on $U$ if for any $V \subset\subset U$ regular, $s_{|V}$ is harmonic.
 \item A function $u$ is a $L$-\emph{potential} on $U$ if $u \geq 0$, superharmonic on $U$ and such that any $L$-harmonic function on $U$ smaller than $u$ is non-positive.
\end{itemize}
 
\end{defin}

\begin{rem}
 A $C^2$ function is superharmonic if and only if $L u \leq 0$.
\end{rem}
Harmonic functions enjoy the following fundamental property:
\begin{prop}[Harnack's principle] \label{prop:Harnack}
 Let $U$ be a domain of $\M$, then for any $p\in U$, the set of functions $\lbrace u \mid u \; L-\textrm{harmonic, positive on U}, \; u(p)=1 \rbrace$ is compact for the topology of uniform convergence on compact subsets of $U$.
\end{prop}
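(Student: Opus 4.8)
The plan is to deduce the statement from the classical Harnack inequality for uniformly elliptic operators, together with interior elliptic estimates and the Arzel\`a--Ascoli theorem, following the standard argument (see, e.g., \cite{GilTru}). First I would recall that, since $L$ is uniformly elliptic, for every pair of connected open sets $V_1 \subset\subset V_2 \subset \M$ there is a constant $C = C(V_1,V_2,L) \geq 1$ such that every positive $L$-harmonic function $u$ on $V_2$ satisfies $\sup_{V_1} u \leq C \inf_{V_1} u$. Covering a given compact set $K \subset U$ by finitely many small balls forming Harnack chains from $p$ to the points of $K$ (possible because $U$, being a domain, is connected) and iterating this inequality along the chains, one obtains a constant $C_K$ depending only on $K$, $L$ and $p$ such that $\sup_K u \leq C_K$ for every $u$ in the family
\begin{equation*}
 \mathcal{H} := \lbrace u \mid u \text{ is } L\text{-harmonic and positive on } U, \ u(p) = 1 \rbrace.
\end{equation*}
Hence $\mathcal{H}$ is locally uniformly bounded on $U$.

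Next I would upgrade this to equicontinuity on compact subsets. Interior elliptic estimates (Schauder estimates, using the regularity of the coefficients of $L$) show that a local $L^\infty$ bound on an $L$-harmonic function controls its $C^{1,\alpha}$-norm on any slightly smaller compact set; combined with the uniform bound above, this makes $\mathcal{H}$ bounded and equicontinuous, indeed bounded in $C^{1,\alpha}_{\mathrm{loc}}(U)$. By Arzel\`a--Ascoli, $\mathcal{H}$ is therefore precompact for the topology of uniform convergence on compact subsets of $U$.

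It then remains to check that $\mathcal{H}$ is closed for this topology. If $u_n \in \mathcal{H}$ converges to $u$ uniformly on compact sets, then $u \geq 0$ and $u(p) = 1$. Moreover $u$ is $L$-harmonic: one may pass to the limit in $L u_n = 0$ using the $C^{1,\alpha}_{\mathrm{loc}}$ bounds and elliptic regularity, or, more elementarily, observe that the mean-value identity $u_n = H_{u_n}^V$ over regular open sets $V \subset\subset U$ survives the uniform limit, since $\mu_x^V$ is a probability measure, so $u = H_u^V$ and $u$ is harmonic. Finally, since $U$ is a domain, $u \geq 0$ is $L$-harmonic and $u(p) = 1 > 0$, the strong minimum principle (equivalently, one more application of the Harnack inequality) forces $u > 0$ on all of $U$, so $u \in \mathcal{H}$. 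Being precompact and closed, $\mathcal{H}$ is compact.

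The main obstacle is genuinely only the invocation of the Harnack inequality; everything downstream of it is soft. In the setting relevant to this dissertation the operator $L$ has smooth (in particular H\"older) coefficients, so the classical Schauder theory applies directly and both the Harnack inequality and the interior estimates are available off the shelf --- I would simply cite \cite{GilTru} for them rather than reprove anything.
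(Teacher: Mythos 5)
The paper does not give a proof of this proposition at all: it is stated, without argument, as one of the standard facts of elliptic potential theory that the chapter recalls (the surrounding material, including Harnack's inequality and the minimum principle, is cited rather than proved, with pointers to Ancona's notes). Your proof is the correct classical argument — Harnack inequality plus chaining for locally uniform bounds, interior Schauder estimates for equicontinuity, Arzel\`a--Ascoli for precompactness, and stability of $L$-harmonicity plus the strong minimum principle for closedness — and each step is sound given the smoothness of the coefficients of $L$; there is nothing in the paper's treatment to compare it against since the paper supplies no proof.
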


\begin{prop}[Harnack's inequality]
 Let $u$ be a harmonic function on a bounded domain $U$ of $\M$. Then there exists a constant $c$ such that 
\begin{equation}
 \sup_{x\in U} u(x) \leq c \inf_{x\in U} u(x)\, .
\end{equation}
\end{prop}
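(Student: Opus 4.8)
The plan is to deduce the inequality from Harnack's principle (Proposition \ref{prop:Harnack}) together with a connectedness (chaining) argument, the whole point being a passage from the \emph{qualitative} compactness of normalized positive harmonic functions to a \emph{quantitative} ratio bound on compact sets, carried out first locally on balls.

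First I would record two preliminary reductions. On the one hand one may assume $u>0$: the hypothesis implicitly forces $u\geq 0$ (otherwise $\inf_U u\leq 0<\sup_U u$ and no positive constant works), and if $u\geq 0$ vanished at an interior point then, $U$ being connected, the strong maximum principle would give $u\equiv 0$, in which case the statement is trivial. On the other hand the estimate is genuinely interior: for the constant $c$ to depend only on $U$ and $L$ one must ask that $u$ be $L$-harmonic on a neighbourhood of $\overline{U}$ (equivalently, that there be a domain $U'$ with $\overline{U}\subset U'$ on which $u$ is positive harmonic), since, e.g., a fundamental solution of $L$ produces positive $L$-harmonic functions whose ratio is unbounded on a punctured ball. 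I would state and prove the inequality in this slightly sharpened, interior form.

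The main step is the \emph{local} Harnack inequality on balls: fix $p\in\M$ and $r>0$ with $B(p,2r)$ contained in the domain of harmonicity; I claim there is $C=C(p,r,L)$ with $\sup_{\overline{B}(p,r)}v\leq C\inf_{\overline{B}(p,r)}v$ for every positive $L$-harmonic $v$ on $B(p,2r)$. Normalizing $v(p)=1$, consider the family $\mathcal{F}_p$ of all such normalized positive $L$-harmonic functions on $B(p,2r)$; by Proposition \ref{prop:Harnack} it is compact for uniform convergence on compact subsets of $B(p,2r)$. Since $\M$ is locally compact Hausdorff, the evaluation map $(v,x)\mapsto v(x)$ is jointly continuous on the compact set $\mathcal{F}_p\times\overline{B}(p,r)$, hence attains a finite maximum $b$ and a minimum $a$; and $a>0$, because $v_0(x_0)=0$ with $v_0\in\mathcal{F}_p$ and $x_0\in\overline{B}(p,r)$ would make $v_0\geq 0$ harmonic on $B(p,2r)$ and zero at an interior point, forcing $v_0\equiv 0$ against $v_0(p)=1$. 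Thus $\sup_{\overline{B}(p,r)}v\leq b\leq (b/a)\inf_{\overline{B}(p,r)}v$ for $v\in\mathcal{F}_p$, and the general case follows by dividing by $v(p)$. (One could instead extract this local estimate from two-sided bounds on the Green function $G_{B(p,2r)}(x,\cdot)$, but the compactness route uses only Proposition \ref{prop:Harnack} and the strong maximum principle.)

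Finally the chaining: $\overline{U}$ is compact and contained in the open set where $u$ is harmonic, so cover it by finitely many balls $B(p_1,r_1),\dots,B(p_N,r_N)$ whose doubled balls $B(p_i,2r_i)$ still lie in that open set, apply the local inequality on each $B(p_i,r_i)$ with constant $C_i$, and put $C=\max_i C_i$. Given $x,y\in U$, join them by a path in $U$; by connectedness and a Lebesgue-number argument this path meets the chosen balls in a chain of length at most $N$ with consecutive balls overlapping, so iterating the local estimate along the chain gives $u(x)\leq C^{N}u(y)$; taking the supremum over $x$ and the infimum over $y$ yields $\sup_U u\leq C^{N}\inf_U u$, i.e. $c=C^{N}$ works. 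The genuine obstacle in this argument is the local inequality on balls — everything around it (connectedness, covering, compactness) is soft — and there the work is precisely to see that the abstract compactness furnished by Harnack's principle, which itself encodes the uniform ellipticity of $L$, upgrades to a quantitative ratio bound on compact subsets.
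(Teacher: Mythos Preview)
The paper does not actually give a proof of this proposition: Harnack's inequality is simply stated there as standard background in the potential-theory recap of Section~\ref{sec:harmonic_measures}, alongside Harnack's principle and the minimum principle, with no argument supplied. So there is no ``paper's own proof'' to compare against.

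Your argument is correct and follows the classical route: extract a local ratio bound on balls from the compactness furnished by Harnack's principle (Proposition~\ref{prop:Harnack}) together with the strong maximum principle, then propagate it over $\overline{U}$ by a finite chain of overlapping balls. The step deducing $a>0$ from the strong maximum principle is the only place where you invoke something the paper does not state verbatim (the paper records only the weak minimum principle), but this is standard for uniformly elliptic $L$ and unproblematic. Your remark that the inequality is genuinely an \emph{interior} estimate --- i.e., that one should take $u$ harmonic on a neighbourhood of $\overline{U}$, or else bound $\sup_K u/\inf_K u$ for compact $K\subset U$ --- is well taken: the proposition as stated in the paper is indeed informal on this point, and your sharpened formulation is the correct one.
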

Another fundamental piece of the theory is the maximum (or minimum) principle:
\begin{prop}[Minimum Principle]
  Let $U$ be a bounded domain in $\M$ and $u\in C^2(U)\cap C^0(\overline{U})$ such that $Lu\leq 0$. If $u(y) \geq 0$ for every $y\in \partial U$, then $u(x) \geq 0$ on $U$.
\end{prop}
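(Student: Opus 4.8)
The statement is the classical weak minimum principle for a uniformly elliptic second‑order operator, so the plan is to reproduce Hopf's two‑step argument, carried out in coordinate charts to accommodate the manifold setting.

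\textbf{Strict supersolutions.} First I would prove the assertion under the stronger hypothesis $Lu<0$ on $U$, in the sharper form $u\ge\min_{\partial U}u$. If this failed, then, $u$ being continuous on the compact set $\overline U$ and nonnegative on $\partial U$, it would attain its minimum over $\overline U$ at an interior point $x_0\in U$ with $u(x_0)<0$. In a chart centered at $x_0$ the interior minimum forces $du(x_0)=0$ and $\mathrm{Hess}\,u(x_0)\succeq 0$. Ellipticity says the symbol matrix $(\sigma^{ij}(x_0))$ is positive definite, so $\sum_{i,j}\sigma^{ij}(x_0)\,\partial_i\partial_j u(x_0)\ge 0$, being the trace of a product of two positive semi‑definite matrices; the first‑order term of $Lu$ vanishes at $x_0$; and the zeroth‑order contribution $c(x_0)u(x_0)$ is $\ge 0$, since $u(x_0)<0$ and $c\le 0$ (for the operators of interest here the zeroth‑order coefficient is in fact $\equiv 0$, as they annihilate constants, and $c\le 0$ is all that the argument uses). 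Hence $Lu(x_0)\ge 0$, contradicting $Lu<0$; therefore $u\ge 0$ on $U$.

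\textbf{The general case by perturbation.} For $Lu\le 0$ one cannot conclude directly, since the computation above only gives $Lu(x_0)\ge 0$ at an interior minimum. Instead I would reduce to the strict case: because $\overline U$ is compact and the coefficients of $L$ are continuous — in our situation they are $\pi_1(M)$‑invariant with $M$ compact, hence bounded on $\overline U$ — uniform ellipticity furnishes, on any coordinate ball $B\subset\subset U$, a standard exponential barrier $w$ of the form $e^{\gamma R^2}-e^{\gamma |x|^2}$ with $Lw<0$ on an annular region once $\gamma$ is chosen large in terms of the ellipticity constant and $\sup_{\overline U}$ of the lower‑order coefficients, while $w\ge 0$ on $B$ and $w>0$ near its center. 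Applying the strict case to $u+\varepsilon w$ on the appropriate subdomain and letting $\varepsilon\to 0$ yields $u(x_0)\ge 0$ for an arbitrary $x_0\in U$, which propagates the nonnegativity of $u$ from $\partial U$ into the interior and gives the claim.

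\textbf{Obstacles and alternatives.} There is no real difficulty here: the only step requiring care is the barrier construction in the manifold setting — the choice of $\gamma$ and the organization of the comparison on annuli — and this is entirely routine. Alternatively, one may localize in charts and simply quote the weak maximum principle (e.g. Gilbarg–Trudinger), or deduce the statement from the fact recalled above that a $C^2$ function with $Lu\le 0$ is superharmonic, hence $u\ge H_u^V$ on every regular $V\subset\subset U$, and pass to an exhaustion of $U$ by regular open sets.
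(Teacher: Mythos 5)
The paper states this proposition without proof, as a recalled background fact preceding the Martin--boundary construction, so there is no paper argument to compare against; what follows reviews your proof on its own terms. Your strict-supersolution step is carried out correctly (interior minimum, trace of a product of a positive-definite and a positive-semidefinite matrix, sign of $c\,u$), and you rightly flag that the argument uses $c\le 0$, which is satisfied here since the operators at hand annihilate constants.

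The perturbation step, however, does not close as written. You place the barrier $w=e^{\gamma R^2}-e^{\gamma|x|^2}$ on a coordinate ball $B\subset\subset U$. That radial barrier and the ``comparison on annuli'' you allude to belong to the Hopf boundary-point lemma, where one has controlled boundary data on both spheres of an annulus. Here you do not: to run your strict case on $u+\varepsilon w$ you need $(u+\varepsilon w)\ge 0$ on the boundary of the comparison region, and you only control the sign of $u$ on $\partial U$, not on $\partial B$. The phrase ``propagates the nonnegativity of $u$ from $\partial U$ into the interior'' does not correspond to an actual mechanism in the weak minimum principle. The repair is to make the barrier \emph{global} on $\overline{U}$: either take a ball $B\supset\overline{U}$ (possible since $\M\cong\R^n$ here, so $\overline{U}$ sits in a single chart) so that $w>0$ on $\overline{U}$ and, with $c\le 0$, $Lw<0$ on all of $\overline{U}$ once $\gamma$ is large; or use the textbook one-directional barrier $w=C-e^{\gamma x_1}$ on a chart covering $\overline{U}$. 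In either case $L(u+\varepsilon w)<0$ on $U$ and $(u+\varepsilon w)|_{\partial U}\ge 0$, the strict case applies on all of $U$, and $\varepsilon\to 0$ finishes. Your second alternative---that $Lu\le 0$ makes $u$ superharmonic, hence $u\ge H^V_u$ on every regular $V\subset\subset U$, followed by an exhaustion of $U$---is the route most consonant with the paper's potential-theoretic framing, since the superharmonicity characterization is the remark recalled immediately before this proposition.
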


Note that there exists also a global version:
\begin{prop} \label{prop:global_min_principle}
 Suppose that $u$ is a potential on $\M$, harmonic outside a closed set $F$ and continuous on $\partial F$. If $s$ is a non-negative superharmonic function on $\M$ such that $s \geq u$ on $\partial F$, then $s \geq u$ on $\M \smallsetminus F$.
\end{prop}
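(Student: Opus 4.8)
This is the classical \emph{domination principle} of potential theory, and I would prove it by exhausting $\M$ and comparing $u$ with $s$ plus a harmonic correction term that vanishes at infinity precisely because $u$ is a potential. First I would fix an exhaustion $\Omega_1 \subset\subset \Omega_2 \subset\subset \cdots$ of $\M$ by relatively compact Dirichlet-regular open sets with $\bigcup_n \Omega_n = \M$; since in the situations where this proposition is applied $F$ is compact, one may assume $F \subset \Omega_n$ for all $n$ (if $F$ is only closed one simply works on $\Omega_n \smallsetminus F$ throughout, with no change to the argument below). For each $n$, let $H^{\Omega_n}_u$ denote the harmonic function on $\Omega_n$ with boundary values $u_{|\partial \Omega_n}$; since $u \ge 0$ the minimum principle gives $H^{\Omega_n}_u \ge 0$, and since $u$ is superharmonic one has $H^{\Omega_n}_u \le u$ on $\Omega_n$. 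The key input is that $H^{\Omega_n}_u$ decreases, as $n \to \infty$, to the greatest harmonic minorant of $u$ on $\M$, which is $0$ because $u$ is a potential: the decreasing limit is harmonic (by Harnack's principle), it is squeezed between $0$ and $u$, hence non-positive by the very definition of a potential, hence identically $0$.

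The comparison step is then short. Set $v_n := s + H^{\Omega_n}_u$ on $\Omega_n \smallsetminus F$; it is superharmonic there, being the sum of the superharmonic function $s$ and a harmonic function. Because $u$ is continuous on $\partial F$ (hypothesis) and harmonic, hence continuous, elsewhere on $\overline{\Omega_n \smallsetminus F}$, while $s$ is lower semicontinuous and $H^{\Omega_n}_u$ is continuous up to $\partial \Omega_n$, the function $v_n - u$ is lower semicontinuous on $\overline{\Omega_n \smallsetminus F}$. On the boundary $\partial(\Omega_n \smallsetminus F)$ we have $v_n \ge u$: on $\partial \Omega_n$ because $v_n \ge H^{\Omega_n}_u = u$ there, and on $\partial F$ because $v_n \ge s \ge u$ by assumption. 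Applying the minimum principle, in its standard version for superharmonic functions, to $v_n - u$ on the bounded domain $\Omega_n \smallsetminus F$ yields $s + H^{\Omega_n}_u \ge u$, i.e. $s \ge u - H^{\Omega_n}_u$, on all of $\Omega_n \smallsetminus F$.

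Finally I would let $n \to \infty$: since $H^{\Omega_n}_u \downarrow 0$ and every point of $\M \smallsetminus F$ lies in $\Omega_n \smallsetminus F$ for $n$ large, we obtain $s \ge u$ on $\M \smallsetminus F$, as claimed. The only genuinely delicate points are the two flagged above: that the decreasing limit of the $H^{\Omega_n}_u$ is harmonic and equals $0$ — this is where the hypothesis that $u$ is a \emph{potential} does all the work, and it is the heart of the argument — and the regularity bookkeeping near $\partial F$ and $\partial \Omega_n$ that is needed to apply the minimum principle up to the boundary, which is exactly why continuity of $u$ on $\partial F$ is assumed. Everything else is a routine use of the minimum principle and of Harnack's principle recalled above.
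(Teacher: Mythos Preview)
The paper does not supply its own proof of this proposition: it is stated in the potential-theory review of Section~4.3.1 as a classical fact, with the reader referred to Ancona's lecture notes \cite{Ancona:theorie_potentiel}. Your argument is the standard proof of the domination principle---exhaust $\M$ by regular domains $\Omega_n$, compare $s + H^{\Omega_n}_u$ with $u$ on $\Omega_n \smallsetminus F$ via the minimum principle, and then let $H^{\Omega_n}_u \downarrow 0$ using that $u$ is a potential---and it is correct. The only cosmetic remark is that the proposition is stated for an arbitrary closed set $F$, not a compact one; as you note, the argument goes through unchanged by working on $\Omega_n \smallsetminus F$, so there is no real gap.
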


To get on with potential theory, we need to assume that the operator $P = L -\frac{\partial}{\partial t}$ admits a fundamental solution, the \emph{heat kernel} of $L$.
\begin{defin}
 The heat kernel of $L$ is a positive function $p(x,y,t)$ defined on $M\times M \times \R$, vanishing identically when $t\leq 0$, continuous for $t>0$ and $x \neq y$, $C^2$ with respect to $y$, $C^1$ in $t$ and such that:
\begin{enumerate}
 \item For any fixed $x\in M $,
      \begin{equation}
	P p(x, \cdot, \cdot) = 0 \quad \text{on} \; M\times \R \smallsetminus\{(x,0)\} ;
      \end{equation}
 \item For any bounded continuous function $f$ on $M$, 
      \begin{equation}
       \lim_{t\rightarrow 0} \int_{y\in M} p(x,y,t) f(y) \, \Omega_y = f(x)\,.
      \end{equation}
\end{enumerate}
\end{defin}

Such a function always exists for a uniformly elliptic operator with uniform H\"older continuous coefficients. See, for instance, \cite{Friedman:PDE_of_parabolic_type} for the construction using the parametrix method in $\R^n$, or \cite{Azencott:diffusion_semi-groups} for a diffusion approach on manifolds. For an approach more specifically adapted to the case we will be interested in in the next sections, the reader can consult \cite{Grigoryan:heat_kernels_metric_measure_space,Grigoryan:heat_kernels_on_weighted_manifolds}.

\begin{defin}
 The Green function of $L$ is defined, for $(x,y) \in M\times M$, by
\begin{equation}
 G(x,y) = \int_0 ^{+\infty} p(x,y,t) dt\, .
\end{equation}
\end{defin}

\begin{rem}
 When $G$ is not identically infinite, then, for any fixed $x$, $G(x,\cdot)$ is a $L$-potential.\\
 Note also that, if $L$ is self-adjoint, then $G(x,y) = G(y,x)$.
\end{rem}

The following result gives us a characterization of when the Green function is not identically infinite:
\begin{thm}[\cite{Ancona:theorie_potentiel}, Th\'eor\`emes 1 and 13] \label{thm:green_function}
 The following propositions are equivalent:
\begin{enumerate}
 \item There exist $x_0,y_0$ in $M$ such that $G(x_0,y_0) <\infty$;
 \item The function $G$ is finite and continuous on $M\times M \smallsetminus\{x=y\}$ and, for $x \in M$, $G(x, \cdot)$ is a $C^2$ $L$-harmonic function on $M\smallsetminus \{x\}$;
 \item There exists a strictly positive $L$-potential on $M$;
 \item There exists a superharmonic, non-degenerate, non-negative function on $M$ which is not $L$-harmonic.
\end{enumerate}

\end{thm}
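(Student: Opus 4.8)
The plan is to establish the cycle of implications $(2)\Rightarrow(1)\Rightarrow(2)$ and then $(2)\Rightarrow(3)\Rightarrow(4)\Rightarrow(1)$, so that all four statements become equivalent. Two of these links are essentially formal. The implication $(2)\Rightarrow(1)$ is immediate: evaluate the finite function $G(x_0,\cdot)$ at any $y_0\neq x_0$. For $(3)\Rightarrow(4)$ one checks that a strictly positive $L$-potential $u$ on $M$ is non-negative, superharmonic, finite on a dense set (hence non-degenerate), and not $L$-harmonic: were it harmonic it would be its own harmonic minorant, and being a potential it would then be non-positive, contradicting $u>0$.

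The heart of the argument is $(1)\Rightarrow(2)$, which is where regularity theory enters. Assume $G(x_0,y_0)<\infty$. First I would show that $y\mapsto G(x_0,y)$ is locally bounded on $M\smallsetminus\{x_0\}$. Split $G(x_0,y)=\int_0^1 p(x_0,y,t)\,dt+\int_1^{\infty}p(x_0,y,t)\,dt$; the first integral is finite and continuous for $y\neq x_0$ by the defining properties of the heat kernel, while for the second the parabolic Harnack inequality applied to the positive solution $(y,t)\mapsto p(x_0,y,t)$ of $Pu=0$ gives, for $y$ in a compact $K\subset M\smallsetminus\{x_0\}$ and $t\geq 1$, a bound $p(x_0,y,t)\leq C(K)\,p(x_0,y_0,t)$ with $C(K)$ independent of $t$; integrating yields $\int_1^{\infty}p(x_0,y,t)\,dt\leq C(K)\int_1^{\infty}p(x_0,y_0,t)\,dt<\infty$. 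Next I would work with the truncations $G_T(x,y):=\int_0^T p(x,y,t)\,dt$, which are continuous, increase to $G(x,\cdot)$, and satisfy $L_yG_T(x_0,\cdot)=p(x_0,\cdot,T)$ on $M\smallsetminus\{x_0\}$: Harnack's principle combined with the uniform local bound above forces the increasing limit to stay locally bounded, and interior $L^p$ and Schauder estimates then make $G(x_0,\cdot)$ a $C^2$, $L$-harmonic function on $M\smallsetminus\{x_0\}$ with $L_yG(x_0,\cdot)=-\delta_{x_0}$ in the distributional sense. Since $L$ is self-adjoint, $G(x,y)=G(y,x)$, so finiteness, continuity off the diagonal, and harmonicity of $G(x,\cdot)$ propagate to every $x\in M$, which is exactly $(2)$.

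For $(2)\Rightarrow(3)$ I would show that $s:=G(x_0,\cdot)$ is a strictly positive $L$-potential. Positivity follows from $p>0$, and then strict positivity on all of $M$ from Harnack's inequality on the connected manifold; lower semicontinuity and $Ls\leq 0$ follow from $s$ being the increasing limit of the continuous functions $G_T$ and from $Ls=-\delta_{x_0}\leq 0$. That $s$ is a potential is the statement that its greatest harmonic minorant $\underline h$ vanishes: along a regular exhaustion $V_n\uparrow M$, the Riesz decomposition on $V_n$ reads $s=G^{V_n}(x_0,\cdot)+H_s^{V_n}$, where $G^{V_n}$ is the Green function of $L$ on $V_n$; since $H_s^{V_n}\downarrow\underline h$ while $G^{V_n}(x_0,\cdot)\uparrow G(x_0,\cdot)=s$, we get $\underline h=0$. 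Finally, $(4)\Rightarrow(1)$ I would prove by contraposition: if $G\equiv+\infty$ and $s$ is any non-degenerate non-negative superharmonic function, write its Riesz decomposition $s=G^{V_n}\mu+H_s^{V_n}$ on a regular exhaustion, where $\mu=-Ls\geq 0$ is the Riesz measure (independent of $n$ in restriction to $V_n$); if $\mu(V_m)>0$ for some $m$, then for every $x$ in the dense set where $s(x)<\infty$ monotone convergence gives $s(x)\geq\int_{V_m}G^{V_n}(x,y)\,d\mu(y)\to+\infty$, a contradiction; hence $\mu=0$, so $Ls=0$ and $s$ is $L$-harmonic, which negates $(4)$.

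The main obstacle is clearly $(1)\Rightarrow(2)$: it rests on having a well-behaved heat kernel satisfying a parabolic Harnack inequality and on interior elliptic estimates that upgrade an increasing limit of truncated potentials to a genuine $C^2$ harmonic function; the parabolic Harnack inequality in particular is the one ingredient not recorded in the earlier part of the text, though it is standard for uniformly elliptic operators with uniformly Hölder coefficients. Everything else is soft potential theory—the minimum principle, its global version, Riesz decomposition on relatively compact regular sets, and Harnack's principle—which applies verbatim to any uniformly elliptic self-adjoint $L$ on $M$, in particular to $-\Delta^{\F}$.
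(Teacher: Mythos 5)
The paper does not prove this statement; it is cited directly from Ancona's \emph{Th\'eorie du potentiel} (Th\'eor\`emes 1 and 13) and used as a black box, so there is no in-text proof to compare your proposal against. Judged on its own merits, your outline follows the standard potential-theoretic route one would expect: the soft cycle $(2)\Rightarrow(1)$, $(3)\Rightarrow(4)$, together with the heavier implications $(1)\Rightarrow(2)$ via truncations $G_T$ of the time integral and parabolic regularity, $(2)\Rightarrow(3)$ via the Riesz decomposition along a regular exhaustion, and $(4)\Rightarrow(1)$ by contraposition using the Riesz measure. This is essentially the classical argument and I find no structural gap.

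Two points deserve care. First, your parabolic Harnack bound $p(x_0,y,t)\leq C(K)\,p(x_0,y_0,t)$ with the \emph{same} $t$ on both sides is not quite what the inequality delivers; the Harnack chain from $y$ to $y_0$ across $K$ produces a bounded time shift $\tau$, i.e.\ $p(x_0,y,t)\leq C(K)\,p(x_0,y_0,t+\tau)$. This is harmless for your purpose, since $\int_1^{\infty}p(x_0,y_0,t+\tau)\,dt=\int_{1+\tau}^{\infty}p(x_0,y_0,s)\,ds\leq G(x_0,y_0)<\infty$, but as written the displayed inequality is slightly too strong. Second, the paper's stated definition of a non-degenerate superharmonic function (``for any $V\subset\subset U$ regular, $s_{|V}$ is harmonic'') appears garbled -- taken literally it would force $s$ to be harmonic; you tacitly replaced it with the standard meaning (the reduced function $H^V_s$ is a genuine harmonic function, equivalently $s$ is finite on a dense set / locally integrable), which is the interpretation that makes the theorem work and is surely what is intended. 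With those two remarks in mind, the proposal is a correct sketch of the classical proof; filling it in completely would of course require the parabolic Harnack inequality and interior Schauder estimates, neither of which is recorded in the paper but both of which are standard for uniformly elliptic operators with uniformly H\"older coefficients, as you note.
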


In the following, $L$ will be assumed to satisfy one of the above equivalent properties.

\subsubsection{Martin Compactification}

Let $O\in \M$ be fixed, for $x\in \M$, let 
\begin{equation}
K_x(y):= \frac{G(x,y)}{G(x,O)} \, .
\end{equation}
We say that a sequence $(x_n)$ converging to infinity in $M$ converges to a Martin point if $(K_{x_n})$ is pointwise convergent. By the Harnack principle (see Proposition \ref{prop:Harnack}) any sequence converging to infinity admits a subsequence converging to a Martin point and, if we denote by $K_{\xi}$ the function associated to a Martin point $\xi$, then $K_{\xi}$ is a non-negative harmonic function such that $K_{\xi}(O)=1$. 

By definition, we say that two sequences define the same Martin point if and only if the limit functions are the same. We will therefore often think of $\xi$ and $K_{\xi}$ as the same thing. We write $\MM$ for the set of Martin points and $\hat{M} =  \M \cup \MM$. We define on $\hat{M}$ the following metric: for $x,x'\in \hat{M}$
\begin{equation}
 \rho(x,x') = \sup_{y\in B(O,1)} |K_{x}(y) - K_{x'}(y)|,
\end{equation}
where $B(O,1)\subset M$ is the ball of center $O$ and radius $1$.
\begin{prop}[Martin \cite{Martin}]
 The space $\hat{M}$ equipped with the metric $\rho$ is a complete, compact space for which $\MM$ is the boundary and $M$ the interior. Furthermore, the topology induced on $M$ coincides with its natural topology.
 The space $\hat{M}$ is called the (L-)\emph{Martin compactification} of $M$.
\end{prop}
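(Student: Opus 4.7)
The plan is to establish the four assertions in turn: that $\rho$ is a metric, that the subspace topology on $M$ matches its natural topology, that $\hat{M}$ is compact, and finally that $\MM = \partial \hat{M}$; completeness will then be automatic from compactness.

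I would first check that $\rho$ is indeed a metric. Symmetry and the triangle inequality are immediate from the definition as a sup-norm. For positive definiteness, suppose $\rho(x,x') = 0$ with $x,x' \in \hat{M}$, so that $K_x \equiv K_{x'}$ on $B(O,1)$. Both functions are $L$-harmonic outside $\{x, x'\}$, so the unique continuation property for the uniformly elliptic operator $L$ forces $K_x \equiv K_{x'}$ on all of $M \smallsetminus \{x, x'\}$. If $x \in M$ and $x \neq x'$, then by Theorem \ref{thm:green_function} $K_x$ has a Green-type singularity at $x$ whereas $K_{x'}$ is harmonic near $x$, a contradiction. Hence $x = x'$.

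Next I would show that the natural topology on $M$ coincides with the topology induced by $\rho$. If $x_n \to x$ in $M$, then continuity of $G$ off the diagonal (Theorem \ref{thm:green_function}) implies that $K_{x_n} \to K_x$ locally uniformly on $M \smallsetminus \{x\}$; combining with Harnack's inequality applied on a small ball around $x$ and the renormalization by $G(x,O)$, one controls the supremum on $B(O,1)$ so that $\rho(x_n,x) \to 0$. Conversely, if $\rho(x_n,x) \to 0$ but $(x_n)$ has a subsequence escaping every compact set of $M$, then Harnack's principle (Proposition \ref{prop:Harnack}) would force a further subsequence of $(K_{x_n})$ to converge locally uniformly on $M$ to a non-negative $L$-harmonic function defined globally, contradicting the singular behavior of $K_x$ at $x$. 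For compactness of $\hat{M}$, I would argue as follows: given a sequence $(\xi_n) \subset \hat{M}$, the family $\{K_\xi : \xi \in \hat{M}\}$ is a family of non-negative $L$-harmonic functions on $M \smallsetminus \{\xi\}$ with common normalization $K_\xi(O) = 1$. By Harnack's principle, this family is relatively compact for local uniform convergence on $M$ (after a standard diagonal extraction handling possible singularities); hence some subsequence $(K_{\xi_{n_k}})$ converges pointwise on $M$, and the limit function defines an element of $\hat{M}$ to which $(\xi_{n_k})$ converges in $\rho$. Completeness then follows from compactness of a metric space.

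Finally, the identification $\MM = \partial \hat{M}$ is essentially built into the construction: every Martin point is by definition a pointwise limit of kernels $K_{x_n}$ with $x_n \to \infty$ in $M$, so lies in the $\rho$-closure of $M$; conversely, any Cauchy sequence in $(M, \rho)$ that stays in a compact subset of $M$ converges in the natural topology of $M$ by the previous step, while one that leaves every compact subset has its $\rho$-limit in $\MM$ by definition. The main technical obstacle I anticipate is the singularity issue: $K_x$ fails to be continuous at $x$, so when $x$ or $x'$ lies in $B(O,1)$ the supremum defining $\rho$ must be handled carefully, and matching the $\rho$-topology with the natural one on $M$ requires sharp control near the Green singularity via Harnack's inequality and the local behavior of $G(x, \cdot)$ described in Theorem \ref{thm:green_function}.
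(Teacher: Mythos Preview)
Your approach is essentially the paper's, and is correct in outline. You are in fact more careful than the paper on two points: positive definiteness of $\rho$ (the paper simply says ``straightforward''), and the converse direction for matching topologies on $M$ (the paper only checks that natural convergence implies $\rho$-convergence). You also rightly flag the Green-singularity issue when $x \in B(O,1)$, which the paper glosses over entirely.

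Your compactness step, however, is less precise than the paper's. You apply Harnack's principle directly to the family $\{K_\xi : \xi \in \hat{M}\}$, but these functions are harmonic on \emph{varying} domains $M \smallsetminus \{\xi\}$, so Harnack does not apply uniformly when the $\xi_n$ accumulate inside $M$. The paper handles this by a trichotomy: either a subsequence of $(\xi_n)$ stays in a compact of $M$ (use the topology match already proved), or a subsequence leaves every compact of $M$ (the singularities eventually lie outside any fixed compact, so Harnack gives a Martin point), or a subsequence lies in $\MM$. For the last case the paper first proves $\MM$ itself is compact by a diagonal trick: approximate each $\xi_n$ by some $y_n \in M$ with $\rho(y_n,\xi_n) \le 1/n$ and $y_n$ outside an exhausting sequence of compacta, then extract. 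Inserting this case split into your argument closes the gap.
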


\begin{proof}
The fact that $\rho$ is a metric is straightforward from the definition.\\
\emph{Both topologies coincide on $M$}: If $x\in M$ and $x_n$ is a sequence that converges to $x$ (for the distance on $M$), then, by continuity of the Green function, $\rho(x,x_n)$ also tends to zero.\\
\emph{$\MM$ is compact}: Let $\left( \xi_n \right) $ be a sequence of points in $\MM$ and $\left(x_n^i\right)$ a sequence of points converging to $\xi_n$. Choose $\left( K_n \right)$ a sequence of compact sets such that $K_{n+1} \supset K_n$ and $M = \cup K_n$. Write $y_n$ for a point in the sequence $x_n^i$ outside of $K_n$ and such that $\rho(y_n, \xi_n) \leq \frac{1}{n}$. Then $\left(y_n\right)$ is a sequence that leaves all compact sets of $M$, therefore admits a subsequence converging to a Martin point $\xi$ and by construction, $\rho(\xi_n, \xi)$ tends to zero.\\
\emph{$\hat{M}$ is compact}: If $\left(x_n\right)$ is a sequence in $\hat{M}$, then either there is a subsequence that stays in a compact set of $M$, or there is a subsequence that leaves every compact set of $M$ and there is a subsequence converging to a Martin point, or, finally, there is a subsequence staying in $\MM$, and we apply the previous fact.\\
\emph{$M$ is open inside $\hat{M}$ with boundary $\MM$}: this is obvious.
\end{proof}

Let $\mathcal{H}_+$ be the convex cone of positive $L$-harmonic functions and write ${\mathcal{K}:= \lbrace u \in \mathcal{H}_+ \mid u(O)=1 \rbrace}$ for the subset of normalized $L$-harmonic functions. The space $\mathcal{K}$ is a base of $\mathcal{H}_+$, it is a convex set and we denote by $E$ its extremal points. A harmonic function $u$ such that $u/u(O)$ is in $E$ is called \emph{minimal}. A function $u$ is minimal iff $u$ does not dominate any other harmonic function apart from multiples of itself (see \cite{Martin}).

\begin{thm}[Martin \cite{Martin}] \label{thm:Martin_contient_partie_minimal}
For any $u\in E$, there exists a Martin point $\xi$ such that $K_{\xi} = u$.
\end{thm}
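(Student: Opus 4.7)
The plan is to establish in two steps the Martin integral representation and then pick out the extremal case. First, I would show that for every $u \in \mathcal{H}_+$ there exists a finite positive Borel measure $\mu$ on $\MM$ with
\[ u(y) \;=\; \int_{\MM} K_\xi(y)\, d\mu(\xi), \qquad y \in \M. \]
Once this is in hand, minimality of $u \in E$ will force $\mu$ to be supported on $\{\xi : K_\xi = u\}$, which is thus nonempty and produces the desired Martin point.

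To build the representation, I would approximate $u$ by $L$-potentials. Fix an exhaustion $(V_n)$ of $\M$ by regular relatively compact open sets with $O \in V_1$ and $\overline{V_n} \subset V_{n+1}$, and consider the balayage $u_n := \hat{R}^{u}_{\overline{V_n}}$. This function equals $u$ on $\overline{V_n}$, is $L$-harmonic on $\M \smallsetminus \overline{V_n}$, and is a genuine $L$-potential --- it is superharmonic, nonnegative, and has no positive harmonic minorant because $\M$ admits a strict global potential (Theorem \ref{thm:green_function}) together with the global minimum principle of Proposition \ref{prop:global_min_principle} applied to $u_n$ outside $\overline{V_n}$. The Riesz decomposition then gives $u_n = G\mu_n$ for a positive Borel measure $\mu_n$ concentrated on $\partial V_n$. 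Normalizing by $d\nu_n(x) := G(x,O)\, d\mu_n(x)$ produces finite positive measures on $\M$ with
\[ u_n(y) \;=\; \int_{\M} K_x(y)\, d\nu_n(x), \qquad \nu_n(\M) = u_n(O) \leq u(O), \]
so the $\nu_n$ form a uniformly bounded family. Viewed as measures on the compact space $\hat M$, a subsequence converges weak-$*$ to some $\nu$; since $\partial V_n$ eventually leaves every compact subset of $\M$, $\nu$ is carried by $\MM$. For each fixed $y$, the map $\xi \mapsto K_\xi(y)$ is continuous on $\hat M \smallsetminus \{y\}$ by the very definition of the Martin topology, and the supports of $\nu_n$ avoid $y$ for large $n$; combined with the monotone convergence $u_n \nearrow u$ this yields $u(y) = \int_{\MM} K_\xi(y)\, d\nu(\xi)$.

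For the extremality step, set $\mu := \nu$, a probability measure by the normalization $u(O)=1$. For any Borel $A \subset \MM$ with $\mu(A) > 0$ the function $v_A(y) := \int_A K_\xi(y)\, d\mu(\xi)$ is a positive $L$-harmonic minorant of $u$ with $v_A(O) = \mu(A)$, and the decomposition $u = v_A + v_{\MM \smallsetminus A}$ realizes $u$ as a convex combination of two elements of $\mathcal{K}$; extremality forces $v_A = \mu(A)\, u$ pointwise for every Borel $A$. Applying this at a countable dense set of points $y$ and using continuity of harmonic functions yields $K_\xi \equiv u$ on $\mathrm{supp}(\mu)$, which is nonempty because $\mu$ is a probability; this produces the desired Martin point $\xi$. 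The main obstacle is the first phase, namely verifying that each $u_n$ is indeed an $L$-potential whose Riesz measure is concentrated on $\partial V_n$ and that $u_n \nearrow u$ --- these are classical but delicate balayage facts in axiomatic potential theory, and they are precisely the content developed in the reference \cite{Ancona:theorie_potentiel}. The weak-$*$ passage and the extremality argument, by contrast, are essentially measure-theoretic once the approximation is in hand.
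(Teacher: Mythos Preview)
Your argument is correct and is precisely the classical Martin route---approximate $u$ by Green potentials $u_n=G\mu_n$ via balayage onto an exhausting family of compacts, pass to a weak-$*$ limit on the compactification to get a representing measure on $\MM$, then use extremality to force the measure onto $\{\xi:K_\xi=u\}$. The paper gives no independent proof here, simply citing \cite{Ancona:theorie_potentiel}, and what you have sketched is exactly the argument one finds there; your one soft spot is the justification that $u_n$ is a potential (the appeal to Proposition~\ref{prop:global_min_principle} is slightly circular), but the fact itself is standard---bound $u$ on $\overline{V_n}$ by a scaled Green function $C\,G(\cdot,x_0)$, which is a potential competitor for the r\'eduite---and you correctly flag this as the delicate step deferred to the reference.
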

 The proof can be found, for instance, p. 31 of \cite{Ancona:theorie_potentiel}.\\

The Martin boundary does not in general coincide with $E$, but the following result shows the importance of the case when $\MM$ is reduced to its minimal part.
\begin{prop} \label{prop:representation_integrale_fonctions_harmonique}
 For any $u \in \mathcal{H}_+$, there exists a unique positive and finite Borel measure $\mu_u$ on $E$ such that, for $x \in M$, 
\begin{equation}
 u(x) = \int_{\xi \in E} K_{\xi}(x) d\mu_u(\xi) \, .
\end{equation}
\end{prop}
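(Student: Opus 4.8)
The statement is the Martin representation theorem for positive $L$-harmonic functions, so the plan is to follow the classical Choquet-theoretic argument, adapted to the present potential-theoretic setting. First I would observe that the cone $\mathcal{H}_+$ of positive $L$-harmonic functions, equipped with the topology of uniform convergence on compact subsets of $M$, is such that the convex set $\mathcal{K} = \lbrace u \in \mathcal{H}_+ \mid u(O) = 1 \rbrace$ is a base of $\mathcal{H}_+$. The crucial preliminary point is that $\mathcal{K}$ is \emph{compact and metrizable}: compactness is a direct consequence of Harnack's principle (Proposition \ref{prop:Harnack}) applied at the point $O$, and metrizability follows because the topology of uniform convergence on compact sets on such an equicontinuous family is induced by the metric $\rho$ already used to build the Martin compactification (restricted to these harmonic functions, or equivalently by a countable family of sup-norms on an exhaustion by compacts). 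Once $\mathcal{K}$ is a compact metrizable convex subset of the locally convex space $C(M)$, the Choquet theorem applies: every $u \in \mathcal{K}$ is the barycenter of a probability measure carried by the set $E$ of extreme points of $\mathcal{K}$ (and $E$ is a $G_\delta$, hence Borel, in $\mathcal{K}$).

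The second step is to transport this abstract representation into the concrete integral formula. Evaluation at a fixed point $x \in M$, $u \mapsto u(x)$, is a continuous linear functional on $C(M)$ restricted to $\mathcal{K}$, so the barycenter property gives $u(x) = \int_E e(x)\, d\nu(\nu)$ for the representing probability measure $\nu$ on $E$. Using Theorem \ref{thm:Martin_contient_partie_minimal}, every extreme point $e \in E$ is of the form $K_\xi$ for some Martin point $\xi$ (the minimal ones), so after identifying $E$ with the minimal part of $\MM$ via $\xi \mapsto K_\xi$ — a homeomorphism onto its image, since $\rho$-convergence of Martin points is exactly convergence of the $K_\xi$ uniformly on $B(O,1)$ and, by Harnack, uniformly on compacts — we obtain $u(x) = \int_{\xi \in E} K_\xi(x)\, d\mu_u(\xi)$ with $\mu_u$ the pushforward of $\nu$. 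For a general $u \in \mathcal{H}_+$ rather than $u \in \mathcal{K}$, one simply writes $u = u(O) \cdot (u/u(O))$ and rescales, so $\mu_u$ is a positive finite measure with total mass $u(O)$.

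The third and genuinely substantive step is \emph{uniqueness} of $\mu_u$. This does not come for free from Choquet's theorem, which only gives existence; uniqueness requires that $\mathcal{K}$ be a (Choquet) \emph{simplex}. The standard way to establish this in potential theory is to use the lattice structure of the cone $\mathcal{H}_+$ (or more precisely of the cone of positive superharmonic functions modulo potentials): the specific order on $\mathcal{H}_+$ induced by the ambient cone of superharmonic functions makes $\mathcal{H}_+$ a lattice in its own order, which is precisely the condition (due to Choquet and Meyer) for $\mathcal{K}$ to be a simplex and hence for the representing measure to be unique. I expect this lattice/simplex verification to be the main obstacle, since it rests on the Riesz decomposition theorem for superharmonic functions and on the fact that the Green potentials are enough to separate the structure — facts which hold here because $L$ admits a Green function by Theorem \ref{thm:green_function}, but whose careful deployment is the technical heart of the matter. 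In a write-up I would either cite this lattice property from Ancona \cite{Ancona:theorie_potentiel} (it is part of the classical Martin theory he develops, essentially the content around the pages following Theorem \ref{thm:Martin_contient_partie_minimal}) or sketch it via Riesz decomposition; in keeping with the excerpt's style of citing \cite{Martin} and \cite{Ancona:theorie_potentiel} for these foundational facts, the cleanest route is to attribute the full statement — existence and uniqueness — to Martin's original work and Ancona's treatment, supplying only the topological verification that $\mathcal{K}$ is compact metrizable (via Harnack) so that the abstract machinery is applicable.
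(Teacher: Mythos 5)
Your proposal takes essentially the same route as the paper: existence via Choquet's theorem and uniqueness via the fact that $\mathcal{K}$ is a base of the lattice cone $\mathcal{H}_+$ (the paper cites \cite[Chapter~10]{Phelps} for this). You supply more of the supporting detail — compactness and metrizability of $\mathcal{K}$ via Harnack, identification of $E$ with the minimal Martin boundary, the rescaling for general $u$ — but the skeleton of the argument is identical.
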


\begin{rem}
 If the Martin boundary is reduced to $E$, then any harmonic function is obtained as the integral of the $K_{\xi}$ over $\MM$. It is not quite Equation \eqref{eq:poisson_representation} that we are ultimately looking for, but it is getting closer.
\end{rem}

\begin{proof}
The existence is given by Choquet's theorem and the uniqueness follows from the fact that $\mathcal{K}$ is a base of a cone $\mathcal{H}_+$ which is a lattice (see \cite[Chapter 10]{Phelps}).
\end{proof}

\subsection{Ancona's Theorem}

In \cite{Ancona:elliptic_operators_Martin_boundary}, Ancona shows that, for an operator of the form $L= \Delta + \langle B, \nabla \rangle$, where $\Delta$ and $\nabla$ come from a Riemannian metric of bounded negative curvature with $B$ satisfying certain conditions, then the Martin boundary is homeomorphic to the visual one.

In our case, we do not know whether the Riemannian metric we obtain from the symbol is of negative curvature, and hence cannot apply directly this result. However, the proof Ancona gives remains true for a generic Gromov-hyperbolic space with a suitable elliptic operator and he presents it in the general setting in \cite{Ancona:theorie_potentiel}.

\begin{thm}[Ancona] \label{thm:Ancona_martin_boundary}
 Let $\M$\ be a Gromov-hyperbolic space, $L$\ a self-adjoint elliptic operator, and denote by $d(\cdot ,\cdot)$\ the distance on $\M$. Suppose that $L$\ verifies:\\
There exist strictly positive constants $r_0, \, \tau, \, c_1$, and $c_2$ such that 
\begin{enumerate}[(H1)]
\item For all $ m \in \M$, there exists a function $\theta: B(m,r_0) \rightarrow \R^n $, such that for all $ x,y \in B(m,r_0) $, we have
\begin{equation*}
 c_2^{-1} d(x,y) \leq \lVert \theta(x) -\theta(y) \rVert \leq c_2 d(x,y)\,;
\end{equation*}
 \item For all $ x_0 \in \M$ and $\forall 0\leq t \leq \tau $\ the Green function $g_t$\ relative to $B(x_0,r_0)$\ of the operator $L + t \id$ satisfies: for all $x,y \in B(x_0,r_0/2)$
\begin{equation*}
 \left\{
\begin{aligned}
  c_1 &\leq g_t(x,y) \\
  g_t(x,y) &\leq c_2, \quad \text{if }  d(x,y) \geq \frac{r_0}{4} \,;
 \end{aligned}
\right.
\end{equation*}
 \item There exists $\eps$, $0<\eps<\frac{1}{2}$, such that for all $ t < 2\eps$, $L + t\id$\ admits a Green function $G_t$\ on $\M$.

\end{enumerate}

Then the Martin compactification of $\M$ is homeomorphic to $\M \cup \M(\infty)$ and the Martin boundary $\MM$ is reduced to its minimal part. Furthermore, if we have chosen a base point $O$ and denote by $K_{\xi} \in \MM$ the harmonic function corresponding to $\xi \in \M(\infty)$, then the application $(\xi,x) \mapsto K_{\xi}(x)$ is continuous on $\M(\infty) \times \M$.

\end{thm}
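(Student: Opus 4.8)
The plan is to follow Ancona's argument in \cite{Ancona:theorie_potentiel}, whose backbone is a multiplicative estimate for the Green function along geodesics. The three hypotheses play the following roles: (H1) says the metric is locally bi-Lipschitz to a Euclidean chart, so the interior estimates of linear elliptic theory (interior Harnack, Dirichlet regularity of small balls, existence of the heat kernel) hold with constants depending only on $L$ and $r_0$; (H2) gives uniform two-sided control on the Green functions of $L+t\id$ on balls of the fixed radius $r_0$, hence a uniform local, and relative/boundary, Harnack principle at that scale; and (H3) --- the spectral gap, i.e. existence of $G_t$ on $\M$ for $t<2\eps$ --- is what produces genuine exponential decay. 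So the first thing I would do is extract the quantitative ingredients: from (H3), $G_\eps(x,\cdot)$ is a positive $L$-superharmonic function (since for $u=G_\eps(x,\cdot)$ one has $Lu=(L+\eps\id)u-\eps u\le -\eps u\le 0$ off $x$), and comparing it on spheres $S(x,r)$ with iterated applications of (H2) yields constants $C,c>0$ with $G(x,y)\le C e^{-c\,d(x,y)}$.

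The crux is then the \emph{Ancona inequality}: there exist $R_0,C_0$ such that for every geodesic segment $[x,z]$ and every $y\in[x,z]$ with $d(x,y),d(y,z)\ge R_0$,
\begin{equation*}
 C_0^{-1}\,G(x,y)\,G(y,z)\ \le\ G(x,z)\ \le\ C_0\,G(x,y)\,G(y,z).
\end{equation*}
The lower bound is soft (a Harnack chain along the segment). The upper bound is where all the work is: I would construct around $y$ a nested family of ``$\Phi$-chain'' regions whose boundaries are separated by a fixed distance, use (H2) together with the spectral gap (H3) to show that an $L$-harmonic function vanishing on the outer boundary of such a region is dominated on the inner one by a fixed fraction $<1$ of its values on the intervening annulus, iterate this geometric decrease along $[x,z]$ to bound $G(x,\cdot)$ on a sphere about $y$ by a multiple of $G(x,y)$, and finally invoke the global minimum principle (Proposition \ref{prop:global_min_principle}) with the barrier $C_0\,G(x,y)\,G(y,\cdot)$. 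I expect this step --- in particular producing the separating ``walls'' adapted to the Gromov-hyperbolic geometry and getting the fraction below $1$ uniformly --- to be the main obstacle.

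Granting the Ancona inequality, the identification with the visual boundary is essentially geometry. Using $\delta$-thinness of triangles, equivalently large Gromov products (Proposition \ref{prop:Gromov_metric}), if $x_n\to\xi\in\M(\infty)$ then the geodesics $[O,x_n]$ eventually fellow-travel, so for $y$ fixed and $w$ far out on their common part the inequality gives $K_{x_n}(y)=G(y,x_n)/G(O,x_n)\asymp G(y,w)/G(O,w)$ with multiplicative error tending to $1$; hence $K_{x_n}$ converges and the limit $K_\xi$ depends only on $\xi$, not on the chosen sequence. Since the visual compactification $\M\cup\M(\infty)$ is compact and every Martin point lies in the closure of $\{K_x\}$ (Harnack, Proposition \ref{prop:Harnack}), every Martin point equals some $K_\xi$, giving a continuous surjection $\M\cup\M(\infty)\to\hat M$, and the same uniform estimates yield joint continuity of $(\xi,x)\mapsto K_\xi(x)$. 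For injectivity, $\xi\ne\eta\Rightarrow K_\xi\ne K_\eta$: the Ancona inequality forces $K_\xi$ to decay exponentially along every geodesic ray leaving $\xi$, so $K_\xi$ ``vanishes'' at each boundary point $\ne\xi$ while $K_\eta$ does not decay toward $\eta$. A continuous bijection between compacta is a homeomorphism, which gives $\hat M\cong\M\cup\M(\infty)$.

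Finally, minimality of each $K_\xi$ --- that $\MM$ coincides with its minimal part $E$ --- is a boundary Harnack argument. Given $u$ harmonic with $0\le u\le K_\xi$, I would write its representing measure $\mu_u$ on $E$ (Proposition \ref{prop:representation_integrale_fonctions_harmonique}); the exponential decay of $K_\xi$ away from $\xi$, combined with the Ancona inequality applied to both $u$ and $K_\xi$ near $\xi$, shows $\mu_u$ is carried by $\{\xi\}$ and that $u/K_\xi$ admits a limit along geodesics to $\xi$, forcing $u=c\,K_\xi$. Then Theorem \ref{thm:Martin_contient_partie_minimal} together with the surjectivity above gives $\MM=E$, completing the proof. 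Throughout, self-adjointness of $L$ with respect to $\Omega$ is used only to symmetrize the Green function, $G(x,y)=G(y,x)$, which streamlines several comparisons but is not essential.
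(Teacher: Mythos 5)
Your strategy is the same as Ancona's (and the paper's): the multiplicative Green function estimate along geodesics (the ``Ancona inequality'') is the engine, and everything downstream is soft. You correctly identify that the hard work lives in the $\Phi$-chain construction. But there is a genuine gap in the step where you pass from the Ancona inequality to convergence of $K_{x_n}$.

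You claim that for $x_n\to\xi$ and $w$ far out on the common part of the geodesics $[O,x_n]$, the inequality gives $K_{x_n}(y)\asymp G(y,w)/G(O,w)$ ``with multiplicative error tending to $1$''. The Ancona inequality does \emph{not} give this: the constant $c$ in $c^{-1}G(y,x)G(z,y)\le G(z,x)\le c\,G(y,x)G(z,y)$ is fixed, independent of how far apart the points are, so all you can extract is that the various $K_{x_n}(y)$ lie within a fixed multiplicative band of each other (say $c^{-2}\le K_{x_n}(y)/K_{x_m}(y)\le c^2$). That forces all subsequential limits to be comparable, not equal, and so does not by itself produce convergence or a well-defined $K_\xi$. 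The missing idea, which is what Ancona actually does (and what the paper sketches), is to introduce the cone $\mathcal{H}_\xi$ of nonnegative $L$-harmonic functions bounded above near every $\xi'\ne\xi$ by a multiple of $G(\cdot,O)$. The two-sided estimate shows any subsequential limit of $K_{z_n}$ lies in $\mathcal{H}_\xi$, and that any two nonzero elements of $\mathcal{H}_\xi$ dominate each other up to multiplicative constants. One then argues by extremality: for $h,h'\in\mathcal{H}_\xi$ strictly positive, set $\lambda=\sup h/h'$; then $\lambda h'-h\in\mathcal{H}_\xi$ but by construction cannot be bounded below by a positive multiple of $h'$, so it must vanish. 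This makes $\mathcal{H}_\xi$ one-dimensional and pins down $K_\xi$ as its unique normalized element. The convergence of $K_{z_n}$ and the independence from the sequence then follow from Harnack compactness plus this uniqueness, not from any quantitative improvement of the Ancona constant. Your minimality and injectivity arguments can be shortened once this is in place: $u\le K_\xi$ positive harmonic lies in $\mathcal{H}_\xi$, hence is a multiple of $K_\xi$ (no need to invoke the Choquet representing measure), and $K_\xi=K_{\xi'}$ with $\xi\ne\xi'$ would force $K_\xi\le C\,G(\cdot,O)$ globally, contradicting that $G(\cdot,O)$ is a potential. The rest of your outline (surjectivity via Harnack compactness, joint continuity from the uniform estimates, and the observation that self-adjointness is only used to get $G(x,y)=G(y,x)$) matches the paper.
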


\begin{rem}
 Ancona proves also that, given the above conditions on $L$, the Green function tends to zero on the boundary: there exist constants $c>0$ and $\alpha >0$, depending on $L$, such that, for any $x,y \in \M$, if $d(x,y) \geq 2$, 
\begin{equation*}
 G(x,y) \leq c e^{-\alpha d(x,y)}.
\end{equation*}
\end{rem}

The main point in the proof is to give an estimate for the Green function of $L$:
\begin{thm}[Ancona, Theorem 6.1 of \cite{Ancona:theorie_potentiel}] \label{thm:utilisation_unique}
 If $\M$ is a Gromov-hyperbolic manifold and $L$ verifies Conditions (H1), (H2) and (H3), then, for any geodesic segment $[x,z]$ in $\M$ and any point $y$ on it satisfying ${\min \left( d(x,y) , d(y,z) \right) \geq 1}$, there exists a constant $c>0$ depending only on $L$ and the $\delta$ given by the Gromov-hyperbolicity such that the Green function $G$ of $L$ verifies
\begin{equation*}
 c^{-1} G(y,x)G(z,y) \leq G(z,x) \leq c G(y,x) G(z,y)\, .
\end{equation*}
\end{thm}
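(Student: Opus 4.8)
The plan is to prove the two inequalities separately, the upper bound $G(z,x)\le c\,G(y,x)\,G(z,y)$ being comparatively soft and the lower bound $c^{-1}G(y,x)G(z,y)\le G(z,x)$ carrying the real geometric weight. Three ingredients feed both halves. First, the exponential decay $G(u,v)\le c_0e^{-\alpha d(u,v)}$ for $d(u,v)\ge 2$ (the remark preceding the statement), which is itself a consequence of (H1)--(H3): the Green function of $L+2\epsilon\,\mathrm{Id}$ furnished by (H3) gives, via Theorem~\ref{thm:green_function}, a strictly positive $(L+2\epsilon\,\mathrm{Id})$-superharmonic function, and iterating the local two-sided bounds of (H2) against it forces $G(\cdot,v)$ to lose a fixed multiplicative factor each time one moves a distance $\sim r_0$ away from $v$. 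Second, the interior Harnack inequality (Proposition~\ref{prop:Harnack}) together with the global minimum principle (Proposition~\ref{prop:global_min_principle}). Third, the geometry of a geodesic $[x,z]$ in a $\delta$-hyperbolic space: the chain of balls $B(w_i,r_0)$ with $w_i\in[x,z]$ and $d(w_i,w_{i+1})\sim r_0$ is linearly ordered, has $O(d(x,z))$ elements, and — this is the point where hyperbolicity enters — can be thickened to a quantitative separating family, so that crossing the portion of the chain sitting between $y$ and $z$ (resp. between $x$ and $y$) genuinely "factors" the Green function through a bounded region around $y$.

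For the upper bound I would first treat the model case in which a single ball $B=B(y,R_0)$, with $R_0=R_0(\delta,r_0)$ controlled, separates $x$ from $z$; let $U$ be the component of $\M\smallsetminus\overline{B}$ containing $z$. On $U$ the function $G(\cdot,x)$ is $L$-harmonic, and by the decay estimate it is a potential on $U$ tending to $0$ at infinity inside $U$. On $\partial U\subset\partial B$, interior Harnack gives $G(\cdot,x)\le C_1G(y,x)$ (points of $\partial U$ lie at bounded distance from $y$, where $G(\cdot,x)$ is harmonic), while the lower bound of (H2), which only improves upon enlarging the ball, together with domain monotonicity of Green functions, gives $G(\cdot,y)\ge c_1'>0$ on $\partial U$. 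Hence $\lambda G(\cdot,y)\ge G(\cdot,x)$ on $\partial U$ for $\lambda:=C_1G(y,x)/c_1'$, and Proposition~\ref{prop:global_min_principle} applied with $F=\overline{B}$, $u=G(\cdot,x)$, $s=\lambda G(\cdot,y)$ (restricted to $U$) yields $G(\cdot,x)\le\lambda G(\cdot,y)$ on $U$; evaluating at $z$ gives $G(z,x)\le cG(y,x)G(z,y)$ with $c=C_1/c_1'$. In general a path from $x$ to $z$ need not meet a single fixed-radius ball around $y$, so one replaces $B$ by a finite chain of $r_0$-balls following $[x,z]$ and telescopes the one-ball comparison along it, hyperbolicity guaranteeing the chain is linearly ordered and of length $O(d(x,z))$; the per-ball loss is a uniform constant, and the bookkeeping leaves a final constant depending only on $L$ and $\delta$.

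The lower bound is the genuine obstacle, and here I would follow the argument of Theorem~6.1 of \cite{Ancona:theorie_potentiel}. The idea is to exploit the harmonic-measure representation $G(z,x)\ge\int_{\Gamma}G(z,w)\,d\mu^{B(y,R_0)}_{x}(w)$ over a piece $\Gamma\subset\partial B(y,R_0)$ of definite size straddling the geodesic: on $\Gamma$ one has $G(z,\cdot)\ge c''G(z,y)$ by Harnack, so everything reduces to a lower estimate of $\mu^{B(y,R_0)}_x(\Gamma)$ in terms of $G(y,x)$. That estimate is obtained by \emph{chaining} the lower bounds $g_t\ge c_1$ of (H2) through a string of overlapping balls following $[x,y]$: hyperbolicity guarantees this geodesic does not spread, so there are only $O(d(x,y))$ balls and each step costs only a fixed multiplicative constant, producing a bound of the form $e^{-\beta d(x,y)}$. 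The subtle point — the reason this is a theorem and not a triviality — is that this chained lower bound must be comparable to the true size of $G(y,x)$, which by the upper half is only known to be at most $c_0e^{-\alpha d(x,y)}$; reconciling the two exponential rates is precisely where the strict positivity and smallness of $\epsilon$ in (H3) are used, since they tie both rates to the same spectral data of $L$ so that no exponential gap can open between them. Once both inequalities are in hand, the constant $c$ depends only on $L$ and the hyperbolicity constant $\delta$, as asserted in Theorem~\ref{thm:utilisation_unique}; I expect the delicate balancing in this last paragraph to be by far the hardest part.
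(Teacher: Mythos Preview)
The paper does not give a proof of this theorem; it is quoted from Ancona, and the only commentary offered is structural: the result decomposes into a hard, purely potential-theoretic estimate of the Green function along what Ancona calls a \emph{$\phi$-chain} (Th\'eor\`eme~5.2 of \cite{Ancona:theorie_potentiel}), plus the comparatively easy geometric observation that in a $\delta$-hyperbolic space a geodesic segment automatically produces such a $\phi$-chain. So there is nothing in the paper to compare your argument against beyond that one sentence.

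That said, your sketch does not reproduce Ancona's scheme and has real gaps. First, you invoke the exponential decay $G(u,v)\le c_0 e^{-\alpha d(u,v)}$ as an input, but in the paper that decay appears as a \emph{consequence} of the theorem you are trying to prove (it is stated in a remark following Theorem~\ref{thm:Ancona_martin_boundary}, which itself rests on Theorem~\ref{thm:utilisation_unique}); in Ancona's paper the logical order is the same, so this is circular. Second, your upper-bound argument in the non-separating case telescopes a one-ball comparison along $O(d(x,z))$ balls, each step costing a uniform constant --- that produces a constant like $c^{\,O(d(x,z))}$, not one depending only on $L$ and $\delta$. The assertion that ``the bookkeeping leaves a final constant depending only on $L$ and $\delta$'' is exactly the content of the theorem and is not justified by what you wrote. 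Third, and most seriously, for the lower bound you correctly isolate the crux --- matching the chained lower rate $e^{-\beta d(x,y)}$ to the actual size of $G(y,x)$ --- but your resolution (``(H3) ties both rates to the same spectral data'') is a slogan, not an argument. Ancona's $\phi$-chain machinery handles this by an induction in which the upper and lower estimates are proved \emph{simultaneously} along the chain, so that no gap between the exponents can open; your two halves are proved independently and nothing you wrote closes that gap.
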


Note that this theorem contains in fact two results: one, hard and general, giving estimates of the Green function along what Ancona calls $\phi$-chain (Theorem 5.2 of \cite{Ancona:theorie_potentiel}) and one, easier, showing that, when the space is Gromov-hyperbolic, you obtain $\phi$-chain by following geodesics.\\

Assuming this result, we can start the:
\begin{proof}[Sketch of proof of Theorem \ref{thm:Ancona_martin_boundary}]
 Ancona splits his proof into three steps:
\begin{itemize}
 \item Choose a base point $O \in \M$, let $\xi \in \M(\infty)$ and $\gamma \colon \R^+ \rightarrow \M$ be a geodesic ray issuing from $O$ and ending in $\xi$. Set $x_j := \gamma\left((j+1) \delta \right)$ and $V_j := \lbrace x\in \M \mid \lbrace x,x_j \rbrace_O \geq j\delta \rbrace$ where $\delta$ is a constant coming from the Gromov-hyperbolicity assumption. For $x\in \M$, recall that $K_x(y) := G(x,y)/G(x,O)$. Then, using Theorem \ref{thm:utilisation_unique}, we can deduce that there exists a constant $c$ such that, for any $x\in V_{j+1}$ and $y\in \M\smallsetminus V_j$,
\begin{equation}
 c^{-2} K_{x_j}(y) \leq K_{x}(y) \leq c^2 K_{x_j}(y)\, . \label{eq:utilisation_unique4}
\end{equation}
Now take a sequence $(z_n)$ in $\M$ converging to $\xi \in \M(\infty)$ and such that $K_{z_n}$ converges to a harmonic function $h$, i.e., the sequence $(z_n)$ is a Martin point. By Gromov-hyperbolicity, $V_j$ is a base of neighborhood of $\xi$ (we can deduce that from the characterization of the visual boundary via the Gromov-compactification, see Section \ref{subsec:gromov-hyperbolicity}). So, we can deduce that for $ j \geq 2$ and $y\in  \M\smallsetminus V_j$,
\begin{equation*}
 c^{-2} K_{x_j}(y) \leq h(y) \leq c^2 K_{x_j}(y)\, .
\end{equation*}
Note that, as a conclusion to that first step, we proved the following: If $h$ is a harmonic function corresponding to a Martin point $(z_n)$ which, as a sequence in $\M$, converges to $\xi \in \M(\infty)$, then, for any $\xi' \neq \xi \in \M(\infty)$, there exists a neighborhood $V$ of $\xi'$ such that $h$ is bounded above by a multiple of $K_{x_j}$ on $\M \cap V$. Using the Harnack principle, we can deduce that $h$ is bounded by a multiple of $G(\cdot, O)$, with a constant depending on the point $\xi'$.

\item Now define $\mathcal{H}_{\xi}$ the cone of non-negative $L$-harmonic functions on $\M$ that are bounded above by a multiple of $G(\cdot, O)$ in a neighborhood of any $\xi' \in \M(\infty)$ different from $\xi$. Ancona proves that, for any $h \in \mathcal{H}_{\xi}$, the following holds: for any $j\geq2$ and $y\in \M \smallsetminus V_j $,
\begin{equation*}
 c^{-2} h(O) K_{x_j}(y) \leq h(y) \leq c^2 h(O) K_{x_j}(y)\, .
\end{equation*}
From this we deduce that, for any $h,h' \in \mathcal{H}_{\xi}$ and any $y\in \M$, we have
\begin{equation*}
 c^{-4} h(O) h'(y) \leq h'(0)h(y) \leq c^2 h(O) h'(y)\, ,
\end{equation*}
that is, any element of $\mathcal{H}_{\xi}$ is bounded by a multiple of any other (non-zero) elements.

 \item Recall that, in the first step, we showed that $\mathcal{H}_{\xi}$ is not reduced to zero. Now the second step induces that $\mathcal{H}_{\xi}$ is one-dimensional. Indeed, take two strictly positive elements $h,h' \in \mathcal{H}_{\xi}$ and set $\lambda:= \sup\left\{ \frac{h(y)}{h'(y)} : y\in \M \right\}$. By definition, $\lambda h' - h \in \mathcal{H}_{\xi}$, but, by our choice of $\lambda$, $\lambda h' - h$ cannot be bounded below by a multiple of $h'$. We set $K_{\xi}$ the unique element of $\mathcal{H}_{\xi}$ such that $K_{\xi}(O)= 1$.
\end{itemize}
Given a point $\xi \in \M(\infty)$ and any sequence $(z_n)$ in $\M$ converging to $\xi$, by Equation \eqref{eq:utilisation_unique4} and the Harnack principle, we know that $(K_{z_n})$ must converge simply and the third step shows that the limit must be $K_{\xi}$. Moreover $K_{\xi}$ is minimal because if $u$ is a positive harmonic function bounded above by $K_{\xi}$, then $u$ is in $\mathcal{H}_{\xi}$, so if $u(O)=1$ then $u= K_{\xi}$. So we constructed an application from $\M(\infty)$ to the minimal part of $\MM$.\\
 
 Let $\xi,\xi'$ be two distinct points in $\M(\infty)$, then $K_{\xi} \neq K_{\xi'}$. Indeed, otherwise we would have shown that $K_{\xi}$ is bounded above by a multiple of $G(\cdot, O)$ on all of $\M(\infty)$ and therefore on all of $\M$. Hence $G(\cdot, O)$ would be bounded below by a positive harmonic function, which is impossible because $y \mapsto G(y,O)$ is a potential.\\
 
 So our application $\M(\infty) \rightarrow \MM$ is injective, and we have everything to see that it is also surjective: take a Martin sequence $(z_n)$ such that $\left(K_{z_n}\right)$ converges to $h$, there exists a subsequence that converges to an element $\xi \in \M(\infty)$ so $h = K_{\xi}$ and therefore $(z_n)$ must converge to $\xi$.
\end{proof}

\begin{rem} \label{rem:characterization_of_Poisson_kernel}
 Note that the proof gives the following characterization of $K_{\xi}$: it is the only harmonic function such that $K_{\xi}(O)=1$ and that is bounded by a multiple of $ G(\cdot, O) $ in a neighborhood of every point in $\M(\infty) \smallsetminus \{\xi \}$. The second condition really gives a meaning to ``$K_{\xi}$ is zero on $\M(\infty) \smallsetminus \{\xi \}$''.
\end{rem}

Some corollary of this result is that the Dirichlet problem at infinity for an operator satisfying (H1) to (H3) has a unique solution. The proof is once again copied from \cite{Ancona:theorie_potentiel}. Note that it is also a direct transcription of the classical proof of the Dirichlet Problem for the Euclidean disc via the Poisson integral formula (see for instance \cite{GilTru}).

\begin{thm}[Dirichlet problem at infinity] \label{thm:solution_dirichlet_a_infini}
Let $(\M,L)$ be as above and $L(\mathds{1}) =0$. Then, for any $f\in C(\M(\infty))$, there exists a unique $u \in C(\M\cup \M(\infty))$ such that $u = f$ on $\M(\infty)$ and $Lu= 0$ on $\M$.\\
Furthermore, if we choose a base point $O$ on $\M$, we can write
\begin{equation*}
 u(x):= \int_{\xi \in \M(\infty)} K_{\xi}(x) f(\xi) d\mu(\xi)\,.
\end{equation*}
\end{thm}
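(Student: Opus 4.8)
The plan is to obtain uniqueness from the minimum principle and existence from the Martin (Poisson) representation furnished by Theorem~\ref{thm:Ancona_martin_boundary}. For uniqueness, given two solutions $u_1,u_2$ I would set $w=u_1-u_2$: it is $L$-harmonic on $\M$, continuous on $\M\cup\M(\infty)$, and zero on $\M(\infty)$. Fix $\varepsilon>0$; since $w\to 0$ at infinity the set $\{|w|\ge\varepsilon\}$ is a compact subset of $\M$, so one may choose a regular domain $\Omega\subset\subset\M$ containing it, whence $|w|<\varepsilon$ on $\partial\Omega$. Because $L(\mathds{1})=0$, the functions $\varepsilon\pm w$ satisfy $L(\varepsilon\pm w)\le 0$, so the Minimum Principle gives $\varepsilon\pm w\ge 0$ on $\Omega$, i.e.\ $|w|\le\varepsilon$ on $\Omega$, hence on all of $\M$ (a point outside $\Omega$ already has $|w|<\varepsilon$). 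Letting $\varepsilon\to 0$ gives $w\equiv 0$.

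For existence: since $L(\mathds{1})=0$, the constant $\mathds{1}$ is a normalized positive $L$-harmonic function, and by Theorem~\ref{thm:Ancona_martin_boundary} the Martin boundary is $\M(\infty)$, reduced to its minimal part. Hence Proposition~\ref{prop:representation_integrale_fonctions_harmonique} produces a unique probability measure $\mu$ on $\M(\infty)$ with $\int_{\M(\infty)}K_\xi(x)\,d\mu(\xi)=1$ for every $x\in\M$, and I would \emph{define}
\begin{equation*}
 u(x):=\int_{\xi\in\M(\infty)}K_\xi(x)\,f(\xi)\,d\mu(\xi),
\end{equation*}
which is exactly the formula in the statement. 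Then $\lVert u\rVert_\infty\le\lVert f\rVert_\infty$; $u$ is continuous on $\M$ by dominated convergence, using the joint continuity of $(\xi,x)\mapsto K_\xi(x)$ from Theorem~\ref{thm:Ancona_martin_boundary} together with Harnack's principle (Proposition~\ref{prop:Harnack}) for a local uniform bound; and for any regular $V\subset\subset\M$ and $x\in V$, Fubini (legitimate since $\mu$ is finite and the kernels are bounded) combined with $H^V_{K_\xi}=K_\xi$ (each $K_\xi$ is $L$-harmonic, hence equal to its Poisson integral over $V$) yields $H^V_u=u$ on $V$. Thus $u$ is $L$-harmonic, and by elliptic regularity $u\in C^2(\M)$ with $Lu=0$.

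It remains to check the boundary values. Fix $\xi_0\in\M(\infty)$, $\varepsilon>0$, and an open neighborhood $W\subset\M(\infty)$ of $\xi_0$ with $|f-f(\xi_0)|<\varepsilon$ on $W$; using $\int K_\xi(x)\,d\mu=1$,
\begin{equation*}
 |u(x)-f(\xi_0)|\le\varepsilon+2\lVert f\rVert_\infty\int_{\M(\infty)\smallsetminus W}K_\xi(x)\,d\mu(\xi),
\end{equation*}
so I need the last integral to tend to $0$ as $x\to\xi_0$ in $\M$. For this I would invoke the localization of Martin kernels already inside the proof of Theorem~\ref{thm:Ancona_martin_boundary}: for $\xi\ne\xi_0$ the function $K_\xi$ is dominated, in a neighborhood of $\xi_0$, by a multiple of $G(\cdot,O)$ (Remark~\ref{rem:characterization_of_Poisson_kernel}), the control coming through inequality~\eqref{eq:utilisation_unique4} from the \emph{universal} Gromov-hyperbolicity constant. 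Since $C:=\M(\infty)\smallsetminus W$ is compact and avoids $\xi_0$, this neighborhood and constant can be taken uniform over $\xi\in C$ (the auxiliary interior points on geodesics from $O$ towards $\xi\in C$ all lie in a fixed compact annulus about $O$, on which $G(\cdot,O)$ is continuous and positive), and then Ancona's exponential decay $G(x,y)\le c\,e^{-\alpha d(x,y)}$ forces $\sup_{\xi\in C}K_\xi(x)\to 0$ as $x\to\xi_0$; integrating against the finite measure $\mu$ finishes the argument, so $u$ extends continuously to $\M\cup\M(\infty)$ with $u=f$ there. The main obstacle is precisely this uniform estimate $\sup_{\xi\in C}K_\xi(x)\to 0$ --- upgrading the pointwise-in-$\xi$ vanishing of each $K_\xi$ away from its own boundary point into a statement uniform over the compact family $C$; this is where Theorem~\ref{thm:utilisation_unique} (the Green-function factorization) and the universality of the constant in~\eqref{eq:utilisation_unique4} are essential, everything else being routine.
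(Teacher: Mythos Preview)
Your proof is correct and follows essentially the same route as the paper: uniqueness from the maximum/minimum principle, existence by representing $\mathds{1}$ via Proposition~\ref{prop:representation_integrale_fonctions_harmonique} to obtain $\mu$, defining $u$ by the Poisson integral, and checking boundary values by splitting the integral near and away from $\xi_0$, with the ``away'' part controlled by the domination of $K_\xi$ by a multiple of $G(\cdot,O)$ coming from step one of the proof of Theorem~\ref{thm:Ancona_martin_boundary}. You supply more detail than the paper in two places --- the $\varepsilon$-exhaustion argument for uniqueness, and the explicit discussion of why the bound $K_\xi(x)\le c\,G(x,O)$ is uniform over $\xi$ in the compact set $\M(\infty)\smallsetminus W$ --- but the underlying ideas are identical.
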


\begin{proof}
Note that uniqueness is a direct consequence of the maximum principle.

With Ancona's theorem, applying Proposition \ref{prop:representation_integrale_fonctions_harmonique} to $\mathds{1}$ shows that we have a measure $\mu$ on $\M(\infty)$ such that 
\begin{equation*}
 \mathds{1}(x) = \int_{\xi \in \M(\infty)} K_{\xi}(x) d\mu(\xi) \, ,
\end{equation*}
where we chose a base point $O \in \M$ for the normalization of $K_{\xi}$. Now let 
\begin{equation*}
 u(x):= \int_{\xi \in \M(\infty)} K_{\xi}(x) f(\xi) d\mu(\xi)\, ,
\end{equation*}
$u$ is $L$-harmonic by definition, so we just need to prove that $u(x)$ tends to $f(\xi)$ when $x \rightarrow \xi$.\\
Let $\xi_0 \in \M(\infty)$ and $V$ a neighborhood of $\xi_0$, we write
\begin{equation*}
 u(x)- f(\xi_0):= \int_{\M(\infty) \smallsetminus V}  K_{\xi}(x) (f(\xi)-f(\xi_0) ) d\mu(\xi) + \int_{V} K_{\xi}(x) (f(\xi)-f(\xi_0) ) d\mu(\xi)\, .
\end{equation*}
Now, for $x$ in a smaller neighborhood of $\xi_0$, $K_{\xi}(x)$ is bounded above by a multiple of $G(x, O)$ (by the first step in the proof of Ancona's Theorem), so the first part in the RHS of the above equation is small. The second part is easily seen to be small because $ \int_{\M(\infty)} K_{\xi}(x) d\mu(\xi) = 1$, which proves the theorem.
\end{proof}

In fact, we can say more about the regularity of the identification between the Martin and the visual boundary. Recall (Proposition \ref{prop:Gromov_metric}) that there is a natural metric on the boundary of a Gromov-hyperbolic space $d_{G, \epsilon}$, where $\epsilon$ is a small real depending only on $\delta$, and that we have, for $\xi, \eta \in \M(\infty)$,
\begin{equation} \label{eq:gromov_metric}
 (3 - 2 e^{\epsilon \delta}) e^{-\epsilon \lbrace \xi, \eta \rbrace_O}\leq d_{G,\epsilon}(\xi,\eta) \leq e^{-\epsilon \lbrace \xi, \eta \rbrace_O}.
\end{equation}
Moreover, the boundary admits a H\"older structure (see \cite[Chapitre 11]{CooDelPap}).

\begin{thm} \label{thm:Martin_is_holder}
 Let $(\M, L)$ satisfying the conditions of Theorem \ref{thm:Ancona_martin_boundary}. There exists a constant $\alpha>0$ depending on $\M$ and $L$ such that the identification $\MM \rightarrow \M(\infty)$ is $\alpha$-H\"older. 

Moreover, if we denote by $K(O,x,\xi)$ the Poisson kernel normalized at $O$, for any compact $K \subset \M$, the application $(O,x,\xi) \in K\times K \times \M(\infty) \mapsto K(O,x,\xi)$ is H\"older-continuous.
\end{thm}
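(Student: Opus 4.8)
The plan is to adapt the method of Anderson--Schoen \cite{AndersonSchoen} to the Gromov-hyperbolic framework, the analytic engine being Ancona's Green-function estimate (Theorem \ref{thm:utilisation_unique}) together with the facts already extracted in the proof of Theorem \ref{thm:Ancona_martin_boundary}. Everything reduces to the quantitative comparison
\begin{equation*}
 C^{-1}\, e^{-\alpha_1\lbrace\xi,\eta\rbrace_O}\ \le\ \rho(K_\xi,K_\eta)\ \le\ C\, e^{-\alpha_0\lbrace\xi,\eta\rbrace_O}\qquad(\xi,\eta\in\M(\infty)),
\end{equation*}
with constants $C,\alpha_0,\alpha_1>0$ depending only on $\M$ and $L=\Delta^{\F}$: indeed, inequality \eqref{eq:gromov_metric} of Proposition \ref{prop:Gromov_metric} turns $e^{-\lbrace\xi,\eta\rbrace_O}$ into a fixed power of $d_{G,\epsilon}(\xi,\eta)$, so the two bounds say exactly that $\xi\mapsto K_\xi$ and its inverse $\MM\to\M(\infty)$ are Hölder. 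The joint statement then follows: writing $K(O,x,\xi)=\lim_{z\to\xi}G(z,x)/G(z,O)$, one has the cocycle relation $K(O',x,\xi)=K(O,x,\xi)/K(O,O',\xi)$, which reduces Hölder dependence on the basepoint to Hölder dependence on the $x$-variable; the latter --- and more, $C^k$-dependence uniformly on compacts --- is interior Schauder regularity for $L$, whose coefficients are smooth here (built from the smooth forms $\alpha^{\F}$, $\Omega^{\F}$), the Schauder constants being uniform on a fundamental domain by $\pi_1(M)$-invariance. Continuity in $\xi$ from Theorem \ref{thm:Ancona_martin_boundary} plus the Hölder bound in $\xi$ then give joint Hölder-continuity on $K\times K\times\M(\infty)$.

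For the displayed estimate, fix $\xi,\eta$, set $R:=\lbrace\xi,\eta\rbrace_O$, and assume $R$ large (for $R$ bounded both sides are pinched between two positive constants, since $\M(\infty)$ is compact and $\xi\neq\eta\Rightarrow K_\xi\neq K_\eta$ by Theorem \ref{thm:Ancona_martin_boundary}). As $\M$ is $\delta$-hyperbolic, the geodesic rays $\gamma_\xi,\gamma_\eta$ from $O$ stay within a bounded distance $C_1\delta$ of one another on $[0,R-C_1\delta]$, and so do the rays from any $y\in B(O,1)$ to $\xi$, resp. to $\eta$; hence $w:=\gamma_\xi(R-C_1\delta)$ lies within a bounded distance of all four rays and separates $B(O,1)$ from both $\xi$ and $\eta$ --- concretely it belongs to every neighbourhood $V_j^\xi$ and $V_j^\eta$ with $j\delta\le R-2C_1\delta$ of the type used in the proof of Theorem \ref{thm:Ancona_martin_boundary}, while $B(O,1)\subset\M\setminus V_j^\xi$ for $j\ge1$.

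The \textbf{hard part} is to turn fixed-constant comparability into the exponentially small error. Letting $z\to\xi$ along $\gamma_\xi$ in Theorem \ref{thm:utilisation_unique} applied to the geodesics $[y,z]$ and $[O,z]$ --- both of which pass within a bounded distance of $w$ --- and transporting the pole between the two near-$w$ points via the uniform Harnack inequality (Proposition \ref{prop:Harnack}), one gets $K_\xi(y)\asymp G(w,y)/G(w,O)\asymp K_\eta(y)$, but only within a fixed multiplicative factor. The refinement is the Anderson--Schoen boundary-Harnack mechanism, which exploits the \emph{geometric} gain hidden in the ``hard and general'' half of Theorem \ref{thm:utilisation_unique} (Ancona's $\phi$-chain estimates, run along a chain of length $\sim R/\delta$ following $\gamma_\xi$) together with the uniform Green-function decay $G(x,y)\le c\,e^{-\alpha d(x,y)}$ of the Remark after Theorem \ref{thm:Ancona_martin_boundary}: the quotient $K_\xi/K_\eta$, pinched in $[c^{-4},c^4]$ on $\M\setminus V_j^\xi$ for all $j\le j_0\sim R/\delta$ by \eqref{eq:utilisation_unique4} applied to $\xi$ and to $\eta$, has oscillation decaying geometrically in $j$, whence $\bigl|\log(K_\xi(y)/K_\eta(y))\bigr|\le C e^{-\alpha_0 R}$ on $B(O,1)$; since $K_\eta\le c_0$ there (Harnack, $K_\eta(O)=1$), the upper bound follows. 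The lower bound is less delicate: one exhibits a $y\in B(O,1)$ where the two kernels differ by at least $c^{-1}e^{-\alpha_1 R}$, using a uniform interior gradient estimate for $L$-harmonic functions near $O$ and the fact that $K_\xi$ and $K_\eta$ ``see'' $w$ from $O$ along essentially opposite directions.

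I expect the genuine obstacle to be carrying out this oscillation-decay step rigorously in the Gromov-hyperbolic setting rather than the smooth pinched-negative-curvature one of \cite{AndersonSchoen}, and checking that every constant in sight --- Harnack, the exponents in Theorem \ref{thm:utilisation_unique}, the decay rate $\alpha$, the Schauder constants --- is uniform over $\M$; this is exactly where $\pi_1(M)$-invariance of $\Delta^{\F}$ and compactness of $M$ are used, and it is what makes the estimates of \cite{Ancona:theorie_potentiel} directly applicable.
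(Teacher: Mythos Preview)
Your approach is essentially the paper's: Anderson--Schoen oscillation decay run along Ancona's $\phi$-chains in the Gromov-hyperbolic setting. The paper packages this slightly differently, first proving a general $C^\alpha$-extension result (its Theorem \ref{thm:C_alpha_extension}) for quotients $u/v$ of harmonic functions vanishing on a boundary arc, then specializing to $u=G(\cdot,y_n)$, $v=G(\cdot,y'_n)$ and passing to the limit. The one technical point the paper isolates as new is a short lemma (Lemma \ref{lem:minoration_V_i}) bounding the Gromov product of the two boundary endpoints of $\overline{V_i}$ linearly in $i$; this is exactly what justifies your implicit step $j_0\sim R/\delta$, and is the place where Gromov hyperbolicity replaces the explicit comparison geometry in \cite{AndersonSchoen}.

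One genuine issue: your lower bound $\rho(K_\xi,K_\eta)\ge C^{-1}e^{-\alpha_1 R}$ is neither proved in the paper nor needed for its statement (the paper only establishes the Hölder upper bound, i.e.\ that $\xi\mapsto K_\xi$ is Hölder), and your sketch for it is wrong. When $R=\lbrace\xi,\eta\rbrace_O$ is large, $\xi$ and $\eta$ are \emph{close}, the rays $\gamma_\xi,\gamma_\eta$ fellow-travel for time $\sim R$, and the point $w$ lies on their common initial segment; so $K_\xi$ and $K_\eta$ ``see $w$ from $O$'' in essentially the \emph{same} direction, not opposite ones, and a gradient argument at $O$ gives no separation. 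Drop that half; what remains matches the paper.
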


The above result is a direct consequence of the following, more technical result:

\begin{thm} \label{thm:C_alpha_extension}
 Let $(\M, L)$ satisfying the conditions of Theorem \ref{thm:Ancona_martin_boundary} and $U$ an unbounded domain in $\M$. Suppose that $u$ and $v$ are two harmonic functions on $U$, continuous on $\overline{U}$, and such that $u_{|\overline{U}\cap \M(\infty)}= v_{|\overline{U}\cap \M(\infty)}=0$. Then the quotient $u/v$ has a $C^{\alpha}$ extension to $\overline{U}\cap \M(\infty)$, where $\alpha$ depends on $M$ and $L$.
\end{thm}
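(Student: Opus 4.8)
The plan is to follow the now-classical Anderson--Schoen strategy, in the Gromov-hyperbolic form due to Ancona: first establish a uniform boundary Harnack principle, then run an oscillation-decay iteration along the shadows $V_j$ that already appear in the proof of Theorem \ref{thm:Ancona_martin_boundary}, and finally translate the geometric decay into a H\"older estimate for the metric $d_{G,\epsilon}$. Fix $\xi\in\overline U\cap\M(\infty)$, a geodesic ray $\gamma$ from the base point $O$ to $\xi$, and the shadows $V_j:=\{x\in\M\mid \lbrace x,\gamma((j+1)\delta)\rbrace_O\ge j\delta\}$. The first and essential step is to prove that there is a constant $c_0>0$, depending only on $\M$ and $L$, such that any two nonnegative $L$-harmonic functions $u,v$ on $U$ that vanish continuously on $\overline U\cap\M(\infty)$ satisfy, for all $j$ large and all $x,y\in V_{j+1}\cap U$,
\begin{equation*}
 c_0^{-1}\,\frac{u(x)}{v(x)} \le \frac{u(y)}{v(y)} \le c_0\,\frac{u(x)}{v(x)}.
\end{equation*}
This is exactly where the hypotheses (H1)--(H3) enter: after extending $u,v$ off $U$ by suitable potentials and using the global minimum principle (Proposition \ref{prop:global_min_principle}), one applies Ancona's Green-function comparison (Theorem \ref{thm:utilisation_unique}) along geodesics, together with the Harnack principle (Proposition \ref{prop:Harnack}), exploiting the fact that in a $\delta$-hyperbolic space the set $V_{j+1}$ is ``tubularly'' separated from $\M\setminus V_j$.

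Granting this, set $m_j:=\inf_{V_j\cap U}(u/v)$ and $M_j:=\sup_{V_j\cap U}(u/v)$. The functions $u-m_jv$ and $M_jv-u$ are nonnegative, $L$-harmonic on $U$, and vanish on $\overline U\cap\M(\infty)$, so applying the boundary Harnack inequality to this pair on $V_{j+1}\cap U$ gives
\begin{equation*}
 M_{j+1}-m_{j+1}\le \Bigl(1-\tfrac{1}{c_0^2+1}\Bigr)\,(M_j-m_j)=:\theta\,(M_j-m_j),\qquad \theta<1,
\end{equation*}
with $\theta$ depending only on $c_0$. Iterating, $\operatorname{osc}_{V_j\cap U}(u/v)\le \theta^{\,j-j_0}\operatorname{osc}_{V_{j_0}\cap U}(u/v)$; in particular $u/v$ converges along any such filtration of shadows, which defines its boundary value at $\xi$.

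To obtain the H\"older modulus, I would use Proposition \ref{prop:Gromov_metric} and the description of $\M(\infty)$ via the Gromov product (Section \ref{subsec:gromov-hyperbolicity}): the trace $V_j\cap\M(\infty)$ has $d_{G,\epsilon}$-diameter comparable to $e^{-\epsilon j\delta}$, and any two boundary points $\eta_1,\eta_2$ with $d_{G,\epsilon}(\eta_1,\eta_2)\le e^{-\epsilon j\delta}$ both lie, up to a bounded shift of the index, in a common shadow $V_{j-C}$. Combining this with the oscillation decay yields
\begin{equation*}
 \Bigl|\tfrac{u}{v}(\eta_1)-\tfrac{u}{v}(\eta_2)\Bigr|\le C\,d_{G,\epsilon}(\eta_1,\eta_2)^{\alpha},\qquad \alpha:=\frac{-\log\theta}{\epsilon\delta},
\end{equation*}
and a matching interior-to-boundary bound (pairing an interior point $x\in V_{j+1}\cap U$ with the shadow $V_j$ and using the Harnack inequality inside $U$) shows the extension is continuous up to $\overline U\cap\M(\infty)$, not merely defined there. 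The constant $\alpha$ depends only on $\M$ and $L$, as claimed; Theorem \ref{thm:Martin_is_holder} then follows by applying this to $u=K_\xi$ on appropriate unbounded domains and letting the base point vary, together with the joint continuity from Theorem \ref{thm:Ancona_martin_boundary}.

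The main obstacle is Step 1: extracting the boundary Harnack inequality with a constant \emph{uniform} in the boundary point and in the scale $j$, for an \emph{arbitrary} unbounded domain $U$ rather than for $\M$ itself. The classical local Harnack inequality is not enough here; one genuinely needs Ancona's sharp Green-function estimates along $\phi$-chains (Theorem \ref{thm:utilisation_unique}), and the Gromov-hyperbolicity of $\M$ is used precisely to guarantee that following geodesics produces such chains, with the separation constants controlled only by $\delta$. Once this uniform comparison is in place, everything else is a formal iteration plus bookkeeping between the shadow filtration $\{V_j\}$ and the Gromov metric $d_{G,\epsilon}$.
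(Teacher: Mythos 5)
Your proposal follows essentially the same strategy as the paper: define the shadow sets $V_j$ along a geodesic ray, establish a uniform boundary Harnack principle via Ancona's Green-function estimates and $\phi$-chains, run the Anderson--Schoen oscillation-decay iteration, and convert the geometric decay $\theta^j$ into a H\"older modulus using Lemma \ref{lem:minoration_V_i} (your diameter comparison for $V_j\cap\M(\infty)$ in $d_{G,\epsilon}$ is exactly what that lemma supplies). The one organizational difference worth noting: where you sketch a derivation of the uniform boundary Harnack inequality (your Step 1) from the Green-function comparison in Theorem \ref{thm:utilisation_unique} plus local Harnack and the minimum principle, the paper simply quotes Ancona's ``Harnack inequality at infinity'' (Theorem 5$'$ of \cite{Ancona:elliptic_operators_Martin_boundary}), which is precisely this uniform boundary Harnack principle, checks that the $(A_i,V_i)$ form a $\phi$-chain, and then copies the Anderson--Schoen iteration verbatim. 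Your more explicit sketch of that input is a reasonable reconstruction, but since Ancona's Theorem 5$'$ already packages it, the paper's citation shortcut is cleaner; in either case the constant $\theta<1$ and hence $\alpha$ depend only on $\delta$ and $L$, as you conclude.
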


As far as I know there is no published proof of that result in this generality. However, the hard part is due to Ancona \cite{Ancona:elliptic_operators_Martin_boundary} and Anderson and Schoen \cite{AndersonSchoen}, the only personal contribution being the following lemma. Note also that there are some closely related results of this generality: in \cite{IzuNeshOka}, M. Izumi, S. Neshveyev and R. Okayasu prove that the Martin kernel for a random walk on a hyperbolic group is H\"older continuous.
\begin{lem} \label{lem:minoration_V_i} %
 Let $\gamma$ be a geodesic ray from $O$ to a point $\xi_0 \in \M(\infty)$. We define $A_i := \gamma(4i\delta)$ and $A_i' :=\gamma((4i+2)\delta)$.  Let $V_i :=  \lbrace x\in M \mid \lbrace x, A'_i \rbrace_O > 4 i \delta \rbrace$.\\
 Let $\xi$ and $\eta$ be the two points in $\partial \left(\M(\infty) \cap \overline{V_i} \right)$, then 
\begin{equation} \label{eq:majoration_produit_gromov}
 \lbrace \xi, \eta \rbrace_O \leq \left( 4(i + 1) + 2\right) \delta + 17 \delta\, .
\end{equation}
Therefore, for $i \geq 12 + \dfrac{-\log(3-2\exp(\epsilon \delta))}{2\epsilon \delta}$,
\begin{equation*}
 \sup_{a,b \in \overline{V_i}} d_{G,\epsilon}(a,b) \geq e^{-8 i \epsilon \delta } \,.
\end{equation*}
\end{lem}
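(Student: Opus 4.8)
\emph{Plan.} First I would separate the two assertions. Granting the Gromov‐product estimate \eqref{eq:majoration_produit_gromov}, the displayed inequality follows by a short computation: since $\xi,\eta\in\overline{V_i}$, Proposition \ref{prop:Gromov_metric} gives
\begin{equation*}
 \sup_{a,b\in\overline{V_i}} d_{G,\epsilon}(a,b) \;\geq\; d_{G,\epsilon}(\xi,\eta) \;\geq\; \left(3-2e^{\epsilon\delta}\right) e^{-\epsilon\{\xi,\eta\}_O} \;\geq\; \left(3-2e^{\epsilon\delta}\right) e^{-(4i+23)\epsilon\delta},
\end{equation*}
and taking logarithms this is $\geq e^{-8i\epsilon\delta}$ exactly when $4i\epsilon\delta \geq 23\epsilon\delta - \log(3-2e^{\epsilon\delta})$. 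Since $-\log(3-2e^{\epsilon\delta})\geq 0$ (one has $1<e^{\epsilon\delta}<\sqrt2$, so $3-2e^{\epsilon\delta}\in(0,1)$), this is implied by the hypothesis $i\geq 12+\frac{-\log(3-2e^{\epsilon\delta})}{2\epsilon\delta}$, as $12$ is larger than $23/4$ and $\tfrac12$ larger than $\tfrac14$. (For $i$ in this range the shadow $\overline{V_i}\cap\M(\infty)$ is a proper nonempty neighbourhood of $\xi_0$, so $\xi\neq\eta$ and $d_{G,\epsilon}(\xi,\eta)>0$.) Thus the whole lemma reduces to \eqref{eq:majoration_produit_gromov}, a statement about Gromov‐hyperbolic geometry alone; no potential theory enters.

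For \eqref{eq:majoration_produit_gromov} I would work with the description of the $V_i$ as shadows. One has $d(O,A_i)=4i\delta$, $d(O,A_i')=(4i+2)\delta$ and $\{A_i',A_j'\}_O=\min\bigl((4i+2)\delta,(4j+2)\delta\bigr)$, because all the $A_j'$ lie on the geodesic ray $\gamma$; moreover $\{x,A_i'\}_O>4i\delta$ unwinds to $d(O,x)-d(x,A_i')>(4i-2)\delta$, so that $V_i$ is, up to the hyperbolicity constant, the shadow from $O$ of a ball of radius $\asymp\delta$ about $A_i'$. In particular $[O,x]$ passes within a universal multiple of $\delta$ of $A_i'$ whenever $x\in V_i$, and conversely. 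As in the proof sketch of Theorem \ref{thm:Ancona_martin_boundary}, the sets $\overline{V_j}\cap\M(\infty)$ are nested and form a neighbourhood basis of $\xi_0$, so $\xi_0$ lies in the interior of $\overline{V_i}\cap\M(\infty)$ while any point $\zeta$ of its boundary --- in particular $\xi$ and $\eta$ --- satisfies $\zeta\in\overline{V_i}$ but $\zeta\notin\overline{V_{i+1}}$. Hence both rays $[O,\xi)$ and $[O,\eta)$ pass within $O(\delta)$ of $A_i'$, but leave the $O(\delta)$‐neighbourhood of $\gamma$ before parameter $(4i+6)\delta$, whereas $\xi_0$ sits in the shadow between them.

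The crux is to turn this into an \emph{upper} bound for $\{\xi,\eta\}_O$, which I would obtain from the standard estimate $|\{\xi,\eta\}_O-d(O,[\xi,\eta])|\leq c_0\delta$ (with $[\xi,\eta]$ a bi‐infinite geodesic and $c_0$ universal) by showing that $[\xi,\eta]$ passes within a controlled multiple of $\delta$ of $A_{i+1}'=\gamma((4i+6)\delta)$. This is the genuinely geometric step: using thinness of the ideal triangle $(O,\xi,\eta)$ together with the fact that $\gamma$ separates $\xi_0$ from $O$ across $[\xi,\eta]$ --- so $[\xi,\eta]$ must meet the $O(\delta)$‐tube about $\gamma$, and it cannot do so beyond parameter $\approx(4i+6)\delta$ since neither $\xi$ nor $\eta$ lies in the deeper shadow $V_{i+1}$ --- one concludes $d(O,[\xi,\eta])\leq d(O,A_{i+1}')+c_1\delta=(4(i+1)+2)\delta+c_1\delta$, and with $c_0+c_1\leq 17$ this gives \eqref{eq:majoration_produit_gromov}. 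The main obstacle is precisely this comparison: in a general $\delta$‑hyperbolic space a ``shadow'' and its ``boundary'' are only defined up to scale $\delta$, and in dimension $>2$ the ``two points $\xi,\eta$'' must be read as an arbitrary pair of boundary points (the argument being insensitive to the choice), so that tracking the accumulated multiples of $\delta$ and formulating the separation property precisely is where the real care lies.
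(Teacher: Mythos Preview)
Your reduction of the second assertion to \eqref{eq:majoration_produit_gromov} via Proposition~\ref{prop:Gromov_metric} matches the paper exactly, and your arithmetic with the constants is fine.

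For \eqref{eq:majoration_produit_gromov} itself, your strategy is correct but the paper's execution is both simpler and more concrete. You invoke the two-sided estimate $|\{\xi,\eta\}_O-d(O,[\xi,\eta])|\le c_0\delta$ together with a thin-triangle argument to show $[\xi,\eta]$ comes close to $A_{i+1}'$, and then hope that the accumulated constants $c_0+c_1$ come out $\le 17$. The paper instead uses only the elementary one-sided bound $\{\xi,\eta\}_O\le d(O,p_0)$ for \emph{any} $p_0$ on the geodesic $(\xi,\eta)$ (a direct triangle-inequality computation, no hyperbolicity needed), and then takes specifically the intersection point $p=\gamma\cap(\xi,\eta)$. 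A short case analysis on whether $p$ lies in $V_{i+1}$ or not finishes: in the hardest case ($p\in V_{i+1}$ while $\xi\notin V_{i+1}$) the paper cites Ancona's Scholie~3.1 to get that $[p,\xi)$ passes within $17\delta$ of $A_{i+1}'$, which is where the explicit constant comes from. So the paper trades your thin-triangle step for a direct use of the intersection with $\gamma$ plus a black-box citation; this avoids tracking constants through an ideal-triangle argument.

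One point where you are more careful than the paper: you correctly flag that ``the two points $\xi,\eta$'' only makes literal sense when $\M(\infty)$ is one-dimensional, and that in higher dimensions one should read them as an arbitrary pair of boundary points of the shadow. The paper's proof, as written, relies on $\gamma\cap(\xi,\eta)\neq\emptyset$, which is likewise a two-dimensional phenomenon; your thin-triangle route is in principle better suited to the general case, though you would then need to actually pin down the constants rather than asserting $c_0+c_1\le 17$.
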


\begin{proof}
First note that $\sup_{a,b \in \overline{V_i}} d_{G,\epsilon}(a,b) \geq (3 - 2 e^{\epsilon \delta})e^{-\epsilon \lbrace \xi,\eta\rbrace_O}$ (see Equation \eqref{eq:gromov_metric}), so proving Equation \eqref{eq:majoration_produit_gromov} will indeed give us the result.\\

Now remark that, for any point $p_0$ on the geodesic $(\xi,\eta)$, $\lbrace \xi,\eta\rbrace_O \leq d(O, p_0)$. Indeed, if we take two sequences $(a_n)$ and $(b_n)$ on $(\xi,\eta)$ converging respectively to $\xi$ and $\eta$, we have
\begin{align*}
 \lbrace \xi,\eta\rbrace_O &= \lim_{n \rightarrow +\infty} \frac{1}{2}\left( d(a_n, O) + d(b_n, O) - d(a_n, b_n) \right) \\
	  &\leq \lim_{n \rightarrow +\infty} \frac{1}{2}\left( d(a_n,p_0 ) + d(p_0, O) + d(b_n, p_0) + d(p_0, O)  - d(a_n, b_n) \right) \\
	  &\leq d(p_0,O)\, .
\end{align*}

Let  $p:= \gamma \cap (\xi,\eta)$ (we take $i$ big enough so that this intersection exists). Depending on the position of $p$, we have two possibilities:
\begin{itemize}
 \item Either $d(O,p) \leq d(O, A_i') = (4i +2) \delta $ which implies Equation \eqref{eq:majoration_produit_gromov},
 \item Or $d(O,p) \geq d(O, A_i')$; then, $p$ is in $V_i$ (because the part of $\gamma$ after $A_i$ is in $V_i$), so we again have two possible cases; 
 \begin{itemize}
  \item Either $p \notin V_{i+1}$ and therefore $d(O,p) \leq d(O, A_{i+1}) = 4(i+1) \delta $ which again implies Equation \eqref{eq:majoration_produit_gromov};
  \item Or  $p \in V_{i+1}$; in that case, we know (using Scholie 3.1 of \cite{Ancona:theorie_potentiel}) that the geodesic ray $[p,\xi)$ must pass no further than $17\delta$ from $A_{i+1}'$; if we denote by $p'$ the point realizing that distance, we have $d(O,p') \leq 17\delta + d(O, A_{i+1}') = \left( 4(i + 1) + 2\right) \delta + 17 \delta$, which ends the proof.
 \end{itemize}
\end{itemize}
\end{proof}

\begin{proof}[Proof of Theorem \ref{thm:C_alpha_extension}]
Let $\xi_0 $ be an interior point of $ \overline{U} \cap \M(\infty)$, $O \in U$ a base point and $\gamma$ a geodesic ray from $O$ to $\xi_0$. Set $A_i := \gamma(4 i \delta)$ and $A'_i := \gamma(4(i+2)\delta ) $, where $\delta>0$ is given by the Gromov-hyperbolicity of $\M$. Now we define $V_i:= \lbrace x\in \M \mid \lbrace x, A'_i \rbrace_O >4i\delta \rbrace$, the $V_i$'s form a basis of neighborhoods of $\xi_0$ and are such that $\overline{V_{i+1}} \subset V_i$. So for $i$ big enough, $\overline{V_i} \subset U $.\\ 

Replacing their $C_i$'s by $V_i$, we can copy verbatim the proof of Theorem 6.2 of \cite{AndersonSchoen} and obtain that $u/v$ admits a radial extension $\varphi$ to $\overline{U}$ and that there exists a constant $c_1>0$, depending on $\delta$ and $L$, such that
\begin{equation} \label{eq:utilisation_unique_10}
 \sup_{x\in \overline{V_i}} \varphi(x) - \inf_{x\in \overline{V_i}} \varphi(x) \leq c_1^i \varphi(O)\, .
\end{equation}
The only change that needs to be done is to use Ancona's Harnack inequality at infinity given by Theorem 5' of \cite{Ancona:elliptic_operators_Martin_boundary}, instead of the Harnack inequality Anderson and Schoen use. Theorem 5' applies because the $(A_i, V_i)$ form a $\phi$-chain (see the proof of Theorem 6.1 in \cite{Ancona:theorie_potentiel}).\\

Now, by Lemma \ref{lem:minoration_V_i} above, we have, for $i \geq c_2$, where $c_2$ depends only on $\delta$ (as $\epsilon$ depends only on $\delta$),
\begin{equation} \label{eq:utilisation_unique_11}
 \sup_{a,b \in \overline{V_i}} e^{-\lbrace a,b\rbrace_O} \geq e^{-8 i \epsilon \delta }.
\end{equation}

So putting together Equation \eqref{eq:utilisation_unique_10} and Equation \eqref{eq:utilisation_unique_11}, we obtain that, for $y$ and $y'$ sufficiently far from $O$, setting $\alpha := (1/8 \epsilon \delta) \log \left(1/c_1  \right) >0$
\begin{equation*}
 \left| \varphi(y) - \varphi(y') \right| \leq \left[e^{-\lbrace y,y'\rbrace_O} \right]^{\alpha} \varphi(O) \, .
\end{equation*}
This proves that the extension $\varphi$ is in fact $C^{\alpha}$.
\end{proof}

Finally we can deduce Theorem \ref{thm:Martin_is_holder} from Theorem \ref{thm:C_alpha_extension} (see \cite[Theorem 6.3]{AndersonSchoen}):
\begin{proof}[Proof of Theorem \ref{thm:Martin_is_holder}]
 Recall that, for $\xi \in \M(\infty)$, the Poisson Kernel normalized at $O$, $K(O, \cdot, \xi )$ is such that
\begin{equation*}
 K(O, x, \xi ) = \lim_{n \rightarrow \infty} \frac{G(x,y_n)}{G(O,y_n)}\,,
\end{equation*}
 where $(y_n)$ is a sequence converging to $\xi$. Applying Theorem \ref{thm:C_alpha_extension} to the $G(x,y_n)$ gives that there exists a constant $C$ such that, for any $x\in B(O,1)$ and any distinct $\xi, \xi' \in \M(\infty)$
\begin{equation*}
 \left| K(O, x, \xi ) - K(O, x, \xi' ) \right| \leq C \left[ d_G (\xi, \xi' ) \right]^{\alpha},
\end{equation*}
which proves the theorem.
\end{proof}

\section{Existence of Finsler--Laplace harmonic measures} \label{sec:existence_of_harmonic}

We finally get back to Finsler geometry. Recall that $\M$ is the universal cover of a closed manifold $M$ and $\F$ is a Finsler metric of negative flag curvature lifted from $M$.

 From now on, we assume that $\F$ is \emph{reversible}. Indeed Ancona's Theorem is proved only for symmetric distance. Note however, that Fang and Foulon \cite{FangFoulon} showed that, for irreversible Finsler metric of negative curvature, there exist two boundaries at infinity: one is given by following the geodesics into the future and the other into the past. It seems very probable that if we consider a non-symmetric distance and redo the steps of Ancona's proof, we should obtain identifications of the past and forwards boundaries with the Martin boundary by taking the Poisson kernels $K_{\xi}$ along forward, respectively backwards, geodesics. But, if this is true, proving it will remain a project for later (or for any interested reader).

\begin{thm}\label{thm:Finsler-Laplace_is_Ancona}
 If $(M,F)$ is a closed reversible Finsler manifold of strictly negative curvature and $(\M,\F)$ its universal cover, then $\Delta^{\F}$ verifies Conditions (H1) to (H3) of Theorem \ref{thm:Ancona_martin_boundary}.
\end{thm}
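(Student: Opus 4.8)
The plan is to verify the three conditions (H1)--(H3) of Theorem \ref{thm:Ancona_martin_boundary} one at a time, exploiting throughout the structural fact that $\Delta^{\F}$ is the lift to $\M$ of $\Delta^F$ and is therefore invariant under the deck group $\pi_1(M)$, which acts properly discontinuously and cocompactly on $\M$. This has two consequences I would use repeatedly. First, every quantity occurring in (H1) and (H2) is $\pi_1(M)$-equivariant, so it is enough to bound it for base points ranging over a compact fundamental domain $D\subset\M$. Second, since the symbol of $\Delta^F$ is (the dual of) a Riemannian metric which, being continuous on the \emph{compact} manifold $M$, is bounded above and below, $\Delta^{\F}$ is \emph{uniformly} elliptic on $\M$; and in any fixed chart its coefficients are lifts of $C^\infty$ functions on $M$, hence uniformly bounded together with all derivatives, in particular uniformly H\"older. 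Finally, $\Delta^{\F}$ is self-adjoint with respect to $\Omega^{\F}$ by Theorem \ref{thm:finsler_laplace_basics}\,(ii), so it fits Ancona's framework; and $\M$ is Gromov-hyperbolic by Egloff's theorem (Section \ref{subsec:gromov-hyperbolicity}), as required there.

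For (H1), I would take $g_\sigma$ to be the symbol metric on $M$ and $g_{\tilde\sigma}$ its lift to $\M$. Since $g_{\tilde\sigma}$ and the Finsler distance $d$ on $\M$ are both $\pi_1(M)$-invariant and $M$ is compact, they are bi-Lipschitz equivalent on small scales with constants independent of the point. Choosing $r_0$ below a uniform lower bound for the injectivity radius of $g_\sigma$ and letting $\theta$ be $g_{\tilde\sigma}$-normal coordinates centred at $m$ (whose distortion is controlled uniformly by the curvature bounds of $g_\sigma$ on the compact $M$) yields constants $c_2,r_0$ realizing (H1) for all $m\in\M$.

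Condition (H2) is the main point. Fix $r_0$ as above and write $L_t=\Delta^{\F}+t\,\mathrm{id}$. For $\tau$ small enough, $\tau$ stays below the first Dirichlet eigenvalue of $-\Delta^{\F}$ on every ball $B(x_0,r_0)$ -- this eigenvalue is bounded below by a constant times $r_0^{-2}$ uniformly in $x_0$, again by uniform ellipticity -- so the Dirichlet Green function $g_t$ of $L_t$ on $B(x_0,r_0)$ exists for $t\in[0,\tau]$. By equivariance it suffices to bound $g_t$ for $x_0\in D$. There the classical interior elliptic estimates together with the uniform Harnack inequality (Proposition \ref{prop:Harnack}, uniform by the same compactness) give: a lower bound $g_t\ge c_1>0$ on $B(x_0,r_0/2)\times B(x_0,r_0/2)$, immediate near the diagonal where $g_t$ is large and supplied by Harnack off the diagonal; and, for fixed $x$, an upper bound for $g_t(x,\cdot)$ on $\{d(x,\cdot)\ge r_0/4\}$, obtained from the maximum principle using the uniform bound of $g_t(x,\cdot)$ on the sphere $\{d(x,\cdot)=r_0/8\}$ and the vanishing of $g_t(x,\cdot)$ on $\partial B(x_0,r_0)$. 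Taking infima/suprema over the compact set $D\times[0,\tau]$ produces $c_1,c_2$. The hard part here is purely technical bookkeeping: one must be careful to quote versions of the elliptic estimates whose constants depend only on the ellipticity and H\"older bounds of the coefficients, not on the particular ball -- which is precisely what cocompactness of the $\pi_1(M)$-action guarantees.

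For (H3), recall that by Proposition \ref{prop:spectral_gap}, $\lambda_1=\inf\{R(u):u\in H^1(\M)\}$ is the bottom of the spectrum of $-\Delta^{\F}$, i.e.\ the generalized principal eigenvalue. Hence for every $t<\lambda_1$ the operator $-L_t=-\Delta^{\F}-t\,\mathrm{id}$ has positive generalized principal eigenvalue $\lambda_1-t$, so it is subcritical; equivalently, it admits a positive supersolution, hence a strictly positive $L_t$-potential, so by Theorem \ref{thm:green_function} (criterion (3)) $L_t$ has a Green function $G_t$ on $\M$. It remains to check $\lambda_1>0$. Since $(M,F)$ is closed, reversible and negatively curved, $\M$ is Gromov-hyperbolic, so $\pi_1(M)$ is a Gromov-hyperbolic group; as $M$ is a closed aspherical manifold of dimension $\ge 2$, $\pi_1(M)$ is infinite and not virtually cyclic, hence non-elementary, hence non-amenable. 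Theorem \ref{thm:Brooks} then gives $\lambda_1>0$, and one may take $\eps=\tfrac14\min(\lambda_1,1)$, so that $L_t$ has a Green function on $\M$ for all $t<2\eps<\tfrac12$. This establishes all of (H1)--(H3).
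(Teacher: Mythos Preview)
Your proof is correct and follows essentially the same strategy as the paper: cocompactness of the $\pi_1(M)$-action to get uniformity in (H1) and (H2), and Brooks' theorem (non-amenability of the hyperbolic group $\pi_1(M)$) to obtain $\lambda_1>0$ for (H3). The only tactical differences are that the paper uses the Finsler exponential map $\exp_m^{-1}$ directly for $\theta$ in (H1) rather than symbol-metric normal coordinates, and for (H2) it pushes $\Delta^{\F}+t\,\mathrm{id}$ forward to $\R^n$ and cites the classical Green function estimates of Stampacchia and Friedman instead of rederiving the bounds via Harnack and the maximum principle; for (H3) it makes the existence of a positive potential explicit via Lax--Milgram rather than invoking subcriticality, but the content is the same.
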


Before starting the proof, let us state the main corollary:
\begin{cor} \label{cor:harmonic_exists}
 There is a $C^{\alpha}$ identification between the Martin and the visual boundaries of $(\M,\F)$ and the Dirichlet problem at infinity for $\Delta^{\F}$ admits a unique solution.\\
 That is, for any $f\in C(\M(\infty))$, there exists a unique $u \in C(\M\cup \M(\infty))$ such that $u = f$ on $\M(\infty)$ and $\Delta^{\F}u= 0$ on $\M$.\\
Furthermore, for any $x\in \M$, there exists a measure $\mu_x$, called the \emph{harmonic measure} for $\Delta^{\F}$ such that
\begin{equation*}
 u(x):= \int_{\xi \in \M(\infty)} f(\xi) d\mu_x(\xi)\, .
\end{equation*}
\end{cor}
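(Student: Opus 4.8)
The plan is to verify that the triple $(\M,\Delta^{\F})$ fits exactly into the framework of Ancona's Theorem \ref{thm:Ancona_martin_boundary} and then to read off each assertion of the corollary from the results of Section \ref{sec:harmonic_measures}. First, since $(M,F)$ is closed, reversible and of strictly negative curvature, its universal cover $(\M,\F)$ is a uniform Finsler Hadamard manifold of strictly negative curvature, hence Gromov-hyperbolic by Egloff's theorem (Section \ref{subsec:gromov-hyperbolicity}); being the universal cover of a closed manifold it is moreover complete, locally compact, geodesic and simply connected. Next, $\Delta^{\F}$ is elliptic (Theorem \ref{thm:finsler_laplace_basics}(i)) and symmetric, hence, via its Friedrich extension, self-adjoint with respect to the volume form $\Omega^{\F}$ (Proposition \ref{prop:symmetry} together with the remark following Lemma \ref{lem:existence_unicity}). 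Finally, Theorem \ref{thm:Finsler-Laplace_is_Ancona} provides precisely the hypotheses (H1), (H2) and (H3). Thus Theorem \ref{thm:Ancona_martin_boundary} applies, giving a homeomorphism between the $\Delta^{\F}$-Martin compactification of $\M$ and $\M\cup\M(\infty)$, the fact that the Martin boundary equals its minimal part, and, for a fixed base point $O\in\M$, the continuity of $(\xi,x)\mapsto K_\xi(x)$ on $\M(\infty)\times\M$.

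The H\"older regularity of the identification $\MM\to\M(\infty)$ and of the normalized Poisson kernel on compact subsets is then exactly the content of Theorem \ref{thm:Martin_is_holder} applied with $L=\Delta^{\F}$; the exponent $\alpha$ depends only on $\M$ and $\F$. For the Dirichlet problem at infinity it suffices to observe that $\Delta^{\F}$ kills constants: for $c\in\R$ one has $L_X^2(\pi^{\ast}c)=0$, so $\Delta^{\F}c=0$, and in particular $\Delta^{\F}(\mathds{1})=0$. Theorem \ref{thm:solution_dirichlet_a_infini} therefore applies word for word and yields, for every $f\in C(\M(\infty))$, a unique $u\in C(\M\cup\M(\infty))$ with $\Delta^{\F}u=0$ on $\M$ and $u=f$ on $\M(\infty)$, together with the representation $u(x)=\int_{\M(\infty)}K_\xi(x)f(\xi)\,d\mu(\xi)$, where $\mu$ is the Borel measure on $\M(\infty)$ obtained by applying Proposition \ref{prop:representation_integrale_fonctions_harmonique} to the harmonic function $\mathds{1}$. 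Setting $d\mu_x(\xi):=K_\xi(x)\,d\mu(\xi)$ for each $x\in\M$ produces a family of Borel probability measures (their total mass is $\int_{\M(\infty)}K_\xi(x)\,d\mu(\xi)=\mathds{1}(x)=1$), and the representation formula becomes $u(x)=\int_{\M(\infty)}f(\xi)\,d\mu_x(\xi)$, which is the claimed formula defining the harmonic measures.

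Since the analytic core of the argument --- establishing (H1)--(H3), i.e.\ Theorem \ref{thm:Finsler-Laplace_is_Ancona} --- is assumed here, the corollary is mostly a matter of assembling statements already proved. The points that need a little care, and that I would make explicit, are: (a) that $\Delta^{\F}$ genuinely satisfies the standing assumptions of Section \ref{sec:harmonic_measures}, namely uniform ellipticity and H\"older-regular coefficients, which holds because $\F$ is smooth and $\pi_1(M)$-invariant and $M$ is compact, so the symbol metric $g_\sigma$ and the drift vector field have bounded geometry on $\M$ and the ellipticity constant is uniform; this is exactly what lets the heat-kernel and Green-function machinery be invoked on the noncompact manifold $\M$; and (b) keeping track of the base point and of the normalization $K_\xi(O)=1$ so that $x\mapsto\mu_x$ is honestly a family of probability measures and not merely of finite measures. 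Neither of these is a genuine obstacle, only a bookkeeping task; the real difficulty of the whole circle of ideas sits in Theorem \ref{thm:Finsler-Laplace_is_Ancona} and in Ancona's theorem itself.
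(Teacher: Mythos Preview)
Your proof is correct and follows essentially the same route as the paper: the paper's own proof is a two-line reference to Theorem \ref{thm:Martin_is_holder} for the $C^\alpha$ identification and to Theorem \ref{thm:solution_dirichlet_a_infini} with $d\mu_x := K_\xi(x)\,d\mu$ for the Dirichlet problem and the harmonic measures. You have simply unpacked these references in more detail (verifying Gromov-hyperbolicity, that $\Delta^{\F}$ annihilates constants, and that the $\mu_x$ are probability measures), which is fine but not a different argument.
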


\begin{proof}
 The first part of the corollary is Theorem \ref{thm:Martin_is_holder} and the second part is Theorem \ref{thm:solution_dirichlet_a_infini} with $d\mu_x := K_{\xi}(x) d\mu$.
\end{proof}

Condition (H1) just means that $\M$ is of ``bounded geometry'' and it would stay true for any uniform Finsler Hadamard manifold in Egloff's sense. Conditions (H2) and (H3) come from the following:

\begin{prop}
 The operator $\Delta^{\F}$ is coercive, i.e., there exists $c>0$ such that, for ${f \in C^{\infty}_0}$,
\begin{equation*}
 \int_{H\M} \left( L_X \pi^{\ast} f \right)^2 \ada \geq c \int_{\M} f^2 \Omega\, .
\end{equation*}
\end{prop}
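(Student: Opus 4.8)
The plan is to recognize that coercivity of $\Delta^{\F}$ is simply the statement that the bottom of its spectrum is strictly positive, and then to deduce the latter from the material of the previous section. Writing $c_n = n/\voleucl(\S^{n-1})$, one has $\int_{H\M}\left(L_X\pi^{\ast}f\right)^2\ada = c_n^{-1}R(f)\int_{\M}f^2\,\Omega$, so the asserted inequality for all $f\in C^{\infty}_0(\M)$ is equivalent to $\inf R(f)\geq c_n c>0$, the infimum being taken over $f\in C^{\infty}_0(\M)$ and hence, by density in $H^1(\M)$ and continuity of the Rayleigh quotient for $\lVert\cdot\rVert_1$, over all of $H^1(\M)$. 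By Proposition \ref{prop:spectral_gap} this infimum is exactly $\lambda_1$, the bottom of the essential spectrum of $-\Delta^{\F}$. So it suffices to prove $\lambda_1>0$, which in turn, by Theorem \ref{thm:Brooks}, amounts to showing that $\pi_1(M)$ is \emph{not} amenable.

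This is where negative curvature enters. Since $\F$ has negative flag curvature, the geodesic flow is Anosov (Foulon's theorem recalled above), hence its topological entropy $h$ is positive; by Theorem \ref{thm:topo_entropy=volume} the volume entropy equals $h$, so $\M$ has exponential volume growth. Moreover $\M$ is Gromov-hyperbolic (Egloff's theorem, using reversibility of $F$), and $\pi_1(M)$ acts properly discontinuously and cocompactly on it, so by the Milnor--\v{S}varc lemma $\pi_1(M)$ is quasi-isometric to $\M$. Thus $\pi_1(M)$ is a Gromov-hyperbolic group whose boundary is infinite (it is homeomorphic to $\M(\infty)\cong\S^{n-1}$ with $n\geq 2$), i.e.\ it is non-elementary; a non-elementary hyperbolic group contains a rank-two free subgroup and is therefore non-amenable. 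This finishes the argument. I expect the only point needing a word of care to be the verification that $\pi_1(M)$ is non-elementary, i.e.\ that $\M(\infty)$ has more than two points, which is clear as soon as $n\geq 2$.

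If one prefers to stay closer to the Riemannian estimates already established, there is a fallback route avoiding Theorem \ref{thm:Brooks} itself: use the comparison proved just before it, namely $\lambda_1\geq\lambda_1(\sigma)\,\inf_{\M}a/\sup_{\M}a$, where $g_\sigma$ is the symbol metric and $a$ the $\pi_1(M)$-invariant density with $\Omega^{\F}=a\,\Omega^{g_\sigma}$. Since $M$ is closed, $a$ is bounded away from $0$ and $\infty$, so it suffices to know $\lambda_1(\sigma)>0$; and $(\M,g_\sigma)$ is bi-Lipschitz equivalent to $(\M,\F)$ (again by compactness of $M$), hence quasi-isometric to a non-elementary Gromov-hyperbolic space, hence of positive Cheeger isoperimetric constant, whence $\lambda_1(\sigma)>0$ by Cheeger's inequality. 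Either way the single genuine input is the non-amenability of $\pi_1(M)$; everything else is bookkeeping, mainly checking that the metrics in play on $\M$ (the lift $\F$, the symbol metric, and the distances implicit in the entropy statements) are pairwise quasi-isometric, which is immediate from the compactness of $M$.
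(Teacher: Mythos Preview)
Your proposal is correct and follows essentially the same route as the paper: reduce coercivity to $\lambda_1>0$ via the Rayleigh-quotient characterization (Proposition~\ref{prop:spectral_gap}), then invoke Theorem~\ref{thm:Brooks} together with the non-amenability of $\pi_1(M)$ coming from Gromov-hyperbolicity. You are in fact more careful than the paper on the last point, spelling out via Milnor--\v{S}varc and non-elementarity why the hyperbolic group $\pi_1(M)$ is non-amenable, whereas the paper simply asserts ``Gromov-hyperbolic, hence not amenable''; your fallback via $\lambda_1(\sigma)$ is extra but not needed.
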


\begin{proof}
 By Theorem \ref{thm:Brooks}, we know that $\lambda_1$ is strictly positive because $\pi_1(M)$ is Gromov-hyperbolic, hence not amenable, and the characterization of $\lambda_1$ as the infimum of the Rayleigh quotient (see Proposition \ref{prop:spectral_gap}) shows that we can take $c$ to be $\lambda_1$.
\end{proof}

We can deduce (H3) from there (see \cite[Lemma 2]{Ancona:elliptic_operators_Martin_boundary}, we recall the proof below):
\begin{cor}[Weak coercivity]
 There exists $\eps$, $0<\eps<\frac{1}{2}$, such that for all $ t < 2\eps$, $\Delta^{\F} + t\id$\ admits a Green function $G_t$\ on $\M$.
\end{cor}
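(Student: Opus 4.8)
The plan is to deduce weak coercivity from the full coercivity of $\Delta^{\F}$ just established, and then to invoke the standard fact that a coercive self-adjoint elliptic operator has a Green function.

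First I would restate the coercivity estimate in the form I will use it. By the Finsler--Green formula \eqref{eq:green_formula}, for every $f\in C^{\infty}_0(\M)$ one has $-\langle \Delta^{\F}f,f\rangle = \frac{n}{\voleucl(\S^{n-1})}\int_{H\M}(L_X\pi^{\ast}f)^2\,\ada = E(f)$, and the preceding proposition, together with Theorem~\ref{thm:Brooks} ($\pi_1(M)$ is Gromov--hyperbolic, hence not amenable, so $\lambda_1>0$) and the characterization of $\lambda_1$ as the infimum of the Rayleigh quotient (Proposition~\ref{prop:spectral_gap}), gives $-\langle \Delta^{\F}f,f\rangle \ge \lambda_1\int_{\M}f^2\,\Omega^{\F}$ for all such $f$; in other words the bottom of the spectrum of $-\Delta^{\F}$ is $\lambda_1>0$.

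Next, I would choose $\eps$ with $0<2\eps<\min(\lambda_1,1)$, so that automatically $0<\eps<\tfrac12$. Then for every $t$ with $0\le t<2\eps$ and every $f\in C^{\infty}_0(\M)$,
\begin{equation*}
 -\bigl\langle (\Delta^{\F}+t\id)f,f\bigr\rangle = -\langle \Delta^{\F}f,f\rangle - t\int_{\M}f^2\,\Omega^{\F} \ge (\lambda_1-t)\int_{\M}f^2\,\Omega^{\F} \ge (\lambda_1-2\eps)\int_{\M}f^2\,\Omega^{\F} > 0 ,
\end{equation*}
so $\Delta^{\F}+t\id$ is again a self-adjoint (with respect to $\Omega^{\F}$), uniformly elliptic second-order operator on $\M$ (its principal symbol, lifted from the compact base $M$, is uniformly positive) which is coercive with a constant $\lambda_1-2\eps$ that does not depend on $t$.

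Finally I would invoke the fact --- this is essentially \cite[Lemma~2]{Ancona:elliptic_operators_Martin_boundary} --- that such an operator admits a positive Green function on $\M$, and sketch the exhaustion argument to keep things self-contained: fix $x\in\M$ and a sequence $D_1\subset D_2\subset\cdots$ of relatively compact Dirichlet-regular domains with $\bigcup_k D_k=\M$; on each $D_k$ the positive operator $-(\Delta^{\F}+t\id)$ has a Dirichlet Green function $G_t^{k}(x,\cdot)\ge 0$, and the minimum principle shows $G_t^{k}$ increases with $k$. Testing the uniform coercivity estimate against a cut-off of $G_t^{k}(x,\cdot)$ bounds $\int_{D_k}G_t^{k}(x,y)^2\,\Omega^{\F}(y)$ over any region staying away from $x$, uniformly in $k$; elliptic regularity and Harnack's inequality then turn this into a pointwise bound on $G_t^{k}(x,\cdot)$ off $x$, again uniform in $k$, so $G_t(x,\cdot):=\sup_k G_t^{k}(x,\cdot)$ is finite and continuous off the diagonal, which by the equivalences in Theorem~\ref{thm:green_function} is exactly the assertion that $\Delta^{\F}+t\id$ has a Green function on $\M$. (Alternatively, since the bottom of the spectrum of $-\Delta^{\F}$ is $\lambda_1>0$ and $\M$ has bounded geometry, the heat kernel of $\Delta^{\F}$ decays like $e^{-\lambda_1 s}$ for large $s$, whence $G_t(x,y)=\int_0^{\infty}e^{ts}p(x,y,s)\,ds$ converges for $t<\lambda_1$.) The only genuinely non-formal step is this passage from coercivity to existence of the Green function --- in particular extracting from the coercivity inequality a bound on $G_t^{k}(x,\cdot)$ that is uniform in $k$; everything else is bookkeeping with the already-proven coercivity estimate, and it is precisely at this step that Ancona's Lemma~2, or the bounded-geometry heat-kernel bound, is used.
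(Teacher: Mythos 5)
Your reduction to coercivity of $\Delta^{\F}+t\,\id$ for $t<2\eps<\lambda_1$ matches the paper's first step exactly. The passage from coercivity to a Green function, however, is where you diverge: the paper recalls Ancona's argument, which applies Lax--Milgram directly on $H^1_0(\M)$ to solve $(\Delta^{\F}+t\,\id)s=-f$ weakly for a positive test function $f$, shows $s\ge 0$ from $q_L(s^-,s^-)=-q_L(s,s^-)\le 0$, and then observes that $s$ is a strictly positive superharmonic function which is not $L$-harmonic --- one of the equivalent conditions of Theorem~\ref{thm:green_function}. Your exhaustion route is also viable, but the step you flag yourself is indeed the delicate one and as sketched it needs repair: testing coercivity against a cut-off $\chi\,G^k_t(x,\cdot)$ leaves you bounding $\int(\chi G^k_t)^2$ by cross-terms that still involve $G^k_t$ on the cut-off annulus, so nothing closes up. The standard fix is to pair $G^k_t$ against a fixed nonnegative $f\in C^\infty_0$ and bound $u^k_f:=\int G^k_t(\cdot,y)f(y)\,\Omega^{\F}$ in $L^2$ directly from coercivity (since $-\langle(\Delta^{\F}+t)u^k_f,u^k_f\rangle=\int f u^k_f$); letting $k\to\infty$ then essentially reproduces the paper's Lax--Milgram solution $s$ as a monotone limit. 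Your heat-kernel alternative is a genuinely distinct third route and arguably the slickest: the paper's preceding Remark already supplies the heat kernel of $\Delta^{\F}$, the spectral gap $\lambda_1>0$ gives $L^2$ decay, a local parabolic estimate on the bounded-geometry space $\M$ upgrades this to a pointwise exponential bound, and $G_t(x,y)=\int_0^{\infty}e^{ts}p(x,y,s)\,ds<\infty$ for $t<\lambda_1$. The paper's choice trades this for staying entirely within the elliptic and potential-theoretic framework it has already assembled in Section~\ref{sec:harmonic_measures}.
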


\begin{rem}
 Recall that, as a weighted Laplace operator, there always exists a heat kernel for $\Delta^{\F}$ (see for instance \cite{Grigoryan:heat_kernels_on_weighted_manifolds}). Therefore we can apply Theorem \ref{thm:green_function} to decide whether a Green function exists.
\end{rem}

\begin{proof}
 Take $\eps$ smaller than $c/2$. Then, for any $t <2 \eps$, $L:= \Delta^{\F} + t\id$ is still coercive and therefore we have a coercive bilinear form $q_L$ associated with $L$ and continuous on $H^1_0(\M)$. Indeed, just set $q_L(u,v):= - \int_{\M} u\left( \Delta^{\F} v + tv\right) \; \Omega$. We will construct a $L$-superharmonic positive function $s$ and use Theorem \ref{thm:green_function} to conclude.\\
 Take a positive test function $f \in C^{\infty}_0(\M)$. There exists (by Lax--Milgram Theorem) an $s \in H_0^1(\M)$ such that, for any $\varphi\in C_0^{\infty}(\M)$,
\begin{equation*}
q_L(s, \varphi) = \int_{\M} f \varphi \, \Omega \, .
\end{equation*}
Now $s$ is positive, because if we let $s^-:= \max \{ 0, - s \}$, then $q_L (s^-, s^-) = - q_L (s, s^-) \leq 0$ (because of the above equation, using a suitable $C^{\infty}$ approximation of $s^-$). As $s$ is a weak solution of $Ls = - f$, $s$ is superharmonic and we can choose $f$ so that $s$ is strictly positive and Theorem \ref{thm:green_function} proves the claim.
\end{proof}

\begin{lem}
There exist strictly positive constants $r_0, \, \tau_0, \, c_1$, and $c_2$ such that 
 \begin{enumerate}[(H1)]
\item For all $ m \in \M$, there exists a function $\theta: B(m,r_0) \rightarrow \R^n $, such that, for all $ x,y \in B(m,r_0) $ we have
\begin{equation*}
 c_2^{-1} d(x,y) \leq || \theta(x) -\theta(y) || \leq c_2 d(x,y) \,;
\end{equation*}
 \item For all $x_0 \in \M$ and $\forall 0\leq t \leq \tau_0 $\ the Green function $g_t$\ relative to $B(x_0,r_0)$\ of the operator $\Delta^{\F} + t \id$ satisfies, for all $ x,y \in B(x_0,r_0/2)$
\begin{equation*}
 \left\{
\begin{aligned}
  c_1 &\leq g_t(x,y) \\
  g_t(x,y) &\leq c_2, \quad \text{if }  d(x,y) \geq \frac{r_0}{4} \,.
 \end{aligned}
\right.
\end{equation*}
\end{enumerate}
\end{lem}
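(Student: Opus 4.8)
The plan is to verify conditions (H1) and (H2) separately, both relying on the fact that $M$ is compact, so that all quantities controlling the geometry of $\M$ and the operator $\Delta^{\F}$ are $\pi_1(M)$-invariant, hence bounded.

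For (H1): the statement is purely about the Finsler distance on $\M$. I would take $r_0$ small enough that a ball of radius $r_0$ embeds in a coordinate chart; compactness of $M$ makes the choice uniform. On such a ball, take $\theta$ to be the coordinate map composed with a linear map making the reference metric (say the symbol metric $g_\sigma$, or any fixed background Riemannian metric) Euclidean at the center. Since $\F$ is smooth and strongly convex, it is bi-Lipschitz equivalent to $g_\sigma$ on compact sets with a uniform constant (again by compactness of $M$), and $g_\sigma$ is bi-Lipschitz equivalent to the Euclidean metric $\|\theta(x)-\theta(y)\|$ on a small enough ball with a constant close to $1$. Composing these two comparisons gives the desired $c_2$. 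This step is essentially bookkeeping and should not be the obstacle.

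For (H2): fix $x_0$ and work on $B:=B(x_0,r_0)$. The operator $L_t:=\Delta^{\F}+t\,\id$ on $B$ is uniformly elliptic with uniformly H\"older coefficients, uniformly over $x_0\in\M$ and $0\le t\le\tau_0$ (using (H1) to put everything in comparable charts, and invariance under $\pi_1(M)$ so only finitely many ``shapes'' of $B$ occur up to bounded distortion). Standard elliptic theory — existence and two-sided bounds for the Green function of a uniformly elliptic operator on a bounded regular domain (see e.g. the references on heat kernels and elliptic PDE cited earlier, or Gilbarg--Trudinger) — gives the lower bound $g_t(x,y)\ge c_1$ for $x,y\in B(x_0,r_0/2)$ and the upper bound $g_t(x,y)\le c_2$ when $d(x,y)\ge r_0/4$ (the Green function is bounded away from the diagonal, and the singularity at $x=y$ is exactly why one restricts to $d(x,y)\ge r_0/4$). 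One must check that the constants can be chosen independent of $x_0$ and $t$: independence in $x_0$ is the $\pi_1(M)$-invariance plus compactness; independence in $t$ for $t$ in a compact interval $[0,\tau_0]$ is continuity of the relevant estimates in the zeroth-order term. I would choose $\tau_0\le 2\eps$ from the weak coercivity corollary so that all of (H1), (H2), (H3) hold simultaneously.

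The main obstacle is making the uniformity in $x_0$ (and $t$) genuinely rigorous: one needs that, although $\M$ is non-compact, the local pieces $(B(x_0,r_0),\Delta^{\F})$ form a compact family modulo the deck transformations. The clean way to say this is that $\Delta^{\F}$ on $\M$ is the lift of $\Delta^F$ on $M$, hence invariant under $\pi_1(M)$, and $M$ is compact; so there is a single set of constants $(r_0,c_1,c_2,\tau_0)$ working everywhere. Once this is in place, (H1) and (H2) are consequences of smoothness, strong convexity, and classical uniformly-elliptic estimates, and together with the weak coercivity corollary (which is (H3)) they complete the verification that $\Delta^{\F}$ satisfies the hypotheses of Ancona's Theorem~\ref{thm:Ancona_martin_boundary}.
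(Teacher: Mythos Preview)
Your proposal is correct and follows essentially the same route as the paper: compactness of $M$ plus $\pi_1(M)$-invariance to get uniform local charts for (H1), and then standard uniformly-elliptic Green-function estimates in those charts for (H2). The only cosmetic difference is that the paper takes $\theta=\exp_m^{-1}$ (which is a global diffeomorphism here since $\M$ is Hadamard) rather than a generic coordinate chart, and it fixes $\tau_0<\lambda_1$ directly; your choice $\tau_0\le 2\eps$ is equivalent since $\eps<\lambda_1/2$.
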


\begin{rem}
 These two conditions follow from the fact that $\M$ and $\Delta^{\F}$ are \emph{well-adapted} (in the terminology of Ancona \cite{Ancona:theorie_potentiel}), i.e., that $\M$ verifies (H1) and that the push-forwards of $\Delta^{\F}$ by $\theta$ have coefficients with H\"older norms bounded by uniform constants. Condition (H2) just means that the Green function of $\Delta^{\F}$ should behave like a Green function of a uniformly elliptic operator on $\R^n$.
\end{rem}

\begin{proof}

For (H1), first remark that, for any $m \in \M$, the exponential map at $m$ is a diffeomorphism from $T_m\M$ to $\M$, and we can take $\theta = \exp_m^{-1}$. For any real number $r_0$, there exists a constant $c= c(r_0) $ depending only on $r_0$ such that (H1) is satisfied. The constant $c$ exists by compactness of $B(m,r_0)$ and does not depend on $m$ by compactness of $M$.\\
The constant $c_2$ will be determined by Condition (H2), as long as we take $c_2 \geq c(r_0)$.\\
Let $\tau_0 < \lambda_1$ and fix $0 \leq t_0 \leq \tau_0$, let $L:= \theta_{\ast}\left(\Delta^{\F} + t_0\id\right)$. The operator $L$ is a uniformly elliptic operator on $U= \theta(B(x_0,r_0))$ with smooth bounded coefficients (by compactness of $M$). Therefore (see, for instance \cite[Th\'eor\`eme 9.6]{Stampacchia:probleme_dirichlet} or Chapter 1 of \cite{Friedman:PDE_of_parabolic_type}, Equations (6.12) and (6.13) in particular), the operator $L$ admits a Green function $g^L$ and there exist two constants $c_1$ and $c_2$ (depending on the H\"older norm of the coefficients of $L$) such that, for $u,v$ a bounded distance apart and a bounded distance from $\partial U$, we have $g^L(u,v) \geq c_1$ and $g^L(u,v) \leq c_2$. As $g^t$ is the pullback of $g^L$ by $\theta$ and again using the compactness of $M$, we have a uniform control on all those bounds and hence have proven our lemma. 
\end{proof}

\section{Ergodic property of harmonic measures}

In this section, we will adapt ergodic results on the harmonic measures to our case. The result and proof are based on Ledrappier's work on harmonic measures for negatively curved Riemannian manifolds in \cite{Ledrappier:Ergodic_properties}.

A \emph{measure class} on a space $V$ is a set $\{ \mu_x \}_{x\in V}$ indexed by $V$ such that, for any $x\in V$, $\mu_x$ is a measure on $V$ and, for $y\in V$, $\mu_x$ and $\mu_y$ are equivalent. All the measures we consider are Radon measures. If a group $\Gamma$ acts on $V$, we say that a measure class is \emph{invariant} by $\Gamma$ if, for $x\in V$ and $\gamma \in \Gamma$, 
\begin{equation*}
 \mu_{\gamma \cdot x} = \gamma_{\ast} \mu_x\, ,
\end{equation*}
where $\gamma_{\ast}\mu_x$ is defined by, for $U \subset V$ measurable, $\gamma_{\ast}\mu_x(U) := \mu_{x} \left(\gamma^{-1} \cdot U \right)$.
Remark that, if $\{ \mu_x\}$ is an invariant measure class, then the measures $\mu_x$ are \emph{quasi-invariant}, that is, for any $\gamma \in \Gamma$, $\mu_{x}$ and $\gamma_{\ast} \mu_x$ are equivalent.

For an invariant measure class (or a quasi-invariant measure), we can use the traditional definition of ergodicity: an invariant measure class is \emph{ergodic} if, for any measurable set $U$ invariant under $\Gamma$, then $U$ is either of measure (for any $\mu_x$) full or null.

In this section, $(M,F)$ is still a closed reversible Finsler manifold of negative curvature, $(\M,\F)$ its universal cover with the lifted metric. For any $x \in \M$, we denote by $\hmu_x$ the harmonic measure on $\M(\infty)$ associated to $\Delta^{\F}$.

\begin{lem}
 The harmonic measures $\{\hmu_x \}$ form a measure class invariant by the action of $\pi_1(M)$ on $\M(\infty)$. Moreover, for $x,y\in \M$, the Radon-Nykodim derivative of $\hmu_x$ and $\hmu_y$ is
\begin{equation*}
 \frac{d\hmu_x}{d\hmu_y}(\xi) = K(y,x,\xi)\, ,
\end{equation*}
where $x\mapsto K(y,x,\xi)$ is the Poisson Kernel normalized at $y$.
\end{lem}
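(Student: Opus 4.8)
The plan is to reduce everything to two facts already at our disposal: the explicit description $d\hmu_x = K(O,x,\xi)\,d\mu$ of the harmonic measures stated just above the lemma, and the uniqueness in the Dirichlet problem at infinity provided by Corollary \ref{cor:harmonic_exists}. Throughout, fix the base point $O$ used to normalize the Poisson kernels, so that $K(O,x,\xi) = K_\xi(x)$ and $d\hmu_x(\xi) = K(O,x,\xi)\,d\mu(\xi)$ with $\mu := \hmu_O$ the probability measure furnished by Proposition \ref{prop:representation_integrale_fonctions_harmonique} applied to $\mathds 1$.

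\textbf{Mutual equivalence and the Radon--Nikodym derivative.} First I would recall that, by Theorem \ref{thm:Finsler-Laplace_is_Ancona}, $\Delta^{\F}$ satisfies the hypotheses of Ancona's theorem, so that for each $\xi\in\M(\infty)$ the cone $\mathcal H_\xi$ is one-dimensional. Both $x\mapsto K(O,x,\xi)$ and $x\mapsto K(y,x,\xi)$ are positive $\Delta^{\F}$-harmonic functions spanning $\mathcal H_\xi$, hence proportional; evaluating at $x=y$ (where $K(y,y,\xi)=1$) gives $K(O,x,\xi)=K(O,y,\xi)\,K(y,x,\xi)$. Equivalently one may just write $K(O,x,\xi)/K(O,y,\xi)=\lim_{z\to\xi}G(x,z)/G(y,z)=K(y,x,\xi)$ directly from the Green-function definition of the Martin kernel. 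Dividing $d\hmu_x=K(O,x,\cdot)\,d\mu$ by $d\hmu_y=K(O,y,\cdot)\,d\mu$ therefore yields $d\hmu_x = K(y,x,\cdot)\,d\hmu_y$, which is the asserted formula. By Theorem \ref{thm:Martin_is_holder} the function $(x,\xi)\mapsto K(y,x,\xi)$ is continuous (in fact H\"older); since $\M(\infty)$ is compact and $K(y,\cdot,\xi)>0$, for each fixed $x$ it is bounded above and bounded away from $0$ on $\M(\infty)$, so $\hmu_x$ and $\hmu_y$ are mutually absolutely continuous and $\{\hmu_x\}$ is a measure class.

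\textbf{Invariance under $\pi_1(M)$.} Since $\F$ is the lift of a Finsler metric on $M$, every object built from it --- in particular $A$, $X$, $\alpha^{\F}$, $\Omega^{\F}$ and hence $\Delta^{\F}$ --- is invariant under the deck group, i.e.\ $\Delta^{\F}(g\circ\gamma)=(\Delta^{\F}g)\circ\gamma$ for all $\gamma\in\pi_1(M)$. Let $f\in C(\M(\infty))$ and let $u_f\in C(\M\cup\M(\infty))$ be the solution of the Dirichlet problem given by Corollary \ref{cor:harmonic_exists} (applicable since $\Delta^{\F}\mathds 1=0$). Then $u_f\circ\gamma$ is $\Delta^{\F}$-harmonic on $\M$ and tends to $f(\gamma\xi)=(f\circ\gamma)(\xi)$ as $x\to\xi\in\M(\infty)$, so by the uniqueness part of Corollary \ref{cor:harmonic_exists} we get $u_f\circ\gamma=u_{f\circ\gamma}$. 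Using the integral representation $u_h(x)=\int_{\M(\infty)}h\,d\hmu_x$ and the convention $\int g\,d(\gamma_\ast\nu)=\int (g\circ\gamma)\,d\nu$, this gives, for every $x\in\M$,
\begin{equation*}
 \int_{\M(\infty)} f\,d\hmu_{\gamma x} \;=\; u_f(\gamma x) \;=\; u_{f\circ\gamma}(x) \;=\; \int_{\M(\infty)} (f\circ\gamma)\,d\hmu_x \;=\; \int_{\M(\infty)} f\,d(\gamma_\ast\hmu_x).
\end{equation*}
As $f\in C(\M(\infty))$ is arbitrary, $\hmu_{\gamma x}=\gamma_\ast\hmu_x$, which is exactly the invariance of the measure class.

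\textbf{Expected difficulty.} No step is deep. The only points requiring care are (i) that ``normalized at $y$'' forces $K(y,x,\xi)=K(O,x,\xi)/K(O,y,\xi)$, which rests on the one-dimensionality of $\mathcal H_\xi$ coming from Ancona's theorem (valid here thanks to Theorem \ref{thm:Finsler-Laplace_is_Ancona}), and (ii) the \emph{uniform} positivity of $K(y,\cdot,\xi)$ on the compact boundary, needed to upgrade $\hmu_x\ll\hmu_y$ to genuine equivalence --- both supplied by Harnack's principle (Proposition \ref{prop:Harnack}) together with the continuity in Theorem \ref{thm:Martin_is_holder}.
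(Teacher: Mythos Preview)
Your proof is correct and follows essentially the same route as the paper: both identify $K(y,x,\xi)$ with $K(O,x,\xi)/K(O,y,\xi)$ via the one-dimensionality of $\mathcal H_\xi$ coming from Ancona's theorem (the paper phrases this through the characterization in Remark \ref{rem:characterization_of_Poisson_kernel}), and both deduce $\pi_1(M)$-invariance from the fact that the $\hmu_x$ solve the Dirichlet problem at infinity. You are slightly more explicit than the paper in justifying the $\pi_1(M)$-equivariance of $\Delta^{\F}$ and the genuine mutual equivalence of the measures, but the underlying argument is the same.
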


\begin{proof}
We can obtain the harmonic measure $\mu_x$ in the following way: let $O$ be a point in $\M$ and $K(O,x,\xi)$ the Poisson kernel normalized at $O$. If $\hmu_O$ is the measure on $\M(\infty)$ such that 
\begin{equation*}
 \int_{\xi \in \M(\infty)} K(O,x,\xi) d\hmu_O(\xi) =1\, ,
\end{equation*}
then $\hmu_x$ is such that, for any Borel set $U \subset \M(\infty)$, $ \hmu_x(U) = \int_U K(O,x,\xi) d\hmu(\xi)$. From this we see that all the measures are equivalent and that their Radon-Nykodim derivative is given by 
\begin{equation*}
 \frac{d\hmu_x}{d\hmu_y}(\xi) = \frac{K(O,x,\xi)}{K(O,y,\xi)} \, .
\end{equation*}
Now recalling the characterization of $K(O,\cdot,\xi)$ given by the proof of Theorem \ref{thm:Ancona_martin_boundary} (see Remark \ref{rem:characterization_of_Poisson_kernel}), we see that $\frac{K(O,x,\xi)}{K(O,y,\xi)} = K(y,x,\xi)$. Indeed, the map ${ x \mapsto K(O,x,\xi)/K(O,y,\xi)}$ is harmonic, normalized at $y$ and such that it tends to zero when $x$ tends to $\xi' \neq \xi$. So we have 
\begin{equation*}
 \frac{d\hmu_x}{d\hmu_y}(\xi) = K(y,x,\xi) \, .
\end{equation*}

Using the fact that the $\mu_x$ solves the Dirichlet problem at infinity, we get that, for any $x\in \M$ and $f \in C^0(\M(\infty))$, 
\begin{equation*}
 \int_{\xi \in \M(\infty)} f(\xi) d\mu_{\gamma\cdot x} = \int_{\xi \in \M(\infty)} f\circ \gamma (\xi) d\mu_{ x} = \int_{\eta \in \M(\infty)} f(\eta) d\left(\gamma_{\ast} \mu_{x} \right) (\eta)\, ,
\end{equation*}
so we do have $\mu_{\gamma\cdot x} = \gamma_{\ast} \mu_{x}$.
\end{proof}

Let $\tau : H\M \rightarrow \M(\infty)$ be the application sending an element $(x,v) \in H\M$ to the point at infinity obtained as the limit of the geodesic ray leaving $x$ in the direction $v$. For any fixed $x\in \M$, $\tau_x$ is a (H\"older)-homeomorphism.\\
As the harmonic measures are $\pi_1(M)$-invariant, we can define the \emph{spherical harmonic measures} $\lbrace \bar{\mu}_y, y\in M \rbrace$ as measures on $H_yM$ by setting
\begin{equation*}
 \bar{\mu}_y := \mu_{\widetilde{y}} \circ \tau_{\widetilde{y}}\, ,
\end{equation*}
where $\widetilde{y}\in \M$ is any lift of $y\in M$.

\begin{thm} \label{thm:harmonic_measures_are_ergodic}
Let $(M,F)$ and $\hmu_x$ be as before. We have the following properties:
\begin{itemize}
 \item[(i)] The harmonic measure class $\lbrace \hmu_x \rbrace$ is ergodic for the action of $\pi_1(M)$ on $\M(\infty)$;\\
 \item[(ii)] For any $x\in \M$, the product measure $\hmu_x \otimes \hmu_x$ is ergodic for the action of $\pi_1(M)$ on $\partial^2\M := \M(\infty) \times \M(\infty) \smallsetminus \text{diagonal}$;\\
 \item[(iii)] There exists a unique $\flot$-invariant measure $\mu$ on $HM$ such that the family of spherical harmonics $\bar{\mu}_x$ is a family of transverse measures. Moreover, $(HM, \flot, \mu)$ is ergodic.
\end{itemize}
\end{thm}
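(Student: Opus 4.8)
The plan is to follow Ledrappier's strategy from \cite{Ledrappier:Ergodic_properties}, adapting each step to the Finsler setting where the only essential inputs are: (a) the identification of the Martin boundary with the visual boundary $\M(\infty)$ together with the explicit Radon--Nikodym formula $d\hmu_x/d\hmu_y(\xi) = K(y,x,\xi)$, which we have just established; (b) the H\"older regularity of the Poisson kernel $(x,\xi)\mapsto K(y,x,\xi)$ from Theorem \ref{thm:Martin_is_holder}; and (c) the fact that the geodesic flow $\flot$ of $(M,F)$ is a (contact) Anosov flow, hence ergodic with respect to the Bowen--Margulis measure and, more to the point, topologically transitive with a dense set of periodic orbits. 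The overall structure is: prove \emph{(ii)} first (ergodicity of $\hmu_x\otimes\hmu_x$ on $\partial^2\M$), then deduce \emph{(i)} as an immediate consequence (an invariant set for the $\pi_1(M)$-action on $\M(\infty)$ gives, by taking products, an invariant set for $\partial^2\M$; here one uses that $\hmu_x$ has no atoms, which follows from the H\"older identification with $\M(\infty)$ and the fact that $\pi_1(M)$ acts minimally on $\M(\infty)$), and finally build the flow-invariant measure in \emph{(iii)} from the product measure by the Kaimanovich-type correspondence recalled in the introduction.

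First I would set up the correspondence between $\pi_1(M)$-invariant measures on $\partial^2\M$ and $\flot$-invariant measures on $HM$. Concretely, fix $x_0\in\M$; the map $\tau_{x_0}\colon H\M\to\M(\infty)$ together with the Busemann cocycle gives a measurable identification of $H\M$ with $\partial^2\M\times\R$ (the $\R$-factor recording the signed distance parameter along the geodesic), under which the lifted geodesic flow $\hflot$ becomes translation in the $\R$-coordinate and the $\pi_1(M)$-action becomes the diagonal action on $\partial^2\M$ twisted by the cocycle. A $\hflot$-invariant, $\pi_1(M)$-invariant Radon measure on $H\M$ then corresponds to a $\pi_1(M)$-invariant measure on $\partial^2\M$ multiplied by a suitable weight (Lebesgue on the $\R$-factor times a coboundary correction coming from the Radon--Nikodym cocycle $K$); this is exactly the ``weight $f\colon\partial^2\M\to\R$'' appearing in Theorem D. So the content of \emph{(iii)} is: the measure on $\partial^2\M$ obtained from $\hmu_{x_0}\otimes\hmu_{x_0}$ (with the weight making it genuinely $\pi_1(M)$-invariant rather than merely quasi-invariant) descends to a finite flow-invariant measure $\mu$ on $HM$, its conditionals on the unstable (or stable) horospheres are the spherical harmonic measures $\bar\mu_x$, and ergodicity of $(HM,\flot,\mu)$ is equivalent to ergodicity of the $\pi_1(M)$-action on $\partial^2\M$, i.e. to \emph{(ii)}.

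So the real work is \emph{(ii)}. Here I would argue by contradiction: suppose $A\subset\partial^2\M$ is $\pi_1(M)$-invariant with $0<(\hmu_{x_0}\otimes\hmu_{x_0})(A)<$ full. Transporting to $HM$ via the correspondence above, $A$ yields a $\flot$-invariant set $\hat A\subset HM$ of intermediate $\mu$-measure. The key mixing mechanism is the Hopf argument combined with the \emph{leafwise absolute continuity} of the harmonic measures along the stable/unstable foliations: because $\hmu_x$ and $\hmu_y$ are equivalent with H\"older-continuous density $K(y,x,\cdot)$, the conditional measures of $\mu$ along stable and along unstable horospherical leaves transform measurably (indeed H\"older-continuously) under holonomy. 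One then shows that a $\flot$-invariant set, being (mod $\mu$) a union of full orbits, must be (mod $\mu$) saturated by both the stable and the unstable horospherical foliations, using the contraction/expansion of the Anosov flow to propagate invariance along leaves; since these two foliations together with the flow direction span $THM$ and the flow is topologically transitive (indeed $\mu$ has full support, as the harmonic measures have full support on $\M(\infty)$), a saturated set is trivial. I expect this leafwise-saturation argument to be the main obstacle, precisely because the Finsler geodesic flow, while Anosov by \cite{Fou:EESLSPC}, does not a priori come with the smooth horospherical structure one has in the Riemannian case; however $C^1$ (or even just absolute continuity of the foliations, which holds for $C^2$ Anosov flows) regularity of the weak stable/unstable foliations is enough, and this is available. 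The remaining pieces --- non-atomicity of $\hmu_x$, finiteness of $\mu$ (from compactness of $M$ and the boundedness of $K$ on compact sets given by Theorem \ref{thm:Martin_is_holder}), and uniqueness of $\mu$ (two flow-invariant measures with the prescribed transverse family would, by ergodicity, have to coincide up to scale, and the normalization is fixed by $\mu(HM)=\mathrm{vol}_{\Omega^F}(M)\cdot$const) --- are then routine adaptations of the Riemannian arguments of Ledrappier, with Theorem \ref{thm:C_alpha_extension} supplying exactly the H\"older control on boundary behaviour that his proof uses.
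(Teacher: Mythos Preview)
Your overall plan is sound and would work, but it takes a genuinely different route from the paper. The paper does not run a Hopf argument at all. Instead, following Ledrappier's Proposition~3 in \cite{Ledrappier:Ergodic_properties} verbatim, it uses the H\"older regularity of the Poisson kernel (Theorem~\ref{thm:Martin_is_holder}) to produce a H\"older-continuous potential $F_0$ on $HM$ such that the measure $\mu$ of part~\emph{(iii)} is precisely the \emph{equilibrium state} of $F_0$ for the Anosov geodesic flow. Ergodicity of $\mu$ is then immediate from Bowen's theory \cite{Bowen:Equilibrium_states}. The logical order is also reversed: the paper proves~\emph{(iii)} first and deduces~\emph{(i)} and~\emph{(ii)} from it via the Kaimanovich correspondence, whereas you prove~\emph{(ii)} directly and pull~\emph{(iii)} along afterwards.

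What each approach buys: the thermodynamic-formalism route is shorter and gives more for free --- uniqueness of $\mu$ is part of the uniqueness of equilibrium states, and one gets mixing (even Bernoulli) rather than mere ergodicity, without any extra work. Your Hopf-type argument is conceptually more self-contained but technically heavier: the ``leafwise saturation'' step for a measure that is \emph{not} in the Lebesgue class requires establishing a local product structure for $\mu$ with H\"older Jacobians along both stable and unstable holonomies, which is essentially the same computation that, in the paper's approach, identifies $\mu$ as a Gibbs state. Your uniqueness sketch (``two flow-invariant measures with the prescribed transverse family would, by ergodicity, have to coincide'') is also a bit thin compared to simply invoking uniqueness of equilibrium states. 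So your route is not wrong, but the paper's is the more efficient packaging of the same H\"older input.
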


Note that, by the following result of Kaimanovich, which still holds in this context, proving {\it (iii)} gives the theorem.
\begin{thm}[Kaimanovich \cite{Kaimanovich}]
There exists a convex isomorphism between the cone of Radon measures on $\partial^2 \M$ and the cone of Radon measures on $H\M$ invariant by $\hflot$.\\
Similarly, there exists a convex isomorphism between the cone of Radon measures on $\partial^2 \M$ invariant by $\pi_1(M)$ and the cone of Radon measures on $HM$ invariant by $\flot$.\\
\end{thm}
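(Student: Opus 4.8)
The statement to prove (attributed to Kaimanovich) asserts two convex isomorphisms: between Radon measures on $\partial^2 \M$ and $\hflot$-invariant Radon measures on $H\M$, and between $\pi_1(M)$-invariant Radon measures on $\partial^2 \M$ and $\flot$-invariant Radon measures on $HM$. The second statement follows from the first by passing to the quotient, so the plan is to establish the first isomorphism and then descend.

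\medskip

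\textbf{Step 1: The coordinate change on $H\M$.} The plan is to build a homeomorphism between (a subset of full measure in) $H\M$ and $\partial^2\M \times \R$. Since $(\M,\F)$ is Gromov-hyperbolic (Egloff's theorem, recalled in Section \ref{subsec:gromov-hyperbolicity}) and the geodesic flow is Anosov (Foulon's theorem), every element $(x,v)\in H\M$ determines a bi-infinite geodesic, hence a pair $(\tau(x,-v),\tau(x,v))=(\xi_-,\xi_+)\in\partial^2\M$ of distinct endpoints, where $\tau$ is the endpoint map introduced just before Theorem \ref{thm:harmonic_measures_are_ergodic}; conversely a pair $(\xi_-,\xi_+)$ of distinct points determines a geodesic line up to the choice of origin along it. First I would fix a base point $O\in\M$ and use orthogonal projection (for the symbol metric, or any Busemann-type normalization --- here reversibility is used so that ``the'' geodesic through two boundary points is well defined) to pin down the origin; this gives a map $\Phi\colon H\M\to\partial^2\M\times\R$, $(x,v)\mapsto(\xi_-,\xi_+,t)$, which is a homeomorphism onto its image and conjugates $\hflot$ to the translation flow $(\xi_-,\xi_+,t)\mapsto(\xi_-,\xi_+,t+s)$. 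The image is all of $\partial^2\M\times\R$; the only loss is the (measure-zero, flow-invariant) set of $(x,v)$ lying on a geodesic through a single boundary point, which is empty here since geodesics in a Gromov-hyperbolic CAT-type space have two distinct endpoints.

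\medskip

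\textbf{Step 2: Disintegration.} Given a $\hflot$-invariant Radon measure $\mu$ on $H\M$, transport it to $\Phi_*\mu$ on $\partial^2\M\times\R$; invariance under translation in the $\R$-factor forces, by uniqueness of Haar measure on $\R$ and a standard disintegration argument (Rokhlin), $\Phi_*\mu = m\otimes dt$ for a unique Radon measure $m$ on $\partial^2\M$. Conversely $m\mapsto \Phi^{-1}_*(m\otimes dt)$ produces a $\hflot$-invariant Radon measure, and the two assignments are mutually inverse. Both are clearly additive and positive-homogeneous, i.e. convex-cone isomorphisms. This gives the first isomorphism.

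\medskip

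\textbf{Step 3: Descent to the quotient.} The group $\pi_1(M)$ acts on $H\M$ by deck transformations commuting with $\hflot$, and on $\partial^2\M$ by the boundary action; the map $\Phi$ is $\pi_1(M)$-equivariant once we average/adjust the origin-normalization appropriately, or more cleanly, one checks directly that a $\pi_1(M)$-invariant, $\hflot$-invariant measure on $H\M$ corresponds under Step 2 to a $\pi_1(M)$-invariant $m$ on $\partial^2\M$ (the extra $\R$-cocycle coming from moving the origin is exact, so it does not obstruct invariance of $m$). Then $\pi_1(M)$-invariant Radon measures on $H\M$ push down bijectively to Radon measures on $HM=\pi_1(M)\backslash H\M$, and $\hflot$-invariance descends to $\flot$-invariance; combining with the first isomorphism gives the second. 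The plan would be to spell out this push-down via a fundamental domain and the standard correspondence between invariant measures upstairs and measures downstairs.

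\medskip

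\textbf{Main obstacle.} The delicate point is Step 1: making the normalization of the origin along each geodesic genuinely canonical and $\pi_1(M)$-equivariant in the merely Gromov-hyperbolic (non-Riemannian, possibly irregular symbol metric) setting, and verifying that $\Phi$ is a Borel isomorphism with the claimed conjugacy --- in particular that the exceptional set is null and that the disintegration $\Phi_*\mu=m\otimes dt$ holds for \emph{Radon} (not just finite) measures. This is exactly where one invokes Gromov-hyperbolicity of $(\M,\F)$ and reversibility; the rest is soft measure theory. Since the statement is quoted from Kaimanovich \cite{Kaimanovich} (and its adaptation to this Finsler context is asserted to "still hold"), I would in practice cite that reference for the hard analytic core and only indicate the above reduction.
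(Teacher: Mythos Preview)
The paper does not give its own proof of this theorem: it is stated as a cited result of Kaimanovich \cite{Kaimanovich}, with the remark that it ``still holds in this context,'' and is then used as a black box to deduce parts (i) and (ii) of Theorem \ref{thm:harmonic_measures_are_ergodic} from part (iii). So there is nothing in the paper to compare your argument against.

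That said, your outline is exactly the standard proof (the Hopf parametrization $H\M \simeq \partial^2\M \times \R$, disintegration against Lebesgue on the $\R$-factor, and descent by $\pi_1(M)$-equivariance), and it is the argument Kaimanovich gives. Your identification of the delicate point is also correct: the time coordinate along each geodesic is usually fixed via a Busemann function based at $O$ rather than an ``orthogonal projection,'' and one checks that under a deck transformation $\gamma$ the map $\Phi$ becomes $(\xi_-,\xi_+,t)\mapsto(\gamma\xi_-,\gamma\xi_+,t+c_\gamma(\xi_-,\xi_+))$ for an additive cocycle $c_\gamma$; since the $\R$-factor already carries Lebesgue measure, this cocycle does not affect the transverse measure $m$, which is why $\pi_1(M)$-invariance passes cleanly. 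If you want to make your sketch self-contained, that Busemann normalization and the cocycle computation are the only places requiring genuine care.
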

So if we construct $\mu$, the Kaimanovich correspondence shows that there exists a weight $f \colon \partial^2 \M \rightarrow \R$ such that the measure $f \hmu_x \otimes \hmu_x$ is invariant by $\pi_1(M)$ (see \cite{Kaimanovich}). And proving that $\mu$ is ergodic for the flow proves that $f \hmu_x \otimes \hmu_x$ is ergodic.

To prove {\it (iii)} of Theorem \ref{thm:harmonic_measures_are_ergodic}, it suffice to copy verbatim the proof of Proposition 3 in \cite{Ledrappier:Ergodic_properties} (using that our Poisson kernel is H\"older continuous by Theorem \ref{thm:Martin_is_holder}). We get:
\begin{prop}
 There exists a H\"older continuous function $F_0$ on $M$ such that the spherical harmonic measures $\bar{\mu}_x$ can be chosen as a family of transverse measures for the equilibrium state $\mu$ of $F_0$.
\end{prop}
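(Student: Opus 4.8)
The plan is to adapt, essentially verbatim, the proof of Proposition~3 of \cite{Ledrappier:Ergodic_properties}, the only new ingredient being the H\"older continuity of the Poisson kernel supplied by Theorem~\ref{thm:Martin_is_holder}. Recall the general picture: since $\F$ has strictly negative flag curvature, the geodesic flow $\flot$ of $(M,F)$ is a transitive (indeed contact, hence mixing) Anosov flow on $HM$, so its weak stable and weak unstable foliations are H\"older, and their lifts to $H\M$ are parametrized by $\M(\infty)$; the Hopf parametrization identifies $H\M$ with $\partial^2\M\times\R$ and conjugates $\hflot$ with translation in the $\R$-factor. Under this identification a $\hflot$-invariant Radon measure on $H\M$ is a product of a Radon measure on $\partial^2\M$ with Lebesgue measure on $\R$, and, by Kaimanovich's isomorphism recalled after Theorem~\ref{thm:harmonic_measures_are_ergodic}, the $\pi_1(M)$-invariant ones descend to $\flot$-invariant measures on $HM$. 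Thus it suffices to produce an $\flot$-invariant probability measure $\mu$ on $HM$ that is an equilibrium state for a H\"older potential $F_0$ and whose conditional measures in the fibre ($=$ strong unstable) direction are the prescribed spherical harmonic measures $\bar{\mu}_x$.

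First I would package the harmonic measure class as a H\"older cocycle and extract a potential from it. By the characterization of the Poisson kernel recorded in Remark~\ref{rem:characterization_of_Poisson_kernel}, the map $(y,x,\xi)\mapsto K(y,x,\xi)$ is $\pi_1(M)$-equivariant and, by Theorem~\ref{thm:Martin_is_holder}, H\"older-continuous; it satisfies the chain rule $K(z,x,\xi)=K(z,y,\xi)\,K(y,x,\xi)$ (both sides are $L$-harmonic in $x$, equal to $1$ at $z$, and vanish at every $\xi'\neq\xi$), and $\tfrac{d\widetilde{\mu}_x}{d\widetilde{\mu}_y}(\xi)=K(y,x,\xi)$. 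For $v\in H\M$, let $\gamma_v$ be the geodesic it determines and $v^{+}=\tau(v)$ its forward endpoint, and set
\begin{equation*}
 F_0(v):=\left.\frac{d}{ds}\right|_{s=0}\log K\bigl(\gamma_v(0),\gamma_v(s),v^{+}\bigr),
\end{equation*}
so that the chain rule gives $\log K(\gamma_v(0),\gamma_v(t),v^{+})=\int_0^{t}F_0(\tilde\phi^{s}v)\,ds$ for all $t$. The H\"older continuity of $K$ together with that of the weak foliations yields the H\"older continuity of $F_0$, and $\pi_1(M)$-equivariance of $K$ makes $F_0$ $\pi_1(M)$-invariant, so that $F_0$ descends; following Ledrappier one moreover checks that it can be taken to be a H\"older function on $M$ as stated. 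Through the Hopf parametrization, $\{\bar{\mu}_x\}$ then appears as a H\"older family of measures on the strong unstable leaves whose holonomy cocycle is $\exp\bigl(\int F_0\bigr)$.

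It then remains to invoke the thermodynamic formalism for H\"older potentials over a transitive Anosov flow. One normalizes $F_0$ by subtracting the topological pressure $P(F_0)$ (harmless: as the $\widetilde{\mu}_x$ are probability measures this only adjusts constants), then uses the existence and uniqueness of the equilibrium state $\mu$ of $F_0$ together with the description of $\mu$ by its conditional measures on strong unstable leaves and the local product structure, to conclude that $\mu$ is the unique $\flot$-invariant probability measure whose transverse (strong unstable) conditionals are proportional to the family $\bar{\mu}_x$; equivalently, the spherical harmonic measures form a family of transverse measures for $\mu$. Finally, the ergodicity of $(HM,\flot,\mu)$ asserted in part~(iii) of Theorem~\ref{thm:harmonic_measures_are_ergodic} — and hence, via Kaimanovich's isomorphism, the ergodicity of the corresponding weighted measure $f\,\widetilde{\mu}_x\otimes\widetilde{\mu}_x$ on $\partial^2\M$ — follows from the mixing of the equilibrium state of a H\"older potential over a mixing (contact) Anosov flow.

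The main obstacle is the Gibbs/cocycle identification of the preceding paragraph: one must verify that the Radon--Nikodym cocycle $K(y,x,\xi)$ of the harmonic measures really is $\exp$ of the integral of $F_0$ along geodesics, i.e. that the harmonic conditionals coincide with the Gibbs conditionals of $F_0$. This is precisely the computation carried out in Ledrappier's Proposition~3, and it transfers to the Finsler setting without change, provided that $F_0$ is H\"older — which is exactly why the H\"older continuity of the Poisson kernel (Theorem~\ref{thm:Martin_is_holder}) is indispensable here.
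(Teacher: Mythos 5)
Your proposal follows the paper's proof exactly: the paper itself says only to copy Ledrappier's Proposition~3 verbatim, with the H\"older continuity of the Poisson kernel (Theorem~\ref{thm:Martin_is_holder}) as the one new ingredient, and you have unpacked precisely that argument (cocycle from the Radon--Nikodym derivatives, potential $F_0$ as the logarithmic derivative of $K$ along the flow, thermodynamic formalism for H\"older potentials over a mixing Anosov flow). One small slip: the function $F_0(v)=\left.\frac{d}{ds}\right|_{s=0}\log K(\gamma_v(0),\gamma_v(s),v^+)$ you construct genuinely depends on the direction $v$ and so lives on $HM$, not on $M$; do not try to ``check that it can be taken to be a function on $M$'' --- the ``on $M$'' in the proposition's statement should be read as ``on $HM$'' (this is also where the potential lives in Ledrappier's original).
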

But we know (see \cite{Bowen:Equilibrium_states}) that an equilibrium state is ergodic, and so we have {\it (iii)}.

\part{Periodic orbits of Skewed $\R$-covered Anosov flows}

\chapter{Skewed $\R$-covered Anosov flows}

\section{Definitions}

\subsection{Basics on Anosov flows} \label{subsec:basics_AF}

In all this part, we will be interested in Anosov flows on closed $3$-manifolds, but we can state the definition in any dimension:

\begin{defin} \label{def:Anosov}
 Let $M$ be a compact manifold and $\flot \colon M \rightarrow M$ a $C^1$ flow on $M$. The flow $\flot$ is called \emph{Anosov} if there exists a splitting of the tangent bundle ${TM =  \R\cdot X \oplus E^{ss} \oplus E^{uu}}$ preserved by $D\flot$ and two constants $a,b >0$ such that:
\begin{enumerate}
 \item $X$ is the generating vector field of $\flot$;
 \item For any $v\in E^{ss}$ and $t>0$,
    \begin{equation*}
     \lVert D\flot(v)\rVert \leq be^{-at}\lVert v \rVert \, ;
    \end{equation*}
  \item For any $v\in E^{uu}$ and $t>0$,
    \begin{equation*}
     \lVert D\phi^{-t}(v)\rVert \leq be^{-at}\lVert v \rVert\, .
    \end{equation*}
\end{enumerate}
In the above, $\lVert \cdot \rVert$ is any Riemannian or Finsler metric on $M$.
\end{defin}

The subbundle $E^{ss}$ (resp. $E^{uu}$) is called the \emph{strong stable distribution} (resp. \emph{strong unstable distribution}). It is a classical result (\cite{Anosov}) that $E^{ss}$, $E^{uu}$, $\R\cdot X \oplus E^{ss}$ and $\R\cdot X \oplus E^{uu}$ are integrable. We denote by $\mathcal{F}^{ss}$, $\mathcal{F}^{uu}$, $\fs$ and $\fu$ the respective foliations and we call them the strong stable, strong unstable, stable and unstable foliations.

In all the following, if $x \in M$, then $\fs(x)$ (resp. $\fu(x)$) is the leaf of the foliation $\fs$ (resp. $\fu$) containing $x$.\\

Another kind of flow that will appear is a \emph{pseudo-Anosov flow}. This type of flows is the generalization of suspensions of pseudo-Anosov diffeomorphisms. They should be thought of as Anosov flows everywhere apart from a finite number of periodic orbits where the stable and unstable foliations are singular. For foundational works on pseudo-Anosov flows, see \cite{Mosher,Mosher:DS,Mosher:DS_II}. 
\begin{defin} \label{def:pseudo-Anosov}
 A flow $\psi^t$ on a closed $3$-manifold $M$ is called \emph{pseudo-Anosov} if it satisfies the following conditions:
\begin{itemize}
 \item For each $x\in M$, the flow line $t\mapsto \psi^t(x)$ is $C^1$, not a single point, and the tangent vector field is $C^0$;
 \item There is a finite number of periodic orbits, called \emph{singular orbits}, such that the flow is smooth off of the singular orbits;
 \item The flow lines of $\psi^t$ are contained in two possibly singular $2$-dimensional foliations $\Lambda^s$ and $\Lambda^u$ satisfying:
outside of the singular orbits, the foliations are not singular, are transverse to each other and their leaves intersect exactly along the flow lines of $\psi^t$. A leaf containing a singularity is homeomorphic to $P\times [0,1] / f$ where $P$ is a $p$-prong in the plane and $f$ is a homeomorphism from $P\times\{1\}$ to $P\times\{0\}$. \emph{We will always assume that $p\geq 3$};
 \item In a stable leaf, all orbits are forward asymptotic; in an unstable leaf, they are all backward asymptotic.
\end{itemize}
\end{defin}

In the definition of Anosov flow, we asked for $\flot$ to be at least $C^1$ but we will only care about smooth (i.e., $C^{\infty}$) flows. Note that the foliations however might not be very regular.

We further assume that $\flot$ is \emph{transversally oriented}, i.e., there exists an orientation on $M$ given by an orientation on each leaf of $\fs$ together with an orientation on each leaf of $\mathcal{F}^{uu}$. Note that this hypothesis will be essential for the description we give of skewed Anosov flows, for instance, in order to have an orientation on the leaf spaces (to be defined below). However, it can be achieved by taking the lift of the flow to a two-fold cover (four-fold if the manifold is not orientable).

Both Sergio Fenley and Thierry Barbot --- at the same time and independently --- started studying Anosov flow via their transversal geometry, that is via the study of the space of orbits. We will follow their lead and use their works throughout this part. So some of the main objects of study here will be the orbit and the leaf spaces that we define as follow.

Let $\M$ be the universal cover of $M$ and $\pi \colon \M \rightarrow M$ the canonical projection. The flow $\flot$ and all the foliations lift to $\M$ and we denote them respectively by $\hflot$, $\widetilde{\mathcal{F}}^{ss} $, $\hfs$, $\widetilde{\mathcal{F}}^{uu}$ and $\hfu$. Now we can define
\begin{itemize}
 \item The \emph{orbit space} of $\flot$ as $\M$ quotiented out by the relation ``being on the same orbit of $\hflot$''. We denote it by $\orb$.
 \item The \emph{stable} (resp. \emph{unstable}) \emph{leaf space} of $\flot$ as $\M$ quotiented out by the relation ``being on the same leaf of $\hfs$ (resp. $\hfu$)''. We denote them by $\leafs$ and $\leafu$ respectively.
\end{itemize}
Note that the foliations $\hfs$ and $\hfu$ obviously project to two transverse foliations of $\orb$. We will keep the same notations for the projected foliations, hoping that it will not lead to any confusion.\\ 

For (pseudo)-Anosov flows in $3$-manifolds, the orbit space is always homeomorphic to $\R^2$ (see \cite{Bar:CFA} and \cite{Fen:AFM} for the Anosov case and \cite{FenleyMosher} for the pseudo-Anosov case). The leaf spaces $\leafs$ and $\leafu$ however are in general \emph{non-Hausdorff} $1$-manifolds, but still connected and simply-connected. In this work, we are specially interested in one particular case:
\begin{defin}
 An Anosov flow is called $\R$-covered if $\leafs$ and $\leafu$ are homeomorphic to $\R$.
\end{defin}

Remark that to prove that a flow is $\R$-covered, we just need to show that one of the leaf spaces is homeomorphic to $\R$:
\begin{thm}[Barbot \cite{Bar:these}, Fenley \cite{Fen:AFM}]
 If $\leafs$ is Hausdorff, then $\leafu$ is Hausdorff and vice versa.
\end{thm}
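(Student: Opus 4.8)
The plan is to carry out the whole argument inside the orbit space $\orb$, which is homeomorphic to $\R^2$ and carries the two transverse foliations $\hfs$ and $\hfu$ obtained by projecting the weak stable and weak unstable foliations. I will use two standard structural facts: each leaf of $\hfs$, and each leaf of $\hfu$, is a properly embedded line in $\orb\cong\R^2$, and a stable leaf meets an unstable leaf in at most one point, transversally. Since $\leafs$ and $\leafu$ are connected and simply connected $1$-manifolds, each is homeomorphic to $\R$ if and only if it is Hausdorff; so it suffices to prove that $\leafs$ non-Hausdorff implies $\leafu$ non-Hausdorff, the converse being the same statement with the roles of $\hfs$ and $\hfu$ exchanged. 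Assume then that $\leafs$ is not Hausdorff.

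By definition of non-Hausdorffness there are two distinct stable leaves $\sigma_1\neq\sigma_2$ and a sequence of stable leaves $\sigma_n$ with $\sigma_n\to\sigma_1$ and $\sigma_n\to\sigma_2$ in $\leafs$. The basic local tool is the product structure supplied by foliation charts: whenever an unstable leaf $w$ crosses a stable leaf $\sigma$ at a point $x$, every unstable leaf close enough to $w$ crosses every stable leaf close enough to $\sigma$ at a point near $x$, and the intersection point depends continuously on the pair of leaves. Fix points $a_i\in\sigma_i$ and set $u_i:=\hfu(a_i)$. First I would record that $u_1\cap\sigma_2=\emptyset$ and, symmetrically, $u_2\cap\sigma_1=\emptyset$ (so in particular $u_1\neq u_2$): for $n$ large the leaf $u_1$ meets $\sigma_n$ in a single point $c_n$, and since $\sigma_n\to\sigma_1$ we get $c_n\to u_1\cap\sigma_1=a_1$; if $u_1$ also met $\sigma_2$ at a point $b$, then $\sigma_n\to\sigma_2$ would force $c_n\to b$ as well, contradicting uniqueness of limits in $\orb$ since $b\in\sigma_2$ while $a_1\in\sigma_1$ and $\sigma_1\cap\sigma_2=\emptyset$. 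Likewise $u_2$ meets $\sigma_n$ in a point $d_n\to a_2$.

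The core of the proof is then a planar-topology analysis. Let $B$ be the component of $\orb\smallsetminus(\sigma_1\cup\sigma_2)$ lying between the two lines; for $n$ large the leaves $\sigma_n$ lie in $B$, and the sub-arc $\beta_n\subset\sigma_n$ from $c_n$ to $d_n$ is contained in $B$, with endpoints converging to $a_1\in\sigma_1$ and $a_2\in\sigma_2$. These arcs cannot subconverge to a compact arc, since such a limit would be an arc inside a single leaf of $\hfs$ meeting both $\sigma_1$ and $\sigma_2$, i.e.\ would force $\sigma_1=\sigma_2$; hence $\beta_n$ escapes every compact subset of $\orb$. Now, travelling along $\beta_n$ from the endpoint $c_n$ (on $u_1$, near $a_1$) to the endpoint $d_n$ (on $u_2$, near $a_2$), the unstable leaves through points of $\beta_n$ near $c_n$ are close to $u_1$ — hence, by local product structure at $a_1$, meet $\sigma_1$ near $a_1$ — while those near $d_n$ meet $\sigma_2$ near $a_2$; the goal is to produce, for each large $n$, a point $e_n\in\beta_n$ whose unstable leaf $w_n:=\hfu(e_n)$ meets $\sigma_1$ at a point $f_n\to a_1$ and $\sigma_2$ at a point $g_n\to a_2$. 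Continuity of the foliation then yields $w_n\to u_1$ and $w_n\to u_2$ in $\leafu$, so $u_1$ and $u_2$ are non-separated and $\leafu$ is not Hausdorff, as desired.

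The main obstacle is exactly this last extraction step. Turning the intuitive ``interpolation along $\beta_n$'' into a rigorous statement requires the Jordan curve theorem together with a careful analysis of how unstable leaves enter and leave the band $B$ — tracking the open subsets of $\beta_n$ consisting of points whose unstable leaf does, respectively does not, exit $B$ through $\sigma_1$ near $a_1$ or through $\sigma_2$ near $a_2$ — and one must rule out the a priori possibility that the relevant unstable leaves remain trapped inside $B$ without ever approaching $a_1$ or $a_2$; this is where the input that $\beta_n$ exits every compact set, i.e.\ the absence of Reeb-type behaviour for Anosov foliations, is essential. This is precisely the delicate two-dimensional argument carried out in Barbot's thesis, and it is the step I expect to require the most work.
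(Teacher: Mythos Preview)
The paper does not give its own proof of this theorem: it is stated as background with attribution to Barbot's thesis and Fenley's paper, and the text moves on immediately to Verjovsky's result. So there is nothing in the paper to compare your argument against.

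That said, your outline follows the spirit of the original references: pass to the orbit space $\orb\cong\R^2$, take two non-separated stable leaves $\sigma_1,\sigma_2$ approached by a common sequence $\sigma_n$, and use the transverse unstable foliation to manufacture non-separated unstable leaves. Your observation that $u_1=\hfu(a_1)$ cannot meet $\sigma_2$ (and symmetrically) is correct and is indeed the first structural fact one needs. You are also honest that the crux --- extracting from the arcs $\beta_n\subset\sigma_n$ a sequence of unstable leaves $w_n$ converging simultaneously to $u_1$ and to $u_2$ --- is exactly the delicate planar-topology step, and you defer to Barbot for it. One caution: the specific formulation you give (a single $w_n$ meeting $\sigma_1$ near $a_1$ \emph{and} $\sigma_2$ near $a_2$) is stronger than what is strictly needed, and is not quite how the original argument proceeds; the actual proof analyses the set of unstable leaves meeting $\sigma_1$ versus those meeting $\sigma_2$ and locates the non-separation at the frontier between these sets, rather than producing leaves that hit both. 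So your sketch is on the right track but would need reshaping at that point to become a complete proof.
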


Let us also recall the following result of A.\ Verjovsky which is fundamental for the following (and already used in the proof of the above mentioned results):
\begin{prop}[Verjovsky \cite{Ver:codim1}]
 Let $\flot$ be an Anosov flow on a $3$-manifold $M$. Then:
\begin{enumerate}
 \item Periodic orbits of $\flot$ are not null-homotopic;
 \item Leaves of $\hfs$ (resp. $\hfu$) are homeomorphic to $\R^{2}$;
 \item $\M$ is homeomorphic to $\R^3$.
\end{enumerate}
\end{prop}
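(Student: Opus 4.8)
All three assertions should be extracted from one structural fact: the weak stable and weak unstable foliations $\fs$ and $\fu$ are \emph{Reeb-less}. Granting this, Novikov's theorem makes every leaf $\pi_1$-injective in $M$, and a short analysis of the topology of individual leaves yields (1) and (2); statement (3) then follows by combining (1) with the fact (already recalled above) that the orbit space $\orb$ is homeomorphic to $\R^2$. So the first step is to rule out compact leaves of $\fs$. Any two orbits lying in a single leaf of $\fs$ are forward asymptotic; hence if a leaf $L$ were a closed surface, $\flot|_L$ would be a nonsingular flow on a closed surface all of whose orbits are mutually forward asymptotic, and a Poincar\'e--Bendixson analysis forces $L$ to collapse onto a single periodic orbit, absurd for a $2$-manifold (this is Plante's classical observation). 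The same holds for $\fu$. A Reeb component would carry a compact torus boundary leaf, so neither $\fs$ nor $\fu$ has a Reeb component; since the weak foliations of an Anosov flow on a closed $3$-manifold are $C^0$ foliations with smooth leaves, Novikov's theorem applies and gives: every leaf of $\fs$ or $\fu$ is $\pi_1$-injective, and there is no null-homotopic closed transversal.

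\textbf{Leaves, and proofs of (1) and (2).} Next I would pin down the topology of a weak leaf. A leaf $L$ of $\fs$ contains at most one periodic orbit, since two distinct periodic orbits stay a bounded distance apart and so cannot be forward asymptotic. If $L$ contains a periodic orbit $\gamma$, every orbit of $L$ is forward asymptotic to $\gamma$ and $L$ fibres over $\gamma\cong S^1$ with fibre the strong stable line; transverse orientability trivialises this bundle, so $L$ is an open annulus deformation retracting onto $\gamma$. If $L$ contains no periodic orbit it is a plane. In either case $\pi_1(L)$ is trivial or $\Z$, its universal cover is $\R^2$, and by $\pi_1$-injectivity the leaves of $\hfs$ are precisely the universal covers of the leaves of $\fs$, hence homeomorphic to $\R^2$; the same argument for $\fu$ proves (2). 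For (1): a periodic orbit $\gamma$ is, by the above, the core of an annular leaf $L$, so $[\gamma]$ generates $\pi_1(L)\cong\Z$; $\pi_1$-injectivity of $L\hookrightarrow M$ makes $[\gamma]$ of infinite order in $\pi_1(M)$, in particular $[\gamma]\neq 1$, which is (1).

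\textbf{Proof of (3).} By (1) the lifted flow $\hflot$ has no circular orbit in $\M$: a circular orbit would project to a periodic orbit $\gamma$ of $\flot$ via a finite cover, forcing some power of $[\gamma]$ to be trivial, contrary to (1). Since $\hflot$ also has no fixed points, the $\R$-action of $\hflot$ on $\M$ is free; as the lifted orbits of an Anosov flow are properly embedded lines (the same input used to show $\orb\cong\R^2$), the action is proper and $\M\to\orb$ is a principal $\R$-bundle. Because $\R$ is contractible this bundle is trivial, so $\M\cong\orb\times\R\cong\R^2\times\R\cong\R^3$, which is (3).

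\textbf{Expected main obstacle.} The only genuinely substantial point is the first step, namely excluding compact leaves and Reeb components of the weak foliations, together with checking that $\fs$ and $\fu$ are regular enough ($C^0$ with $C^1$ leaves) for Novikov's theorem to apply; everything else is a fairly mechanical unwinding. A secondary technical nuisance is the properness of $\hflot$ on $\M$ used for (3), but this is a standard feature of Anosov flows and is in any case part of the machinery behind the cited identification $\orb\cong\R^2$.
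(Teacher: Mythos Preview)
The paper does not actually prove this proposition: it is stated and attributed to Verjovsky, with the remark that it underlies the results of Barbot and Fenley cited just above (in particular the identification $\orb\cong\R^2$). So there is no ``paper's own proof'' to compare against; what follows is an assessment of your outline on its own terms.

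Your overall architecture is the standard one and is correct: rule out compact leaves of $\fs,\fu$, invoke Novikov to get $\pi_1$-injectivity, classify weak leaves as planes or open annuli, and read off (1) and (2). Two points deserve tightening. First, your sentence ``a Poincar\'e--Bendixson analysis forces $L$ to collapse onto a single periodic orbit, absurd for a $2$-manifold'' is not an argument. The clean way to exclude a compact stable leaf $L$ is: $L$ would be a torus (it carries a nonsingular flow), and on $L$ the time-$t$ map $D\flot$ fixes the flow direction and contracts $E^{ss}$, hence has Jacobian $<1$ for large $t$; but a diffeomorphism of a closed surface preserves total area. Alternatively, if $L$ contained a periodic orbit $\gamma$ then $L=W^s(\gamma)$ would be an open annulus, while if not one reaches the same area contradiction. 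Either way the step is short but should be stated correctly.

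Second, and more important in the context of this paper, your proof of (3) via $\orb\cong\R^2$ is circular here: the paper explicitly says that Verjovsky's result is used in the Barbot--Fenley proof that $\orb\cong\R^2$. The self-contained route is to use (2) together with Palmeira's theorem (a simply connected open $n$-manifold carrying a codimension-one foliation by planes is diffeomorphic to $\R^n$), which immediately gives $\M\cong\R^3$ from the foliation $\hfs$ by planes. Your principal $\R$-bundle argument is fine in isolation, but in this paper's logical order you should replace the appeal to $\orb\cong\R^2$ by Palmeira.
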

Verjovsky's result is in fact more general; the above stays true (with obvious modifications) for \emph{codimension one} Anosov flows, i.e., Anosov flows such that (say) the strong unstable foliation $\mathcal{F}^{uu}$ is one-dimensional. In the same article, Verjovsky also proved the following result:
\begin{prop}
 If $\flot$ is a codimension $1$ Anosov flow, then any leaf of $\widetilde{\mathcal{F}}^{uu}$ intersects at most once a leaf of $\hfs$.
\end{prop}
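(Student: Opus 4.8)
The statement to prove is: if $\flot$ is a codimension one Anosov flow on a closed $3$-manifold, then any leaf of $\widetilde{\mathcal F}^{uu}$ intersects a leaf of $\hfs$ at most once. The plan is to argue by contradiction, exploiting the contracting dynamics along the strong unstable foliation under the reverse flow together with the fact (already quoted above via Verjovsky's result) that $\M$ is homeomorphic to $\R^3$ and leaves of $\hfs$ are planes. First I would suppose that some strong unstable leaf $\widetilde{\mathcal F}^{uu}(x)$ meets a weak stable leaf $\hfs(y)$ in two distinct points $p$ and $q$. Since $\widetilde{\mathcal F}^{uu}$ is one-dimensional, the leaf through $x$ is a line, so $p$ and $q$ bound a compact arc $J \subset \widetilde{\mathcal F}^{uu}(x)$; and since $p,q$ lie on the same weak stable leaf, there is a path $J'$ in $\hfs(y)$ from $p$ to $q$. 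The concatenation $J \cup J'$ is a loop in $\M \cong \R^3$, hence null-homotopic, which will be the contradiction once we extract incompatible geometric information.

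The key step is the dynamical one. Applying the reverse flow $\hflot[-t]$, the length of the image of the unstable arc $J$ shrinks exponentially (by the definition of $E^{uu}$ and uniform hyperbolicity, transported to $\M$ where the same estimates hold since they are pulled back from the compact quotient). On the other hand, the endpoints $\hflot[-t](p)$ and $\hflot[-t](q)$ stay on a single weak stable leaf $\hfs(\hflot[-t](y))$, and within a weak stable leaf the backward flow does \emph{not} contract distances along the strong stable direction — in fact along $E^{ss}$ it does, but the relevant point is that $p$ and $q$, being joined by a short unstable arc that becomes shorter and shorter, eventually lie in a single foliated chart where $\widetilde{\mathcal F}^{uu}$ and $\hfs$ form a product structure. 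In such a chart a plaque of $\widetilde{\mathcal F}^{uu}$ meets a plaque of $\hfs$ in exactly one point, so $\hflot[-t](p) = \hflot[-t](q)$ for large $t$, whence $p = q$, contradiction. So the heart of the matter is: (i) show the unstable arc $J_t := \hflot[-t](J)$ has length $\to 0$; (ii) conclude $J_t$ is eventually contained in a product-foliated flow box for the pair $(\widetilde{\mathcal F}^{uu}, \hfs)$; (iii) use local product structure in the flow box to force the two endpoints to coincide.

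The main obstacle I anticipate is making step (ii) rigorous: a priori the endpoints of $J_t$ could ``escape to infinity'' in $\M$ even while $J_t$ shrinks, so one cannot immediately invoke compactness to land inside a single chart. The fix is to work downstairs: project to $M$, where $\pi(J_t)$ is a loop-free arc (or could close up, but then one uses that periodic orbits are not null-homotopic) of length tending to $0$, hence eventually inside an embedded foliation box of $M$ adapted to both $\mathcal F^{uu}$ and $\fs$; since such boxes exist uniformly by compactness of $M$ and continuity of the foliations, $\pi(J_t)$ lies in a box where the two foliations form a product. Lifting back, $J_t$ lies in a single lifted box, and within it a leaf of $\widetilde{\mathcal F}^{uu}$ meets a leaf of $\hfs$ in a connected set that is a single point (the box is a genuine product $D^1_{ss}\times D^1_{uu}\times D^1_{\text{flow}}$, with $\hfs$ the $\{ *\}\times D^1_{uu}$-free slices and $\widetilde{\mathcal F}^{uu}$ the $D^1_{ss}\times\{*\}$-free slices intersecting in points). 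This yields $\hflot[-t](p)=\hflot[-t](q)$, hence $p=q$, completing the argument; the only care needed is to verify the reverse-flow contraction estimate transfers to $\M$, which is immediate since the metric and the splitting are $\pi_1(M)$-invariant.
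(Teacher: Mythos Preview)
The paper does not prove this proposition; it is simply quoted as a result of Verjovsky \cite{Ver:codim1}. So there is no proof in the paper to compare against, and I comment only on your plan.

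The backward-flow idea is natural, but step (iii) has a genuine gap. The product structure in a flow box tells you how \emph{plaques} intersect, not how global \emph{leaves} intersect: a single leaf of $\hfs$ may a priori meet even a small lifted box $\tilde U\subset\M$ in several plaques, and then the strong unstable plaque in $\tilde U$ would meet that leaf in several points. So from ``$J_t$ sits in one product box'' you cannot conclude $\hflot[-t](p)=\hflot[-t](q)$; all you get is that the two endpoints lie on distinct plaques of the same weak stable leaf, which is precisely the configuration you are trying to exclude. Your parenthetical description of the box as $D^1_{ss}\times D^1_{uu}\times D^1_{\text{flow}}$ is a description of plaques, and the sentence silently promotes ``plaque'' to ``leaf''.

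The fix is global and, once seen, makes the backward flow unnecessary. You already recorded the two relevant facts from Verjovsky's earlier proposition: each leaf $L$ of $\hfs$ is a properly embedded plane and $\M\cong\R^3$. Hence $L$ separates $\M$ into two components $\M^+_L,\M^-_L$, labelled by the transverse orientation. The original arc $J\subset\widetilde{\mathcal F}^{uu}(x)$ is everywhere transverse to $\hfs$ with a fixed co-orientation, so leaving $p\in L$ it immediately enters $\M^+_L$; if it touched $L$ again at some first parameter $s_1$, it would be arriving from $\M^+_L$ while its tangent at $J(s_1)$, being positively transverse to $\hfs(J(s_1))=L$, points \emph{into} $\M^+_L$ --- a contradiction. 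Thus $J$ never returns to $L$, so $q\notin L$ unless $q=p$. Equivalently, $z\mapsto\hfs(z)$ is a locally injective map from the interval $J$ into the simply connected $1$-manifold $\leafs$, and such maps are injective. This is essentially the null-homotopic loop $J\cup J'$ you wrote down in your first paragraph put to actual use: it has algebraic intersection $\pm 1$ with the separating plane $L$, which is impossible for a contractible loop.
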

Note that this result does \emph{not} tell you that there always is an intersection. Indeed, we say that a $\R$-covered flow is \emph{skewed} if, for every leaf $L^{u} \in \hfu$, there exists a leaf $L^{s}$ of $\hfs$ such that $L^u \cap L^s = \emptyset$ and vice-versa. We have the following:
\begin{thm}[Barbot \cite{Bar:CFA}]
 If $\flot$ is a $\R$-covered flow on a $3$-manifold $M$, then either $\flot$ is skewed, or it is a topologically conjugated to a suspension of an Anosov diffeomorphism.
\end{thm}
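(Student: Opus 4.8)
The statement to prove is the dichotomy: an $\R$-covered Anosov flow on a closed $3$-manifold $M$ is either skewed or topologically conjugate to the suspension of an Anosov diffeomorphism of the torus. The plan is to analyze the ``unskewed'' possibility and show it forces the suspension structure. First I would set up the transverse picture on the orbit space $\orb \cong \R^2$: the two foliations $\hfs$ and $\hfu$ project to a pair of transverse, non-singular, trivial ($\R$-covered) foliations of the plane, so that the leaf spaces $\leafs$ and $\leafu$ are both homeomorphic to $\R$, with an induced $\pi_1(M)$-action by orientation-preserving homeomorphisms (here the transverse orientability hypothesis is used). Each point of $\orb$ is the unique intersection point of exactly one stable leaf and one unstable leaf (uniqueness of intersection is Verjovsky's proposition, recalled in the excerpt), so one gets a continuous, injective, $\pi_1(M)$-equivariant map
\begin{equation*}
 \Theta \colon \orb \longrightarrow \leafs \times \leafu \cong \R^2, \qquad x \longmapsto \left(\hfs(x), \hfu(x)\right).
\end{equation*}
The flow $\flot$ being skewed is exactly the statement that $\Theta$ is \emph{not} onto; so I would assume $\Theta$ is onto and deduce the suspension conclusion.

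Assume $\Theta$ is surjective. Then $\Theta$ is a continuous equivariant bijection between two planes, hence (invariance of domain) a homeomorphism, and it conjugates the $\pi_1(M)$-action on $\orb$ to the product action on $\leafs \times \leafu$. In particular the $\pi_1(M)$-action on $\orb$ preserves the product structure, i.e.\ it acts by $\gamma \cdot (a,b) = (\rho_s(\gamma) a, \rho_u(\gamma) b)$ for two representations $\rho_s, \rho_u \colon \pi_1(M) \to \mathrm{Homeo}^+(\R)$. The next step is to promote these to affine, in fact translation-like, actions. The standard argument: because every leaf of $\widetilde{\mathcal F}^{ss}$ meets every leaf of $\hfu$ (that is what surjectivity of $\Theta$ says), the holonomy along the flow identifies the leaf space $\leafu$ with a single strong stable leaf, and strong stable contraction forces the $\pi_1(M)$-action on $\leafu$ (and symmetrically $\leafs$) to have no fixed points and to be conjugate to an action by translations; this is where I would invoke the contraction estimates in Definition~\ref{def:Anosov} together with the fact, recalled above, that periodic orbits are not null-homotopic (so nontrivial deck transformations have the right dynamics on the leaf spaces). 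Concretely, one shows each $\rho_s(\gamma)$ and $\rho_u(\gamma)$ is conjugate to a translation $t \mapsto t + c_s(\gamma)$, $t\mapsto t + c_u(\gamma)$, and after conjugating the plane one may take the action on $\leafs \times \leafu \cong \R^2$ to be by genuine translations.

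Once $\pi_1(M)$ acts on $\orb \cong \R^2$ by translations (freely and properly, since the $M$ is closed and $\orb$ is the orbit space), the image is a lattice $\Gamma \cong \Z^2$, so $\pi_1(M)$ has a rank-$2$ abelian quotient acting as $\Z^2$ on $\R^2$; reconstructing $M$ from $\orb$ and the flow direction then exhibits $M$ as a $\mathbb T^2$-bundle over $S^1$ whose monodromy is a linear map $A \in \mathrm{GL}(2,\Z)$, and the flow $\flot$ is topologically the suspension flow of $A$. The Anosov condition on $\flot$ translates into $A$ being hyperbolic (eigenvalues off the unit circle), so $A$ is an Anosov diffeomorphism of $\mathbb T^2$, and $\flot$ is topologically conjugate to its suspension; by the classification of $\mathbb T^2$-bundles, $M$ is the corresponding mapping torus. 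I expect the main obstacle to be the middle step: rigorously upgrading the a priori merely topological product action on $\orb$ to a translation action, i.e.\ ruling out fixed points and exotic ($\mathrm{Homeo}^+(\R)$-wild) behaviour of $\rho_s,\rho_u$. This is precisely the place where one must feed in the hyperbolic dynamics (uniform contraction/expansion, non-triviality of periodic orbits, and the fact that in the unskewed case the stable/unstable holonomy identifies leaf spaces with single leaves), rather than pure foliation theory; all the surrounding steps (invariance of domain, reconstruction of the suspension from a free cocompact $\Z^2$-action on the orbit space) are comparatively routine.
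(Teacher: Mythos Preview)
The paper does not prove this theorem; it is stated as a result of Barbot with a citation to \cite{Bar:CFA} and no argument is given. So there is nothing in the paper to compare your proposal against.

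That said, your outline has the right architecture---study $\Theta:\orb\to\leafs\times\leafu$ and analyze the product action when $\Theta$ is onto---but the middle step contains a genuine error. You assert that in the unskewed case the actions $\rho_s,\rho_u$ on $\leafs,\leafu$ are fixed-point free and conjugate to actions by translations. This is already false in the model you are trying to reach: for the suspension of a hyperbolic $A\in\mathrm{SL}(2,\Z)$ one has $\pi_1(M)\cong\Z^2\rtimes_A\Z$; the $\Z^2$ factor acts by translations on each leaf space, but the generator of the $\Z$ factor (the monodromy) acts as $t\mapsto\lambda t$ on $\leafs$ and $t\mapsto\lambda^{-1}t$ on $\leafu$, hence fixes $0$. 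So the action is \emph{affine}, not by translations, and your sentence ``strong stable contraction forces the $\pi_1(M)$-action \ldots to have no fixed points'' cannot be right. What is actually needed (and what Barbot uses) is a Solodov-type statement: the action of $\pi_1(M)$ on each leaf space has the property that every nontrivial element has at most one fixed point (this is where hyperbolicity enters, and is consistent with the proposition in the paper that stabilizers are trivial or cyclic and that fixed leaves are attractors/repellers), and such an action on $\R$ without a global fixed point is topologically conjugate to an affine action. From the affine picture one then extracts the normal $\Z^2$ of translations and the hyperbolic monodromy.

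There is also a smaller gap at the start: the negation of ``skewed'' as defined just above the theorem gives you only \emph{one} unstable leaf meeting every stable leaf (or symmetrically), not that $\Theta$ is surjective; you still need the open-and-closed argument in $\leafu\cong\R$ to pass from one such leaf to all.
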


An $\R$-covered Anosov flow is always transitive (see \cite[Theorem 2.5]{Bar:CFA}), i.e., admits a dense orbit. If it is skewed, it is even more:
\begin{prop}[Barbot]
 A skewed $\R$-covered Anosov flow is topologically mixing.
\end{prop}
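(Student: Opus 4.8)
The plan is to establish topological mixing directly from the skewed $\R$-covered structure together with the transitivity of the flow (which holds for every $\R$-covered Anosov flow). Recall that a flow $\flot$ is topologically mixing if for any two nonempty open sets $U,V \subset M$ there is $T>0$ with $\flot(U)\cap V \neq \emptyset$ for all $t\geq T$. Since $\flot$ is transitive, it suffices to upgrade transitivity to mixing, and the standard mechanism is to show that the flow is \emph{not} conjugate to a (nontrivial) suspension, which is exactly what ``skewed'' rules out: a suspension of an Anosov diffeomorphism has a global cross-section, hence a nontrivial map to $S^1$ obstructing mixing, whereas a skewed flow has no such cross-section.

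First I would lift everything to the universal cover and work in the orbit space $\orb \cong \R^2$, on which the two foliations $\hfs$ and $\hfu$ project to transverse foliations whose leaf spaces are both $\R$. The skew condition gives a canonical ``one-step'' map: for each stable leaf $L^s$ there is a well-defined nonempty interval of unstable leaves it misses, and following the boundary of the region swept by $L^s$ produces a fixed-point-free increasing homeomorphism $\eta$ of $\leafs \cong \R$ (Barbot--Fenley's structure theory of skewed flows; this is implicit in the results quoted just above and I would cite \cite{Bar:CFA}). The key dynamical input is that $\eta$ commutes with the $\pi_1(M)$-action on $\leafs$, so $\pi_1(M)$ acts on $\leafs$ together with this extra commuting translation-like symmetry. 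From this one extracts that the stable and unstable foliations in $M$ are \emph{quasigeodesic} / uniform, and in particular that no finite cover of $M$ fibers over $S^1$ transversally to the flow with the flow as a suspension.

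The core step is the mixing argument itself. Given nonempty open $U,V\subset M$, lift $V$ to an open set $\widetilde V \subset \M$. By transitivity and the Anosov property, the strong unstable leaf through (almost) any point of $U$ has image under $\flot$ that, in forward time, $C^0$-approximates every unstable leaf; equivalently, in $\orb$ the forward $\hflot$-saturated image of (a plaque of) $\widetilde U$ sweeps across all stable leaves in a controlled, eventually surjective way precisely because the skew map $\eta$ has no fixed point — there is no invariant ``barrier'' stable leaf that the orbit of $U$ could fail to cross. Quantitatively: pick a small flow box for $U$; its strong-unstable cross-section, pushed forward by $\flot$, grows and its plaques become dense in every unstable leaf (local product structure + expansion), and the absence of a cross-section means this density is achieved uniformly in $t$ for all large $t$, not just for a sequence $t_n$. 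Intersecting with $V$ then gives $\flot(U)\cap V\neq\emptyset$ for all $t\geq T$.

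I expect the main obstacle to be making the ``uniformly in $t$'' step rigorous rather than just for a subsequence: transitivity only gives intersection for some arbitrarily large times, and promoting this to all large times is exactly where skewness (equivalently, the non-suspension property) must be used decisively. The cleanest route is probably the contrapositive via cross-sections: if $\flot$ were not mixing, a theorem of Plante (for codimension-one Anosov flows, using that $E^{ss}\oplus E^{uu}$ is a codimension-one distribution and invoking Verjovsky's results quoted above) forces the existence of a global cross-section, hence $\flot$ is a suspension of an Anosov diffeomorphism, contradicting that $\flot$ is skewed. So the real content is: \emph{transitive, non-suspension, codimension-one Anosov $\Rightarrow$ mixing}, which applies here since a skewed $\R$-covered flow is transitive and, by definition, not a suspension. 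I would cite \cite{Bar:CFA} for the transitivity and the suspension dichotomy and complete the argument through the cross-section obstruction, keeping the orbit-space picture as the intuition for why no cross-section can exist when $\eta$ is fixed-point free.
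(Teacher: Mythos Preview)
The paper does not actually give a proof of this proposition: immediately after stating it, the paper simply says ``The proof is given in Remark 2.2 of \cite{Bar:PAG}'' and moves on. So there is no in-paper argument to compare against, only Barbot's reference.

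That said, your proposal contains two rather different arguments. The first, the ``direct'' orbit-space argument occupying most of the text, does not work as written, and you essentially acknowledge this yourself: the step ``the absence of a cross-section means this density is achieved uniformly in $t$ for all large $t$, not just for a sequence $t_n$'' is exactly the whole difficulty, and nothing in the preceding discussion of $\eta$ or the sweeping of plaques supplies it. Transitivity plus expansion gives density for \emph{some} large times; promoting this to \emph{all} large times is the entire content of mixing, and appealing to ``no invariant barrier leaf'' is not a proof.

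Your fallback argument in the last paragraph is the correct one and is essentially the standard route (and presumably Barbot's): for a transitive Anosov flow, failure of topological mixing is equivalent to joint integrability of $E^{ss}\oplus E^{uu}$ (Plante), and in codimension one this integrability forces the flow to be a suspension of an Anosov diffeomorphism (Plante/Verjovsky). Since a skewed $\R$-covered flow is transitive (\cite{Bar:CFA}) and, by Barbot's dichotomy quoted just above the proposition, is \emph{not} a suspension, it must be mixing. If you want to write this up, drop the orbit-space heuristics entirely and state this contrapositive chain cleanly; the only citations you need are Plante for the mixing/integrability equivalence and the codimension-one suspension criterion, plus \cite{Bar:CFA} for transitivity and the dichotomy.
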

The proof is given in Remark 2.2 of \cite{Bar:PAG}. Topologically mixing means that given two open sets, there exists a time after which the image by the flow of one set always intersects the other (see \cite{KatokHassel}).

In all the rest, we will be considering skewed $\R$-covered Anosov flows. Note that a geodesic flow of a negatively curved surface is a skewed $\R$-covered Anosov flow. More generally, any contact Anosov flow is skewed $\R$-covered (\cite{Bar:PAG})

\subsubsection{Orbit space and fundamental group}

It is easy to see that the action of the fundamental group of $M$ on $\M$ projects to the orbit and leaf spaces. We can even say a bit more about this action:
\begin{prop}
 Let $\flot$ be an Anosov flow on $M$.
\begin{enumerate}
 \item The stabilizer by $\pi_1(M)$ of a point in $\orb$, $\leafs$ or $\leafu$ is either trivial or cyclic.
 \item If $\gamma\in \pi_1(M) $ fixes a point $O \in \orb$, then $O$ is a hyperbolic fixed point of $\gamma$.
 \item If $\gamma\in \pi_1(M) $ fixes a point $l \in \leafs$ (or $\leafu$), then $l$ is either an attractor or a repeller for the action of $\gamma$.
\end{enumerate}
\end{prop}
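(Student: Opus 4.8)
The plan is to prove the three assertions essentially from the hyperbolicity of the Anosov flow, working in the orbit space and using the uniform contraction/expansion estimates from Definition \ref{def:Anosov}.

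For part (2), suppose $\gamma \in \pi_1(M)$ fixes a point $O \in \orb$. Then $\gamma$ preserves the corresponding orbit $\tilde{o}$ of $\hflot$ in $\M$, hence maps $\tilde{o}$ to itself. Since periodic orbits are not null-homotopic and $\gamma$ acts on $\tilde{o} \cong \R$ by a translation-type map commuting with the flow, $\gamma$ shifts along the orbit by some time $\tau$, and $\tau \ne 0$ because otherwise $\gamma$ would be trivial (it would fix a whole orbit pointwise, forcing $\pi(\tilde o)$ to be a null-homotopic loop or $\gamma = \mathrm{id}$). Replacing $\gamma$ by $\gamma^{-1}$ if necessary, assume $\tau > 0$. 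Now $\gamma$ preserves the two transverse foliations $\hfs$, $\hfu$ through $\tilde o$, hence acts on the local two-dimensional picture at $O$ preserving the two transverse one-dimensional stable/unstable directions. The contraction estimate $\lVert D\flot(v)\rVert \leq b e^{-at}\lVert v\rVert$ on $E^{ss}$ together with $\tau>0$ shows $\gamma$ contracts in the stable direction and, via the estimate on $E^{uu}$, expands in the unstable direction; so $O$ is a hyperbolic fixed point with one attracting and one repelling transverse direction. The first step I would carry out is to make precise the induced action of $\gamma$ on a neighbourhood of $O$ in $\orb$ as a product of the actions on $\leafs$ and $\leafu$ near the images of $O$, using that $\hfs$ and $\hfu$ are transverse foliations of $\orb \cong \R^2$.

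For part (3), suppose $\gamma$ fixes a leaf $l \in \leafs$, i.e., $\gamma(\hfs(\tilde x)) = \hfs(\tilde x)$ for some $\tilde x$. Then $\gamma$ acts as a homeomorphism of the leaf $L^s := \hfs(\tilde x) \cong \R^2$ preserving the flow and the strong-stable foliation inside it; quotienting $L^s$ by the strong-stable foliation gives a line (the orbit space of the restricted flow on the leaf, homeomorphic to $\R$), and $\gamma$ acts on this line by an orientation-preserving homeomorphism (orientation is preserved by the transverse-orientability hypothesis). If $\gamma$ had a fixed point on this line it would fix an orbit in $L^s$, reducing to the situation of part (2) where we already saw the fixed point is hyperbolic — but then $\gamma$ is not identity and has exactly the behaviour described; if $\gamma$ has no fixed point on this line it acts as a fixed-point-free orientation-preserving homeomorphism of $\R$, hence every point is pushed monotonically in one direction, so $l$ is an attractor or repeller for the $\gamma$-action on $\leafs$ depending on the direction. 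In either case one concludes $l$ is an attractor or repeller. I would organise this by first reducing to the action on the line $L^s/\widetilde{\mathcal F}^{ss}$, then case-splitting on whether that action has a fixed point, invoking part (2) in the fixed-point case.

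For part (1), the stabilizer of $O \in \orb$ (resp. $l \in \leafs$, $\leafu$) is a subgroup of $\pi_1(M)$ every nontrivial element of which, by parts (2) and (3), acts on the relevant orbit (resp. leaf) of $\hflot$ by a nontrivial translation along the flow direction; so the stabilizer embeds into $\R$ via the translation amount, hence into the additive group of reals. Because $\pi_1(M)$ acts properly discontinuously on $\M$ and periodic orbits are closed, the image is a discrete subgroup of $\R$, hence trivial or infinite cyclic. The key point to verify carefully is that the "translation amount" homomorphism is injective on the stabilizer: an element acting trivially on a whole orbit of $\hflot$ must be trivial, which follows from the free, properly discontinuous action of $\pi_1(M)$ on $\M \cong \R^3$. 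The main obstacle in the whole argument is a bookkeeping one: making the passage from the flow-direction dynamics on $\M$ to the transverse dynamics on $\orb$, $\leafs$, $\leafu$ fully rigorous — in particular that a hyperbolic fixed point downstairs corresponds exactly to a periodic orbit along which $\gamma$ translates, and that the sign of the translation controls attractor-versus-repeller — rather than any deep new input; all the genuine hyperbolicity is already packaged in Definition \ref{def:Anosov} and Verjovsky's proposition.
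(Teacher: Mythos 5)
Your parts (1) (for orbit stabilizers) and (2) are essentially sound; for (2) one should add that the germ of $\gamma$ at $O$ in $\orb$, read in a local transversal to $\hflot$, is conjugate to the Poincar\'e return map of the periodic orbit $\pi(\tilde o)$ after time $\tau$, so that the estimates of Definition \ref{def:Anosov} give hyperbolicity of that return map. The gap is in part (3), in your ``no fixed point'' case. The line you put $\gamma$-dynamics on is $L^s/\hflot$, which is canonically identified with the curve $\hfs(O)$ lying \emph{inside} $\orb$; monotone pushing there only controls the \emph{intra-leaf} dynamics, i.e., how $\gamma$ permutes the orbits contained in $L^s$. The assertion that $l$ is an attractor or repeller in $\leafs$ concerns instead how $\gamma$ moves nearby stable \emph{leaves}, a one-dimensional quotient transverse to $\hfs$, and when $\gamma$ fixes no orbit in $L^s$ there is no $\gamma$-invariant transversal to $\hfs$ on which to read that action. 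Nothing in your argument bridges the two quotients, and the inference simply does not follow.

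The way to close the gap --- and the route taken in \cite{Bar:these} --- is to rule your second case out: a nontrivial $\gamma$ fixing $L^s$ always fixes an orbit in $L^s$. Indeed, the stabilizer of $L^s$ in $\pi_1(M)$ is identified with $\pi_1(\pi(L^s))$, so if it contains some $\gamma\neq 1$, the leaf $\pi(L^s)$ is not a plane. Leaves of $\fs$ are planes, cylinders or M\"obius bands, and the non-plane leaves of a weak stable Anosov foliation contain a unique periodic orbit $\alpha$ (a standard fact). Since $L^s$ is the universal cover of $\pi(L^s)$, it contains a unique lift $\widetilde{\alpha}$ of $\alpha$; as $\gamma(\widetilde{\alpha})$ is again a lift of $\alpha$ inside $L^s$, uniqueness gives $\gamma(\widetilde{\alpha}) = \widetilde{\alpha}$. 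Part (2) then applies with $O=\widetilde{\alpha}$, and the attractor/repeller behaviour of $l$ is read off from the expanding (or, replacing $\gamma$ by $\gamma^{-1}$, contracting) action of $\gamma$ on the transversal $\hfu(O)$ at $O$ in $\orb$. Note finally that your proof of part (1) in the leaf-stabilizer case quietly relies on the same point: the ``translation amount'' homomorphism you invoke is only well-defined once all elements of the stabilizer of $l$ are known to translate along a common orbit, which is precisely $\widetilde{\alpha}$.
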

The proof can be found in \cite{Bar:these} and holds once again for codimension $1$ Anosov flows.\\

One fundamental remark of Fenley in \cite{Fen:AFM} is the following:
\begin{prop}[Fenley] \label{prop:eta_s_eta_u}
 Let $\flot$ be a skewed, $\R$-covered Anosov flow in a $3$-manifold $M$. Then, there exist two functions $\eta^s \colon \leafs \rightarrow \leafu$ and $\eta^u \colon \leafu \rightarrow \leafs$ that are monotonous, $\pi_1(M)$-equivariant and $C^{\alpha}$. Furthermore, $\eta^u \circ \eta^s$ and $\eta^s \circ \eta^u$ are strictly increasing and we can define $\eta \colon \orb  \rightarrow \orb$ by
\begin{equation*}
\eta(o):= \eta^u \left( \hfu(o)\right) \cap \eta^s\left(\hfs(o) \right) .
\end{equation*}
\end{prop}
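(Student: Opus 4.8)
The plan is to work entirely in the orbit space $\orb \cong \R^2$ and to realise the pair of foliations $(\hfs,\hfu)$ as a ``skewed strip''. Using transverse orientability (after passing to the two- or four-fold cover mentioned above) fix orientations so that $\leafs \cong \R$ and $\leafu \cong \R$ as ordered lines. The map $\iota \colon \orb \to \leafs \times \leafu$, $o \mapsto (\hfs(o),\hfu(o))$, is continuous, and it is injective: if $\hfs(o) = \hfs(o')$ and $\hfu(o) = \hfu(o')$ then $o'$ lies in $\hfs(o) \cap \hfu(o)$, which is the $\hflot$-orbit of $o$ (a weak stable and a weak unstable leaf meet along at most one orbit), so $o = o'$. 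By invariance of domain $\iota$ is then an open embedding onto an open region $\Omega \subset \R^2$, and the whole statement reduces to understanding $\Omega$.

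Next I would describe the ``vertical slices''. Fix $L^s \in \leafs$; the trace of $\hfu$ on the properly embedded line $L^s \subset \orb$ is, via $o \mapsto \hfu(o)$, a continuous injection of $L^s$ into $\leafu \cong \R$ (injective because an unstable leaf meets $L^s$ at most once), hence a homeomorphism onto an open interval
\[
 I(L^s) \;=\; \{\, u \in \leafu \mid (L^s,u) \in \Omega \,\}.
\]
By the skewed hypothesis there is an unstable leaf disjoint from $L^s$, so $I(L^s) \neq \leafu$ and $I(L^s)$ is a \emph{proper} open interval. The key structural claim --- this is the substantive point, and it is exactly the content of the $\R$-covered structure theorems of Barbot \cite{Bar:CFA} and Fenley \cite{Fen:AFM} --- is that $I(L^s)$ is bounded on \emph{both} sides, say $I(L^s) = \bigl(a(L^s),\,b(L^s)\bigr)$, and that the endpoint functions $a,b \colon \leafs \to \leafu$ are strictly increasing, and in fact (after checking properness and continuity) increasing homeomorphisms, with $a < b$. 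The argument is a Jordan-curve/separation argument in $\orb \cong \R^2$: two disjoint stable leaves are properly embedded lines separating $\orb$, and the unstable leaves reaching from one complementary side to the other are constrained, which forces monotonicity of the endpoints; boundedness on both sides is where ``skewed'' (as opposed to ``suspension'') is used. Symmetrically, for fixed $L^u \in \leafu$ the slice $\{\,s \mid (s,L^u)\in\Omega\,\}$ equals $\bigl(b^{-1}(L^u),\,a^{-1}(L^u)\bigr)$.

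Then I would set $\eta^s(L^s) := b(L^s)$ and $\eta^u(L^u) := a^{-1}(L^u)$, the ``upper'' endpoints, the choice being coherent thanks to the fixed orientations. Monotonicity is immediate from the previous step, and the composites $\eta^u \circ \eta^s = a^{-1}\circ b$ and $\eta^s \circ \eta^u = b \circ a^{-1}$ are strictly increasing as composites of increasing maps. For $\pi_1(M)$-equivariance: each $\gamma \in \pi_1(M)$ acts on $\orb$ by a homeomorphism preserving $\hfs$, $\hfu$ and their co-orientations, hence preserving $\Omega$ together with its ordered boundary; therefore $\gamma$ commutes with ``take the upper endpoint of a slice'', i.e.\ $\eta^s(\gamma\cdot L^s) = \gamma\cdot\eta^s(L^s)$ and likewise for $\eta^u$. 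Hölder regularity is the one place where the dynamics, not just the topology, enters: the weak stable and unstable foliations of an Anosov flow have Hölder-continuous transverse holonomy, and $a,b$ can be estimated through these holonomies, so that $\eta^s,\eta^u$ are $C^\alpha$ for some $\alpha>0$ depending on the hyperbolicity constants; alternatively one invokes the known $C^\alpha$ regularity of the weak foliations.

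Finally, for $\eta$ on $\orb$: given $o$, write $(s,u) = \iota(o)$, so $a(s) < u < b(s)$. Then $\eta^s(\hfs(o)) = b(s)$ is an unstable leaf and $\eta^u(\hfu(o)) = a^{-1}(u)$ is a stable leaf, and the inequalities $u < b(s)$ and $a(s) < u$ say precisely that $a\bigl(a^{-1}(u)\bigr) = u < b(s) < b\bigl(a^{-1}(u)\bigr)$, i.e.\ $\bigl(a^{-1}(u),\,b(s)\bigr) \in \Omega$; hence this stable leaf and this unstable leaf do intersect, in a single orbit, which is $\eta(o)$. In $\iota$-coordinates $\eta(s,u) = \bigl(a^{-1}(u),\,b(s)\bigr)$, manifestly a homeomorphism of $\orb$ commuting with $\pi_1(M)$. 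The main obstacle is the step above it --- the ``skewed bi-infinite strip'' description of $\Omega$, that is, both-sided boundedness of the slices and monotonicity of the endpoint functions --- together with the Hölder estimate, both of which I would borrow from the structure theory of Barbot and Fenley rather than reprove.
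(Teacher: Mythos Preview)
Your approach is essentially the same as the paper's, only spelled out in much greater detail: the paper simply defines $I = \{L^u \in \leafu \mid L^u \cap L^s \neq \emptyset\}$, asserts it is an open interval with two boundary points, takes $\eta^s(L^s)$ to be the larger one (using the transverse orientation), says monotonicity and equivariance are ``trivial to check'', and defers H\"older regularity to \cite{Bar:PAG}. Your embedding $\iota\colon \orb \to \leafs\times\leafu$ and description of $\Omega$ as a skewed strip are exactly the picture the paper draws in Figure~\ref{fig:orbit_space}, and you are more explicit than the paper about the fact that two-sided boundedness of the slices is the substantive input from the Barbot--Fenley structure theory; you also carry out the verification that $\eta(o)$ is well defined, which the paper omits.
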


\begin{proof}
 Let $L^s \in \leafs$. Define $I : = \lbrace L^u \in \leafu \mid L^u \cap L^s \neq \emptyset\rbrace $. The set $I$ is an open, connected subset in $\leafs \simeq \R$. Hence $\partial I$ consists of $2$ elements and, as $\flot$ is transversally oriented, $\leafs$ as a natural orientation. So we can set $\eta^s(L^s)$ to be the largest element of $\partial I$. The function $\eta^u$ can be defined in exactly the same fashion.\\
 Monotonicity is trivial to check using the definition, as is the equivariance under the fundamental group. H\"older continuity is done in \cite{Bar:PAG}.
\end{proof}

Using this result, we can get a better picture of the space of orbits: Let 
$$\Gamma(\eta^s) := \lbrace \left( \lambda^s , \eta^s(\lambda^s) \right) , \; \lambda^s \in \leafs \rbrace \subset \leafs \times \leafu$$
 and 
$${\Gamma(\eta^u) := \lbrace \left( \eta^u(\lambda^u) , \lambda^u \right) , \; \lambda^u \in \leafu \rbrace} \subset \leafs \times \leafu$$
 be the graphs of $\eta^s$ and $\eta^u$ respectively. Then $\orb$ is the subset of $\leafs \times \leafu$ in between $\Gamma(\eta^s)$ and $\Gamma(\eta^u)$, and the foliations $\hfs$ and $\hfu$ in $\orb$ are just given by vertical and horizontal lines (see Figure \ref{fig:orbit_space}).
\begin{figure}[h]
 \begin{center}
\scalebox{0.8} %
{

\begin{pspicture}(0,-4.3903127)(14.97,4.3903127)
\psline[linewidth=0.04cm,arrowsize=0.05291667cm 2.0,arrowlength=1.4,arrowinset=0.4]{->}(2.7,-3.976875)(11.72,-3.956875)
\psline[linewidth=0.04cm,arrowsize=0.05291667cm 2.0,arrowlength=1.4,arrowinset=0.4]{->}(1.12,-2.896875)(1.14,4.223125)
\psbezier[linewidth=0.04,linestyle=dashed,dash=0.16cm 0.16cm](4.66,-2.916875)(5.0,-2.256875)(5.978522,-2.5493124)(6.76,-1.916875)(7.541478,-1.2844378)(7.22,-0.956875)(7.96,-0.456875)(8.7,0.043125)(8.143241,0.3636842)(9.12,0.743125)(10.096759,1.1225657)(10.036516,0.8545029)(10.94,1.243125)(11.843484,1.631747)(11.56,2.043125)(11.94,2.503125)(12.32,2.963125)(12.52,2.663125)(12.96,3.543125)
\psbezier[linewidth=0.04,linestyle=dashed,dash=0.16cm 0.16cm](1.62,-2.276875)(1.96,-1.616875)(2.7385218,-2.0293124)(3.5,-1.436875)(4.261478,-0.8444377)(4.18,-0.336875)(4.9,0.143125)(5.62,0.623125)(5.323241,0.7636842)(6.08,1.383125)(6.836759,2.0025659)(7.116516,1.914503)(8.02,2.283125)(8.923484,2.651747)(8.68,3.043125)(9.02,3.483125)(9.36,3.923125)(9.62,3.563125)(10.5,4.083125)
\psline[linewidth=0.04cm](1.78,-2.196875)(6.26,-2.176875)
\psline[linewidth=0.04cm](3.36,-1.616875)(7.08,-1.596875)
\psline[linewidth=0.04cm](4.0,-1.016875)(7.4,-1.016875)
\psline[linewidth=0.04cm](4.34,-0.576875)(7.7,-0.576875)
\psline[linewidth=0.04cm](4.56,-0.156875)(8.26,-0.156875)
\psline[linewidth=0.04cm](4.98,0.183125)(8.48,0.183125)
\psline[linewidth=0.04cm](5.48,0.583125)(8.76,0.623125)
\psline[linewidth=0.04cm](5.74,0.983125)(9.86,1.003125)
\psline[linewidth=0.04cm](6.14,1.383125)(11.28,1.383125)
\psline[linewidth=0.04cm](6.82,1.803125)(11.58,1.823125)
\psline[linewidth=0.04cm](7.94,2.223125)(11.74,2.223125)
\psline[linewidth=0.04cm](8.62,2.563125)(11.98,2.563125)
\psline[linewidth=0.04cm](8.96,3.163125)(12.72,3.163125)
\psline[linewidth=0.04cm](9.28,3.563125)(12.9,3.563125)
\psline[linewidth=0.04cm](3.38,-1.656875)(3.38,-2.836875)
\psline[linewidth=0.04cm](3.94,-1.116875)(3.94,-2.816875)
\psline[linewidth=0.04cm](4.52,-0.176875)(4.52,-2.756875)
\psline[linewidth=0.04cm](5.18,0.243125)(5.18,-2.476875)
\psline[linewidth=0.04cm](5.74,0.943125)(5.74,-2.376875)
\psline[linewidth=0.04cm](6.3,1.503125)(6.3,-2.056875)
\psline[linewidth=0.04cm](6.8,1.763125)(6.8,-1.816875)
\psline[linewidth=0.04cm](7.3,1.963125)(7.3,-1.116875)
\psline[linewidth=0.04cm](7.92,2.183125)(7.92,-0.396875)
\psline[linewidth=0.04cm](8.52,2.523125)(8.52,0.343125)
\psline[linewidth=0.04cm](9.18,3.503125)(9.18,0.823125)
\psline[linewidth=0.04cm](9.76,3.723125)(9.76,1.003125)
\psline[linewidth=0.04cm](10.38,3.903125)(10.38,1.203125)
\psline[linewidth=0.04cm](10.96,3.963125)(10.96,1.463125)
\psline[linewidth=0.04cm](11.74,4.003125)(11.74,2.363125)
\usefont{T1}{ptm}{m}{n}
\rput(12.025,-4.206875){$\leafs$}
\usefont{T1}{ptm}{m}{n}
\rput(0.8,4.2){$\leafu$}
\usefont{T1}{ptm}{m}{n}
\rput(13.2,2.873125){$\Gamma(\eta^s)$}
\usefont{T1}{ptm}{m}{n}
\rput(9.927813,4.193125){$\Gamma(\eta^u)$}
\end{pspicture} 
} 
\end{center}
\caption{The space $\orb$ seen in $\leafs \times \leafu$} \label{fig:orbit_space}
\end{figure}

\subsubsection{Free homotopy class of periodic orbits} 

In \cite{Fen:AFM}, Fenley constructed examples of skewed $\R$-covered Anosov flows on atoroidal, not Seifert-fibered spaces. So in particular, these flows are not geodesic flows. It turns out that, if you consider the free homotopy class of periodic orbits, Fenley's examples behave in a very different way:
\begin{thm}[Fenley \cite{Fen:AFM}] \label{thm:infinite_homotopy_class}
 If $M$ is atoroidal and not Seifert-fibered, then the free homotopy class of a periodic orbit of $\flot$ contains infinitely many distinct periodic orbits.
\end{thm}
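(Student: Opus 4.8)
The plan is to use the \emph{structure of the orbit space} $\orb$ and the map $\eta\colon\orb\to\orb$ from Proposition \ref{prop:eta_s_eta_u}, together with the action of $\pi_1(M)$, to exhibit for each periodic orbit a whole family of pairwise non-isotopic periodic orbits all in the same free homotopy class. Recall that a periodic orbit of $\flot$ of (primitive) period corresponds to a $\gamma\in\pi_1(M)\setminus\{1\}$ fixing some point $O\in\orb$, which is then a hyperbolic fixed point of $\gamma$; the leaves $\hfs(O)$ and $\hfu(O)$ are also fixed, one as an attractor and one as a repeller. The key observation is that $\eta$ is $\pi_1(M)$-equivariant (it is built from the equivariant maps $\eta^s,\eta^u$), so $\gamma$ also fixes $\eta(O)$, and by induction every point $\eta^k(O)$, $k\in\Z$, is fixed by $\gamma$. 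Thus the primitive element $\gamma$ (or a suitable power) is represented by infinitely many periodic orbits of $\hflot$, namely the orbits through the $\eta^k(O)$.

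The first step is to set this up carefully: check that $\eta^k(O)$ are genuinely distinct points of $\orb$ (this follows because $\eta^u\circ\eta^s$ and $\eta^s\circ\eta^u$ are \emph{strictly} increasing, so $\eta$ has no periodic points other than, possibly, a fixed one which cannot occur here since $O$ already carries a hyperbolic $\gamma$-action and $\eta$ shifts along the orientations of $\leafs,\leafu$), and that each projects to a genuine periodic orbit of $\flot$ rather than degenerating. The second step is to pass from ``$\gamma$ fixes $\eta^k(O)$'' to ``the projected periodic orbits are freely homotopic'': two periodic orbits whose lifts are fixed by a common $\gamma$ are freely homotopic through the orbit $t\mapsto$ (the flow-segment path) conjugating one to the other, i.e.\ one builds the homotopy in $\M$ using the $\hflot$-flow lines joining $\hflot$-orbits of $\eta^k(O)$ and $\eta^{k+1}(O)$ and pushes it down; this is where one uses that $\M\cong\R^3$ (Verjovsky) so that such interpolating disks exist. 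The third step is to extract \emph{infinitely many distinct} orbits downstairs: a priori two different $\eta^k(O)$ could project to the same periodic orbit of $\flot$, which would mean some $\delta\in\pi_1(M)$ sends $\eta^k(O)$ to $\eta^{k'}(O)$; one rules this out, or rather controls it, using that $M$ is atoroidal and not Seifert-fibered — if the free homotopy class were finite, the subgroup of $\pi_1(M)$ commuting with (a power of) $\gamma$ together with the cyclic group $\langle\gamma\rangle$ would force the existence of an incompressible torus or a Seifert structure (one gets a $\Z^2$ or a circle action from the translations along the $\eta^k$-direction commuting with $\gamma$), contradicting the hypothesis.

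Concretely the order I would carry the argument: (i) fix $\gamma$ and $O$ with $\gamma O=O$, invoke the hyperbolic/attractor-repeller description of the action on $O,\hfs(O),\hfu(O)$; (ii) show $\gamma\eta^k(O)=\eta^k(O)$ for all $k$ by equivariance and induction; (iii) show the $\eta^k(O)$ are all distinct and on distinct $\hflot$-orbits, using strict monotonicity of $\eta^u\circ\eta^s$; (iv) construct the free homotopy in $M$ between consecutive projected orbits; (v) show the projected orbits fall into only finitely many $\pi_1(M)$-orbits \emph{only if} $M$ is toroidal or Seifert, hence under our hypotheses there are infinitely many distinct periodic orbits in the free homotopy class. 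The main obstacle is step (v): translating ``the free homotopy class is finite'' into a concrete topological structure (incompressible torus or Seifert fibration) on $M$. The cleanest route is to note that if two periodic orbits $\alpha,\beta$ are freely homotopic then the immersed annulus/cylinder between them lifts to a $\gamma$-invariant strip in $\M$; finiteness of the class forces some positive power of the ``shift by $\eta$'' to be realized by an element of $\pi_1(M)$ commuting with $\gamma$, producing a rank-two abelian subgroup (hence, by the torus theorem, an essential torus) or a periodicity incompatible with atoroidality — and here one must be careful to cite the correct version of the torus theorem and to handle the Seifert case separately. I would also double-check that the homotopy in step (iv) is through \emph{closed} loops (free homotopy) and not merely a path homotopy, which is automatic because both endpoints are $\gamma$-invariant.
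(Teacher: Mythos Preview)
Your proposal follows essentially the same route as the paper's sketch: use the $\pi_1(M)$-equivariance of $\eta$ to show that $\gamma$ fixes every $\eta^k(O)$, argue these project to freely homotopic periodic orbits, and then use atoroidality to rule out the $\Z^2$ in $\pi_1(M)$ that would arise if only finitely many of them were distinct downstairs.

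Two points the paper flags that you gloss over. First, by looking at the action of $\gamma$ on the lozenge with corners $\eta^i(O)$ and $\eta^{i+1}(O)$, one sees that consecutive orbits $\eta^i(\widetilde\alpha)$ and $\eta^{i+1}(\widetilde\alpha)$ have \emph{reversed} orientations as flow lines; hence only the $\eta^{2i}(\widetilde\alpha)$ are freely homotopic to $\widetilde\alpha$ as oriented loops, and your step (iv) should be restricted to even powers. Second, your parenthetical ``(or a suitable power)'' hides a genuine issue: for the projected orbits to be \emph{periodic orbits} (not multiple covers) in the same free homotopy class as $\alpha$, you need that if $\gamma$ generates the stabilizer of $\widetilde\alpha$ then it also generates the stabilizer of each $\eta^i(\widetilde\alpha)$. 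The paper singles this out as requiring ``some work''; without it, step (iv) only shows that $\alpha$ is freely homotopic to some \emph{power} of the orbit through $\pi(\eta^k(O))$, which is not what the statement claims.
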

For the geodesic flow of a negatively curved (Riemannian or Finsler) manifold, it is a classical result (\cite{Klingenberg}) that there is at most one periodic orbit in a free homotopy class (and exactly one geodesic for each element in the fundamental group of the manifold).\\

We give a sketch of proof of this result as it is useful for the understanding of such flows.
\begin{proof}[Sketch of proof]
 Let $\alpha$ be a periodic orbit of $M$, and $\widetilde{\alpha}$ a lift to the universal cover. There exists an element $\gamma \in \pi_1(M)$ such that $\gamma$ leaves $\widetilde{\alpha}$ invariant. For any $i \in \Z$, $\eta^i(\widetilde{\alpha})$ is also left invariant by $\gamma$ (by the previous proposition) and hence its projection on $M$ is a periodic orbit. Just by looking at the action of $\gamma$ on $\orb$ we can deduce that $\eta^i(\widetilde{\alpha})$ and $\eta^{i+1}(\widetilde{\alpha})$ have reverse directions (see Figure \ref{fig:gamma_stabilizing_chain}).\\
 Then, there is some work to show that, if $\gamma$ was the generator of the stabilizer of $\wt{\alpha}$ for the action of $\pi_1(M)$, then it is the generator for any $\eta^i(\widetilde{\alpha})$. This proves that $\eta^{2i}(\widetilde{\alpha})$ are all freely homotopic.\\
Finally, using the topological assumptions, Fenley shows that the projections of $\eta^{2i}(\widetilde{\alpha})$ to $M$ are all distinct (otherwise, there would be a $\Z^2$ in $\pi_1(M)$).
\end{proof}

In the following, we will often abuse terminology and say that an orbit $\wt{\alpha}$ of $\hflot$ is periodic if its projection to $M$ is periodic, or equivalently, if $\wt{\alpha}$ is stabilized by an element of the fundamental group.

\subsubsection{Lozenges}

In \cite{Fen:AFM}, Fenley introduced the notion of lozenges, which is a kind of basic block in the orbit space and is fundamental to the study of (not only $\R$-covered) (pseudo)-Anosov flow.

\begin{figure}[h]
\begin{center}
\scalebox{1}{

\begin{pspicture}(0,-1.97)(3.92,1.97)


\psbezier[linewidth=0.04](0.02,0.17)(0.88,0.03)(0.74114233,-0.47874346)(1.48,-1.11)(2.2188578,-1.7412565)(2.38,-1.73)(3.26,-1.95)
\psbezier[linewidth=0.04](0.0,0.27)(0.96,0.3105634)(0.75286174,0.37057108)(1.78,0.77)(2.8071382,1.169429)(2.66,1.47)(3.36,1.45)
\psbezier[linewidth=0.04](0.6,1.95)(1.2539726,1.8871263)(1.1265805,1.3646309)(2.0345206,0.7973154)(2.9424605,0.23)(3.2249315,0.2543258)(3.9,0.31)
\psbezier[linewidth=0.04](0.52,-1.33)(1.48,-1.33)(1.3597014,-0.9703507)(2.3,-0.63)(3.2402985,-0.28964934)(3.14,0.05)(3.84,0.23)
\psdots[dotsize=0.16](1.98,0.85)
\psdots[dotsize=0.16](1.46,-1.11)
\usefont{T1}{ptm}{m}{n}
\rput(1.6145313,-0.06){$L$}
\usefont{T1}{ptm}{m}{n}
\rput(1.4,-1.38){$\alpha$}
\rput(2,1.2){$\beta$}

\rput(0.6,-0.6){$A$}
\rput(3,-0.6){$B$}
\rput(0.6,0.6){$D$}
\rput(3,0.6){$C$}
\end{pspicture} 
 }
\end{center}
\caption{A lozenge with corners $\alpha$, $\beta$ and sides $A,B,C,D$} \label{fig:a_lozenge}
\end{figure}
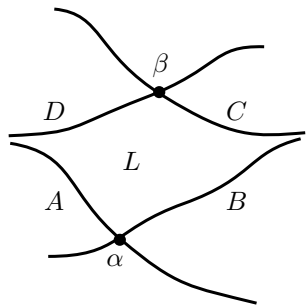

\begin{defin}
 A lozenge $L$ in $\orb$ is a subset of $\orb$ such that (see Figure \ref{fig:a_lozenge}):\\
There exist two points $\alpha,\beta \in L$ and four half leaves $A \subset \hfs(\alpha)$, $B \subset \hfu(\alpha)$, $C \subset \hfs(\beta)$ and $D \subset \hfu(\beta)$ verifying:
\begin{itemize}
 \item For any $\lambda^s \in \leafs$, $\lambda^s \cap B \neq \emptyset$ if and only if $\lambda^s \cap D\neq \emptyset$,
 \item For any $\lambda^u \in \leafu$, $\lambda^u \cap A \neq \emptyset$ if and only if $\lambda^u \cap C \neq \emptyset$,
 \item The half-leaf $A$ does not intersect $D$ and $B$ does not intersect $C$.
\end{itemize}
Then,
\begin{equation*}
 L := \lbrace \alpha,\beta \rbrace \cup \lbrace p \in \orb \mid \hfs(p) \cap B \neq \emptyset, \; \hfu(p) \cap A \neq \emptyset \rbrace.
\end{equation*}
The points $\alpha$ and $\beta$ are called the \emph{corners} of $L$ and $A,B,C$ and $D$ are called the \emph{sides}.
\end{defin}
Note that in our definition, we do not count the sides as part of a lozenge, but we do include the two corners.

\begin{defin}
 A chain of lozenges is a union (finite or infinite) of lozenges $L_i$ such that two consecutive lozenges $L_i$ and $L_{i+1}$ always share a corner.
\end{defin}

There are basically two configurations for consecutive lozenges in a chain: either they share a side, or they don't. The first case is characterized by the fact that there exists a leaf intersecting the interior of both lozenges, while it cannot happen in the second case (see Figure \ref{fig:chain_of_lozenges})
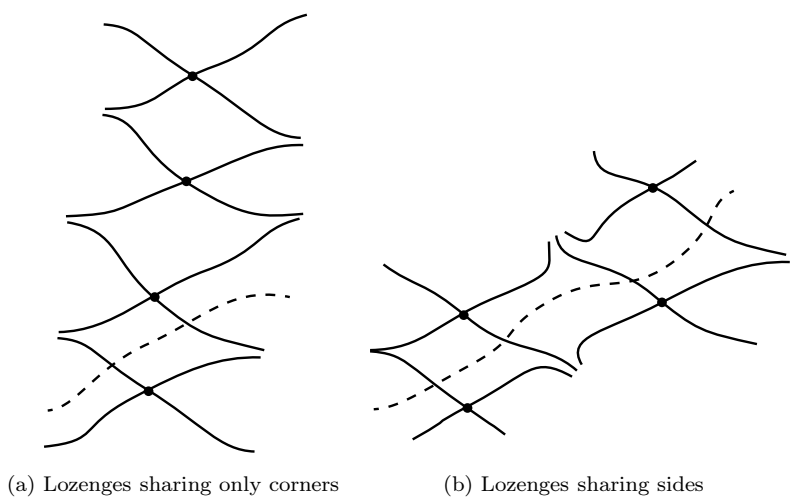
\begin{figure}[h]
 \centering
  \subfloat[Lozenges sharing only corners]{\label{fig:string_lozenges} \scalebox{0.8} { 

\begin{pspicture}(-0.4,-3.8)(4.8,4)


\psdots[dotsize=0.16](2.44,2.6)
\psbezier[linewidth=0.04](0.38,0.18)(1.24,0.04)(1.1011424,-0.46874347)(1.84,-1.1)(2.5788577,-1.7312565)(2.74,-1.72)(3.62,-1.94)
\psbezier[linewidth=0.04](0.36,0.28)(1.32,0.32056338)(1.1128618,0.38057107)(2.14,0.78)(3.1671383,1.1794289)(3.56,1.48)(4.26,1.46)
\psbezier[linewidth=0.04](0.96,1.96)(1.6139725,1.8971263)(1.4865805,1.3746309)(2.3945205,0.80731547)(3.3024607,0.24)(3.5849316,0.26432583)(4.26,0.32)
\psbezier[linewidth=0.04](0.24,-1.64)(1.16,-1.62)(1.7197014,-0.9603507)(2.66,-0.62)(3.6002986,-0.27964935)(3.5,0.06)(4.2,0.24)
\psdots[dotsize=0.16](2.34,0.86)
\psdots[dotsize=0.16](1.82,-1.06)
\psbezier[linewidth=0.04](0.98,3.46)(1.5739726,3.4171262)(1.5465806,3.1746309)(2.4545205,2.6073155)(3.3624606,2.04)(3.64,1.6)(4.22,1.58)
\psbezier[linewidth=0.04](1.0,2.06)(1.96,2.06)(1.8397014,2.4196494)(2.78,2.76)(3.7202985,3.1003506)(3.62,3.44)(4.32,3.62)
\psbezier[linewidth=0.04](0.22,-1.74)(0.8139726,-1.7828737)(0.7865805,-2.0253692)(1.6945206,-2.5926845)(2.6024606,-3.16)(2.88,-3.6)(3.46,-3.62)
\psbezier[linewidth=0.04](0.0,-3.54)(0.94,-3.4394367)(0.54,-3.22)(1.44,-2.74)(2.34,-2.26)(2.86,-2.04)(3.56,-2.06)
\psdots[dotsize=0.16](1.72,-2.62)
\psbezier[linewidth=0.04,linestyle=dashed,dash=0.16cm 0.16cm](0.06,-2.94)(0.54,-2.9)(0.8125543,-2.2401693)(1.7,-1.84)(2.5874457,-1.4398307)(3.08,-0.86)(4.04,-1.06)

\end{pspicture} }}
  \subfloat[Lozenges sharing sides]{\label{fig:adjacent_lozenges} \scalebox{0.8} { 

\begin{pspicture}(6.6,-3.6)(13.8,2)


\psbezier[linewidth=0.04](7.0,-0.32)(7.54,-0.72)(7.6411424,-0.56874347)(8.38,-1.2)(9.118857,-1.8312565)(9.64,-1.6)(10.18,-2.08)
\psbezier[linewidth=0.04](6.78,-1.74)(7.7,-1.72)(7.8397017,-1.2803507)(8.86,-0.86)(9.880299,-0.43964934)(9.72,-0.42)(9.72,0.06)
\psdots[dotsize=0.16](8.32,-1.16)
\psbezier[linewidth=0.04](6.78,-1.76)(7.3739724,-1.8028737)(7.3665805,-1.9853691)(8.25452,-2.6126845)(9.142461,-3.24)(8.6,-2.84)(9.0,-3.14)
\psbezier[linewidth=0.04](7.48,-3.22)(8.32,-2.74)(7.56,-3.12)(8.48,-2.62)(9.4,-2.12)(9.46,-1.82)(10.1,-2.18)
\psdots[dotsize=0.16](8.38,-2.7)
\psbezier[linewidth=0.04](9.84,0.16)(9.92,-0.4)(10.641142,-0.16874346)(11.4,-0.8)(12.158857,-1.4312565)(12.24,-1.44)(13.14,-1.64)
\psbezier[linewidth=0.04](10.26,-1.98)(9.98,-1.5)(10.66,-1.44)(11.56,-0.96)(12.46,-0.48)(12.98,-0.26)(13.68,-0.28)
\psbezier[linewidth=0.04](10.46,1.56)(10.52,0.96)(11.121142,1.3112565)(11.88,0.68)(12.638858,0.048743468)(12.72,0.04)(13.62,-0.16)
\psbezier[linewidth=0.04](9.98,0.22)(10.62,-0.18)(10.26,0.28)(11.12,0.78)(11.98,1.28)(11.5,0.96)(12.14,1.4)
\psdots[dotsize=0.16](11.43,0.95)
\psdots[dotsize=0.16](11.58,-0.95)
\psbezier[linewidth=0.04,linestyle=dashed,dash=0.16cm 0.16cm](6.84,-2.72)(7.24,-2.7)(7.572743,-2.4578607)(8.44,-1.98)(9.307257,-1.5021392)(8.894286,-1.4042312)(9.74,-0.92)(10.585714,-0.4357688)(10.980949,-0.86126333)(11.76,-0.28)(12.539051,0.30126333)(12.3,0.7)(12.76,0.9)

\end{pspicture} }}
 \caption{The two types of consecutive lozenges in a chain} \label{fig:chain_of_lozenges}
\end{figure}

In the case at hand, lozenges and chain of lozenges are pretty nice:
\begin{prop}[Fenley \cite{Fen:AFM}]
 Let $\flot$ be a skewed $\R$-covered Anosov flow, and $C = \bigcup L_i$ a chain of lozenges. Let $p_{i-1}$ and $p_{i}$ be the two corners of $L_i$, then $p_{i} = \eta(p_{i-1})$. Furthermore, if $p_{i}$ is the shared corner with $L_{i+1}$, then the union of the sides through $p_i$ of $L_i$ and $L_{i+1}$ is $\hfs(p_i) \cup \hfu(p_i)$. In other words, consecutive lozenges never share a side. In particular, an (un)stable leaf cannot intersect the interior of more than one lozenge in $C$.
\end{prop}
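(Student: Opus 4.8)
The plan is to exploit the concrete model of the orbit space furnished by Proposition \ref{prop:eta_s_eta_u}: namely that $\orb$ sits inside $\leafs \times \leafu$ between the two graphs $\Gamma(\eta^s)$ and $\Gamma(\eta^u)$, with $\hfs$ and $\hfu$ being the horizontal and vertical lines, and that $\eta = \eta^u \circ \eta^s$ (at the level of leaves) is the diagonal-type map whose graph structure governs lozenges. First I would recall the definition of a lozenge $L_i$ with corners $p_{i-1}, p_i$ and sides $A_i \subset \hfs(p_{i-1})$, $B_i \subset \hfu(p_{i-1})$, $C_i \subset \hfs(p_i)$, $D_i \subset \hfu(p_i)$, and translate the ``matching'' conditions (a stable leaf meets $B_i$ iff it meets $D_i$; an unstable leaf meets $A_i$ iff it meets $C_i$; $A_i$ misses $D_i$ and $B_i$ misses $C_i$) into statements about the functions $\eta^s, \eta^u$. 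The first key claim is that for a \emph{single} lozenge in a skewed $\R$-covered flow, the two corners must satisfy $p_i = \eta(p_{i-1})$ (or $p_{i-1} = \eta(p_i)$, depending on orientation conventions): the point is that $B_i$ is the half of $\hfu(p_{i-1})$ on one side of $p_{i-1}$, its ``shadow'' in $\leafs$ is exactly the interval of stable leaves meeting that half-unstable-leaf, and the requirement that this shadow coincide with the shadow of $D_i = $ a half of $\hfu(p_i)$ forces $\hfu(p_i)$ to be the extremal unstable leaf on that side, which by definition of $\eta^s$ and $\eta^u$ is precisely $\eta^u\big(\eta^s(\hfu(p_{i-1}))\big)$ paired with the corresponding stable leaf; combining the stable and unstable coordinates gives $p_i = \eta(p_{i-1})$. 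I expect this to be the main step and would present it carefully, drawing the picture in $\leafs \times \leafu$ (Figure \ref{fig:orbit_space}).

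Second, I would analyze what happens at a shared corner $p_i$ of two consecutive lozenges $L_i$ and $L_{i+1}$ in the chain $C$. Since $p_i$ is a corner of $L_i$, it contributes a half-stable-leaf and a half-unstable-leaf as sides of $L_i$; since $p_i$ is also a corner of $L_{i+1}$, it contributes another half-stable and half-unstable leaf as sides of $L_{i+1}$. The claim is that these pairs of half-leaves are the \emph{opposite} halves, so that their union is the full leaves $\hfs(p_i) \cup \hfu(p_i)$. Here I would use the first step twice: $p_i = \eta(p_{i-1})$ and $p_{i+1} = \eta(p_i)$, and then track, using monotonicity and strict monotonicity of $\eta^u \circ \eta^s$ and $\eta^s \circ \eta^u$ (Proposition \ref{prop:eta_s_eta_u}), on which side of $p_i$ the lozenge $L_i$ lies and on which side $L_{i+1}$ lies. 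Applying $\eta$ moves a point ``monotonically across'' in $\orb$, so $L_i$ occupies the quadrant at $p_i$ determined by the halves of $\hfs(p_i), \hfu(p_i)$ pointing back toward $p_{i-1}$, while $L_{i+1}$ occupies the quadrant pointing forward toward $p_{i+1}$; these are opposite quadrants, hence the sides through $p_i$ of the two lozenges together form $\hfs(p_i) \cup \hfu(p_i)$. In particular $L_i$ and $L_{i+1}$ do not share a side (a shared side would require the two quadrants to be adjacent rather than opposite).

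Finally, the last assertion — that an (un)stable leaf cannot meet the interior of more than one lozenge of $C$ — follows by combining the ``consecutive lozenges share only a corner'' statement with the general fact (from Figure \ref{fig:chain_of_lozenges} and the dichotomy for consecutive lozenges in a chain) that a single leaf meets the interiors of two lozenges in a chain \emph{only} when those lozenges share a side. I would argue: suppose a stable leaf $\lambda^s$ meets the interiors of $L_j$ and $L_k$ with $j < k$; by the structure of the chain and the corner relation $p_{i+1} = \eta(p_i)$, the horizontal (stable) coordinate is strictly monotone along the chain, so $\lambda^s$ can meet at most the two lozenges adjacent to a fixed corner, and we have just shown consecutive lozenges share no side, giving a contradiction; the unstable case is symmetric. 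The main obstacle is the bookkeeping in the first step — correctly identifying, with all orientation conventions fixed by transverse orientability of $\flot$, that the matching condition on shadows of half-leaves forces exactly the map $\eta$ and not some other monotone relation; once that is pinned down, the rest is a direct consequence of the monotonicity properties already recorded in Proposition \ref{prop:eta_s_eta_u}.
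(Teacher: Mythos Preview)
The paper does not actually give its own proof of this proposition: it is stated with attribution to Fenley \cite{Fen:AFM} and immediately followed by the corollary, with no argument in between. So there is nothing to compare your proposal against in the strict sense.

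That said, your plan is the natural one given the tools the paper has set up, and it is essentially correct. Using the strip model of $\orb$ inside $\leafs\times\leafu$ (Proposition \ref{prop:eta_s_eta_u} and Figure \ref{fig:orbit_space}) is exactly the right device: in that picture a lozenge becomes an axis-parallel rectangle whose two ``missing'' corners lie on $\Gamma(\eta^s)$ and $\Gamma(\eta^u)$, and the shadow-matching plus non-intersection conditions in the definition of a lozenge force these missing corners to sit precisely on the graphs, which is what pins the second corner to $\eta$ of the first. Your second step (opposite quadrants at the shared corner, via strict monotonicity of $\eta^u\circ\eta^s$ and $\eta^s\circ\eta^u$) is also correct and gives the ``no shared side'' conclusion directly. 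For the final assertion, your argument is fine but can be said more cleanly: once you know $p_i=\eta(p_{i-1})$ with $\eta$ strictly increasing in both coordinates, the $\leafs$-intervals (respectively $\leafu$-intervals) spanned by the interiors of the $L_i$ are pairwise disjoint, so a single stable (respectively unstable) leaf can meet the interior of at most one of them --- no case analysis on adjacency is needed.

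The only place to be careful is the bookkeeping you already flag: fixing once and for all which half-leaves $A_i,B_i,C_i,D_i$ point in which direction (using the transverse orientation and the $(+,+,-,-)$ convention the paper adopts) so that the ``extremal leaf'' you identify really is $\eta^s(\hfs(p_{i-1}))$ and $\eta^u(\hfu(p_{i-1}))$ rather than the other endpoint of the strip. Once that orientation is fixed, the rest is forced by the definitions.
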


\begin{cor}
 If $L$ is a lozenge such that one of its corners is fixed by an element $\gamma$ of $\pi_1(M)$, then $\gamma$ stabilizes the whole lozenge and fix the other corner.\\
Now, if we assume furthermore that $M$ is atoroidal and not a Seifert-fibered space, then we have:
\begin{itemize}
 \item If $C$ is a chain of lozenges with corners $p_i$ and if one corner is a periodic orbit, then every corner is a periodic orbit, the loops $\{\pi(p_{2i})\}$ (resp. $\{\pi(p_{2i+1})\}$) are in the free homotopy class of $\pi(p_0)$ (resp. $\pi(p_1)$) and $C$ is stabilized by the deck transformation fixing the $p_i$.
 \item Conversely, if $\alpha$ is a periodic orbit of $\flot$ and $\wt{\alpha}$ a lift to $\M$, then $\wt{\alpha}$ is a corner of an infinite maximal chain of lozenges $C$. The stabilizer of $C$ in $\pi_1(M)$ is generated by one element. We call the projection to $M$ of \emph{all} the corners of $C$ the \emph{double free homotopy class} of $\alpha$.
\end{itemize}
\end{cor}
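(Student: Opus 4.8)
The plan is to deduce everything from the preceding Proposition on chains of lozenges in skewed $\R$-covered Anosov flows, together with Fenley's structure results quoted above (Proposition \ref{prop:eta_s_eta_u}, Theorem \ref{thm:infinite_homotopy_class}, and the Proposition relating $\eta$ to corners of lozenges). First I would handle the single-lozenge statement. Let $L$ be a lozenge with corners $p$ and $q=\eta(p)$ (by the quoted Proposition, the two corners of a lozenge are always related by $\eta$), and suppose $\gamma\in\pi_1(M)$ fixes $p$. Since $\gamma$ acts on $\orb$ by a homeomorphism preserving the foliations $\hfs$ and $\hfu$ and their orientations (recall $\flot$ is transversally oriented), $\gamma$ fixes the two half-leaves $A\subset\hfs(p)$ and $B\subset\hfu(p)$ issuing from $p$ that are sides of $L$: indeed $\gamma$ fixes the full leaves $\hfs(p)$, $\hfu(p)$ and, being an attractor/repeller on each leaf space near the fixed leaf, it must preserve each of the two sides of $p$ in each leaf rather than swap them. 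Because $\eta$ is $\pi_1(M)$-equivariant (Proposition \ref{prop:eta_s_eta_u}), $\gamma q=\gamma\eta(p)=\eta(\gamma p)=\eta(p)=q$, so the other corner is fixed too; and since $L$ is completely determined by the pair $(p,q)$ together with the sides through them (or equivalently as the set $\{p,q\}\cup\{x\mid\hfs(x)\cap B\neq\emptyset,\ \hfu(x)\cap A\neq\emptyset\}$), a homeomorphism of $\orb$ preserving the foliations and fixing $p,q,A,B$ must preserve $L$ setwise. This proves the first paragraph.

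Next, the chain statement, still without topological hypotheses is immediate from the single-lozenge case by induction along the chain: if $C=\bigcup L_i$ has a corner $p_{i_0}$ that is a periodic orbit, let $\gamma$ generate its stabilizer; then $\gamma$ stabilizes $L_{i_0}$ and fixes its other corner, hence fixes $p_{i_0\pm1}$, hence stabilizes the adjacent lozenges, and so on, so $\gamma$ stabilizes all of $C$ and fixes every corner $p_i=\eta^{i-i_0}(p_{i_0})$; in particular every corner is periodic. For the free homotopy statement under the atoroidal, non-Seifert hypothesis, I would invoke the sketch of proof of Theorem \ref{thm:infinite_homotopy_class}: consecutive corners $p_i,p_{i+1}=\eta(p_i)$ point in opposite flow directions, so $\pi(p_i)$ and $\pi(p_{i+2})$ are freely homotopic (as oriented loops) because $\gamma$ — shown there to be the generator of the stabilizer of every $\eta^j(p_{i_0})$ — realizes a free homotopy; and the topological hypotheses guarantee the $\pi(p_{2i})$ are pairwise distinct (else one gets a $\Z^2$ in $\pi_1(M)$). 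Thus $\{\pi(p_{2i})\}$ lies in the free homotopy class of $\pi(p_0)$ and $\{\pi(p_{2i+1})\}$ in that of $\pi(p_1)$.

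For the converse, let $\alpha$ be a periodic orbit and $\wt\alpha$ a lift, with stabilizer generated by $\gamma$. Then $\eta^i(\wt\alpha)$ is $\gamma$-invariant for all $i\in\Z$ by equivariance of $\eta$, and for each $i$ the pair $\{\eta^i(\wt\alpha),\eta^{i+1}(\wt\alpha)\}$ bounds a lozenge $L_i$ (this is exactly the content of the quoted Proposition linking $\eta$ and lozenges in the skewed $\R$-covered case), so the $L_i$ form a bi-infinite chain $C$ with $\wt\alpha$ as a corner; maximality is clear since $\eta$ is a global bijection of $\orb$, and the stabilizer of $C$ in $\pi_1(M)$ is generated by $\gamma$ (any element stabilizing $C$ permutes its corners, but the corners accumulate only at the ends, so it must fix each corner, in particular $\wt\alpha$, hence lies in $\langle\gamma\rangle$). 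Defining the double free homotopy class of $\alpha$ as $\pi$ of all corners of $C$ then makes sense and is independent of the chosen lift $\wt\alpha$ up to the $\pi_1(M)$-action. The main obstacle I anticipate is the bookkeeping in the single-lozenge step — making precise that a foliation- and orientation-preserving homeomorphism of $\orb$ fixing a corner cannot swap the two sides of that corner's stable (or unstable) leaf — which uses the attractor/repeller description of the $\gamma$-action on the leaf spaces quoted earlier; everything after that is formal manipulation of $\eta$ and citation of Fenley's results.
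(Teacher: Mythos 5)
Your first two paragraphs follow essentially the same route as the paper: $\eta$-equivariance gives that both corners are fixed, an orientation argument (more carefully spelled out by you than by the author) gives that the four sides are preserved, and then the chain statement follows by propagation plus the sketch of Theorem~\ref{thm:infinite_homotopy_class}. No complaint there.

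The gap is in your argument that the stabilizer of the maximal chain $C$ is cyclic. You write: ``any element stabilizing $C$ permutes its corners, but the corners accumulate only at the ends, so it must fix each corner.'' This inference is false. An element $g\in\pi_1(M)$ stabilizing $C$ sends lozenges to lozenges and corners to corners, and by the order/orientation structure on the chain it must shift them, $g\cdot p_i=p_{i+k}$; but a nontrivial shift $k\neq 0$ is perfectly compatible with ``the corners accumulate only at the ends'', so the accumulation pattern rules out nothing. Indeed, without the atoroidal / non--Seifert hypothesis such a shift actually occurs: for the geodesic flow on $T^1\Sigma$ the central element of $\pi_1(T^1\Sigma)$ shifts the chain, and the stabilizer of $C$ contains a $\Z^2$, so any argument that does not invoke the topological hypothesis is proving a false statement. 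The paper flags precisely this as ``the only hard part'': a shift $g\cdot p_0=p_k$ with $k\neq 0$ would force $\pi(p_0)=\pi(p_k)$, contradicting the distinctness of the projected corners, which is exactly the content admitted in the sketch of Theorem~\ref{thm:infinite_homotopy_class} under the atoroidal, non--Seifert assumption. You already established that distinctness two sentences earlier; you just need to route the stabilizer argument through it (``a shift would identify two corners downstairs, contradicting distinctness, hence $k=0$ and $g\in\langle\gamma\rangle$'') instead of the accumulation claim.
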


\begin{rem}
 The difference between the free homotopy class of a periodic orbit and the double free homotopy class, is that in the latter, we forget about the orientation of the curves.
\end{rem}

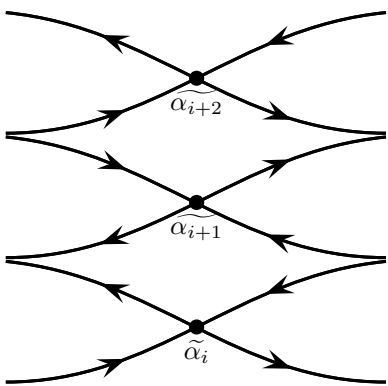
\begin{figure}[h]
 \begin{center}
 \scalebox{1}{

\begin{pspicture}(0,0)(6,6)


\rput(2.51,0.4){$\al i$}
\rput(2.51,2.05){$\al{i+1}$}
\rput(2.51,3.7){$\al{i+2}$}

\psset{arrowscale=2}
\psbezier[linewidth=0.04,ArrowInside=->,ArrowInsidePos=0.30](0,0)(2,0)(3,1.4)(5,1.6)
\psbezier[linewidth=0.04,ArrowInside=-<,ArrowInsidePos=0.75](0,0)(2,0)(3,1.4)(5,1.6)

\psbezier[linewidth=0.04,ArrowInside=->,ArrowInsidePos=0.75](5,0)(3,0)(2,1.4)(0,1.6)
\psbezier[linewidth=0.04,ArrowInside=-<,ArrowInsidePos=0.30](5,0)(3,0)(2,1.4)(0,1.6)

\psdots[dotsize=0.2](2.51,0.73)

\rput(0,1.65){
\psbezier[linewidth=0.04,ArrowInside=-<,ArrowInsidePos=0.75](5,0)(3,0)(2,1.4)(0,1.6)
\psbezier[linewidth=0.04,ArrowInside=->,ArrowInsidePos=0.30](5,0)(3,0)(2,1.4)(0,1.6)

\psdots[dotsize=0.2](2.51,0.73)

\psbezier[linewidth=0.04,ArrowInside=-<,ArrowInsidePos=0.30](0,0)(2,0)(3,1.4)(5,1.6)
\psbezier[linewidth=0.04,ArrowInside=->,ArrowInsidePos=0.75](0,0)(2,0)(3,1.4)(5,1.6)
}

\rput(0,3.3){
\psbezier[linewidth=0.04,ArrowInside=->,ArrowInsidePos=0.75](5,0)(3,0)(2,1.4)(0,1.6)
\psbezier[linewidth=0.04,ArrowInside=-<,ArrowInsidePos=0.30](5,0)(3,0)(2,1.4)(0,1.6)

\psdots[dotsize=0.2](2.51,0.73)

\psbezier[linewidth=0.04,ArrowInside=->,ArrowInsidePos=0.30](0,0)(2,0)(3,1.4)(5,1.6)
\psbezier[linewidth=0.04,ArrowInside=-<,ArrowInsidePos=0.75](0,0)(2,0)(3,1.4)(5,1.6)
}

\end{pspicture}
}
\end{center}
\caption{The action of an element $\gamma \in \pi_1(M)$ stabilizing a chain of lozenges} \label{fig:gamma_stabilizing_chain}
\end{figure}

\begin{proof}
 The first assertion is easy: if $\gamma$ stabilizes some corner, then it stabilizes the other corner because it is an image of the first one by a power of $\eta$ and $\eta$ commutes with deck transformations. Now, this implies that $\gamma$ stabilizes every side of $L$, hence stabilizes $L$.

 The second assertion is a trivial application of the first one and the fact that free homotopy classes are obtained by powers of $\eta$ (see \cite{Fen:AFM} or the sketch of proof of Theorem \ref{thm:infinite_homotopy_class}).

 Finally, the only hard part of the last assertion is that the stabilizer of $C$ is some cyclic group. It amounts to the same thing as we already admitted in the proof of Theorem \ref{thm:infinite_homotopy_class}, i.e., that the projections to $M$ of the corners are distinct. Indeed, the image by a deck transformation of a lozenge is a lozenge and in particular it sends corner to corner.
\end{proof}
Note that Fenley obviously first studied these lozenges and obtained the above results and \emph{then} deduced Theorem \ref{thm:infinite_homotopy_class}. I hope the reader will forgive the liberty I took with the order in which I present the results. My goal was not to give complete proofs but merely an idea of what these flows look like.

\begin{rem} 
Looking at the orientation of the sides of a lozenge, we can see that they come in two different types.\\
Recall that the transverse orientability of $\flot$ gives an orientation on each leaf of $\hfs$ and $\hfu$ seen in $\orb$. So any orbit defines two half stable leafs (positive and negative) and two half unstable leafs. Now, let $p$ be a corner of a lozenge $L$. The sides of $L$ going through $p$ --- call them $A$ for the stable and $B$ for the unstable --- could either be both positive, both negative, or of different signs. It is quite easy to see that the stable (resp. unstable) side of the other corner needs to have switched sign from $B$ (resp. from $A$). So each lozenge could be of two types, either $(+,+,-,-)$ or $(+,-,-,+)$, but evidently, all the lozenges of the same (transversally orientable) flow are of the same type (\cite{Fen:AFM}).
\end{rem}
In the sequel, we will consider flows such that lozenges are of the type $(+,+,-,-)$ (see Figure \ref{fig:lozenge++--}).

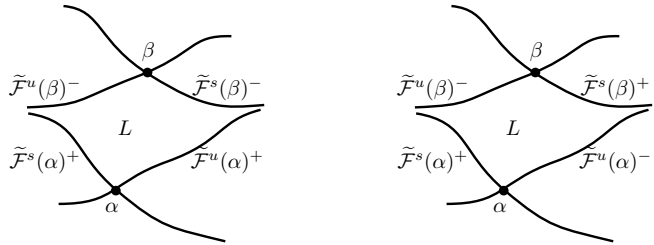
\begin{figure}[h]
 \centering
  \subfloat[A lozenge of type $(+,+,-,-)$ ]{\label{fig:lozenge++--} \scalebox{0.8} { \begin{pspicture}(-1,-2.2)(5,2)


\psbezier[linewidth=0.04](0.02,0.17)(0.88,0.03)(0.74114233,-0.47874346)(1.48,-1.11)(2.2188578,-1.7412565)(2.38,-1.73)(3.26,-1.95)
\psbezier[linewidth=0.04](0.0,0.27)(0.96,0.3105634)(0.75286174,0.37057108)(1.78,0.77)(2.8071382,1.169429)(2.66,1.47)(3.36,1.45)
\psbezier[linewidth=0.04](0.6,1.95)(1.2539726,1.8871263)(1.1265805,1.3646309)(2.0345206,0.7973154)(2.9424605,0.23)(3.2249315,0.2543258)(3.9,0.31)
\psbezier[linewidth=0.04](0.52,-1.33)(1.48,-1.33)(1.3597014,-0.9703507)(2.3,-0.63)(3.2402985,-0.28964934)(3.14,0.05)(3.84,0.23)
\psdots[dotsize=0.16](1.98,0.85)
\psdots[dotsize=0.16](1.46,-1.11)
\usefont{T1}{ptm}{m}{n}
\rput(1.6145313,-0.06){$L$}
\usefont{T1}{ptm}{m}{n}
\rput(1.4,-1.38){$\alpha$}
\rput(2,1.2){$\beta$}

\rput(0.3,-0.6){$\hfs(\alpha)^+$}
\rput(3.3,-0.6){$\hfu(\alpha)^+ $}
\rput(0.3,0.6){$\hfu(\beta)^-$}
\rput(3.3,0.6){$\hfs(\beta)^-$}
\end{pspicture} }}
  \subfloat[A lozenge of type $(+,-,-,+)$]{\label{fig:lozenge+-+-} \scalebox{0.8} { \begin{pspicture}(-1,-2.2)(4.5,2)


\psbezier[linewidth=0.04](0.02,0.17)(0.88,0.03)(0.74114233,-0.47874346)(1.48,-1.11)(2.2188578,-1.7412565)(2.38,-1.73)(3.26,-1.95)
\psbezier[linewidth=0.04](0.0,0.27)(0.96,0.3105634)(0.75286174,0.37057108)(1.78,0.77)(2.8071382,1.169429)(2.66,1.47)(3.36,1.45)
\psbezier[linewidth=0.04](0.6,1.95)(1.2539726,1.8871263)(1.1265805,1.3646309)(2.0345206,0.7973154)(2.9424605,0.23)(3.2249315,0.2543258)(3.9,0.31)
\psbezier[linewidth=0.04](0.52,-1.33)(1.48,-1.33)(1.3597014,-0.9703507)(2.3,-0.63)(3.2402985,-0.28964934)(3.14,0.05)(3.84,0.23)
\psdots[dotsize=0.16](1.98,0.85)
\psdots[dotsize=0.16](1.46,-1.11)
\usefont{T1}{ptm}{m}{n}
\rput(1.6145313,-0.06){$L$}
\usefont{T1}{ptm}{m}{n}
\rput(1.4,-1.38){$\alpha$}
\rput(2,1.2){$\beta$}

\rput(0.3,-0.6){$\hfs(\alpha)^+$}
\rput(3.3,-0.6){$\hfu(\alpha)^- $}
\rput(0.3,0.6){$\hfu(\beta)^-$}
\rput(3.3,0.6){$\hfs(\beta)^+$}
\end{pspicture} }}
 \caption{The two possible orientations of lozenges} \label{fig:type_of_lozenge}
\end{figure}

\subsection{Foliations and slitherings}

We leave for a bit Anosov flows to digress about ($\R$-covered) foliations. Thurston in \cite{Thurston:3MFC} introduced the following notion:
\begin{defin}
 The manifold $M$ slithers around the circle if there exists a fibration $s \colon \widetilde{M} \rightarrow S^1$ such that $\pi_1(M)$ acts by bundle automorphisms of $s$, i.e., an element $\gamma$ of $\pi_1(M)$ sends a fiber $s^{-1}(x)$ to a possibly different fiber. Such a map $s$ is called a slithering.
\end{defin}

A slithering $s$ defines a foliation $\mathcal{F}(s)$ on $M$, just by taking the leaves to be the projections on $M$ of the connected components of $s^{-1}(x)$. Reciprocally, we say that a foliation comes from a slithering if it is obtained in that way. It is immediate that foliations coming from slitherings are $\R$-covered. Here, $\R$-covered means that the leaf space of the foliation is $\R$.\\

 Recall that a \emph{taut} foliation is a foliation that admits a closed transversal. Note that the foliations we are interested in are always taut. A skewed $\R$-covered Anosov flow is transitive, so each strong leaf is dense (see \cite{Plante}). Hence, in order to get a closed path transverse to the weak stable foliation, we can just follow a strong unstable leaf until we are close to where we began and close it up in a transverse way.\\
 Candel managed to apply the classical uniformization theorem to taut foliations:
\begin{thm}[Candel's Uniformization Theorem \cite{Candel:uniformization}]
 Let $\mathcal{F}$ be a taut foliation on an atoroidal $M$. Then there exists a Riemannian metric such that its restriction to every leaf is hyperbolic.
\end{thm}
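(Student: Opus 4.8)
The statement is a theorem of Candel \cite{Candel:uniformization}; here I sketch the strategy one would follow. The plan is to reduce the existence of a leafwise hyperbolic metric to the solvability of a single leafwise semilinear elliptic PDE, and then to use tautness and atoroidality to guarantee that solvability. First I would fix an auxiliary Riemannian metric $g_0$ on $M$; it induces on each leaf $L$ a smooth Riemannian metric whose Gauss curvature assembles into a function $K_0$ on $M$ which is continuous globally and smooth along the leaves. Producing a leafwise metric of constant curvature $-1$ inside the leafwise conformal class of $g_0$ is the same as finding $u \colon M \to \R$, continuous on $M$ and smooth along leaves, so that $g := e^{2u} g_0$ has leafwise Gauss curvature identically $-1$; the formula for the conformal change of curvature in dimension two turns this into the equation
\begin{equation*}
 \Delta_{\mathcal{F}} u = K_0 + e^{2u},
\end{equation*}
where $\Delta_{\mathcal{F}}$ is the leafwise Laplacian of $g_0$ (the sign depending on conventions).

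Next I would solve this equation leaf by leaf while controlling the dependence on the transverse direction. On a leaf that is conformally the disk, the equation has a unique bounded solution, constructed as a limit over an exhaustion by Dirichlet-regular subdomains together with barrier-type a priori $C^{0}$ bounds and then interior Schauder estimates; compactness of $M$ makes these estimates uniform over the leaf space, so the leafwise solutions glue into a function $u$ on $M$ with the required regularity, and $e^{2u}g_0$ is then leafwise hyperbolic. The subtlety — and this is Candel's main point — is that the argument requires \emph{every} leaf to be conformally hyperbolic, and in fact only needs the weaker conclusion that every leafwise-harmonic probability measure $\mu$ on $M$ has negative Euler characteristic, $\chi(\mu) := \int_M \kappa_{\mathcal{F}}\, d\mu < 0$, where $\kappa_{\mathcal{F}}$ denotes the leafwise Euler density. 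So the real task is to verify this criterion under the hypotheses of the theorem.

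This is the step I expect to be the main obstacle, and it is where tautness and atoroidality enter. Since $\mathcal{F}$ is taut, Garnett's theory of foliated Brownian motion furnishes harmonic measures, and every closed $\mathcal{F}$-saturated subset carries one. If some harmonic measure $\mu$ satisfied $\chi(\mu) \geq 0$, then — passing to a minimal set in its support and invoking the leafwise Gauss--Bonnet identity for harmonic measures — one would be forced into one of the following situations: a compact leaf of genus $\leq 1$, or a holonomy-invariant transverse measure concentrated on leaves of Euclidean conformal type. A sphere leaf is excluded because $M$ is irreducible in the relevant setting (for instance $M$ is aspherical, which holds whenever $M$ carries an Anosov flow); a torus leaf is an embedded incompressible torus, contradicting atoroidality; and a holonomy-invariant transverse measure supported on a Euclidean-type piece can, by the standard structure theory of taut foliations, be upgraded to an embedded incompressible torus or Klein bottle in $M$, once more contradicting atoroidality. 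Hence $\chi(\mu) < 0$ for every harmonic measure, Candel's criterion holds, the displayed PDE is solvable with the stated regularity, and $g = e^{2u} g_0$ is the desired leafwise hyperbolic metric. Finally I would observe that, since the hyperbolic metric in a conformal class on a surface is unique, the function $u$ is canonically determined leaf by leaf, and therefore automatically inherits exactly the transverse regularity permitted by the smoothness of $\mathcal{F}$.
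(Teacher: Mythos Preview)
The paper does not prove this theorem at all: it is simply quoted as Candel's result with a bare citation to \cite{Candel:uniformization}, and is used only as background to justify that each leaf of $\widetilde{\mathcal{F}}$ has a well-defined circle at infinity. So there is no ``paper's own proof'' to compare your proposal against.

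That said, your sketch is a faithful outline of Candel's actual argument: the reduction to the leafwise curvature equation $\Delta_{\mathcal{F}} u = K_0 + e^{2u}$, the criterion $\chi(\mu)<0$ for every harmonic measure $\mu$, and the use of the topological hypotheses to exclude spherical or Euclidean conformal types. Two small points worth tightening if you ever write this up in full. First, Garnett's existence of harmonic measures does not require tautness; tautness (together with atoroidality and asphericity) is used rather to rule out Reeb components and to ensure that any compact leaf or invariant-measure piece of nonnegative Euler characteristic would produce an essential sphere or torus. Second, the passage ``a holonomy-invariant transverse measure supported on a Euclidean-type piece can \dots\ be upgraded to an embedded incompressible torus'' is where the real work hides; in Candel's paper this is handled via the Riemann--Hurwitz/Gauss--Bonnet analysis of minimal sets, and one should either cite that or spell it out, since it is not a one-line consequence of ``standard structure theory''.
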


Using the metric given by Candel's uniformization theorem, we can define a boundary at infinity for any leaf $\lambda$ of $\widetilde{\mathcal{F}}$ that we denote by $S^1_{\infty}(\lambda)$. Thurston explained how to stitch those boundaries together to obtain a ``universal circle'' that takes into account the topological and geometrical information of the foliation.

Let us write $\mathcal{L}$ for the leaf space of the foliation $\mathcal{F}$.
\begin{defin}[Universal circle]
Let $\mathcal{F}$ be a $\R$-covered taut foliation on an atoroidal $3$-manifold $M$. A \emph{universal circle} for $\mathcal{F}$ is a circle $\univ$ together with the following data:
\begin{enumerate}
 \item There is a faithful representation
\begin{equation*}
 \rho_{\text{univ}} \colon \pi_1(M) \rightarrow \text{Homeo}^+\left(\univ \right)\,;
\end{equation*}
 \item For every leaf $\lambda$ of $\wt{\mathcal{F}}$, there is a monotone map 
\begin{equation*}
 \Psi_{\lambda} \colon \univ \rightarrow S^1_{\infty}(\lambda).
\end{equation*}
Moreover, the map
\begin{equation*}
 \Psi \colon \univ \times \mathcal{L} \rightarrow E_{\infty}
\end{equation*}
defined by $\Psi(\cdot , \lambda) := \Psi_{\lambda}(\cdot)$ is continuous.\\
Here, $E_{\infty}:= \bigcup_{\lambda \in \mathcal{L}} S^1_{\infty}(\lambda)$ is given the largest topology such that the endpoint map $e\colon T^1 \wt{\mathcal{F}} \rightarrow E_{\infty}$, which associates the endpoint in $ S^1_{\infty}(\lambda)$ of a geodesic ray defined by an element of the unit tangent bundle of the leaf $\lambda$, is continuous.
 \item For every leaf $\lambda$ of $\wt{\mathcal{F}}$ and any $\gamma \in \pi_1(M)$ the following diagram commutes:
\begin{equation*}
 \xymatrix{
 \univ \ar[r]^{\rho_{\text{univ}}(\gamma)} \ar[d]_{\Psi_{\lambda}} & \univ \ar[d]^{\Psi_{\gamma \cdot \lambda}} \\
  S^1_{\infty}(\lambda) \ar[r]_{\gamma} & S^1_{\infty}(\gamma \cdot \lambda)
}
\end{equation*}
\end{enumerate}

\end{defin}
This definition is taken from \cite{Calegari:book}. Note however that Calegari defines universal circles for any kind of taut foliations and hence needs a last condition that is empty for $\R$-covered foliations.

\begin{thm}[Thurston \cite{Thurston:3MFC}, Fenley \cite{Fen:Foliations_TG3M}, Calegari \cite{Calegari:geometry_of_R_covered}, Calegari and Dunfield \cite{Calegari-Dunfield}]
 A foliation coming from a slithering defines a universal circle.
\end{thm}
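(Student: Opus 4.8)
The plan is to construct the universal circle by transporting the circle at infinity of a single leaf over the entire leaf space, using the extra rigidity furnished by the slithering to make this transport canonical and $\pi_1(M)$-equivariant.

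First I would fix the leafwise hyperbolic metric provided by Candel's Uniformization Theorem, so that each leaf $\lambda$ of $\wt{\mathcal{F}}$ is a complete hyperbolic plane and therefore carries a well-defined circle at infinity $S^1_{\infty}(\lambda)$; this is the data underlying $E_{\infty}$ and its cylinder-at-infinity topology. Next I would exploit the slithering $s\colon \M\rightarrow S^1$: lifting it to the universal cover of the base gives $\bar s\colon \M\rightarrow\R$, which identifies the leaf space $\mathcal{L}$ with $\R$, compatibly both with the $\Z$-action of the deck group of $S^1$ and with the action of $\pi_1(M)$ (covering the holonomy representation $\pi_1(M)\rightarrow\mathrm{Homeo}^+(S^1)$). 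The essential point --- and the only place where the hypothesis ``coming from a slithering'' is genuinely used --- is \emph{uniform comparability of leaves}: since $s$ is a fibration, $\pi_1(M)$ acts by bundle automorphisms, and $M$ is compact, any two leaves $\lambda,\mu$ of $\wt{\mathcal{F}}$ admit a quasi-isometry $f_{\lambda\mu}\colon\lambda\rightarrow\mu$ whose constants do not depend on the pair (flow across the fibration over a fundamental interval of $S^1$, then globalize using equivariance), and these can be chosen coherently, so that $f_{\mu\nu}\circ f_{\lambda\mu}$ stays a bounded distance from $f_{\lambda\nu}$ and the whole family is $\pi_1(M)$-equivariant.

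Then I would push these maps to infinity. By the Morse stability lemma in $\Hyp^2$, each $f_{\lambda\mu}$ extends to a homeomorphism $\partial f_{\lambda\mu}\colon S^1_{\infty}(\lambda)\rightarrow S^1_{\infty}(\mu)$, and the bounded-distance coherence of the $f_{\lambda\mu}$ upgrades at infinity to an exact, $\pi_1(M)$-equivariant cocycle identity $\partial f_{\mu\nu}\circ\partial f_{\lambda\mu}=\partial f_{\lambda\nu}$. Fixing a base leaf $\lambda_0\in\mathcal{L}$, I would set $\univ:=S^1_{\infty}(\lambda_0)$, define $\Psi_{\lambda}:=\partial f_{\lambda_0\lambda}\colon\univ\rightarrow S^1_{\infty}(\lambda)$ (a homeomorphism, a fortiori monotone), and define $\piuniv(\gamma):=\partial f_{\gamma\lambda_0,\lambda_0}\circ\gamma_{\ast}$, where $\gamma_{\ast}\colon S^1_{\infty}(\lambda_0)\rightarrow S^1_{\infty}(\gamma\lambda_0)$ is the boundary map induced by the deck transformation $\gamma$. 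The cocycle identity shows $\piuniv$ is a homomorphism; the commuting diagram relating $\piuniv(\gamma)$, $\Psi_{\lambda}$ and the action of $\gamma$ on $S^1_{\infty}(\lambda)$ is immediate from the definitions; and faithfulness holds because an element of the kernel would fix every leaf and act as the identity at infinity on each, which for a taut foliation on an atoroidal $M$ forces it to be trivial. The one remaining axiom --- continuity of $\Psi\colon\univ\times\mathcal{L}\rightarrow E_{\infty}$ into the cylinder-at-infinity topology --- I would deduce from the fact that, by compactness of $M$ and smoothness of $s$, the quasi-isometries $f_{\lambda\mu}$, and hence their boundary extensions, depend continuously on the pair of leaves.

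The main obstacle is exactly this last continuity statement, together with the coherent choice of the comparison maps: one has to control the quasi-isometry constants and the closeness of compositions \emph{uniformly} over all pairs of leaves, and then verify that the resulting family of boundary homeomorphisms varies continuously; this is where the real analytic work sits. (In the general $\R$-covered --- not necessarily slithering --- setting the comparison maps are only monotone, not homeomorphisms, and one is forced to pass to a ``minimal'' universal circle by collapsing the indeterminacy gaps; that is the additional content of the theorems of Fenley and of Calegari--Dunfield, whereas Thurston's original slithering argument is essentially the clean construction sketched above, which is why it is listed first.)
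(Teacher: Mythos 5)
The paper does not prove this theorem: it simply cites Thurston (for the slithering case) and Fenley, Calegari, and Calegari--Dunfield (as successive generalizations), and adds one sentence to that effect. So there is no internal proof to compare against. Your sketch is a faithful reproduction of Thurston's original argument, and it correctly isolates what the slithering hypothesis buys: uniform quasi-isometries between any two leaves, hence genuine boundary \emph{homeomorphisms} (rather than the merely monotone maps one gets for a general $\R$-covered foliation), so that one may simply take $\univ = S^1_{\infty}(\lambda_0)$ for a fixed base leaf. That is the right skeleton, and your closing remark correctly places the extra work of Fenley and Calegari--Dunfield in the non-slithering setting.

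Beyond the analytic gap you already flag (uniform control of the comparison maps and continuity of $\Psi$), one step is asserted rather than proved and does not follow as written: faithfulness. With your definition $\rho_{\text{univ}}(\gamma)=\partial f_{\gamma\lambda_0,\lambda_0}\circ\gamma_*$, the hypothesis $\rho_{\text{univ}}(\gamma)=\mathrm{id}$ gives only an identity between the boundary action of $\gamma$ and the comparison map $\partial f_{\gamma\lambda_0,\lambda_0}$; it does not by itself say that $\gamma$ fixes $\lambda_0$, let alone every leaf, so your claim that ``an element of the kernel would fix every leaf and act as the identity at infinity'' is begging the question. What one actually needs to argue is, first, that for a slithering on a closed aspherical atoroidal $3$-manifold the induced action of $\pi_1(M)$ on the leaf space $\mathcal L\cong\R$ is already faithful (a nontrivial element acting trivially on $\mathcal L$ would force a Seifert-fibered or reducible piece), and second, that a leaf-preserving deck transformation acting trivially on the circle at infinity of a hyperbolic leaf must be the identity on that leaf, whence on $\M$. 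You should separate and substantiate these two points rather than collapse them into one sentence.
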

The different sources for this result are in fact different generalizations of the original result due to Thurston.

\begin{defin}[Regulating flow]
 A flow $\psi^t$ on $M$ is said to be regulating for $\mathcal{F}$ if $\psi^t$ is transverse to $\mathcal{F}$ and, when lifted to the universal cover, any orbit of $\widetilde{\psi}^t$ intersects every leaf of $\widetilde{\mathcal{F}}$. In other words, we have a homeomorphism between any orbit of $\widetilde{\psi}^t$ and the leaf space of $\mathcal{F}$.
\end{defin}

The main result concerning slitherings is probably the following, but note that Fenley and Calegari obtained that result for a larger class of foliations:
\begin{thm}[Thurston, Fenley, Calegari \cite{Calegari:geometry_of_R_covered}] \label{thm:regulating_pseudo_Anosov}
 If $\mathcal{F}$ is a foliation coming from a slithering on an atoroidal, aspherical closed $3$-manifold $M$, then it admits a pseudo-Anosov regulating flow  $\psi^t \colon M \rightarrow M$.
\end{thm}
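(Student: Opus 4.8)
The plan is to build the flow out of its weak stable and unstable singular foliations, producing these at infinity first and only afterwards assembling the flow. Since $\mathcal{F}$ comes from a slithering $s\colon\M\to S^1$, a lift $\hat{s}\colon\M\to\R$ exhibits $\M$ as a fibration over $\R$ whose fibers are the leaves of $\widetilde{\mathcal{F}}$; as $M$ is aspherical the leaves are planes, $\M\cong\R^3$, and the leaf space $\mathcal{L}$ is $\R$. Candel's uniformization theorem \cite{Candel:uniformization} supplies a metric that is hyperbolic on each leaf, so every leaf $\lambda$ has a circle at infinity $S^1_\infty(\lambda)$, and the theorem just quoted provides the universal circle $\univ$, the $\pi_1(M)$-action on it, and the continuous, equivariant family of monotone maps $\Psi_\lambda\colon\univ\to S^1_\infty(\lambda)$.

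The first substantive step is to extract from this data two mutually transverse $\pi_1(M)$-invariant laminations $\Lambda^{+},\Lambda^{-}$ of $\univ$. The mechanism is the turning of $\mathcal{F}$: as $\lambda$ moves monotonically toward the two ends of $\mathcal{L}\cong\R$, the intervals collapsed by $\Psi_\lambda$ vary monotonically, and their nested limits produce the two laminations. Carrying $\Lambda^{\pm}$ to a leaf $\lambda$ through $\Psi_\lambda$ and replacing chords by hyperbolic geodesics, one obtains a pair of transverse geodesic laminations on every leaf, depending continuously on $\lambda$ and equivariantly on $\pi_1(M)$. I would then blow these up to genuine laminations and collapse their complementary regions by a Moore-type argument, arriving at a pair of mutually transverse \emph{singular} foliations $\mathcal{G}^{s},\mathcal{G}^{u}$ of the leaves, whose singularities are $n$-prongs with $n\ge 3$.

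Next comes the flow. Because the slithering makes $\mathcal{F}$ \emph{uniform}, it carries a transverse flow each of whose lifted orbits meets every leaf exactly once --- a regulating transverse flow --- and, using the universal circle and the laminations above, one can arrange a $\pi_1(M)$-equivariant such flow $\widetilde{\psi}^{t}$ whose holonomy inside $\widetilde{\mathcal{F}}$ carries $\mathcal{G}^{s},\mathcal{G}^{u}$ to themselves. Identifying the orbit space $\orb$ of $\widetilde{\psi}^{t}$ with a leaf, hence with $\Hyp^2$, and flowing $\mathcal{G}^{s},\mathcal{G}^{u}$ out along $\widetilde{\psi}^{t}$ produces the singular $2$-dimensional foliations $\Lambda^{s},\Lambda^{u}$ of the quotient flow $\psi^{t}$ on $M$ required by Definition \ref{def:pseudo-Anosov}. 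The remaining point is the dynamics: along $\widetilde{\psi}^{t}$ the leaves of $\widetilde{\mathcal{F}}$ turn monotonically, and this turning contracts the complementary bands of $\mathcal{G}^{s}$ while expanding those of $\mathcal{G}^{u}$; since $M$ is compact this yields the forward (resp. backward) asymptoticity of orbits inside stable (resp. unstable) leaves, and the singular orbits of $\psi^{t}$ are exactly the singular points of $\mathcal{G}^{s}$, of which atoroidality forces there to be finitely many.

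The genuine obstacle is the middle step: turning the purely topological data of the slithering and its universal circle into an honest pair of transverse singular foliations --- with finitely many singular leaves, all $n$-prongs with $n\ge 3$ --- and then certifying that the associated regulating flow actually satisfies Definition \ref{def:pseudo-Anosov}. Showing that $\Lambda^{+}$ and $\Lambda^{-}$ are genuine, that they fill $\univ$, and that their complementary regions behave well is precisely where the atoroidal and aspherical hypotheses are indispensable; this is the technical heart of \cite{Thurston:3MFC}, \cite{Fen:Foliations_TG3M} and \cite{Calegari:geometry_of_R_covered}, which I would follow for the details.
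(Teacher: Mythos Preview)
Your strategy matches the paper's own rough description of how the cited works construct the regulating pseudo-Anosov flow: produce two $\pi_1(M)$-invariant laminations $\Lambda^{\pm}$ on the universal circle, transfer them to transverse geodesic laminations on the leaves of $\widetilde{\mathcal{F}}$, collapse complementary regions to obtain two transverse singular foliations, and take the flow to be generated by the line field along their intersection. The paper does not give its own proof of this theorem --- it is a cited result of Thurston, Fenley and Calegari --- and your sketch is a faithful (and somewhat more detailed) summary of that same construction, correctly identifying the collapsing step and the verification of the pseudo-Anosov axioms as the technical heart to be found in the references.
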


Fenley proves even more about these pseudo-Anosov regulating flows:
\begin{thm}[Fenley \cite{Fenley:Ideal_boundaries}, \cite{Fenley:Rigidity_pseudo-Anosov}]
 Let $\mathcal{F}$ be a foliation coming from a slithering on an atoroidal, aspherical closed $3$-manifold $M$. Then, up to topological conjugacy, there is only one regulating pseudo-Anosov flow $\psi^t$.\\
 Furthermore, the orbit space of $\psi^t$ is a disc that admits $S^1_{\text{univ}}$ as a natural boundary.
\end{thm}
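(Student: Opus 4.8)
The statement to establish has two parts: first, that the regulating pseudo-Anosov flow produced by Theorem~\ref{thm:regulating_pseudo_Anosov} is unique up to topological conjugacy; second, that its orbit space is an open disc with $\univ$ as ideal boundary. The plan is to do the geometric (second) part first, since it sets up the framework for the uniqueness argument. So I would start by fixing a regulating pseudo-Anosov flow $\psi^t$ for $\mathcal{F}$ and lifting everything to $\M$. Because $\psi^t$ is transverse to $\mathcal{F}$ and the foliation is $\R$-covered, the "which leaf am I in" function is strictly monotone along each orbit of $\widetilde{\psi}^t$, so every orbit meets every leaf of $\widetilde{\mathcal{F}}$ \emph{at most} once; "regulating" supplies "at least" once. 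Hence, for a fixed leaf $\lambda$ of $\widetilde{\mathcal{F}}$, the composite $\lambda \hookrightarrow \M \to \mathcal{O}_{\psi}$ (quotient by $\widetilde{\psi}^t$) is a homeomorphism. Since the orbit space of a pseudo-Anosov flow on a closed $3$-manifold is a topological plane, and $\lambda$ carries a complete hyperbolic metric by Candel's uniformization theorem, this identification lets me compactify $\mathcal{O}_{\psi}$ to a closed disc $\overline{\mathcal{O}_{\psi}} = \mathcal{O}_{\psi} \cup S^1_{\infty}(\lambda)$, with the $\pi_1(M)$-action extending to the boundary circle compatibly with the action on $S^1_{\infty}(\lambda)$.

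Next I would identify this boundary with $\univ$. The key is that a foliation coming from a slithering is \emph{uniform}: the slithering $s\colon \M \to S^1$ forces any two leaves of $\widetilde{\mathcal{F}}$ to lie within bounded Hausdorff distance in $\M$, and this bound is $\pi_1(M)$-invariant. Bounded Hausdorff distance between leaves makes the nearest-point projections $S^1_{\infty}(\lambda) \to S^1_{\infty}(\lambda')$ into homeomorphisms, so one may take the universal circle of $\mathcal{F}$ to be a single $S^1_{\infty}(\lambda_0)$, with the maps $\Psi_{\lambda}$ of the definition being exactly these projections — this is essentially the construction of $\univ$ already invoked in the preceding theorem. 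Fixing $\lambda_0$ and using the homeomorphism $\mathcal{O}_{\psi} \cong \lambda_0$ from the first step, I obtain a $\pi_1(M)$-equivariant homeomorphism $\partial\overline{\mathcal{O}_{\psi}} \cong S^1_{\infty}(\lambda_0) = \univ$, which is the second assertion.

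For uniqueness, suppose $\psi_1^t$ and $\psi_2^t$ are two regulating pseudo-Anosov flows for $\mathcal{F}$. By the above, both orbit spaces are equivariantly identified with the common leaf $\lambda_0$ and both ideal boundaries with $\univ$, so there is a $\pi_1(M)$-equivariant homeomorphism $\mathcal{O}_{\psi_1} \to \mathcal{O}_{\psi_2}$ extending to the boundary. What remains is to see that this homeomorphism carries the singular stable and unstable foliations of $\psi_1$ onto those of $\psi_2$; once that is known, a reconstruction principle — a pseudo-Anosov flow is determined up to topological conjugacy by its orbit space together with the induced pair of (possibly singular) transverse foliations and the $\pi_1(M)$-action, in the spirit of the Barbot--Fenley analysis of transverse geometry used throughout this chapter — closes the argument. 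The mechanism I would use is that endpoints of stable (resp. unstable) leaves of $\widetilde{\psi}_i^t$, viewed in $\mathcal{O}_{\psi_i}\cong\lambda_0$, give a $\pi_1(M)$-invariant geodesic lamination of $\lambda_0$, hence an ideal lamination of $\univ$; and this pair of ideal laminations is forced by $\mathcal{F}$ alone — concretely, by how the transverse foliations $\hfs$, $\hfu$ degenerate at infinity, i.e. by the data encoded in the monotone maps $\eta^s$, $\eta^u$ and the skewing they record (Proposition~\ref{prop:eta_s_eta_u}). Since the two flows then produce identical ideal data, they are topologically conjugate.

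The main obstacle, I expect, is precisely this last forcing step: proving that the ideal stable and unstable laminations on $\univ$ are intrinsic to $\mathcal{F}$ rather than extra structure carried by the chosen flow. This requires controlling the prong structure of the pseudo-Anosov flow at its ideal boundary and ruling out spurious singular orbits or laminations not dictated by $\mathcal{F}$, which is where Fenley's "ideal boundary" and "rigidity" techniques do the real work. A secondary point that must be handled carefully, though standard, is verifying that $\Psi_{\lambda}$ is genuinely a homeomorphism (not merely monotone) for slitherings, since that is exactly what makes "boundary $=\univ$" literally true rather than true only up to a monotone quotient.
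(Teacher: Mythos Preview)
The paper does not actually prove this theorem: it is stated as a result of Fenley with references to \cite{Fenley:Ideal_boundaries} and \cite{Fenley:Rigidity_pseudo-Anosov}, and no proof is given. The surrounding discussion (``let us describe very roughly how these regulating pseudo-Anosov flows are obtained\dots'') sketches the \emph{construction} of a regulating flow from the laminations $\Lambda^{\pm}_{\text{univ}}$, not the uniqueness or the boundary identification. So there is no ``paper's own proof'' to compare your proposal against.

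That said, your outline is a reasonable sketch of how Fenley's argument runs, and you have correctly located the substantive difficulty: the identification $\mathcal{O}_{\psi}\cong\lambda_0$ and the extension to $\univ$ are essentially formal consequences of the regulating property and uniformity of slitherings, while the uniqueness hinges entirely on showing that the pair of ideal laminations on $\univ$ is determined by $\mathcal{F}$ alone. That forcing step is precisely the content of Fenley's rigidity papers and is not something one can complete with the tools assembled in this chapter. Your honest flagging of this as the ``main obstacle'' is appropriate; just be aware that it is not a small technical point but the heart of the theorem, so what you have written is an outline of the strategy rather than a proof.
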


In order to have a better picture of the above results, let us describe very roughly how these regulating pseudo-Anosov flows are obtained. The first step is to construct a lamination of $\univ$. Let us recall the definition,
\begin{defin}
 Let $(a,b)$ and $(c,d)$ be two pairs of points in $S^1$. We say that they \emph{intersect} if $(c,d)$ is contained in different components of $S^1\smallsetminus \{a,b\}$.
\end{defin}
Calegari \cite{Calegari:book} says that the two pairs are \emph{linked}. We will justify later (Remark \ref{rem:intersects}) why we use this name.
\begin{defin}
 A \emph{lamination} of $S^1$ is a closed subset of the set of unordered pairs of distinct points in $S^1$ with the property that no two elements of the lamination intersect.
\end{defin}

Now, the first (big) step towards Theorem \ref{thm:regulating_pseudo_Anosov} is:
\begin{thm}[Thurston, Fenley, Calegari]
 If $\mathcal{F}$ is a foliation coming from a slithering on an atoroidal, aspherical closed $3$-manifold $M$, then the associated universal circle $\univ$ admits two laminations $\Lambda^{\pm}_{\text{univ}}$ which are preserved under the natural action of $\pi_1(M)$ on $\univ$.
\end{thm}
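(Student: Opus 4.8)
The plan is to construct the two laminations $\Lambda^{\pm}_{\text{univ}}$ of $\univ$ directly from the stable and unstable foliations of the slithering, using the fact (already quoted as Candel's uniformization theorem and the universal circle theorem) that each leaf $\lambda$ of $\wt{\mathcal{F}}$ carries a hyperbolic metric with ideal boundary $S^1_{\infty}(\lambda)$ and that there are monotone gluing maps $\Psi_{\lambda}\colon \univ \to S^1_{\infty}(\lambda)$ which are $\pi_1(M)$-equivariant via $\rho_{\text{univ}}$. First I would recall that a slithering comes from a nowhere-zero integrable distribution whose leaves, seen in $\wt M$, are the fibers of $s\colon \wt M \to S^1$; two nearby leaves $\lambda,\lambda'$ are then canonically identified away from a discrete set of ``branching'' directions, and the failure of this identification to extend to the whole ideal boundary is exactly what produces leaves of the lamination. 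Concretely, for a pair of leaves $\lambda < \lambda'$ the holonomy along the slithering gives a leafwise map $h\colon \lambda \to \lambda'$ which is a quasi-isometry on the regions where it is defined; the ``gaps'' of $h$, i.e. the complementary intervals in $S^1_\infty(\lambda)$ of the domain of the boundary extension of $h$, are each collapsed by $\Psi_\lambda$ and their endpoints pull back (via $\Psi_\lambda$) to pairs of points in $\univ$. I would define $\Lambda^+_{\text{univ}}$ to be the closure in the space of unordered pairs of the collection of all such endpoint-pairs, ranging over all pairs of leaves on the ``positive'' side, and $\Lambda^-_{\text{univ}}$ symmetrically on the negative side; the superscripts $\pm$ refer to the two sides of the transverse orientation of $\mathcal{F}$ (equivalently, to the stable versus the unstable picture).

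The key steps, in order, would be: (1) verify that each such endpoint-pair is genuinely a pair of \emph{distinct} points of $\univ$ — this uses that $\Psi_\lambda$ is monotone (so it collapses intervals to points but does not collapse a point to a pair) together with the fact that a gap of the holonomy is a nondegenerate interval because of the exponential contraction/expansion transverse to $\mathcal{F}$ coming from the Anosov/slithering structure; (2) check the \emph{unlinkedness}: given two gaps, either of the same pair $(\lambda,\lambda')$ or of nested pairs $(\lambda,\lambda')$ and $(\mu,\mu')$, their endpoint-pairs in $\univ$ do not intersect in the sense of the definition of a lamination. For a single holonomy map this is immediate because the gaps are disjoint sub-intervals of $S^1_\infty(\lambda)$ and $\Psi_\lambda$ is monotone, hence preserves cyclic order; for gaps coming from different leaf-pairs one uses the continuity of $\Psi\colon \univ\times\mathcal{L}\to E_\infty$ and a compactness/limiting argument to reduce to the nested case, where monotonicity of the compositions $\Psi_{\lambda'}\circ(\text{extension of }h)$ again forces the order to be respected; (3) check that the set is \emph{closed}, which is essentially built into the definition since we took a closure, but one must make sure the closure does not introduce linked pairs — this again follows from the fact that a limit of unlinked pairs is unlinked (``unlinked'' is a closed condition) and that degenerate limits (a pair collapsing to a point) can be discarded or are excluded by step (1); (4) check $\pi_1(M)$-invariance: for $\gamma\in\pi_1(M)$ the commuting square in the definition of the universal circle, $\Psi_{\gamma\lambda}\circ\rho_{\text{univ}}(\gamma) = \gamma\circ\Psi_\lambda$, shows that $\rho_{\text{univ}}(\gamma)$ carries the gap data of the pair $(\lambda,\lambda')$ to the gap data of $(\gamma\lambda,\gamma\lambda')$, hence preserves the whole collection and its closure. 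Finally I would record that the two laminations are transverse to each other (they come from the two transverse foliations/sides), which is what makes them useful for building the singular bifoliation $\Lambda^{\pm}$ of the regulating pseudo-Anosov flow in the next theorem.

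The main obstacle I expect is step (2) in its full generality — proving that gaps arising from \emph{different}, non-nested pairs of leaves still give unlinked pairs in $\univ$. The subtlety is that $\univ$ is only a quotient-type object assembled from all the $S^1_\infty(\lambda)$ simultaneously, and two leaves that are ``far apart'' in $\mathcal{L}$ need not have a single holonomy map between them; one has to chain holonomies through intermediate leaves and control how gaps can merge, split, or be created along the chain. The right tool is the continuity of $\Psi$ together with the monotonicity of each $\Psi_\lambda$: one shows that the cyclic order of endpoints of gaps is a $\mathcal{L}$-continuous family of order relations, so any potential linking would have to appear already at some first leaf-parameter where it occurs, contradicting the nested case which we can handle by hand. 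Making this ``first occurrence'' argument rigorous — i.e. that the linking locus in $\mathcal{L}\times\mathcal{L}$ is open and its boundary reduces to the already-understood situations — is the technical heart of the proof, and is where I would spend most of the effort; everything else (distinctness, closedness, equivariance) is comparatively routine given the properties of the universal circle recalled above.
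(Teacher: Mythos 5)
Your proposal relies on a picture --- the ``gaps'' of the holonomy $h\colon\lambda\to\lambda'$ along the slithering, defined as the complementary intervals in $S^1_\infty(\lambda)$ of the domain of the boundary extension of $h$ --- that collapses in this setting, and the collapse is fatal. A foliation coming from a slithering is $\R$-covered, and the leaves of $\wt{\mathcal{F}}$ with their Candel metrics are uniformly quasi-isometric; for any \emph{fixed} pair of leaves $\lambda,\lambda'$ the comparison map $h$ is therefore a genuine quasi-isometry of hyperbolic planes, and its boundary extension $\bar h\colon S^1_\infty(\lambda)\to S^1_\infty(\lambda')$ is a homeomorphism of circles. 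There are no nondegenerate complementary intervals of its domain, so the set you propose to take the closure of is empty. Your ancillary remark that ``two nearby leaves are canonically identified away from a discrete set of `branching' directions'' is symptomatic of the same confusion: $\R$-covered means precisely that the leaf space has no branching.

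The laminations actually arise from \emph{asymptotic} degeneration, not from any single comparison map. The slithering has a monodromy $T$, a strictly increasing homeomorphism of $\mathcal{L}\cong\R$ commuting with the $\pi_1(M)$-action, and each $\bar h_{\lambda,T^n\lambda}$ is individually a homeomorphism, but the family degenerates as $n\to+\infty$ (resp.\ $n\to-\infty$); the attracting/repelling pairs of boundary directions that survive in the limit, transported to $\univ$ by the $\Psi_\lambda$, are what define $\Lambda^+_{\text{univ}}$ (resp.\ $\Lambda^-_{\text{univ}}$). Extracting a well-defined, closed, unlinked, $\pi_1(M)$-invariant limit from this data --- via uniform quasisymmetry of the $\bar h$ and a compactness argument --- is the technical core of the Thurston--Calegari--Fenley proof, and it is exactly what your sketch does not supply. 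Your remaining steps (the $\pm$ corresponding to the two transverse sides, unlinkedness via continuity of $\Psi$ and monotonicity of the $\Psi_\lambda$, closedness surviving the closure, equivariance from the commuting square) are sound heuristics and would apply once the correct asymptotic gap structure is in hand. For what it is worth, the paper states this theorem only as a citation, with no internal proof, so there is no argument in the text itself to compare against.
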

While proving that result, they also construct two laminations $\wt{\Lambda}^{\pm}$ of $\M$ such that they are transverse, $\pi_1(M)$-invariant and the intersection of $\wt{\Lambda}^{+}$ (or $\wt{\Lambda}^{+}$) with a leaf of $\mathcal{F}$ is a geodesic (for the leaf-wise hyperbolic metric). The regulating pseudo-Anosov flow is then obtained from $\wt{\Lambda}^{\pm}$ by ``collapsing'' the complementary regions, thus obtaining two transverse singular foliations, and taking the flow to be the line field generated by the intersection.\\
Note also that, in our case, any lamination in $\M$, $\pi_1(M)$-invariant and obtained from the laminations $\Lambda^{\pm}_{\text{univ}}$ will give rise to a regulating pseudo-Anosov flow (see \cite[Chapter 9]{Calegari:book}).

\section{Skewed $\R$-covered Anosov Flows}

One of the motivations of Thurston in \cite{Thurston:3MFC} to study foliations coming from slithering was Fenley's examples. Indeed:
\begin{prop}[Thurston]
 Let $\flot$ be a skewed $\R$-covered Anosov flow. Then the foliations $\fs$ and $\fu$ come from slitherings.
\end{prop}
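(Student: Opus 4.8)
The plan is to exhibit an explicit slithering of $M$ whose associated foliation is $\fs$ (and, symmetrically, $\fu$), using the structure of the leaf spaces established earlier in this chapter. Recall that for a skewed $\R$-covered Anosov flow the stable leaf space $\leafs$ is homeomorphic to $\R$, and Fenley's Proposition \ref{prop:eta_s_eta_u} provides the monotone, $\pi_1(M)$-equivariant maps $\eta^s \colon \leafs \to \leafu$, $\eta^u \colon \leafu \to \leafs$, together with $\eta = \eta^u\circ\eta^s$ (equivalently as a map on $\orb$). The composition $\eta^u\circ\eta^s \colon \leafs \to \leafs$ is a strictly increasing homeomorphism commuting with the $\pi_1(M)$-action, and it has no fixed point (a fixed point would correspond to a leaf $L^u$ meeting every $L^s$, which is excluded in the skewed case; alternatively it would force $L^s$ to be both invariant and attracting/repelling under a deck transformation in an incompatible way). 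So $\eta\colon\leafs\to\leafs$ is conjugate to a translation, and $\leafs/\langle\eta\rangle$ is a circle.

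The key steps, in order, would be: (1) identify $\leafs$ with $\R$ so that $\eta$ becomes the translation $x\mapsto x+1$; since $\eta$ is $\pi_1(M)$-equivariant and strictly increasing with no fixed point, Hölder's theorem (or a direct construction averaging a point orbit) gives such a coordinate, canonical up to an affine change, and in this coordinate $\pi_1(M)$ acts by homeomorphisms of $\R$ commuting with integer translation, hence descending to $\mathrm{Homeo}^+(S^1)$; (2) let $q\colon \widetilde M \to \leafs$ be the quotient map sending a point to the leaf of $\hfs$ through it — this is continuous and its fibers are exactly the connected components of leaves of $\hfs$, i.e. the leaves of $\widetilde{\fs}$; (3) post-compose with the covering $\R \to \R/\Z = S^1$ to obtain $s := (\text{mod }\Z)\circ(\text{coord})\circ q \colon \widetilde M \to S^1$; one checks $s$ is a fibration whose fibers are the leaves of $\widetilde{\fs}$, so $\mathcal F(s) = \fs$; (4) verify that $\pi_1(M)$ acts by bundle automorphisms of $s$: a deck transformation $\gamma$ permutes the leaves of $\widetilde{\fs}$, hence acts on $\leafs$, and this action in the chosen coordinate commutes with $\eta$ (by equivariance of $\eta^s,\eta^u$), therefore descends to a homeomorphism of $S^1$ through which $\gamma$ intertwines the fibers of $s$ — precisely the definition of a slithering. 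The same argument applied to $\leafu$ with $\eta^s\circ\eta^u$ shows $\fu$ also comes from a slithering.

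The main obstacle I expect is step (1), namely producing the canonical $\R$-coordinate on $\leafs$ in which $\eta$ is a genuine translation \emph{and} the $\pi_1(M)$-action is by honest homeomorphisms (so that it descends cleanly to $S^1$): one must be careful that $\leafs$ is only a topological line, that $\eta^u\circ\eta^s$ is a priori merely a monotone homeomorphism rather than smooth, and that the coordinate change be made functorially enough to be compatible with the whole group action. This is handled by the classical dynamical fact that an abelian-by-$\eta$ group of orientation-preserving homeomorphisms of $\R$ with a free central element is semiconjugate (here actually conjugate, since $\eta$ is fixed-point free and the line is $\leafs\cong\R$) to a group acting by translations on that element — the point being that $\langle \eta\rangle\cong\Z$ acts freely and properly discontinuously on $\leafs$, so $\leafs/\langle\eta\rangle$ is a circle and the quotient map is the desired $S^1$-valued slithering coordinate after lifting to $\widetilde M$. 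Everything else — continuity of $q$, the fact that fibers of $s$ are exactly the $\widetilde{\fs}$-leaves, and bundle-automorphism equivariance — is then formal, using only that $\hfs$ has leaf space $\leafs$ and that $\eta^s,\eta^u$ are $\pi_1(M)$-equivariant as in Proposition \ref{prop:eta_s_eta_u}.
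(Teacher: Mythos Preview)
Your proposal is correct and follows essentially the same route as the paper: quotient $\leafs$ by the action of $\eta^u\circ\eta^s$ to obtain a circle $C$, then take the slithering to be the composite $\M\to\leafs\to C$, with the bundle-automorphism property coming from the $\pi_1(M)$-equivariance of $\eta^u\circ\eta^s$. The paper's argument is slightly more direct than yours in that it never bothers to conjugate $\eta$ to a translation or to invoke H\"older-type results; since $\eta^u\circ\eta^s$ is a fixed-point-free increasing homeomorphism of $\leafs\cong\R$, the quotient $\leafs/\langle\eta^u\circ\eta^s\rangle$ is immediately a circle, which is exactly what you arrive at in your final paragraph after discarding the unnecessary coordinate change.
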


\begin{proof}
 Let $C:= \leafs / \eta^u \circ \eta^s$ and $\pi_C \colon \leafs \rightarrow C$ the projection. As $\leafs$ is homeomorphic to $\R$ and $\eta^u \circ \eta^s$ is a strictly increasing, continuous, $\pi_1(M)$-equivariant function (by Proposition \ref{prop:eta_s_eta_u}), we have that $C$ is homeomorphic to $S^1$ and that the action of $\pi_1(M)$ on $\leafs$ descends to an action by bundle automorphisms on $C$. Hence we can define $s^s \colon \M \rightarrow C$ as the canonical projection from $\M$ to $\leafs$, then to $C$ and it is clear that $\fs$ comes from the slithering $s^s$.\\
 The same thing applies for $\fu$.
\end{proof}

So a skewed $\R$-covered Anosov flow comes with two slitherings and we can apply the theory developed by Thurston. Note that those foliations are linked:
\begin{prop}
 A regulating flow for $\fs$ transverse to $\fu$ is also regulating for $\fu$ and vice-versa.
\end{prop}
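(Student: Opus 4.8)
The statement asserts a symmetry between the two weak foliations $\fs$ and $\fu$ of a skewed $\R$-covered Anosov flow with respect to the regulating property. My plan is to exploit the monotone equivariant maps $\eta^s \colon \leafs \to \leafu$ and $\eta^u \colon \leafu \to \leafs$ of Proposition \ref{prop:eta_s_eta_u}, together with the fact (proved earlier) that $\eta^u \circ \eta^s$ and $\eta^s \circ \eta^u$ are strictly increasing homeomorphisms. Suppose $\psi^t$ is a flow transverse to both $\fs$ and $\fu$ that is regulating for $\fs$; we want to show it is regulating for $\fu$. Lift $\psi^t$ to $\hat\psi^t$ on $\M$, fix an orbit $\gamma$ of $\hat\psi^t$, and consider the two maps $p^s \colon \gamma \to \leafs$ and $p^u \colon \gamma \to \leafu$ sending a point of $\gamma$ to the stable, respectively unstable, leaf through it. Since $\psi^t$ is transverse to $\fs$, the map $p^s$ is locally injective and continuous, and being regulating for $\fs$ means precisely that $p^s$ is a homeomorphism $\gamma \xrightarrow{\sim} \leafs$.

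The key step is to compare $p^u$ with $\eta^s \circ p^s$. Both are continuous maps from $\gamma \cong \R$ to $\leafu \cong \R$. I would first check that $p^u$ is itself monotone: transversality of $\psi^t$ to $\fu$ gives local injectivity, and connectedness of $\gamma$ plus the fact that $\leafu$ is a (Hausdorff, simply connected) $1$-manifold forces global monotonicity. Next, for a point $x \in \gamma$, the stable and unstable leaves $p^s(x)$ and $p^u(x)$ both pass through the same orbit in $\orb$, hence — reading off the picture of $\orb$ inside $\leafs \times \leafu$ bounded by the graphs of $\eta^s$ and $\eta^u$ (Figure \ref{fig:orbit_space}) — we have $\eta^u(p^u(x)) \leq p^s(x) \leq \eta^s(p^u(x))$ wait, more precisely $p^u(x)$ lies in the interval $[\,\cdot\,]$ between $(\eta^s)^{-1}$-type bounds; the clean way to say it is that the pair $(p^s(x), p^u(x))$ lies in the region of $\leafs\times\leafu$ caught between $\Gamma(\eta^s)$ and $\Gamma(\eta^u)$. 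Since $p^s$ is already a homeomorphism onto $\leafs$, composing with the homeomorphism $\eta^u\circ\eta^s$ (or $\eta^s$ alone, up to the appropriate composition) and using the squeeze between the two graphs shows that $p^u$ is proper and surjective: as $x$ runs over $\gamma$, $p^s(x)$ exhausts $\leafs$, the bounding leaves $\eta^s(p^u(x))$ and $\eta^u(p^u(x))^{-1}$ move monotonically to $\pm\infty$, and hence $p^u(x)$ is forced to exhaust all of $\leafu$. A monotone, proper, surjective, locally injective map of $\R$ onto $\R$ is a homeomorphism, so $p^u \colon \gamma \to \leafu$ is a bijection, i.e., $\psi^t$ is regulating for $\fu$.

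The symmetric statement (regulating for $\fu$ implies regulating for $\fs$) follows by interchanging the roles of $s$ and $u$ and of $\eta^s$ and $\eta^u$, since all the structural facts used — transversality to both foliations, the monotone equivariant maps $\eta^{s},\eta^{u}$, and the sandwich description of $\orb$ — are symmetric in $s$ and $u$.

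\textbf{Main obstacle.} The delicate point is the properness/surjectivity of $p^u$: I need to be sure that the coordinate $p^u(x)$ cannot stay bounded while $p^s(x) \to \pm\infty$. This is exactly where skewedness enters — for a suspension ($\leafs$ meeting every $\leafu$) the map $\eta^s$ would be a global homeomorphism with no ``gaps,'' and the argument is easy, but in the skewed case $\eta^s$ is only monotone and non-surjective, so one must use that the \emph{composition} $\eta^u \circ \eta^s$ is an honest homeomorphism of $\leafs$ (this is the content of Proposition \ref{prop:eta_s_eta_u}) to transfer the exhaustion of $\leafs$ by $p^s(\gamma)$ into an exhaustion of $\leafu$ by $p^u(\gamma)$. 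Making this squeeze rigorous — carefully tracking which half-leaves bound the orbit and invoking the monotonicity of $\eta^s,\eta^u$ — is the part that requires the most care; the transversality and local-injectivity parts are routine.
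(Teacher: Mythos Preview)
Your plan is sound and would go through once the squeeze inequalities are written correctly (the relations you want are $p^u(x) < \eta^s(p^s(x))$ and $p^s(x) < \eta^u(p^u(x))$; some of your expressions like ``$\eta^s(p^u(x))$'' are ill-typed, but this is cosmetic). Monotonicity of $p^u$ via local injectivity is fine, and the properness argument from the strip picture is exactly the right idea: boundedness of $p^u$ would bound $\eta^u(p^u)$ and contradict $p^s \to +\infty$.

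However, the paper's argument is far shorter and does not touch $\eta^s,\eta^u$ at all. It uses only the separation property: every leaf of $\hfs$ or $\hfu$ is a plane in $\M \cong \R^3$ and therefore separates $\M$ into two components. If a lifted orbit $\gamma$ of the regulating flow missed some $L^u \in \hfu$, it would lie entirely in one component; but since (in the skewed $\R$-covered case) there are stable leaves entirely contained in \emph{each} component of $\M \smallsetminus L^u$, $\gamma$ could not meet all stable leaves, contradicting the hypothesis that $\psi^t$ is regulating for $\fs$.

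So the two arguments use the same underlying structure but packaged differently: yours is order-theoretic, tracking $p^u$ against the boundary graphs of the strip model of $\orb$; the paper's is purely topological, invoking only that leaves separate $\M$. Your route has the merit of being explicit about where skewedness enters (through the $\eta$ maps), while the paper's one-line proof hides the step that there are stable leaves on \emph{both} sides of $L^u$ --- which, as you correctly identify, is the genuine content, and is implicitly supplied by the strip description of the orbit space.
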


\begin{proof}
 Any lift of a(n) (un)stable leaf separates $\widetilde{M}$ into two connected components, hence the result.
\end{proof}

Note that, when the skewed $\R$-covered Anosov flow we consider is just a geodesic flow on a negatively curved surface $\Sigma$, we have an obvious regulating flow coming to mind: here, $M$ corresponds to $H\Sigma$, so consider the flow that just push vectors along the fibers, without moving the base point (in other words, the flow generating rotations). It is clear that this flow is regulating. Now $\widetilde{H\Sigma}$ is (not surprisingly) homeomorphic to $\R \times \widetilde{\Sigma} $, where the first coordinate is given by how much a vector is turned with respect to a fixed direction (taken as a point on the visual boundary of $\widetilde{\Sigma}$). It turns out that we can always have this kind of identification whenever we consider a skewed $\R$-covered Anosov flow (see also Figure \ref{fig:skewed_R-covered_AF}):

\begin{SCfigure}[50][h]
\centering
 \scalebox{1.5}{ 

\begin{pspicture}(-2,-1)(2,7)
\psset{viewpoint=30 30 30,Decran=60}

\defFunction[algebraic]{helice}(t){cos((2*Pi/3)*t)}{sin((2*Pi/3)*t)}{t}
\psSolid[object=courbe,r=0.001,range=0 6,linecolor=blue,linewidth=0.02,resolution=360,function=helice]

\defFunction[algebraic]{geodesic}(t){(1-t)*(-0.70710) + t*cos((2*Pi/3)*3)}{(1-t)*(-0.70710) + t*sin((2*Pi/3)*3)}{3}

\defFunction[algebraic]{unstable_leaf}(s,t){(1-t)*(-sqrt(2)/2) + t*cos((2*Pi/3)*s)}{(1-t)*(-sqrt(2)/2) + t*sin((2*Pi/3)*s)}{s}
\psSolid[object=surfaceparametree,
    opacity=0.7,
    linecolor={[cmyk]{1,0,1,0.5}},
    base= 1.875 4.875 0 1,
    action = draw,
    function=unstable_leaf,
    linewidth=0.5\pslinewidth,ngrid=30 1]

\defFunction[algebraic]{stable_leaf}(s,t){(1-t)*cos((2*Pi/3)*(1+s)) + t*cos((2*Pi/3)*1)}{(1-t)*sin((2*Pi/3)*(1+s)) + t*sin((2*Pi/3)*1)}{1}
\psSolid[object=surfaceparametree,
    linecolor=red,
    base= 1.875 4.875 0 1,
    action = draw,
    function=stable_leaf,
    linewidth=0.5\pslinewidth,ngrid=20 1]


\psSolid[object=cylindrecreux,h=6,r=1,action=draw,linewidth=0.01,ngrid=3 25,opacity = 0.1,fillcolor=red,incolor=white](0,0,0)

\end{pspicture} }
 \caption{Using Proposition \ref{prop:identification_M_tilda}, we can represent $\hflot$ in the following way: $\M$ is identified with a solid cylinder where each horizontal slice is a stable leaf. On a stable leaf, the orbits of $\hflot$ are lines all pointing towards the same point on the boundary at infinity of the leaf. We represented a stable leaf, with some orbits on it, in red. The blue curve represents the point at infinity where orbits ends. It is a way of seeing $\leafs$ ``slithers''. An unstable leaf now, represented in green, is given by fixing the $(x,y)$-coordinates (i.e., the points that project to the same point on the universal circle) and taking the lines pointing towards the blue curve. Finally, the orbits of the regulating pseudo-Anosov flow $\wt{\psi}^t$ are vertical curves inside the cylinder and stabilize the foliation by vertical straight lines on the boundary.} \label{fig:skewed_R-covered_AF}
\end{SCfigure}
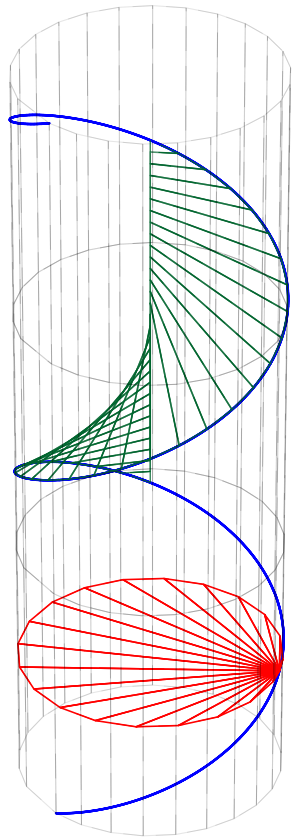

\begin{prop} \label{prop:identification_M_tilda}
Let $\flot$ be a skewed $\R$-covered Anosov flow and $\psi^t$ a regulating flow for both $\fs$ and $\fu$. Then, we can construct two continuous identifications of the universal cover:
\begin{align*}
 I^s \colon &\widetilde{M} \rightarrow \leafs \times \mathcal{O}(\psi)\,,  &    I^u \colon &\widetilde{M} \rightarrow \leafu \times \mathcal{O}(\psi) \,, \\
	    & x \mapsto \left( \hfs(x), \widetilde{\psi}^t (x) \right) & & x \mapsto \left( \hfu(x),\widetilde{\psi}^t (x) \right) 
\end{align*}
where $\orb(\psi)$ is the orbit space of $\psi^t$.
\end{prop}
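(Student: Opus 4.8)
The plan is to check that $I^s$ and $I^u$ are continuous bijections between topological $3$-manifolds, and then to conclude with Brouwer's invariance of domain rather than by verifying continuity of the inverse by hand.

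The first step is to record that $I^s$ is continuous. Its two coordinate functions are, by construction, precisely the two quotient projections $\M \to \leafs$ (the stable leaf space of $\flot$) and $\M \to \orb(\psi)$ (the orbit space of $\widetilde{\psi}^t$), both of which are continuous by the very definition of the quotient topologies on $\leafs$ and on $\orb(\psi)$. Hence $I^s$ is continuous into the product $\leafs \times \orb(\psi)$, and the same remark applies verbatim to $I^u$.

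Next I would prove that $I^s$ is a bijection. Since $\psi^t$ is regulating for $\fs$, every orbit of $\widetilde{\psi}^t$ meets every leaf of $\hfs$ in exactly one point; this is exactly the statement that restriction to a single orbit gives a homeomorphism between that orbit and $\leafs$. Therefore, given $(L,o) \in \leafs \times \orb(\psi)$, the intersection $L \cap o$ is a single point $x$, and by construction $I^s(x) = (L,o)$, which gives surjectivity. For injectivity, if $I^s(x) = I^s(y)$ then $x$ and $y$ lie on the same leaf of $\hfs$ and on the same orbit of $\widetilde{\psi}^t$, so $x = y$ by the uniqueness just used. Since $\psi^t$ is also regulating for $\fu$ (by hypothesis, or by the preceding proposition that a regulating flow for $\fs$ transverse to $\fu$ is regulating for $\fu$), the identical argument shows that $I^u$ is a bijection.

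Finally I would upgrade these continuous bijections to homeomorphisms. By Verjovsky's proposition, $\M$ is homeomorphic to $\R^3$; because $\flot$ is $\R$-covered, $\leafs$ is homeomorphic to $\R$; and, being the orbit space of the (pseudo-Anosov) regulating flow $\psi^t$ on a closed $3$-manifold, $\orb(\psi)$ is homeomorphic to $\R^2$ (the orbit-space statement for pseudo-Anosov flows, \cite{FenleyMosher}). Thus $\leafs \times \orb(\psi)$ is a topological $3$-manifold, and $I^s$ is a continuous bijection between $3$-manifolds, so Brouwer's invariance of domain forces $I^s$ to be an open map, hence a homeomorphism; the same holds for $I^u$. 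The computations here are all soft: the only point that really needs care is the bijectivity step, where the \emph{regulating} hypothesis must be used twice over — once to get a single intersection point (giving surjectivity and making $(I^s)^{-1}$ well defined) and once for injectivity. One could alternatively bypass invariance of domain by building a local inverse directly out of transverse flow boxes for $\psi^t$ and $\fs$, but invoking invariance of domain is cleaner and leaves the matching of dimensions ($1 + 2 = 3$) as the only quantitative input.
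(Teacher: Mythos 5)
Your proof is correct, and it starts the same way as the paper's (bijectivity from the definition of regulating flow; continuity of $I^s$ from the quotient topologies), but finishes by a genuinely different route. The paper's proof is very terse and, for the homeomorphism step, merely gestures at the transversality of $\widetilde{\psi}^t$ to $\hfs$ together with $\orb(\psi)\cong\R^2$ — in effect the "transverse flow box / local product structure" argument you mention as an alternative at the end. You instead invoke invariance of domain, after checking that source and target are topological $3$-manifolds (via Verjovsky for $\M\cong\R^3$, $\R$-coveredness for $\leafs\cong\R$, and the pseudo-Anosov orbit-space theorem for $\orb(\psi)\cong\R^2$). This is a clean, soft replacement for the local argument; what it buys is that you never need to exhibit a local inverse or analyze the transversality carefully, only to match dimensions. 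What the local argument buys in return is independence from the classification of $\orb(\psi)$ — it would work for any regulating flow whose orbit space is a $2$-manifold, without needing the full pseudo-Anosov machinery. One thing you do slightly better than the paper: you explicitly separate the (trivial) continuity of $I^s$ from the (non-trivial) continuity of its inverse, whereas the paper's phrase "continuity follows from\dots" silently conflates the two.
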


\begin{proof}
 Injectivity and surjectivity are given by the definition of a regulating flow. Continuity follows from the fact that $\hfs$ is a foliation of $\M$, $\wt{\psi}^t$ is transverse to it and $\orb(\psi)$ is homeomorphic to $\R^2$.
\end{proof}

Note that, for any orbit of $\hflot$, using the slithering given by $\fs$, it is possible to ``project'' this orbit onto the universal circle. Indeed, an orbit determines two leaves $L^s \in \leafs$ and $L^u \in \leafu$. Then, using $\eta^u \colon \leafu \rightarrow \leafs$ (defined in Proposition~\ref{prop:eta_s_eta_u}), we get a pair of points $(L^s, \eta^u(L^u))$ in $\leafs$ and this in turn determines two distinct points in the universal circle. When we consider the reciprocal image of this application, we obtain:

\begin{lem}
 Two distinct points on the universal circle define a (countable) collection of orbits of $\flot$.
\end{lem}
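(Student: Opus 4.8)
The statement to prove is: two distinct points on the universal circle $\univ$ define a countable collection of orbits of $\flot$. The idea is to invert, carefully, the construction sketched just before the lemma, which sends an orbit of $\hflot$ to a pair of distinct points of $\univ$. Recall that an orbit $o \in \orb$ determines the pair $(L^s, L^u) = (\hfs(o), \hfu(o)) \in \leafs \times \leafu$, that $\eta^u$ (Proposition~\ref{prop:eta_s_eta_u}) sends $L^u$ to a leaf $\eta^u(L^u) \in \leafs$, and then that the slithering $s^s \colon \M \to C = \leafs / (\eta^u \circ \eta^s)$ descends the pair $(L^s, \eta^u(L^u))$ of distinct leaves of $\leafs$ to a pair of distinct points of the universal circle. (That $L^s \neq \eta^u(L^u)$ as elements of $\leafs$ is exactly the skewness hypothesis, and that they project to distinct points of $\univ$ uses that $\eta^u\circ\eta^s$ is strictly increasing, so the two leaves differ by strictly less than one full ``period''.) First I would spell out this map $\Phi \colon \orb \to \{\text{unordered pairs of distinct points of } \univ\}$ and then analyze its fibers.

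Second, fix two distinct points $a, b \in \univ$. I want to describe $\Phi^{-1}(\{a,b\})$. Lifting $a$ and $b$ to $\leafs$ (each has a $\Z$-indexed family of lifts, since $C = \leafs/(\eta^u\circ\eta^s)$ and the deck map $\eta^u\circ\eta^s$ generates a $\Z$-action on $\leafs$), a choice of lift $\tilde a, \tilde b$ of each gives an unordered pair of distinct leaves $(\Lambda_1, \Lambda_2)$ of $\hfs$. For an orbit $o$ to satisfy $\Phi(o) = \{a,b\}$, one of $\hfs(o)$, $\eta^u(\hfu(o))$ must be such a lift $\Lambda_1$ and the other $\Lambda_2$, i.e.\ up to relabeling, $\hfs(o) = \Lambda_1$ and $\hfu(o) = (\eta^u)^{-1}(\Lambda_2) =: \Lambda_2'$ (here I use that $\eta^u$ is monotone and $\eta^u\circ\eta^s$ strictly increasing, hence $\eta^u$ is injective, so $(\eta^u)^{-1}$ makes sense on its image; one must also check $\Lambda_2$ is in the image of $\eta^u$, which follows from skewness together with the definition of $\eta^u$ as a boundary map of the intersection set). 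But a leaf $\Lambda_1 \in \hfs$ and a leaf $\Lambda_2' \in \hfu$ determine \emph{at most one} orbit of $\hflot$: by Verjovsky's theorem a strong unstable leaf meets a stable leaf at most once, so $\Lambda_1 \cap \Lambda_2'$ is a single orbit of $\hflot$ or empty. Hence for each of the countably many choices of lifts $(\Lambda_1, \Lambda_2)$ — indexed by $\Z \times \Z$ after fixing a base lift of $a$ and of $b$, and then the unordered-pair and relabeling choices only multiply this by a finite factor — we get at most one orbit of $\hflot$, which projects to one periodic orbit of $\flot$. Therefore $\Phi^{-1}(\{a,b\})$ is at most countable. (That it is typically genuinely infinite is visible already in Theorem~\ref{thm:infinite_homotopy_class} and the lozenge picture: the orbits $\eta^i(\wt\alpha)$ give the $\Z$-family, all mapping to the same pair $\{a,b\}$.)

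Third, I would assemble these observations into the formal argument: define $\Phi$, note it is well-defined on $\orb$ and $\pi_1(M)$-equivariant, show via the three inputs above (skewness, strict monotonicity/injectivity of $\eta^u$ and $\eta^u\circ\eta^s$, and Verjovsky's one-intersection property) that each fiber injects into $\Z^2$ (times a finite set), and conclude countability. One should also remark that every orbit in a fiber is periodic: if $\hfs(o)=\Lambda_1$ is a lift of a point of $C$ it is stabilized by $\eta^u\circ\eta^s$, equivalently by a deck transformation $\gamma$, and since $\hfu(o)$ is likewise a lift, $\gamma$ stabilizes $o$; so $o$ projects to a periodic orbit. This matches the phrasing ``collection of orbits of $\flot$'' and connects to the corollary on double free homotopy classes.

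\textbf{Main obstacle.} The delicate point is the bookkeeping of the maps $\eta^s, \eta^u, \eta$ and the slithering projection: I need to verify precisely that $\eta^u$ is injective on the relevant domain, that the leaf $\Lambda_2$ coming from the second point actually lies in the range of $\eta^u$ (so that $(\eta^u)^{-1}(\Lambda_2)$ is a bona fide unstable leaf), and that distinct points of $\univ$ really do correspond, after lifting, to distinct leaves differing by a controlled (finite, here $\Z$-indexed) amount — all of which rest on Proposition~\ref{prop:eta_s_eta_u} and the fact that $C\cong S^1$ with $\pi_1(M)$ acting by bundle automorphisms. Once that dictionary is nailed down, the finiteness ``per lift'' is an immediate consequence of Verjovsky's theorem and the whole count collapses to $\#(\Z^2)=\aleph_0$.
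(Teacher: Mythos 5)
The paper declines to give a formal proof here and explicitly refers the reader to Figure~\ref{fig:skewed_R-covered_AF} instead, so your proposal supplies the rigorous argument that the author deliberately leaves to a picture. Your main line of reasoning is correct and is essentially the rigorous shadow of that picture: the slithering quotient $\leafs\to C$ is a $\Z$-covering with deck group generated by $\eta^u\circ\eta^s$, so each of the two points $a,b\in\univ$ has $\Z$ lifts to $\leafs$; injectivity of $\eta^u$ (strict monotonicity forced by strict monotonicity of $\eta^s\circ\eta^u$) and Verjovsky's single-intersection property then give at most one orbit per ordered pair of lifts, and the whole fiber injects into $\Z^2$ times a finite labeling factor. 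The chain of inequalities $L^s<\eta^u(L^u)<\eta^u\circ\eta^s(L^s)$ that you invoke to show the two projected points are distinct is also exactly right.

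However, the closing remark about periodicity is wrong and should be dropped. You write that since $\hfs(o)=\Lambda_1$ is a lift of a point of $C$, it ``is stabilized by $\eta^u\circ\eta^s$, equivalently by a deck transformation $\gamma$.'' Neither half of this is correct: $\eta^u\circ\eta^s$ is the slithering shift on $\leafs\cong\R$ and has no fixed points at all, so no leaf is stabilized by it; and ``stabilized by $\eta^u\circ\eta^s$'' is not equivalent to ``stabilized by a deck transformation'' ($\eta^u\circ\eta^s$ commutes with $\pi_1(M)$ but is not itself a deck transformation). Worse, since the projection $\leafs\to C$ is surjective, \emph{every} stable leaf is a lift of a point of $C$, so your argument, if valid, would prove that every orbit of $\flot$ is periodic, which is false. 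The correct statement is that $\Phi^{-1}(\{a,b\})$ consists entirely of periodic orbits precisely when the pair $\{a,b\}$ is fixed by some nontrivial $\gamma\in\pi_1(M)$ acting on $\univ$ --- this is exactly what happens for the $\eta^i(\wt\alpha)$ coming from a periodic $\wt\alpha$, but a generic pair of boundary points corresponds to a $\Z$-family of nonperiodic orbits (already visible in the geodesic-flow model). The countability claim, which is all the lemma asserts, is unaffected.
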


Instead of giving a rigorous proof of that lemma, I would like to refer the reader to Figure \ref{fig:skewed_R-covered_AF}: two points on $\univ$ determine two vertical lines on the outside of the cylinder, and every time one of these lines intersects $\leafs$, we obtain one orbit.

\section{Isotopy and co-cylindrical class}

The question that started our study of these kinds of Anosov flows was the following: 
Suppose you are given a skewed $\R$-covered Anosov flow in a hyperbolic $3$-manifold $M$. Any periodic orbit is freely homotopic to infinitely many other orbits. In other words, we have a family of knots in $M$. Then {\it are these knots different?} Here, we understand ``different'' in the sense of traditional knot theory, i.e., two knots are equivalent if there exists an isotopy between them. And if some of these knots are indeed different, can we say more about them? That is, can we develop a kind of knot theory adapted to Anosov flows?

It turns out that there is no knot theory in that case. Indeed we will show below that any freely homotopic orbits are isotopic.

We will also study a related question: {\it among a free homotopy class, can we say when two orbits are boundaries of an embedded cylinder?} Indeed, an isotopy between two orbits gives an immersed cylinder. So it seems natural to wonder whether this can be made into an embedding. Furthermore, Barbot \cite{Bar:MPOT} (and later together with Fenley \cite{BarbotFenley}) studied embeddings of tori in manifolds equipped with an Anosov flow. This is in some sense the atoroidal equivalent.

The results in the following sections are joint work with Sergio Fenley and will be published with full details later.

\subsection{Isotopy class of periodic orbits}

Let us start by giving the definition of isotopy we will use here:
\begin{defin}
 Two curves $c_1$ and $c_2$ in $M$ are isotopic if there exists a continuous application $H \colon S^1 \times [0,1] \rightarrow M$ such that $H(S^1,0)= c_1$, $H(S^1,0)= c_2$ and, for any $t\in [0,1]$, $H(S^1,t)$ is an embedding of $S^1$ in $M$.
\end{defin}

Among isotopic orbits, we define:
\begin{defin}
 Two curves $c_1$ and $c_2$ in $M$ are \emph{co-cylindrical} if there exists an embedded annulus $A$ in $M$ such that $\partial A = c_1 \cup c_2$.
\end{defin}
Note that this is \emph{not} an equivalence relation as it is clearly non-transitive. However, as we will see, its study is quite interesting.

Let us start by considering geodesic flows for a minute. In that case, the question of isotopy is trivial (because there is, at most, one periodic orbit in a free homotopy class). What is \emph{not} trivial however is answering the following question: given a periodic orbit $\alpha$, is there an embedded torus in $H\Sigma$ containing $\alpha$? If you suppose that $\alpha$ is simple, then the answer is clearly yes. Indeed, just take $\{ (x,v) \in H\Sigma \mid x \in \pi(\alpha) \}$. If the orbit is non-simple however, it turns out that there is no such embedded torus.

To my everlasting surprise, this kind of condition will remain true for any skewed $\R$-covered Anosov flow.\\

But before studying co-cylindrical classes, we can use the work of Thurston, Fenley and Calegari to answer our first question and deduce that the isotopy classes are the same as the double free-homotopy classes:
\begin{thm}[Barthelm\'e, Fenley] \label{thm:homo_implies_isotope} 
 Let $\flot$ be a skewed $\R$-covered Anosov flow on a closed atoroidal, not Seifert fibered, $3$-manifold. If $\alpha_i$ is a double free homotopy class of periodic orbits of $\flot$, then all the $\alpha_i$s are isotopic.
\end{thm}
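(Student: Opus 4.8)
The plan is to realize the isotopy between homotopic periodic orbits by pushing them along the orbits of a well-chosen regulating pseudo-Anosov flow $\psi^t$, using the fact that (as recalled in the excerpt) the weak stable foliation $\fs$ of $\flot$ comes from a slithering, hence Theorem \ref{thm:regulating_pseudo_Anosov} (Thurston, Fenley, Calegari) produces a pseudo-Anosov flow $\psi^t$ that is regulating for $\fs$, and by the Proposition in the excerpt it is also regulating for $\fu$. The key observation is that the double free homotopy class of $\alpha$ corresponds, in $\M$, to a maximal chain of lozenges $C = \bigcup L_i$ whose corners $\{p_i\}$ are exactly the lifts of the orbits $\alpha_i$; all the $p_i$ are invariant under a single primitive deck transformation $\gamma$ generating the stabilizer of $C$, and $p_{i+1} = \eta(p_i)$. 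Since $\psi^t$ is regulating for both $\fs$ and $\fu$, by Proposition \ref{prop:identification_M_tilda} we get identifications $I^s \colon \M \to \leafs \times \orb(\psi)$ and $I^u \colon \M \to \leafu \times \orb(\psi)$. I would first show that each corner $p_i$, being the intersection of one stable and one unstable leaf, projects under $\M \to \orb(\psi)$ to a single orbit $\widetilde{q}_i$ of $\widetilde{\psi}^t$; the orbit $\alpha_i$ of $\flot$ and the orbit $\pi(\widetilde{q}_i)$ of $\psi^t$ are freely homotopic via a homotopy supported on the corresponding leaf, because on a fixed (stable or unstable) leaf both flows are ``parallel'' in the regulating direction.

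The next step is to build an explicit isotopy in $M$ between $\alpha_i$ and $\alpha_{i+1}$. Work in $\M$. Consider the two corners $p_i, p_{i+1}$ of the lozenge $L_i$; they lie, respectively, on a stable leaf $L^s$ and unstable leaf $L^u$ meeting the interior of $L_i$, and both are stabilized by $\gamma$. I would take a $\gamma$-invariant path in the lozenge $L_i$ from $p_i$ to $p_{i+1}$ — e.g. following half of $\hfs(p_i)$ to a point and half of $\hfu(p_{i+1})$ — and thicken it, using the product structure $I^s$, to a $\gamma$-invariant embedded band in $\M$ whose two boundary lines are $\hflot$-orbit lines $p_i$ and $p_{i+1}$; concretely, interpolate in the $\orb(\psi)$ factor between the points $\widetilde{q}_i$ and $\widetilde{q}_{i+1}$ along an embedded arc in $\orb(\psi)\cong \R^2$ avoiding other relevant orbits, while lifting each interpolating orbit of $\widetilde{\psi}^t$ back to $\M$ to a curve transverse to $\fs$. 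Quotienting by $\gamma$ gives an immersed (in fact embedded, for $\alpha_i$ and $\alpha_{i+1}$ adjacent in the chain) annulus in $M$ realizing an isotopy from $\alpha_i$ to $\alpha_{i+1}$. Composing these adjacent isotopies (and using that the chain is the whole double free homotopy class, by the Corollary on lozenges and chains in the excerpt) yields an isotopy between any two $\alpha_i, \alpha_j$.

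The main obstacle I expect is controlling embeddedness of the interpolating surface: a priori the band in $\M$ constructed from an arc in $\orb(\psi)$ could fail to be embedded once we quotient by $\gamma$, or two of the intermediate orbit-curves could coincide in $M$. The point where care is needed is choosing the arc in the orbit space $\orb(\psi)$ of the pseudo-Anosov flow so that its $\gamma$-translates are pairwise disjoint and it meets no singular orbit in a bad way; here Fenley's description of $\orb(\psi)$ as a disk with $\univ$ as boundary, together with the fact that $\gamma$ acts on $\orb(\psi)$ with a ``hyperbolic-type'' dynamics (two fixed points on $\univ$, corresponding to the endpoints attached to the chain $C$), should let me pick the arc in a fundamental domain for $\gamma$ and guarantee disjointness of translates. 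For the global statement one also needs that distinct corners $p_i$ project to distinct orbits in $M$, which is exactly the atoroidal, non-Seifert-fibered hypothesis (otherwise $\pi_1(M)$ would contain a $\Z^2$), as already used in the proof of Theorem \ref{thm:infinite_homotopy_class}. Once embeddedness of each adjacent annulus is secured, transitivity of isotopy (which, unlike co-cylindricality, is an equivalence relation) finishes the argument; the finer question of which $\alpha_i, \alpha_j$ bound an \emph{embedded} cylinder is then a separate matter, addressed in the subsequent discussion of co-cylindrical classes.
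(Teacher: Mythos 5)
Your overall strategy — push periodic orbits along a regulating pseudo-Anosov flow $\psi^t$, working one lozenge at a time in the chain $B(\wt\alpha,\wt\beta)$, then compose isotopies — is exactly the one the paper uses. However, your concrete construction has a gap at the very first step: you assert that each corner $p_i$ (an orbit of $\hflot$) ``projects under $\M\to\orb(\psi)$ to a single orbit $\widetilde q_i$ of $\widetilde\psi^t$''. This is false. The orbit $p_i$ is a line sitting entirely inside one stable leaf $\hfs(p_i)$, and $\widetilde\psi^t$ is regulating, so each $\widetilde\psi^t$-orbit meets $\hfs(p_i)$ in exactly one point; hence $p_i$ spreads across a whole $1$-parameter family of $\widetilde\psi^t$-orbits and does not determine a single point $\widetilde q_i$ in $\orb(\psi)$. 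Without well-defined endpoints $\widetilde q_i$, $\widetilde q_{i+1}$, your scheme of interpolating along an arc in $\orb(\psi)$ has nothing to interpolate between, and the resulting ``band'' of $\widetilde\psi^t$-orbit segments is not a $\gamma$-invariant set with boundary $p_i \cup p_{i+1}$, so it does not quotient to an annulus between $\alpha_i$ and $\alpha_{i+1}$.

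The paper's proof sidesteps this entirely. Rather than trying to match up $\widetilde\psi^t$-orbits with the $\alpha_i$, it pushes the \emph{whole closed curve} $\alpha_0$ forward by $\psi^t$, using a continuous time change $\Psi^t$ of $\psi^t$ so that $\Psi^{t_1}(\alpha_0)$ is a curve $\alpha_1'$ lying in the leaf $\fs(\alpha_1)$. The point that buys embeddedness for free — and replaces all your careful arc-choosing in $\orb(\psi)$ — is that $\Psi^t$ is a flow, i.e.\ a one-parameter family of homeomorphisms of $M$, so $\Psi^t(\alpha_0)$ is automatically an embedded circle for every $t$; the only technical care needed is a $C^1$ modification of $\psi^t$ so that $\alpha_0$ meets the resulting immersed cylinder in only finitely many transverse points, which is what lets the time change be defined. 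Finally, $\alpha_1'$ is not $\alpha_1$ itself, only a curve freely homotopic to $\alpha_1$ on the annular leaf $\fs(\alpha_1)$; one more isotopy inside that leaf finishes the argument — a step your sketch omits because you expected your band to land exactly on $\alpha_{i+1}$.
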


\begin{proof}[Sketch of proof]
 We are going to construct an isotopy between $\alpha_0$ and $\alpha_1$. As isotopy is an equivalence relation, it will show that all free homotopic orbits are isotopic.

Let $\psi^t$ be a regulating flow for $\hfs$ and $\hfu$, $\al 0$ a lift of $\alpha_0$ to $\M$ and ${\al 1 = \eta(\al 0)}$. For any $x \in \al 0$ there exists a time $T(x)$ such that $\wt{\psi}^{T(x)}(x) \in \hfs(\al 1)$.

Let $C:=\lbrace \psi^t(\pi(x)) \mid x\in \al 0 , 0\leq t \leq T(x) \rbrace$. It is an immersed cylinder with one boundary $\alpha_0$ and the other one a closed curve on $\fs(\alpha_1)$. Let us call the second boundary component $\alpha_1'$.

Up to a $C^1$ modification of $\psi^t$, we can show that there is only a finite number of (transverse) intersections of $\alpha_0$ with $C$. We can therefore find a continuous time change $\Psi^t$ of $\psi^t$ such that, for some $t_1\in \R$, $\alpha_{1}' = \Psi^{t_1}(\alpha_{0})$. As $\Psi^t$ is a flow, for any $t\in [0,t_1]$, $\Psi^t(\alpha_0)$ is an embedded $S^1$ in $M$.

We produced an isotopy from $\alpha_0$ to $\Psi^{t_1}(\alpha_0)$. Now, $\Psi^{t_1}(\alpha_0)$ is freely homotopic to $\alpha_1$ on the surface $\fs(\alpha_1)$, hence is isotopic.
\end{proof}

\subsection{Co-cylindrical class}

We will now show the link between having two co-cylindrical periodic orbits and simple chain of lozenges. This is essentially based on Barbot's work \cite{Bar:MPOT}.

In \cite{Bar:MPOT} (see also \cite{BarbotFenley}) Barbot studied embedded tori in (toroidal) $3$-manifolds supporting skewed $\R$-covered Anosov flows, showing that they could be put in a quasi-transverse position (i.e., transverse to the flow, apart from along some periodic orbits). We will use his work to obtain properties of embedded annuli:
\begin{thm} \label{thm:simple_and_cocylindrical}
 Let $\alpha$ and $\beta$ be two orbits in the same free homotopy class, choose coherent lifts $\wt{\alpha}$ and $\wt{\beta}$, and denote by $B(\wt{\alpha}, \wt{\beta})$ the chain of lozenges between $\wt{\alpha}$ and $\wt{\beta}$.\\
 If $\alpha$ and $\beta$ are co-cylindrical, then $B(\wt{\alpha}, \wt{\beta})$ is simple, i.e., if we denote by $(\al i)_{i=0 \dots n}$ the corners of the lozenges in $B(\wt{\alpha}, \wt{\beta})$, with $\al 0 = \wt{\alpha}$ and $\al n= \wt{\beta}$, then
\begin{equation*}
 \left( \pi_1(M) \cdot \al i \right) \cap B(\wt{\alpha}, \wt{\beta}) = \{ \al i\}.
\end{equation*}
Conversely, if $B(\wt{\alpha}, \wt{\beta})$ is simple, then there exists an embedded annulus, called a Birkhoff annulus, with boundary $\alpha \cup \beta$.
\end{thm}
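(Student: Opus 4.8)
The plan is to prove the two implications separately, the common device being the projection $p\colon \M \to \orb$ onto the orbit space and the dictionary between $\langle\gamma\rangle$-invariant surfaces in $\M$ and $\gamma$-invariant regions of $\orb$, where $\gamma\in\pi_1(M)$ is the common generator of the stabilizers of $\wt\alpha$ and $\wt\beta$ fixed by the coherence of the lifts. Recall (from the corollary on lozenges) that $\gamma$ stabilizes the whole chain $B:=B(\wt\alpha,\wt\beta)$ and fixes each of its corners $\al 0=\wt\alpha,\al 1,\dots,\al n=\wt\beta$; that $\mathrm{Stab}_{\pi_1(M)}(\al i)=\langle\gamma\rangle$ for every $i$; and that consecutive lozenges of $B$ never share a side, so the four half-leaves through an interior corner $\al k$ split into the two forming the relevant sides of $L_k$ and the two forming those of $L_{k+1}$.

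For ``co-cylindrical $\Rightarrow$ simple'': let $A$ be an embedded annulus with $\partial A=\alpha\cup\beta$. Adapting Barbot \cite{Bar:MPOT} in the $\R$-covered setting (cf.\ \cite{BarbotFenley}), after an isotopy rel $\partial A$ one may put $A$ in efficient quasi-transverse position: transverse to $\flot$ on its interior, tangent to $\flot$ exactly along $\alpha\cup\beta$ and along the periodic orbits $\pi(\al 1),\dots,\pi(\al{n-1})$, and such that the lift $\wt A$ with boundary $\wt\alpha\cup\wt\beta$ is an embedded, $\gamma$-invariant strip with $p(\wt A)=\bar B$. In particular each corner $\al i$ is a full orbit contained in $\wt A$. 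Suppose $B$ is not simple: there are $g\in\pi_1(M)$ and an index $i$ with $g\al i\in B$ and $g\al i\neq\al i$. Since $g\al i\subset g\wt A$ is a full orbit and $g\al i\in B=p(\wt A)$ forces $g\al i$ to meet $\wt A$, we get $g\wt A\cap\wt A\neq\emptyset$; embeddedness of $A$ then forces $g\wt A=\wt A$, i.e.\ $g\in\mathrm{Stab}(\wt A)$. But $\mathrm{Stab}(\wt A)$ preserves $p(\wt A)=\bar B$, hence permutes the corners of the oriented chain with fixed endpoints $\al 0,\al n$, hence fixes every corner, so $\mathrm{Stab}(\wt A)\subseteq\mathrm{Stab}(\al i)=\langle\gamma\rangle$ and $g\al i=\al i$ --- a contradiction. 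Hence $B$ is simple.

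For ``simple $\Rightarrow$ embedded Birkhoff annulus'': I would build $\mathcal A$ lozenge by lozenge. Over each lozenge $L_k$ of $B$ one has the classical Birkhoff annulus $\mathcal A_k$ of a lozenge (Fenley \cite{Fen:AFM}): a compact annulus with $\partial\mathcal A_k=\pi(\al{k-1})\cup\pi(\al k)$, transverse to $\flot$ on its interior, tangent along its two boundary orbits, and whose lift through $\al{k-1},\al k$ projects onto $\bar L_k$ (concretely, the union of the stable-leaf segments across $L_k$ together with flow segments accumulating on the two corners). Set $\mathcal A:=\mathcal A_1\cup\dots\cup\mathcal A_n$, glued along the shared corner orbits $\pi(\al 1),\dots,\pi(\al{n-1})$. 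The ``no shared side'' property makes the gluing clean: near $\pi(\al k)$ the pieces $\mathcal A_k$ and $\mathcal A_{k+1}$ approach $\al k$ along complementary pairs of half-leaves, with matching flow-tangency, so they fit together to a single embedded strip through $\pi(\al k)$; hence $\mathcal A$ is an annulus with $\partial\mathcal A=\pi(\al 0)\cup\pi(\al n)=\alpha\cup\beta$. For embeddedness, upstairs $\wt{\mathcal A}=\wt{\mathcal A}_1\cup\dots\cup\wt{\mathcal A}_n$ is an embedded $\gamma$-invariant strip with $p(\wt{\mathcal A})=\bar B$; simplicity ensures first that the orbits $\pi(\al 0),\dots,\pi(\al n)$ are pairwise distinct (a coincidence $\pi(\al i)=\pi(\al j)$, $i\neq j$, would give $\al j=g\al i\in B$ with $\al j\neq\al i$), so the pieces $\mathcal A_k$ overlap only along the prescribed corner orbits, and second that no nontrivial $\pi_1(M)$-translate of $\wt{\mathcal A}$ meets $\wt{\mathcal A}$: such a translate would force a $\pi_1(M)$-image of a corner of $B$ into $B$, hence (by simplicity) equal to that corner, hence the translating element lies in $\langle\gamma\rangle$ and acts on $\wt{\mathcal A}$ as a deck translation, properly discontinuously and without folding. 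Therefore $\mathcal A=\wt{\mathcal A}/\langle\gamma\rangle$ is embedded in $M$.

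The main obstacle is the first implication, specifically the input from Barbot's theory: rigorously putting the embedded annulus into efficient quasi-transverse position and verifying that its lift then projects onto exactly the closed chain $\bar B$, which is what converts the topological hypothesis (embeddedness) into the combinatorial conclusion (simplicity). In the second implication the analogous, more hands-on difficulty is checking that the assembled surface is embedded along the interior corner orbits, where the ``consecutive lozenges share no side'' fact and the distinctness of the corner orbits supplied by simplicity are both essential.
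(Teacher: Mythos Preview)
Your converse direction matches the paper's: both defer to Barbot's construction of an embedded Birkhoff annulus from a simple chain, and the extra details you supply are essentially correct.

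The forward direction has a real gap, and it sits exactly where you flag the ``main obstacle.'' You assume that one can isotope the given embedded annulus $A$ (rel $\partial A$) into efficient quasi-transverse position with $p(\wt A)=\bar B$, citing Barbot. But in Barbot's treatment of embedded tori, this positioning is obtained \emph{after} proving that the associated chain is simple: one first establishes simplicity (this is Lemma~7.6 of \cite{Bar:MPOT}, equivalently step~1 of Theorem~6.10 of \cite{BarbotFenley}), then builds the quasi-transverse model from the simple chain, and only then shows the original surface is isotopic to it. Invoking Barbot to obtain the quasi-transverse position for $A$ is therefore circular --- it presupposes the conclusion.

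The paper avoids this by a genuinely different argument. It constructs an abstract $\gamma$-invariant plane $\wt{C}_0$ directly over $\bar B$ (via Barbot's local trick of lifting an arc across each lozenge; crucially, $\pi(\wt{C}_0)$ is \emph{not} assumed embedded), and then uses the given embedded cylinder $C$ only through a coarse-geometric step. If $B$ were not simple, some translate $\theta=h\cdot\al i$ would meet $\wt{C}_0$ in a single point; since $C$ is embedded in an oriented manifold it is two-sided, and since $\pi(\theta)$ is freely homotopic into $C$ it can be pushed off $C$, which forces one of the two rays of $\theta$ to stay a bounded distance from $\wt{C}_0$. A compactness argument on the compact set $\pi(\wt{C}_0)$ then produces a nontrivial power of the stabilizer of $\theta$ preserving $\wt{C}_0$, hence $B$ --- impossible, since that element would fix both a lozenge and an interior point of it. Note that embeddedness of $C$ enters only via the two-sidedness step, not through any quasi-transverse positioning of $C$ itself; this is precisely what lets the argument break the circularity your approach runs into.
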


\begin{proof}
 Construction of an embedded Birkhoff annulus from a simple chain of lozenges is done in \cite{Bar:MPOT}, hence proving the converse part.

To prove that, if $\alpha$ and $\beta$ are co-cylindrical, then $B(\wt{\alpha}, \wt{\beta})$ is simple, we have to re-prove Lemma 7.6 of \cite{Bar:MPOT} (or equivalently step 1 of the proof of Theorem 6.10 of \cite{BarbotFenley}) when, instead of having an embedded torus, we just have an embedded cylinder.\\
 Let $C$ be an embedded cylinder such that $\partial C = \lbrace \alpha, \beta \rbrace$ and $\wt{C}$ the lift of $C$ in $\M$ such that its boundary is on $\wt{\alpha}$ and $\wt{\beta}$. Let us also denote the generator of the stabilizer of $\wt{\alpha}$ by $\gamma \in \pi_1(M)$. Following \cite{Bar:MPOT}, we can construct a embedded plane $\wt{C}_0$ in $\M$ such that
\begin{itemize}
 \item $\wt{C}_0$ is $\gamma$-invariant, 
 \item $\wt{C}_0$ contains all the $\al i$,
 \item $\wt{C}_0$ is transverse to $\hflot$ except along the $\al i$,
 \item the projection of $\wt{C}_0$ to $\orb$ is $B(\wt{\alpha}, \wt{\beta})$.
\end{itemize}
Barbot's trick to obtain such a plan is, for every lozenge in $B(\wt{\alpha}, \wt{\beta})$, to take a simple curve $\bar{c}$ from one corner of the lozenge to the other (for instance $\al i$ and $\al{i+1}$). Then, lift $\bar{c}$ to $\wt{c}\subset \M$ such that $\wt{c}$ is transverse to $\hflot$. Now choose an embedded rectangle $R_i$ in $\M$ such that $R_i$ is bounded by $\wt{c}$, $\gamma \cdot \wt{c}$, and the two pieces of $\al i$ and $\al{i+1}$ between the endpoints of $\wt{c}$ and $\gamma \cdot \wt{c}$. Then define $\wt{C}_0$ as the orbit under $\gamma$ of the unions of the rectangles $R_i$.\\

From now on, we copy the proof of \cite[Theorem 6.10, step 1]{BarbotFenley}.\\
Suppose that $B(\wt{\alpha}, \wt{\beta})$ is not simple. Then there exist $\al i$ and $h \in \pi_1(M)$ such that $\theta:= h \cdot \al i$ intersects the interior of $B(\wt{\alpha}, \wt{\beta})$. Then $\theta$ intersects $\wt{C}_0$ in a single point $p$. Let $\theta^+$ and $\theta^-$ be the two rays in $\theta$ defined by $p$. If we denote by $\wt{V}$ the subset of $\M$ delimited by $\hfs(\wt{\alpha})$ and $\hfs(\wt{\beta})$ and containing $\wt{C}_0$, then $\wt{C}_0$ separates $\wt{V}$ in two components.

\begin{claim}
 Either $\theta^+$ or $\theta^-$ stays a bounded distance away from $\wt{C}_0$.
\end{claim}
\begin{proof}
Assume they don't: for any $R>0$, there exist points $q_R^-,q_R^+$ on $\theta^-, \theta^+$ such that $d(q_R^{\pm}, \wt{C}_0) >R$.\\
As $\pi (\wt{C}_0)$ and $C$ are freely homotopic, there exists $R_0$ such that $\wt{C}$ is contained in the $R_0$-neighborhood of $\wt{C}_0$. Then, for any $R > 2 R_0$, any path in $\wt{V}$ joining two points $q^-$ and $q^+$ such that $d(q^{\pm}, q_R^{\pm})<R$ must intersect $\wt{C}$.\\
Now, $\pi(\theta)$ is freely homotopic to a curve in $C$, and, as $C$ is embedded in an oriented manifold, it must be two-sided. So $\pi(\theta)$ is homotopic to a curve disjoint from $C$. (Note that this is the only point where we use the fact that $C$ is embedded). Lifting it gives a homotopy from $\theta$ to a curve $\theta_1$ disjoint from $\wt{C}$. But homotopies move points a bounded distance away: there exists $r>0$ such that, for any $R>0$, there are two points $m_R^{\pm}$ on $\theta_1$ such that $d(m_R^{\pm}, q_R^{\pm}) <r$.\\
Choose $R > \max\{ 2 R_0, r \}$, according to the above, the segment in $\theta_1$ from $m_R^-$ to $m_R^+$ must intersect $\wt{C}$ hence a contradiction.
\end{proof}

We assume that $\theta^+$ stays at a distance $\leq a_1$ from $\wt{C_0}$.\\
Let $g\in \pi_1(M)$ be the generator of the stabilizer of $\theta$. Choose a sequence $\left(p_i\right)$ with $p_i := g^{n_i}\cdot p \in \theta^+$. Let $\left(q_i\right)$ be a sequence in $\wt{C}_0$ such that $d(q_i,p_i) \leq a_1$, up to a subsequence, we can assume that $\pi(q_i)$ converges and as $\pi(\wt{C_0})$ is compact, we can even assume that $\pi(q_i)$ is constant. Now, up to another subsequence, we can assume that there are segments $u_i$ in $\wt{V}$ from $p_i$ to $q_i$ such that $\pi(u_i)$ converges in $M$. Adjusting once again, we can assume that $\pi(u_i)$ is constant for big enough $i$.\\
We consider the following closed curve in $\wt{V}$: start by a segment in $\theta^+$ from $p_i$ to $p_k$, $k>i$, then follow $u_k$, then choose a segment in $\wt{C}_0$ from $q_k$ to $q_i$ and close up along $u_i$. Since $\pi(u_i) = \pi(u_k)$, this shows that there exists $n\in \Z$ such that $g^n(q_i)= q_k$.\\
Hence, for some $n \neq 0$, $\wt{C}_0$ is left invariant by $g^n$, which implies that $B(\wt{\alpha}, \wt{\beta})$ is also invariant by $g^n$. But $g^n \cdot \theta = \theta$, so $g^n$ leaves invariant a point in the interior of a lozenge as well as the whole lozenge, which is impossible.
\end{proof}

Using the theorem, we can deduce the following property of co-cylindrical class:
\begin{prop}\label{prop:cardinality_isotopy_class}
If the co-cylindrical class of one orbit is finite, then all the co-cylindrical classes in the same double free homotopy class are finite. Moreover, they all have the same cardinality.
\end{prop}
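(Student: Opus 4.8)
The statement is about co-cylindrical classes inside a fixed double free homotopy class; by Theorem \ref{thm:homo_implies_isotope} all the orbits $\alpha_i$ in such a class are the corners of one maximal chain of lozenges $C$, indexed (say) by $i\in\Z$, with $\eta(\al i)=\al{i+1}$ up to the action of the stabilizing element $\gamma$. The key translation tool is Theorem \ref{thm:simple_and_cocylindrical}: $\alpha_i$ and $\alpha_j$ (with coherent lifts $\al i$ and $\al j$) are co-cylindrical if and only if the sub-chain $B(\al i,\al j)$ is \emph{simple}, meaning no $\pi_1(M)$-translate of an interior corner of $B(\al i,\al j)$ lands back inside $B(\al i,\al j)$. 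So the proposition reduces to a purely combinatorial statement about simplicity of sub-intervals of the corner-index line $\Z$, on which $\pi_1(M)$ acts (the subgroup relevant here being generated by $\gamma$, which shifts indices by the ``period'' $p$ of the free homotopy class, since $\gamma$ stabilizes the whole chain and fixes each $\al i$ only when $p\mid i$ — more precisely $\gamma$ translates the chain by $2$ corners when we track the double free homotopy class, matching the sketch of proof of Theorem \ref{thm:infinite_homotopy_class}).

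First I would set up notation: fix the double free homotopy class, realize it as the set of corners $\{\al i\}_{i\in\Z}$ of the maximal chain $C$, and let $g\in\pi_1(M)$ be the generator of the stabilizer of $C$; then $g$ acts on the index set $\Z$ by a translation $i\mapsto i+p$ for some fixed $p\geq 1$ (this is where the ``atoroidal, not Seifert-fibered'' hypothesis, already used in Theorem \ref{thm:infinite_homotopy_class}, guarantees the corners project to distinct orbits in $M$, so the $\Z$-indexing is faithful). Next, I would record what ``$B(\al i,\al j)$ simple'' means in these terms: for $i<j$, the chain is simple iff for every interior corner $\al k$, $i<k<j$, and every $h\in\pi_1(M)$, $h\cdot\al k$ — equivalently $g^m\cdot \al k = \al{k+mp}$ for the $g$-translates, but also translates by elements of $\pi_1(M)$ moving $C$ to a different chain that happens to reintersect — does not have its corner lie strictly between $\al i$ and $\al j$. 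The crucial observation is that simplicity of $B(\al i,\al j)$ is \emph{invariant under the $g$-action}: $g\cdot B(\al i,\al j) = B(\al{i+p},\al{j+p})$ and $g$ is an orientation-preserving homeomorphism of $\orb$ conjugating the $\pi_1(M)$-action to itself, so $B(\al i,\al j)$ is simple iff $B(\al{i+p},\al{j+p})$ is. This immediately gives that the co-cylindrical relation on $\{\al i\}$ is $g$-equivariant, i.e. $\alpha_i$ is co-cylindrical to $\alpha_j$ iff $\alpha_{i+p}$ is co-cylindrical to $\alpha_{j+p}$.

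With that equivariance in hand, the proof becomes: if the co-cylindrical class of some $\alpha_{i_0}$ is finite, I would show every co-cylindrical class is finite with the same cardinality. Fix $i_0$ and let $N$ be the (finite) cardinality of its co-cylindrical class $\mathcal{C}(i_0)$. For any other index $j$, consider the bijection of the index set induced by $g^m$ for a suitable $m$ — but $g$ translates indices by the fixed amount $p$, so $g^m$ sends $i_0$ to $i_0+mp$, which only hits indices congruent to $i_0$ mod $p$. To pass between arbitrary classes I would instead argue directly: the co-cylindrical relation is symmetric (an embedded annulus between $\alpha$ and $\beta$ is the same as one between $\beta$ and $\alpha$), so $\mathcal{C}(i_0)$ being finite of size $N$ means exactly $N$ values of $k$ give a simple $B$ between the lifts; by $g$-equivariance, $\mathcal{C}(i_0+p)$ also has size $N$; and since (again by the atoroidal hypothesis) any two orbits in the double free homotopy class differ by moving along the chain a bounded-then-unbounded number of steps, while the ``simple or not'' status of $B(\al i,\al j)$ depends only on the pair $(i,j)$ through the $g$-orbit of their difference pattern... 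The cleanest route: show that for $i<j<k$, if $B(\al i,\al k)$ is simple then so are $B(\al i,\al j)$ and $B(\al j,\al k)$ (a sub-chain of a simple chain is simple — this is essentially immediate from the definition). Then the set $\{j : B(\al{i_0},\al j)\text{ simple}\}$ is an interval of $\Z$ containing $i_0$; finiteness means it is a bounded interval, say $[i_0-a,i_0+b]$ with $a+b+1 = N$ (indices counted appropriately, being careful that only indices of the same ``type'' as $i_0$, i.e. same parity in the double-free-homotopy indexing, can be co-cylindrical, since co-cylindrical orbits are in particular freely homotopic with coherent orientation). Equivariance under $g$ then translates this interval rigidly, proving every class along the chain has cardinality $N$.

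\textbf{Main obstacle.} The delicate point is the ``sub-chain of a simple chain is simple'' claim together with making the interval structure precise: I expect the real work is checking that $\{j : B(\al i,\al j)\text{ is simple}\}$ is genuinely an interval, which requires showing that if some interior corner $\al k$ of $B(\al i,\al j)$ has a $\pi_1(M)$-translate re-entering $B(\al i,\al j)$, then it also re-enters any larger $B(\al i,\al{j'})$ with $j'>j$ — this uses that $\orb$ is ordered compatibly with the chain and that translates respect this order, which should follow from Fenley's structure results (Proposition \ref{prop:eta_s_eta_u} and the lozenge propositions) but needs to be spelled out. A secondary subtlety is correctly accounting for the period $p$ and the parity/orientation constraint so that the cardinalities genuinely match across \emph{all} co-cylindrical classes in the double free homotopy class and not just those in a single $g$-orbit of indices; this is handled by combining the interval description with $g$-equivariance, but it is exactly the kind of bookkeeping that must be done carefully. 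I would present the full combinatorial argument with these two lemmas (equivariance of simplicity under $g$; interval/monotonicity of the simple locus) as the backbone, citing Theorem \ref{thm:simple_and_cocylindrical} for the geometric content and Barbot's \cite{Bar:MPOT} constructions where the embedded annulus is produced.
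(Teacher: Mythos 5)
Your strategy of reducing to a combinatorial statement about the corner-index line $\Z$ and using an equivariance argument is the right idea, but you pick the wrong map. You set up the key step around the generator $g$ of the stabilizer of the chain $C$, claiming that $g$ acts on the corner index set by a translation $i\mapsto i+p$ for some $p\geq 1$, and that ``equivariance under $g$ then translates this interval rigidly.'' This is false: by the lozenge corollary you cite (the one following Fenley's lozenge results), if a deck transformation $\gamma$ fixes one corner of a lozenge it fixes the other corner and stabilizes the whole lozenge; iterating, the generator of the stabilizer of $C$ fixes \emph{every} corner $\al i$. So $g$ acts trivially on the index set, $p=0$, and $g$-equivariance tells you nothing. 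The interval you carefully constructed for $\alpha_{i_0}$ never gets moved to another base point, and your proof does not close.

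The tool you actually need is $\eta$, the translation $\eta(\al i)=\al{i+1}$ of $\orb$ built from $\eta^s$ and $\eta^u$ (Proposition \ref{prop:eta_s_eta_u}), which you mention in your opening sentence and then abandon. The crucial property is not that $\eta\in\pi_1(M)$ (it is not a deck transformation), but that $\eta$ \emph{commutes} with the $\pi_1(M)$-action and sends lozenges to lozenges. That is exactly the equivariance that makes the argument go: if $h\cdot\al 0\in L(\al{k-1},\al k)$ witnesses non-simplicity of $B(\al 0,\al k)$, then $h\cdot\al i=\eta^i(h\cdot\al 0)\in L(\al{k-1+i},\al{k+i})$ bounds the co-cylindrical class of $\alpha_i$ from above; and simplicity of the maximal simple sub-chain containing $\al 0$ pushes forward by $\eta^i$ to bound it from below. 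This is essentially the paper's two-line proof, and it makes your ancillary lemmas (sub-chain simplicity, interval structure of the simple locus) unnecessary, though the sub-chain lemma is true and your observation that the simple locus is an interval would be a clean way to package the same computation. Replace $g$ by $\eta$ throughout the equivariance argument and drop the claim that $g$ translates indices; the rest of your scaffolding then works.
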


\begin{proof}
This result just relies on the fact that the homeomorphism $\eta$ of $\orb$, defined by applying $\eta^u$ and $\eta^s$ to respectively the unstable and stable leaf commutes with the action of $\pi_1(M)$.

 Let $\alpha_i$ be a double free homotopy class of periodic orbit. Suppose that $\alpha_0$ has $k$ elements in its co-cylindrical class. Then it implies that, for any coherent lift $\al i$ of the $\alpha_i$, the chain of lozenges $B(\al 0, \al k)$ is non-simple. More precisely, we have an element $h\in \pi_1(M)$ such that $h \cdot \al 0$ is in $L(\al{k-1}, \al k)$, the lozenge with corners $\al{k-1}$ and $\al k$. Indeed, recall that $\al i = \eta^i (\al 0)$, so, if $h \cdot \al i \in L(\al{k-1}, \al k)$, then 
\begin{equation*}
h \cdot \al 0 = h \cdot \eta^{-i}(\al i) = \eta^{-i}\left(h \cdot \al i \right) \in \eta^{-i}\left(L(\al{k-1}, \al k) \right) = L(\al{k-1-i}, \al{k-i}).
\end{equation*}
 So $\alpha_0$ and $\alpha_{k-i}$ would not be co-cylindrical.

Then, for any $i$, $h \cdot \al i \in L(\al{k-1 + i}, \al{k+i})$, which proves that the number of orbits co-cylindrical to $\alpha_i$ is at most $k$, and again the same argument as above shows that it is also at least $k$.
\end{proof}

\section[Action on $\univ$ and co-cylindrical orbits]{Action of the fundamental group on $\univ$ and co-cylindrical orbits} \label{sec:action_of_pi1}

Thanks to Thurston's work in \cite{Thurston:3MFC} we know that the fundamental group of a $3$-manifold admitting a $\R$-covered foliation acts on the universal circle implying many results about the type of group it can be, as we can see for instance in \cite{Calegari:geometry_of_R_covered,Calegari-Dunfield,Fen:Foliations_TG3M}. There is a remarkable link between the existence of co-cylindrical orbits and the action of $\pi_1(M)$ on \emph{pairs} of points in $\univ$.\\

\begin{defin}
 Let $(\alpha^+, \alpha^-)$ and $(\beta^+, \beta^-)$ be two pairs of points in $\univ$. We say that $(\alpha^+, \alpha^-)$ and $(\beta^+, \beta^-)$ \emph{intersect} if, for some order on $\univ$, we have
\begin{equation*}
 \alpha^- < \beta^- <\alpha^+ < \beta^+.
\end{equation*}
We will say that $(\alpha^+, \alpha^-)$ \emph{self-intersects} if there exists $h\in \pi_1(M)$ such that $(\alpha^+, \alpha^-)$ and $(h \cdot \alpha^+, h \cdot \alpha^-)$ intersect.
\end{defin}

\begin{prop}\label{prop:intersects_equivalent_simple}
 Let $\alpha$ be a periodic orbit of $\flot$, $\wt{\alpha}$ a lift to $\M$ and $(\alpha^+, \alpha^-)$ the projection of $\wt{\alpha}$ on $\univ$.\\
 The co-cylindrical class of $\alpha$ is finite if and only if $(\alpha^+, \alpha^-)$ self-intersects.
\end{prop}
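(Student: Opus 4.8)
The key is to identify, for a periodic orbit $\alpha$ with lift $\wt{\alpha}$ and associated pair of points $(\alpha^+,\alpha^-)\in\univ$, the corner orbits of the maximal chain of lozenges through $\wt{\alpha}$ with orbits that project to the \emph{same} pair of points on $\univ$. Recall from the last section of the excerpt how $\wt{\alpha}$ determines $(\alpha^+,\alpha^-)$: it determines leaves $L^s\in\leafs$, $L^u\in\leafu$, and then the pair $(L^s,\eta^u(L^u))$ in $\leafs$ corresponds to a pair of points on $\univ$ (recall $\univ\cong\leafs/(\eta^u\circ\eta^s)$). The crucial observation, which I would establish first, is that applying $\eta$ to an orbit does \emph{not} change the associated pair of points on $\univ$: if $o'=\eta(o)$ then $\hfs(o')=\eta^u(\hfu(o))$ and $\hfu(o')=\eta^s(\hfs(o))$, so the pair $(\hfs(o'),\eta^u(\hfu(o')))=(\eta^u(\hfu(o)),\eta^u\eta^s\hfs(o))$, which is the same pair of points of $\univ=\leafs/(\eta^u\eta^s)$ as $(\eta^s\hfs(o),\eta^u\hfu(o))$ up to the identification — i.e. the corners $\al i$ of the chain through $\wt\alpha$ all project to the \emph{same} pair $(\alpha^+,\alpha^-)$. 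Conversely, an orbit $\wt\beta$ projecting to $(\alpha^+,\alpha^-)$ must be a corner of that chain.

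\textbf{Main argument.} With this dictionary in hand, the proposition becomes a translation of Theorem~\ref{thm:simple_and_cocylindrical} together with the bookkeeping of Proposition~\ref{prop:cardinality_isotopy_class}. First I would show the contrapositive of one direction: suppose the co-cylindrical class of $\alpha$ is \emph{infinite}. By Proposition~\ref{prop:cardinality_isotopy_class} this forces, for every $k$, the chain $B(\al 0,\al k)$ to be simple, i.e.\ the full maximal chain of lozenges through $\wt\alpha$ is simple; equivalently, $\left(\pi_1(M)\cdot\al i\right)$ meets the chain only in $\{\al i\}$ for each $i$. Now if $(\alpha^+,\alpha^-)$ self-intersected, there would be $h\in\pi_1(M)$ with $(h\cdot\alpha^+,h\cdot\alpha^-)$ intersecting $(\alpha^+,\alpha^-)$; translating intersection of pairs on $\univ$ back into the orbit space (this is exactly the content of linking of pairs of leaves in $\leafs\times\leafu$, cf.\ Figure~\ref{fig:orbit_space} and the definition of lozenge), $h\cdot\wt\alpha$ would lie in the interior of one of the lozenges of the chain through $\wt\alpha$ — contradicting simplicity, since $h\cdot\wt\alpha$ is in the $\pi_1(M)$-orbit of some corner $\al j$ (namely the image of a corner of $h\cdot(\text{chain})=$ the chain through $h\cdot\wt\alpha$, but $h\cdot\wt\alpha$ being a corner of its own chain is $h\cdot\al 0$, and $\al j=\eta^{j}(\al 0)$ with $\eta$ equivariant, so $h\cdot\al j=\eta^j(h\cdot\al 0)$ is a corner of the chain through $h\cdot\wt\alpha$). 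So no self-intersection — one implication done.

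For the other direction, assume the co-cylindrical class of $\alpha$ is \emph{finite}, say of cardinality $k+1$. Then $\alpha_0$ and $\alpha_{k+1}$ are \emph{not} co-cylindrical, so by Theorem~\ref{thm:simple_and_cocylindrical} the chain $B(\al 0,\al {k+1})$ is non-simple: there is $h\in\pi_1(M)$ with $h\cdot\al i$ in the interior of some lozenge $L(\al{j},\al{j+1})$ of that chain for some $i,j$. Using $\eta$-equivariance as in Proposition~\ref{prop:cardinality_isotopy_class}, after replacing $h\cdot\al i$ by $h\cdot\al 0=\eta^{-i}(h\cdot\al i)$ we get $h\cdot\wt\alpha$ in the interior of a lozenge $L(\al{j-i},\al{j+1-i})$ of the chain through $\wt\alpha$. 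Now I claim this forces the pairs $(\alpha^+,\alpha^-)$ and $(h\cdot\alpha^+,h\cdot\alpha^-)$ to intersect on $\univ$: indeed $h\cdot\wt\alpha$ being in the interior of a lozenge of the chain means, in terms of the leaves, that $\hfs(h\cdot\wt\alpha)$ separates $\hfs(\al{j-i})$ from $\hfs(\al{j+1-i})$ while $\hfu(h\cdot\wt\alpha)$ separates the corresponding unstable leaves; applying $\eta^u$ to the unstable data and projecting to $\univ=\leafs/(\eta^u\eta^s)$ yields exactly the cyclic-order relation $\alpha^-<h\cdot\alpha^-<\alpha^+<h\cdot\alpha^+$ (or its reverse), i.e.\ $(\alpha^+,\alpha^-)$ self-intersects. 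This last step — carefully checking that ``$h\cdot\wt\alpha$ in the interior of a lozenge of the chain'' translates precisely into linking of the projected pairs on the universal circle, and in particular that the two points $\alpha^\pm$ are genuinely separated on $\univ$ by $h\cdot\alpha^\pm$ and not accidentally collapsed by the monotone maps $\eta^s,\eta^u$ — is the main technical obstacle, and I would handle it by working entirely in $\leafs\times\leafu$ with the picture of Figure~\ref{fig:orbit_space}, using that $\eta^u\circ\eta^s$ is strictly increasing (Proposition~\ref{prop:eta_s_eta_u}) to guarantee that distinct corners of a single chain project to genuinely distinct pairs of points on $\univ$.
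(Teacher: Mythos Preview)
Your approach is essentially the same as the paper's: both directions go through Theorem~\ref{thm:simple_and_cocylindrical} to equate finiteness of the co-cylindrical class with non-simplicity of the chain of lozenges, and then translate non-simplicity (the existence of $h$ with $h\cdot\wt{\alpha}$ in the interior of some $L(\al i,\al{i+1})$) into linking of the projected pairs on $\univ$. The paper is considerably terser---it states the two implications in one sentence each and leaves the $\eta$-shifting from $h\cdot\al i$ to $h\cdot\al 0$ implicit---whereas you spell out the dictionary between corners and pairs on $\univ$, make the $\eta$-equivariance reduction explicit, and phrase one direction as a contrapositive; but the logical structure is identical.
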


\begin{proof}
 If the co-cylindrical of $\alpha$ is finite, then (by Theorem \ref{thm:simple_and_cocylindrical}) the chain of lozenges containing $\wt{\alpha}$ is non-simple. So, there exists $h\in \pi_1(M)$ such that $h\cdot \wt{\alpha} \in L(\al i, \al{i+1})$. Projecting that lozenge to $\univ$ shows that $(\alpha^+, \alpha^-)$ and $h \cdot (\alpha^+, \alpha^-)$ intersect.
Reciprocally, if there exists $h \in \pi_1(M)$ such that $(\alpha^+, \alpha^-)$ and $h \cdot (\alpha^+, \alpha^-)$ intersect, then $h\cdot \wt{\alpha} \in L(\al i, \al{i+1})$ for some $i$. Hence, by Theorem \ref{thm:simple_and_cocylindrical}, the co-cylindrical class of $\alpha$ must be finite.
\end{proof}

Let us announce the following result with Sergio Fenley, to be published later: 
\begin{thm}[Barthelm\'e, Fenley] \label{thm:everything_intersects} 
 Let $(\alpha^+, \alpha^-)$ be the projection on $\univ$ of a periodic orbit $\wt{\alpha}$ of $\hflot$. Then $(\alpha^+, \alpha^-)$ self-intersects.
\end{thm}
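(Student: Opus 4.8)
\textbf{Proof plan for Theorem \ref{thm:everything_intersects}.}

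The plan is to show that the co-cylindrical class of every periodic orbit is finite, and then conclude by Proposition \ref{prop:intersects_equivalent_simple}. By Theorem \ref{thm:simple_and_cocylindrical}, this is equivalent to showing that the maximal chain of lozenges through a lift $\wt{\alpha}$ of a periodic orbit is never simple; equivalently, that the projection $(\alpha^+,\alpha^-)$ of $\wt{\alpha}$ to $\univ$ self-intersects. So the whole question is about the dynamics of the $\pi_1(M)$-action on $\univ$ together with the $\pi_1(M)$-invariant laminations $\Lambda^{\pm}_{\mathrm{univ}}$ coming from the slitherings associated with $\fs$ and $\fu$.

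First I would set up the correspondence precisely: an orbit $\wt{\alpha}$ determines a pair of leaves $(L^s,L^u) \in \leafs \times \leafu$, and via $\eta^u$ and the two slithering maps $s^s \colon \M \to \univ$ (through $\leafs$) one reads off the pair of points $\alpha^\pm \in \univ$; moreover the stabilizer $\gamma$ of $\wt{\alpha}$ acts on $\univ$ fixing exactly $\alpha^+$ and $\alpha^-$, with one an attractor and the other a repeller (using the action on the leaf spaces recalled before Proposition \ref{prop:eta_s_eta_u}, and $\rho_{\mathrm{univ}}$ faithful). Next I would use the regulating pseudo-Anosov flow $\psi^t$ of Theorem \ref{thm:regulating_pseudo_Anosov} and Fenley's identification of its orbit space with a disc bounded by $\univ$: the leaves of $\wt{\Lambda}^+$ and $\wt{\Lambda}^-$ collapse to the stable/unstable foliations of $\wt{\psi}^t$, whose leaves (in the orbit space) are properly embedded lines with well-defined endpoint pairs in $\univ$ forming the laminations $\Lambda^\pm_{\mathrm{univ}}$. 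The key dynamical input is that $\psi^t$ has a dense orbit (it is a regulating pseudo-Anosov flow on an atoroidal manifold, hence transitive), so the laminations $\Lambda^\pm_{\mathrm{univ}}$ are minimal-ish — more precisely, the set of endpoint pairs of leaves through $\pi_1(M)$-translates of $\wt{\alpha}$ is dense among all leaves of $\Lambda^\pm_{\mathrm{univ}}$. From this density, given the ideal pair $(\alpha^+,\alpha^-)$ of a leaf one can find a translate $h\cdot(\alpha^+,\alpha^-)$ whose endpoints straddle $\alpha^+$, i.e. intersect the original pair in the sense of the definition; this forces $h\cdot\wt{\alpha}$ into the interior of one of the lozenges $L(\al i,\al{i+1})$ of the chain through $\wt{\alpha}$, so the chain is non-simple and the co-cylindrical class is finite. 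Proposition \ref{prop:intersects_equivalent_simple} then yields self-intersection.

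The main obstacle I anticipate is the density/minimality step: one must rule out the possibility that the leaf of $\Lambda^\pm_{\mathrm{univ}}$ carrying $(\alpha^+,\alpha^-)$ is isolated from all its translates, or that the complementary gaps of the lamination are preserved in a way that keeps all translates of $\wt{\alpha}$ outside every lozenge of its chain. Handling this cleanly requires knowing that periodic orbits of $\hflot$ are dense enough among the leaves of the universal-circle laminations, which in turn rests on transitivity (indeed topological mixing, by Barbot's proposition) of $\flot$ together with the structure of how periodic orbits of $\flot$ sit inside the orbit space $\orb$ and project to $\univ$ via the chains of lozenges. A secondary technical point is verifying the ``straddling implies interior of a lozenge'' implication: this needs the precise description (from the corollary on lozenges and from Proposition \ref{prop:eta_s_eta_u}) that the orbits associated to a pair of points of $\univ$ are exactly the intersections of the two vertical lines with $\leafs$, so that an intersecting translate lands strictly between consecutive corners $\al i$ and $\al{i+1}$ rather than on a side. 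Once these two points are established, the conclusion follows formally from the results already in the excerpt.
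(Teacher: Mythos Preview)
Your proposal matches the approach the paper indicates: the paper does not give a full proof of this theorem (it is announced as joint work with Fenley ``to be published later''), but the one-sentence sketch it does provide says exactly to view $\univ$ as the ideal boundary of the orbit space of the regulating pseudo-Anosov flow $\psi^t$ and to exploit transitivity of $\psi^t$ (citing Mosher for transitivity of pseudo-Anosov flows on atoroidal manifolds), which is precisely the mechanism you outline.

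One logical point worth straightening out: your opening sentence inverts the deductive order used in the paper. You propose to first prove that co-cylindrical classes are finite and then deduce self-intersection via Proposition~\ref{prop:intersects_equivalent_simple}, but in the paper Theorem~\ref{thm:co_cylindrical_are_finite} is obtained as a \emph{corollary} of Theorem~\ref{thm:everything_intersects} together with Proposition~\ref{prop:intersects_equivalent_simple}. Since the two statements are equivalent this is harmless, and indeed by your second sentence you have already reduced to proving self-intersection directly; just be aware that the argument has to live on the $\univ$/pseudo-Anosov side and cannot bootstrap from a prior finiteness result. Also, in your ``main obstacle'' paragraph you invoke transitivity of $\flot$; the paper's hint relies on transitivity of the \emph{regulating} flow $\psi^t$, which is the relevant one since it is $\psi^t$'s orbit space that carries $\univ$ as boundary.
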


The theorem is proved by seeing $\univ$ as the boundary at infinity of the orbit space of a regulating pseudo-Anosov flow $\psi^t$ and using the transitivity of such flows (Mosher \cite{Mosher}, proved that any pseudo-Anosov flow on an atoroidal manifold is transitive).

\begin{rem}\label{rem:intersects}
 Suppose that $(\alpha^+, \alpha^-)$ self-intersects and denote by $(\al i)$ the orbits in $\M$ projecting to $(\alpha^+, \alpha^-)$ and $\alpha_i = \pi (\al i)$ their projection to $M$. Then, for any $i$, there exist a $j$ and a $t$ such that $\psi^t(\alpha_i) \cap \alpha_j \neq \emptyset$. By flowing one orbit we get an actual intersection.
\end{rem}

If we consider the geodesic flow case now, there is also a natural circle at infinity. Just take the visual boundary $\wt{\Sigma}(\infty)$ and the fundamental group $\pi_1(H\Sigma)$ naturally acts on it. So, for a pair $(\alpha^+, \alpha^-)$ in $\wt{\Sigma}(\infty)$, to self-intersect in that case means that the only geodesic in $\Sigma$ such that a lift of it has endpoints $(\alpha^+, \alpha^-)$ is non-simple. Hence, in the geodesic flow case, there exist points on $\wt{\Sigma}(\infty)$ representing a periodic orbit that does not self-intersect, in contrast with the atoroidal case we studied here.

As a corollary of Proposition \ref{prop:intersects_equivalent_simple} and Theorem \ref{thm:everything_intersects}, we obtain:
\begin{thm}[Barthelm\'e, Fenley] \label{thm:co_cylindrical_are_finite}
 Every co-cylindrical class is finite.
\end{thm}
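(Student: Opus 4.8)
The plan is to obtain Theorem~\ref{thm:co_cylindrical_are_finite} as an immediate formal consequence of the two preceding results, so the ``proof'' really consists of one implication plus a quantifier check. First I would recall the dictionary between co-cylindrical classes, chains of lozenges, and pairs of points on $\univ$: by Theorem~\ref{thm:simple_and_cocylindrical}, the co-cylindrical class of a periodic orbit $\alpha$ is finite precisely when the maximal chain of lozenges through a lift $\wt\alpha$ fails to be simple; and by Proposition~\ref{prop:intersects_equivalent_simple}, this non-simplicity is equivalent to the pair $(\alpha^+,\alpha^-)$ obtained by projecting $\wt\alpha$ to $\univ$ being self-intersecting. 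Since Theorem~\ref{thm:everything_intersects} asserts that \emph{every} periodic orbit projects to a self-intersecting pair, Proposition~\ref{prop:intersects_equivalent_simple} then forces the co-cylindrical class of $\alpha$ to be finite.

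The one genuine step beyond quoting the lemmas is to make sure the statement ``every co-cylindrical class is finite'' is correctly deduced from ``the co-cylindrical class of every periodic orbit is finite.'' Here I would invoke Theorem~\ref{thm:homo_implies_isotope} together with Proposition~\ref{prop:cardinality_isotopy_class}: a co-cylindrical class lives inside a double free homotopy class, and Proposition~\ref{prop:cardinality_isotopy_class} says that within a fixed double free homotopy class, if one co-cylindrical class is finite then they all are (with equal cardinality). Combined with the previous paragraph --- which gives finiteness for at least one (indeed every) orbit in each such class --- this upgrades the per-orbit statement to the statement for all co-cylindrical classes. So the write-up would read roughly: let $\alpha$ be a periodic orbit, $\wt\alpha$ a lift, $(\alpha^+,\alpha^-)$ its projection to $\univ$; by Theorem~\ref{thm:everything_intersects} the pair $(\alpha^+,\alpha^-)$ self-intersects, hence by Proposition~\ref{prop:intersects_equivalent_simple} the co-cylindrical class of $\alpha$ is finite; since $\alpha$ was arbitrary (and, if one prefers to phrase it via double free homotopy classes, by Proposition~\ref{prop:cardinality_isotopy_class}), every co-cylindrical class is finite.

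The main obstacle is not in Theorem~\ref{thm:co_cylindrical_are_finite} itself --- which is a two-line corollary --- but in the inputs it quotes, above all Theorem~\ref{thm:everything_intersects}, whose proof (announced as joint work with Fenley, to appear) realizes $\univ$ as the ideal boundary of the orbit space of a regulating pseudo-Anosov flow $\psi^t$ and exploits transitivity of pseudo-Anosov flows on atoroidal manifolds. Within the present excerpt that theorem may be assumed, so the only thing to be careful about in the actual write-up is citing the correct pair of statements and spelling out that ``self-intersects $\Rightarrow$ finite co-cylindrical class'' is exactly the nontrivial direction of Proposition~\ref{prop:intersects_equivalent_simple}. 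A short remark could be added contrasting this with the geodesic flow case, where simple closed geodesics give pairs on $\wt\Sigma(\infty)$ that do not self-intersect, emphasizing that the atoroidal, not Seifert-fibered hypothesis is what makes every periodic orbit ``complicated''; but that is optional flavor rather than part of the proof.
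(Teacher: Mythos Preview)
Your proposal is correct and matches the paper exactly: the paper presents this theorem with no separate proof, simply introducing it by ``As a corollary of Proposition~\ref{prop:intersects_equivalent_simple} and Theorem~\ref{thm:everything_intersects}, we obtain.'' Your second paragraph's quantifier check via Proposition~\ref{prop:cardinality_isotopy_class} is unnecessary --- a co-cylindrical class is by definition the co-cylindrical class of \emph{some} periodic orbit, so ``the co-cylindrical class of every periodic orbit is finite'' already says ``every co-cylindrical class is finite'' --- but it does no harm.
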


Note that it is still an open question whether a co-cylindrical class can be non-trivial. We only know that some are:
\begin{prop}
 There exist periodic orbits of $\flot$ with trivial co-cylindrical class. 
\end{prop}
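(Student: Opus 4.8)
The plan is to exhibit explicitly a periodic orbit whose projection to the universal circle does \emph{not} self-intersect, which by Proposition \ref{prop:intersects_equivalent_simple} is equivalent to having a trivial (indeed infinite) co-cylindrical class. The natural candidates come from the boundary of the orbit space of the regulating pseudo-Anosov flow $\psi^t$: recall that $\univ$ is the natural boundary of $\orb(\psi)$, and that the laminations $\Lambda^{\pm}_{\text{univ}}$ on $\univ$ are $\pi_1(M)$-invariant and pairwise non-intersecting (no two leaves of a lamination are linked). So any pair of points $(\alpha^+,\alpha^-)$ that happens to be a leaf of $\Lambda^+_{\text{univ}}$ automatically fails to self-intersect, since $h\cdot(\alpha^+,\alpha^-)$ is again a leaf of $\Lambda^+_{\text{univ}}$ and hence unlinked from $(\alpha^+,\alpha^-)$.

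First I would identify a periodic orbit $\alpha$ of $\flot$ whose projection $(\alpha^+,\alpha^-)$ to $\univ$ lies on $\Lambda^{+}_{\text{univ}}$ (or $\Lambda^{-}_{\text{univ}}$). Concretely: the singular orbits of $\psi^t$, or more generally orbits of $\psi^t$ corresponding to leaves of $\wt{\Lambda}^{\pm}$ that are $\pi_1(M)$-periodic, have endpoints on $\univ$ that are precisely leaves of $\Lambda^{\pm}_{\text{univ}}$. One then needs to check that such an endpoint pair is also the $\univ$-projection of a periodic orbit $\wt\alpha$ of $\hflot$ — this is where the correspondence ``two distinct points on $\univ$ define a countable collection of orbits of $\flot$'' (the Lemma near the end of the section on skewed $\R$-covered flows) is used, together with the fact that a $\pi_1(M)$-periodic pair produces a periodic orbit of $\flot$ via the $\eta$-chain structure. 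Having such an $\alpha$, Proposition \ref{prop:intersects_equivalent_simple} says: $(\alpha^+,\alpha^-)$ self-intersects $\iff$ the co-cylindrical class of $\alpha$ is finite. Since $(\alpha^+,\alpha^-)$ is a leaf of an invariant lamination it does not self-intersect, so the co-cylindrical class of $\alpha$ is infinite, which is what is wanted.

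Alternatively, and perhaps more cleanly, I would argue directly at the level of lozenges: by Theorem \ref{thm:simple_and_cocylindrical} the co-cylindrical class of $\alpha$ is trivial (equivalently infinite) precisely when the chain of lozenges $B$ through $\wt\alpha$ is simple, i.e.\ $(\pi_1(M)\cdot\al i)\cap B = \{\al i\}$ for every corner. So it suffices to produce one periodic orbit whose associated bi-infinite $\eta$-chain of lozenges is simple. For an orbit coming from a leaf of $\wt\Lambda^{+}$, the projection of its chain of lozenges to $\univ$ is governed by the lamination $\Lambda^{+}_{\text{univ}}$, and the non-linking property of laminations forbids any translate $h\cdot\al i$ from landing in the interior of a lozenge of $B$; hence $B$ is simple.

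The main obstacle I expect is the bookkeeping in the second step: verifying that the endpoint pair on $\univ$ of a chosen $\psi^t$-periodic leaf of $\wt\Lambda^\pm$ genuinely arises as the $\univ$-projection of a \emph{periodic} orbit of the original Anosov flow $\flot$, and that the lamination leaf through that pair really is the whole projected $\eta$-chain (not just one lozenge). This requires carefully tracking how the identifications $I^s, I^u$ of Proposition \ref{prop:identification_M_tilda} intertwine $\orb(\flot)$, $\orb(\psi)$, and $\univ$, and invoking the uniqueness (up to topological conjugacy) of the regulating pseudo-Anosov flow. Once that dictionary is in place the conclusion is immediate from the non-linking of $\Lambda^{\pm}_{\text{univ}}$ together with Proposition \ref{prop:intersects_equivalent_simple} (or Theorem \ref{thm:simple_and_cocylindrical}), and full details can be deferred to the forthcoming joint paper with S.\ Fenley.
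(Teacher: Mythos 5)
Your proof rests on a misreading of what ``trivial co-cylindrical class'' means, and consequently aims at the opposite conclusion from what the proposition asserts. You write that a trivial co-cylindrical class means an \emph{infinite} one, and so you try to produce a periodic orbit whose projection to $\univ$ does \emph{not} self-intersect (equivalently, by Theorem \ref{thm:simple_and_cocylindrical}, one whose $\eta$-chain of lozenges is simple, so that it bounds embedded Birkhoff annuli with every freely homotopic neighbor). But this cannot work: Theorem \ref{thm:everything_intersects}, stated just above the present proposition, says that the $\univ$-projection of \emph{every} periodic orbit self-intersects, and Theorem \ref{thm:co_cylindrical_are_finite} (its immediate corollary) says \emph{every} co-cylindrical class is finite. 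So there is no periodic orbit with an infinite co-cylindrical class, and no periodic orbit with a non-self-intersecting projection. The plan to locate such an orbit on a leaf of $\Lambda^{\pm}_{\text{univ}}$ must therefore fail at the step you flagged as ``bookkeeping'': those endpoint pairs are not the $\univ$-projections of periodic orbits of $\flot$.

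What the proposition actually claims is the opposite extreme: a periodic orbit whose co-cylindrical class is a \emph{singleton}, i.e., one that is not co-cylindrical to \emph{any} other orbit in its free homotopy class. (Read together with the subsequent remark — ``it is still an open question whether a co-cylindrical class can be non-trivial'' — ``trivial'' here clearly means minimal, not infinite.) By Theorem \ref{thm:simple_and_cocylindrical}, it suffices to exhibit a periodic orbit whose lozenge $L(\wt\alpha,\eta(\wt\alpha))$ is \emph{non-simple}, i.e., contains a $\pi_1(M)$-translate of $\wt\alpha$ in its interior. The paper's proof goes exactly this way: use transitivity plus the Anosov Closing Lemma to produce a periodic orbit that nearly fills a small flow box $V$; then two distinct arcs of $\alpha$ in $V$ are joined inside $V$ by a short loop $c$ running along half-leaves in the pattern $(+,+,-,-)$; projecting the lift of $c$ to the orbit space — and using that $V$ has no topology — shows that one lift of $\alpha$ lands in the interior of the lozenge determined by another, so the lozenge is non-simple. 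This is a local dynamical argument (closing lemma, product structure in a flow box), not a lamination argument on $\univ$, and it produces the non-simple situation rather than trying to rule it out.
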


\begin{rem}
For such an orbit, Proposition \ref{prop:cardinality_isotopy_class} shows that every other orbit in the double free homotopy class must also have a trivial co-cylindrical class.
\end{rem}

\begin{proof}
 Let $V$ be a flow box of $\flot$, as $\flot$ is transitive, we can pick a long segment of a dense orbit that $\eps$-fills $V$. Then, by the Anosov Closing lemma (see \cite{KatokHassel}), we get a periodic orbit $\alpha$ that $2\eps$-fills $V$. Now, choose $x$ on one of the connected components of $\alpha \cap V$. If $\eps$ was chosen small enough, then there must exist $y$ on another connected component of $\alpha \cap V$ such that there is a close path $c$ \emph{staying in $V$}, starting at $x$ going through the positive stable leaf of $x$, then the negative unstable leaf of $y$, then the negative stable leaf of $y$ and finally close up along the positive stable leaf of $x$. If we lift the path $c$ to the universal cover of $M$ and project it to the orbit space $\orb$, as $V$ has no topology, we see that the projection of the lift of $y$ must be inside the lozenge determined by the lift of $x$ (remember that we chose our flow so that the lozenges orientation is $(+,+,-,-)$, otherwise, we would have to modify our path $c$, see Figure \ref{fig:lozenge++--}). Hence the lozenge is non-simple and therefore the co-cylindrical class of $\alpha$ is trivial.
\end{proof}

\subsection{Some open questions} 

I wanted to end this dissertation with a list of questions I have about skewed $\R$-covered Anosov flow, because even if a lot of things are known, thanks mostly to T. Barbot and S. Fenley, the things that are unknown justify, at least in my view, a continuation of their study.\\

Let's start with the ``topological'' questions:

P. Foulon and B. Hasselblatt \cite{FouHassel:contact_anosov} have constructed \emph{contact} Anosov flows (i.e., Anosov flow preserving a contact form) on not Seifert-fibered spaces and it seems very likely that their construction often yields hyperbolic manifolds. Now, contact Anosov flows are skewed and $\R$-covered (see \cite{Bar:PAG}) and are the ``nicest'' flows from a regularity point of view (see \cite{FouHassel:Zygmund_strong_foliations}). In \cite{BarbotFenley}, Barbot and Fenley showed that skewed $\R$-covered Anosov flows in Seifert-fibered spaces are (up to a finite cover) topologically conjugated to a geodesic flow on a closed surface. A natural question is then: \emph{Is a skewed $\R$-covered Anosov flow on an atoroidal manifold always topologically conjugate to a contact Anosov flow?}

 Indeed, it seems that the structure of $\M$ given by the regulating pseudo-Anosov flow (see proposition \ref{prop:identification_M_tilda}) is very rigid, so can we use that to show that $\flot$ is topologically contact? (See \cite{Bar:PAG} for a definition.) And, from there, can we get an actual topological conjugacy?

Given a continuous map $s \colon \R \times \mathbb{H}^2 \rightarrow S^1$ such that, for all $t \in \R$, $s(t, \cdot)$ is constant and for any $x \in \mathbb{H}^2$, $s(\cdot, x)$ is strictly monotone, we can construct, using Figure \ref{fig:skewed_R-covered_AF}, an Anosov flow on $\R \times \mathbb{H}^2$. Now suppose that we are given a discreet group $\Gamma$ acting in a ``good'' way on $\R \times \mathbb{H}^2$, then is the quotient flow a contact Anosov flow? And if that is true, then, can we get all contact Anosov flows on atoroidal $3$-manifolds in this fashion?\\

Finally, there are a lot of ergodic theoretical questions for these flows:

A classical question (initiated by Bowen and Margulis) for Anosov flows is to count the number of closed orbits of length less than $R$ and find an asymptotic equivalent when $R$ gets big. In \cite{ParryPollicott}, Parry and Policott prove that this number is asymptotic to $e^{hR}/hR$ where $h$ is the topological entropy. Following them, Katsuda and Sunada \cite{KatSun} answered the question of counting closed orbits inside an \emph{homology} class. So it seems natural to ask, in the case of skewed $\R$-covered Anosov flows on atoroidal manifolds, whether we can give an equivalent to the number of closed orbits of length less than $R$ inside a free \emph{homotopy} class.\\
A somewhat related question (asked by M. Crampon) is the following: let $\alpha_i$ be the orbits in a free homotopy class of $\flot$, denote by $l_i$ the length of $\alpha_i$ and $\delta_{\alpha_i}$ the Dirac measure on $\alpha_i$. Let 
\begin{equation*}
\mu_n := \sum_{ |i| \leq n } \frac{\delta_{\alpha_i}}{ l_i }\,.
\end{equation*}
The sequence $(\mu_n)$ admits at least one weak limit $\mu$. Can we show that this limit is unique and ergodic? If that is so, then what is the measure-entropy of $\mu$? Can we link that entropy to the previous counting question?

\pagebreak

{\thispagestyle{empty}
\selectlanguage{french}{
\null \vspace{\stretch{1}}
\begin{flushright}
 Je ne sais pas le reste.
\end{flushright}
\vspace{\stretch{2}} \null
}


\backmatter

\selectlanguage{french}{
\chapter{Remerciements}


J'ai souvent du mal à commencer et à finir les choses, surtout quand il s'agit d'écrire. Ici c'est encore plus difficile parce qu'il faut que je commence à remercier des gens et qu'en plus ça terminera ma thèse... 


Ces remerciements sont longs, car il y a une quantité de gens que je veux remercier. C'est aussi une bonne chose: ça permettra aux gens assistant à la soutenance et pas passionnés par la géométrie de Finsler ou les flots d'Anosov d'avoir quelque chose à lire. Ce sera ma première manière de remercier tout ceux qui assistent à ma soutenance. 

Quand on écrit des remerciements, il faut aussi essayer d'oublier personne. C'est impossible d'oublier personne, le travail que j'ai produit ici n'est pas vraiment de moi; il est le résultat de tous les gens que j'ai croisés, qui m'ont orienté, appris des choses, de la (les) société(s) dont je fais parti. Comme impossible est non seulement français mais universel, j'ai juste essayé de me concentrer sur les personnes qui ont eu un impact direct sur ce travail mathématiques et d'en oublier le moins possible. 

Il est traditionnel de commencer par remercier son (ses) directeur(s) de thèse, souvent j'aime pas beaucoup les traditions, mais il y en a des bonnes. En plus, ça me fait vraiment plaisir de respecter celle là, car non seulement je n'aurais jamais pu écrire ses lignes sans eux, mais en plus j'ai toujours apprécié mes discussions avec eux, que ce soit à propos de maths ou d'autres choses.

Donc, j'aimerais remercier Patrick pour plein de raisons. D'abord, c'est lui qui m'a donné le Laplacien en Finsler (entre autres pistes qu'il m'a montrées), donc c'est peu dire qu'il est à la base de cette thèse. Il a toujours bien su me guider et j'ai vraiment énormément appris à son contact. Mais, plus généralement, c'est quelqu'un que j'apprécie et que j'admire beaucoup: il a toujours plein d'idées intéressantes sur des quantités de choses, et, ce qui ne cessera jamais de m'étonner, il arrive à discuter de maths de 16h à 20h après s'être enchaîné 18 réunions administratives dans la journée. Donc, merci Patrick d'avoir pris sur ton temps pour m'apprendre toutes ces choses et m'avoir aidé durant ces années.

\selectlanguage{american}{%
My second advisor is Boris, and I switch to English because it is the language we speak together. Boris works at Tufts University, it is in between Somerville and Medford, but let's just say that it is in Boston, because more people know where that is. Boris is also an amazing advisor, and I'm indebted to him for so many reasons. But to explain the full extent of it, I need to digress a bit. As you might have noticed, my dissertation has two fairly disjoint parts: the second part is about Anosov flows and the first part is about Finsler geometry, with dynamical systems coming up only later. Boris knows everything there is to know about dynamical systems (and this statement is such a slight exageration that it can be considered true) and I am thankful to him for sharing some of that knowledge with me. But, when I started my Ph.D., he was not an expert in Finsler geometry. Despite that, he has always been helpful, going out of his way to try answering a question not in the range of his expertise. I think that gives an idea of how nice he is. In addition to his tremendous help in Mathematics, Boris invited me to Tufts University several times, even offering me twice a lecturing position there. This allowed me to spend long periods of time in Boston. And those who know me know why spending time in Boston is important to me. So, for all that and much more, thanks Boris!
}%


Avoir eu non pas un, mais deux directeurs de thèse sympas et m'ayant énormément aidé constitue déjà une grande chance. Moi, j'ai eu encore plus de chance: j'ai eu un frère jumeau de thèse vachement cool. Il s'appelle Mickaël Crampon et, ça va l'emmerder mais je l'écris quand même, il est super intelligent. Tous les deux ont a discuté de plein de trucs, et il m'a été d'une aide précieuse. Non seulement il a des tas d'idées et connaît plein de trucs en maths, mais, quand on en a marre de faire des maths, on peut aller boire de la bière ou du rhum et parler politique.

Un truc dont je me suis rendu compte au cours de cette thèse, c'est que faire des maths tout seul c'est bien, mais à plusieurs c'est encore mieux. Il y a une personne qui apparaît ici tout naturellement et ça tombe bien car en plus c'est un de mes rapporteurs.

Quand j'ai rencontré Bruno Colbois (à l'occasion d'une conférence organisée en partie par le G.D.R. Platon, j'y reviendrai), il a tout de suite été intéressé par mes travaux et je l'en remercie. Il connaît une quantité impressionnante de choses sur le(s) Laplacien(s) et il m'a fait découvrir et réfléchir à plein de nouvelles questions. Il m'a invité à Neuchâtel en juin dernier, c'est là que j'ai eu pour la première fois l'impression de faire des maths à plusieurs comme un grand. On a discuté tous les trois avec Patrick Verovic qui était aussi là, et j'en profite pour remercier ce Patrick là, et je me suis dis que c'était quand même vachement mieux de faire des maths en collaboration!
Donc Bruno, merci d'avoir accepté de rapporter cette thèse, merci de ton intérêt pour mon travail et merci de m'offrir l'occasion de travailler avec toi après cette thèse.

La seconde personne qui m'a fait l'honneur d'accepter de rapporter ma thèse c'est François Ledrappier. J'ai beaucoup lu ses papiers et je l'ai rencontré quelque fois, grâce à nouveau au G.D.R. Platon. Les fois où je l'ai rencontré, j'ai jamais trop osé lui parler: les gens connus me font un peu peur. Je savais par Mickaël qu'il était très gentil. Les contacts que j'ai eu avec lui depuis qu'il a accepté de rapporter cette thèse l'ont confirmé. Merci François d'avoir encombré tes vacances de Noël avec le rapport de cette thèse et merci d'avoir fait tous ces théorèmes que je peux essayer d'adapter.

Tant que j'y suis à parler de l'honneur qui m'est fait et de gens importants, merci à Jean-Pierre Bourguignon d'avoir accepté d'être membre de ce jury. Merci de prendre de votre temps pour assister à ma soutenance, j'en suis vraiment honoré et reconnaissant. Enfin, merci d'avoir lu et commenté si attentivement mon manuscrit.

Merci à Athanase Papadopoulos, qui est le représentant de l'université de Strasbourg à ma soutenance. J'ai été bien content de le c\^otoyer pendant mes années d'études à Strasbourg, il connaît plein de choses sur ce qui est discuté dans cette thèse, et c'est aussi quelqu'un de vraiment bien. Athanase ça me fait plaisir que tu participes à ma soutenance.

Thierry Barbot a aussi accepté d'être membre de mon jury. J'en suis très content car j'ai passé beaucoup de temps avec ses articles et un petit peu à discuter directement avec lui. Ses articles et lui m'ont bien aidé à comprendre un peu les flots d'Anosov alignables en biais. Il connaît tout de ces flots et en a prouvé la moitié. Donc merci Thierry.

\selectlanguage{american}{%
The second half of what is known about these Anosov flows is due to Sergio Fenley. I met him when he came to give a talk at Tufts in 2009. Since then he has always been helpful with my questions. He also invited me to Tallahassee and we worked together there. It is fun to work with him and I hope we will continue because there is still so much to be done.
}%

Au cours de ma thèse, j'ai aussi rencontré Yves Colin de Verdière, il m'a fait l'honneur de s'intéresser à mes travaux. Il m'a suggéré de nombreuses choses et m'a posé plein de questions auxquelles j'ai essayé de répondre. Le rencontrer fut très important pour moi et cette thèse lui doit beaucoup.

Encore une fois, c'est grâce au G.D.R. Platon que j'ai rencontré Yves Colin de Verdière, et il est grand temps de remercier ceux qui se cachent derrière.
Françoise Dal'bo est une des organisatrices de ce G.D.R. et je suis s\^ur qu'elle est en grande partie responsable du fait que les jeunes chercheurs y sont tellement bien traités. Sans elle et le G.D.R., je n'aurais pas rencontré la moitié des gens qui ont eu un impact direct sur mes travaux.

Il y a, bien s\^ur, plein d'autres mathématicien(ne)s que j'ai rencontrés, avec qui j'ai discuté, et qui m'ont permis d'apprendre des choses. Parmi ces personnes, j'aimerais en distinguer certaines, comme Vincent Berard (et oui, aussi surprenant que cela puisse paraître, j'ai aussi un peu parlé de maths avec lui), Bruce Boghosian, Sofien Souaifi, Camille Tardif et Genevieve Walsh.

J'aimerais aussi remercier mes compagnons de bureau, Anne-Laure et Ambroise (et Mickaël mais c'est déjà fait plus haut). L'ambiance à toujours été très bonne et ça fait du bien.

Si j'ai pu faire des maths pendant toutes ces années sans me soucier de rien ou presque, c'est grâce aux équipes administratives qui m'ont choyé que ce soit à Strasbourg ou à Tufts. Donc un grand merci à elles!

J'ai aussi été membre du Collège Doctoral Européen, promotion Rosa Parks. Le CDE a fait plein de choses pour moi. En particulier, il a financé une partie de mes voyages à Boston et il m'a appris des choses sur l'Europe et sur d'autres sujets, ce qui est bien. Donc merci CDE et puis surtout merci Céline, Christine et Jean-Paul.

Ma vie mathématiques n'est (heureusement!) pas la seule, mais ce serait beaucoup trop long de remercier tous les gens qui ont une importance pour moi dans d'autres domaines, et d'expliquer en quoi ils ont été (et continueront à être) essentiels. Donc je serai rapide et je sais qu'ils me pardonneront.

Pour rester sain d'esprit durant ma thèse j'ai joué aux cartes avec mes compagnons thésards à Strasbourg, fait plein de manifs (merci Nico?), bu de la bière et discuté de trucs avec plein de gens que j'aime beaucoup en France et à Boston et fait de l'escalade (à mains nues, oui madame!). Donc merci à tout mes potes.

J'aimerais enfin remercier mes parents, Jacqueline et François et mon frère Simon, parce que c'est un bon moment pour remercier sa famille qui a eu à vous supporter pendant un bon bout de temps (27 ans pour l'instant et ça n'arrête pas). En plus, ils ont gentiment corrigé une partie des innombrables fautes de grammaire que j'ai cachées dans cette thèse.

I also met someone very dear to me during my thesis. Thanks Christine for being there and thank you for coping with me all the time, especially with the stress of the last few months.

}

\bibliographystyle{amsplain}
\bibliography{/home/thomas/Dropbox/maths/these/tout}

\end{document}